\documentclass[11pt,reqno]{amsart}
\usepackage{amsmath,amsthm,amssymb}
\usepackage{yhmath}
\usepackage{mathtools}
\usepackage{enumerate}
\usepackage{amsfonts}
\usepackage{eucal}
\usepackage{tikz-cd}
\usepackage[all]{xy}
\CompileMatrices
\usepackage{hyperref}
\definecolor{webgreen}{rgb}{0,.5,0}
\definecolor{webbrown}{rgb}{.6,0,0}
\definecolor{ocre}{RGB}{52,177,201}
	\definecolor{royalblue(web)}{rgb}{0.25, 0.41, 0.88}
\hypersetup{
 colorlinks=true,linktocpage=true,pdfstartpage=1,
 pdfstartview=FitV,breaklinks=true,pdfpagemode=UseNone,
 pageanchor=true,pdfpagemode=UseOutlines,
 plainpages=false,bookmarksnumbered,
 bookmarksopen=true,bookmarksopenlevel=1,
 hypertexnames=true,pdfhighlight=/O,
 urlcolor=webbrown,linkcolor=royalblue(web),
 citecolor=webgreen,
 hyperfootnotes=false,pdfpagelabels,
 pdfcreator={pdfLaTeX},
 pdfproducer={LaTeXwithArsClassica}
 }
\usepackage{enumitem}
\usepackage[normalem]{ulem}
\usepackage{slashed}
\usepackage{scalerel}
\usepackage{galois}
\usepackage{esvect}
\usepackage{imakeidx,etoolbox}
\usepackage[usestackEOL]{stackengine}
\makeindex[title=Index,columns=2]

\newcommand{\defoperator}[1]{\expandafter\def\csname#1\endcsname{\operatorname{#1}}}
\newcommand{\deffrak}[1]{\expandafter\def\csname#1\endcsname{\mathfrak{#1}}}
\newcommand{\defbb}[1]{\expandafter\def\csname#1#1\endcsname{\mathbb{#1}}}
\newcommand{\dbar}{\bar{\partial}}

\newcommand{\sA}{{\mathcal A}}
\newcommand{\sO}{{\mathcal O}}

\defbb{Z}
\defbb{R}
\defbb{C}
\defbb{H}
\defbb{V}
\deffrak{g}
\deffrak{t}
\defoperator{Im}
\defoperator{Re}
\defoperator{Id}
\defoperator{id}
\defoperator{End}
\defoperator{Hom}
\defoperator{Aut}
\defoperator{Iso}
\defoperator{Diff}
\defoperator{ad}
\defoperator{pr}
\defoperator{Ad}
\defoperator{At}
\defoperator{rank}
\defoperator{ker}
\defoperator{coker}
\defoperator{im}
\defoperator{tr}
\defoperator{Str}
\defoperator{ind}
\defoperator{diag}
\defoperator{ord}
\defoperator{Spec}
\defoperator{Proj}
\defoperator{Ext}
\defoperator{Gal}
\defoperator{Jac}
\defoperator{Gr}
\defoperator{Ric}
\defoperator{Isom}
\defoperator{DF}

\defoperator{GL}
\defoperator{SL}
\defoperator{PSL}
\defoperator{U}
\defoperator{SU}
\defoperator{PSU}
\defoperator{SO}
\defoperator{Spin}
\defoperator{grad}
\defoperator{Alb}
\defoperator{Stab}

\DeclareMathOperator*{\midoplus}{\text{\raisebox{0.25ex}{\scalebox{0.8}{$\bigoplus$}}}}

\makeatletter
\newcommand*\rel@kern[1]{\kern#1\dimexpr\macc@kerna}
\newcommand*\widebar[1]{%
  \begingroup
  \def\mathaccent##1##2{%
    \rel@kern{0.8}%
    \overline{\rel@kern{-0.8}\macc@nucleus\rel@kern{0.2}}%
    \rel@kern{-0.2}%
  }%
  \macc@depth\@ne
  \let\math@bgroup\@empty \let\math@egroup\macc@set@skewchar
  \mathsurround\z@ \frozen@everymath{\mathgroup\macc@group\relax}%
  \macc@set@skewchar\relax
  \let\mathaccentV\macc@nested@a
  \macc@nested@a\relax111{#1}%
  \endgroup
}
\makeatother

\newcommand{\I}{\mathrm{i}}
\newcommand{\E}{\mathrm{e}}

\DeclareFontFamily{OT1}{rsfs}{}
\DeclareFontShape{OT1}{rsfs}{n}{it}{<-> rsfs10}{}
\DeclareMathAlphabet{\mathscr}{OT1}{rsfs}{n}{it}
\renewcommand*{\setminus}{-}
\DeclareEmphSequence{\bfseries}

\makeatletter
\newcommand*{\Rom}[1]{\expandafter\@slowromancap\romannumeral #1@}
\makeatother

\theoremstyle{plain}
  \newtheorem{theorem}{Theorem}
  \numberwithin{theorem}{section}
	
  \newtheorem{proposition}[theorem]{Proposition}
  \newtheorem{lemma}[theorem]{Lemma}
  \newtheorem{corollary}[theorem]{Corollary}
\theoremstyle{definition}

  \newtheorem{definition}[theorem]{Definition}
  
  \newtheorem{assumption}[theorem]{Assumption}

  \newtheorem{example}[theorem]{Example}
  \newtheorem{remark}[theorem]{Remark}

\numberwithin{equation}{section}

{
  \begin{enumerate}[label=(#1*),leftmargin=1.7\parindent]
}%
{
  \end{enumerate}
}

\newcommand{\Dol}{\mathrm{Dol}}
\newcommand{\dR}{\mathrm{dR}}
\newcommand{\B}{\mathrm{B}}
\newcommand{\D}{\mathrm{d}}
\newcommand{\G}{\mathbb{G}}
\newcommand{\calH}{\mathcal{H}}

\newcommand{\calE}{\mathcal{E}}
\newcommand{\calA}{\mathcal{A}}

\usepackage{graphicx}
\newcommand\sbullet[1][.5]{\mathbin{\ThisStyle{\vcenter{\hbox{%
  \scalebox{#1}{$\SavedStyle\bullet$}}}}}%
}

\title{Connections, metrics and Higgs fields on complex fiber bundles}
\author{Nianzi Li}
\email{lnz@mail.tsinghua.edu.cn}
\address{Yau Mathematical Sciences Center, Tsinghua University, Beijing 100084, China}

\author{Mao Sheng}
\email{msheng@tsinghua.edu.cn}
\address{Yau Mathematical Sciences Center, Tsinghua University, Beijing 100084, China \& Yanqi
Lake Beijing Institute of Mathematical Sciences and Applications, Beijing, 101408, China}
\subjclass{53C07, 53C43, 32G20}
\keywords{Holomorphic connection, Higgs bundle, nonabelian Hodge theory, harmonic bundle}
\begin{document}
\begin{abstract}
We give a differential-geometric representation of the extension class associated to a holomorphic fibration, generalizing the work of Atiyah on holomorphic principal bundles in a natural way. As an application, we obtain a nonlinear analogue of the classical result of Weil on characterizing the existence of flat connections on holomorphic vector bundles over compact Riemann surfaces. We further establish a faithful functor from the category of nonlinear flat holomorphic bundles reductive of K\"ahler type to the category of nonlinear Higgs bundles over the same connected compact K\"ahler base. Finally, we establish a notion of nonlinear harmonic bundle and prove that the variation of nonabelian Hodge structure is a nonlinear harmonic bundle in the rank one case and in the semisimple case.
\end{abstract}

\maketitle
\tableofcontents
\section{Introduction}
Weil \cite{weil1938generalisation} showed that an indecomposable holomorphic vector bundle over a compact Riemann surface admits a flat holomorphic connection if and only if its degree is zero. Atiyah \cite{atiyah57} gave an alternative proof of this result, and showed that the obvious generalization (namely holomorphic vector bundles with vanishing Chern classes) does not hold over a higher-dimensional base. Indeed, stability comes into play. As an important consequence of the existence theorem of Uhlenbeck-Yau \cite{uhlenbeck_yau1986existence}, stable bundles with vanishing Chern classes over a compact K\"ahler manifold admit flat connections. In fact, they constructed a canonical flat structure out of it, namely the unitary flat connection. This leads to the Donaldson-Uhlenbeck-Yau (DUY) correspondence, generalizing the fundamental work of Narasimhan-Seshadri \cite{narasimhan_seshadri1965stable} on the correspondence between irreducible unitary representations of the topological fundamental group of a compact Riemann surface and stable vector bundles of degree zero over it. However, even in the one-dimensional case, a stable bundle of degree zero may admit more than one flat structure, and an indecomposable holomorphic vector bundle may well be unstable. These phenomena can be beautifully explained by the nonabelian Hodge correspondence \cite{donaldson87twisted, corlette88, hitchin87selfdual, simpson1988construct}, extending the DUY correspondence by introducing an additional structure on holomorphic vector bundles, the so-called Higgs fields \cite{hitchin87selfdual, Si1}.

In a recent work \cite{She25b} of the second-named author, a nonlinear Hodge correspondence in characteristic $p$ was established, extending the work of Ogus-Vologodsky \cite{OV07} on nonabelian Hodge correspondence in characteristic $p$. It is called nonlinear because one considers there connections or Higgs fields on arbitrary smooth morphisms, which extend linear connections or linear Higgs fields on vector bundles in a natural way. It is natural to ask for a nonlinear Hodge correspondence over the field of complex numbers. In \cite{She25a}, the first attempt towards this goal has been made. In this work, we shall continue to explore the same circle of ideas, enrich the notion of a nonlinear harmonic bundle introduced therein, and establish a half nonlinear Hodge correspondence in the finite-dimensional case.

Let us begin by recalling the Atiyah class that was originally introduced by Atiyah in \cite{atiyah57}. Let $G$ be a connected complex Lie group, and let $\pi:P\to S$ be a holomorphic principal $G$-bundle over a complex manifold $S$. The infinitesimal structure of $P$ is encoded in the so-called Atiyah sequence
\begin{equation}\label{AtiyahSeq_eq}
    0 \to \ad (P)\to \At (P) \to TS\to 0,
\end{equation}
where $\ad (P)$ is the adjoint bundle, and $\At(P)=TP/G$ is the Atiyah bundle. A holomorphic connection on $P$ is defined as a holomorphic splitting of \eqref{AtiyahSeq_eq}. The obstruction to the existence of such a connection is the Atiyah class $A(P)\in H^1(S,\ad P\otimes \Omega_S)$, which is the extension class of \eqref{AtiyahSeq_eq}. Therefore, $P$ admits a holomorphic connection if and only if $A(P)=0$ (\cite[Theorem 2]{atiyah57}). An important step towards a differential geometrical understanding of the Atiyah class is that $A(P)$ coincides with the Dolbeault cohomology class $-[F_{A}^{1,1}]\in H^{1,1}(S,\ad P)$, where $F_{A}$ is the curvature of \emph{any} $C^\infty$ connection $A$ of type $(1,0)$ (\cite[Proposition 4]{atiyah57}, with the comparison sign adjusted to our convention).

A holomorphic fibration $f: X\to S$ is a holomorphic submersion between complex manifolds $X$ and $S$. In the above, $\pi$ is a special kind of holomorphic fibration: It is a holomorphic fiber bundle, namely, locally over open subsets $U\subset S$, $\pi$ is holomorphically isomorphic to the projection $U\times G\to U$. Our first group of results is that Atiyah's idea can be generalized to an arbitrary holomorphic fibration. Indeed, canonically associated to $f$ is the following short exact sequence of holomorphic tangent bundles
\begin{equation}\label{TangentSeqFibration_eq}
    0 \to T_{X/S}\to TX\to f^\ast TS\to 0.
\end{equation}
The Atiyah sequence \eqref{AtiyahSeq_eq} is nothing but the quotient of the above sequence for $\pi$ by the $G$-action. A holomorphic connection on $P$ is a $G$-equivariant holomorphic splitting of the above sequence.

Throughout the paper, for vector-valued one-forms $\alpha,\beta$ whose values carry a Lie bracket, we use the convention
\[
[\alpha,\beta](u,v):=[\alpha(u),\beta(v)]-[\alpha(v),\beta(u)],
\]
where $u,v$ are tangent vectors. In particular, $\frac12[\alpha,\alpha](u,v)=[\alpha(u),\alpha(v)]$. We use the \v{C}ech differential $(\delta u)_{ab}=u_b-u_a$ and the total differential $\D_{\mathrm{tot}}=\delta+(-1)^p\dbar$ in \v{C}ech degree $p$. We identify $(0,q)$-forms with values in holomorphic one-forms with $(1,q)$-forms by $\beta\otimes\alpha\mapsto\beta\wedge\alpha$; the same convention applies with vector bundle coefficients.

\begin{definition}
A holomorphic connection on $f$ is a holomorphic splitting of the sequence \eqref{TangentSeqFibration_eq}. That is, it is a holomorphic bundle morphism
\[
\nabla: f^*TS\to TX
\]
such that its composition with the projection $TX\to f^*TS$ is the identity. $\nabla$ is said to be integrable or flat if its curvature, which is a holomorphic bundle morphism defined by
\[
F_{\nabla}: f^*\wedge^2TS\longrightarrow T_{X/S},\qquad F_{\nabla}(u,v)=[\nabla(u),\nabla(v)]-\nabla([u,v]),
\]
is zero. Here $u,v$ are local holomorphic vector fields on $S$.
\end{definition}
Clearly, the integrability condition is vacuous when $S$ is one-dimensional. An integrable connection on a fibration $f: X\to S$ is nothing but a foliation on the total space $X$ that is transversal to the fibers of $f$ everywhere. With this point of view, Riccati foliations (resp. turbulent foliations) on complex surfaces (\cite[Ch.~4]{Bru15}, \cite{LN02}) can be regarded as integrable connections on rational fibrations (resp. elliptic fibrations) after restricting to the locus where the fibration is smooth and the foliation is transverse to its fibers. By its very definition, the vanishing of the extension class $A(X)\in H^1(X,f^\ast \Omega_S\otimes T_{X/S})$ is equivalent to the existence of a holomorphic connection on $f$. In order to give a differential geometrical interpretation of $A(X)$, it is necessary to introduce a wider class of connections.
\begin{definition}
A complex connection $\nabla^{\mathbb{C}}$ on $f$ is a smooth splitting of the natural projection
$$
TX^{\mathbb{C}}=TX\oplus \overline{TX}\to f^*TS^{\mathbb{C}}=f^*TS\oplus f^*\overline{TS}.
$$
It is said to be pure if $\nabla^{\mathbb{C}}$ maps $f^*TS$ into $TX$, and $f^*\overline{TS}$ into $\overline{TX}$. A $(1,0)$-connection $\nabla^{1,0}$ on $f$ is a smooth splitting of the composite of projections $TX^{\mathbb C}\to f^*TS^{\mathbb C}\to f^*TS$ such that the projection of its image under $TX^{\mathbb C}\to f^*TS^{\mathbb C}$ is contained in $f^* TS$. It is said to be relatively holomorphic if $[\overline{T_{X/S}},\textrm{im}(\nabla^{1,0})]\subset \overline{T_{X/S}}\oplus \textrm{im}(\nabla^{1,0})$ holds.
\end{definition}
We may then summarize Proposition \ref{prop:dbar_complex_structure}, Corollary \ref{cor:rel_hol_ks_vanish}, Proposition \ref{prop:dolb_class} and Theorem \ref{thm:Ext_class_curv_class} into the following statement, which is a generalization of \cite[Proposition 4]{atiyah57}.
\begin{theorem}\label{thm: main result 1}
There is a $\bar{\partial}_X$-closed tensor $\mathcal{R}\in A^{0,1}(X,f^*\Omega_S\otimes T_{X/S})$ canonically associated to a pure complex connection $\nabla^{\mathbb{C}}=\nabla^{1,0}+ \nabla^{0,1}$, whose class $[\mathcal{R}]\in H^1(X,f^*\Omega_S\otimes T_{X/S})$ equals $A(X)$. When $\nabla^{1,0}$ is relatively holomorphic, the Kodaira-Spencer map of $f$ vanishes everywhere. In this case, $\mathcal{R}$ equals the curvature $F^{1,1}_{D}$ associated to $D=\nabla^{1,0}+\bar \partial_{f}$, where $\bar \partial_f$ is the canonical $\bar{\partial}$-operator associated to $f$.
\end{theorem}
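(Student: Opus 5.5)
The plan is to build $\mathcal{R}$ directly from the failure of the horizontal lift to be holomorphic, then compare Dolbeault and \v{C}ech representatives via the total complex with $\D_{\mathrm{tot}}$.

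\emph{Construction of $\mathcal{R}$.} A pure complex connection gives a smooth splitting $TX=T_{X/S}\oplus \nabla^{1,0}(f^*TS)$. Let $P:TX\to T_{X/S}$ be the corresponding smooth projection. Define, for a local holomorphic vector field $u$ on $S$ and a $(0,1)$-vector $\bar w$ on $X$,
\[
\mathcal{R}(\bar w)(u) := \bar\partial_{TX}\bigl(\nabla^{1,0}(f^*u)\bigr)(\bar w) = \mathrm{pr}^{1,0}[\bar w,\nabla^{1,0}(f^*u)].
\]
Since $f_*$ is holomorphic and $f_*\nabla^{1,0}(f^*u)=u$ is holomorphic, we get $f_*\bar\partial(\nabla^{1,0} f^*u)=\bar\partial_{f^*TS}(f^*u)=0$. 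Hence $\mathcal{R}$ takes values in $T_{X/S}$. It is $C^\infty$-linear in $u$ after extending $u$ to smooth sections, because $\nabla^{1,0}$ is $C^\infty$-linear. So $\mathcal{R}=\bar\partial_{\mathrm{Hom}(f^*TS,TX)}(\nabla^{1,0})\in A^{0,1}(X,f^*\Omega_S\otimes T_{X/S})$.

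\emph{$\bar\partial$-closedness and the class.} Closedness follows because $\mathcal{R}$ is $\bar\partial$-exact as a $\mathrm{Hom}(f^*TS,TX)$-valued form and $T_{X/S}\subset TX$ is a holomorphic subbundle. Thus $\bar\partial\mathcal{R}=\bar\partial^2\nabla^{1,0}=0$. For the class, take an open cover $\{U_a\}$ of $X$ on which holomorphic splittings $\nabla_a$ exist; locally, $f$ is a product. Then $\nabla_b-\nabla_a$ is a holomorphic section of $f^*\Omega_S\otimes T_{X/S}$ on $U_{ab}$, and this cocycle represents $A(X)$ by definition of the extension class. Set $u_a:=\nabla^{1,0}-\nabla_a\in A^0(U_a,f^*\Omega_S\otimes T_{X/S})$. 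This is smooth, with values in $T_{X/S}$ since both are splittings. Then $\bar\partial u_a=\mathcal{R}|_{U_a}$ and $(\delta u)_{ab}=\nabla_a-\nabla_b$. So $\D_{\mathrm{tot}}u$ identifies $\mathcal{R}$ with the \v{C}ech cocycle $\pm(\nabla_b-\nabla_a)$ in the total complex. This gives $[\mathcal{R}]=A(X)$; fixing the sign requires matching the paper's conventions for $\D_{\mathrm{tot}}$ and for the extension class.

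\emph{Relatively holomorphic case.} The Kodaira--Spencer map of $f$ at a point of $S$ sends $u$ to the class in $H^1(X_s,T_{X_s})$ of $\mathcal{R}|_{X_s}(u)$, i.e.\ of $\bar\partial_{X_s}$ of a lift of $u$. This is the standard description: the restriction of a smooth lift's $\bar\partial$ to the fiber. Relative holomorphicity says $[\overline{T_{X/S}},\mathrm{im}\nabla^{1,0}]\subset \overline{T_{X/S}}\oplus\mathrm{im}\nabla^{1,0}$. Applying $P$, we get $\mathcal{R}(\bar w)=0$ for every vertical $\bar w$. Hence $\mathcal{R}|_{X_s}=0$ pointwise, and the Kodaira--Spencer map vanishes everywhere, not just in cohomology.

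\emph{Identification with $F^{1,1}_D$.} In this case $\mathcal{R}$ only has components along $f^*\overline{TS}$, with $\bar w=\nabla^{0,1}(\bar v)$. The canonical $\bar\partial_f$ should be the operator on relative objects given by $\bar v\mapsto$ the $(0,1)$-part determined by $\nabla^{0,1}=\overline{\nabla^{1,0}}$ (purity). The $(1,1)$-curvature of $D=\nabla^{1,0}+\bar\partial_f$ is then $[\nabla^{1,0}u,\nabla^{0,1}\bar v]$ projected to $T_{X/S}$, with $[u,\bar v]=0$ for holomorphic $u$ and antiholomorphic $\bar v$. This equals $P[\nabla^{0,1}\bar v,\nabla^{1,0}u]$ up to sign, which is exactly $\mathcal{R}(\nabla^{0,1}\bar v)(u)$.

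The main obstacle is this last step. It requires pinning down precisely what $\bar\partial_f$ and $F_D$ mean and checking that relative holomorphicity is what makes $D$ well-defined, i.e.\ makes $P[\,\cdot\,,\cdot\,]$ tensorial in the right slots. Sign conventions are a secondary issue. I would first verify tensoriality of the bracket under the relatively holomorphic hypothesis, and then compare term by term.
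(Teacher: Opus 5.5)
Your proposal is correct and is essentially the paper's argument. Your intrinsic $\mathcal R=\dbar_{\mathrm{Hom}(f^*TS,TX)}(\nabla^{1,0})$ is exactly the paper's $-(\mathcal R_{\mathrm{KS}}+\mathcal R_{\mathrm{A}_1})$, which is locally $\dbar_X(\Gamma_i^\alpha\,\D s^i\otimes\partial_\alpha)$, and your cochain $u_a=\nabla^{1,0}-\nabla_a$ is the paper's $\gamma_a=\nabla^{1,0}-\sigma_a$; with $(\delta u)_{ab}=u_b-u_a$ one gets $\D_{\mathrm{tot}}u=-c+\mathcal R$, so the sign is $+$. In the last step, purity does not give $\nabla^{0,1}=\overline{\nabla^{1,0}}$, only that both lift $\dbar_f$, but your observation that $\mathcal R$ vanishes on $\overline{T_{X/S}}$ already makes the bracket independent of the lift, and the residual sign is absorbed by the paper's identification $\beta\otimes\alpha\mapsto\beta\wedge\alpha$.
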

\begin{remark}
A holomorphic fibration $f: X\to S$ whose underlying smooth map is a fiber bundle can be regarded as a complex fiber bundle $(f,T_{X/S})$ equipped with an integrable $\bar\partial$-operator (\S \ref{sub: complex structure}). This point of view is basic throughout the paper. The notion of relatively holomorphic $(1,0)$-connections is also meaningful for a complex fiber bundle equipped with a (not necessarily integrable) $\bar\partial$-operator, and the curvature $F^{1,1}_D$ is also well-defined in this generalization (Lemma \ref{lem:curv_def_rel_hol}).
\end{remark}
For a holomorphic fiber bundle, there always exists a relatively holomorphic $(1,0)$-connection (see Proposition \ref{prop:exist_fiberwise_holo_connection}). Combining Theorem \ref{thm: main result 1} with Weil's original theorem for vector bundles and a theorem of Azad-Biswas for principal bundles \cite{hassan_biswas03}, we obtain a nonlinear analogue of Weil's theorem.
\begin{corollary}[Corollary \ref{cor:Weil}]
Let $S$ be a compact connected Riemann surface and $G$ be a connected complex reductive group. Let $f:X\to S$ be a holomorphic fiber bundle with typical fiber $Y$ and structure group $G\leq \Aut(Y)$ (in other words, $f$ is the associated bundle of a holomorphic principal $G$-bundle $P\to S$). Suppose $H^0(Y,TY)$ is finite-dimensional (e.g. $Y$ is proper). Then $f$ admits a holomorphic (automatically flat) connection if and only if each summand in the Remak decomposition of $\ad P$ has degree zero and $c(P)\in \pi_1(G)$ is torsion.
\end{corollary}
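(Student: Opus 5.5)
The plan is to reduce the existence of a holomorphic connection on $f$ to the existence of a holomorphic connection on the principal bundle $P$. Then Weil's theorem for vector bundles, via Azad--Biswas, gives the stated criterion. Flatness is automatic because $\dim S=1$, so $f^*\wedge^2TS=0$.

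\emph{If direction.} By Azad--Biswas \cite{hassan_biswas03}, the conditions on the Remak decomposition of $\ad P$ and on $c(P)$ are exactly equivalent to $P$ admitting a holomorphic connection. Such a connection is a $G$-equivariant holomorphic splitting of \eqref{TangentSeqFibration_eq} for $\pi$. It descends to the associated bundle $X=P\times_G Y$, because $TX$ is the quotient of $TP\times TY$ by $G$. The horizontal subbundle of $P$ pushes forward to a holomorphic subbundle of $TX$ that is complementary to $T_{X/S}$. This gives a holomorphic connection on $f$.

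\emph{Only if direction.} Suppose $\nabla$ is a holomorphic connection on $f$. I would compare extension classes through the infinitesimal action. The action of $G$ on $Y$ gives a Lie algebra map $\mathfrak g\to H^0(Y,TY)$. Twisting by $P$ yields a morphism of sheaves $\ad P\to f_*T_{X/S}$, and pushforward along $f$ sends the Atiyah sequence of $P$ into $f_*$ of \eqref{TangentSeqFibration_eq}. Equivalently, $A(X)$ is the image of $A(P)\in H^1(S,\ad P\otimes\Omega_S)$ under the induced map
\[
H^1(S,\ad P\otimes\Omega_S)\to H^1(S,f_*T_{X/S}\otimes\Omega_S)\to H^1(X,f^*\Omega_S\otimes T_{X/S}).
\]
Theorem \ref{thm: main result 1} makes this transparent. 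Choose a $(1,0)$-connection on $P$ and push it to $X$. The result is relatively holomorphic, as in Proposition \ref{prop:exist_fiberwise_holo_connection}, and its curvature $F^{1,1}_D$ is the image of $F^{1,1}_A$.

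The hard part is going backwards: from $A(X)=0$ one must deduce that $A(P)$ vanishes, or at least that $P$ has a holomorphic connection. I would proceed as follows.

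\begin{enumerate}
\item Since $H^0(Y,TY)$ is finite-dimensional, $f_*T_{X/S}$ is the holomorphic vector bundle $P\times_G H^0(Y,TY)$. Using the Leray spectral sequence, the map $H^1(S,f_*T_{X/S}\otimes\Omega_S)\to H^1(X,\cdot)$ is injective. So $A(X)=0$ forces the image of $A(P)$ in $H^1(S,P\times_G H^0(Y,TY)\otimes\Omega_S)$ to vanish.
\item The map $\mathfrak g\hookrightarrow \mathfrak h:=H^0(Y,TY)$ is injective, since $G\le\Aut(Y)$, and $G$-equivariant. Because $G$ is reductive, $\mathfrak g$ is a $G$-direct summand of $\mathfrak h$. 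Hence $\ad P$ is a holomorphic direct summand of $P\times_G\mathfrak h$.
\item The induced map on $H^1$ is therefore injective, so $A(P)=0$ and $P$ admits a holomorphic connection.
\item Azad--Biswas then gives the degree and torsion conditions.
\end{enumerate}

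I expect the main obstacle to be justifying the compatibility of extension classes under $P\mapsto X$ with correct signs, together with the injectivity in Leray. I would handle both at the level of Dolbeault representatives, using Theorem \ref{thm: main result 1}.
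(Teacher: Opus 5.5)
Your proposal is correct and follows essentially the paper's route. The ``if'' direction pushes a holomorphic connection on $P$ down to $f$ (Corollary \ref{cor:affine_map_associated_hol}). Your ``only if'' argument is exactly Corollary \ref{cor:assoc_holconn_imply_princ_holconn}: injectivity of the Leray edge map $\iota$, Theorem \ref{thm:atiyah_associated} identifying $[F^{1,1}_D]_S$ with $-\tau(A(P))$ (the sign is irrelevant for vanishing), and the reductive splitting of Lemma \ref{lem:reductive_finite_split}.

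The only imprecision is crediting the whole principal-bundle criterion to Azad--Biswas. The paper assembles it from four ingredients, and you would need to supply these translations explicitly:
\begin{itemize}
\item Weil's theorem for $\ad P$;
\item Ramanathan's identity $\deg(L_\chi)=\chi_*(c(P))$, so that torsion of $c(P)$ kills all character degrees;
\item Azad--Biswas Th.~3.1;
\item Lemma \ref{lem:flat_char_torsion}, that a flat $P$ has torsion $c(P)$.
\end{itemize}
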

For a proper holomorphic fiber bundle $f$ over a Riemann surface (structure group may be non-reductive), we have a characterization of the existence of a flat structure (representation of $\pi_1(S)$) on $f$ in terms of the vanishing of the curvature class $[F^{1,1}_D]_S\in H^{1,1}(S,f^{\mathrm{hol}}_*T_{X/S})$ (see Corollary \ref{cor:RH_RiemannSurface}), where $f^{\mathrm{hol}}_*T_{X/S}$ is the direct image sheaf. Relatively holomorphic connections continue to play an important role in our next investigation, harmonic metrics on (nonlinear) flat holomorphic bundles. The following definition is the obvious analytic analogue of the one introduced in \cite{She25b}.
\begin{definition}\label{def:holomorphic_Higgs_field}
Let $S$ be a complex manifold. A flat holomorphic bundle over $S$ is a pair $(f,\nabla)$, where $f: X\to S$ is a holomorphic fibration, and $\nabla$ is an integrable holomorphic connection on $f$. A nonlinear Higgs bundle is also a pair $(f,\theta)$, where $\theta$ is a holomorphic Higgs field on $f$: It is an $\mathcal{O}_S$-linear morphism $\theta: TS\to f^{\mathrm{hol}}_*T_{X/S}$ satisfying the integrability condition $[\theta,\theta]=0$. For two flat holomorphic bundles $(f_1: X_1\to S,\nabla_1)$ and $(f_2: X_2\to S,\nabla_2)$ over $S$, we define a morphism $F: (f_1,\nabla_1)\to (f_2,\nabla_2)$ to be a holomorphic map $F: X_1\to X_2$ such that $f_2\comp F=f_1$ and $\D F\comp \nabla_1=F^*\nabla_2$ where $\D F: TX_1\to F^*TX_2$ is the tangent map of $F$. Morphisms between nonlinear Higgs bundles over $S$ are similarly defined.
\end{definition}
When the context is clear, we shall call a flat holomorphic bundle (resp. nonlinear Higgs bundle) simply a flat bundle (resp. Higgs bundle). Let $f: X\to S$ be a holomorphic fibration. Note that when $f$ is non-proper, not every flat holomorphic connection on $f$ gives rise to a flat structure. This motivates us to introduce the following
\begin{definition}
A holomorphic connection on $f$ is said to be complete if the horizontal lift of any smooth path $\gamma: [0,1]\to S$ starting at any point $x_0\in X_{\gamma(0)}$ is defined for all $t\in [0,1]$. A flat holomorphic bundle $(f,\nabla)$ is said to be complete if $\nabla$ is complete.
\end{definition}
We obtain a nonlinear generalization of the classical Riemann-Hilbert correspondence.
\begin{theorem}[Theorem \ref{thm:HolomorphicRH}]\label{RH}
Let $S$ be a connected complex manifold. Then there is an equivalence between the category of representations $\pi_1(S,s_0)\to \Aut(Y)$, where $Y$ is a (non-fixed) complex manifold, and the category of complete flat holomorphic bundles over $S$.
\end{theorem}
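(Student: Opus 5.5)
Both directions of the correspondence can be built by hand, following the classical Riemann--Hilbert dictionary. We then check that they are quasi-inverse.

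\emph{From representations to flat bundles.} Let $\rho:\pi_1(S,s_0)\to\Aut(Y)$ be a representation and $p:\tilde S\to S$ the universal cover. We form
\[
X_\rho:=\tilde S\times_{\pi_1(S,s_0)} Y,
\]
where $\pi_1(S,s_0)$ acts diagonally, by deck transformations on $\tilde S$ and through $\rho$ on $Y$. The deck action on $\tilde S$ is free and properly discontinuous, and it acts by biholomorphisms on the product. Hence $X_\rho$ is a complex manifold and $f_\rho:X_\rho\to S$ is a holomorphic fibre bundle with fibre $Y$. The foliation of $\tilde S\times Y$ by the leaves $\tilde S\times\{y\}$ is invariant, so it descends to a holomorphic foliation transverse to the fibres. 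This gives an integrable holomorphic connection $\nabla_\rho$. Horizontal lifts of paths are lifts of paths to $\tilde S$ with constant $Y$-coordinate, so they exist for all time and $\nabla_\rho$ is complete. A morphism of representations is a pair $(\phi:Y_1\to Y_2,\ \text{holomorphic equivariant})$. It induces $\mathrm{id}\times\phi$, which descends to a morphism of flat bundles. This defines a functor $\Phi$.

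\emph{From complete flat bundles to representations.} This is the main step. Let $(f:X\to S,\nabla)$ be complete and flat, and put $Y:=X_{s_0}$. For a path $\gamma$ from $s_0$, completeness gives a parallel transport map $P_\gamma:X_{\gamma(0)}\to X_{\gamma(1)}$. Transport along the reversed path gives its inverse, so $P_\gamma$ is a bijection. It is holomorphic because the horizontal distribution $\nabla(f^*TS)$ is a holomorphic foliation. Locally, by the holomorphic Frobenius theorem, we have $X|_U\cong U\times Y_U$ with leaves $U\times\{y\}$, and in such charts transport is the identity in the $Y$-coordinate. Flatness gives homotopy invariance. Using these foliated charts and the standard argument for foliated bundles, transport along a homotopy $\gamma_s$ with fixed endpoints is locally constant in $s$. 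Completeness is exactly what lets us apply this uniformly on the compact square $[0,1]^2$: the lift of the whole homotopy exists. So each loop class $[\gamma]$ gives $\rho([\gamma]):=P_\gamma\in\Aut(Y)$, and concatenation yields a homomorphism, up to the usual convention of composing in reverse order, which we use to make $\rho$ a left action.

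We also need the developing map
\[
\tilde S\times Y\to X,\qquad ([\gamma],y)\mapsto P_\gamma(y).
\]
It is holomorphic, again by working in foliated charts. It is a fibrewise bijection and equivariant, so it descends to a biholomorphism $X_\rho\cong X$ compatible with the connections. Morphisms $F$ commute with parallel transport, because $\D F\circ\nabla_1=F^*\nabla_2$ means $F$ maps horizontal lifts to horizontal lifts. Therefore $F|_{X_{1,s_0}}$ is an equivariant holomorphic map, and this defines a functor $\Psi$.

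\emph{Quasi-inverse check.} $\Psi\Phi(\rho)$ recovers $\rho$ on the fibre over $[e]\in\tilde S$, which is identified with $Y$. $\Phi\Psi\cong\mathrm{id}$ is given by the developing map above, and this isomorphism is natural in morphisms. The main obstacle is the homotopy invariance and global existence of transport for a non-proper fibre: completeness has to be used over two-parameter families and not only single paths. I would handle this by covering the image of the homotopy by finitely many foliated charts and lifting cell by cell, using completeness to guarantee that lifts of the edge paths exist.
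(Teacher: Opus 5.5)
Your proposal is correct and follows essentially the same route as the paper. You descend the trivial foliation of $\widetilde S\times Y$ to get a complete flat bundle, recover $\rho$ from complete, holomorphic, homotopy-invariant parallel transport, identify $X$ with $\widetilde S\times_\rho X_{s_0}$ via the developing map, and use that morphisms commute with transport. The differences are cosmetic: you prove holomorphicity and homotopy invariance with foliated charts rather than the paper's ODE and leaf-tangency arguments (Lemmas \ref{lem:hol_parallel_transport} and \ref{lem:Path_Independence_Flat}), and you conclude via natural isomorphisms for both composites rather than checking full faithfulness directly. You should pin down the convention as $\rho([\gamma])=P_\gamma^{-1}$, so that the developing map is invariant under the diagonal deck action.
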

Let $G\leq \Aut(Y)$ be a complex Lie subgroup. A complete flat holomorphic bundle $(f,\nabla)$ over $S$ with typical fiber $Y$ is said to have holonomy contained in $G$ if it corresponds via Theorem \ref{RH} to a representation $\pi_1(S,s_0)\to G\leq \Aut(Y)$. Now we suppose that $Y$ is of K\"ahler type, and consider a group $G$ satisfying the following
\begin{assumption}\label{as: model metric}
There exists a K\"ahler metric $\omega_Y$ on $Y$ such that $K:=\Stab_G(\omega_Y)$ is a compact real form of $G\leq \Aut(Y)$, where $G$ is a connected complex reductive closed Lie subgroup.
\end{assumption}
A typical example to keep in mind is a polarized compact constant scalar curvature K\"ahler (cscK) manifold $(Y,H_Y,\omega_Y)$, where $(Y,H_Y)$ is a polarized compact complex manifold and $\omega_Y\in c_1(H_Y)$ is a cscK metric. Then for $G=\Aut_0(Y,H_Y)$, $K=\Stab_G(\omega_Y)$ satisfies the assumption. See Example \ref{ex: mabuchi's result} for more information.
\begin{definition}
A flat holomorphic bundle over $S$ is of K\"ahler type if it is complete and has holonomy contained in a group $G$ satisfying Assumption \ref{as: model metric}. Furthermore, it is said to be reductive if the corresponding representation $\pi_1(S,s_0)\to G$ is reductive, meaning that the Zariski closure of its image is reductive.
\end{definition}
The collection of flat holomorphic bundles reductive of K\"ahler type over $S$ and the admissible morphisms of Definition \ref{def:admissible_flat_morphism} between them constitutes a subcategory of the category of flat holomorphic bundles over $S$. We obtain one half of a nonlinear Hodge correspondence over $\CC$.
\begin{theorem}[Proposition \ref{prop:faithful_functor}]\label{thm: half correspondence in proper case}
Let $S$ be a connected  compact complex manifold of K\"ahler type. Then there is a faithful functor from the category of flat holomorphic bundles reductive of K\"ahler type over $S$ to the category of nonlinear Higgs bundles over $S$.
\end{theorem}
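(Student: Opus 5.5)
The functor is built in the nonlinear analogue of Corlette's construction: a reductive representation gives a harmonic equivariant map, and the derivative of that map gives the Higgs field. Let $(f,\nabla)$ be flat, reductive of Kähler type. By Theorem \ref{RH} it corresponds to a reductive representation $\rho:\pi_1(S,s_0)\to G\leq \Aut(Y)$. Writing $\widetilde S$ for the universal cover, we have $X\cong \widetilde S\times_\rho Y$, and the connection $\nabla$ is the flat structure induced by the trivial product connection. A choice of "metric" on $f$ means a $\rho$-equivariant map $h:\widetilde S\to G/K$. Through Assumption \ref{as: model metric}, $G/K$ is the space of Kähler metrics $g^*\omega_Y$ in the $G$-orbit of $\omega_Y$, so $h$ is a fiberwise Kähler metric on $f$ modelled on $\omega_Y$. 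Because $\rho$ is reductive and $S$ is compact Kähler, the theorem of Corlette and Donaldson supplies a $\rho$-equivariant harmonic map $h$, unique up to the centralizer of $\rho$. Siu--Sampson/Corlette then upgrade $h$ to a pluriharmonic map, and the pullback of the Maurer--Cartan form then satisfies $[\theta,\theta]=0$ for its $(1,0)$-part.

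Next we build the Higgs bundle. Split $\nabla$ relative to the metric. The tangent of $G/K$ at $h$ is $\mathfrak p=i\mathfrak k$, and $\D h$ gives a $\mathfrak g$-valued one-form. Its $(1,0)$-part $\theta$ is a one-form with values in the bundle of infinitesimal automorphisms $\widetilde S\times_\rho\mathfrak g\subset f_*T_{X/S}$, which are holomorphic vector fields on the fibers. The remaining part is a $K$-valued unitary connection, and we set $D_K=\nabla-\theta-\bar\theta$. Decompose $D_K=D_K^{1,0}+D_K^{0,1}$ and put $\bar\partial_{\mathrm{new}}=D_K^{0,1}+\bar\theta$. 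Pluriharmonicity gives $(\bar\partial_{\mathrm{new}})^2=0$, $\bar\partial_{\mathrm{new}}\theta=0$ and $[\theta,\theta]=0$. This is exactly Simpson's computation, performed inside the Lie algebra $\mathfrak g$ of fiberwise vector fields, where the Lie bracket of vector fields matches the bracket on $\mathfrak g$. Now view $(f,T_{X/S})$ as a complex fiber bundle with structure group $G$ acting holomorphically on $Y$ (\S\ref{sub: complex structure}). The operator $\bar\partial_{\mathrm{new}}$ then defines a new integrable $\bar\partial$-operator, hence a new holomorphic fibration $f':X'\to S$ with the same underlying smooth bundle. Since $\theta$ takes values in holomorphic fiberwise vector fields, it is a holomorphic Higgs field $TS\to f'^{\mathrm{hol}}_*T_{X'/S}$. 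This gives the object $(f',\theta)$. Because harmonic maps are unique up to the centralizer, different choices yield isomorphic Higgs bundles, so the construction is well defined up to isomorphism.

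On morphisms, an admissible morphism $F$ (Definition \ref{def:admissible_flat_morphism}) should intertwine the holonomy groups and be compatible with the model metrics. That compatibility means $F$ commutes with the harmonic-map decompositions: it preserves the $K$-parts and the $\mathfrak p$-parts. Hence $F$ is holomorphic for the new $\bar\partial$-operators and intertwines $\theta_1$ and $\theta_2$. Functoriality is then clear. Faithfulness is immediate because the functor does not change the underlying smooth map $F:X_1\to X_2$.

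The main obstacle is this morphism step. One must choose the harmonic metrics compatibly, so that $F$ maps harmonic metric to harmonic metric, and one must check that $\bar\partial_{\mathrm{new}}$ is preserved. I would try to deduce this from the uniqueness of harmonic maps, using admissibility to guarantee that $F$ is equivariant for a homomorphism $G_1\to G_2$ taking $K_1$ into $K_2$. Composing the harmonic map for $\rho_1$ with the induced totally geodesic map $G_1/K_1\to G_2/K_2$ then gives the harmonic map for $\rho_2$, up to the centralizer.
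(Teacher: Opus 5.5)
Your construction on objects is essentially the paper's principal-bundle route, to which the proof of Proposition \ref{thm:nonlinear_higgs_correspondence} reduces, apart from one slip. With $D_K=\nabla-\theta-\bar\theta$, your $\bar\partial_{\mathrm{new}}=D_K^{0,1}+\bar\theta$ is just $\nabla^{0,1}$, the original complex structure, and for it $\theta$ is in general not holomorphic. The new operator must be $D_K^{0,1}=\nabla^{0,1}-\bar\theta$, i.e.\ $\dbar_{\omega_{X/S}}=\dbar_\nabla-\bar\theta_{\omega_{X/S}}$.

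The genuine gap is the morphism step, and the premise you propose for it is false. Definition \ref{def:admissible_flat_morphism} gives no homomorphism $G_1\to G_2$, let alone one carrying $K_1$ into $K_2$. It only gives a span $G_1\xleftarrow{p_1}H\xrightarrow{p_2}G_2$ with $p_i\circ\sigma=\rho_i$ and $\psi$ $H$-equivariant. In general $\rho_2$ does not factor through $\rho_1$ at all: take $\rho_1$ trivial, $\rho_2$ nontrivial, and $\psi$ constant at a point fixed by $\overline{\rho_2(\Gamma)}^{\mathrm{Zar}}$. Even when it does factor, $K_1$ and $K_2$ come from independently chosen $\omega_{Y_i}$. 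So there is no map $G_1/K_1\to G_2/K_2$ to compose with, and the claim that $F$ ``preserves the $K$-parts and the $\mathfrak p$-parts'' is unsupported.

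The paper instead puts the harmonic map on the auxiliary group:
\begin{enumerate}
\item replace $H$ by $\overline{\sigma(\Gamma)}^{\mathrm{Zar}}$, which is reductive;
\item pass to the finite cover $q_S:S^\circ\to S$ attached to $\sigma^{-1}(H^\circ)$ and take a harmonic $K_H$-reduction there;
\item choose $c_i\in G_i$ with $c_ip_i(K_H)c_i^{-1}\subset K_i$, and push the reduction along the equivariant totally geodesic maps $hK_H\mapsto p_i(h)c_i^{-1}K_i$.
\end{enumerate}
This gives harmonic reductions of both endpoints at once. The differentiated $H$-equivariance of $\psi_c(y)=c_2\psi(c_1^{-1}y)$ then shows that $q_S^*F$ intertwines the Higgs data, and one descends along $q_S$.

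Even then you have only shown that $F$ is a Higgs morphism for \emph{these} harmonic metrics. Since $\mathsf H(F)=F$, you need it for the structures fixed on the objects. ``Well defined up to isomorphism'' or ``up to the centralizer'' is not enough: a centralizer element acts on $X_2$ by a nontrivial flat automorphism, and you need the transported Higgs structure to equal the original one. What is required is that $(\dbar_{\omega_{X/S}},\theta_{\omega_{X/S}})$ on a fixed flat bundle is literally independent of the harmonic metric and of the admissible $\omega_Y$. The paper proves this in Proposition \ref{thm:nonlinear_higgs_correspondence}. For a change of $\omega_Y$ it conjugates $K'=uKu^{-1}$. For two harmonic reductions it passes to a faithful representation $G\hookrightarrow\GL(V)$ and uses Simpson's lemma: the identity, a flat section of the harmonic $\mathrm{Hom}$ bundle, is a Higgs section.

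Finally, ``functoriality is clear'' skips the fact that admissible morphisms compose, which is needed for the source to be a category. The paper takes the Zariski closure $L$ of $(\sigma_{12},\sigma_{23})(\Gamma)$ in the product of the reductive closures. It shows $L$ is reductive, since its unipotent radical dies in both factors. Its two homomorphisms to $G_2$ agree on the dense monodromy, so the composite fiber map is $L$-equivariant.
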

The functor is independent of the choice of a K\"ahler metric $\omega_Y$ on $Y$ satisfying Assumption \ref{as: model metric}. Let us describe our approach to the construction of the functor in Theorem \ref{thm: half correspondence in proper case}. It utilizes a generalized Simpson mechanism of Section \ref{sec:simpson_mechanism}, which seeks harmonic fiberwise K\"ahler metrics on fibrations. Let $(f: X\to S, T_{X/S})$ be a complex fiber bundle (Definition \ref{def: cpx fiber bundle}). A fiberwise K\"ahler metric $\omega_{X/S}$ on it is a smooth section of $\wedge^{1,1}T^*_{X/S}$ such that it restricts to a K\"ahler metric on each fiber. It is said to be modeled on $(Y,\omega_Y)$ if there exists a unitary atlas for $(f,\omega_{X/S})$ (see Lemma \ref{lem:unitary atlas}).

\begin{proposition}[Proposition \ref{prop:Chern_conn}]
Let $(f,T_{X/S})$ be a complex fiber bundle over $S$. Let $\bar \partial_f$ be a $\bar \partial$-operator satisfying the lifting condition (Lemma \ref{lem:integrability_dbar}) and $\omega_{X/S}$ be a fiberwise K\"ahler metric on $(f,T_{X/S})$ modeled on $(Y,\omega_Y)$. Let $\mathfrak{k}\subset \mathfrak{aut}(Y,\omega_Y)$ be a real subspace satisfying $\mathfrak{k}\cap \I\mathfrak{k}=\{0\}$. Choose and then fix a $\mathfrak{k}$-compatible unitary atlas $\{(U_a,\Phi_a)\}_a$ in the sense of Definition \ref{def:k_compatible_unitary_atlas}. Suppose $\bar \partial_f-\bar \partial_a\in A^{0,1}(U_a,\mathfrak{k}^{\CC})$ for each $a$, where $\{\bar \partial_a\}_a$ are the $\bar \partial$-operators attached to the unitary atlas (Proposition \ref{prop: kahler connection}). Then this atlas determines a relatively holomorphic $(1,0)$-connection $\nabla^{1,0}_{\omega_{X/S}}: f^*TS\to TX^{\mathbb{C}}$ satisfying the following two properties:
\begin{itemize}
    \item [(i)]
    $\nabla^{1,0}_{\omega_{X/S}}$ is pure with respect to $\bar \partial_f$. That is, the composition
    $$
f^*\overline{TS}\xrightarrow{\overline{\nabla_{\omega_{X/S}}^{1,0}}} TX^{\mathbb{C}}\to \frac{TX^{\mathbb{C}}}{\overline{T_{X/S}}}
    $$
    is $\bar \partial_f$;
    \item [(ii)]  $\nabla^{1,0}_{\omega_{X/S}}$ preserves $\omega_{X/S}$. That is, for any real horizontal vector field $H_{v}$ with respect to $\nabla^{\RR}_{\omega_{X/S}}$, $\mathcal{L}_{H_v}\omega_{X/S}=0$. Here $\nabla^{\RR}_{\omega_{X/S}}$ is the real connection whose complexification is $\nabla^{1,0}_{\omega_{X/S}}+\overline{\nabla^{1,0}_{\omega_{X/S}}}$.
\end{itemize}
The connection is independent of the choice of an equivalent $\mathfrak{k}$-compatible unitary atlas. We shall call $\nabla^{1,0}_{\omega_{X/S}}$ the Chern connection associated to $(\bar \partial_f, \omega_{X/S})$ relative to this atlas class.
\end{proposition}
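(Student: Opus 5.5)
Locally over each chart $U_a$, the unitary atlas $\Phi_a: f^{-1}(U_a)\to U_a\times Y$ identifies $\omega_{X/S}$ with $\omega_Y$. The product structure gives a flat real connection, and hence a "trivial" $(1,0)$-connection $\nabla^{1,0}_a$ whose horizontal lifts of $\partial/\partial s$ are the coordinate vector fields $\partial_s$ on $U_a\times Y$. With respect to this product structure, the operator $\bar\partial_f$ differs from the product operator $\bar\partial_a$ by $\beta_a:=\bar\partial_f-\bar\partial_a\in A^{0,1}(U_a,\mathfrak{k}^{\CC})$. Here a vector field $\xi\in\mathfrak{aut}(Y,\omega_Y)$ is viewed as a holomorphic vector field on $Y$, and the horizontal lift of $\partial_{\bar s}$ is $\partial_{\bar s}+\overline{\beta_a(\partial_{\bar s})}$ modulo $\overline{T_{X/S}}$. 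Equivalently, we split $\beta_a=\beta_a'+\I\beta_a''$ with $\beta'_a,\beta''_a$ real $\mathfrak{k}$-valued $(0,1)$-forms; this decomposition is unique because $\mathfrak{k}\cap\I\mathfrak{k}=0$. The plan is to define locally
\[
\nabla^{1,0}_{\omega_{X/S}}(\partial_s):=\partial_s-\overline{\beta_a}^{\,\mathfrak{k}}(\partial_s),
\]
where $\overline{\beta_a}^{\,\mathfrak{k}}$ denotes the $(1,0)$-form obtained from $\beta_a$ by conjugating the form part while using the $\mathfrak{k}$-real structure on coefficients. This mimics $A=A_a-\beta_a^{*}$ for the Chern connection of a hermitian bundle in a unitary frame. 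The $\mathfrak{k}$-valued vector fields are then realized as real vector fields on $Y$ preserving $\omega_Y$, and their $(1,0)$-parts are used to form the lift. Concretely, the real connection $\nabla^{\RR}$ has local connection form $\beta'_a+\overline{\beta'_a}$-type combination, valued in $\mathfrak{k}\subset\mathfrak{aut}(Y,\omega_Y)$, chosen so that the $(0,1)$-part of its complexification reproduces $\bar\partial_f$.

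First I will check property (i) locally. It reduces to a linear-algebra identity: a real $\mathfrak{k}$-valued $1$-form $\alpha$ on $U_a$, acting by real vector fields on $Y$, has complexified horizontal lift whose $(0,1)$-component modulo $\overline{T_{X/S}}$ gives the operator $\bar\partial_a+\alpha^{0,1}\cdot$. Matching this with $\beta_a$ determines $\alpha$ uniquely in $\mathfrak{k}^{\CC}$-terms as $\alpha^{0,1}=\beta_a$, $\alpha^{1,0}=\overline{\beta_a}$, where the bar is the $\mathfrak{k}$-conjugation. Property (ii) is then automatic: the horizontal fields are $\partial_{s}$ plus vector fields in $\mathfrak{k}\subset\mathfrak{aut}(Y,\omega_Y)$, and both kinds preserve the pulled-back $\omega_Y$. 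Relative holomorphicity will follow because $\mathfrak{k}^{\CC}$ acts by holomorphic vector fields on $Y$, whose $(1,0)$-parts commute with $\overline{T_{Y}}$ modulo $\overline{T_Y}$.

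Next I will glue. On $U_a\cap U_b$ the transition $g_{ab}$ takes values in the structure group of the $\mathfrak{k}$-compatible atlas (Definition \ref{def:k_compatible_unitary_atlas}). I expect this to mean that $g_{ab}^{-1}\D g_{ab}$ is real and $\mathfrak{k}$-valued, and that $\operatorname{Ad}g_{ab}$ preserves $\mathfrak{k}$. Under this hypothesis, $\beta_b=\operatorname{Ad}(g_{ab}^{-1})\beta_a+(g_{ab}^{-1}\bar\partial g_{ab})$, and the real connection forms transform as connections. The local definitions therefore agree, since both are the unique real $\mathfrak{k}$-valued connection with the prescribed $(0,1)$-part. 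Independence of the choice of an equivalent atlas follows from the same uniqueness argument applied to the union of the two atlases.

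I expect the main obstacle to be this uniqueness/compatibility step. Conjugation in $\mathfrak{k}^{\CC}$ is only well defined because $\mathfrak{k}\cap\I\mathfrak{k}=0$, but $\mathfrak{k}$ need not be a Lie subalgebra. I must therefore verify precisely that the $\mathfrak{k}$-compatibility condition makes the transition terms land in $\mathfrak{k}$ and intertwine the conjugations. If it fails, I would instead characterize $\nabla^{1,0}_{\omega_{X/S}}$ intrinsically as the unique relatively holomorphic $(1,0)$-connection satisfying (i) and (ii) whose local deviation from the product connections lies in $\mathfrak{k}^{\CC}$. Gluing is then automatic from uniqueness.
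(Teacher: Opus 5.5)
Your plan is essentially the paper's proof. Locally you take the unique $\mathfrak k$-real connection form whose $(0,1)$-part is $B_a=\dbar_f-\dbar_a$, i.e.\ the almost connection $\partial_a+\phi B_a$ with $(\phi B)(u)=\phi_{\mathfrak k}(B(\bar u))$, so the real lifts have vertical parts $B_a(v^{0,1})+\phi_{\mathfrak k}(B_a(v^{0,1}))\in\mathfrak k$; gluing and atlas-independence then come from exactly the two conditions you anticipated, $(g_{ab})_*\mathfrak k=\mathfrak k$ and $\mathsf g_{ab,s}(v)\in\mathfrak k$ for real $v$ (Definition \ref{def:k_compatible_unitary_atlas}), which let $\phi$ pass through the transformation law $B_a=(g_{ab})_*B_b+\mathsf g_{ab}^{0,1}$.

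The sign in your first display must be $+$, as your own matching $\alpha^{1,0}=\overline{\beta_a}$ shows (in the unitary case $\beta^*=-\phi\beta$, so $A_a-\beta^*=A_a+\phi\beta$); with a minus sign the vertical parts would lie in $\I\mathfrak k$ and (ii) would fail. Your fallback characterization is not needed, and on its own it would not glue without these compatibility conditions (cf.\ Example \ref{ex:Chern_conn_atlas_dependence}).
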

Clearly, $\mathfrak{k}=\textrm{Lie}(K)$ in Assumption \ref{as: model metric} satisfies the conditions $\mathfrak{k}\subset\mathfrak{aut}(Y,\omega_Y)$ and $\mathfrak{k}\cap\I\mathfrak{k}=\{0\}$. $\nabla^{1,0}_{\omega_{X/S}}$ is said to be the Chern connection, because it generalizes the classical Chern connection attached to a Hermitian vector bundle equipped with an ordinary $\bar \partial$-operator. This notion is crucial in the Simpson mechanism: Let $(f,\nabla)$ be a flat holomorphic bundle. Let $\bar \partial_f$ be the canonical $\bar{\partial}$-operator attached to the holomorphic fibration $f$. Choose and then fix a K\"ahler metric $\omega_Y$. Take a fiberwise K\"ahler metric $\omega_{X/S}$ on $f$ which is modeled on $(Y,\omega_Y)$. Let $\nabla^{1,0}_{\omega_{X/S}}$ be the Chern connection associated to $(\bar \partial_f, \omega_{X/S})$ and a $\mathfrak{k}$-compatible unitary atlas. Then we obtain an almost Higgs field on the complex fiber bundle $(f,T_{X/S})$ by the formula:
$$
\theta_{\omega_{X/S}}=\tfrac{1}{2}(\partial-\partial^{\mathrm{Ch}}_{\omega_{X/S}}),$$
where $\partial$ resp. $\partial^{\mathrm{Ch}}_{\omega_{X/S}}$ are the almost connections (Definition \ref{def:almost_connection}) associated to $\nabla$ resp. $\nabla^{1,0}_{\omega_{X/S}}$, as well as an almost complex structure on $(f,T_{X/S})$ by the formula:
$$
\bar \partial_{\omega_{X/S}}=\bar \partial_f-\bar{\theta}_{\omega_{X/S}},
$$
where $\bar{\theta}_{\omega_{X/S}}$ is the complex conjugate of $\theta_{\omega_{X/S}}$ (Definition \ref{def:comp_conj_higgs}). They form an almost Higgs pair $(\bar \partial_{\omega_{X/S}},\theta_{\omega_{X/S}})$ on $(f,T_{X/S})$. There is a well-defined tensor $G_{D''}$ attached to $D''=\bar{\partial}_{\omega_{X/S}}+\theta_{\omega_{X/S}}$, which is called the pseudo-curvature of the almost Higgs pair. It vanishes if and only if the almost Higgs pair is a genuine Higgs pair. That is, $\bar \partial_{\omega_{X/S}}$ is an integrable complex structure on the complex fiber bundle and hence gives rise to a potentially different holomorphic fibration $f': X'\to S$ with the same underlying differentiable fiber bundle structure as $f: X\to S$ (the identity map on the underlying smooth total space is biholomorphic if and only if $\theta_{\omega_{X/S}}=0$), and $\theta_{\omega_{X/S}}$ is a holomorphic nonlinear Higgs field on $f'$. This process generalizes the classical one for flat vector bundles in \cite{Si1}.
\begin{definition}
Let $(f,\nabla)$ be a flat holomorphic bundle. A fiberwise K\"ahler metric $\omega_{X/S}$ modeled on $(Y,\omega_Y)$ is said to be harmonic if the resulting pseudo-curvature $G_{D''}$ according to the previous process vanishes.
\end{definition}
The existence theorem of Donaldson \cite{donaldson87twisted} and Corlette \cite{corlette88} allows us to obtain the following
\begin{proposition} [Proposition \ref{thm:nonlinear_higgs_correspondence}]\label{Intro_Thm2}
Let $S$ be a compact complex manifold of K\"ahler type and $(f,\nabla)$ be a flat holomorphic bundle reductive of K\"ahler type over $S$. Then there exists a harmonic fiberwise K\"ahler metric on $(f,\nabla)$.
\end{proposition}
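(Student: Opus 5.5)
The plan is to reduce to the classical Donaldson--Corlette theorem for the representation $\rho:\pi_1(S,s_0)\to G$ and then transport the resulting harmonic reduction to the fibration. By the nonlinear Riemann--Hilbert correspondence (Theorem \ref{RH}), $(f,\nabla)$ is isomorphic to the flat bundle $X=\widetilde S\times_{\rho} Y\to S$, where $\widetilde S$ is the universal cover and $\rho$ takes values in $G\le \Aut(Y)$. Equivalently, $X=P\times_G Y$ for the flat principal $G$-bundle $P=\widetilde S\times_\rho G$. Since $\rho$ is reductive and $S$ is compact Kähler, Corlette's theorem gives a $\rho$-equivariant harmonic map $u:\widetilde S\to G/K$, i.e.\ a harmonic reduction $P_K\subset P$ to the compact real form $K=\Stab_G(\omega_Y)$ from Assumption \ref{as: model metric}. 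Donaldson's theorem covers the case $\dim S=1$.

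Next I build the fiberwise metric. Choose a smooth local section $g_a:U_a\to G$ of $P_K$ over a trivializing cover, i.e.\ $u=g_a K$ in the flat frame. Define $\Phi_a:=g_a\cdot(\text{flat trivialization})$ and set $\omega_{X/S}|_{f^{-1}(U_a)}:=\Phi_a^*\omega_Y$. On overlaps the transition functions lie in $K$, which preserves $\omega_Y$, so the definition glues. This yields a fiberwise Kähler metric modeled on $(Y,\omega_Y)$, with $\{(U_a,\Phi_a)\}$ a $\mathfrak{k}$-compatible unitary atlas. The $\bar\partial$-operator $\bar\partial_f$ differs from $\bar\partial_a$ by the $(0,1)$-part of $g_a^{-1}dg_a$, which lies in $\mathfrak g=\mathfrak k^{\CC}$, so the hypothesis of Proposition \ref{prop:Chern_conn} holds and the Chern connection $\nabla^{1,0}_{\omega_{X/S}}$ is defined.

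The key step is a computation. In the atlas, the flat connection $\nabla$ corresponds to the $\mathfrak g$-valued form $A=g_a^{-1}dg_a$, acting on the fiber $Y$ through the infinitesimal action $\mathfrak g\to H^0(Y,TY)$. Decompose $A=A_{\mathfrak k}+A_{\I\mathfrak k}$. I would check that the Chern connection corresponds to the unitary part plus the correction forced by purity, and hence that $\theta_{\omega_{X/S}}$ is the $(1,0)$-part of $A_{\I\mathfrak k}$, that is, $\partial u$ viewed in $\ad P$ and pushed to vector fields on fibers. Then $\bar\partial_{\omega_{X/S}}+\theta_{\omega_{X/S}}$ is the image, under the Lie algebra homomorphism $\mathfrak g\to \mathfrak{aut}(Y)$, of Simpson's operator $D''$ on $\ad P$ attached to the metric $u$. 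The pseudo-curvature $G_{D''}$ should then be the image of the classical quantity $(D'')^2$, which is built from brackets $[\cdot,\cdot]$ and derivatives that are compatible with a Lie algebra homomorphism.

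Finally, harmonicity of $u$ together with Kählerness of $S$ gives $(D'')^2=0$ (Simpson/Corlette: pluriharmonicity, $\bar\partial\theta=0$, $[\theta,\theta]=0$, $\bar\partial^2=0$). Hence $G_{D''}=0$, and $\omega_{X/S}$ is harmonic. I expect the main obstacle to be the identification of the nonlinear pseudo-curvature and Chern connection with the linear ones via the action homomorphism. This requires unwinding Proposition \ref{prop:Chern_conn} in a $\mathfrak k$-compatible atlas and carefully tracking sign and bracket conventions: the action map is an anti-homomorphism for vector fields, so $\mathfrak k$ versus $\I\mathfrak k$ and the conjugation of Definition \ref{def:comp_conj_higgs} must be matched with the Cartan involution of $\mathfrak g$ with respect to $\mathfrak k$. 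To settle this, I would first verify it on the linear case $Y=\CC^n$, $G=\GL_n$.
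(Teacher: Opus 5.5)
Your proposal follows essentially the same route as the paper. You realize $X=P\times_G Y$ with $P=\widetilde S\times_\rho G$, take the Donaldson--Corlette harmonic $K$-reduction $P_K$, and define $\omega_{X/S}$ and the Chern connection through the unitary atlas of local sections of $P_K$. You then identify $(\dbar_{\omega_{X/S}},\theta_{\omega_{X/S}})$ with the $\tau$-image of Simpson's pair $(\dbar_{A_K},\psi^{1,0})$, so that $G_{D''}=\tau(G_{(\dbar_\pi,\theta)})=0$. The sign and bracket bookkeeping you flag as the main obstacle is what the paper packages into Lemmas \ref{lem:assoc_Chern_conn}, \ref{lem:thetabar_assoc} and \ref{lem:pseudo_curvature_correspondence}; the $\exp(-t\xi)$ convention in \eqref{eq:tau0} makes $\tau_0$ a Lie algebra homomorphism.
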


\begin{remark}\label{rmk: princiapl bundle approach}
We remind the reader of an alternative approach to the construction of the functor which is more classical. It goes through principal bundles: Let $\rho: \pi_1(S,s_0)\to G\leq \Aut_0(Y)$ be the representation associated to a given flat holomorphic bundle reductive of K\"ahler type. Let $\pi: P=\widetilde S\times_{\rho}G\to S$ be the associated principal $G$-bundle together with the canonical $G$-equivariant flat holomorphic connection $\nabla_P$ on $\pi$. Since $\rho$ is reductive, it follows again from \cite{donaldson87twisted, corlette88} (see also \cite[\S2]{simpson97Hodge}) that there exists an equivariant harmonic map to $G/K$ which is pluriharmonic because of the K\"ahler condition. Thus we obtain a harmonic metric ($K$-reduction) on $\pi$ which yields a principal Higgs bundle structure. From that we may associate a nonlinear Higgs bundle with typical fiber $Y$.
\end{remark}

A variation of nonabelian Hodge structure (or relative de Rham moduli space) admits a natural flat connection, the so-called nonabelian Gauss-Manin connection. Although it is already an issue whether it is reductive of K\"ahler type in general, we know that the complex fiber bundle underlying the associated graded Higgs bundle (the corresponding relative Dolbeault moduli space equipped with the nonabelian Kodaira-Spencer map, see \cite{fu2025}) cannot be isomorphic to the one underlying the variation of nonabelian Hodge structure. Therefore, the previous construction of Higgs bundles does not work for variations of nonabelian Hodge structure, so we have to extend the Simpson mechanism further. To this end, we introduce the twisting maps and the twisted Simpson mechanism (see Section \ref{subsec:twisted_mech}).

To relate the geometries of two different integrable complex structures $J_{X/S}^A$ and $J_{X/S}^B$ on the same real relative tangent bundle $T_{X/S}^\RR$, we define a twisting map $\beta_{X/S}$ as a smooth real vector bundle isomorphism $\beta_{X/S}^\RR: T_{X/S}^\RR \xrightarrow{\cong} T_{X/S}^\RR$ satisfying the intertwining condition $\beta_{X/S}^\RR \comp J_{X/S}^A = J_{X/S}^B \comp \beta_{X/S}^\RR$. Equivalently, it can be viewed as a complex vector bundle isomorphism $\beta_{X/S}: T_{X/S}^A \xrightarrow{\cong} T_{X/S}^B$.

Utilizing this twisting map, the twisted Simpson mechanism relates a flat structure on $X_A = (f, J_{X/S}^A)$ and a Higgs bundle structure on $X_B = (f, J_{X/S}^B)$ as follows. Take a flat connection on $X_A$ (which determines an almost connection $\partial_A$ and a $\dbar$-operator $\dbar_A$) and a fiberwise Riemannian metric $g_{X/S}$ that is K\"ahler with respect to both $J_{X/S}^A$ and $J_{X/S}^B$. Assume that $\beta_{X/S}^{\RR}$ is an isometry for $g_{X/S}$ and that $|J_{X/S}^A-J_{X/S}^B|_{g_{X/S}}$ is a positive smooth function. We define an almost Higgs field $\theta$ on $X_B$ and then a $\dbar$-operator $\dbar_B$ on $X_B$ by
\[
\theta := \tfrac{1}{2}|J_{X/S}^A-J_{X/S}^B|_{g_{X/S}}^{-1} \beta_{X/S} (\partial_A - \partial_{\omega_{X/S}^A}),\quad \dbar_B := \beta_{X/S}(\dbar_A - \dbar_{A,0}) + \dbar_{B,0} - \bar{\theta}_J.
\]
Here $\partial_{\omega_{X/S}^A}$ is a symplectic almost connection associated to $\omega_{X/S}^A$ and $\dbar_A$, $\dbar_{A,0}$ and $\dbar_{B,0}$ are reference $\dbar$-operators on $X_A$ and $X_B$, and $\bar{\theta}_J(\bar{v}):=\mathrm{pr}_{T_{X/S}^B}(\I J_{X/S}^A\widebar{\theta(v)}_{J_{X/S}^B})$. We say $g_{X/S}$ is a $\beta$-twisted harmonic metric if the resulting pair $(\dbar_B, \theta)$ defines a Higgs bundle structure, with the auxiliary choices in the mechanism fixed. Conversely, starting from a Higgs bundle structure $(\dbar_B,\theta)$ on $X_B$, we may construct $\dbar_A$ and then $\partial_A$ on $X_A$ by
\[	\dbar_A := \beta_{X/S}^{-1}(\dbar_B-\dbar_{B,0} + \bar{\theta}_J)+\dbar_{A,0},\quad \partial_A:=\partial_{\omega_{X/S}^A}+2|J_{X/S}^A-J_{X/S}^B|_{g_{X/S}}\beta_{X/S}^{-1}(\theta). \]

In the case of a variation of nonabelian Hodge structure, we find that the fiberwise Hitchin metric is a $(\id+J_{X/S}^AJ_{X/S}^B)/\sqrt{2}$-twisted harmonic metric. Let $S$ be a smooth complex variety and $f: X\to S$ be a smooth projective family of algebraic curves of genus $\geq 2$. For a connected complex reductive group $G$, let $(f_{\dR}: M_{\dR}(X/S,G)\to S, \nabla_{\mathrm{GM}}, F_{\mathrm{Hod}})$ be the variation of nonabelian Hodge structure attached to $f$. Let $(f_{\Dol}: M_{\Dol}(X/S,G)\to S,\theta_{\mathrm{KS}})$ be the associated graded Higgs bundle to the variation.
\begin{theorem}[Theorem \ref{thm: vnhs is harmonic bundle}]
Suppose $G$ is either $\CC^*$ or semisimple. The flat bundle $(f_{\dR}: M_{\dR}(X/S,G)\to S, \nabla_{\mathrm{GM}})$ and the Higgs bundle $(f_{\Dol}: M_{\Dol}(X/S,G)\to S,\theta_{\mathrm{KS}})$ can be reconstructed from each other via the twisting map $(\id+J_{X/S}^AJ_{X/S}^B)/\sqrt{2}$ and the fiberwise Hitchin metric through the twisted Simpson mechanism. Here, in the semisimple case, we restrict to the smooth loci of the moduli spaces.
\end{theorem}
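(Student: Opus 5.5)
The strategy is to reduce everything to the fiberwise nonabelian Hodge correspondence on a single curve $C=X_s$, together with the hyperkähler geometry of the Hitchin moduli space. We take $X_A=M_{\dR}(X/S,G)$ and $X_B=M_{\Dol}(X/S,G)$. Via the fiberwise homeomorphism of nonabelian Hodge theory (Hitchin, Donaldson, Corlette, Simpson), both are realized on the same underlying smooth bundle $M_{\mathrm{Hit}}(X/S,G)\to S$, restricted to smooth loci in the semisimple case. The real relative tangent bundle then carries the two complex structures $J^A_{X/S}=J$ (de Rham) and $J^B_{X/S}=I$ (Dolbeault), and $g_{X/S}$ is the fiberwise Hitchin hyperkähler metric.

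First we verify the algebraic hypotheses of the twisted mechanism.
\begin{itemize}
\item Since $IJ=K$ is a third complex structure anticommuting with $I$ and $J$, we have $(\id+JI)^2=\id+2JI+(JI)^2=2JI$ and $(\id+JI)^\top(\id+JI)=2\,\id$. Hence $\beta=(\id+JI)/\sqrt2$ is a $g$-isometry.
\item The intertwining identity $\beta J=I\beta$ holds: $(\id+JI)J=J+JIJ=J+I$ and $I(\id+JI)=I-I J I\cdot(-1)\dots$. The precise sign will be checked using $JIJ=I$.
\item $|J-I|^2_g=\tr((J-I)^\top(J-I))=2\dim_\RR+\dots$ is a positive constant, since $\tr(IJ)=0$. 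So the normalizing function is constant.
\end{itemize}

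The core step is to identify the Gauss--Manin connection with the output of the mechanism. Using Theorem \ref{RH}, $\nabla_{\mathrm{GM}}$ is the isomonodromy connection: moving $s$ keeps the representation fixed in $M_B$. Its almost connection $\partial_A$ differs from the symplectic almost connection $\partial_{\omega^A}$ (for $\omega_J$, which is preserved by Chern-type connections as in Proposition \ref{prop:Chern_conn}) by a vector field valued $(1,0)$-form. We compute this vector field by differentiating the harmonic metric equation in $s$: a variation of complex structure $\mu\in H^1(C,T_C)$ acting on $(E,\bar\partial_E,\Phi)$ under the isomonodromic flow yields the infinitesimal deformation $\mu\lrcorner\Phi$ on the Dolbeault side. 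Pairing $\mu$ with $\Phi$ and $\Phi^{*}$ through the holomorphic symplectic form gives exactly $\theta_{\mathrm{KS}}(\mu)=[\mu\Phi]\in H^1$, the nonabelian Kodaira--Spencer map of \cite{fu2025}. The aim is to show
\[
\partial_A-\partial_{\omega^A}=2|J-I|_g\,\beta^{-1}(\theta_{\mathrm{KS}}),
\]
together with the analogous identity for $\bar\partial$-operators, $\bar\partial_B=\beta(\bar\partial_A-\bar\partial_{A,0})+\bar\partial_{B,0}-\bar\theta_J$. Both reduce to the decomposition of the isomonodromic vector field into its $I$-holomorphic and $I$-antiholomorphic parts. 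The $\bar\theta_J$ term is the conjugate piece $\bar\mu\lrcorner\Phi^*$ appearing in the flat connection $\bar\partial_E+\partial_E+\Phi+\Phi^*$.

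In the $\CC^*$ case everything is explicit. Here $M_{\Dol}=T^*\mathrm{Jac}\times\dots$, $M_{\dR}$ is an affine torsor, the Hitchin metric is flat, and the twisting map is a linear map on $H^1(C,\RR)\otimes\CC$ relating Hodge decompositions. The identity then becomes the classical computation of the Gauss--Manin connection on $H^1$ relative to the Hodge filtration, with Griffiths transversality supplying $\theta_{\mathrm{KS}}$. This should be done first as a model. In the semisimple case we work on the stable smooth locus, where the harmonic metric depends smoothly on $s$ by the implicit function theorem.

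The main obstacle is keeping track of the auxiliary reference operators $\bar\partial_{A,0},\bar\partial_{B,0}$ and the constants, so that the formula holds on the nose and not merely up to an exact term. I expect to choose $\bar\partial_{A,0}$ and $\bar\partial_{B,0}$ as the canonical operators from a common $C^\infty$ trivialization given by the nonabelian Hodge homeomorphism, and then to check the identity pointwise in $T_{X/S}$.
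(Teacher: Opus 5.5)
The gap is in the $\partial$-half of the mechanism. The symplectic almost connection $\partial_{\omega^A_{X/S}}$ is an auxiliary choice, and the identity $\partial_A-\partial_{\omega^A_{X/S}}=2|J^A_{X/S}-J^B_{X/S}|_{g_{X/S}}\,\beta^{-1}(\theta)$ depends on which one you take. Two symplectic connections pure for $\dbar_A$ differ by forms valued in vertical fields $V$ with $V$ and $J^A_{X/S}V$ both symplectic (locally, gradients of fiberwise pluriharmonic functions). On these non-compact fibers that is a huge space.

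Your candidate, a Chern-type connection as in Proposition \ref{prop:Chern_conn}, is not available. It needs a unitary atlas, i.e.\ fibers $(X_s,J^A_{X/S},\omega^A_s)$ locally holomorphically isometric to one model. This fails because the Hitchin metric on the fixed complex manifold $(\bfX(G),J^A)$ moves with the curve. Already in rank one, nearby fibers $((\CC^*)^{2k},J^A,g_\Pi)$ are not holomorphically isometric, and the connection \eqref{eq:symp_A} that actually works is not even relatively holomorphic on $X_A$.

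The missing idea is to take the symplectic connection of the relatively K\"ahler form $\omega=\I\partial_{X_A}\dbar_{X_A}\phi$, where $\phi$ is a global fiberwise K\"ahler potential of $\omega^A_{X/S}$. In rank one this is the Hodge norm $(\Pi_y)_{\alpha\beta}p^\alpha\bar p^\beta$; in the semisimple case it is Hitchin's energy $\mathrm E=\|\Phi\|^2$. In the isomonodromic trivialization, $\mathcal V=(\partial_A-\partial_{\omega^A_{X/S}})([\mu]^{1,0})$ is then characterized by $\omega^A_{X/S}(\mathcal V,v)=\I\dbar_{J^A}\bigl(\tfrac12(\D\mathrm E(\ell_{[\mu]})-\I\,\D\mathrm E(\ell_{[\I\mu]}))\bigr)(v)$. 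So the real content is two steps:
\begin{enumerate}
\item the variation formula $\D\mathrm E(\ell_{[\mu]})=4\Re\varphi_\mu$, with $\varphi_\mu$ the $\Omega^B_{X/S}$-Hamiltonian of $\Theta([\mu])$ (Lemmas \ref{lem:Theta_Hamiltonian} and \ref{lem:energy_var});
\item the pointwise hyperk\"ahler Lemma \ref{lem:theta_grad_energy}, which converts this gradient into $2(\id-J^C_{X/S})\Theta([\mu])$.
\end{enumerate}
Differentiating Hitchin's equations in $s$ describes the isomonodromic lift in Dolbeault terms. Without a choice of $\omega$, it says nothing about $\partial_{\omega^A_{X/S}}$.

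Your $\dbar$-half is sound in outline, provided the ``common trivialization'' is the isomonodromic one. Then $\dbar_{A,0}=\dbar_A$, $\dbar_{B,0}$ is induced by the $(0,1)$-part of $\nabla_{\mathrm{GM}}$, and the identity reduces to $\dbar_{B,0}-\dbar_B=\bar\theta_J$. The left side is read off from the $\mathrm I$-antilinear part $\tfrac12(\ell_{[\mu]}+\mathrm I\ell_{[\I\mu]})$ of the isomonodromic lift. Your heuristic $\bar\mu\lrcorner\Phi^*$ still has to be made precise here; the paper does this with semiharmonic representatives, a gauge-fixing lemma from \cite{CTW25}, and Lemma \ref{lem:barJ_real}.

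Your checks that $\beta$ is an intertwining isometry are fine. However, the normalizing factor is the pointwise operator norm. Since $(J^A-J^B)^\top(J^A-J^B)=2\,\id$ for anticommuting structures, $|J^A-J^B|_{g}=\sqrt2$ and $2\sqrt2\,\beta^{-1}=2(\id-J^A_{X/S}J^B_{X/S})$. Your trace norm $\sqrt{2\dim_\RR}$ would make the target identity false by a factor $\sqrt{\dim_\RR}$.
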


\noindent{\bf Acknowledgements.} The authors would like to thank Tianzhi Hu who drew our attention to the recent work \cite{CTW25}, Junchao Shentu for pointing out an error in the original formulation of Theorem \ref{thm: main result 1}, and Zhaofeng Yu for many valuable discussions about the Simpson mechanism. This work is supported by the Chinese Academy of Sciences Project for Young Scientists in Basic Research (Grant No. YSBR-032). The first-named author is supported by the Shuimu Tsinghua Scholar Program.

\section{Preliminaries}
\subsection{$\dbar$-operators and almost complex structures}\label{sub: complex structure}
Let $f:X\to S$ be a smooth fiber bundle over a complex manifold $S$. Associated to $f$ is the short exact sequence of complex vector bundles over $X$:
\begin{equation}\label{ComplexTangentSeq_General_eq}
0\to T_{X/S}^{\CC}\to TX^{\CC}\xrightarrow{\D f^\CC} f^*TS^{\CC}\to 0,
\end{equation}
where $T_{X/S}^{\CC}$ is the complexification of the real relative tangent bundle $T_{X/S}^{\RR}$ and similarly for the other bundles. An integrable complex structure on $T_{X/S}^{\RR}$ is a choice of a complex subbundle $T_{X/S}\subset T_{X/S}^{\CC}$ satisfying $T_{X/S}^{\CC}=T_{X/S}\oplus \widebar{T_{X/S}}$ and $[T_{X/S},T_{X/S}]\subset T_{X/S}$ (by abuse of notation, $T_{X/S}$ denotes $C^\infty(X,T_{X/S})$).

Alternatively, this structure is defined by a fiberwise endomorphism $J_{X/S} \in C^{\infty}(X, \End(T_{X/S}^\RR))$ satisfying $J_{X/S}^2=-\id$ and the integrability condition that the relative Nijenhuis tensor vanishes. The relationship is given by identifying $T_{X/S}$ with the $+\I$-eigenbundle of $J_{X/S}$ acting on $T_{X/S}^\CC$. Conversely, given the splitting $T_{X/S}^{\CC}=T_{X/S}\oplus \widebar{T_{X/S}}$, the operator $J_{X/S}$ is recovered by defining it to be multiplication by $\I$ on $T_{X/S}$ and by $-\I$ on $\widebar{T_{X/S}}$.

\begin{definition}\label{def: cpx fiber bundle}
A \emph{complex fiber bundle} over $S$ is a pair $(f,T_{X/S})$, where $f: X\to S$ is a smooth fiber bundle over $S$ and $T_{X/S}$ is an integrable complex structure on $T_{X/S}^{\RR}$. We define $f_* T_{X/S}$ to be the sheaf on $S$ whose sections over an open subset $U \subset S$ are the smooth sections of $T_{X/S}|_{f^{-1}(U)}$ that are fiberwise holomorphic.
\end{definition}

We may regard a complex fiber bundle as a differentiable family of complex manifolds.  It naturally generalizes the notion of a complex vector bundle.
\begin{lemma}\label{lem:isotrivial_cx_fib_bun}
Let $f: X \to S$ be a smooth fiber bundle with typical fiber $Y$, where $Y$ is a complex manifold.
Suppose there exists an open covering $\mathcal{U} = \{U_a\}$ of $S$ and smooth trivializations $\Phi_a: f^{-1}(U_a) \to U_a \times Y$ such that the transition functions $g_{ab}(s):=\Phi_{a,s} \comp \Phi_{b,s}^{-1}$
take values in $\mathrm{Aut}(Y)$ (the group of biholomorphisms of $Y$) for all $s\in U_{ab}:=U_a\cap U_b$. Then, $X$ admits a unique global fiberwise complex structure $J_{X/S}$ such that each fiber $X_s$ is a complex manifold biholomorphic to $Y$ via the maps $\Phi_{a,s}$. We call such an atlas a \emph{compatible atlas}.
\end{lemma}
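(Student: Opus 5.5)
The plan is to define $J_{X/S}$ locally by pulling back the complex structure $J_Y$ of $Y$ through the trivializations, and then to check that the local definitions agree on overlaps because the transition functions are biholomorphic. Concretely, on $f^{-1}(U_a)$ the differential of $\Phi_a$ restricts to an isomorphism of real vector bundles $\D\Phi_a|_{T^{\RR}_{X/S}}: T^{\RR}_{X/S}|_{f^{-1}(U_a)}\xrightarrow{\cong} \mathrm{pr}_Y^*TY^{\RR}$. This holds because $\Phi_a$ commutes with the projections to $U_a$, so vertical vectors map to vectors tangent to $\{s\}\times Y$. Fiberwise, at $x\in X_s$, this isomorphism is just $\D\Phi_{a,s}$. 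We then set
\[
J^a_{X/S}:=(\D\Phi_{a})^{-1}\comp \mathrm{pr}_Y^*J_Y\comp \D\Phi_{a}\quad\text{on } T^{\RR}_{X/S}|_{f^{-1}(U_a)}.
\]
This is a smooth section of $\End(T^{\RR}_{X/S})$, since $\Phi_a$ is a diffeomorphism and $J_Y$ is smooth. It satisfies $(J^a_{X/S})^2=-\id$, and on each fiber $X_s$ it is exactly the complex structure that makes $\Phi_{a,s}:X_s\to Y$ a biholomorphism.

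For the gluing, take $x\in X_s$ with $s\in U_{ab}$. On the fiber we have $\D\Phi_{a,s}=\D g_{ab}(s)\comp \D\Phi_{b,s}$. Since $g_{ab}(s)\in\Aut(Y)$, its differential commutes with $J_Y$, and therefore $J^a_{X/S}=J^b_{X/S}$ at $x$. The local tensors thus patch to a global smooth $J_{X/S}$. Integrability holds because the relative Nijenhuis tensor can be computed fiberwise, using brackets of vertical vector fields, which restrict to brackets on the fiber. On $X_s\cap f^{-1}(U_a)$ it is the pullback under $\Phi_{a,s}$ of the Nijenhuis tensor of the integrable $J_Y$, so it vanishes. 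Equivalently, the $+\I$-eigenbundle $T_{X/S}$ is locally the pullback of $T^{1,0}Y$, and it is closed under bracket of vertical fields. Hence $(f,T_{X/S})$ is a complex fiber bundle in the sense of Definition \ref{def: cpx fiber bundle}.

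Uniqueness follows from the requirement that each $\Phi_{a,s}$ be biholomorphic onto $Y$. This forces $J_{X/S}|_{X_s}=\Phi_{a,s}^*J_Y$ at every point of $f^{-1}(U_a)$, and the $U_a$ cover $S$. The only point needing a little care is smoothness in the base direction. The fiberwise definition is a priori just a family of complex structures, but the formula above in terms of the smooth map $\Phi_a$ on the total space shows that it is smooth. I would spell this out explicitly; beyond that I expect no real obstacle.
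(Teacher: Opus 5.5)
Your proposal is correct and follows essentially the same route as the paper, which defines $J_a|_{X_s}=\Phi_{a,s}^*J_Y$ and uses $\Phi_{a,s}=g_{ab}(s)\comp\Phi_{b,s}$ together with $g_{ab}(s)^*J_Y=J_Y$ to glue the local structures. The smoothness, integrability and uniqueness checks you add are details the paper leaves implicit.
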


\begin{proof}
Let $J_Y$ be the complex structure on $Y$. On each chart $f^{-1}(U_a)$, we define $J_a|_{X_s} = \Phi_{a,s}^* J_Y$.
On the overlap $U_{ab}$, we have $\Phi_{a,s} = g_{ab}(s) \comp \Phi_{b,s}$. Thus,
\[
J_a|_{X_s}  = (g_{ab}(s) \comp \Phi_{b,s})^* J_Y = \Phi_{b,s}^* g_{ab}(s)^* J_Y=\Phi_{b,s}^* J_Y=J_b|_{X_s}.
\]
These local structures glue to a global fiberwise complex structure $J_{X/S}$.
\end{proof}
The complex fiber bundle in the above lemma is \emph{isotrivial}, which means it is locally trivial as a family of complex manifolds with fibers biholomorphic to $Y$. By definition, each isotrivial complex fiber bundle admits a compatible atlas with transition maps taking values in $\mathrm{Aut}(Y)$. Note that every complex vector bundle is an isotrivial complex fiber bundle. An isotrivial complex fiber bundle is a \emph{holomorphic fiber bundle} if and only if the transition maps $g_{ab}:U_{ab}\times Y\to Y$ are holomorphic for some compatible atlas, and such an atlas is called a \emph{holomorphic atlas}.

It is well known that a holomorphic structure on a complex vector bundle over $S$ is equivalent to an integrable $\dbar$-operator on the bundle. Motivated by this fact, we make the following definition.

\begin{definition}\label{def:dbar_operator}
A \emph{$\dbar$-operator} on the complex fiber bundle $(f,T_{X/S})$ is a smooth bundle morphism
\[
\dbar: f^*\widebar{TS}\to \frac{TX^{\CC}}{\widebar{T_{X/S}}}
\]
whose image under the projection $\frac{TX^{\CC}}{\widebar{T_{X/S}}}\to f^*TS^{\CC}$ is contained in $f^*\widebar{TS}$, and such that its composition with the projection to $f^*\widebar{TS}$ is the identity on $f^*\widebar{TS}$. Note that there is a short exact sequence of complex vector bundles associated to $(f,T_{X/S})$:
\begin{equation}\label{eq:quot_exact_seq}
	0\to T_{X/S}\to \frac{TX^\CC}{\widebar{T_{X/S}}}\stackrel{q}{\longrightarrow} f^*TS^\CC\to 0.
\end{equation}
A $\dbar$-operator is just a smooth splitting of \eqref{eq:quot_exact_seq} restricted to $f^*\widebar{TS}$.
\end{definition}

\begin{lemma}\label{lem:dbar_affine_space}
The space of $\dbar$-operators on $(f,T_{X/S})$ is an affine space modeled on $C^{\infty}(X,f^\ast \widebar{T^* S}\otimes T_{X/S})$.
\end{lemma}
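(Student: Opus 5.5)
The plan is to treat a $\dbar$-operator exactly as a splitting of the short exact sequence \eqref{eq:quot_exact_seq} over the subbundle $f^*\widebar{TS}\subset f^*TS^{\CC}$, and to use the standard fact that the splittings of a short exact sequence form an affine space over the homomorphisms into the kernel. First I restrict \eqref{eq:quot_exact_seq} along $f^*\widebar{TS}\hookrightarrow f^*TS^{\CC}$. Setting $E:=q^{-1}(f^*\widebar{TS})\subset TX^{\CC}/\widebar{T_{X/S}}$ gives a short exact sequence of smooth complex vector bundles
\[
0\to T_{X/S}\to E\xrightarrow{q|_E} f^*\widebar{TS}\to 0 .
\]
Here $E$ is a genuine subbundle because $q$ is a surjective bundle map of constant rank and $f^*\widebar{TS}$ is a subbundle. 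By Definition \ref{def:dbar_operator}, a $\dbar$-operator is precisely a smooth bundle morphism $\dbar: f^*\widebar{TS}\to E$ with $q\comp\dbar=\id$.

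Next I check that the space of such operators is nonempty. Over each trivializing chart of the smooth bundle, $E$ splits, and any short exact sequence of smooth vector bundles admits a global smooth splitting. One way to see this is to glue local splittings with a partition of unity on $X$; this works because the splitting condition $q\comp s=\id$ is affine, so convex combinations of splittings are again splittings. Another way is to take the orthogonal complement of $T_{X/S}$ in $E$ with respect to a Hermitian metric. Either way, a $\dbar$-operator exists.

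For the affine structure, take two $\dbar$-operators $\dbar_1,\dbar_2$. Then $q\comp(\dbar_1-\dbar_2)=0$, so $\dbar_1-\dbar_2$ takes values in $\ker q=T_{X/S}$. It is therefore a smooth section of $\Hom(f^*\widebar{TS},T_{X/S})=f^*\widebar{T^*S}\otimes T_{X/S}$. Conversely, for any $\dbar$ and any $\phi\in C^\infty(X,f^*\widebar{T^*S}\otimes T_{X/S})$, the composition of $\phi$ with the inclusion $T_{X/S}\hookrightarrow E$ is killed by $q$. Hence $\dbar+\phi$ still satisfies $q\comp(\dbar+\phi)=\id$ and has image in $E$. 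The action $(\phi,\dbar)\mapsto\dbar+\phi$ is therefore free and transitive, which is exactly the claim.

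No step is a real obstacle. The only point requiring care is confirming that the image condition in Definition \ref{def:dbar_operator} together with the identity-composition condition is equivalent to being a splitting of the restricted sequence. This follows directly from the definition of $E$.
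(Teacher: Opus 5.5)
Your argument is correct and is essentially the paper's own. The paper gives no separate proof; it relies on the remark just before the lemma that a $\dbar$-operator is a smooth splitting of \eqref{eq:quot_exact_seq} restricted to $f^*\widebar{TS}$, so any two such splittings differ by a map into $\ker q = T_{X/S}$. Your explicit check of nonemptiness, via a partition of unity or an orthogonal complement, fills in a point the paper leaves implicit.
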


\begin{definition}\label{def:alm_cx_str}
An \emph{almost complex structure} on the complex fiber bundle $(f,T_{X/S})$ is a complex subbundle $TX\subset TX^{\CC}$ which contains $T_{X/S}$ and satisfies
\begin{enumerate}
    \item  $TX\oplus \widebar{TX}= TX^{\CC}$;
\item  The image of $TX$ under the composite $TX\hookrightarrow TX^{\CC} \to f^*TS^{\CC}$ is contained in $f^*TS$.
\end{enumerate}
If $TX$ further satisfies the integrability condition $[TX,TX]\subset TX$, we call it a \emph{complex structure}.
\end{definition}

By the Newlander-Nirenberg theorem, a complex structure on $(f,T_{X/S})$ gives rise to a holomorphic fibration structure on $f$ whose holomorphic relative tangent bundle equals $T_{X/S}$, and vice versa.

\begin{proposition}\label{prop:dbar_complex_structure}
	Let $(f, T_{X/S})$ be a complex fiber bundle over $S$. There is a canonical bijection between almost complex structures on $(f, T_{X/S})$ and $\dbar$-operators. Furthermore, an almost complex structure $TX$ is integrable (i.e., a complex structure) if and only if the corresponding $\dbar$-operator is \emph{integrable}, which means the inverse image of $\dbar(f^*\widebar{TS})\subset \frac{TX^{\CC}}{\widebar{T_{X/S}}}$ in $TX^{\CC}$ is closed under Lie bracket.
\end{proposition}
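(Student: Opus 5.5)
The plan is to build both directions of the bijection explicitly and check they are mutually inverse. Then the integrability statement reduces to comparing two distributions inside $TX^{\CC}$.

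\emph{From an almost complex structure to a $\dbar$-operator.} Given $TX$, set $\widebar{TX}\subset TX^{\CC}$. Since $T_{X/S}\subset TX$, conjugation gives $\widebar{T_{X/S}}\subset \widebar{TX}$. Condition (2) together with $TX\oplus\widebar{TX}=TX^{\CC}$ shows that $\D f^{\CC}$ maps $\widebar{TX}$ onto $f^*\widebar{TS}$. A rank count then gives the exact sequence
\[
0\to\widebar{T_{X/S}}\to\widebar{TX}\to f^*\widebar{TS}\to 0 .
\]
I will use this in two ways: the kernel of $\widebar{TX}\to f^*\widebar{TS}$ is exactly $\widebar{TX}\cap T^{\CC}_{X/S}=\widebar{T_{X/S}}$, because $T_{X/S}\cap\widebar{TX}=0$; and the quotient $\widebar{TX}/\widebar{T_{X/S}}\subset TX^{\CC}/\widebar{T_{X/S}}$ maps isomorphically onto $f^*\widebar{TS}$. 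I then define $\dbar$ as the inverse of this isomorphism, composed with the inclusion. By construction it satisfies the image and splitting conditions of Definition \ref{def:dbar_operator}.

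\emph{From a $\dbar$-operator to an almost complex structure.} Conversely, given $\dbar$, let $W\subset TX^{\CC}$ be the preimage of $\dbar(f^*\widebar{TS})$. It is a smooth subbundle of rank $\dim_\CC f^*\widebar{TS}+\rank \widebar{T_{X/S}}$, containing $\widebar{T_{X/S}}$ and mapping onto $f^*\widebar{TS}$. Set $TX:=\widebar W$. Then $T_{X/S}\subset TX$ and $\D f^{\CC}(TX)=f^*TS$, which is condition (2). For condition (1), note that $TX\cap\widebar{TX}$ maps to $f^*TS\cap f^*\widebar{TS}=0$, so it lies in $T^{\CC}_{X/S}\cap TX\cap\widebar{TX}$. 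We have $W\cap T_{X/S}^{\CC}=\widebar{T_{X/S}}$, because $\dbar$ is injective modulo $\widebar{T_{X/S}}$. Hence $TX\cap T^{\CC}_{X/S}=T_{X/S}$, and the intersection is $T_{X/S}\cap\widebar{T_{X/S}}=0$. The ranks add up to $\rank TX^{\CC}$, so $TX\oplus\widebar{TX}=TX^{\CC}$. The two constructions are visibly inverse: each one recovers $\widebar{TX}$ as the preimage of the image of $\dbar$. Canonicity is clear, since no choices are made.

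\emph{Integrability.} Under the bijection, the inverse image of $\dbar(f^*\widebar{TS})$ is exactly $\widebar{TX}$. Since complex conjugation commutes with the Lie bracket of complex vector fields, $[\widebar{TX},\widebar{TX}]\subset\widebar{TX}$ if and only if $[TX,TX]\subset TX$.

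The only step needing care is the rank and transversality bookkeeping in the second construction, specifically the identification $W\cap T^{\CC}_{X/S}=\widebar{T_{X/S}}$. The rest is formal linear algebra, done fiberwise on smooth subbundles.
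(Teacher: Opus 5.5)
Your proposal is correct and is essentially the paper's proof: both directions go through the projection $p\colon TX^{\CC}\to TX^{\CC}/\widebar{T_{X/S}}$, the key step in each direction is $\widebar{TX}\cap T^{\CC}_{X/S}=\widebar{T_{X/S}}$, and integrability reduces to identifying $p^{-1}(\dbar(f^*\widebar{TS}))$ with $\widebar{TX}$. Your remark that conjugation commutes with the bracket spells out a step the paper leaves implicit. The one justification to tighten is $W\cap T^{\CC}_{X/S}=\widebar{T_{X/S}}$: the actual reason is that the projection $q\colon TX^{\CC}/\widebar{T_{X/S}}\to f^*TS^{\CC}$ is injective on $\im(\dbar)$ because $q\circ\dbar=\id$, so any $w\in W\cap T^{\CC}_{X/S}$ has $q(p(w))=0$ and therefore $p(w)=0$.
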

\begin{proof}
Let $Q = TX^{\CC}/\widebar{T_{X/S}}$. Then we have the natural projection $p: TX^{\CC} \to Q$, such that $\D f^{\CC} = q\comp p$. Let $TX$ be an almost complex structure on $(f, T_{X/S})$. Consider the restriction $\bar{\pi} = \D f^{\CC}|_{\widebar{TX}}: \widebar{TX} \to f^*\widebar{TS}$. Its kernel is $\widebar{TX} \cap T_{X/S}^{\CC}$. We claim this intersection is $\widebar{T_{X/S}}$. Clearly $\widebar{T_{X/S}}$ is contained in the intersection. Conversely, let $v \in \widebar{TX} \cap T_{X/S}^{\CC}$. We decompose $v=v^{1,0}+v^{0,1}$ with $v^{1,0}\in T_{X/S}, v^{0,1}\in \widebar{T_{X/S}}$. Since $v^{0,1}\in \widebar{T_{X/S}}\subset \widebar{TX}$ and $v\in \widebar{TX}$, we have $v^{1,0}=v-v^{0,1} \in \widebar{TX}$. But $v^{1,0}\in T_{X/S}\subset TX$. Thus $v^{1,0}\in TX\cap \widebar{TX}=\{0\}$. So $v\in \widebar{T_{X/S}}$. Therefore, we have a short exact sequence
\begin{equation}\label{eq:barTX_SES}
	0 \to \widebar{T_{X/S}} \to \widebar{TX} \xrightarrow{\bar{\pi}} f^*\widebar{TS} \to 0.
\end{equation}
For $u\in f^*\widebar{TS}$, let $w\in \widebar{TX}$ be any lift such that $\bar{\pi}(w)=u$. Define $\dbar(u) := p(w)$. This is well-defined because if $w'$ is another lift, $w-w'\in \ker(\bar{\pi})=\widebar{T_{X/S}}=\ker(p)$. It satisfies the definition of a $\dbar$-operator since $q(\dbar(u)) = q(p(w)) = \D f^{\CC}(w) = u$.

Conversely, given a $\dbar$-operator, we define $\widebar{TX} = p^{-1}(\im(\dbar))$. Let $TX = \overline{\widebar{TX}}$. $\widebar{T_{X/S}}=\ker(p) \subset \widebar{TX}$. By conjugation, $T_{X/S}\subset TX$. $\D f^{\CC}(\widebar{TX}) = q(p(\widebar{TX})) = q(\im(\dbar))$. Since $\dbar$ is a splitting over $f^*\widebar{TS}$, this is $f^*\widebar{TS}$. By conjugation, $\D f^{\CC}(TX)\subset f^*TS$. Let $v\in TX\cap \widebar{TX}$, $\D f^{\CC}(v) \in f^*TS \cap f^*\widebar{TS} = \{0\}$, so $v\in T_{X/S}^{\CC}$.
Suppose $w\in\widebar{TX}\cap T_{X/S}^{\CC} $. The condition $w\in T_{X/S}^{\CC}$ implies $\D f^{\CC}(w)=0$, while $w\in \widebar{TX}$ means $p(w)\in \im(\dbar)$. Thus $q(p(w))=\D f^{\CC}(w)=0$. Since $q|_{\im(\dbar)}$ is an isomorphism onto $f^*\widebar{TS}$, we must have $p(w)=0$. Thus $w\in \ker(p)=\widebar{T_{X/S}}$.
So $\widebar{TX}\cap T_{X/S}^{\CC} = \widebar{T_{X/S}}$.
Therefore, $v \in T_{X/S}^{\CC} \cap TX \cap \widebar{TX} = T_{X/S} \cap \widebar{T_{X/S}} = \{0\}$. By rank reasons, $TX^{\CC}=TX\oplus \widebar{TX}$.

These constructions are mutually inverse. The final statement regarding integrability follows, as the integrability of $TX$ is precisely the definition of the integrability of the corresponding $\dbar$.
\end{proof}

When $(f, T_{X/S})$ is a complex vector bundle, a $\dbar$-operator in the above sense is much more general than that in the classical sense. For example, while the integrability of a classical $\dbar$-operator can be measured through the vanishing of a curvature tensor in $A^{0,2}(S,f_*T_{X/S})$, it is not the case for an arbitrary $\dbar$-operator. We shall now discuss a condition for $\dbar$ so that its integrability can be measured through the vanishing of a curvature tensor, generalizing the complex vector bundle situation (see Example \ref{ex:vb_conn_dbar}).
\begin{lemma}\label{lem:integrability_dbar}
Let $(f, T_{X/S})$ be a complex fiber bundle on $S$. Consider a $\dbar$-operator on $(f, T_{X/S})$ satisfying the following \emph{lifting condition}: There is a lifting $\nabla^{0,1}: f^*\widebar{TS}\to TX^\CC$ of $\dbar$ such that
\begin{equation}\label{eq:lift_cond}
	[\widebar{T_{X/S}},\im(\nabla^{0,1})]\subset \widebar{TX}.
\end{equation}
Then there is a smooth bundle map $F^{0,2}: f^*\wedge^2 \widebar{TS}\to \frac{TX^\CC}{\widebar {TX}}$ such that $F^{0,2}=0$ if and only if $\dbar$ is integrable. If $\dbar$ is integrable, then the lifting condition is automatically satisfied.
\end{lemma}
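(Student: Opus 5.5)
Set $\widebar{TX}:=p^{-1}(\im\dbar)$, where $p:TX^{\CC}\to Q:=TX^{\CC}/\widebar{T_{X/S}}$ is the projection. By Proposition \ref{prop:dbar_complex_structure} this is the almost complex structure attached to $\dbar$, and $\dbar$ is integrable if and only if $[\widebar{TX},\widebar{TX}]\subset\widebar{TX}$. From the short exact sequence \eqref{eq:barTX_SES} we have $\widebar{TX}=\widebar{T_{X/S}}\oplus\im(\nabla^{0,1})$, with $\nabla^{0,1}$ the given lifting. So it suffices to test closure under brackets on the three types of pairs of generators.

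For the pairs: $[\widebar{T_{X/S}},\widebar{T_{X/S}}]\subset\widebar{T_{X/S}}$ holds by integrability of the fiberwise complex structure. $[\widebar{T_{X/S}},\im\nabla^{0,1}]\subset\widebar{TX}$ is exactly the lifting condition \eqref{eq:lift_cond}. So integrability is equivalent to $[\nabla^{0,1}(\bar u),\nabla^{0,1}(\bar v)]\in\widebar{TX}$ for local sections $\bar u,\bar v$ of $\widebar{TS}$. I would work with lifts of vector fields on $S$ and use $C^\infty(X)$-linear combinations of pullbacks, since these span $f^*\widebar{TS}$. Define
\[
F^{0,2}(\bar u,\bar v):=\bigl[\nabla^{0,1}(\bar u),\nabla^{0,1}(\bar v)\bigr]-\nabla^{0,1}([\bar u,\bar v]) \mod \widebar{TX}.
\]

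The main step is to show this is well defined as a tensor $f^*\wedge^2\widebar{TS}\to TX^{\CC}/\widebar{TX}$. Antisymmetry is clear. For $C^\infty(X)$-linearity, replace $\bar u$ by $g\bar u$ with $g\in C^\infty(X)$ (extending $\nabla^{0,1}$ $C^\infty(X)$-linearly). The extra terms are $-\nabla^{0,1}(\bar v)(g)\,\nabla^{0,1}(\bar u)$ and the corresponding term from $[g\bar u,\bar v]$ on $S$. Both lie in $\widebar{TX}$, since $\im\nabla^{0,1}\subset\widebar{TX}$. Next, the definition must not depend on the lift. The lifting is unique only up to adding a map into $\widebar{T_{X/S}}$, and the lifting condition is stated for one lift. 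Changing the lift by $\bar\xi\in\widebar{T_{X/S}}$ changes the bracket by terms in $[\widebar{T_{X/S}},\im\nabla^{0,1}]$ and $[\widebar{T_{X/S}},\widebar{T_{X/S}}]$, both of which lie in $\widebar{TX}$. This also shows the lifting condition holds for every lift. The term $\nabla^{0,1}([\bar u,\bar v])$ lies in $\widebar{TX}$ anyway, so it could be dropped, but keeping it mirrors the curvature formula. Then $F^{0,2}=0$ if and only if all brackets close, which is equivalent to integrability.

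For the last claim: if $\dbar$ is integrable, then $\widebar{TX}$ is involutive. Any lift $\nabla^{0,1}$ has image in $p^{-1}(\im\dbar)=\widebar{TX}$, and $\widebar{T_{X/S}}\subset\widebar{TX}$, so \eqref{eq:lift_cond} holds. I expect the only delicate point to be the tensoriality and lift-independence bookkeeping. In particular, one has to handle sections of $f^*\widebar{TS}$ that are not pullbacks, which is done by the $C^\infty(X)$-linearity argument above.
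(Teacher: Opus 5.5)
Your proposal is correct and follows essentially the same route as the paper. You split $\widebar{TX}=\widebar{T_{X/S}}\oplus\im(\nabla^{0,1})$, use fiberwise integrability and the lifting condition to reduce involutivity of $\widebar{TX}$ to $[\im(\nabla^{0,1}),\im(\nabla^{0,1})]\subset\widebar{TX}$, take $F^{0,2}$ to be that bracket modulo $\widebar{TX}$, and get the converse from $\widebar{T_{X/S}}+\im(\nabla^{0,1})\subset\widebar{TX}$ together with involutivity. Your tensoriality and lift-independence checks are details the paper leaves implicit. The only imprecision is writing $[g\bar u,\bar v]$ on $S$ for $g\in C^\infty(X)$, which is harmless: the term $\nabla^{0,1}([\bar u,\bar v])$ vanishes modulo $\widebar{TX}$, so you can drop it, as the paper does, and set $F^{0,2}(a,b):=[\nabla^{0,1}(a),\nabla^{0,1}(b)]\bmod\widebar{TX}$ for sections $a,b$ of $f^*\widebar{TS}$.
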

Since $[\widebar{T_{X/S}},\widebar{T_{X/S}}]\subset \widebar{T_{X/S}}\subset \widebar{TX}$, the above condition is independent of the choice of such a lifting.
\begin{proof}
By the definition of $\widebar{TX}$, we have the short exact sequence \eqref{eq:barTX_SES}. A lifting of $\dbar$ is a splitting of the projection $\widebar{TX}\to f^*\widebar{TS}$. Write $\widebar{TX}=\widebar{T_{X/S}}\oplus \im(\nabla^{0,1})$. By the lifting condition, $[\widebar{TX},\widebar{TX}]\subset \widebar{TX}$ holds if and only if $[\im(\nabla^{0,1}),\textrm{im}(\nabla^{0,1})]\subset \widebar{TX}$ holds. Equivalently, the composite
\[f^*\wedge^2\widebar{TS}\xrightarrow{\frac12[\nabla^{0,1},\nabla^{0,1}]} TX^\CC\to \frac{TX^\CC}{\widebar{TX}}, \quad a\wedge b \mapsto [[\nabla^{0,1}(a),\nabla^{0,1}(b)]],\]
is zero. Suppose the $\dbar$-operator is integrable. By Proposition \ref{prop:dbar_complex_structure}, this induces a complex structure on $X$ such that $f: X\to S$ is a holomorphic fibration. Any smooth splitting $\nabla^{0,1}$ of
\[0\to \widebar{T_{X/S}}\to \widebar{TX}\to f^\ast \widebar{TS}\to 0\]
is a lifting of $\dbar$. Moreover, $[\widebar{T_{X/S}},\im(\nabla^{0,1})]\subset [\widebar{TX},\widebar{TX}]\subset\widebar{TX}$. The lifting condition is satisfied.
\end{proof}
Any lifting $\nabla^{0,1}$ of $\dbar$ is a $(0,1)$-connection to be defined below.

\subsection{Connections}
Recall that a \emph{connection} $\nabla^\RR$ on a smooth fiber bundle $f:X\to S$ is a smooth splitting of the following exact sequence of real tangent bundles
\begin{equation}\label{RealTangentSeq_eq}
	0\to T_{X/S}^{\RR}\to TX^{\RR}\xrightarrow{\D f} f^\ast TS^{\RR}\to 0.
\end{equation}
In other words, $\nabla^\RR$ is a smooth bundle monomorphism $\nabla^\RR: f^\ast TS^{\RR}\to TX^{\RR}$ such that $\D f\comp \nabla^\RR=\id_{f^\ast TS^{\RR}}$.

If $S$ is a complex manifold, a connection $\nabla^\RR$ induces a splitting of the complexification \eqref{ComplexTangentSeq_General_eq}, denoted by $\nabla$. We have $\nabla=\nabla^{1,0}+\nabla^{0,1}$, where $\nabla^{1,0}:=\nabla|_{f^\ast TS}$ and $\nabla^{0,1}:=\nabla|_{f^\ast \widebar{TS}}$. Then $\nabla^{0,1}=\widebar{\nabla^{1,0}}$. More generally, we consider a \emph{complex connection}, which is a splitting $\nabla=\nabla^{1,0}+\nabla^{0,1}$ of \eqref{ComplexTangentSeq_General_eq}, which may not be the complexification of a real connection $\nabla^\RR$. We call $\nabla^{1,0}$ a $(1,0)$-connection and $\nabla^{0,1}$ a $(0,1)$-connection.

Suppose $S$ is an $n$-dimensional complex manifold and $(f,T_{X/S})$ is a complex fiber bundle whose typical fiber is an $m$-dimensional complex manifold (complex structures may vary). We can choose local coordinates $(s^1,\ldots,s^n,z^1,\ldots,z^m)$ adapted to the complex fiber bundle structure. This means $\{s^i\}$ are holomorphic coordinates on $S$ and $\{z^\alpha\}$ are holomorphic coordinates along the fibers. Locally $f$ is the projection $(s,z)\mapsto s$. In these coordinates, we may express $\nabla^{1,0}$ and $\nabla^{0,1}$ as
\begin{align}
	&\nabla^{1,0}:\partial_{i}\mapsto  H_i = \partial_i+\Gamma^{\alpha}_i\partial_{\alpha}+\Gamma^{\bar{\beta}}_i\partial_{\bar{\beta}},\label{local10conn_eq}\\
	&\nabla^{0,1}:\partial_{\bar{j}}\mapsto H_{\bar{j}} = \partial_{\bar{j}}+\Gamma^{\gamma}_{\bar{j}}\partial_{\gamma}+\Gamma^{\bar{\delta}}_{\bar{j}}\partial_{\bar{\delta}},\label{local01conn_eq}
\end{align}
where $\partial_i=\frac{\partial}{\partial s^i}, \partial_\alpha=\frac{\partial}{\partial z^\alpha}$ and so on, and we used the Einstein summation convention. The coefficients $\Gamma$ are smooth functions in $(s,z)$. Consider another such coordinate system $(s', z')$. The transition functions $s'^i=s'^i(s)$ are holomorphic, $z'^\alpha=z'^\alpha(s,\bar{s},z)$ are smooth in the base coordinates and holomorphic in the fiber coordinates $z$. Then we have the transformation laws (cf. \cite[\S3.2]{sardanashvili2013advanced})
\begin{align}
	&{\Gamma'}_i^\alpha=\frac{\partial s^j}{\partial s'^i}(\partial_j+\Gamma_j^\beta \partial_\beta)z'^\alpha,\quad {\Gamma'}_i^{\bar{\beta}}=\frac{\partial s^j}{\partial s'^i}(\partial_j+\Gamma_j^{\bar{\alpha}}\partial_{\bar{\alpha}}) \bar{z}'^{\beta},\label{TransLaw10_eq}\\
	&	{\Gamma'}_{\bar{j}}^{\bar{\delta}}=\frac{\partial \bar{s}^i}{\partial \bar{s}'^j}(\partial_{\bar{i}}+\Gamma_{\bar{i}}^{\bar{\gamma}}\partial_{\bar{\gamma}}) \bar{z}'^{\delta},\quad{\Gamma'}_{\bar{j}}^\gamma=\frac{\partial \bar{s}^i}{\partial \bar{s}'^j}(\partial_{\bar{i}}+\Gamma_{\bar{i}}^\delta \partial_\delta) z'^\gamma.\label{TransLaw01_eq}
\end{align}

Any $\nabla^{0,1}$ induces a $\dbar$-operator via the composition
\begin{equation}\label{InducedDbar_eq}
	f^\ast \widebar{TS}\xrightarrow{\nabla^{0,1}} TX^{\CC}\to \frac{TX^{\CC}}{\widebar{T_{X/S}}}.
\end{equation}
Locally, the induced $\dbar$-operator is determined by the coefficients $\Gamma_{\bar{j}}^\gamma$. Conversely, any $\dbar$-operator is induced by some $(0,1)$-connection $\nabla^{0,1}$.
\begin{lemma}\label{lem:matrix_form_J}
Let $\nabla^\RR$ be a connection on a complex fiber bundle $(f,T_{X/S})$, which induces a splitting $TX^\RR = T_{X/S}^\RR \oplus H^\RR$ with $H^\RR \cong f^* TS^\RR$. The $(0,1)$-part $\nabla^{0,1}$ of its complexification $\nabla$ induces a $\dbar$-operator $\dbar_\nabla$. Let $\dbar$ be any $\dbar$-operator on $(f, T_{X/S})$, and let $J$ be the corresponding almost complex structure on $X$ (Proposition \ref{prop:dbar_complex_structure}). With respect to the splitting defined by $\nabla^\RR$, $J$ can be written in block matrix form as
\[
J = \begin{pmatrix}
J_{X/S} & \phi \\
0 & f^* J_S
\end{pmatrix},
\]
where $\phi \in C^\infty(X, f^* T^*S^\RR\otimes T_{X/S}^\RR)$. Let $\widebar{\Theta} := \dbar_\nabla - \dbar \in C^\infty(X, f^* \widebar{T^*S}\otimes T_{X/S})$. Then we have
\begin{equation}\label{eq:phi_from_dbar}
\phi(v) = 4 \Re \bigl( \I \widebar{\Theta}(v^{0,1}) \bigr), \quad \text{for all } v \in f^* TS^\RR.
\end{equation}
Here, $v^{0,1} = \frac{1}{2}(v + \I f^*J_S v)$ is the $(0,1)$-component of $v$ in $f^*\widebar{TS}$.
\end{lemma}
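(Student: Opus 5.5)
The proof will identify $\widebar{TX}$ with the $-\I$-eigenbundle of $J$ and compute that eigenbundle directly in the splitting defined by $\nabla^\RR$.

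\emph{Block form.} By Definition \ref{def:alm_cx_str}, $T_{X/S}\subset TX$, and hence also $\widebar{T_{X/S}}\subset\widebar{TX}$. Therefore $J$ preserves $T_{X/S}^\RR$ and acts on it by $J_{X/S}$; this gives the first column of the block matrix. The second condition of Definition \ref{def:alm_cx_str}, applied to $TX$ and its conjugate, says $\D f^\CC(TX)\subset f^*TS$ and $\D f^\CC(\widebar{TX})\subset f^*\widebar{TS}$. This is equivalent to $\D f\comp J=f^*J_S\comp \D f$. So the horizontal–horizontal block of $J$ is $f^*J_S$, the lower-left block vanishes, and the only unknown entry is the off-diagonal term $\phi$. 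Extend $\phi$ $\CC$-linearly. Since $\phi$ is real, $\phi(\bar u)=\widebar{\phi(u)}$, so it suffices to compute $\phi$ on $f^*\widebar{TS}$.

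\emph{Lifting $u\in f^*\widebar{TS}$ into $\widebar{TX}$.} By the construction in Proposition \ref{prop:dbar_complex_structure}, $\widebar{TX}=p^{-1}(\im\dbar)$, where $p:TX^\CC\to TX^\CC/\widebar{T_{X/S}}$ is the projection. The map $p\comp\nabla^{0,1}$ is $\dbar_\nabla$ by \eqref{InducedDbar_eq}. Hence
\[
w:=\nabla^{0,1}(u)-\widebar\Theta(u)\in TX^\CC
\]
satisfies $p(w)=\dbar_\nabla(u)-(\dbar_\nabla-\dbar)(u)=\dbar(u)$, where $\widebar\Theta(u)\in T_{X/S}$ is viewed inside $TX^\CC$. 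So $w\in\widebar{TX}$, and every lift of $u$ differs from $w$ by an element $\eta\in\widebar{T_{X/S}}$.

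\emph{Using $Jw=-\I w$.} I will evaluate $Jw$ using the block form, linearity, and $(f^*J_S)u=-\I u$:
\begin{itemize}
\item[(a)] $J\nabla^{0,1}(u)=\nabla^{0,1}(f^*J_S u)+\phi(u)=-\I\nabla^{0,1}(u)+\phi(u)$;
\item[(b)] $J\widebar\Theta(u)=\I\,\widebar\Theta(u)$, because $\widebar\Theta(u)\in T_{X/S}$.
\end{itemize}
Comparing $Jw$ with $-\I w$ gives $\phi(u)-\I\widebar\Theta(u)=\I\widebar\Theta(u)$, that is,
\[
\phi(u)=2\I\,\widebar\Theta(u)\qquad\text{for } u\in f^*\widebar{TS}.
\]
This value lies in $T_{X/S}$. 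The ambiguity $\eta$ does not affect it, since $(J+\I)\eta=0$. Finally, for real $v=v^{1,0}+v^{0,1}$,
\[
\phi(v)=\phi(v^{0,1})+\widebar{\phi(v^{0,1})}=2\Re\bigl(2\I\,\widebar\Theta(v^{0,1})\bigr),
\]
which is \eqref{eq:phi_from_dbar}.

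The main obstacle is bookkeeping rather than substance. One has to check that $v^{0,1}=\tfrac12(v+\I f^*J_Sv)$ really is the $-\I$-eigencomponent, and that the identifications of $T_{X/S}$ inside $TX^\CC$ and inside the quotient $TX^\CC/\widebar{T_{X/S}}$ are used consistently. The factor $4$ comes from the factor $2$ above combined with taking twice the real part, and it depends on the convention $v^{0,1}=\tfrac12(\cdots)$.
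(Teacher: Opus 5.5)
Your proposal is correct and follows essentially the same route as the paper: you exhibit $\nabla^{0,1}(u)-\widebar{\Theta}(u)$ as a $-\I$-eigenvector of $J$ in $\widebar{TX}$, use $J\widebar{\Theta}=\I\widebar{\Theta}$ to obtain $\phi(u)=2\I\,\widebar{\Theta}(u)$ on $f^*\widebar{TS}$, and recover the real formula by conjugation. The differences are cosmetic. The paper assembles the real lift $H_v=H_{v^{0,1}}+\overline{H_{v^{0,1}}}$ before reading off $\phi(v)$, whereas you first compute the complex-linear extension of $\phi$ and then use its reality. You also spell out why $J$ is block-triangular, which the paper only asserts.
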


\begin{proof}
The block triangular form follows from the compatibility of $J$ with the complex fiber bundle structure. Let $v \in f^* TS^\RR$ and let $H_{v^{0,1}}$ be the horizontal lift of $v^{0,1}$ with respect to $\nabla^{0,1}$. By the definition of $\widebar{\Theta}$, we have
\[J(H_{v^{0,1}}-\widebar{\Theta}(v^{0,1}))=-\I(H_{v^{0,1}}-\widebar{\Theta}(v^{0,1})).\]
Since $J(\widebar{\Theta}(v^{0,1}))=\I \widebar{\Theta}(v^{0,1})$, we have $J(H_{v^{0,1}}) = -\I H_{v^{0,1}} + 2\I \widebar{\Theta}(v^{0,1})$. The real horizontal lift decomposes as $H_v=H_{v^{0,1}}+\widebar{H_{v^{0,1}}}$. Since $J$ is a real operator,
\[
J(H_v) = J(H_{v^{0,1}}) + \widebar{J(H_{v^{0,1}})} = \bigl(-\I H_{v^{0,1}} + 2\I \widebar{\Theta}(v^{0,1})\bigr) + \bigl(\I \widebar{H_{v^{0,1}}} - 2\I \overline{\widebar{\Theta}(v^{0,1})}\bigr).
\]
Therefore, $\phi(v) =2\I \widebar{\Theta}(v^{0,1}) + \overline{2\I \widebar{\Theta}(v^{0,1})} = 4 \Re(\I \widebar{\Theta}(v^{0,1}))$.
\end{proof}

\begin{lemma}\label{lem:LC_coordinates}
A $\dbar$-operator satisfies the lifting condition if and only if for any (equivalently, some) $(0,1)$-connection $\nabla^{0,1}$ inducing it, its coefficients in any adapted local coordinate system satisfy
\begin{equation}\label{eq:LC}
\partial_{\bar{\beta}}\Gamma_{\bar{j}}^\gamma = 0
\end{equation}
for all $\beta, \gamma, j$. This condition is independent of the choice of adapted coordinates. We call such $\nabla^{0,1}$ \emph{mixed relatively holomorphic}.
\end{lemma}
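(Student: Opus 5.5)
The plan is a direct local computation of Lie brackets in adapted coordinates $(s,z)$. Since $\widebar{T_{X/S}}$ is spanned by the $\partial_{\bar\beta}$, and $\widebar{TX}=\widebar{T_{X/S}}\oplus\im(\nabla^{0,1})$ (as in the proof of Lemma \ref{lem:integrability_dbar}), the lifting condition \eqref{eq:lift_cond} for a given lift $\nabla^{0,1}$ reduces to showing $[\partial_{\bar\beta},H_{\bar j}]\in\widebar{TX}$ for all $\beta,j$. Using \eqref{local01conn_eq}, we have $[\partial_{\bar\beta},H_{\bar j}]=(\partial_{\bar\beta}\Gamma^\gamma_{\bar j})\partial_\gamma+(\partial_{\bar\beta}\Gamma^{\bar\delta}_{\bar j})\partial_{\bar\delta}$. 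The second term lies in $\widebar{T_{X/S}}\subset\widebar{TX}$. For the first term, recall from the proof of Proposition \ref{prop:dbar_complex_structure} that $\widebar{TX}\cap T^{\CC}_{X/S}=\widebar{T_{X/S}}$. A vector in $T_{X/S}$ that lies in $\widebar{TX}$ therefore lies in $T_{X/S}\cap\widebar{T_{X/S}}=0$. Hence the bracket lies in $\widebar{TX}$ if and only if $\partial_{\bar\beta}\Gamma^\gamma_{\bar j}=0$. This establishes the equivalence for the chosen lift $\nabla^{0,1}$.

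Next I would handle ``any (equivalently, some)''. By the remark after Lemma \ref{lem:integrability_dbar}, the lifting condition does not depend on which lift is chosen. The coordinate argument above works for an arbitrary lift. Consequently \eqref{eq:LC} holds for one lift exactly when it holds for every lift. This can also be checked directly: two lifts of the same $\dbar$ differ by a section of $f^*\widebar{T^*S}\otimes\widebar{T_{X/S}}$, which changes only the coefficients $\Gamma^{\bar\delta}_{\bar j}$ and leaves the $\Gamma^\gamma_{\bar j}$ untouched. In the same way, the condition depends only on $\dbar$, because \eqref{InducedDbar_eq} shows that $\dbar$ is locally encoded by the $\Gamma^\gamma_{\bar j}$.

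For independence of coordinates, the cleanest route is that the lifting condition is intrinsic, and \eqref{eq:LC} has just been shown equivalent to it in every adapted chart. As a sanity check, I would also verify it with \eqref{TransLaw01_eq}: ${\Gamma'}^\gamma_{\bar j}=\frac{\partial\bar s^i}{\partial\bar s'^j}(\partial_{\bar i}z'^\gamma+\Gamma^\delta_{\bar i}\partial_\delta z'^\gamma)$. Here $z'$ is holomorphic in $z$, so $\partial_{\bar\beta}$ kills $\partial_\delta z'^\gamma$ and $\partial_{\bar i}z'^\gamma$, since $\partial_{\bar\beta}$ commutes with $\partial_{\bar i}$ in the fixed chart. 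Moreover, $\partial_{\bar\beta'}$ is a combination of the $\partial_{\bar\beta}$ alone, because $\bar s$ does not depend on $z'$ and $\partial z/\partial\bar z'=0$. The only mildly delicate step is this change of variables for $\partial_{\bar\beta'}$, which mixes base and fiber directions. I expect no real obstacle: the key point is simply the identification $\widebar{TX}\cap T_{X/S}=0$.
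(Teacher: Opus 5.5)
Your proposal is correct and follows essentially the same route as the paper. Both compute $[\partial_{\bar\beta},H_{\bar j}]$ in adapted coordinates and then check the transformation law \eqref{TransLaw01_eq} directly, using that $\partial_{\bar\beta'}$ is a combination of the $\partial_{\bar\delta}$ alone. Your additional remarks make explicit points the paper leaves implicit: that $T_{X/S}\cap\widebar{TX}=0$, that lifts of the same $\dbar$ differ only in the coefficients $\Gamma^{\bar\delta}_{\bar j}$, and that chart independence also follows from the intrinsic nature of the lifting condition.
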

\begin{proof}
 $\dbar$ satisfies the lifting condition if and only if $[\widebar{T_{X/S}},\im(\nabla^{0,1})]\subset \widebar{TX}$. We compute
\[
[\partial_{\bar{\beta}}, H_{\bar{j}}] = [\partial_{\bar{\beta}}, \partial_{\bar{j}}+\Gamma^{\gamma}_{\bar{j}}\partial_{\gamma}+\Gamma^{\bar{\delta}}_{\bar{j}}\partial_{\bar{\delta}}] = (\partial_{\bar{\beta}}\Gamma^{\gamma}_{\bar{j}})\partial_{\gamma} + (\partial_{\bar{\beta}}\Gamma^{\bar{\delta}}_{\bar{j}})\partial_{\bar{\delta}}.
\]
This vector belongs to $\widebar{TX}$ if and only if $\partial_{\bar{\beta}}\Gamma_{\bar{j}}^\gamma=0$.

To show the condition \eqref{eq:LC} is well-defined, we check the transformation law using \eqref{TransLaw01_eq}.
\begin{align*}
\partial_{\bar{\beta}'} {\Gamma'}_{\bar{j}}^{\gamma} &= \frac{\partial \bar{s}^k}{\partial \bar{s}'^j} \partial_{\bar{\beta}'} \Bigl( \frac{\partial z'^\gamma}{\partial \bar{s}^k} + \frac{\partial z'^\gamma}{\partial z^\alpha} \Gamma_{\bar{k}}^\alpha \Bigr)\\
&= \frac{\partial \bar{s}^k}{\partial \bar{s}'^j} \Bigl( 0 + \frac{\partial z'^\gamma}{\partial z^\alpha} (\partial_{\bar{\beta}'} \Gamma_{\bar{k}}^\alpha) + 0 \Bigr)\\
&=\frac{\partial \bar{s}^k}{\partial \bar{s}'^j} \frac{\partial z'^\gamma}{\partial z^\alpha} \frac{\partial \bar{z}^\delta}{\partial \bar{z}'^\beta} (\partial_{\bar{\delta}} \Gamma_{\bar{k}}^\alpha)=0.
\end{align*}
Hence \eqref{eq:LC} is well-defined.
\end{proof}

The curvature of $\nabla^{0,1}$ is defined as $F^{0,2}_{\nabla^{0,1}}:=\frac12[\nabla^{0,1},\nabla^{0,1}]^{\mathrm{vert}}$, i.e., $$
F^{0,2}_{\nabla^{0,1}}(u,v)=[\nabla^{0,1}(u),\nabla^{0,1}(v)]-\nabla^{0,1}([u,v]),$$ where $u,v\in C^{\infty}(S,\widebar{TS})$. We have $F^{0,2}_{\nabla^{0,1}}\in C^{\infty}(X,f^*(\wedge^2\widebar{T^*S})\otimes T_{X/S}^\CC)$. Combining with Lemma \ref{lem:integrability_dbar}, we have the following result.
\begin{corollary}\label{cor:dbarint_via_conn_curv}
Let $\dbar$ be a $\dbar$-operator on a complex fiber bundle $(f, T_{X/S})$. Let $\nabla^{0,1}$ be any $(0,1)$-connection inducing $\dbar$. Then $\dbar$ is integrable if and only if $\nabla^{0,1}$ is mixed relatively holomorphic and the $(1,0)$-vertical part $F^{0,2}_{\dbar}$ (independent of the choice of such mixed relatively holomorphic $\nabla^{0,1}$) of the curvature of $\nabla^{0,1}$ vanishes.
\end{corollary}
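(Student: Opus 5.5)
The plan is to combine Lemma \ref{lem:integrability_dbar} with Lemma \ref{lem:LC_coordinates}, and then identify the map $F^{0,2}$ of Lemma \ref{lem:integrability_dbar} with the image of the curvature $F^{0,2}_{\nabla^{0,1}}$ under projection.

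First, the necessity of mixed relative holomorphicity. If $\dbar$ is integrable, then Lemma \ref{lem:integrability_dbar} says the lifting condition holds automatically. Since any $(0,1)$-connection inducing $\dbar$ is a lifting of $\dbar$, Lemma \ref{lem:LC_coordinates} shows that $\nabla^{0,1}$ is mixed relatively holomorphic. So in both directions we may assume $\nabla^{0,1}$ is mixed relatively holomorphic, and hence that the lifting condition holds.

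Under this assumption, the proof of Lemma \ref{lem:integrability_dbar} shows that integrability is equivalent to the vanishing of
\[
a\wedge b\mapsto [[\nabla^{0,1}(a),\nabla^{0,1}(b)]]\in TX^\CC/\widebar{TX}.
\]
I will rewrite this in terms of the curvature. Take $u,v$ to be local sections of $\widebar{TS}$, pulled back. Then $[\nabla^{0,1}(u),\nabla^{0,1}(v)] = F^{0,2}_{\nabla^{0,1}}(u,v)+\nabla^{0,1}([u,v])$. The bracket $[u,v]$ lies in $\widebar{TS}$, because $S$ is complex. Hence $\nabla^{0,1}([u,v])\in \widebar{T_{X/S}}\oplus\im(\nabla^{0,1})=\widebar{TX}$, and this term dies in the quotient. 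The curvature $F^{0,2}_{\nabla^{0,1}}$ is vertical, with values in $T^\CC_{X/S}=T_{X/S}\oplus\widebar{T_{X/S}}$, and $\widebar{T_{X/S}}\subset\widebar{TX}$. The composite $T_{X/S}\hookrightarrow TX^\CC\to TX^\CC/\widebar{TX}$ is injective, since $T_{X/S}\cap\widebar{TX}=0$ (Proposition \ref{prop:dbar_complex_structure}). Therefore the obstruction vanishes if and only if the $T_{X/S}$-component $F^{0,2}_{\dbar}$ of $F^{0,2}_{\nabla^{0,1}}$ vanishes. Because this is a pointwise tensorial statement, taking local sections suffices.

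For independence of the choice, compare two mixed relatively holomorphic liftings $\nabla^{0,1}$ and $\nabla'^{0,1}=\nabla^{0,1}+\sigma$ of the same $\dbar$, where $\sigma$ takes values in $\widebar{T_{X/S}}$. Then
\[
[\nabla'(u),\nabla'(v)]-[\nabla(u),\nabla(v)]=[\nabla(u),\sigma(v)]+[\sigma(u),\nabla(v)]+[\sigma(u),\sigma(v)],
\]
and $\nabla'^{0,1}([u,v])-\nabla^{0,1}([u,v])=\sigma([u,v])$. The last bracket lies in $\widebar{T_{X/S}}$. The cross terms lie in $\widebar{TX}$ by the lifting condition and are vertical, since $\D f$ of the bracket equals $[v\text{-horizontal}, 0]$-type terms that vanish. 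So they lie in $\widebar{TX}\cap T_{X/S}^\CC=\widebar{T_{X/S}}$. The difference of the two curvatures therefore has no $T_{X/S}$-component, and $F^{0,2}_{\dbar}$ is well defined.

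The only delicate point I anticipate is this last verticality argument. I would handle it in adapted coordinates: there $[\partial_{\bar\beta},H_{\bar j}]$ and $[H_{\bar j},\sigma]$ visibly have no base components, and their $\partial_\gamma$-coefficients involve $\partial_{\bar\beta}\Gamma^\gamma_{\bar j}=0$.
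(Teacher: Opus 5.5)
Your proposal is correct and follows the paper's implicit argument. The paper combines Lemma \ref{lem:integrability_dbar} (integrability forces the lifting condition, and under it integrability is equivalent to $[\im(\nabla^{0,1}),\im(\nabla^{0,1})]\subset\widebar{TX}$) with Lemma \ref{lem:LC_coordinates}, then observes that modulo $\widebar{TX}$ this bracket is exactly the $T_{X/S}$-component of $F^{0,2}_{\nabla^{0,1}}$, because $T_{X/S}\cap\widebar{TX}=0$. Your explicit check that this component does not depend on the choice of mixed relatively holomorphic lift is a welcome detail that the paper only asserts parenthetically.
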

The following example illustrates that a flat connection may not induce an integrable $\dbar$-operator.
\begin{example}\label{ex:flat_real_nonintegrable_dbar}
Let $\Delta_s=\{s\in\CC:|s|<1\}$ and equip $f:X=\Delta_s\times\CC_z\to\Delta_s$ with the standard fiberwise complex structure. Consider the smooth bundle diffeomorphism
\[
\Psi:\Delta_s\times\CC_w\longrightarrow X,\qquad \Psi(s,w)=(s,w+\bar s\,\bar w).
\]
Its inverse is given by $w=(z-\bar s\,\bar z)/(1-|s|^2)$. Push the trivial real connection on $\Delta_s\times\CC_w$ forward by $\Psi$. The resulting real connection $\nabla^\RR$ is flat. In the adapted coordinates $(s,z)$, let $B:=(\bar z-sz)/(1-|s|^2)$. The horizontal lifts of its complexification are
\[
H_s=\partial_s+\overline{B}\,\partial_{\bar z},\qquad H_{\bar s}=\partial_{\bar s}+B\,\partial_z.
\]
However, $\partial_{\bar z}B=(1-|s|^2)^{-1}\ne0$. Thus $\nabla^{0,1}$ is not mixed relatively holomorphic, and the induced $\dbar$-operator is not integrable.
\end{example}

\begin{lemma}\label{lem:LC_space}
	Whenever nonempty, the space $\mathcal{A}^{0,1}_{\mathrm{LC}}$ of $\dbar$-operators satisfying the lifting condition is an affine space modeled on $A^{0,1}(S,f_\ast T_{X/S})$.
\end{lemma}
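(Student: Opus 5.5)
The plan is to show that the difference of two $\dbar$-operators satisfying the lifting condition is a smooth section of $f^*\widebar{T^*S}\otimes T_{X/S}$ that is fiberwise holomorphic. Conversely, adding any such section to an element of $\mathcal{A}^{0,1}_{\mathrm{LC}}$ should stay in $\mathcal{A}^{0,1}_{\mathrm{LC}}$. By Lemma~\ref{lem:dbar_affine_space}, the space of all $\dbar$-operators is already an affine space modeled on $C^\infty(X,f^*\widebar{T^*S}\otimes T_{X/S})$. It therefore suffices to identify the subspace of differences with $A^{0,1}(S,f_*T_{X/S})$, i.e.\ with smooth $(0,1)$-forms on $S$ valued in the sheaf $f_*T_{X/S}$ of smooth, fiberwise holomorphic sections of $T_{X/S}$ (Definition~\ref{def: cpx fiber bundle}).

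The computation is local, so I will use Lemma~\ref{lem:LC_coordinates}. Fix adapted coordinates $(s,z)$ and $\dbar_1,\dbar_2\in\mathcal{A}^{0,1}_{\mathrm{LC}}$, induced by $(0,1)$-connections $\nabla^{0,1}_1,\nabla^{0,1}_2$ with coefficients $\Gamma^{\gamma}_{1,\bar j},\Gamma^{\gamma}_{2,\bar j}$. Only these $(1,0)$-vertical coefficients matter for the induced $\dbar$-operator, by \eqref{InducedDbar_eq}. Hence
\[
\dbar_1-\dbar_2=\D\bar s^j\otimes\bigl(\Gamma^\gamma_{1,\bar j}-\Gamma^\gamma_{2,\bar j}\bigr)\partial_\gamma .
\]
By \eqref{eq:LC} both coefficient families satisfy $\partial_{\bar\beta}\Gamma^\gamma_{\bar j}=0$, so their difference is annihilated by $\partial_{\bar\beta}$ for all $\beta$. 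The fields $\partial_\gamma$ are fiberwise holomorphic vector fields. Thus for each fixed $\bar j$, the vertical vector field $(\Gamma^\gamma_{1,\bar j}-\Gamma^\gamma_{2,\bar j})\partial_\gamma$ is holomorphic along each fiber.

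Conversely, let $\sigma\in A^{0,1}(S,f_*T_{X/S})$, locally $\sigma=\D\bar s^j\otimes\sigma_{\bar j}$ with $\sigma_{\bar j}=\sigma^\gamma_{\bar j}\partial_\gamma$ and $\partial_{\bar\beta}\sigma^\gamma_{\bar j}=0$. Set $\nabla'^{0,1}:=\nabla^{0,1}+\sigma$, meaning $H'_{\bar j}=H_{\bar j}+\sigma_{\bar j}$. This is a $(0,1)$-connection inducing $\dbar+\sigma$. Its coefficients $\Gamma^\gamma_{\bar j}+\sigma^\gamma_{\bar j}$ still satisfy \eqref{eq:LC}, so by Lemma~\ref{lem:LC_coordinates} we get $\dbar+\sigma\in\mathcal{A}^{0,1}_{\mathrm{LC}}$.

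It remains to see that $A^{0,1}(S,f_*T_{X/S})$ really is the subspace of $C^\infty(X,f^*\widebar{T^*S}\otimes T_{X/S})$ singled out by the condition $\partial_{\bar\beta}(\cdot)^\gamma_{\bar j}=0$, independently of coordinates. An element of $f^*\widebar{T^*S}\otimes T_{X/S}$ pairs with the pullback of a local $(0,1)$-vector field on $S$ to give a vertical $(1,0)$ field. Under a change of adapted coordinates, the components transform by $\partial\bar s^k/\partial\bar s'^j$, which is constant along fibers, and by $\partial z'^\gamma/\partial z^\alpha$, which is fiberwise holomorphic. Hence fiberwise holomorphicity is preserved, exactly as in the computation in the proof of Lemma~\ref{lem:LC_coordinates}. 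The main point needing care is this identification, namely that a section of $f^*\widebar{T^*S}\otimes T_{X/S}$ that is fiberwise holomorphic is the same thing as a $(0,1)$-form on $S$ with values in $f_*T_{X/S}$. It follows by choosing a local frame $\D\bar s^j$ on $S$, which is constant along the fibers. Nonemptiness is assumed in the statement, so the affine structure follows.
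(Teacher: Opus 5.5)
Your proposal is correct and follows essentially the same route as the paper. Write any $\dbar$-operator as $\dbar_0+\Phi$ via Lemma~\ref{lem:dbar_affine_space}, then use the coordinate criterion $\partial_{\bar\beta}\Gamma^\gamma_{\bar j}=0$ of Lemma~\ref{lem:LC_coordinates} to see that the lifting condition holds exactly when $\Phi$ is fiberwise holomorphic, i.e.\ lies in $A^{0,1}(S,f_*T_{X/S})$; your extra check that this condition does not depend on the choice of adapted coordinates is a detail the paper leaves implicit.
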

\begin{proof}
Fix $\dbar_0 \in \mathcal{A}^{0,1}_{\mathrm{LC}}$. By Lemma \ref{lem:dbar_affine_space}, any other $\dbar$-operator is of the form $\dbar = \dbar_0 + \Phi$, where $\Phi \in C^{\infty}(X,f^\ast \widebar{T^* S}\otimes T_{X/S})$. Let $\Gamma_{0,\bar{j}}^\gamma$ be the local coefficients of $\dbar_0$, and $\Phi_{\bar{j}}^\gamma$ be the coefficients of $\Phi$. The coefficients of $\dbar$ are $\Gamma_{\bar{j}}^\gamma = \Gamma_{0,\bar{j}}^\gamma + \Phi_{\bar{j}}^\gamma$. By Lemma \ref{lem:LC_coordinates}, we have $\partial_{\bar{\beta}}\Gamma_{0,\bar{j}}^\gamma=0$. Then $\dbar$ satisfies the lifting condition if and only if $\partial_{\bar{\beta}}\Gamma_{\bar{j}}^\gamma=0$, which is equivalent to $\partial_{\bar{\beta}}\Phi_{\bar{j}}^\gamma=0$. This condition means that $\Phi$ is holomorphic along the fibers, and can be identified with an element in $A^{0,1}(S,f_\ast T_{X/S})$.
\end{proof}

\begin{definition}
Let $\dbar$ be a $\dbar$-operator, which induces an almost complex structure $TX$. A $(1,0)$-connection $\nabla^{1,0}$ (resp. $(0,1)$-connection $\nabla^{0,1}$) is called \emph{pure} with respect to $\dbar$ (or simply pure, when $\dbar$ is clear from the context) if it is a splitting of the following exact sequence \eqref{HolTangentSeq_eq} (resp. \eqref{ConjTangentSeq_eq}). Equivalently, $\im(\nabla^{1,0})\subset TX$ (resp. $\im(\nabla^{0,1})\subset \widebar{TX}$). A complex connection $\nabla=\nabla^{1,0}+\nabla^{0,1}$ is called \emph{pure} if both $\nabla^{1,0}$ and $\nabla^{0,1}$ are pure.
\begin{align}
	0&\to  T_{X/S}\to TX\to f^\ast TS\to 0,\label{HolTangentSeq_eq}\\
	0&\to \widebar{T_{X/S}}\to \widebar{TX}\to f^\ast \widebar{TS}\to 0.\label{ConjTangentSeq_eq}
\end{align}
\end{definition}
Note that pure connections always exist. A $(0,1)$-connection $\nabla^{0,1}$ is pure if and only if it induces $\dbar$, and a $(1,0)$-connection $\nabla^{1,0}$ is pure if and only if $\widebar{\nabla^{1,0}}$ induces $\dbar$.

\begin{lemma}\label{lem:new10_conn}
Let $f: X \to S$ be a complex fiber bundle. Let $\nabla^{1,0}$ and $\nabla^{0,1}$ be connections locally given by \eqref{local10conn_eq} and \eqref{local01conn_eq} respectively. Then these connections induce a $(1,0)$-connection $\nabla_1^{1,0}$ given locally by
\[
  \partial_i\mapsto \partial_i + \Gamma^\alpha_i \partial_\alpha + \overline{\Gamma^\beta_{\bar{i}}} \partial_{\bar{\beta}}.
\]
In particular, $\nabla_1^{1,0}$ is pure with respect to the $\dbar$-operator induced by $\nabla^{0,1}$.
\end{lemma}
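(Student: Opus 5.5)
The proof is a coordinate check in three steps: show that the proposed local formula is well defined under changes of adapted coordinates, check that it is a $(1,0)$-connection, and read off purity from it. The key point is that $\overline{\Gamma^\beta_{\bar i}}$ is the complex conjugate of the $\partial_\gamma$-coefficient of $\nabla^{0,1}$, and this coefficient is exactly what determines the induced $\dbar$-operator \eqref{InducedDbar_eq}. A cleaner intrinsic route is also available: $\nabla_1^{1,0}(u)$ is the $T_{X/S}$-component of $\nabla^{1,0}(u)$ plus the $\widebar{T_{X/S}}$-component of $\widebar{\nabla^{0,1}}(u)$. We first make this sum intrinsic, and then the coordinate formula follows.

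\textbf{Intrinsic construction.} For $u\in f^*TS$, both $\nabla^{1,0}(u)$ and $\widebar{\nabla^{0,1}(\bar u)}$ are lifts of $u$ in $TX^{\CC}$, so their difference lies in $T^{\CC}_{X/S}=T_{X/S}\oplus\widebar{T_{X/S}}$. Let $\pi^{0,1}$ be the projection onto $\widebar{T_{X/S}}$, and define
\[
\nabla_1^{1,0}(u):=\nabla^{1,0}(u)+\pi^{0,1}\bigl(\widebar{\nabla^{0,1}(\bar u)}-\nabla^{1,0}(u)\bigr).
\]
This is a smooth bundle map $f^*TS\to TX^{\CC}$. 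It is a splitting of the required type, because the correction term is vertical and so does not change the projection to $f^*TS^{\CC}$, which is therefore $u$. It is manifestly independent of coordinates, so the transformation laws \eqref{TransLaw10_eq}–\eqref{TransLaw01_eq} need not be checked at all.

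\textbf{Local formula.} In adapted coordinates, $\nabla^{1,0}(\partial_i)=\partial_i+\Gamma^\alpha_i\partial_\alpha+\Gamma^{\bar\beta}_i\partial_{\bar\beta}$. Conjugating $H_{\bar i}$ gives
\[
\widebar{\nabla^{0,1}(\partial_{\bar i})}=\partial_i+\overline{\Gamma^{\gamma}_{\bar i}}\,\partial_{\bar\gamma}+\overline{\Gamma^{\bar\delta}_{\bar i}}\,\partial_{\delta}.
\]
The $\widebar{T_{X/S}}$-part of the difference is $\bigl(\overline{\Gamma^\beta_{\bar i}}-\Gamma^{\bar\beta}_i\bigr)\partial_{\bar\beta}$. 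Adding this to $\nabla^{1,0}(\partial_i)$ replaces $\Gamma^{\bar\beta}_i$ by $\overline{\Gamma^\beta_{\bar i}}$, which is exactly the stated formula.

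\textbf{Purity.} The conjugate is $\widebar{\nabla_1^{1,0}}(\partial_{\bar i})=\partial_{\bar i}+\Gamma^\beta_{\bar i}\partial_\beta+\overline{\Gamma^\alpha_i}\,\partial_{\bar\alpha}$. Modulo $\widebar{T_{X/S}}$ this equals $\nabla^{0,1}(\partial_{\bar i})$, since the two differ only in $\partial_{\bar\delta}$-terms. So $\widebar{\nabla_1^{1,0}}$ induces the same $\dbar$-operator as $\nabla^{0,1}$. By the remark following the definition of purity, $\nabla_1^{1,0}$ is then pure with respect to that $\dbar$.

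The only point needing care is the conjugation bookkeeping in the local formula: the $\partial_{\bar\beta}$-coefficient of $\widebar{H_{\bar i}}$ is $\overline{\Gamma^\beta_{\bar i}}$. Everything else is formal.
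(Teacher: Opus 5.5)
Your proof is correct, but it takes a different route from the paper. The paper sets ${\Gamma_1}_i^\alpha=\Gamma_i^\alpha$ and ${\Gamma_1}_i^{\bar\beta}=\overline{\Gamma^\beta_{\bar i}}$. It then checks that these coefficients satisfy the transformation law of a $(1,0)$-connection, using \eqref{TransLaw10_eq} for the first family and the complex conjugate of the second law in \eqref{TransLaw01_eq} for the second; purity is left as evident from the local formula.

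You instead define $\nabla_1^{1,0}(u)=\nabla^{1,0}(u)+\pi^{0,1}\bigl(\overline{\nabla^{0,1}(\bar u)}-\nabla^{1,0}(u)\bigr)$ globally. This uses only the canonical splitting $T^{\CC}_{X/S}=T_{X/S}\oplus\widebar{T_{X/S}}$ and the fact that both terms lift $u$. Well-definedness is therefore automatic, and only a one-line local computation remains. Your conjugation bookkeeping and your purity check via the remark after the definition of purity are both right.

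What your version buys is conceptual. It shows at once that $\nabla_1^{1,0}$ is unchanged if $\nabla^{1,0}(u)$ or $\nabla^{0,1}(\bar u)$ is modified by an element of $\widebar{T_{X/S}}$. In other words, $\nabla_1^{1,0}$ depends only on the almost connection induced by $\nabla^{1,0}$ and the $\dbar$-operator induced by $\nabla^{0,1}$. That is exactly how the lemma is used later, to identify a $(1,0)$-connection with $\hat D=\partial+\dbar$, e.g.\ in Lemma \ref{lem:RH_space}. The paper's coordinate check instead stays within the transformation-law framework used throughout that section and verifies directly that the stated local formula is tensorial.
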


\begin{proof}
We verify the transformation law for $\nabla_1^{1,0}$. We have ${\Gamma_1}_i^\alpha=\Gamma_i^\alpha$ and ${\Gamma_1}_i^{\bar \beta}=\overline{\Gamma_{\bar i}^\beta}$. Then by \eqref{TransLaw10_eq} and \eqref{TransLaw01_eq},
\begin{align*}
	{\Gamma_1'}_i^\alpha&=\frac{\partial s^j}{\partial s'^i}(\partial_j+\Gamma_j^\beta \partial_\beta)z'^\alpha=\frac{\partial s^j}{\partial s'^i}(\partial_j+{\Gamma_1}_j^\beta \partial_\beta)z'^\alpha,\\
	{\Gamma_1'}_i^{\bar \beta}&=\overline{\frac{\partial \bar{s}^j}{\partial \bar{s}'^i}(\partial_{\bar{j}}+\Gamma_{\bar{j}}^\alpha \partial_\alpha)z'^\beta}=\frac{\partial s^j}{\partial s'^i}(\partial_j+{\Gamma_1}_j^{\bar{\alpha}}\partial_{\bar{\alpha}}) \bar{z}'^{\beta}.
\end{align*}
The connection $\nabla_1^{1,0}$ is globally well-defined.
\end{proof}

Suppose that the $\dbar$-operator $\dbar$ is integrable. Then $f:X\to S$ is a holomorphic fibration. We can choose adapted local holomorphic coordinates such that the transition functions $z'(s,z)$ are holomorphic in $s$. In this case, a connection $\nabla^{1,0}$ (resp. $\nabla^{0,1}$) is pure with respect to $\dbar$ if and only if $\Gamma_i^{\bar{\beta}}=0$ (resp. $\Gamma_{\bar{j}}^\gamma=0$) in these adapted holomorphic coordinates, which are well-defined by \eqref{TransLaw10_eq} and \eqref{TransLaw01_eq}.

\subsection{Curvature classes}
Let $(f, T_{X/S})$ be a complex fiber bundle. Let $\dbar$ be a $\dbar$-operator (not necessarily integrable) and $\nabla^{1,0}$ be a $(1,0)$-connection (not necessarily pure). We consider the operator $D:=\nabla^{1,0}+\dbar$. We define the $(1,1)$-part of its curvature, $F^{1,1}_{D}$, via the composite map
\begin{equation}\label{Curvature11Def_eq}
	f^*(\wedge^{1,1}TS)\xrightarrow{[\nabla^{1,0},\nabla^{0,1}]} TX^\CC \xrightarrow{\mathrm{pr}_{T_{X/S}}} T_{X/S},
\end{equation}
where $\mathrm{pr}_{T_{X/S}}$ is defined using the splitting $TX^\CC=f^*TS^\CC\oplus T_{X/S}^\CC$ given by the connection $\nabla^{1,0}+\nabla^{0,1}$ for any $(0,1)$-connection $\nabla^{0,1}$ lifting $\dbar$, and $T_{X/S}^\CC=T_{X/S}\oplus \widebar{T_{X/S}}$. $\mathrm{pr}_{T_{X/S}}$ is independent of the choice of $\nabla^{0,1}$. $F^{1,1}_{D}$ is well-defined if and only if it is independent of the choice of $\nabla^{0,1}$. Equivalently, $[\im(\nabla^{1,0}), \widebar{T_{X/S}}]\to T_{X/S}$ is zero. In this case, $F^{1,1}_{D}$ is the $(1,0)$-vertical part of the $(1,1)$-part $F_{\nabla}^{1,1}$ of the curvature of $\nabla=\nabla^{1,0}+\nabla^{0,1}$. Therefore $F_D^{1,1}\in C^\infty(X,f^*(\wedge^{1,1}T^*S)\otimes T_{X/S})$. Choose adapted local coordinates $(s^i, z^\alpha)$ such that
\begin{align}
	\nabla^{1,0}(\partial_i) &= H_i = \partial_i+\Gamma^{\alpha}_i\partial_{\alpha}+\Gamma^{\bar{\beta}}_i\partial_{\bar{\beta}},\label{eq:10conn}\\
	\dbar(\partial_{\bar{j}}) &= [\partial_{\bar{j}}+\Gamma^{\gamma}_{\bar{j}}\partial_{\gamma}] \mod \widebar{T_{X/S}}.\label{eq:dbarf}
\end{align}

\begin{lemma}\label{lem:curv_def_rel_hol}
	$F^{1,1}_{D}$ is well-defined if and only if $\nabla^{1,0}$ is \emph{relatively holomorphic}, i.e.,
	\begin{equation}\label{RelHolConn_eq}
		\partial_{\bar{\beta}}\Gamma_i^\alpha=0,
	\end{equation}
	for all $\alpha,\beta,i$ in adapted local coordinates. Locally,
	\begin{equation}\label{Curvature11Local_eq}
		F^{1,1}_{D} = \bigl( \partial_i \Gamma_{\bar{j}}^\alpha - \partial_{\bar{j}} \Gamma_i^\alpha  +  \Gamma_i^\beta \partial_\beta \Gamma_{\bar{j}}^\alpha - \Gamma_{\bar{j}}^\gamma \partial_\gamma \Gamma_i^\alpha  + \Gamma_i^{\bar{\beta}} \partial_{\bar{\beta}} \Gamma_{\bar{j}}^\alpha \bigr) \D s^i \wedge \D\bar{s}^j\otimes \partial_\alpha.
	\end{equation}
\end{lemma}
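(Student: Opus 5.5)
The plan is a direct computation in adapted local coordinates $(s^i,z^\alpha)$. Fix a lifting $\nabla^{0,1}$ of $\dbar$, given locally by $H_{\bar j}=\partial_{\bar j}+\Gamma^\gamma_{\bar j}\partial_\gamma+\Gamma^{\bar\delta}_{\bar j}\partial_{\bar\delta}$. Here the $\Gamma^\gamma_{\bar j}$ are determined by $\dbar$, while the $\Gamma^{\bar\delta}_{\bar j}$ are arbitrary smooth functions. Any other lifting differs from this one by a section of $f^*\widebar{T^*S}\otimes\widebar{T_{X/S}}$, i.e.\ by changing only the $\Gamma^{\bar\delta}_{\bar j}$.

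First I will compute the bracket $[H_i,H_{\bar j}]$, with $H_i$ as in \eqref{eq:10conn}. This bracket is vertical, since $[\partial_i,\partial_{\bar j}]=0$. Next I will extract its $\partial_\alpha$-component. In the splitting determined by $\nabla^{1,0}+\nabla^{0,1}$, a vertical vector is its own vertical part, so $\mathrm{pr}_{T_{X/S}}$ simply takes the $\partial_\alpha$-coefficients. Expanding gives
\[
H_i(\Gamma^\alpha_{\bar j})-H_{\bar j}(\Gamma^\alpha_i)
=\partial_i\Gamma^\alpha_{\bar j}+\Gamma^\beta_i\partial_\beta\Gamma^\alpha_{\bar j}+\Gamma^{\bar\beta}_i\partial_{\bar\beta}\Gamma^\alpha_{\bar j}
-\partial_{\bar j}\Gamma^\alpha_i-\Gamma^\gamma_{\bar j}\partial_\gamma\Gamma^\alpha_i-\Gamma^{\bar\delta}_{\bar j}\partial_{\bar\delta}\Gamma^\alpha_i .
\]

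The only term that involves the free coefficients $\Gamma^{\bar\delta}_{\bar j}$ is $-\Gamma^{\bar\delta}_{\bar j}\partial_{\bar\delta}\Gamma^\alpha_i$. The projection is therefore independent of the lifting exactly when $\partial_{\bar\delta}\Gamma^\alpha_i=0$ for all indices, since the $\Gamma^{\bar\delta}_{\bar j}$ can be perturbed arbitrarily, for instance by bump functions. Equivalently, $[H_i,\partial_{\bar\beta}]=-\partial_{\bar\beta}\Gamma^\alpha_i\,\partial_\alpha-\partial_{\bar\beta}\Gamma^{\bar\gamma}_i\,\partial_{\bar\gamma}$ has zero $T_{X/S}$-component. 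This matches the criterion $[\im(\nabla^{1,0}),\widebar{T_{X/S}}]\to T_{X/S}$ being zero stated before the lemma. Once this condition holds, the last term drops out and what remains is exactly \eqref{Curvature11Local_eq}.

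The main remaining point is showing that the condition \eqref{RelHolConn_eq} does not depend on the choice of adapted coordinates, so that ``relatively holomorphic'' is intrinsic. I will deduce this from the intrinsic reformulation $\mathrm{pr}_{T_{X/S}}[\im\nabla^{1,0},\widebar{T_{X/S}}]=0$. Alternatively, it can be checked with the transformation law \eqref{TransLaw10_eq}, as in Lemma \ref{lem:LC_coordinates}: the functions $z'^\alpha$ are holomorphic in $z$, so $\partial_{\bar\beta'}$ annihilates $\partial_jz'^\alpha$ and $\partial_\beta z'^\alpha$. This leaves $\partial_{\bar\beta'}{\Gamma'}^\alpha_i=\frac{\partial s^j}{\partial s'^i}\frac{\partial z'^\alpha}{\partial z^\gamma}\frac{\partial\bar z^\delta}{\partial\bar z'^\beta}\partial_{\bar\delta}\Gamma^\gamma_j$. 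One subtlety in this check is that $\partial_{\bar\beta'}$ is taken at fixed $s'$ and equals $\frac{\partial\bar z^\delta}{\partial\bar z'^\beta}\partial_{\bar\delta}$. Finally, I need to confirm that the resulting local expressions glue into a section of $f^*(\wedge^{1,1}T^*S)\otimes T_{X/S}$. This is automatic, because the definition via \eqref{Curvature11Def_eq} is intrinsic.
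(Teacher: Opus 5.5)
Your proposal is correct and follows essentially the same route as the paper: a direct bracket computation in adapted coordinates, with coordinate independence checked through the transformation law \eqref{TransLaw10_eq}. The only minor difference is how the criterion is reached. You compute $[H_i,H_{\bar j}]$ with a general lift and read off the lift-dependence from the term $-\Gamma^{\bar\delta}_{\bar j}\partial_{\bar\delta}\Gamma^\alpha_i$. The paper instead quotes the equivalent criterion via $[\partial_{\bar\gamma},H_i]$ and then uses the particular lift $\partial_{\bar j}+\Gamma^\gamma_{\bar j}\partial_\gamma$ modulo $\widebar{T_{X/S}}$.
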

\begin{proof}
 First we show that the condition \eqref{RelHolConn_eq} is well-defined. In fact, by \eqref{TransLaw10_eq},
\begin{align*}
	\partial_{\bar{\beta}'} {\Gamma'}_{i}^{\alpha} &= \partial_{\bar{\beta}'} \Big( \frac{\partial s^j}{\partial s'^i} \left( \partial_j z'^\alpha + \Gamma_j^\gamma \partial_\gamma z'^\alpha \right) \Big)=\frac{\partial s^j}{\partial s'^i} \partial_{\bar{\beta}'} \left( \partial_j z'^\alpha + \Gamma_j^\gamma \partial_\gamma z'^\alpha \right)\\
	&=\frac{\partial s^j}{\partial s'^i} \big(\partial_{\bar{\beta}'}(\partial_j z'^\alpha) + (\partial_{\bar{\beta}'} \Gamma_j^\gamma) \partial_\gamma z'^\alpha + \Gamma_j^\gamma \partial_{\bar{\beta}'} (\partial_\gamma z'^\alpha) \big)\\
	&=\frac{\partial s^j}{\partial s'^i}(\partial_{\bar{\beta}'} \Gamma_j^\gamma) \partial_\gamma z'^\alpha=0.
\end{align*}

For $H_i=\nabla^{1,0}\partial_i=\partial_i+\Gamma_i^\alpha\partial_\alpha+\Gamma_i^{\bar{\beta}}\partial_{\bar{\beta}}\in\im(\nabla^{1,0})$ and  $\partial_{\bar{\gamma}} \in \widebar{T_{X/S}}$, we have
\[[\partial_{\bar{\gamma}}, H_i] = (\partial_{\bar{\gamma}}\Gamma_i^\alpha) \partial_\alpha+(\partial_{\bar{\gamma}}\Gamma_i^{\bar{\beta}}) \partial_{\bar{\beta}}.\] So $F^{1,1}_{D}$ is well-defined if and only if all $\partial_{\bar{\gamma}}\Gamma_i^\alpha$ vanish. \eqref{Curvature11Local_eq} follows from
\[ [\partial_i+\Gamma_i^\alpha\partial_\alpha+\Gamma_i^{\bar{\beta}}\partial_{\bar{\beta}}, \partial_{\bar{j}}+\Gamma_{\bar{j}}^\gamma\partial_\gamma]=\bigl( \partial_i \Gamma_{\bar{j}}^\alpha - \partial_{\bar{j}} \Gamma_i^\alpha  +  \Gamma_i^\beta \partial_\beta \Gamma_{\bar{j}}^\alpha - \Gamma_{\bar{j}}^\gamma \partial_\gamma \Gamma_i^\alpha  + \Gamma_i^{\bar{\beta}} \partial_{\bar{\beta}} \Gamma_{\bar{j}}^\alpha \bigr) \partial_\alpha\mod \widebar{T_{X/S}}.\qedhere\]
\end{proof}
If $\dbar$ satisfies the lifting condition, then by \eqref{Curvature11Local_eq}, $F_{D}^{1,1}$ is independent of $\Gamma_i^{\bar{\beta}}$. This motivates the following definition.
\begin{definition}\label{def:almost_connection}
	An \emph{almost connection} $\partial$ on a complex fiber bundle $(f,T_{X/S})$ is a smooth splitting of \eqref{eq:quot_exact_seq} restricted to $f^*TS$.
\end{definition}
Similar to the $\dbar$-operator case, any $(1,0)$-connection $\nabla^{1,0}$ induces an almost connection via the composition
\begin{equation}\label{InducedAC_eq}
	f^\ast TS\xrightarrow{\nabla^{1,0}} TX^{\CC}\to \frac{TX^{\CC}}{\widebar{T_{X/S}}}.
\end{equation}
Conversely, any almost connection is induced by a $(1,0)$-connection in this way.
 If $\dbar$ satisfies the lifting condition, then $F_D^{1,1}$ only depends on $\hat{D}:=\partial+\dbar$ and we may write it as $F_{\hat{D}}^{1,1}$. Similar to \eqref{eq:lift_cond}, we can also define the lifting condition for $\partial$ by $[\widebar{T_{X/S}},\im(\nabla^{1,0})]\subset \widebar{T_{X/S}}\oplus \im(\nabla^{1,0})$. Then $\partial$ satisfies the lifting condition if and only if every $\nabla^{1,0}$ inducing $\partial$ is relatively holomorphic. If $\nabla^{1,0}$ is relatively holomorphic, the $(1,0)$-vertical part of the curvature $F_{\nabla^{1,0}}^{2,0}$ of $\nabla^{1,0}$ only depends on $\partial$, and we denote it by $F_{\partial}^{2,0}$. Note that a $(1,0)$-connection $\nabla^{1,0}$ is equivalent to an operator $\hat{D}=\partial+\dbar$ since $\nabla^{1,0}$ induces $\partial$ and $\widebar{\nabla^{1,0}}$ induces $\dbar$, and conversely $\hat{D}=\partial+\dbar$ determines $\nabla^{1,0}$ by Lemma \ref{lem:new10_conn}.

\begin{example}\label{ex:vb_conn_dbar}
Let $f: E\to S$ be a smooth complex vector bundle of rank $m$ over a complex manifold $S$ of dimension $n$. Let $\{e_\alpha\}$ be a local smooth frame for $E$. This induces adapted local coordinates $(s^i, z^\alpha)$ on $E$.

A linear connection $D$ on $E$ determines a connection $\nabla^\RR$, i.e., a splitting of \eqref{RealTangentSeq_eq}. $D$ is locally represented by its connection 1-form matrix $\omega=(\omega_\alpha^\beta)$ relative to the frame $\{e_\alpha\}$, $D e_\alpha = \omega_\alpha^\beta \otimes e_\beta$. We decompose $\omega = \omega^{1,0} + \omega^{0,1}$, and write
\[\omega^{1,0}_\beta{}^\alpha = A_{i\beta}^\alpha(s) \D s^i, \quad
\omega^{0,1}_\beta{}^\alpha = B_{\bar{j}\beta}^\alpha(s) \D \bar{s}^j.\]

Let $\nabla$ be the complexification of $\nabla^{\RR}$. It decomposes as $\nabla=\nabla^{1,0}+\nabla^{0,1}$, with $\nabla^{0,1}=\widebar{\nabla^{1,0}}$. The horizontal lift $H_i = \nabla^{1,0}(\partial_i)$ satisfies $\D f(H_i)=\partial_i$, $(A_{j\beta}^\alpha(s)z^\beta\D s^j+\D z^\alpha)(H_i) = 0$, and $(\overline{B_{\bar{j}\beta}^\alpha(s)}\bar{z}^\beta\D s^j+\D \bar{z}^\alpha)(H_i) = 0$. Then
\begin{align*}
	\nabla^{1,0}(\partial_i) &= \partial_i - (A_{i\beta}^\alpha(s) z^\beta) \partial_\alpha-(\overline{B_{\bar{i}\beta}^\alpha(s)}\bar{z}^\beta)\partial_{\bar{\alpha}},\\
	\nabla^{0,1}(\partial_{\bar{j}}) &= \partial_{\bar{j}} - (B_{\bar{j}\beta}^\alpha(s) z^\beta) \partial_\alpha-(\overline{A_{j\beta}^\alpha(s)} \bar{z}^\beta) \partial_{\bar{\alpha}}.
\end{align*}
$\nabla^{1,0}$ is relatively holomorphic and $\nabla^{0,1}$ is mixed relatively holomorphic, which induce $\partial$ and $\dbar$, both satisfying the lifting condition. Note that although there is a decomposition $D=D^{1,0}+D^{0,1}$, $\nabla^{0,1}$ is not induced by $D^{0,1}$, and similarly for $\nabla^{1,0}$. If $E$ is a holomorphic vector bundle with holomorphic structure determined by $\dbar_E=D^{0,1}$ and $\{e_\alpha\}$ is a local holomorphic frame, then $\omega^{0,1}=0$. $D^{0,1}=\dbar_E$ does not canonically induce $\nabla^{0,1}$ in general. However, $\partial$ is determined by $D^{1,0}$ and $\dbar$ is determined by $D^{0,1}$.
\end{example}

\begin{lemma}\label{lem:RH_space}
Let $\dbar$ be a $\dbar$-operator on $(f,T_{X/S})$. Whenever nonempty, the space $\mathcal{A}_{\mathrm{RH},\dbar}$ of relatively holomorphic pure $(1,0)$-connections $\nabla^{1,0}$ is an affine space modeled on $A^{1,0}(S,f_*T_{X/S})$. $\mathcal{A}_{\mathrm{RH},\dbar}$ is isomorphic to the space $\mathcal{A}_{\mathrm{LC}}^{1,0}$ of almost connections satisfying the lifting condition.
\end{lemma}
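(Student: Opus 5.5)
The plan mirrors the proofs of Lemma \ref{lem:dbar_affine_space} and Lemma \ref{lem:LC_space}. First I will show that the difference of two pure $(1,0)$-connections lies in the right space. Fix $\nabla^{1,0}_0\in\mathcal{A}_{\mathrm{RH},\dbar}$ and let $\nabla^{1,0}$ be any other pure $(1,0)$-connection. Both are splittings of \eqref{HolTangentSeq_eq}, so their difference $\Psi:=\nabla^{1,0}-\nabla^{1,0}_0$ is a smooth section of $f^*T^*S\otimes T_{X/S}$. Conversely, $\nabla^{1,0}_0+\Psi$ is again a pure splitting for any such $\Psi$.

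In adapted local coordinates, purity fixes the antiholomorphic vertical coefficients $\Gamma_i^{\bar\beta}$: purity says $\overline{\nabla^{1,0}}$ induces $\dbar$, so $\Gamma_i^{\bar\beta}=\overline{\Gamma_{\bar i}^{\beta}}$, where $\Gamma^\beta_{\bar i}$ are the coefficients of $\dbar$ in \eqref{eq:dbarf}. Hence only $\Gamma_i^\alpha$ varies, and $\Gamma_i^\alpha=\Gamma_{0,i}^\alpha+\Psi_i^\alpha$. By Lemma \ref{lem:curv_def_rel_hol}, relative holomorphicity is the condition $\partial_{\bar\beta}\Gamma_i^\alpha=0$, and this condition is coordinate-independent. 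Since $\nabla_0^{1,0}$ satisfies it, $\nabla^{1,0}$ is relatively holomorphic if and only if $\partial_{\bar\beta}\Psi_i^\alpha=0$. Checking that this vanishing is coordinate-independent is the same transformation computation as in Lemma \ref{lem:LC_coordinates}: $\Psi$ transforms tensorially as a section of $f^*T^*S\otimes T_{X/S}$, and the change of fiber coordinates is fiberwise holomorphic. So $\Psi$ is fiberwise holomorphic, i.e.\ an element of $A^{1,0}(S,f_*T_{X/S})$. This gives the affine structure.

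For the second claim, I will send $\nabla^{1,0}\in\mathcal{A}_{\mathrm{RH},\dbar}$ to the almost connection $\partial$ it induces via \eqref{InducedAC_eq}. By the discussion after Definition \ref{def:almost_connection}, $\partial$ satisfies the lifting condition exactly when every inducing $(1,0)$-connection is relatively holomorphic. Relative holomorphicity depends only on the $\Gamma_i^\alpha$, which are also the coefficients of $\partial$, so the induced $\partial$ does satisfy the lifting condition and the map lands in $\mathcal{A}^{1,0}_{\mathrm{LC}}$.

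The inverse map uses the construction of Lemma \ref{lem:new10_conn}. Given $\partial\in\mathcal{A}^{1,0}_{\mathrm{LC}}$, choose any $(1,0)$-connection inducing $\partial$ and any $(0,1)$-connection inducing $\dbar$. The resulting $\nabla_1^{1,0}$ has coefficients $\Gamma_i^\alpha$ from $\partial$ and $\overline{\Gamma_{\bar i}^\beta}$ from $\dbar$. It is therefore independent of the choices, pure with respect to $\dbar$, and relatively holomorphic. The two maps are mutually inverse because a pure $(1,0)$-connection is determined by its $\Gamma_i^\alpha$. They are also affine: both spaces are modeled on $A^{1,0}(S,f_*T_{X/S})$, and the lifting-condition space is an affine space over it by the same argument as Lemma \ref{lem:LC_space}.

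The main point needing care is the well-definedness of the inverse: the output $\nabla^{1,0}_1$ must be independent of the chosen lifts of $\partial$ and $\dbar$. This holds because lifts of $\partial$ differ only by elements of $\widebar{T_{X/S}}$, which change only the $\Gamma_i^{\bar\beta}$, and $\nabla_1^{1,0}$ discards those. Everything else is routine.
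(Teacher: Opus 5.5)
Your proposal is correct and follows essentially the paper's proof: purity pins down the $\Gamma_i^{\bar\beta}$, so the difference of two elements of $\mathcal{A}_{\mathrm{RH},\dbar}$ involves only the $\Gamma_i^\alpha$ and is fiberwise holomorphic, and the isomorphism with $\mathcal{A}^{1,0}_{\mathrm{LC}}$ is given by \eqref{InducedAC_eq}, with inverse supplied by Lemma \ref{lem:new10_conn}. The only cosmetic difference is that you obtain tensoriality of the difference from both connections being splittings of \eqref{HolTangentSeq_eq}, whereas the paper checks it directly with the transformation law \eqref{TransLaw10_eq}.
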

\begin{proof}
Let $\nabla_1^{1,0},\nabla_2^{1,0}\in \mathcal{A}_{\mathrm{RH},\dbar}$. In adapted local coordinates, we write
\[\nabla_1^{1,0}(\partial_i)=\partial_i+{\Gamma_1}_i^\alpha\partial_\alpha+{\Gamma_1}_i^{\bar{\beta}}\partial_{\bar{\beta}},\quad \nabla_2^{1,0}(\partial_i)=\partial_i+{\Gamma_2}_i^\alpha\partial_\alpha+{\Gamma_2}_i^{\bar{\beta}}\partial_{\bar{\beta}}.\]
Since their conjugates induce the same $\dbar$, we have ${\Gamma_1}_i^{\bar{\beta}}={\Gamma_2}_i^{\bar{\beta}}$. Then
\begin{equation}\label{TransgressionForm_eq}
G_{1,2}:=\nabla_1^{1,0}-\nabla_2^{1,0}=({\Gamma_1}_i^\alpha-{\Gamma_2}_i^\alpha) \D s^i\otimes\partial_\alpha.
\end{equation}
We will prove that $G_{1,2}$ is a global section of $C^\infty(X, f^* T^*S \otimes T_{X/S})$ by showing it transforms correctly under coordinate changes $s' = s'(s), z' = z'(s,\bar{s},z)$. By \eqref{TransLaw10_eq},
\begin{align*}
({\Gamma'_1}_i^\alpha - {\Gamma'_2}_i^\alpha) \D {s'}^i\otimes \partial_{\alpha'}
&= \frac{\partial s^j}{\partial s'^i} \big( \big( \partial_j z'^\alpha + {\Gamma_1}_j^\beta \partial_\beta z'^\alpha \big) - \big( \partial_j z'^\alpha + {\Gamma_2}_j^\beta \partial_\beta z'^\alpha \big) \big)\D {s'}^i\otimes\partial_{\alpha'}  \\
&= \frac{\partial s^j}{\partial s'^i} \big( {\Gamma_1}_j^\beta - {\Gamma_2}_j^\beta \big) \partial_\beta z'^\alpha \D {s'}^i\otimes \partial_{\alpha'} \\
&= \frac{\partial s^j}{\partial s'^i} \big({\Gamma_1}_j^\beta - {\Gamma_2}_j^\beta\big)   \frac{\partial s'^i}{\partial s^p} \D s^p\otimes \partial_\beta\\
&= ({\Gamma_1}_p^\beta - {\Gamma_2}_p^\beta) \D s^p\otimes \partial_\beta.
\end{align*}
Therefore $G_{1,2}$ is globally defined. Since $\nabla_1^{1,0},\nabla_2^{1,0}$ are both relatively holomorphic, we have $G_{1,2}\in A^{1,0}(S,f_*T_{X/S})$. On the other hand, for any $\nabla_1^{1,0}$ and $G_{1,2}\in A^{1,0}(S,f_*T_{X/S})$, we have $\nabla_1^{1,0}+G_{1,2}\in \mathcal{A}_{\mathrm{RH},\dbar}$.

The map $\mathcal{A}_{\mathrm{RH},\dbar}\to \mathcal{A}_{\mathrm{LC}}^{1,0}$ is provided by \eqref{InducedAC_eq}. Its inverse is provided by Lemma \ref{lem:new10_conn}, which determines $\nabla^{1,0}$ using $\partial$ and $\dbar$.
\end{proof}
Combining Lemma \ref{lem:new10_conn}, Lemma \ref{lem:LC_space}, and Lemma \ref{lem:RH_space}, we obtain the following result.
\begin{corollary}\label{cor:RH_LC_space}
Whenever nonempty, the space $\mathcal{A}^{1,0}_{\mathrm{RH,LC}}$ of relatively holomorphic $(1,0)$-connections $\nabla^{1,0}$ whose conjugates induce $\dbar$-operators satisfying the lifting condition is an affine space modeled on $A^1(S,f_*T_{X/S})$. $\mathcal{A}^{1,0}_{\mathrm{RH,LC}}$ is nonempty if and only if there exists a relatively holomorphic $(1,0)$-connection $\nabla^{1,0}$ and a mixed relatively holomorphic $(0,1)$-connection $\nabla^{0,1}$. $\mathcal{A}^{1,0}_{\mathrm{RH,LC}}$ is isomorphic to the space $\mathcal{A}_{\mathrm{LC}}$ of $\partial+\dbar$, both satisfying the lifting conditions.
\end{corollary}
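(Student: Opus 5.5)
The corollary has three claims: the affine structure of $\mathcal{A}^{1,0}_{\mathrm{RH,LC}}$, the nonemptiness criterion, and the identification with $\mathcal{A}_{\mathrm{LC}}$. I would prove them in the order: identification first, then affine structure, then nonemptiness. Throughout, the dictionary of Lemma \ref{lem:new10_conn} is used: a $(1,0)$-connection $\nabla^{1,0}$ corresponds to the pair $\hat D=\partial+\dbar$. Here $\partial$ is induced by $\nabla^{1,0}$ via \eqref{InducedAC_eq}, and $\dbar$ is induced by $\widebar{\nabla^{1,0}}$ via \eqref{InducedDbar_eq}. Conversely, a pair $(\partial,\dbar)$ rebuilds $\nabla^{1,0}$ with local coefficients $\Gamma_i^\alpha$ taken from $\partial$ and $\Gamma_i^{\bar\beta}=\overline{\Gamma^\beta_{\bar i}}$ taken from $\dbar$.

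For the identification, note that being relatively holomorphic, i.e.\ \eqref{RelHolConn_eq}, only involves $\Gamma_i^\alpha$. So $\nabla^{1,0}$ is relatively holomorphic if and only if $\partial$ satisfies the lifting condition, as already remarked after Definition \ref{def:almost_connection}. The condition that $\widebar{\nabla^{1,0}}$ induces a $\dbar$ satisfying the lifting condition is, by definition, the condition that $\dbar\in\mathcal{A}^{0,1}_{\mathrm{LC}}$. Hence the bijection $\nabla^{1,0}\leftrightarrow(\partial,\dbar)$ restricts to $\mathcal{A}^{1,0}_{\mathrm{RH,LC}}\cong \mathcal{A}^{1,0}_{\mathrm{LC}}\times\mathcal{A}^{0,1}_{\mathrm{LC}}=\mathcal{A}_{\mathrm{LC}}$. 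The only thing to verify is that the two constructions are mutually inverse; this is immediate from the local formulas.

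For the affine structure, I fix a base point and split differences into two parts. By Lemma \ref{lem:LC_space}, $\mathcal{A}^{0,1}_{\mathrm{LC}}$ is affine over $A^{0,1}(S,f_*T_{X/S})$. For $\mathcal{A}^{1,0}_{\mathrm{LC}}$, the argument of Lemma \ref{lem:RH_space} (the transformation computation for $G_{1,2}$) applies verbatim: it used only the $\Gamma_i^\alpha$ coefficients and \eqref{TransLaw10_eq}. It shows that differences of almost connections satisfying the lifting condition are exactly the elements of $A^{1,0}(S,f_*T_{X/S})$. Therefore the product is affine over $A^{1,0}\oplus A^{0,1}=A^1(S,f_*T_{X/S})$. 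The mild subtlety is that Lemma \ref{lem:RH_space} is stated for a fixed $\dbar$, so I must note that varying $\dbar$ does not affect the $\partial$-part; this follows since the $\Gamma_i^\alpha$ and $\Gamma_{\bar j}^\gamma$ coefficients are independent data.

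For the nonemptiness criterion, I argue both directions. Given $\nabla^{1,0}\in\mathcal{A}^{1,0}_{\mathrm{RH,LC}}$, take $\nabla^{0,1}:=\widebar{\nabla^{1,0}}$; it induces a $\dbar$ satisfying the lifting condition, so it is mixed relatively holomorphic by Lemma \ref{lem:LC_coordinates}. Conversely, given a relatively holomorphic $\nabla^{1,0}$ and a mixed relatively holomorphic $\nabla^{0,1}$, Lemma \ref{lem:new10_conn} produces $\nabla_1^{1,0}$ with $\Gamma_i^\alpha$ unchanged, so it is still relatively holomorphic. Its conjugate has $(0,1)$-vertical coefficients $\Gamma^\gamma_{\bar j}$ from $\nabla^{0,1}$, so it induces the same $\dbar$, which satisfies the lifting condition by Lemma \ref{lem:LC_coordinates}. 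I expect no real obstacle. The only care needed is bookkeeping of which coefficients each condition involves, and the transformation laws for that have already been checked in the cited lemmas.
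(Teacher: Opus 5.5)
Your proposal is correct and is essentially the paper's argument: the paper obtains this corollary by combining Lemma \ref{lem:new10_conn} (the dictionary $\nabla^{1,0}\leftrightarrow\partial+\dbar$) with Lemma \ref{lem:LC_space} for the $(0,1)$-part and Lemma \ref{lem:RH_space} for the $(1,0)$-part of the model space $A^1(S,f_*T_{X/S})$, which is exactly your decomposition. Your extra bookkeeping fills in details the paper leaves implicit: relative holomorphicity involves only $\Gamma_i^\alpha$, and the lifting condition for $\dbar$ involves only $\Gamma_{\bar j}^\gamma$, so the $\partial$-part of Lemma \ref{lem:RH_space} does not depend on the fixed $\dbar$.
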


Under the condition \eqref{RelHolConn_eq}, one may directly use \eqref{Curvature11Local_eq} instead of \eqref{Curvature11Def_eq} to define $F^{1,1}_{D}$, since one may verify that \eqref{Curvature11Local_eq} transforms tensorially under a change of adapted coordinates $(s, z) \to (s', z')$, where $s'(s)$ is holomorphic, and $z'(s, \bar{s}, z)$ is holomorphic in $z$.

From now on we assume that $f:X\to S$ is a holomorphic fibration, $\dbar_f$ is the canonical integrable $\dbar$-operator, and $\nabla^{1,0}$ is relatively holomorphic and pure. The situation simplifies significantly. In adapted holomorphic coordinates, $\Gamma_i^{\bar{\beta}}=0$ in \eqref{eq:10conn} and $\Gamma_{\bar{j}}^\gamma=0$ in \eqref{eq:dbarf}. Then \eqref{Curvature11Local_eq} simplifies to
\begin{equation}\label{Curvature11Local_eq1}
	F^{1,1}_{D}=-\big(\partial_{\bar{j}}\Gamma_i^\alpha\big)  \D s^i\wedge \D \bar{s}^j\otimes \partial_\alpha.
\end{equation}
As suggested above, we can directly check that the local expressions \eqref{Curvature11Local_eq1} glue to a global element as follows. Consider the change of adapted holomorphic coordinates $(s, z) \to (s', z')$, then $s'(s)$ is holomorphic, and $z'(s, z)$ is holomorphic in $s$ and $z$.
\begin{lemma}
	$ \big(\partial_{\bar{j}'} \Gamma'{}_i^\alpha \big) \D s'^i \wedge \D\bar{s}'^j\otimes \partial_{\alpha'} =\big(\partial_{\bar{j}}\Gamma_i^\alpha\big) \D s^i\wedge \D \bar{s}^j\otimes \partial_\alpha$.
\end{lemma}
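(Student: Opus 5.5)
The plan is to specialize the transformation law \eqref{TransLaw10_eq} to the holomorphic setting and differentiate it in $\bar s'$. In adapted holomorphic coordinates the functions $s'(s)$ and $z'(s,z)$ are holomorphic in all variables. Purity gives $\Gamma_i^{\bar\beta}=0$, so
\[
\Gamma'{}_i^\alpha=\frac{\partial s^k}{\partial s'^i}\bigl(\partial_k z'^\alpha+\Gamma_k^\beta\,\partial_\beta z'^\alpha\bigr).
\]

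First, I convert $\partial_{\bar j'}$ into the unprimed coordinates. The inverse change $s=s(s')$, $z=z(s',z')$ is holomorphic, so
\[
\partial_{\bar j'}=\frac{\partial\bar s^l}{\partial\bar s'^j}\partial_{\bar l}.
\]
There is no $\partial_{\bar\gamma}$ term, because $\bar z$ depends only on $\bar s'$ and $\bar z'$ antiholomorphically. Even if such a term were present, it would kill $\Gamma$ by relative holomorphicity \eqref{RelHolConn_eq}. Here $\partial_{\bar l}$ is taken at fixed $z$, and $\bar s$ depends only on $\bar s'$.

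Next, I apply $\partial_{\bar l}$ to the displayed expression. The factors $\partial s^k/\partial s'^i$, $\partial_k z'^\alpha$ and $\partial_\beta z'^\alpha$ are holomorphic in $(s,z)$, so they are annihilated. This leaves
\[
\partial_{\bar j'}\Gamma'{}_i^\alpha=\frac{\partial s^k}{\partial s'^i}\,\frac{\partial\bar s^l}{\partial\bar s'^j}\,\bigl(\partial_{\bar l}\Gamma_k^\beta\bigr)\,\partial_\beta z'^\alpha.
\]

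Finally, I substitute the standard relations
\[
\D s'^i=\frac{\partial s'^i}{\partial s^p}\D s^p,\qquad \D\bar s'^j=\frac{\partial\bar s'^j}{\partial\bar s^q}\D\bar s^q,\qquad \partial_{\alpha'}\text{ with }\partial_\beta=\frac{\partial z'^\alpha}{\partial z^\beta}\partial_{\alpha'}.
\]
The last relation holds as vertical vectors, since $s$ is fixed along fibers. The Jacobians then cancel pairwise, exactly as in the computation for $G_{1,2}$ in the proof of Lemma \ref{lem:RH_space}, and this gives the claimed identity.

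The only delicate point is the first step. One has to make sure that the vector field $\partial_{\bar j'}$, computed in the primed chart, equals the stated combination of the $\partial_{\bar l}$ with no vertical correction. A vertical correction would come from $\partial\bar z^\gamma/\partial\bar s'^j$. It can be nonzero, since the transition is holomorphic but still depends on $s$. However, it enters only through $\partial_{\bar\gamma}\Gamma_k^\beta$, which vanishes by \eqref{RelHolConn_eq}. With that observation the rest is bookkeeping.
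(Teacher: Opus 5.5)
Your proof is correct and is essentially the paper's: differentiate the transformation law \eqref{TransLaw10_eq}, rewrite $\partial_{\bar j'}$ in the unprimed chart, discard the vertical correction using \eqref{RelHolConn_eq}, and cancel the Jacobians. One fix: the displayed identity $\partial_{\bar j'}=\frac{\partial\bar s^l}{\partial\bar s'^j}\partial_{\bar l}$ and the claim that there is no $\partial_{\bar\gamma}$ term are false as vector-field statements, since the coefficient $\overline{\partial z^\gamma/\partial s'^j}$ of $\partial_{\bar\gamma}$ is generally nonzero (the paper writes this term out explicitly); keep only your final-paragraph justification, namely that the correction acts only on functions annihilated by $\partial_{\bar\gamma}$.
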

\begin{proof}
	The antiholomorphic derivative transforms as
	\[
	\partial_{\bar{j}'} = \frac{\partial}{\partial \bar{s}'^j} = \widebar{ \frac{\partial s^k}{\partial s'^j}} \partial_{\bar{k}}- \widebar{ \frac{\partial s^k}{\partial s'^j}} \frac{\partial \bar{z}'^\beta}{\partial \bar{s}^k} \frac{\partial \bar{z}^\gamma}{\partial \bar{z}'^\beta}\partial_{\bar{\gamma}}.
	\]
	Apply $\partial_{\bar{j}'}$ to the transformation formula \eqref{TransLaw10_eq} and use \eqref{RelHolConn_eq},
	\begin{align*}
	\partial_{\bar{j}'} \Gamma'{}_i^\alpha
	&= \partial_{\bar{j}'} \Big( \frac{\partial s^\ell}{\partial s'^i} \big( \partial_\ell z'^\alpha + \Gamma_\ell^\beta \partial_\beta z'^\alpha \big) \Big) \\
	&= \frac{\partial s^\ell}{\partial s'^i} \partial_{\bar{j}'} \big( \Gamma_\ell^\beta \partial_\beta z'^\alpha \big) = \frac{\partial s^\ell}{\partial s'^i} \big( \partial_{\bar{j}'} \Gamma_\ell^\beta \big) \partial_\beta z'^\alpha \\
	&= \frac{\partial s^\ell}{\partial s'^i} \widebar{ \frac{\partial s^k}{\partial s'^j} } \big( \partial_{\bar{k}} \Gamma_\ell^\beta \big) \partial_\beta z'^\alpha.
	\end{align*}
	The relative tangent vectors transform as
	\[
	\partial_{\alpha'} = \frac{\partial z^\gamma}{\partial z'^\alpha} \partial_\gamma, \quad \text{thus} \quad \partial_\beta z'^\alpha \partial_{\alpha'} = \partial_\beta z'^\alpha \frac{\partial z^\gamma}{\partial z'^\alpha} \partial_\gamma = \delta_\beta^\gamma \partial_\gamma = \partial_\beta.
	\]
	Therefore
	\[
	\big( \partial_{\bar{j}'} \Gamma'{}_i^\alpha \big) \partial_{\alpha'} = \frac{\partial s^\ell}{\partial s'^i} \widebar{ \frac{\partial s^k}{\partial s'^j} } \big( \partial_{\bar{k}} \Gamma_\ell^\beta \big) \partial_\beta.
	\]
	The differential form transforms as
	\[
	\D s'^i \wedge \D\bar{s}'^j = \frac{\partial s'^i}{\partial s^p} \widebar{ \frac{\partial s'^j}{\partial s^q} } \D s^p \wedge \D\bar{s}^q.
	\]
	Then we have
	\begin{align*}
	\big(\partial_{\bar{j}'} \Gamma'{}_i^\alpha\big)   \D s'^i \wedge \D\bar{s}'^j\otimes \partial_{\alpha'} &= \Big( \frac{\partial s'^i}{\partial s^p} \widebar{ \frac{\partial s'^j}{\partial s^q} } \D s^p \wedge \D\bar{s}^q \Big)\otimes \Big( \frac{\partial s^\ell}{\partial s'^i} \widebar{ \frac{\partial s^k}{\partial s'^j} } \big( \partial_{\bar{k}} \Gamma_\ell^\beta \big) \partial_\beta \Big) \\
	&=  \frac{\partial s^\ell}{\partial s'^i} \frac{\partial s'^i}{\partial s^p}  \widebar{ \frac{\partial s^k}{\partial s'^j} \frac{\partial s'^j}{\partial s^q} } \big( \partial_{\bar{k}} \Gamma_\ell^\beta \big)  \D s^p \wedge \D\bar{s}^q\otimes \partial_\beta\\
	&=\delta_p^\ell \delta_q^k\big( \partial_{\bar{k}} \Gamma_\ell^\beta \big)   \D s^p \wedge \D\bar{s}^q\otimes \partial_\beta\\
	&=\big( \partial_{\bar{q}} \Gamma_p^\beta \big) \D s^p \wedge \D\bar{s}^q\otimes \partial_\beta. \qedhere
	\end{align*}
\end{proof}

By \eqref{RelHolConn_eq} and \eqref{Curvature11Local_eq1}, $F^{1,1}_{D}$ can be regarded as an element in $A^{1,1}(S,f_\ast T_{X/S})$. We have
\[A^{1,1}(S,f_\ast T_{X/S})\subset C^\infty(X,f^*(\wedge^{1,1}T^*S)\otimes T_{X/S})\subset A^{0,1}(X, f^*T^* S \otimes T_{X/S}).\]
 $F_D^{1,1}$ is a cocycle in $A^{0,1}(X, f^*T^* S \otimes T_{X/S})$, since locally we can write
\begin{equation}\label{CurvLocalExact_eq}
	F^{1,1}_{D}=\dbar_X(\Gamma_i^\alpha \D s^i\otimes \partial_\alpha),
\end{equation}
which means $F^{1,1}_{D}$ is locally exact, hence closed. Therefore we obtain the class \[[F^{1,1}_{D}]\in H^1(X,f^*\Omega_S \otimes T_{X/S}),\] called the \emph{curvature class} of $D$.

\begin{proposition}\label{CurvClassIndep_prop}
	The class $[F^{1,1}_{D}]\in H^1(X,f^*\Omega_S \otimes T_{X/S})$ is independent of the choice of a relatively holomorphic pure $(1,0)$-connection $\nabla^{1,0}$.
\end{proposition}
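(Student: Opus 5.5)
Two relatively holomorphic pure $(1,0)$-connections with respect to the canonical $\dbar_f$ differ by a global tensor, so the plan is to show that the difference of their curvatures is $\dbar_X$ of that tensor. Let $\nabla_1^{1,0},\nabla_2^{1,0}$ be two such connections, with $D_k=\nabla_k^{1,0}+\dbar_f$. By Lemma \ref{lem:RH_space}, their difference
\[
G_{1,2}=\nabla_1^{1,0}-\nabla_2^{1,0}=({\Gamma_1}_i^\alpha-{\Gamma_2}_i^\alpha)\,\D s^i\otimes\partial_\alpha
\]
is a globally defined smooth section of $f^*T^*S\otimes T_{X/S}$ over $X$; in particular it lies in $A^{0,0}(X,f^*\Omega_S\otimes T_{X/S})$. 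The goal is therefore to prove
\[
F^{1,1}_{D_1}-F^{1,1}_{D_2}=\dbar_X G_{1,2},
\]
which makes the two classes in $H^1(X,f^*\Omega_S\otimes T_{X/S})$ equal.

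The identity is checked locally in adapted holomorphic coordinates, where both connections have $\Gamma_i^{\bar\beta}=0$ by purity. Formula \eqref{CurvLocalExact_eq} gives $F^{1,1}_{D_k}=\dbar_X({\Gamma_k}_i^\alpha\,\D s^i\otimes\partial_\alpha)$ on each chart. Subtracting the two local expressions yields exactly $\dbar_X$ of the local expression of $G_{1,2}$.

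The only point needing care is the meaning of $\dbar_X$ applied to a section of $f^*\Omega_S\otimes T_{X/S}$. Here $f^*\Omega_S\otimes T_{X/S}$ is a holomorphic vector bundle on $X$, with local holomorphic frame $\D s^i\otimes\partial_\alpha$ in adapted holomorphic coordinates, since the transition functions $z'(s,z)$ are holomorphic. So $\dbar_X$ acts componentwise on coefficients in this frame, namely $\dbar_X(\phi_i^\alpha\,\D s^i\otimes\partial_\alpha)=(\dbar_X\phi_i^\alpha)\otimes\D s^i\otimes\partial_\alpha$. This operator is independent of the chart, and the local computation therefore globalizes.

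It remains to confirm that $\dbar_X$ of the difference agrees with $-(\partial_{\bar j}\Gamma)\,\D s^i\wedge\D\bar s^j$ under the identification convention of the paper (up to its sign convention $\beta\otimes\alpha\mapsto\beta\wedge\alpha$). The fiber-direction derivatives $\partial_{\bar\beta}\Gamma_{k,i}^\alpha$ vanish by relative holomorphicity \eqref{RelHolConn_eq}, so only base antiholomorphic derivatives survive. This matches \eqref{Curvature11Local_eq1} for each $k$, and no real obstacle arises beyond this bookkeeping. Hence the two cocycles differ by the coboundary $\dbar_X G_{1,2}$, and the curvature class is independent of the choice of $\nabla^{1,0}$.
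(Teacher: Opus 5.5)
Your proof is correct and follows the paper's argument: by Lemma \ref{lem:RH_space} the difference $G_{1,2}$ is a global smooth section of $f^*T^*S\otimes T_{X/S}$, and \eqref{CurvLocalExact_eq} gives $F^{1,1}_{D_1}-F^{1,1}_{D_2}=\dbar_X G_{1,2}$, so the two classes coincide. Your additional checks are consistent with \eqref{Curvature11Local_eq1} and simply make explicit what the paper leaves implicit: that $\D s^i\otimes\partial_\alpha$ is a local holomorphic frame, so $\dbar_X$ acts componentwise, and that the sign convention $\beta\otimes\alpha\mapsto\beta\wedge\alpha$ matches.
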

\begin{proof}
	Let $\nabla_1^{1,0},\nabla_2^{1,0}$ be two relatively holomorphic pure $(1,0)$-connections and $F_1,F_2$ be the corresponding $(1,1)$-type curvatures. By \eqref{CurvLocalExact_eq}, locally we have $F_1-F_2=\dbar_X G_{1,2}$, where $G_{1,2}$ is given by \eqref{TransgressionForm_eq}, and $G_{1,2}\in A^{1,0}(S,f_\ast T_{X/S})\subset C^{\infty}(X,f^*T^*S\otimes T_{X/S})$. Therefore $F_1-F_2$ is $\dbar_X$-exact, and $[F_1]=[F_2]$ in $ H^1(X,f^*\Omega_S \otimes T_{X/S})$.
\end{proof}

In general, there may not exist a relatively holomorphic pure $(1,0)$-connection on $f$. In fact, let \[\rho_{s_0}:T_{s_0}S\to H^1(X_{s_0},TX_{s_0})\] be the Kodaira-Spencer map at $s_0\in S$, then $\rho_{s_0}(\partial_i)$ is represented by (see \cite[\S4]{schumacher12positivity})
\[(\bar{\partial} H_i)|_{X_{s_0}}=(\bar{\partial}( \partial_i+\Gamma_i^\alpha\partial_\alpha))|_{X_{s_0}}=(\partial_{\bar{\beta}}\Gamma_i^\alpha)  \D \bar{z}^\beta\otimes \partial_\alpha. \]
For completeness, we give a proof in the more general setting of a complex fiber bundle.
\begin{proposition}\label{prop:ks_conn_rep}
Let $f: X \to S$ be a complex fiber bundle. Let $\mathcal{U} = \{U_a\}$ be an open cover of $X$ with adapted coordinates $(s, z_a)$ such that the transition functions $z_a = f_{ab}(s,\bar{s},z_b)$ are holomorphic in $z_b$. Consider a $(1,0)$-connection $\nabla^{1,0}$ defined locally by
 $H_i|_{U_a}=\nabla^{1,0}(\partial_i)=\partial_i + \Gamma_{i,a}^\alpha \partial_{\alpha,a} + \Gamma_{i,a}^{\bar{\beta}}\partial_{\bar{\beta},a}$. Then the Kodaira-Spencer class $\rho_{s_0}(\partial_i) \in H^1(X_{s_0}, TX_{s_0})$ is represented in the Dolbeault cohomology by
\[ b_i = (\partial_{\bar{\beta},a} \Gamma_{i,a}^\alpha) \D\bar{z}_a^\beta\otimes \partial_{\alpha,a}\quad\text{ on }U_a|_{X_{s_0}}. \]
Similarly, $\rho_{s_0}(\partial_{\bar i})$ is represented by $b_{\bar{i}}= (\partial_{\bar{\beta},a} \Gamma_{\bar{i},a}^\alpha) \D\bar{z}_a^\beta\otimes \partial_{\alpha,a}$ on $U_a|_{X_{s_0}}$ for a $(0,1)$-connection $\nabla^{0,1}$ defined locally by $H_{\bar i}|_{U_a}=\nabla^{0,1}(\partial_{\bar i})=\partial_{\bar i} + \Gamma_{\bar{i},a}^\alpha \partial_{\alpha,a} + \Gamma_{\bar{i},a}^{\bar{\beta}}\partial_{\bar{\beta},a}$.
\end{proposition}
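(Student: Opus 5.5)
We will compare the Čech description of the Kodaira--Spencer class with the Dolbeault description coming from the connection. First we recall the definition for a differentiable family of complex manifolds. Over a small neighborhood of $s_0$, the coordinate changes on $X_{s_0}$ are $z_a=f_{ab}(s,\bar s,z_b)$. Then $\rho_{s_0}(\partial_i)$ is the Čech class of the cocycle
\[
\theta_{ab,i}:=\frac{\partial f^\alpha_{ab}}{\partial s^i}(s_0,z_b)\,\partial_{\alpha,a}\in \Gamma(U_{ab}\cap X_{s_0},TX_{s_0}),
\]
which is holomorphic in $z_b$ because $f_{ab}$ is. For $\partial_{\bar i}$ the same formula holds with $\partial/\partial\bar s^i$ in place of $\partial/\partial s^i$; this is the Kodaira--Spencer map of a differentiable family in the sense of Kodaira. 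The statement is then the assertion that this Čech cocycle corresponds, under the Čech--Dolbeault isomorphism with our conventions $(\delta u)_{ab}=u_b-u_a$ and $\D_{\mathrm{tot}}=\delta+(-1)^p\dbar$, to the Dolbeault class of $b_i$.

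The key step is to use the local vertical fields $\xi_a:=\Gamma^\alpha_{i,a}\,\partial_{\alpha,a}\big|_{X_{s_0}}$, which are smooth $(1,0)$-vector fields on $U_a\cap X_{s_0}$. They form a Čech $0$-cochain with $\dbar\xi_a=b_i|_{U_a}$. We then compare $\xi_a$ and $\xi_b$ on overlaps using the transformation law \eqref{TransLaw10_eq}. The base transition is the identity, so only the fiber transition $z'=z_a$, $z=z_b$ enters, and we get
\[
\Gamma^\alpha_{i,a}=\partial_i f^\alpha_{ab}+\Gamma^\beta_{i,b}\,\partial_\beta f^\alpha_{ab}.
\]
Since $\partial_{\beta,b}f^\alpha_{ab}\,\partial_{\alpha,a}=\partial_{\beta,b}$, this reads $\xi_a-\xi_b=\theta_{ab,i}$, i.e. $(\delta\xi)_{ab}=-\theta_{ab,i}$. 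The $\partial_{\bar\beta}$-components $\Gamma^{\bar\beta}_i$ do not enter, because they only affect $\bar z'$ in \eqref{TransLaw10_eq}. Applying $\dbar$ to this identity shows that the $b_i$ glue to a global $\dbar$-closed form; indeed $\theta_{ab,i}$ is fiberwise holomorphic, which is where holomorphy of $f_{ab}$ in $z_b$ is used.

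Next we run the standard zig-zag. The $0$-cochain $\xi$ satisfies $\D_{\mathrm{tot}}\xi=\delta\xi+\dbar\xi=-\theta_i+b_i$. Hence the Čech class of $\theta_i$ and the Dolbeault class of $b_i$ agree in $H^1(X_{s_0},TX_{s_0})$, up to the overall sign fixed by the Čech--Dolbeault identification. We then check that the paper's sign conventions make this sign $+1$, or else absorb it into the convention for $\rho$; this bookkeeping is the main point needing care. Independence of the choice of $\nabla^{1,0}$ follows because changing the connection changes $\xi$ by a global vertical field $\eta$, and hence changes $b_i$ by $\dbar\eta$.

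The $(0,1)$ case is identical. We use \eqref{TransLaw01_eq} in the form
\[
\Gamma'^\gamma_{\bar j}=(\partial_{\bar j}+\Gamma^\delta_{\bar j}\partial_\delta)z'^\gamma,
\]
again with trivial base change, and take $\xi_a=\Gamma^\alpha_{\bar i,a}\partial_{\alpha,a}$. The only genuine obstacle is confirming that the Kodaira--Spencer cocycle is $\partial_{s^i}f_{ab}$ rather than its negative under the paper's conventions. We would settle this by testing on a trivial family with a shear transition, where both sides can be computed explicitly.
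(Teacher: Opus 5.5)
Your proposal is essentially the paper's proof: the paper takes the same $0$-cochain $\sigma_a=\Gamma^\alpha_{i,a}\partial_{\alpha,a}$ and uses \eqref{TransLaw10_eq} to get $\delta\sigma=-\theta$ with $\theta_{ab}=(\partial_i f^\alpha_{ab})\partial_{\alpha,a}$, then concludes from $\D_{\mathrm{tot}}\sigma=-\theta+b_i$. The sign check you defer is not needed: $b_i-\theta=\D_{\mathrm{tot}}\sigma$ is already exact, so the two classes agree with sign $+1$ as soon as the Kodaira--Spencer cocycle is defined as $\partial_{s^i}f_{ab}$, which is exactly the definition the paper adopts.
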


\begin{proof}
Consider the \v{C}ech-Dolbeault double complex $K^{p,q} = \check{C}^p(\mathcal{U}, \mathcal{A}^{0,q}(TX_{s_0}))$. By the differential geometric definition, the Kodaira-Spencer class $\rho(\partial_i)$ is represented by a \v{C}ech 1-cocycle $\theta \in \check{C}^1(\mathcal{U}, TX_{s_0})$, given by $\theta_{ab} := (\partial_i f_{ab}^\alpha(s,\bar{s}, z_b) )\partial_{\alpha,a}$ on the overlap $U_{ab}|_{X_{s_0}}$. By \eqref{TransLaw10_eq},
\begin{align*}
	\theta_{ab}&=(\partial_i f_{ab}^\alpha) \partial_{\alpha,a}=\Gamma_{i,a}^\alpha\partial_{\alpha,a}-\Gamma_{i,b}^\beta(\partial_{\beta,b}f_{ab}^\alpha) \partial_{\alpha,a}
	\\ &=\Gamma_{i,a}^\alpha\partial_{\alpha,a}-\Gamma_{i,b}^\alpha\partial_{\alpha,b}=:\sigma_a-\sigma_b.
\end{align*}
Therefore, we find a 0-cochain $\sigma = \{\sigma_a\} \in K^{0,0}$ such that $\delta\sigma=-\theta$. On the other hand,
\[\dbar \sigma_a=(\dbar_{X_{s_0}} \Gamma_{i,a}^\alpha)\partial_{\alpha,a}= (\partial_{\bar{\beta},a} \Gamma_{i,a}^\alpha) \D\bar{z}_a^\beta\otimes \partial_{\alpha,a}.\]
This implies $\dbar\sigma=b_i$ and $\D_{\mathrm{tot}}\sigma=-\theta+b_i$, so the Kodaira-Spencer class $\rho(\partial_i)$ is represented by $b_i$. The proof for $\rho_{s_0}(\partial_{\bar i})$ is similar.
\end{proof}

\begin{corollary}\label{cor:rel_hol_ks_vanish}
If the complex fiber bundle $f:X\to S$ admits a relatively holomorphic $(1,0)$-connection and a mixed relatively holomorphic $(0,1)$-connection, then the Kodaira-Spencer map for $f$ vanishes everywhere.
\end{corollary}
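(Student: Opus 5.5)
The plan is to use Proposition \ref{prop:ks_conn_rep} directly. We need to show that $\rho_{s_0}(\partial_i)$ and $\rho_{s_0}(\partial_{\bar i})$ vanish for every $s_0\in S$ and every $i$. Here the Kodaira--Spencer map of the complex fiber bundle is taken on $TS^{\CC}$, so the conjugate directions also count.

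Let $\nabla^{1,0}$ be a relatively holomorphic $(1,0)$-connection. Write it in adapted coordinates on each chart $U_a$ as $H_i=\partial_i+\Gamma^\alpha_{i,a}\partial_{\alpha,a}+\Gamma^{\bar\beta}_{i,a}\partial_{\bar\beta,a}$.
\begin{itemize}
\item By Lemma \ref{lem:curv_def_rel_hol}, relative holomorphicity means $\partial_{\bar\beta,a}\Gamma^\alpha_{i,a}=0$ for all $\alpha,\beta,i$. This condition is independent of the adapted coordinates, as checked there.
\item Hence the representative $b_i=(\partial_{\bar\beta,a}\Gamma^\alpha_{i,a})\,\D\bar z^\beta_a\otimes\partial_{\alpha,a}$ from Proposition \ref{prop:ks_conn_rep} vanishes identically on each $U_a|_{X_{s_0}}$.
\item Its Dolbeault class is therefore zero, so $\rho_{s_0}(\partial_i)=0$.
\end{itemize}

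Let $\nabla^{0,1}$ be a mixed relatively holomorphic $(0,1)$-connection. By Lemma \ref{lem:LC_coordinates}, its coefficients satisfy $\partial_{\bar\beta}\Gamma^\gamma_{\bar j}=0$ in any adapted coordinates. So the representative $b_{\bar i}$ from Proposition \ref{prop:ks_conn_rep} also vanishes identically, and $\rho_{s_0}(\partial_{\bar i})=0$.

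Since $\{\partial_i,\partial_{\bar i}\}$ spans $T_{s_0}S^{\CC}$ and $\rho_{s_0}$ is linear, $\rho_{s_0}$ vanishes at every $s_0$.

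The main point to watch is that Proposition \ref{prop:ks_conn_rep} holds for an arbitrary, not necessarily pure, $(1,0)$- or $(0,1)$-connection, with the local coefficients read off in the chosen adapted coordinates. It does: its proof only uses the transformation laws \eqref{TransLaw10_eq} and \eqref{TransLaw01_eq}. No compatibility between the two connections is needed, and the vanishing conditions are pointwise identities. Consequently the representatives vanish as forms, not merely up to exact terms, so there is no real obstacle.
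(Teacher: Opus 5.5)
Your proposal is correct and is exactly the intended argument. The paper gives no separate proof: the corollary follows at once from Proposition \ref{prop:ks_conn_rep}, since the representatives $b_i$ and $b_{\bar i}$ vanish identically under the relative holomorphicity condition \eqref{RelHolConn_eq} and the mixed condition \eqref{eq:LC}, respectively.
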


\begin{proposition}\label{prop:exist_fiberwise_holo_connection}
Let $f: X \to S$ be a complex fiber bundle with connected base $S$. If $f$ is isotrivial, then it admits a relatively holomorphic $(1,0)$-connection and a mixed relatively holomorphic $(0,1)$-connection. The converse is true if $f$ has compact fibers. If moreover $f$ is a holomorphic fibration, then it is a holomorphic fiber bundle.
\end{proposition}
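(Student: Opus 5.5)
For the forward direction I would work with a compatible atlas $\Phi_a: f^{-1}(U_a)\to U_a\times Y$, as in Lemma \ref{lem:isotrivial_cx_fib_bun}, whose transition maps take values in $\Aut(Y)$. On each chart I pull back the trivial $(1,0)$- and $(0,1)$-connections $\partial_i\mapsto\partial_i$ and $\partial_{\bar j}\mapsto\partial_{\bar j}$ on $U_a\times Y$. In the adapted coordinates $(s,z)$ coming from $Y$, these local connections have all $\Gamma$'s equal to zero, so they satisfy \eqref{RelHolConn_eq} and \eqref{eq:LC}. I then glue them with a smooth partition of unity $\{\rho_a\}$ on $S$, pulled back by $f$.

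The key observation is that relative holomorphicity is preserved under convex combinations with coefficients depending only on $s$. Indeed, in a fixed adapted chart the coefficient $\Gamma_i^\alpha$ of $\sum_a\rho_a(s)\nabla_a^{1,0}$ is $\sum_a\rho_a(s)\Gamma^\alpha_{i,a}$. By the transformation law \eqref{TransLaw10_eq}, each $\Gamma^\alpha_{i,a}$ is fiberwise holomorphic: $\partial_i z'^\alpha$ and $\partial_\beta z'^\alpha$ are holomorphic in $z$ because $z'$ is holomorphic in $z$. The same argument applies to $\Gamma^\gamma_{\bar j}$ via \eqref{TransLaw01_eq}. The affine structure is legitimate because the differences $\nabla_a-\nabla_b$ are vertical, and $\rho_a$ is constant along fibers. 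This gives both connections on $X$.

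For the converse, assume the fibers are compact. I would use a relatively holomorphic $(1,0)$-connection $\nabla^{1,0}$ and a mixed relatively holomorphic $(0,1)$-connection $\nabla^{0,1}$. By Lemma \ref{lem:new10_conn} I may replace them with the conjugate pair $\nabla_1^{1,0}$ and $\overline{\nabla_1^{1,0}}$ inducing the same $\dbar$. Together these give a real connection $\nabla^\RR$ whose complexified horizontal lifts $H_i$, $H_{\bar j}$ have fiberwise holomorphic $\partial_\alpha$-components. For a real tangent vector $v$ on $S$, the real horizontal lift $H_v=H_{v^{1,0}}+H_{v^{0,1}}$ then has flow preserving $J_{X/S}$. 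Concretely, $[\partial_{\bar\beta},H_v]\in\widebar{T_{X/S}}$ follows from \eqref{RelHolConn_eq} and \eqref{eq:LC}, so the Lie derivative of the fiberwise complex structure vanishes. Because the fibers are compact, the connection is complete. Parallel transport along paths in a coordinate ball $U\subset S$, for instance radial paths from $s_0$, then gives a diffeomorphism $f^{-1}(U)\cong U\times X_{s_0}$ that is fiberwise biholomorphic. Connectedness of $S$ makes all fibers biholomorphic to a single $Y$, so $f$ is isotrivial.

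The step I expect to need the most care is checking that the flow of $H_v$ maps fibers biholomorphically. $H_v$ is not vertical, so I must show that the transport map $X_{s}\to X_{s'}$ pulls $T_{X/S}$ back to $T_{X/S}$. I would first try differentiating $\Psi_t^*J_{X/S}$ restricted to fibers along the flow, and reduce this to $[\widebar{T_{X/S}},H_v]\subset\widebar{T_{X/S}}\oplus\langle\text{horizontal}\rangle$, projected to the vertical $(1,0)$-part. For the final assertion, suppose $f$ is a holomorphic fibration. Its Kodaira–Spencer map vanishes by Corollary \ref{cor:rel_hol_ks_vanish}, and the fibers are compact and all biholomorphic. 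The Fischer–Grauert theorem then shows $f$ is locally holomorphically trivial, i.e. a holomorphic fiber bundle.
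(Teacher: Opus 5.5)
Your proposal is correct and follows the paper's route. For the forward direction you glue the chart-wise trivial connections of a compatible atlas with a partition of unity on $S$, exactly as the paper does; for the converse you pass to the conjugate pair via Lemma \ref{lem:new10_conn}, show parallel transport is biholomorphic, build radial trivializations, and finish with Fischer--Grauert, again matching the paper. The only differences are these: you verify biholomorphicity of transport directly, by showing the flow of $H_v$ preserves $\widebar{T_{X/S}}$, where the paper cites its ODE argument for $\partial z/\partial\bar z_0$ (Lemma \ref{lem:hol_parallel_transport}); and your appeal to the vanishing of the Kodaira--Spencer map in the last step is unnecessary.
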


\begin{proof}
Choose a compatible atlas $\mathcal{U} = \{(U_a, \Phi_a)\}$ with trivializations $\Phi_a: f^{-1}(U_a) \xrightarrow{\cong} U_a \times Y$. On each chart $U_a$, the trivialization induces a canonical flat connection compatible with the product structure. We define the local horizontal lifts of $\partial_i$ and $\partial_{\bar{i}}$ as $H_{i,a} := \Phi_a^* ( \partial_i)$, $H_{\bar{i},a} := \Phi_a^* ( \partial_{\bar i})$. Let $\{\chi_a\}$ be a smooth partition of unity on $S$ subordinate to the cover $\{U_a\}$. We define the global horizontal lifts by
\[
H_i := \sum_a f^*(\chi_a) H_{i,a}, \quad H_{\bar{i}} := \sum_a f^*(\chi_a) H_{\bar{i},a}.
\]
These define the connections $\nabla^{1,0}$ and $\nabla^{0,1}$.

We verify that $\nabla^{1,0}$ is relatively holomorphic. It suffices to check this in an arbitrary chart $(U_b, \Phi_b)$ with coordinates $(s, z_b)$.
The term $H_{i,b}$ is just $\partial_i$. Consider the term $H_{i,a}$ from a different chart.
The coordinate transformation is $z_a = g_{ab}(s,\bar{s},z_b)$. The vector field $H_{i,a}$ expressed in the $b$-coordinates is
\[
H_{i,a} =\partial_i + (\partial_i g_{ba}^\alpha(s,\bar{s},z_a)) \partial_{z_b^\alpha} + (\partial_i \bar{g}_{ba}^\beta(s,\bar{s}, z_a)) \partial_{\bar{z}_b^\beta}.
\]
Note that $\partial_i g_{ba}^\alpha(s,\bar{s},z_a)$ is holomorphic in $z_b$.
The global connection coefficient is a convex combination
\[ \Gamma_i^\alpha(s, z_b) = \sum_a \chi_a(s) (\partial_i g_{ba}^\alpha(s,\bar{s}, z_a)), \]
which is holomorphic in $z_b$.
The proof for $\nabla^{0,1}$ is identical.

Now suppose that $f$ has compact fibers and admits a relatively holomorphic $(1,0)$-connection $\nabla^{1,0}$ and a mixed relatively holomorphic $(0,1)$-connection $\nabla^{0,1}$. Using Lemma \ref{lem:new10_conn}, we obtain a $(1,0)$-connection $\nabla_1^{1,0}$ from $\nabla^{1,0}$ and $\nabla^{0,1}$. $\nabla_1^{1,0}$ is relatively holomorphic and $\widebar{\nabla_1^{1,0}}$ is mixed relatively holomorphic, so by Lemma \ref{lem:hol_parallel_transport} below, the parallel transport maps defined by $\nabla_1^{1,0}$ are biholomorphisms. Then $f$ admits a compatible atlas provided by radial parallel transports as in Lemma \ref{lem:hol_triv_via_conn}, which means $f$ is an isotrivial complex fiber bundle. If moreover $f$ is a holomorphic fibration, since its fibers are compact and biholomorphic, $f$ is a holomorphic fiber bundle by the Fischer–Grauert theorem.
\end{proof}
\begin{remark}\label{rmk:curv_class_S}
	Suppose $f:X\to S$ is a holomorphic fibration admitting a relatively holomorphic pure $(1,0)$-connection $\nabla^{1,0}$, and $f^{\mathrm{hol}}_*T_{X/S}$ is locally free of finite rank, where $f^{\mathrm{hol}}_*T_{X/S}$ is the direct image sheaf, which is a sheaf of $\mathcal{O}_S$-modules. Then $F^{1,1}_D$ defines a class $[F^{1,1}_{D}]_S\in H^{1,1}(S, f^{\mathrm{hol}}_*T_{X/S})\cong H^1(S, \Omega_S \otimes f^{\mathrm{hol}}_*T_{X/S})$ which is independent of $\nabla^{1,0}$. In particular, if $f$ is a proper holomorphic fibration which admits a relatively holomorphic pure $(1,0)$-connection, then its restriction over each connected component of $S$ is a holomorphic fiber bundle by Proposition \ref{prop:exist_fiberwise_holo_connection}, and $f^{\mathrm{hol}}_*T_{X/S}$ is locally free of finite rank.
\end{remark}

\subsection{Decomposition of $\dbar_X$}
This subsection was inspired by \cite[\S3.3]{rollenske20}. Let $\nabla$ be a pure complex connection on a holomorphic fibration $f:X\to S$. Then we have a splitting of the complexified tangent bundle
$$
TX^{\mathbb{C}}= TX\oplus \overline{TX}=T_{X/S}\oplus \overline{T_{X/S}} \oplus H^{1,0}\oplus H^{0,1} = T_{X/S}^{\mathbb{C}}\oplus H^{\CC}
$$
induced by $\nabla$, where $T_{X/S}^{\mathbb{C}}=T_{X/S}\oplus \overline{T_{X/S}}$ is the vertical bundle and $H^\CC=H^{1,0}\oplus H^{0,1}\cong f^\ast TS^\CC$ is the horizontal bundle defined by $\nabla$, with $H^{1,0}=\im(\nabla^{1,0})$ and $H^{0,1}=\im(\nabla^{0,1})$. So we have the decomposition
\[\wedge^{p,q}T^*X= \midoplus_{k+j=p,l+s=q}(\wedge^{k}(H^{1,0})^*)\wedge(\wedge^{l}(H^{0,1})^*)\wedge(\wedge^{j,s}T_{X/S}^*). \]
Correspondingly, we may decompose the space of $(p,q)$-forms on $X$ as
\[
A^{p,q}(X) = \midoplus_{k+j=p,l+s=q} A^{(k,l|j,s)}(X).
\]
A form in $A^{(k,l|j,s)}(X)$ is said to have \emph{bi-degree} $(k,l|j,s)$.

$\dbar_X$ does not generally preserve this bi-degree. Its decomposition reflects how the connection interacts with the complex structure. As in \cite[Lem.~3.11]{rollenske20}, the integrability of $T_{X/S}$ forces the decomposition of $\dbar_X$ to take a special form
\begin{equation}\label{DbarDecomp_eq}
\dbar_X = \dbar_{\mathrm{v}} + \dbar_{\mathrm{h}} + R_{\mathrm{A}_1} + R_{\mathrm{A}_2} + R_{\mathrm{KS}},
\end{equation}
where the components are characterized by their shifts in bi-degree, which are $(0,0|0,1)$, $(0,1|0,0)$, $(1,1|-1,0)$, $(0,2|0,-1)$, and $(1,0|-1,1)$ respectively. It was observed in loc. cit. that $R_{\mathrm{A}} = R_{\mathrm{A}_1} + R_{\mathrm{A}_2}$ is related to the Atiyah class in the situation of a principal bundle, and $R_{KS}$ is related to the Kodaira-Spencer class. We shall prove \eqref{DbarDecomp_eq} and expound these tensors in our context.

In local holomorphic coordinates $(s^i, z^\alpha)$, let $\nabla^{1,0}$ and $\nabla^{0,1}$ have coefficients $\Gamma_i^\alpha$ and $\Gamma_{\bar{j}}^{\bar{\beta}}$, respectively. The horizontal lifts are
\[
H_i = \partial_i + \Gamma_i^\alpha \partial_\alpha, \quad H_{\bar{j}} = \partial_{\bar{j}} + \Gamma_{\bar{j}}^{\bar{\beta}} \partial_{\bar{\beta}}.
\]
We introduce the adapted coframe. The horizontal forms are spanned by $\{\D s^i, \D\bar{s}^j\}$. The vertical forms are spanned by
\begin{equation}\label{eq:vertical_form}
	\phi^\alpha = \D z^\alpha - \Gamma_i^\alpha \D s^i, \quad
	\phi^{\bar{\beta}} = \D\bar{z}^\beta - \Gamma_{\bar{j}}^{\bar{\beta}} \D\bar{s}^j.
\end{equation}

The tensors $R_{\mathrm{KS}}$ and $R_{\mathrm{A}_1}$ are determined by the action of $\dbar_X$ on the vertical $(1,0)$-forms $\phi^\alpha$.
\begin{align*}
\dbar_X(\phi^\alpha) &= \dbar_X(\D z^\alpha - \Gamma_i^\alpha \D s^i) = -(\dbar_X \Gamma_i^\alpha) \wedge \D s^i \\
&= -\bigl( (\partial_{\bar{j}}\Gamma_i^\alpha) \D\bar{s}^j + (\partial_{\bar{\beta}}\Gamma_i^\alpha) \D\bar{z}^\beta \bigr) \wedge \D s^i.
\end{align*}
Substitute $\D\bar{z}^\beta = \phi^{\bar{\beta}} + \Gamma_{\bar{j}}^{\bar{\beta}} \D\bar{s}^j$ to express the result in the adapted coframe as
\begin{align}
\dbar_X(\phi^\alpha) &= (\partial_{\bar{j}}\Gamma_i^\alpha) \D s^i \wedge \D\bar{s}^j + (\partial_{\bar{\beta}}\Gamma_i^\alpha) \D s^i \wedge (\phi^{\bar{\beta}} + \Gamma_{\bar{j}}^{\bar{\beta}} \D\bar{s}^j) \nonumber \\
&= \underbrace{\bigl( \partial_{\bar{j}}\Gamma_i^\alpha + (\partial_{\bar{\beta}}\Gamma_i^\alpha) \Gamma_{\bar{j}}^{\bar{\beta}} \bigr) \D s^i \wedge \D\bar{s}^j}_{\text{Type } (1,1|0,0)} + \underbrace{(\partial_{\bar{\beta}}\Gamma_i^\alpha) \D s^i \wedge \phi^{\bar{\beta}}}_{\text{Type } (1,0|0,1)}. \label{DbarThetaLocal_eq}
\end{align}
By definition, the first term is $R_{\mathrm{A}_1}(\phi^\alpha)$ and the second term is $R_{\mathrm{KS}}(\phi^\alpha)$. Similarly, by computing $\dbar_X(\phi^{\bar{\beta}})$ we obtain $R_{\mathrm{A}_2}$. There are no other possibilities for bi-degree shifts and \eqref{DbarDecomp_eq} follows.

\subsubsection*{Relation to the Kodaira-Spencer map}
The operator $R_{\mathrm{KS}}$ can be identified with a global tensor $\mathcal{R}_{\mathrm{KS}} \in A^{(1,0|0,1)}(X,T_{X/S})$. Using the dual basis $\partial_\alpha$,
\begin{equation}\label{eq:KS_tensor}
	\mathcal{R}_{\mathrm{KS}} = R_{\mathrm{KS}}(\phi^\alpha) \otimes \partial_\alpha = (\partial_{\bar{\beta}}\Gamma_i^\alpha)  \D s^i \wedge \phi^{\bar{\beta}}\otimes \partial_\alpha.
\end{equation}
 As shown in Proposition \ref{prop:ks_conn_rep}, when restricted to $X_{s_0}$ for $s_0\in S$ and contracted by $\partial_i$, this exactly represents the Kodaira-Spencer class $\rho_{s_0}(\partial_i)$.

\subsubsection*{Relation to the curvature}
The operator $R_{\mathrm{A}_1}$ can be identified with a global tensor
\[
\mathcal{R}_{\mathrm{A}_1} = \bigl( \partial_{\bar{j}}\Gamma_i^\alpha + (\partial_{\bar{\beta}}\Gamma_i^\alpha) \Gamma_{\bar{j}}^{\bar{\beta}} \bigr) \D s^i \wedge \D \bar{s}^j \otimes \partial_\alpha \in A^{(1,1|0,0)}(X,T_{X/S}).
\]
This tensor $\mathcal{R}_{\mathrm{A}_1}$ is the negative of the vertical $(1,0)$-component of $F_\nabla^{1,1}$. Similarly, $R_{\mathrm{A}_2}$ is the negative of the vertical $(0,1)$-component of $F_\nabla^{0,2}$.

\subsubsection*{The relatively holomorphic case}
The connection between these general tensors and the specific definitions in the previous subsection becomes clear when we consider a relatively holomorphic pure $(1,0)$-connection $\nabla^{1,0}$. In this case $R_{\mathrm{KS}}=0$ and $R_{\mathrm{A}_1}$ simplifies and becomes independent of the choice of $\nabla^{0,1}$:
    \[
    \mathcal{R}_{\mathrm{A}_1} = (\partial_{\bar{j}}\Gamma_i^\alpha)  \D s^i \wedge \D\bar{s}^j\otimes \partial_\alpha.
    \]
Comparing this with \eqref{Curvature11Local_eq1}, we have $F^{1,1}_{D} = - \mathcal{R}_{\mathrm{A}_1}$.

\begin{remark}
Similarly to \eqref{DbarDecomp_eq}, the decomposition of $\partial_X$ has the form
\begin{equation}
    \partial_X =\partial_{\mathrm{v}} + \partial_{\mathrm{h}} + R_{\mathrm{A}_1'} + R_{\mathrm{A}_2'} + R_{\mathrm{KS}'},
\end{equation}
where the components have bi-degree shifts $(0,0|1,0)$, $(1,0|0,0)$, $(1,1|0,-1)$, $(2,0|-1,0)$, and $(0,1|1,-1)$ respectively. $R_{\mathrm{A}_1'}$ and $R_{\mathrm{A}_2'}$ can be identified with the negatives of the vertical $(0,1)$-component of $F_\nabla^{1,1}$ and the vertical $(1,0)$-component of $F_\nabla^{2,0}$, respectively.
\end{remark}

\begin{proposition}\label{prop:dolb_class}
	The tensor $\mathcal{R}:=-(\mathcal{R}_{\mathrm{KS}}+\mathcal{R}_{\mathrm{A}_1})\in A^{0,1}(X,f^*T^*S\otimes T_{X/S})$ is $\dbar_X$-closed and defines a cohomology class $[\mathcal{R}]\in H^1(X,f^*\Omega_S\otimes T_{X/S})$, which is independent of a pure complex connection $\nabla$ (always exists). When $\nabla^{1,0}$ is relatively holomorphic, $\mathcal{R}=F_D^{1,1}$.
\end{proposition}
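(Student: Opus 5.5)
The plan is to exhibit $\mathcal{R}$ as locally $\dbar_X$-exact, exactly as in \eqref{CurvLocalExact_eq}. Work in adapted holomorphic coordinates $(s^i,z^\alpha)$ on an open set $U\subset X$. There the pure connection has horizontal lifts $H_i=\partial_i+\Gamma_i^\alpha\partial_\alpha$, and we set
\[
\sigma_U:=\Gamma_i^\alpha\,\D s^i\otimes\partial_\alpha\in C^\infty(U,f^*T^*S\otimes T_{X/S}).
\]
Here $\D s^i$ and $\partial_\alpha$ are local holomorphic frames of $f^*\Omega_S$ and $T_{X/S}$, so the holomorphic structure on $f^*\Omega_S\otimes T_{X/S}$ gives
\[
\dbar_X\sigma_U=(\dbar_X\Gamma_i^\alpha)\wedge\D s^i\otimes\partial_\alpha .
\]
By the computation \eqref{DbarThetaLocal_eq}, this equals $-(R_{\mathrm{A}_1}+R_{\mathrm{KS}})(\phi^\alpha)\otimes\partial_\alpha$, which is $\mathcal{R}|_U$ once we use the convention identifying $(0,1)$-forms with values in $f^*\Omega_S$ with $(1,1)$-forms. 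I will check the sign of the wedge reordering $\D\bar s^j\wedge\D s^i$ versus $\D s^i\wedge\D\bar s^j$ against the paper's identification $\beta\otimes\alpha\mapsto\beta\wedge\alpha$. Since $\mathcal{R}$ is a global tensor (it is built from the global tensors $\mathcal{R}_{\mathrm{KS}}$ and $\mathcal{R}_{\mathrm{A}_1}$), local exactness gives $\dbar_X\mathcal{R}=0$ globally, so $\mathcal{R}$ defines a class in $H^1(X,f^*\Omega_S\otimes T_{X/S})$.

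For independence of $\nabla$, I will first note that a pure complex connection exists: take any smooth splitting of \eqref{HolTangentSeq_eq} and its conjugate. Then let $\nabla_1,\nabla_2$ be two pure connections. Their local coefficients differ by $\Gamma_{1,i}^\alpha-\Gamma_{2,i}^\alpha$. The argument of Lemma \ref{lem:RH_space} (the transformation law \eqref{TransLaw10_eq} with holomorphic $z'$) shows that this difference glues to a global smooth section
\[
G_{1,2}\in C^\infty(X,f^*T^*S\otimes T_{X/S}).
\]
Relative holomorphicity is not needed for this gluing step. The local identities then give $\mathcal{R}_1-\mathcal{R}_2=\dbar_X G_{1,2}$ globally, so the two classes coincide. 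Note that $\mathcal{R}$ depends only on $\nabla^{1,0}$: the $\Gamma_{\bar j}^{\bar\beta}$ cancel in $\dbar_X\sigma_U$, which confirms that the choice of $\nabla^{0,1}$ is irrelevant.

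When $\nabla^{1,0}$ is relatively holomorphic, we have $\partial_{\bar\beta}\Gamma_i^\alpha=0$. Then $\mathcal{R}_{\mathrm{KS}}=0$ and $\mathcal{R}_{\mathrm{A}_1}=(\partial_{\bar j}\Gamma_i^\alpha)\D s^i\wedge\D\bar s^j\otimes\partial_\alpha$. Comparing with \eqref{Curvature11Local_eq1} gives $\mathcal{R}=-\mathcal{R}_{\mathrm{A}_1}=F^{1,1}_D$, as observed just before the statement.

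The main obstacle is bookkeeping rather than substance. I need to verify carefully that $\dbar_X\sigma_U$ reproduces exactly $-(\mathcal{R}_{\mathrm{KS}}+\mathcal{R}_{\mathrm{A}_1})$, including the reexpression of $\D\bar z^\beta$ via $\phi^{\bar\beta}$ and the sign conventions for wedge ordering. I also need to confirm the tensoriality of $G_{1,2}$ without assuming relative holomorphicity.
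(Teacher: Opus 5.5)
Your proposal is correct and follows the paper's proof: local $\dbar_X$-exactness $\mathcal{R}=\dbar_X(\Gamma_i^\alpha\,\D s^i\otimes\partial_\alpha)$ gives closedness, the globally defined difference $G_{1,2}$ of \eqref{TransgressionForm_eq} gives independence of $\nabla$, and comparison with \eqref{Curvature11Local_eq1} handles the relatively holomorphic case. You also note that the tensoriality of $G_{1,2}$ needs only the transformation law \eqref{TransLaw10_eq}, not relative holomorphicity; this is correct, and it makes explicit a point the paper's one-line citation of \eqref{TransgressionForm_eq} leaves implicit.
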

\begin{proof}
	Note that locally we have \[\mathcal{R}=\dbar_X(\Gamma_i^\alpha  \D s^i\otimes \partial_\alpha),\]
	so $\mathcal{R}$ is $\dbar_X$-closed. By \eqref{TransgressionForm_eq}, $[\mathcal{R}]$ is independent of $\nabla$. The last statement follows from the above computations.
\end{proof}

\subsection{Associated bundles}\label{subsec:assoc_bundle}
Let $S$ and $Y$ be complex manifolds. Let $G$ be a complex Lie group with a holomorphic action $G\times Y\to Y$. Let $\pi_P:P\to S$ be a smooth principal $G$-bundle. The adjoint bundle $\ad P := P\times_G \mathfrak{g}$ is a smooth complex vector bundle over $S$. We consider the associated smooth fiber bundle
\[
f:X:=P\times_G Y \longrightarrow S,\qquad [p,y]\longmapsto \pi_P(p).
\]
Since the transition maps for $X$ take values in the image of $G\to \Aut(Y)$, by Lemma \ref{lem:isotrivial_cx_fib_bun}, $X$ naturally inherits the structure of an isotrivial complex fiber bundle $(f, T_{X/S})$.

Let $\mathfrak{g}$ be the Lie algebra of $G$. The action induces the infinitesimal action map $\tau_0: \mathfrak{g} \to H^0(Y, TY)$ (the space of holomorphic vector fields on $Y$), defined by
\begin{equation}\label{eq:tau0}
	\tau_0(\xi):=\bigl(\tilde{\tau}_0(\xi)\bigr)^{1,0},\quad \big(\tilde{\tau}_0(\xi)\big)(y) := \frac{\D}{\D t}\Big|_{t=0}\,\exp(-t\xi)\cdot y,
\end{equation}
where $t\in\RR$ and $\exp:\mathfrak{g}\to G$ is the exponential map. $\tau_0$ is a complex-linear Lie algebra homomorphism and is $G$-equivariant with respect to the adjoint action on $\mathfrak g$ and the push-forward action on $H^0(Y,TY)$. We show the $G$-equivariance. For any $g\in G$ and $\xi\in\mathfrak{g}$,
\begin{align*}
\big(\tilde{\tau}_0(\Ad_g\xi)\big)(y)
&= \frac{\D}{\D t}\Big|_{t=0}\,g\cdot\bigl(\exp(-t\xi)\cdot(g^{-1}\cdot y)\bigr)\\
&= (\D L_g)_{g^{-1}\cdot y}\Bigl(\frac{\D}{\D t}\Big|_{t=0}\,\exp(-t\xi)\cdot(g^{-1}\cdot y)\Bigr)\\
&=(\D L_g)_{g^{-1}\cdot y}(\tilde{\tau}_0(\xi)(g^{-1}\cdot y))
\\&= \bigl(g_*\tilde{\tau}_0(\xi)\bigr)(y),
\end{align*}
where $L_g:Y\to Y$ is $y\mapsto g\cdot y$. 

Let $N:=\{g\in G\mid g\cdot y=y\text{ for every }y\in Y\}$. Then $N$ is a closed normal complex Lie subgroup of $G$, and $\mathfrak n:=\mathrm{Lie}(N)=\ker\tau_0$ is a complex $\Ad_G$-invariant ideal of $\mathfrak g$. Set $G_{\mathrm{eff}}:=G/N$ and $P_{\mathrm{eff}}:=P/N$. Then $G_{\mathrm{eff}}$ acts effectively on $Y$, $P_{\mathrm{eff}}$ is a principal $G_{\mathrm{eff}}$-bundle, and there is a canonical isomorphism $P\times_GY\cong P_{\mathrm{eff}}\times_{G_{\mathrm{eff}}}Y$. Moreover, $\tau_0$ induces an injective $G_{\mathrm{eff}}$-equivariant homomorphism $\hat\tau_0:\mathfrak g/\mathfrak n\hookrightarrow H^0(Y,TY)$. Let $\mathfrak n_P:=P\times_G\mathfrak n\subset\ad P$. Then $\ad P_{\mathrm{eff}}\cong(\ad P)/\mathfrak n_P$. Note that $f_*T_{X/S}$ is canonically isomorphic to the sheaf of smooth sections of the bundle $P \times_G H^0(Y, TY)$ (which has infinite rank if $H^0(Y, TY)$ is infinite-dimensional).

\begin{lemma}\label{lem:tau_associated}
There is a natural morphism of sheaves
\begin{equation}\label{eq:morphism_tau}
	\tau:\ad P \longrightarrow f_*T_{X/S}.
\end{equation}
Its kernel is the sheaf of smooth sections of $\mathfrak n_P$. Consequently, $\tau$ induces an injective morphism
\begin{equation}\label{eq:morphism_tau_hat}
\hat\tau:\ad P_{\mathrm{eff}} \cong(\ad P)/\mathfrak n_P \longrightarrow f_*T_{X/S}.
\end{equation}
If $\hat{\tau}_0:\mathfrak g/\mathfrak n\to H^0(Y,TY)$ is an isomorphism, then $\hat\tau$ is an isomorphism of smooth complex vector bundles.
\end{lemma}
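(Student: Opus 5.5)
The plan is to define $\tau$ fiberwise through the infinitesimal action, then read off its kernel and the isomorphism statement from the corresponding facts about $\tau_0$.

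\textbf{Construction.} A local smooth section of $\ad P$ over $U\subset S$ is the same thing as a smooth $G$-equivariant map $\xi:\pi_P^{-1}(U)\to\mathfrak g$, i.e.\ $\xi(pg)=\Ad_{g^{-1}}\xi(p)$. I define the vector field $\tau(\xi)$ on $f^{-1}(U)$ by
\[
\tau(\xi)([p,y]) := (\D\iota_p)_y\bigl(\tau_0(\xi(p))(y)\bigr),
\]
where $\iota_p:Y\to X_{\pi_P(p)}$, $y\mapsto[p,y]$, is the fiber identification. By Lemma \ref{lem:isotrivial_cx_fib_bun}, $\iota_p$ is a biholomorphism onto the fiber.

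\textbf{Well-definedness.} Replacing $(p,y)$ by $(pg,g^{-1}y)$, we have $\iota_{pg}=\iota_p\comp L_g$. Combined with $\xi(pg)=\Ad_{g^{-1}}\xi(p)$ and the $G$-equivariance of $\tau_0$ proved above, namely $\tau_0(\Ad_{g^{-1}}\eta)=(g^{-1})_*\tau_0(\eta)$, this gives
\[
(\D\iota_{pg})\bigl(\tau_0(\Ad_{g^{-1}}\xi(p))(g^{-1}y)\bigr)=(\D\iota_p)(\D L_g)(\D L_{g^{-1}})\,\tau_0(\xi(p))(y).
\]
So the definition is independent of the representative.

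\textbf{Regularity.} In a local trivialization $P|_U\cong U\times G$, the section $\xi$ becomes a smooth map $U\to\mathfrak g$. The vector field $\tau(\xi)$ is then the composite of $\xi$ with the complex-linear map $\tau_0$, pushed through the chart. Hence it is smooth in $s$, lies in $T_{X/S}$ because $\tau_0$ takes values in the $(1,0)$-part, and is fiberwise holomorphic because each $\tau_0(\eta)$ is holomorphic. The map $\tau$ is $C^\infty_S$-linear and commutes with restrictions, so it is a morphism of sheaves into $f_*T_{X/S}$.

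\textbf{Kernel.} Fiberwise, $\tau(\xi)=0$ exactly when $\tau_0(\xi(p))=0$ for all $p$, since $\D\iota_p$ is an isomorphism. This means $\xi(p)\in\ker\tau_0=\mathfrak n$. The condition is well posed because $\mathfrak n$ is $\Ad_G$-invariant, and so $\ker\tau$ is the sheaf of sections of $\mathfrak n_P$. The induced map $\hat\tau$ on $(\ad P)/\mathfrak n_P\cong\ad P_{\mathrm{eff}}$ is then injective. Compatibility with the isomorphism $P\times_GY\cong P_{\mathrm{eff}}\times_{G_{\mathrm{eff}}}Y$ follows because the construction only uses $\hat\tau_0$.

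\textbf{Isomorphism case.} Use the stated identification of $f_*T_{X/S}$ with sections of $P\times_G H^0(Y,TY)$, where a fiberwise holomorphic vertical field $v$ corresponds to $p\mapsto\iota_p^*v$. Under this identification, $\hat\tau$ is exactly the bundle map $P\times_G(\mathfrak g/\mathfrak n)\to P\times_G H^0(Y,TY)$ induced by the equivariant linear map $\hat\tau_0$. If $\hat\tau_0$ is an isomorphism, then it is finite-dimensional and this induced bundle map is an isomorphism of smooth complex vector bundles.

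\textbf{Main obstacle.} The hard step is the regularity check: that smooth dependence on $s$ of the coefficient $\xi$ gives a smooth vertical vector field on $X$, and, in the converse direction, that the identification of $f_*T_{X/S}$ with smooth sections of the associated bundle holds. In the finite-dimensional case both reduce to local trivializations, where $\tau_0$ is a fixed linear map.
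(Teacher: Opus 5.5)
Your proposal is correct and follows essentially the same route as the paper. The paper defines the vertical field $(0,\tau_0(A(p))|_y)$ on $\pi_P^{-1}(U)\times Y$, checks its invariance under $(p,y)\mapsto(pg,g^{-1}y)$ using the equivariance of $\tau_0$, and descends it to $X$; this is exactly your formula via $\D\iota_p$. It then reads off the kernel in a local trivialization and obtains the isomorphism from the correspondence between equivariant maps into $\mathfrak g/\mathfrak n$ and equivariant maps into $H^0(Y,TY)$.
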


\begin{proof}
	A smooth section $A$ of $\ad P$ over $U\subset S$ corresponds to a smooth map $A:\pi_P^{-1}(U)\to\mathfrak{g}$ such that $A(p\cdot g)=\Ad_{g^{-1}}A(p)$.
	We define a vertical vector field $\widetilde V_A$ on $(\pi_P^{-1}(U))\times Y$ by
	\[
	\widetilde V_A\big|_{(p,y)} := \big(0,\tau_0(A(p))|_y\big).
	\]
$\widetilde V_A$ is invariant under the action $r_g(p,y)=(p\cdot g,\,g^{-1}\!\cdot y)$ since
\begin{align*}
	(\D r_g)_{(p,y)}\bigl(\widetilde V_A\big|_{(p,y)}\bigr)
	&= \bigl(0,(\D L_{g^{-1}})_y\bigl(\tau_0(A(p))\bigr)\bigr)\\
	&= \big(0,\tau_0\big(\Ad_{g^{-1}}A(p)\big)\big|_{g^{-1}\cdot y}\big) \quad (\text{by equivariance of } \tau_0)\\
	&= \big(0,\tau_0\big(A(p\cdot g)\big)\big|_{g^{-1}\cdot y}\big) = \widetilde V_A\big|_{r_g(p,y)}.
\end{align*}
Thus $\widetilde V_A$ descends to a smooth vertical vector field $V_A$ on $X|_U$, which is fiberwise holomorphic. The assignment $A\mapsto V_A$ defines a morphism of sheaves $\tau:\ad P\to f_*T_{X/S}$. In a local trivialization, $V_A=0$ if and only if $A$ takes values in $\mathfrak n$. Hence $\ker\tau=\mathfrak n_P$, and $\tau$ induces the injective morphism $\hat\tau$. If $\hat{\tau}_0$ is an isomorphism, then $\dim_\CC H^0(Y, TY)=\dim_\CC \mathfrak{g}-\dim_\CC \mathfrak{n}$, which is finite. The above construction provides a bijection between equivariant maps into $\mathfrak{g}/\mathfrak{n}$ and equivariant maps into $H^0(Y, TY)$, hence $\hat{\tau}$ is an isomorphism of complex vector bundles.
\end{proof}

Let $A$ be a \emph{principal connection} on $P$, which is an element in $A^1(P, \mathfrak{g})$ satisfying
\[\Ad_g (R_g^* A) = A, \quad A(v_\xi) = \xi, \quad \forall \xi \in \mathfrak{g} \text{ and } g\in G,\]
where $v_\xi$ is the fundamental vector field on $P$ associated to $\xi$ and $R_g$ is the right action by $g$. The space of such connections is an affine space modeled on $A^{1}(S, \ad P)$. The connection $A$ defines a $G$-invariant horizontal distribution $H_P^\RR = \ker A \subset TP^\RR$. On $P \times Y$, the distribution $H_{(p,y)}^\RR = H_{P,p}^\RR \oplus 0$ is $G$-invariant and descends via the quotient map $q: P \times Y \to X$ to a distribution $H_X^\RR \subset TX^\RR$, defining a connection $\nabla_A^\RR$ on $X$. Its complexification $\nabla_A$ decomposes into $\nabla_A^{1,0}$ and $\nabla_A^{0,1}=\widebar{\nabla_A^{1,0}}$.
\begin{remark}\label{rmk:principal_dbar_alconn}
When $Y=G$ with the left multiplication action, $f:X\to S$ is canonically isomorphic to $\pi_P:P\to S$ and $H_{X}^\RR\cong H_P^\RR$ under this isomorphism. A \emph{principal $\dbar$-operator} on $\pi_P$ is a $\dbar$-operator (Definition \ref{def:dbar_operator}) induced by $\nabla_A^{0,1}$, where $A$ is a principal connection. Similarly, a \emph{principal almost connection} is an almost connection induced by $\nabla_A^{1,0}$. The space of principal $\dbar$-operators (resp. almost connections) is affine modeled on $A^{0,1}(S,\ad P)$ (resp. $A^{1,0}(S,\ad P)$). A principal connection is equivalent to the sum of a principal $\dbar$-operator and a principal almost connection.

	A principal $\dbar$-operator on $P$ is equivalent to a \emph{principal almost complex structure} on $P$ (\cite[Def.~2.4]{CTW25}), i.e., an almost complex structure on $\pi:P\to S$ (Definition \ref{def:alm_cx_str}) such that the corresponding operator $J_{P}\in C^{\infty}(P,\End(TP^\RR))$ satisfies $J_P\comp (R_g)_*=(R_g)_*\comp J_P$ for any $g\in G$.
\end{remark}

\begin{lemma}\label{lem:b_rel_holo_associated}
For any principal connection $A$ on $P$, the induced $(1,0)$-connection $\nabla_A^{1,0}$ on $f:X\to S$ is relatively holomorphic. Similarly, $\nabla_A^{0,1}$ is mixed relatively holomorphic.
\end{lemma}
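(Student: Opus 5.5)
The plan is to work in adapted local coordinates coming from a local trivialization of $P$, and to check the coordinate criteria \eqref{RelHolConn_eq} and \eqref{eq:LC} directly. Both conditions are independent of the choice of adapted coordinates (Lemma \ref{lem:curv_def_rel_hol} and Lemma \ref{lem:LC_coordinates}), so one convenient chart near each point suffices.

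\textbf{Step 1: local coordinates and connection form.} Fix a local section $\sigma:U\to P$. It gives a trivialization $f^{-1}(U)\cong U\times Y$ via $(s,y)\mapsto q(\sigma(s),y)$. By Lemma \ref{lem:isotrivial_cx_fib_bun} this trivialization is compatible with $T_{X/S}$, so holomorphic coordinates $s^i$ on $U$ and $z^\alpha$ on $Y$ are adapted. Pull back the connection: $\sigma^*A=a=a_i\,\D s^i+a_{\bar j}\,\D\bar s^j$, with $a_i,a_{\bar j}$ smooth $\mathfrak g$-valued functions of $s$ alone.

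\textbf{Step 2: identify the horizontal lift.} For a tangent vector $v\in T_sS^{\RR}$, the vector $\D\sigma(v)$ fails to be $A$-horizontal by the fundamental vector field of $a(v)$. The horizontal lift at $\sigma(s)$ is therefore $\D\sigma(v)-v_{a(v)}$. Pushing forward through $q$ using $(p\cdot g,g^{-1}y)\sim(p,y)$, the component $-v_{a(v)}$ on $P$ becomes a vertical vector on $Y$ generated by $\exp(ta(v))$ acting on $y$. This is $\pm\tilde\tau_0(a(v))(y)$, and the sign depends on the convention \eqref{eq:tau0}, which needs care but does not affect the argument. So the real horizontal lift is $v+c\,\tilde\tau_0(a(v))$ with $c=\pm1$.

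\textbf{Step 3: complexify and read off coefficients.} Complexify and evaluate on $\partial_i$. Since $\tilde\tau_0(\xi)$ is the real vector field of a holomorphic one, its $(1,0)$-part depends complex-linearly on $\xi$ and its $(0,1)$-part antilinearly. This gives
\[
H_i=\partial_i+c\,\tau_0(a_i)+c\,\overline{\tau_0(\overline{a}_i')}
\]
for a suitable $\mathfrak g$-valued function $a_i'$ (coming from $a_{\bar i}$, after sorting out conjugations). Hence
\[
\Gamma_i^\alpha(s,z)=c\,\tau_0(a_i(s))^\alpha(z).
\]
For fixed $s$ this is the component of a holomorphic vector field on $Y$, so $\partial_{\bar\beta}\Gamma_i^\alpha=0$. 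This is exactly \eqref{RelHolConn_eq}.

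\textbf{Step 4: the $(0,1)$ case.} The same computation for $H_{\bar j}$ gives $\Gamma_{\bar j}^\gamma=c\,\tau_0(\cdot)^\gamma$, the $(1,0)$-part of a real holomorphic vector field. It is therefore holomorphic in $z$, which is \eqref{eq:LC}. Alternatively, since $\nabla_A^{0,1}=\widebar{\nabla_A^{1,0}}$, one can obtain this by conjugating the coefficient computation.

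The main obstacle is Step 2: identifying correctly how the horizontal lift descends through the quotient $P\times Y\to X$, including the sign and the real-versus-complex bookkeeping for $\tilde\tau_0$. Beyond that, the argument is the single observation that $\mathfrak g$ acts on $Y$ by holomorphic vector fields, so every coefficient is fiberwise holomorphic.
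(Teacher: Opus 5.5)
Your proposal is correct and follows essentially the same route as the paper: trivialize via a local section of $P$, identify the horizontal lift through the quotient $P\times Y\to X$ as the base vector plus $\tilde\tau_0$ applied to the connection form, and read off $\Gamma_i^\alpha=\tau_0(a_i)^\alpha$ and $\Gamma_{\bar j}^\gamma=\tau_0(a_{\bar j})^\gamma$. These are fiberwise holomorphic because $\tau_0$ takes values in $H^0(Y,TY)$. With the convention \eqref{eq:tau0} one gets $c=+1$ and $H_i=\partial_i+\tau_0(a_i)+\overline{\tau_0(a_{\bar i})}$, so your auxiliary $a_i'$ is simply $a_{\bar i}$.
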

\begin{proof}
	Consider the local smooth trivialization $P|_U \cong U \times G$, so $X|_U \cong U \times Y$ via $[(s,g),y] \mapsto (s, g y)$.  Let $(s^i)$ be holomorphic coordinates on $U$ and $(z^\alpha)$ be local holomorphic coordinates on $Y$.  The connection form $A$, pulled back to the identity section $(s,e)$, can be written as
\begin{equation}\label{eq:princ_conn_local}
	A = A_i(s) \D s^i + B_{\bar{j}}(s) \D\bar{s}^j,
\end{equation}
where $A_i, B_{\bar{j}}: U \to \mathfrak{g}$ are smooth functions. Let $s^i=x^i+\I y^i$. At $(s,e) \in P$, under the natural identification $T_{(s,e)} P \cong T_s U \oplus \mathfrak{g}$, the horizontal lift $\partial_{x^i}^\sharp$ of $\partial_{x^i}$ is $\partial_{x^i}^\sharp = \partial_{x^i}-A_i-B_{\bar{i}}$. Similarly, $\partial_{y^i}^\sharp = \partial_{y^i}-\I A_i+\I B_{\bar{i}}$.  Then
\begin{align*}
\D q_{(s,e,z)}(\partial_{x^i}^\sharp, 0) &= (\partial_{x^i}, \tilde{\tau}_0(A_i)(z)+\tilde{\tau}_0(B_{\bar{i}})(z)),\\
\D q_{(s,e,z)}(\partial_{y^i}^\sharp, 0) &= (\partial_{y^i}, \tilde{\tau}_0(\I A_i)(z)-\tilde{\tau}_0(\I B_{\bar{i}})(z)).
\end{align*}
Therefore, we have
\begin{align*}
\nabla_{A}^{1,0}(\partial_i)&=\partial_i+\tfrac{1}{2}\bigl({\tau}_0(A_i)+{\tau}_0(B_{\bar{i}})-\I({\tau}_0(\I A_i)-{\tau}_0(\I B_{\bar{i}}))\bigr)
\\&\qquad\,+\tfrac{1}{2}\bigl(\widebar{{\tau}_0(A_i)}+\overline{{\tau}_0(B_{\bar{i}})}-\I(\widebar{{\tau}_0({\I} A_i)}-\overline{{\tau}_0(\I B_{\bar{i}})})\bigr)\\
&=\partial_i+\tau_0(A_i)+\overline{\tau_0(B_{\bar{i}})}\\
\nabla_{A}^{0,1}(\partial_{\bar{i}})&=\partial_{\bar{i}}+\tau_0(B_{\bar{i}})+\widebar{\tau_0(A_i)}.
\end{align*}
 Since $\im(\tau_0)\subset H^0(Y,TY)$, $\nabla_A^{1,0}$ is relatively holomorphic and  $\nabla_A^{0,1}$ is mixed relatively holomorphic.
\end{proof}

\begin{lemma}\label{lem:affine_map_associated}
There is a natural affine map
\[
\{\text{principal connections on }P\}\longrightarrow\mathcal{A}^{1,0}_{\mathrm{RH,LC}},
\]
given by $A \mapsto \nabla_A^{1,0}$, where $\mathcal{A}^{1,0}_{\mathrm{RH,LC}}$ is defined in Corollary \ref{cor:RH_LC_space}. Two principal connections have the same image if and only if their difference belongs to $A^1(S,\mathfrak n_P)$, and there is a natural affine injection
\[
\{\text{principal connections on }P_{\mathrm{eff}}\}
\hookrightarrow
\mathcal{A}^{1,0}_{\mathrm{RH,LC}}.
\]
If $\hat{\tau}_0$ is an isomorphism, then this map is an affine isomorphism.
\end{lemma}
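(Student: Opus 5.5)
The plan is to read off everything from the local formulas in the proof of Lemma \ref{lem:b_rel_holo_associated}. There, in a local trivialization with $A=A_i\,\D s^i+B_{\bar j}\,\D\bar s^j$, we found
\[
\nabla_A^{1,0}(\partial_i)=\partial_i+\tau_0(A_i)+\overline{\tau_0(B_{\bar i})},\qquad \nabla_A^{0,1}(\partial_{\bar i})=\partial_{\bar i}+\tau_0(B_{\bar i})+\overline{\tau_0(A_i)} .
\]
So $\nabla_A^{1,0}$ is relatively holomorphic and its conjugate $\nabla_A^{0,1}$ is mixed relatively holomorphic. Its conjugate therefore induces a $\dbar$-operator satisfying the lifting condition (Lemma \ref{lem:LC_coordinates}), and hence $\nabla_A^{1,0}\in\mathcal{A}^{1,0}_{\mathrm{RH,LC}}$. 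Well-definedness is automatic, because $\nabla_A$ is constructed globally by descending $H_P^\RR\oplus 0$.

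For affineness, take two principal connections $A'=A+\eta$ with $\eta=\eta^{1,0}+\eta^{0,1}\in A^1(S,\ad P)$. The local formula shows that the almost connection and the $\dbar$-operator shift by $\tau(\eta^{1,0})$ and $\tau(\eta^{0,1})$ respectively. Under the identification of Corollary \ref{cor:RH_LC_space} of $\mathcal{A}^{1,0}_{\mathrm{RH,LC}}$ with pairs $(\partial,\dbar)$, an affine space over $A^1(S,f_*T_{X/S})=A^{1,0}(S,f_*T_{X/S})\oplus A^{0,1}(S,f_*T_{X/S})$, this gives
\[
\nabla^{1,0}_{A+\eta}=\nabla^{1,0}_A+\tau(\eta).
\]
Here $\tau$ is the sheaf morphism of Lemma \ref{lem:tau_associated}, applied coefficientwise to forms. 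The map is therefore affine with linear part $\tau\otimes\id$. One has to check that the linear dependence in $\eta$ is compatible with the conjugate-linear appearance of $B$ in $\nabla^{1,0}$. This works because the $(0,1)$-part enters the $(1,0)$-connection only through $\overline{\tau_0(B_{\bar i})}$, and in the $(\partial,\dbar)$ description it is recorded as $\tau_0(B_{\bar i})$ in $\dbar$, which is linear.

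Injectivity modulo $\mathfrak n_P$ follows from Lemma \ref{lem:tau_associated}. Two connections have the same image iff $\tau(\eta)=0$. Since $\tau$ is applied to $\eta^{1,0}$ and $\eta^{0,1}$ separately, and $\ker\tau=\mathfrak n_P$ (a complex subbundle, so the type decomposition is preserved), this holds iff $\eta\in A^1(S,\mathfrak n_P)$.

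Principal connections on $P$ push forward to principal connections on $P_{\mathrm{eff}}$ via $\mathfrak g\to\mathfrak g/\mathfrak n$, and this map is surjective: lift locally and patch with a partition of unity, using affineness. Since $P\times_GY\cong P_{\mathrm{eff}}\times_{G_{\mathrm{eff}}}Y$ compatibly with the induced horizontal distributions, the map factors through connections on $P_{\mathrm{eff}}$. The factored map is injective because its linear part is $\hat\tau$, which is injective.

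Finally, if $\hat\tau_0$ is an isomorphism, then $\hat\tau:\ad P_{\mathrm{eff}}\to f_*T_{X/S}$ is an isomorphism of bundles by Lemma \ref{lem:tau_associated}. Then $\hat\tau\otimes\id:A^1(S,\ad P_{\mathrm{eff}})\to A^1(S,f_*T_{X/S})$ is bijective. An affine map between nonempty affine spaces with bijective linear part is an affine isomorphism; nonemptiness of the source holds since principal connections exist.

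The main obstacle is the bookkeeping in the affineness step. One must verify that the affine structure on $\mathcal{A}^{1,0}_{\mathrm{RH,LC}}$ from Corollary \ref{cor:RH_LC_space}, via Lemma \ref{lem:new10_conn}, really matches the complex-linear action $\eta\mapsto\tau(\eta)$ with consistent signs and complex linearity. I would check this directly using the coefficients $\Gamma_i^\alpha=\tau_0(A_i)^\alpha$ and $\Gamma_{\bar i}^\alpha=\tau_0(B_{\bar i})^\alpha$.
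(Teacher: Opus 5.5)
Your proposal is correct and follows the paper's route: the local formulas of Lemma \ref{lem:b_rel_holo_associated} put $\nabla_A^{1,0}$ in $\mathcal{A}^{1,0}_{\mathrm{RH,LC}}$, the shift $\tau(\eta)$ read in the $(\partial,\dbar)$ picture gives affineness, Lemma \ref{lem:tau_associated} identifies the kernel, and a bijective linear part gives the isomorphism. Your extra remarks on surjectivity of pushing connections forward to $P_{\mathrm{eff}}$, and on compatibility of the horizontal distributions, fill in what the paper compresses into ``the quotient therefore gives the asserted affine injection.''
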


\begin{proof}
The map $A \mapsto \nabla_A^{1,0}$ is affine, since $\nabla_{A + \alpha}^{1,0}-\nabla_A^{1,0} =  \tau(\alpha^{1,0})+\widebar{\tau(\alpha^{0,1})}$, which corresponds to $\tau(\alpha)\in A^1(S,f_*T_{X/S})$. By Lemma \ref{lem:b_rel_holo_associated}, the image is contained in $\mathcal{A}^{1,0}_{\mathrm{RH,LC}}$. By Lemma \ref{lem:tau_associated}, two principal connections have the same image precisely when their difference belongs to $A^1(S,\mathfrak n_P)$. The quotient therefore gives the asserted affine injection for $P_{\mathrm{eff}}$. If $\hat{\tau}_0$ is an isomorphism, then $\hat\tau$ is an isomorphism, and so is the induced map between the modeling spaces $A^1(S,\ad P_{\mathrm{eff}})\to A^1(S,f_*T_{X/S})$. Hence the affine injection is an affine isomorphism.
\end{proof}

Let $\partial_A$ be the almost connection induced by $\nabla_A^{1,0}$ and $\dbar_A$ be the $\dbar$-operator induced by $\nabla_A^{0,1}$, both satisfying the lifting condition. Let $D_A = \nabla_A^{1,0} + \dbar_A$ and $\hat{D}_A=\partial_A+\dbar_A$. Then the curvatures $F^{1,1}_{D_A}=F^{1,1}_{\hat{D}_A}$ (defined by \eqref{Curvature11Def_eq}), $F^{0,2}_{\dbar_A}$ (defined in Corollary \ref{cor:dbarint_via_conn_curv}), and $F^{2,0}_{\partial_A}$ (defined after Definition \ref{def:almost_connection}) are well-defined tensors.

\begin{lemma}\label{lem:curvature_correspondence}
Let $F_A$ be the curvature of $A$. The curvatures of the induced operators on $f:X\to S$ satisfy
\[F^{1,1}_{D_A} = \tau\bigl( F_A^{1,1} \bigr),\qquad F^{0,2}_{\dbar_A} = \tau\bigl( F_A^{0,2} \bigr), \qquad F^{2,0}_{\partial_A} = \tau\bigl( F_A^{2,0} \bigr).\]
Consequently, the induced $\dbar$-operator $\dbar_A$ is integrable if and only if $F_A^{0,2}\in A^{0,2}(S,\mathfrak n_P)$, or equivalently the connection $\hat{A}$ induced on $P_{\mathrm{eff}}$ satisfies $F_{\hat A}^{0,2}=0$.
\end{lemma}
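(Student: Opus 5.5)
The argument is local: all three tensors are computed by brackets of horizontal lifts. I will work in a local trivialization $P|_U\cong U\times G$ with holomorphic coordinates $(s^i)$ on $U$ and $(z^\alpha)$ on $Y$, as in the proof of Lemma \ref{lem:b_rel_holo_associated}. Write $A=A_i\,\D s^i+B_{\bar j}\,\D\bar s^j$ as in \eqref{eq:princ_conn_local}. That lemma gives explicit lifts:
\[
H_i=\partial_i+\tau_0(A_i)+\overline{\tau_0(B_{\bar i})},\qquad H_{\bar j}=\partial_{\bar j}+\tau_0(B_{\bar j})+\overline{\tau_0(A_j)}.
\]
Since $[\partial_i,\partial_{\bar j}]=0$, the tensor $F^{1,1}_{D_A}(\partial_i,\partial_{\bar j})$ is the $T_{X/S}$-component of $[H_i,H_{\bar j}]$, in the sense of \eqref{Curvature11Def_eq}. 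Similarly, $F^{0,2}_{\dbar_A}(\partial_{\bar i},\partial_{\bar j})$ and $F^{2,0}_{\partial_A}(\partial_i,\partial_j)$ are the $(1,0)$-vertical parts of $[H_{\bar i},H_{\bar j}]$ and $[H_i,H_j]$. These are well defined by Lemma \ref{lem:curv_def_rel_hol}, Corollary \ref{cor:dbarint_via_conn_curv}, and the discussion after Definition \ref{def:almost_connection}, since $\nabla_A^{1,0}$ is relatively holomorphic and $\nabla_A^{0,1}$ is mixed relatively holomorphic.

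First I expand the brackets. Holomorphic and antiholomorphic vertical fields commute when their coefficients are holomorphic in $z$, respectively antiholomorphic in $z$. Hence the $(1,0)$-vertical part of $[H_i,H_{\bar j}]$ is
\[
\partial_i\tau_0(B_{\bar j})-\partial_{\bar j}\tau_0(A_i)+[\tau_0(A_i),\tau_0(B_{\bar j})].
\]
Here $\partial_i\tau_0(B_{\bar j})=\tau_0(\partial_iB_{\bar j})$ by linearity of $\tau_0$, and the bracket equals $\tau_0([A_i,B_{\bar j}])$ up to the sign dictated by the definition \eqref{eq:tau0}. The minus sign in $\exp(-t\xi)$ is exactly what makes $\tau_0$ a Lie algebra homomorphism rather than an anti-homomorphism, and the paper asserts this property. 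The result is therefore $\tau$ applied to
\[
\partial_iB_{\bar j}-\partial_{\bar j}A_i+[A_i,B_{\bar j}]=F_A(\partial_i,\partial_{\bar j}),
\]
which is the $(1,1)$-component of $\D A+\tfrac12[A,A]$.

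The same computation with $[H_{\bar i},H_{\bar j}]$ and its $(1,0)$-vertical part gives
\[
\tau_0\bigl(\partial_{\bar i}B_{\bar j}-\partial_{\bar j}B_{\bar i}+[B_{\bar i},B_{\bar j}]\bigr)=\tau\bigl(F_A^{0,2}(\partial_{\bar i},\partial_{\bar j})\bigr),
\]
and likewise for $(2,0)$. The pieces $\overline{\tau_0(\cdot)}$ only contribute to the $(0,1)$-vertical parts, which are discarded.

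The main obstacle is sign bookkeeping. One sign comes from the convention in \eqref{eq:tau0} combined with the horizontal lift $\partial^\sharp=\partial-A$. Another comes from matching the pullback to the identity section with the $\Ad$-equivariant description of $\tau$ in Lemma \ref{lem:tau_associated}. To fix both, I would check the case $Y=G$ with left multiplication, where the formula must reduce to the standard identification of curvature with the vertical bracket of horizontal lifts. Finally, I must confirm that $\tau(F_A)$, computed in the trivialization, is the global section given by Lemma \ref{lem:tau_associated}. This holds because both sides are tensorial.

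For the consequence: by Corollary \ref{cor:dbarint_via_conn_curv}, $\dbar_A$ is integrable if and only if $F^{0,2}_{\dbar_A}=0$, because $\nabla_A^{0,1}$ is already mixed relatively holomorphic. This says $\tau(F_A^{0,2})=0$, which by $\ker\tau=\mathfrak n_P$ means $F_A^{0,2}\in A^{0,2}(S,\mathfrak n_P)$. Since the projection $\ad P\to\ad P_{\mathrm{eff}}=(\ad P)/\mathfrak n_P$ maps $F_A$ to $F_{\hat A}$, this is equivalent to $F_{\hat A}^{0,2}=0$.
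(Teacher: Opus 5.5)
Your proposal is correct and is essentially the paper's proof: the paper substitutes the coefficients $\Gamma_i^\alpha=\tau_0(A_i)^\alpha$, $\Gamma_{\bar j}^\alpha=\tau_0(B_{\bar j})^\alpha$ from Lemma \ref{lem:b_rel_holo_associated} into the local formula \eqref{Curvature11Local_eq}, which is exactly your bracket computation. It then uses that $\tau_0$ is a complex-linear Lie algebra homomorphism, and deduces the integrability criterion from Corollary \ref{cor:dbarint_via_conn_curv} and $\ker\tau=\mathfrak n_P$, just as you do. The sign issues you flag are already settled by the lift formula of Lemma \ref{lem:b_rel_holo_associated} and by the fact that $\tau$ is simply $\tau_0$ applied to the pullback along the identity section, so the extra $Y=G$ check is not needed.
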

\begin{proof}
Recall that $F_{D_A}^{1,1}$ is locally given by \eqref{Curvature11Local_eq}. Using the notations in the proof of Lemma \ref{lem:b_rel_holo_associated}, we compute that
\begin{align*}
 \bigl( \partial_i \Gamma_{\bar{j}}^\alpha - \partial_{\bar{j}} \Gamma_i^\alpha \bigr) + \bigl( \Gamma_i^\beta \partial_\beta \Gamma_{\bar{j}}^\alpha - \Gamma_{\bar{j}}^\beta \partial_\beta \Gamma_i^\alpha \bigr) &= \tau_0(\partial_i B_{\bar{j}} - \partial_{\bar{j}} A_i)^\alpha + [\tau_0(A_i), \tau_0(B_{\bar{j}})]^\alpha \\
&= \tau_0(\partial_i B_{\bar{j}} - \partial_{\bar{j}} A_i + [A_i, B_{\bar{j}}])^\alpha.
\end{align*}
The curvature of $A$ is $F_A = \D A + \frac{1}{2}[A, A]$. Its $(1,1)$-part is locally
\[
F_A^{1,1} = (\partial_i B_{\bar{j}} - \partial_{\bar{j}} A_i + [A_i, B_{\bar{j}}]) \D s^i \wedge \D \bar{s}^j.
\]
Thus, $F^{1,1}_{D_A} = \tau(F_A^{1,1})$.  Similarly, $F^{0,2}_{\dbar_A} = \tau( F_A^{0,2})$ and $F^{2,0}_{\partial_A} = \tau(F_A^{2,0})$. Since the lifting condition holds, $\dbar_A$ is integrable if and only if $F^{0,2}_{\dbar_A}=0$.  By Lemma \ref{lem:tau_associated}, this is equivalent to $F_A^{0,2}\in A^{0,2}(S,\mathfrak n_P)$. Since $F_{\hat A}$ is the image of $F_A$ under $\ad P\to\ad P_{\mathrm{eff}}$, this is equivalent to $F_{\hat A}^{0,2}=0$.
\end{proof}
\begin{remark}\label{rmk:curv_coresp}
Let $A$ be a principal connection on $P$ which induces an almost connection $\partial_A$ and a $\dbar$-operator $\dbar_A$ on $\pi_P:P\to S$. Then $F_{\hat{D}_A}^{1,1}=-F_A^{1,1}$, $F_{\dbar_A}^{0,2}=-F_A^{0,2}$, and $F_{\partial_A}^{2,0}=-F_A^{2,0}$.
\end{remark}

Now we assume that $P$ is a holomorphic principal bundle, then $X$ is a holomorphic fiber bundle. In this case, the map $A \mapsto V_A$ constructed in Lemma \ref{lem:tau_associated} preserves the holomorphic structure, so $\tau$ in \eqref{eq:morphism_tau} is a morphism of analytic sheaves (with $f_*T_{X/S}$ replaced by $f^{\mathrm{hol}}_*T_{X/S}$).
\begin{definition}
A connected complex Lie group $G$ is \emph{reductive} if it is of the form $\left(\left(\mathbb{C}^*\right)^r \times G_0\right) / Z$ where $G_0$ is semisimple and $Z$ is a finite central subgroup. Equivalently, $G$ is the complexification $K^\CC$ of a compact real Lie group $K$. A complex Lie group $G$ is \emph{reductive} if $G^{\circ}$ is reductive and $G / G^{\circ}$ is finite, where $G^{\circ}$ is the identity component of $G$.
\end{definition}
\begin{lemma}\label{lem:reductive_finite_split}
	Let $\pi_P:P\to S$ be a holomorphic principal $G$-bundle. Suppose that $G$ is a complex reductive Lie group and $H^0(Y,TY)$ is finite-dimensional. Then $\hat\tau$ in \eqref{eq:morphism_tau_hat} is a split injective morphism of holomorphic vector bundles.
\end{lemma}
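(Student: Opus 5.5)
We prove the claim fiberwise at the level of $G$-representations and then pass to associated bundles. Recall that $\ad P\cong P\times_G\mathfrak g$ and that, by the remark preceding Lemma \ref{lem:tau_associated}, $f^{\mathrm{hol}}_*T_{X/S}$ is the sheaf of holomorphic sections of $P\times_G H^0(Y,TY)$. Because $H^0(Y,TY)$ is finite-dimensional, this is a holomorphic vector bundle of finite rank. The map $\hat\tau$ is induced by the $G$-equivariant injective linear map $\hat\tau_0:\mathfrak g/\mathfrak n\hookrightarrow H^0(Y,TY)$. It therefore suffices to produce a $G$-equivariant linear complement $W\subset H^0(Y,TY)$ with
\[
H^0(Y,TY)=\hat\tau_0(\mathfrak g/\mathfrak n)\oplus W .
\]
Applying $P\times_G(-)$ then gives a holomorphic splitting $P\times_G H^0(Y,TY)\cong \ad P_{\mathrm{eff}}\oplus (P\times_G W)$. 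The associated-bundle construction is functorial in holomorphic representations and takes direct sums to direct sums, so $\hat\tau$ is split injective.

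\textbf{Step 1 (holomorphic, finite-dimensional representation).} The push-forward action of $G$ on $V:=H^0(Y,TY)$ is a linear action on a finite-dimensional complex space. We check that it is holomorphic, i.e. that $G\to\GL(V)$ is a holomorphic homomorphism. Since $G\times Y\to Y$ is holomorphic, the map $g\mapsto g_*\xi$ depends holomorphically on $g$ for each fixed $\xi$: evaluate at points of $Y$ and use that a finite-dimensional space of holomorphic vector fields is determined by evaluation at finitely many points (or at jets). The subspace $\hat\tau_0(\mathfrak g/\mathfrak n)$ is $G$-invariant, by the equivariance of $\tau_0$ proved earlier.

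\textbf{Step 2 (complete reducibility).} This is the main point. A holomorphic finite-dimensional representation of a complex reductive group is completely reducible. For $G^\circ=K^\CC$, we apply Weyl's unitary trick: average a Hermitian inner product on $V$ over the compact group $K$. The orthogonal complement of an invariant subspace is then $K$-invariant. Because the representation is holomorphic, $\mathfrak g=\mathfrak k\oplus\I\mathfrak k$ also preserves this complement, and since $G^\circ$ is connected, so does $G^\circ$. To pass to $G$, we use that $G/G^\circ$ is finite. Let $\pi_0$ be any $G^\circ$-equivariant projection of $V$ onto the invariant subspace, and average it over coset representatives:
\[
\pi=\tfrac{1}{|G/G^\circ|}\sum_{[g]\in G/G^\circ} g\,\pi_0\,g^{-1}.
\]
This is well defined because $\pi_0$ is $G^\circ$-equivariant, and $\pi$ is a $G$-equivariant projection onto $\hat\tau_0(\mathfrak g/\mathfrak n)$. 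Its kernel is the desired complement $W$.

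\textbf{Step 3 (conclusion).} The projection $\pi$, composed with $\hat\tau_0^{-1}$, is a $G$-equivariant linear map $V\to\mathfrak g/\mathfrak n$. It induces a holomorphic bundle map $P\times_G V\to P\times_G(\mathfrak g/\mathfrak n)\cong\ad P_{\mathrm{eff}}$, and this map is a left inverse of $\hat\tau$. This left inverse is holomorphic, since it is induced by a constant linear map in holomorphic trivializations of $P$. Hence $\hat\tau$ is a split injective morphism of holomorphic vector bundles.

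The only delicate point is the holomorphy of the representation in Step 1, which is needed to extend invariance from $K$ to $K^\CC$. We expect it to follow routinely from the holomorphy of the action map.
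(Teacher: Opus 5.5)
Your proposal is correct and follows the paper's argument. Both use complete reducibility of the finite-dimensional holomorphic $G$-representation $H^0(Y,TY)$ to obtain a $G$-invariant complement $W$ of $\im(\tau_0)\cong\mathfrak g/\mathfrak n$, and then apply $P\times_G(-)$, so that $\hat\tau$ becomes the inclusion of the first summand of $(\ad P/\mathfrak n_P)\oplus(P\times_G W)$. The extra details you supply are exactly the standard facts the paper uses without proof: holomorphy of $G\to\GL(H^0(Y,TY))$ via finitely many evaluation points, the unitary trick for $G^\circ=K^\CC$, and averaging the projection over $G/G^\circ$.
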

\begin{proof}
Since $H^0(Y,TY)$ is a finite-dimensional holomorphic representation of the complex reductive group $G$, the representation is completely reducible. $\tau_0$ is $G$-equivariant, so the image $\im(\tau_0)\cong\mathfrak g/\mathfrak n$ is a $G$-subrepresentation. By complete reducibility, there exists a $G$-invariant subspace $W$ such that $H^0(Y,TY)\cong(\mathfrak g/\mathfrak n)\oplus W$. Consequently,
\[
f^{\mathrm{hol}}_*T_{X/S} \cong P\times_G ((\mathfrak g/\mathfrak n)\oplus W)\cong (\ad P/\mathfrak n_P)\oplus(P\times_G W),
\]
and $\hat\tau$ is the inclusion of the first summand.
\end{proof}

A \emph{principal complex connection} $A$ on $P$ is a principal connection belonging to $A^{1,0}(P,\mathfrak{g})$. The space of such connections is an affine space modeled on $A^{1,0}(S, \ad P)$. Given a principal complex connection $A$ on $P$, we define the induced connection as above. In a local holomorphic trivialization, $B_{\bar{j}}=0$ in \eqref{eq:princ_conn_local}, $\dbar_A$ is the canonical $\dbar$-operator on $f$, and $\nabla_A^{1,0}$ is a relatively holomorphic pure $(1,0)$-connection. By Lemma \ref{lem:affine_map_associated}, we have the following result.

\begin{corollary}\label{cor:affine_map_associated}
There is a natural affine map
\[
\big\{\text{principal complex connections on }P\big\}\longrightarrow \big\{\text{relatively holomorphic pure $(1,0)$-connections on }f\big\},
\]
given by $A \mapsto \nabla_A^{1,0}$. Under the identification \[\big\{\text{principal complex connections on }P\big\}/A^{1,0}(S,\mathfrak n_P)\cong \big\{\text{principal complex connections on }P_{\mathrm{eff}}\big\},\]
it induces an affine injection
\[
\big\{\text{principal complex connections on }P_{\mathrm{eff}}\big\}\hookrightarrow \big\{\text{relatively holomorphic pure $(1,0)$-connections on }f\big\}.
\]
If $\hat{\tau}_0$ is an isomorphism, then this map is an affine isomorphism.
\end{corollary}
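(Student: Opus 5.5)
The plan is to obtain Corollary \ref{cor:affine_map_associated} by restricting Lemma \ref{lem:affine_map_associated} to the affine subspace of principal complex connections. First I check that a principal complex connection $A$ lands in the target set. In a local holomorphic trivialization of $P$ we have $B_{\bar j}=0$ in \eqref{eq:princ_conn_local}. The formulas in the proof of Lemma \ref{lem:b_rel_holo_associated} then read
\[
\nabla_A^{1,0}(\partial_i)=\partial_i+\tau_0(A_i),\qquad \nabla_A^{0,1}(\partial_{\bar i})=\partial_{\bar i}+\widebar{\tau_0(A_i)}.
\]
Since the induced trivialization of $X|_U\cong U\times Y$ is holomorphic, these coordinates are adapted holomorphic coordinates. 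Hence $\Gamma_{\bar j}^\gamma=0$, so $\dbar_A=\dbar_f$ is the canonical operator. We also get $\Gamma_i^{\bar\beta}=0$, so $\nabla_A^{1,0}$ is pure. Relative holomorphicity follows from Lemma \ref{lem:b_rel_holo_associated}. This gives a well-defined map.

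Next, affineness follows from the formula $\nabla_{A+\alpha}^{1,0}-\nabla_A^{1,0}=\tau(\alpha^{1,0})+\widebar{\tau(\alpha^{0,1})}$ in the proof of Lemma \ref{lem:affine_map_associated}. For $\alpha\in A^{1,0}(S,\ad P)$ this equals $\tau(\alpha)\in A^{1,0}(S,f_*T_{X/S})$. That space is exactly the modeling space of $\mathcal{A}_{\mathrm{RH},\dbar_f}$ from Lemma \ref{lem:RH_space}. So the linear part of the map is $\tau\colon A^{1,0}(S,\ad P)\to A^{1,0}(S,f_*T_{X/S})$.

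By Lemma \ref{lem:tau_associated}, the kernel of this linear part is $A^{1,0}(S,\mathfrak n_P)$, which gives the fibers of the map. The identification of the quotient with principal complex connections on $P_{\mathrm{eff}}$ comes from pushing $A$ forward along $\mathfrak g\to\mathfrak g/\mathfrak n$. The pushforward of a $(1,0)$-form is again $(1,0)$, and every principal complex connection on $P_{\mathrm{eff}}$ lifts: pick any lift and correct it by a section of $A^{1,0}(S,\ad P)$ using the surjection $\ad P\to\ad P_{\mathrm{eff}}$. The map factors through this quotient because $\tau$ factors through $\hat\tau$, and the factored map is injective because $\hat\tau$ is injective.

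Finally, if $\hat\tau_0$ is an isomorphism, then $\hat\tau\colon A^{1,0}(S,\ad P_{\mathrm{eff}})\to A^{1,0}(S,f_*T_{X/S})$ is an isomorphism by Lemma \ref{lem:tau_associated}. An injective affine map between affine spaces whose linear part is bijective, with nonempty domain, is bijective. The main point needing care is purity: it relies on $X|_U\cong U\times Y$ being a holomorphic chart, which holds because $G$ acts holomorphically and the trivialization of $P$ is holomorphic. Everything else is bookkeeping on top of Lemma \ref{lem:affine_map_associated}.
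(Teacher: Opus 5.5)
Your proposal is correct and follows the paper's route. The paper likewise notes that in a local holomorphic trivialization $B_{\bar j}=0$ in \eqref{eq:princ_conn_local}, so $\dbar_A$ is the canonical operator and $\nabla_A^{1,0}$ is relatively holomorphic and pure, and then restricts Lemma~\ref{lem:affine_map_associated} to principal complex connections. You just make explicit the bookkeeping the paper leaves implicit: the linear part $\tau$ on $A^{1,0}(S,\ad P)$ landing in the modeling space $A^{1,0}(S,f_*T_{X/S})$ of Lemma~\ref{lem:RH_space}, the kernel $A^{1,0}(S,\mathfrak n_P)$, and the lifting argument behind the $P_{\mathrm{eff}}$ identification.
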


\begin{theorem}\label{thm:atiyah_associated}
Let $f:X \to S$ be an associated bundle as above. Suppose $H^0(Y,TY)$ is finite-dimensional. Then for any relatively holomorphic pure $(1,0)$-connection $\nabla^{1,0}$ on $f$, the curvature class $[F_{D}^{1,1}]_S \in H^{1,1}(S, f^{\mathrm{hol}}_*T_{X/S})$ is the negative of the image of the Atiyah class $A(P) \in H^1(S, \Omega_S\otimes \ad P)$ under the Dolbeault isomorphism and the morphism $\tau:\ad P\to f^{\mathrm{hol}}_*T_{X/S}$.
\end{theorem}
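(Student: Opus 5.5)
The plan is to reduce to a single convenient connection, compute its curvature class, and compare with Atiyah's Dolbeault description of $A(P)$.

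\textbf{Step 1 (reduction).} By Remark~\ref{rmk:curv_class_S}, the class $[F^{1,1}_D]_S\in H^{1,1}(S,f^{\mathrm{hol}}_*T_{X/S})$ does not depend on the choice of relatively holomorphic pure $(1,0)$-connection. Here $f^{\mathrm{hol}}_*T_{X/S}\cong P\times_G H^0(Y,TY)$ is locally free of finite rank because $H^0(Y,TY)$ is finite-dimensional. Concretely, two such connections differ by $G_{1,2}\in A^{1,0}(S,f_*T_{X/S})$ as in Lemma~\ref{lem:RH_space}. By \eqref{Curvature11Local_eq1}, their curvatures then differ by $\dbar_S G_{1,2}$ computed in a local holomorphic frame, which is a $\dbar$-exact form with values in the holomorphic bundle $f^{\mathrm{hol}}_*T_{X/S}$. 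So it suffices to verify the statement for one particular connection.

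\textbf{Step 2 (a convenient connection).} Choose a principal complex connection $A\in A^{1,0}(P,\mathfrak g)$ on the holomorphic principal bundle $P$; these exist, for instance by gluing local flat ones with a partition of unity. By Corollary~\ref{cor:affine_map_associated}, $\nabla_A^{1,0}$ is a relatively holomorphic pure $(1,0)$-connection on $f$, and $\dbar_A=\dbar_f$. Lemma~\ref{lem:curvature_correspondence} then gives
\[
F^{1,1}_{D_A}=\tau(F_A^{1,1}).
\]
Because $\tau:\ad P\to f^{\mathrm{hol}}_*T_{X/S}$ is a morphism of holomorphic vector bundles, it commutes with $\dbar$. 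It therefore induces the map on Dolbeault cohomology
\[
H^{1,1}(S,\ad P)\to H^{1,1}(S,f^{\mathrm{hol}}_*T_{X/S}),
\]
and this map is compatible with the Dolbeault isomorphisms $H^{1,1}(S,E)\cong H^1(S,\Omega_S\otimes E)$ by naturality. It follows that $[F^{1,1}_{D_A}]_S=\tau_*[F_A^{1,1}]$.

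\textbf{Step 3 (Atiyah's description).} Invoke Atiyah's result as recalled in the introduction: for any $C^\infty$ connection $A$ of type $(1,0)$, the Atiyah class satisfies $A(P)=-[F_A^{1,1}]\in H^{1,1}(S,\ad P)$, with the sign matching the paper's conventions. Combining with Step 2 gives
\[
[F^{1,1}_D]_S=-\tau_*A(P),
\]
as claimed.

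\textbf{Main obstacle.} The delicate point is bookkeeping in Step 3. One must check that the paper's conventions for the bracket $[\alpha,\beta]$, for $F_A=\D A+\tfrac12[A,A]$, for the Čech differential, and for the identification of $(0,1)$-forms with values in $\Omega_S$ with $(1,1)$-forms all agree with the convention under which $A(P)=-[F_A^{1,1}]$ holds. If there is any doubt, I would verify the sign directly. Take local holomorphic trivializations with transition functions $g_{ab}$, and write $A|_{U_a}=A_a$. Then $\{A_a\}$ is a $0$-cochain whose Čech coboundary is the Maurer–Cartan cocycle $g^{-1}\D g$ representing the extension class of \eqref{AtiyahSeq_eq}. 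Its $\dbar$-differential is $\dbar A_a=-F^{1,1}_A$, since $F_A^{1,1}=-\partial_{\bar j}A_i\,\D s^i\wedge\D\bar s^j$ when $B=0$. The total differential $\D_{\mathrm{tot}}$ then identifies the two classes up to the stated sign.

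A secondary check is that $\tau$ of the Čech cocycle $g_{ab}^{-1}\D g_{ab}$ equals the Čech cocycle of the sequence \eqref{TangentSeqFibration_eq}. This follows from the formula $\nabla_A^{1,0}(\partial_i)=\partial_i+\tau_0(A_i)$ in the proof of Lemma~\ref{lem:b_rel_holo_associated}.
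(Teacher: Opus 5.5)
Your proposal is correct and is essentially the paper's proof: take a principal complex connection $A$, use Atiyah's identity $A(P)=-[F_A^{1,1}]$, transport it by Lemma~\ref{lem:curvature_correspondence} and $\tau$, and pass to an arbitrary relatively holomorphic pure connection via Remark~\ref{rmk:curv_class_S}. One small slip in your optional sign check: with the paper's identification $\beta\otimes\alpha\mapsto\beta\wedge\alpha$ one gets $\dbar A_a=\partial_{\bar j}A_i\,\D\bar s^j\wedge\D s^i=+F_A^{1,1}$, not $-F_A^{1,1}$; the minus sign instead comes from $\D_{\mathrm{tot}}\{A_a\}=c+F_A^{1,1}$, where $c_{ab}=\sigma_b-\sigma_a$ and $A_a=\sigma_a-\nabla_A$ (the horizontal lift being $\partial_i-A_i$).
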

\begin{proof}
Let $A$ be a principal complex connection on $P$, then by \cite[Prop.~4]{atiyah57} (with our \v{C}ech--Dolbeault convention), $A(P)=-[F_A^{1,1}]$ under the Dolbeault isomorphism. By Lemma \ref{lem:curvature_correspondence}, we have $\tau([F_A^{1,1}])=[F_{D_A}^{1,1}]_S$. By Lemma \ref{lem:b_rel_holo_associated}, $\nabla^{1,0}_A$ is a relatively holomorphic pure $(1,0)$-connection. Then $[F_{D_A}^{1,1}]_S=[F_{D}^{1,1}]_S$ by Remark \ref{rmk:curv_class_S}. The conclusion then follows.
\end{proof}

\begin{remark}\label{rmk:cpt_fiber}
Let $S$ be connected and let $f: X \to S$ be a proper surjective holomorphic fibration which admits a relatively holomorphic pure $(1,0)$-connection. Then $f$ is a holomorphic fiber bundle by Proposition \ref{prop:exist_fiberwise_holo_connection}, which arises as the associated bundle $X = P \times_G Y\to S$ for a holomorphic principal $G$-bundle $\pi_P: P \to S$ and a compact complex manifold $Y$, where $G =\Aut(Y)$ is a complex Lie group. In this case, $\tau_0$ is an isomorphism and $f^{\mathrm{hol}}_*T_{X/S}$ is locally free, so $\tau$ is an isomorphism by Lemma \ref{lem:tau_associated}.
\end{remark}

\section{Holomorphic connections}\label{sec:hol_conn}
Let $ f: X \to S $ be a holomorphic fibration between (possibly noncompact) complex manifolds of dimensions $m+n$ and $n$. As in \eqref{HolTangentSeq_eq}, there is a short exact sequence of holomorphic vector bundles on $X$:
\begin{equation}\label{HolTangentSeq_eq1}
	0 \to T_{X/S} \to TX \to f^* TS \to 0.
\end{equation}
Recall that a \emph{holomorphic connection} on $f$ is a holomorphic splitting of \eqref{HolTangentSeq_eq1}. This is equivalent to the connection coefficients $\Gamma_i^\alpha(s,z)$ being holomorphic functions in both $s$ and $z$. The following lemma is straightforward.
\begin{lemma}
Whenever nonempty, the space of holomorphic connections on $f$ is an affine space modeled on $H^0(S,\Omega_S\otimes f^{\mathrm{hol}}_*T_{X/S})$.
\end{lemma}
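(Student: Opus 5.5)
The plan is to fix one holomorphic connection $\nabla_0$ and show that every other one differs from it by an element of $H^0(S,\Omega_S\otimes f^{\mathrm{hol}}_*T_{X/S})$, and that every such element can be added back. Since holomorphic connections are in particular relatively holomorphic pure $(1,0)$-connections, I will run the argument of Lemma \ref{lem:RH_space} in the holomorphic setting.

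Given two holomorphic connections $\nabla_1,\nabla_2$ on $f$, their composites with $TX\to f^*TS$ both equal the identity. Hence the difference $G_{1,2}:=\nabla_1-\nabla_2$ maps $f^*TS$ into $\ker(TX\to f^*TS)=T_{X/S}$. It is therefore a holomorphic section of $f^*\Omega_S\otimes T_{X/S}$ on $X$, i.e. an element of $H^0(X,f^*\Omega_S\otimes T_{X/S})$. In adapted holomorphic coordinates it reads $G_{1,2}=({\Gamma_1}_i^\alpha-{\Gamma_2}_i^\alpha)\,\D s^i\otimes\partial_\alpha$, with holomorphic coefficients. This is the same local expression as \eqref{TransgressionForm_eq}, and the tensoriality check there applies verbatim.

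Next I identify $H^0(X,f^*\Omega_S\otimes T_{X/S})$ with $H^0(S,\Omega_S\otimes f^{\mathrm{hol}}_*T_{X/S})$. Since $\Omega_S$ is locally free, on a coordinate chart $U\subset S$ a section over $f^{-1}(U)$ is a tuple $(\sigma_i)$, $\sigma_i\in H^0(f^{-1}(U),T_{X/S})=f^{\mathrm{hol}}_*T_{X/S}(U)$, written as $\sum_i \D s^i\otimes\sigma_i$. These local descriptions glue by the usual transformation of $\D s^i$. So the projection formula $f_*^{\mathrm{hol}}(f^*\Omega_S\otimes T_{X/S})\cong\Omega_S\otimes f^{\mathrm{hol}}_*T_{X/S}$ holds, and taking global sections gives the identification.

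Conversely, given a holomorphic connection $\nabla$ and $G\in H^0(S,\Omega_S\otimes f^{\mathrm{hol}}_*T_{X/S})$, the map $\nabla+G:f^*TS\to TX$ is holomorphic. Its composite with the projection is still the identity, because $G$ lands in $T_{X/S}$. So $\nabla+G$ is again a holomorphic connection. The operations $(\nabla,G)\mapsto\nabla+G$ and $(\nabla_1,\nabla_2)\mapsto\nabla_1-\nabla_2$ satisfy the affine space axioms: the action is free and transitive, and $(\nabla_1-\nabla_2)+(\nabla_2-\nabla_3)=\nabla_1-\nabla_3$. This gives the claimed affine structure whenever the space is nonempty.

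There is no real obstacle. The only point needing care is the projection-formula identification, where one must use that $\Omega_S$ is locally free of finite rank. No finiteness of $f^{\mathrm{hol}}_*T_{X/S}$ is needed.
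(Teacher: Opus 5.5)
Your proposal is correct and matches the argument the paper intends. The paper calls the lemma straightforward and gives no proof; the implicit reasoning is exactly yours: the difference of two holomorphic splittings is a holomorphic section of $f^*\Omega_S\otimes T_{X/S}$, which the projection formula identifies with a section of $\Omega_S\otimes f^{\mathrm{hol}}_*T_{X/S}$, as in the difference computation of Lemma \ref{lem:RH_space}.
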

Together with Lemma \ref{lem:affine_map_associated}, this implies the following.
\begin{corollary}\label{cor:affine_map_associated_hol}
Suppose $\pi_P:P\to S$ is a holomorphic principal $G$-bundle, $G$ acts holomorphically on $Y$, and $f:P\times_G Y\to S$ is the associated holomorphic fiber bundle. Then there is a natural affine map
\[\big\{\text{holomorphic connections on }P\big\}\longrightarrow\big\{\text{holomorphic connections on }f\big\},\]
given by $A \mapsto \nabla_A^{1,0}$. Let
\[
N:=\ker\bigl(G\to\Aut(Y)\bigr),\qquad G_{\mathrm{eff}}:=G/N,\qquad P_{\mathrm{eff}}:=P/N.
\]
Applying the associated-bundle construction directly to $P_{\mathrm{eff}}$ gives an affine injection
\[
\big\{\text{holomorphic connections on }P_{\mathrm{eff}}\big\} \hookrightarrow \big\{\text{holomorphic connections on }f\big\}.
\]
If $\hat{\tau}:\ad P_{\mathrm{eff}}\to f^{\mathrm{hol}}_*T_{X/S}$ is an isomorphism, then this map is an affine isomorphism.
\end{corollary}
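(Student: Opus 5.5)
The plan is to derive Corollary \ref{cor:affine_map_associated_hol} from Corollary \ref{cor:affine_map_associated} (equivalently Lemma \ref{lem:affine_map_associated}), by checking that holomorphicity is preserved in both directions.

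\textbf{Holomorphic connections map to holomorphic connections.} A holomorphic connection on $P$ is a principal complex connection $A$ which, in a local holomorphic trivialization of $P$, has the form $A=A_i(s)\,\D s^i$ with $A_i$ holomorphic $\mathfrak g$-valued; here $B_{\bar j}=0$ and $\dbar A_i=0$. By the computation in the proof of Lemma \ref{lem:b_rel_holo_associated},
\[
\nabla_A^{1,0}(\partial_i)=\partial_i+\tau_0(A_i(s)).
\]
Since $\tau_0$ is complex linear with values in $H^0(Y,TY)$, the coefficients $\Gamma_i^\alpha(s,z)=\tau_0(A_i(s))^\alpha(z)$ are holomorphic in $z$ and depend holomorphically on $s$. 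The local holomorphic trivialization of $X$ induced from that of $P$ is a holomorphic chart, so $\nabla^{1,0}_A$ is a holomorphic splitting of \eqref{HolTangentSeq_eq1}. Hence the map of Corollary \ref{cor:affine_map_associated} restricts to holomorphic connections. It is affine, since $\nabla^{1,0}_{A+\alpha}-\nabla^{1,0}_A=\tau(\alpha)$ for $\alpha\in H^0(S,\Omega_S\otimes\ad P)$. Because $\tau$ is a morphism of analytic sheaves, this is compatible with the modeling spaces in the preceding lemma.

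\textbf{Passing to $P_{\mathrm{eff}}$.} First I check that $P\times_G Y\cong P_{\mathrm{eff}}\times_{G_{\mathrm{eff}}}Y$ holomorphically and that the effective action has infinitesimal map $\hat\tau_0$. Applying the previous step to $P_{\mathrm{eff}}$ then gives the affine map. Its linear part is $H^0(S,\Omega_S\otimes\ad P_{\mathrm{eff}})\to H^0(S,\Omega_S\otimes f^{\mathrm{hol}}_*T_{X/S})$, induced by $\hat\tau$. This is injective because $\hat\tau$ is an injective sheaf morphism (Lemma \ref{lem:tau_associated}) and $\Omega_S$ is locally free. Alternatively, injectivity follows directly from the affine injection of Corollary \ref{cor:affine_map_associated} by restriction.

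\textbf{Surjectivity when $\hat\tau$ is an isomorphism.} This is the only step needing care. Given a holomorphic connection $\nabla$ on $f$, Corollary \ref{cor:affine_map_associated} produces a unique principal complex connection $\hat A$ on $P_{\mathrm{eff}}$ with $\nabla^{1,0}_{\hat A}=\nabla$. It remains to show that $\hat A$ is holomorphic. Fix any holomorphic-frame computation. In a local holomorphic trivialization, $\nabla^{1,0}_{\hat A}(\partial_i)=\partial_i+\hat\tau_0(\hat A_i(s))$. Holomorphicity of $\nabla$ means $\hat\tau_0(\hat A_i)$ is a holomorphic function of $s$ valued in $H^0(Y,TY)$. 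Since $\hat\tau_0$ is a linear isomorphism of finite-dimensional spaces, $\hat A_i$ is holomorphic.

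A cleaner alternative runs at the level of global objects. Fix one holomorphic connection $A_0$ on $P_{\mathrm{eff}}$, if one exists. Then $\nabla-\nabla_{A_0}\in H^0(S,\Omega_S\otimes f^{\mathrm{hol}}_*T_{X/S})$, and applying $\hat\tau^{-1}$ as an isomorphism of holomorphic bundles shows that the difference lies in $H^0(S,\Omega_S\otimes\ad P_{\mathrm{eff}})$. The local argument above settles the case where the source is empty.
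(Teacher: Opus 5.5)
Your proposal is correct and follows the paper's route: the paper obtains this corollary in one line by combining the fact that holomorphic connections on $f$ form an affine space modeled on $H^0(S,\Omega_S\otimes f^{\mathrm{hol}}_*T_{X/S})$ with Lemma \ref{lem:affine_map_associated}, and your argument fills in the restriction to holomorphic connections in both directions. Your local argument for surjectivity also covers nonemptiness of the source, which the paper leaves implicit. The step from the hypothesis on $\hat\tau$ to $\hat\tau_0$ being an isomorphism, which you need to invoke Corollary \ref{cor:affine_map_associated}, is immediate: apply $\hat\tau^{-1}$ to constant vertical fields in a holomorphic trivialization.
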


\begin{lemma}\label{lem:hol_conn_curv}
Let $(f,T_{X/S})$ be a complex fiber bundle equipped with an integrable $\dbar$-operator $\dbar$. Let $\nabla^{1,0}$ be a pure $(1,0)$-connection. Then $\nabla^{1,0}$ is a holomorphic connection if and only if $\nabla^{1,0}$ is relatively holomorphic and $F_D^{1,1}=0$ (well-defined by Lemma \ref{lem:curv_def_rel_hol} since $\nabla^{1,0}$ is relatively holomorphic).
\end{lemma}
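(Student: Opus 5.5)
I will work in adapted local holomorphic coordinates $(s^i,z^\alpha)$ for the holomorphic fibration determined by the integrable $\dbar$, so that purity of $\nabla^{1,0}$ means $\Gamma_i^{\bar\beta}=0$. Then $\nabla^{1,0}(\partial_i)=\partial_i+\Gamma_i^\alpha\partial_\alpha$. As recalled at the start of Section \ref{sec:hol_conn}, $\nabla^{1,0}$ is a holomorphic splitting of \eqref{HolTangentSeq_eq1} exactly when every $\Gamma_i^\alpha$ is a holomorphic function of $(s,z)$. In other words, we need $\partial_{\bar\beta}\Gamma_i^\alpha=0$ and $\partial_{\bar j}\Gamma_i^\alpha=0$ for all indices.

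For the forward direction, suppose $\nabla^{1,0}$ is holomorphic. Then $\partial_{\bar\beta}\Gamma_i^\alpha=0$, which is precisely the relative holomorphicity condition \eqref{RelHolConn_eq}. So $F^{1,1}_D$ is well-defined by Lemma \ref{lem:curv_def_rel_hol}. In these holomorphic coordinates $\dbar$ is the canonical operator ($\Gamma_{\bar j}^\gamma=0$), and \eqref{Curvature11Local_eq1} gives
\[F^{1,1}_D=-(\partial_{\bar j}\Gamma_i^\alpha)\,\D s^i\wedge\D\bar s^j\otimes\partial_\alpha.\]
This vanishes because $\partial_{\bar j}\Gamma_i^\alpha=0$.

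For the converse, relative holomorphicity gives $\partial_{\bar\beta}\Gamma_i^\alpha=0$. The vanishing of $F^{1,1}_D$ together with \eqref{Curvature11Local_eq1} gives $\partial_{\bar j}\Gamma_i^\alpha=0$, since the coefficients of the tensor with respect to the local frame $\D s^i\wedge\D\bar s^j\otimes\partial_\alpha$ must all vanish. Hence $\dbar_X\Gamma_i^\alpha=0$, so each $\Gamma_i^\alpha$ is holomorphic and $\nabla^{1,0}$ is a holomorphic splitting.

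The only point needing care is that the coordinate characterization is legitimate, i.e. that we may use adapted holomorphic coordinates in which $\Gamma_{\bar j}^\gamma=0$ and $\Gamma_i^{\bar\beta}=0$. This follows from the integrability of $\dbar$ via Proposition \ref{prop:dbar_complex_structure} and the Newlander--Nirenberg theorem, together with the remark following Lemma \ref{lem:new10_conn}. No genuine obstacle is expected.
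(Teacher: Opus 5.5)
Your proposal is correct and follows the paper's own argument: the paper simply says the lemma is immediate from \eqref{Curvature11Local_eq1}, and you have spelled out exactly that. In adapted holomorphic coordinates, holomorphicity of the splitting splits into $\partial_{\bar\beta}\Gamma_i^\alpha=0$ (relative holomorphicity) and $\partial_{\bar j}\Gamma_i^\alpha=0$ (vanishing of $F^{1,1}_D$).
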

\begin{proof}
This is immediate from \eqref{Curvature11Local_eq1}.
\end{proof}
The following example illustrates that a flat connection may not induce a holomorphic connection.
\begin{example}\label{ex:flat_real_nonholomorphic_connection}
Let $\Delta=\{s\in\CC:|s|<1\}$ and let $f:X=\Delta_s\times\CC_z\to\Delta_s$ carry its standard product holomorphic structure. Consider the smooth bundle diffeomorphism
\[
\Phi:\Delta_s\times\CC_w\longrightarrow X,\qquad \Phi(s,w)=(s,w+s\bar w).
\]
Its inverse is given by $w=(z-s\bar z)/(1-|s|^2)$. Push the product real connection forward by $\Phi$. The resulting real connection is flat, and its complexified horizontal lifts are
\[
H_s=\partial_s+A\,\partial_z,\qquad H_{\bar s}=\partial_{\bar s}+\overline{A}\,\partial_{\bar z},\qquad A:=(\bar z-\bar s z)/(1-|s|^2).
\]
Since $H_{\bar s}\equiv\partial_{\bar s}\pmod{\widebar{T_{X/S}}}$, it induces the canonical $\dbar$-operator of the product holomorphic structure. However, $\partial_{\bar z}A=(1-|s|^2)^{-1}\ne0$, so $\nabla^{1,0}$ is not relatively holomorphic and hence is not holomorphic.
\end{example}

\subsection{Extension classes}
The short exact sequence \eqref{HolTangentSeq_eq1} defines an extension class
\[
A(X) \in H^1(X, f^* \Omega_S \otimes T_{X/S}),
\]
which vanishes if and only if there exists a holomorphic connection on $f$. We will give a differential geometric interpretation of $A(X)$.

We first compute a Čech cocycle representing the class $A(X)$. Choose an open cover $ \{U_a\} $ of $ X $ with local coordinates $(z_a^1, \dots, z_a^m, s_a^1, \dots, s_a^n)$, where $ f $ is given by $ (z_a, s_a) \mapsto s_a $ on $U_a$. On overlaps $ U_{ab} = U_a \cap U_b $, the transition functions are $s_b^j = s_b^j(s_a^1, \dots, s_a^n)$, $z_b^\beta= z_b^\beta(z_a^1, \dots, z_a^m, s_a^1, \dots, s_a^n)$. The Jacobian matrix has block structure
\[
J_{ab} = \begin{pmatrix}
\frac{\partial z_b^\beta}{\partial z_a^\alpha} & \frac{\partial z_b^\beta}{\partial s_a^i} \\[1.5mm]
0 & \frac{\partial s_b^j}{\partial s_a^i}
\end{pmatrix} =: \begin{pmatrix}
B_{ab} & C_{ab} \\
0 & A_{ab}
\end{pmatrix}.
\]
On each $ U_a $, define the standard holomorphic splitting $ \sigma_a: f^* TS |_{U_a} \to TX |_{U_a} $ by
\[
\sigma_a(\partial_{s_a^i}) = \partial_{s_a^i}, \quad i = 1, \dots, n.
\]
Then $A(X)$ is represented by the Čech $1$-cocycle $(U_{ab},c_{ab})$, where
\[
c_{ab} = \sigma_b - \sigma_a \in \Gamma(U_{ab}, f^* \Omega_S \otimes T_{X/S}).
\]

We compute
\begin{align*}
	\sigma_b(\partial_{s_a^i}) &= \frac{\partial s_b^j}{\partial s_a^i} \sigma_b(\partial_{s_b^j}) = \frac{\partial s_b^j}{\partial s_a^i} \partial_{s_b^j}\\
	&=(A_{ab})_i^j \big( (A_{ab}^{-1})_j^k \partial_{s_a^k} - (B_{ab}^{-1})_\beta^\alpha (C_{ab} A_{ab}^{-1})_j^\beta \partial_{z_a^\alpha} \big)
	\\&=\partial_{s_a^i} - \Big( \frac{\partial z_b^\beta}{\partial s_a^i} \frac{\partial z_a^\alpha}{\partial z_b^\beta} \Big) \partial_{z_a^\alpha}.
\end{align*}
The difference is
\[
c_{ab}(\partial_{s_a^i}) = \sigma_b(\partial_{s_a^i}) - \sigma_a(\partial_{s_a^i}) = -\Big( \frac{\partial z_b^\beta}{\partial s_a^i} \frac{\partial z_a^\alpha}{\partial z_b^\beta} \Big) \partial_{z_a^\alpha}.
\]
Therefore
\begin{equation}\label{Cech1cocycle_eq}
	c_{ab} = -\Big( \frac{\partial z_b^\beta}{\partial s_a^i} \frac{\partial z_a^\alpha}{\partial z_b^\beta} \Big) \D s_a^i \otimes \partial_{z_a^\alpha}=-\left( B_{ab}^{-1} C_{ab}  \right)^\alpha_i  \D s_a^i \otimes \partial_{z_a^\alpha}.
\end{equation}

\begin{theorem}\label{thm:Ext_class_curv_class}
	Let $\nabla$ be a pure complex connection on a holomorphic fibration $f:X\to S$. Then $A(X)=[\mathcal{R}]$, where $[\mathcal{R}]$ is defined in Proposition \ref{prop:dolb_class}. In particular, if $f$ admits a relatively holomorphic connection $\nabla^{1,0}$, then $A(X)=[F_D^{1,1}]$.
\end{theorem}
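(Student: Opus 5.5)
The plan is to compare the Čech cocycle \eqref{Cech1cocycle_eq} with the Dolbeault representative $\mathcal{R}$ inside the Čech--Dolbeault double complex $\check{C}^p(\{U_a\},\mathcal{A}^{0,q}(f^*T^*S\otimes T_{X/S}))$ with total differential $\D_{\mathrm{tot}}=\delta+(-1)^p\dbar$. This mirrors the proof of Proposition \ref{prop:ks_conn_rep}.

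First, on each chart $U_a$ with holomorphic adapted coordinates $(s_a,z_a)$, write the pure connection as $\nabla^{1,0}(\partial_{s_a^i})=\partial_{s_a^i}+\Gamma^\alpha_{i,a}\partial_{z_a^\alpha}$. Set
\[
\sigma_a:=\Gamma^\alpha_{i,a}\,\D s_a^i\otimes\partial_{z_a^\alpha}\in K^{0,0}.
\]
By the local formula in the proof of Proposition \ref{prop:dolb_class}, we have $\dbar_X\sigma_a=\mathcal{R}|_{U_a}$. Equivalently, $\sigma_a=\nabla^{1,0}-\sigma^{\mathrm{std}}_a$, where $\sigma^{\mathrm{std}}_a$ is the standard holomorphic splitting used to define $c_{ab}$.

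Next I compute $\delta\sigma$. On $U_{ab}$ one has
\[
(\delta\sigma)_{ab}=\sigma_b-\sigma_a=(\nabla^{1,0}-\sigma^{\mathrm{std}}_b)-(\nabla^{1,0}-\sigma^{\mathrm{std}}_a)=-(\sigma^{\mathrm{std}}_b-\sigma^{\mathrm{std}}_a)=-c_{ab},
\]
since $\nabla^{1,0}$ is a global splitting. As a cross-check, I would verify the same identity directly in coordinates using the transformation law \eqref{TransLaw10_eq} with holomorphic $z_b(s_a,z_a)$. That law gives
\[
\Gamma_{i,b}^\beta\,\partial_{z_b^\beta}=\tfrac{\partial s_a^k}{\partial s_b^i}\Bigl(\partial_{s_a^k} z_b^\beta+\Gamma^\alpha_{k,a}\partial_{z_a^\alpha}z_b^\beta\Bigr)\partial_{z_b^\beta},
\]
and rewriting $\partial_{z_b^\beta}$ in the $a$-frame recovers \eqref{Cech1cocycle_eq} with the correct sign.

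Finally, I conclude. We have $\D_{\mathrm{tot}}\sigma=\delta\sigma+\dbar\sigma=-c+\mathcal{R}$. So the Čech class $[c]=A(X)$ and the Dolbeault class $[\mathcal{R}]$ agree under the standard Čech--Dolbeault isomorphism, with the sign conventions fixed in the introduction. The \emph{in particular} clause then follows from Proposition \ref{prop:dolb_class}, which gives $\mathcal{R}=F^{1,1}_D$ when $\nabla^{1,0}$ is relatively holomorphic.

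The main obstacle is bookkeeping rather than ideas. I must make sure the cover used for \eqref{Cech1cocycle_eq} consists of adapted holomorphic charts, and that $\sigma_a$ is interpreted in the $a$-coordinates consistently on overlaps. Above all, the sign convention $(\delta u)_{ab}=u_b-u_a$ must match the one implicit in Atiyah's identification, so that one gets $A(X)=+[\mathcal{R}]$ and not its negative.
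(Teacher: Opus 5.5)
Your proposal is correct and is essentially the paper's own proof. The paper likewise takes the $0$-cochain $\gamma_a=\nabla^{1,0}-\sigma_a$, which is your $\sigma_a$, checks that $\dbar_X\gamma_a=\mathcal{R}|_{U_a}$ and $\delta\gamma=-c$, and concludes from $\D_{\mathrm{tot}}\gamma=-c+\mathcal{R}$ (your coordinate cross-check via \eqref{TransLaw10_eq} is consistent with this but not needed).
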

\begin{proof}
	Let $\nabla=\nabla^{1,0}+\nabla^{0,1}$ be a pure complex connection. Then $\nabla^{1,0}:f^* TS \to TX$ is a pure $(1,0)$-connection, given locally on $U_a$ by
\[
\nabla^{1,0}(\partial_{s_a^i}) = \partial_{s_a^i} + {\Gamma_a}_i^\alpha \partial_{z_a^\alpha}.
\]
It suffices to show that the Čech $1$-cocycle $c_{ab}$ defined by \eqref{Cech1cocycle_eq} and
	the Dolbeault representative $\mathcal{R}$ locally given by \[\bigl( (\partial_{\bar{s}_a^j} {\Gamma_a}_i^\alpha) \D\bar{s}_a^j+(\partial_{\bar{z}_a^\beta}{\Gamma_a}_i^\alpha)\D\bar{z}_a^\beta \bigr) \otimes \D s_a^i \otimes \partial_{z_a^\alpha}\in A^{0,1}(U_a,f^*T^*S\otimes T_{X/S})\] represent the same cohomology class in $H^1(X, f^* \Omega_S \otimes T_{X/S})$.

On each $U_a$, we have the standard holomorphic splitting $\sigma_a$. Define a Čech $0$-cochain $\gamma = \{\gamma_a\}$ by
\[
\gamma_a = \nabla^{1,0} - \sigma_a \in C^{\infty}(U_a, f^* T^*S \otimes T_{X/S}).
\]
Locally on $U_a$, this is $\gamma_a = {\Gamma_a}_i^\alpha \D s_a^i \otimes \partial_{z_a^\alpha}$.

Now we compute the Dolbeault differential of $\gamma_a$. We have
\[	\dbar_X \gamma_a = \dbar_X \bigl( {\Gamma_a}_i^\alpha \D s_a^i \otimes \partial_{z_a^\alpha} \bigr) = \bigl( (\partial_{\bar{s}_a^j} {\Gamma_a}_i^\alpha) \D\bar{s}_a^j+(\partial_{\bar{z}_a^\beta}{\Gamma_a}_i^\alpha)\D\bar{z}_a^\beta \bigr) \otimes \D s_a^i \otimes \partial_{z_a^\alpha}.\]
This is exactly the local expression for $\mathcal{R}|_{U_a}$.

Next, compute the Čech coboundary $\delta \gamma$. On overlaps $U_{ab} = U_a \cap U_b$,
\[
(\delta \gamma)_{ab} = \gamma_b - \gamma_a = (\nabla^{1,0} - \sigma_b) - (\nabla^{1,0} - \sigma_a) = \sigma_a - \sigma_b = -c_{ab}.
\]
Thus $\delta \gamma = -c$, where $c = \{c_{ab}\}$ is the Čech $1$-cocycle.

In the Čech-Dolbeault double complex for the cover $\mathcal{U}=\{U_a\}$ and the sheaf $f^* \Omega_S \otimes T_{X/S}$, the element $\gamma \in \check{C}^0(\mathcal{U}, \mathcal{A}^{0,0}(f^* T^*S \otimes T_{X/S}))$ satisfies
\[
\D_{\mathrm{tot}} \gamma = \delta \gamma +\dbar_X \gamma = -c + \mathcal{R},
\]
where $\D_{\mathrm{tot}}$ is the total differential. Therefore, $c$ and $\mathcal{R}$ represent the same cohomology class in $H^1(X, f^* \Omega_S \otimes T_{X/S})$.
\end{proof}

 Consider the Leray spectral sequence for the $\mathcal O_X$-module $f^\ast\Omega_S\otimes T_{X/S}$, whose $E_2$-page is
\[
E_2^{p,q}=H^p\big(S,\Omega_S\otimes R^q f^{\mathrm{hol}}_*T_{X/S}\big)\Longrightarrow H^{p+q}\big(X,f^\ast\Omega_S\otimes T_{X/S}\big).
\]
Then we have the five-term exact sequence
\begin{multline*}
	0\to H^1(S,\Omega_S\otimes f^{\mathrm{hol}}_*T_{X/S})\xrightarrow{\iota} H^1(X,f^\ast\Omega_S\otimes T_{X/S})\xrightarrow{\rho} H^0(S,\Omega_S\otimes R^1f^{\mathrm{hol}}_*T_{X/S}) \\ \to H^2(S,\Omega_S\otimes f^{\mathrm{hol}}_*T_{X/S})\to H^2(X,f^\ast\Omega_S\otimes T_{X/S}).
\end{multline*}

The following result is known (e.g. \cite[Lem.~4.6]{rizzi25}). We provide a direct differential geometric proof.

\begin{lemma}\label{lem:rho-kodaira-spencer}
Let $f: X \to S$ be a holomorphic fibration. Then the map
\[
\rho: H^1(X,f^\ast\Omega_S\otimes T_{X/S})\to H^0(S,\Omega_S\otimes R^1f^{\mathrm{hol}}_*T_{X/S})
\]
induced by the Leray spectral sequence sends the extension class $A(X)$ to the global Kodaira-Spencer class $\mathrm{ks}$ of the fibration.
\end{lemma}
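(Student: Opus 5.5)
Taking $A(X)=[\mathcal{R}]$ from Theorem \ref{thm:Ext_class_curv_class}, I will compute $\rho$ on the Dolbeault representative $\mathcal{R}=-(\mathcal{R}_{\mathrm{KS}}+\mathcal{R}_{\mathrm{A}_1})$ of Proposition \ref{prop:dolb_class}, built from a pure complex connection $\nabla$. Concretely, I will identify the edge map $\rho$ of the Leray spectral sequence with an explicit recipe. Given a $\dbar_X$-closed form $\eta\in A^{0,1}(X,f^*T^*S\otimes T_{X/S})$, restrict it over a small open $V\subset S$ and contract with a holomorphic vector field $\partial_i$ on $S$. Then take the vertical $(0,1)$-part along each fiber $X_s$. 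This gives a family of fiberwise $\dbar$-closed forms in $A^{0,1}(X_s,TX_s)$, whose classes define a section of $R^1f^{\mathrm{hol}}_*T_{X/S}$ over $V$, contracted into $\Omega_S$.

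To justify this description, I will use the Čech picture. On a Stein open $V\subset S$ the $E_2$ edge map sends $H^1(f^{-1}V,f^*\Omega_S\otimes T_{X/S})$ to $H^0(V,\Omega_S\otimes R^1f_*^{\mathrm{hol}}T_{X/S})$. Composing with the restriction $R^1f^{\mathrm{hol}}_*T_{X/S}\to H^1(X_s,TX_s)$ (via $T_{X/S}|_{X_s}=TX_s$) is just restriction of cohomology classes to the fiber. By the Dolbeault isomorphism on $X_s$, restricting a $\dbar_X$-closed $(0,1)$-form amounts to pulling it back to $X_s$, which kills $\D\bar s^j$ and leaves exactly the vertical $(0,1)$-component. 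A section of $\Omega_S\otimes R^1f_*$ is determined by these pointwise restrictions only up to the injectivity of base change. To avoid that issue, I will instead compare directly with the Čech cocycle $c_{ab}$ of \eqref{Cech1cocycle_eq}. Its image under $\rho$ is by definition the class of the fiberwise cocycle $-(B_{ab}^{-1}C_{ab})\,\D s^i\otimes\partial_{z_a^\alpha}$. After contraction with $\partial_i$, this is, up to the sign convention, the classical Kodaira–Spencer cocycle $\theta_{ab}=\partial_i f_{ab}$ used in the proof of Proposition \ref{prop:ks_conn_rep}, now viewed as a section over $f^{-1}(V)$. The classical definition of $\mathrm{ks}\in H^0(S,\Omega_S\otimes R^1f_*T_{X/S})$ is precisely this relative Čech cocycle, so $\rho(A(X))=\mathrm{ks}$ follows once the sign is checked.

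On the Dolbeault side, the pullback of $\mathcal{R}$ to $X_{s_0}$ contracted with $\partial_i$ is $-(\partial_{\bar\beta}\Gamma_i^\alpha)\D\bar z^\beta\otimes\partial_\alpha$, because $\mathcal{R}_{\mathrm{A}_1}$ is horizontal and vanishes on the fiber. By Proposition \ref{prop:ks_conn_rep} this represents $\mp\rho_{s_0}(\partial_i)$, which is consistent with the Čech computation. This serves as a cross-check on the sign.

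The main obstacle is sign bookkeeping. The two conventions to reconcile are $(\delta u)_{ab}=u_b-u_a$ with $\D_{\mathrm{tot}}$, and $\theta_{ab}=\partial_if_{ab}$ with the direction of the transition maps $z_a=f_{ab}(z_b)$ versus $z_b(z_a)$ in \eqref{Cech1cocycle_eq}. Here $c_{ab}$ involves $\partial z_b/\partial s_a$, transported to $a$-coordinates by $\partial z_a/\partial z_b$, and this must match $-\theta_{ba}$ or $\theta_{ab}$ accordingly. I would fix the global Kodaira–Spencer class convention to agree with Proposition \ref{prop:ks_conn_rep}, namely $\delta\sigma=-\theta$ with $\sigma_a=\Gamma_{i,a}^\alpha\partial_{\alpha,a}$. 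Then $\gamma_a$ in the proof of Theorem \ref{thm:Ext_class_curv_class} is exactly $\sigma_a\,\D s^i$, so $c=-\delta\gamma=\theta$ identically, and $\rho(A(X))=\mathrm{ks}$ with no sign discrepancy.
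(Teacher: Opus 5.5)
Your proposal is correct and is essentially the paper's argument. Both represent $A(X)$ by the \v{C}ech cocycle $c_{ab}$ and contract it with $f^*v$ for a local holomorphic vector field $v$ on $V\subset S$. Both then recognize the connecting (Kodaira--Spencer) class in $H^0(V,R^1f^{\mathrm{hol}}_*T_{X/S})$ and use that the Leray edge map is given by restriction to $f^{-1}(V)$. Your observation that the cochain $\gamma$ from the proof of Theorem \ref{thm:Ext_class_curv_class}, contracted with $\partial_i$, is exactly the cochain $\sigma$ of Proposition \ref{prop:ks_conn_rep}, so that $c_{ab}(\partial_i)=\theta_{ab}$, is an explicit sign check that the paper leaves implicit.

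One small slip is in your Dolbeault cross-check. With the paper's convention $\beta\otimes\alpha\mapsto\beta\wedge\alpha$ (as in the local formula for $\mathcal{R}$ in that proof), the contraction of $\mathcal{R}$ with $\partial_i$ restricted to $X_{s_0}$ is $+(\partial_{\bar\beta}\Gamma_i^\alpha)\D\bar z^\beta\otimes\partial_\alpha=b_i$, not its negative. Only this sign is consistent with $c=\theta$.
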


	\begin{proof}
	Fix a point $s_0 \in S$ and a tangent vector $v = \frac{\partial}{\partial s_a^i} \big|_{s_0} \in T_{s_0}S$.
		The extension class $A(X)$ is represented by the cocycle $c_{ab}$. Restricting to the fiber $X_{s_0}$, we obtain a Čech 1-cocycle $c_{ab}|_{X_{s_0}}(v)$ with values in $TX_{s_0}$. This represents the extension class of the restricted exact sequence $0 \to TX_{s_0} \to TX|_{X_{s_0}} \to f^* TS|_{X_{s_0}} \to 0$, contracted by $v$, which coincides with $\rho_{s_0}(v)$. For a local holomorphic vector field $v$ on an open set $V\subset S$, the cocycle $\{c_{ab}(f^*v)\}$ on $f^{-1}(V)$ represents the connecting class whose image in $H^0(V,R^1f^{\mathrm{hol}}_*T_{X/S})$ is $\mathrm{ks}(v)$.
By the construction of the Leray edge morphism, the same image is the contraction of $\rho(A(X))$ with $v$.
Therefore $\rho(A(X))=\mathrm{ks}$.
	\end{proof}
\begin{remark}
	If $f$ admits a relatively holomorphic pure $(1,0)$-connection, then $\mathrm{ks}$ vanishes by Corollary \ref{cor:rel_hol_ks_vanish}. If moreover $f^{\mathrm{hol}}_*T_{X/S}$ is locally free of finite rank, then $\iota([F_D^{1,1}]_S)=A(X)$ by Theorem \ref{thm:Ext_class_curv_class}.
\end{remark}
We have a converse of Corollary \ref{cor:affine_map_associated_hol} as follows.
\begin{corollary}\label{cor:assoc_holconn_imply_princ_holconn}
Let $\pi_P:P\to S$ be a holomorphic principal $G$-bundle with $G$
acting holomorphically on $Y$, and let $f:P\times_G Y\to S$ be the associated holomorphic fiber bundle. Suppose that $G_{\mathrm{eff}}$ is reductive and that $H^0(Y,TY)$ is finite-dimensional. If $f$ admits a holomorphic connection, then so does $P_{\mathrm{eff}}$.
\end{corollary}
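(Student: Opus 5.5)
The plan is to compare the Atiyah class of $P_{\mathrm{eff}}$ with the extension class $A(X)$, and to use the splitting of Lemma \ref{lem:reductive_finite_split} to show the former vanishes.

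First, replace $P$ by $P_{\mathrm{eff}}$ and $G$ by $G_{\mathrm{eff}}$. This is harmless because $X\cong P_{\mathrm{eff}}\times_{G_{\mathrm{eff}}}Y$ canonically. We may then assume $G$ acts effectively and is reductive, so that $\mathfrak n=0$, $\hat\tau=\tau$, and $\tau:\ad P\to f^{\mathrm{hol}}_*T_{X/S}$ is injective.

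Since $H^0(Y,TY)$ is finite-dimensional, $f^{\mathrm{hol}}_*T_{X/S}\cong P\times_G H^0(Y,TY)$ is a holomorphic vector bundle of finite rank. Lemma \ref{lem:reductive_finite_split} then gives a holomorphic splitting
\[
f^{\mathrm{hol}}_*T_{X/S}\cong \ad P\oplus (P\times_G W).
\]
Let $\varpi:f^{\mathrm{hol}}_*T_{X/S}\to \ad P$ be the corresponding holomorphic projection, so that $\varpi\circ\tau=\id$.

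Now let $\nabla$ be a holomorphic connection on $f$. The key step is to produce from $\nabla$ a holomorphic connection on $P$ directly. I would argue on the level of curvatures, which avoids Leray spectral sequence subtleties.
\begin{enumerate}
\item Choose any principal complex connection $A$ on $P$. By Lemma \ref{lem:b_rel_holo_associated} and the discussion before Corollary \ref{cor:affine_map_associated}, $\nabla_A^{1,0}$ is a relatively holomorphic pure $(1,0)$-connection on $f$.
\item The holomorphic connection $\nabla$ is also relatively holomorphic and pure. By Lemma \ref{lem:RH_space}, the difference $G:=\nabla-\nabla^{1,0}_A$ lies in $A^{1,0}(S,f_*T_{X/S})$. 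Because $f^{\mathrm{hol}}_*T_{X/S}$ is a finite rank holomorphic bundle, $G$ can be regarded as a smooth $(1,0)$-form on $S$ with values in this bundle.
\item Lemma \ref{lem:hol_conn_curv} gives $F^{1,1}_{D}=0$ for $D=\nabla+\dbar_f$. Locally we have $F^{1,1}=-\dbar\Gamma$ by \eqref{Curvature11Local_eq1}, so $0=F^{1,1}_{D_A}-\dbar G$ up to the sign convention in \eqref{CurvLocalExact_eq}.
\item By Lemma \ref{lem:curvature_correspondence}, $\tau(F_A^{1,1})=\dbar G$, read as an identity of $(1,1)$-forms on $S$ with values in $f^{\mathrm{hol}}_*T_{X/S}$.
\item Apply $\varpi$. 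Since $\varpi$ is holomorphic it commutes with $\dbar$, and so $F_A^{1,1}=\dbar(\varpi G)$ with $\varpi G\in A^{1,0}(S,\ad P)$.
\end{enumerate}

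Set $A':=A-\varpi G$, which is again a principal complex connection. Its $(1,1)$-curvature is
\[
F^{1,1}_{A'}=F_A^{1,1}-\dbar(\varpi G)=0,
\]
because for principal complex connections, in local holomorphic trivializations, $F^{1,1}=-\dbar A_i\,\D s^i$ is linear in $A$. So $A'$ has holomorphic coefficients, that is, it is a holomorphic connection on $P_{\mathrm{eff}}$.

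The main obstacle I expect is in steps 2 and 5. We need the identification of $A^{1,0}(S,f_*T_{X/S})$ with smooth sections of $\Omega^{1,0}_S\otimes f^{\mathrm{hol}}_*T_{X/S}$, and we need the operator $\dbar_X$ acting on $G$ to agree with the Dolbeault operator of this holomorphic vector bundle. Both should follow from $f^{\mathrm{hol}}_*T_{X/S}\cong P\times_G H^0(Y,TY)$ via local holomorphic trivializations in which fiberwise holomorphic sections become maps to $H^0(Y,TY)$. The signs in step 3 also need care, but any consistent convention gives the same conclusion.

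Alternatively, one can argue with classes. Theorem \ref{thm:atiyah_associated} gives $\tau(A(P))=-[F^{1,1}_D]_S$, and the class $[F^{1,1}_D]_S$ vanishes because $F^{1,1}_D=0$ for the holomorphic connection. Applying $\varpi$ then gives $A(P_{\mathrm{eff}})=0$, and we conclude by Atiyah's theorem.
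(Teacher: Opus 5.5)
Your proof is correct, and your main argument takes a different route from the paper's. The paper works entirely with cohomology classes, in four steps:
\begin{enumerate}
\item The holomorphic connection gives $A(X)=0$.
\item The remark after Lemma \ref{lem:rho-kodaira-spencer} gives $\iota([F^{1,1}_D]_S)=A(X)$, with $\iota$ injective by the Leray five-term sequence, so $[F^{1,1}_D]_S=0$.
\item The splitting of Lemma \ref{lem:reductive_finite_split} makes $H^{1,1}(S,\ad P_{\mathrm{eff}})\to H^{1,1}(S,f^{\mathrm{hol}}_*T_{X/S})$ injective, so Theorem \ref{thm:atiyah_associated} forces $A(P_{\mathrm{eff}})=0$.
\item Atiyah's criterion finishes.
\end{enumerate}

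Your closing alternative is this argument with a shortcut. Theorem \ref{thm:atiyah_associated} applies to every relatively holomorphic pure connection, so you may feed it the holomorphic connection itself, whose $F^{1,1}_D$ vanishes identically. This makes the passage through $A(X)$ and the Leray sequence unnecessary.

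Your main argument goes further: it works with forms and writes the connection down, which also bypasses Atiyah's existence criterion. It produces an explicit, canonical output. The connection $A+\varpi(\nabla-\nabla^{1,0}_A)$ does not depend on the reference $A$, because changing $A$ to $A+a$ shifts $\nabla^{1,0}_A$ by $\tau(a)$ and $\varpi\tau=\id$. So for a fixed invariant complement $W$ you obtain an affine left inverse of the injection in Corollary \ref{cor:affine_map_associated_hol}. The paper's version is shorter because it reuses the Atiyah-class machinery already set up.

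You do need to fix the sign. Write $G=\nabla-\nabla^{1,0}_A$ as in your proposal. Proposition \ref{CurvClassIndep_prop} gives $F^{1,1}_D-F^{1,1}_{D_A}=\dbar_X G$, hence $\tau(F^{1,1}_A)=-\dbar G$. Since $F^{1,1}_{A+a}=F^{1,1}_A+\dbar a$ for $a\in A^{1,0}(S,\ad P)$, the holomorphic connection is $A'=A+\varpi G$, not $A-\varpi G$. As a check, only this choice returns $A_0$ when $\nabla=\nabla^{1,0}_{A_0}$ with $A_0$ holomorphic.

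The curvature step is in fact dispensable. In a local holomorphic trivialization the coefficients of $\nabla$ are $\tau_0(A_i)+G_i$, and these are holomorphic. Applying the constant projection $H^0(Y,TY)\to\mathfrak g$ coming from the invariant splitting shows directly that $A_i+(\varpi G)_i$ is holomorphic.
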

\begin{proof}
If $f$ admits a holomorphic connection, then $A(X)=0$. By the above remark, $[F_D^{1,1}]_S=0$. By Lemma \ref{lem:reductive_finite_split}, $\hat{\tau}$ induces an injection $H^{1,1}(S,\ad P_{\mathrm{eff}})\to H^{1,1}(S,f^{\mathrm{hol}}_*T_{X/S})$. Then by Theorem \ref{thm:atiyah_associated}, we have $A(P_{\mathrm{eff}})=0$. Therefore, $P_{\mathrm{eff}}$ admits a holomorphic connection.
\end{proof}

\subsection{Nonlinear Riemann-Hilbert correspondence}
In this subsection, we establish the correspondence between representations of the fundamental group of the base manifold into the automorphism group of the fiber and complete flat holomorphic bundles. This is a nonlinear analogue of the classical Riemann-Hilbert correspondence.

Let $S$ be a connected complex manifold and $Y$ be a complex manifold. Let $\nabla^{1,0}$ be a holomorphic connection on a holomorphic fiber bundle $f:X\to S$ with fiber $Y$. $\nabla^{1,0}$ is determined by $H_i :=\nabla^{1,0}(\partial_i)=\partial_i + \Gamma_i^\alpha \partial_\alpha$, where $\Gamma_i^\alpha$ are holomorphic functions in $(s,z)$. These define the holomorphic horizontal distribution $H = \text{span}\{H_i\}$.

The curvature of $\nabla^{1,0}$ is purely of type $(2,0)$, defined by $F^{2,0}_{\nabla^{1,0}} := \frac12[\nabla^{1,0}, \nabla^{1,0}]^{\mathrm{vert}}$. It measures the failure of $H$ to be integrable, which is determined by
\begin{equation}\label{eq:hol_curv_coeff}
	[H_i, H_j]
	= \big( \partial_i\Gamma_j^\beta - \partial_j\Gamma_i^\beta + \Gamma_i^\alpha\partial_\alpha\Gamma_j^\beta - \Gamma_j^\alpha\partial_\alpha\Gamma_i^\beta \big) \partial_\beta=: R_{ij}^\beta \partial_\beta.
\end{equation}
Locally, the curvature is  $F^{2,0}_{\nabla^{1,0}} =\frac12 R_{ij}^\beta \partial_\beta\otimes \D s^i \wedge \D s^j$. The holomorphic connection $\nabla^{1,0}$ is called \emph{flat} if its curvature $F^{2,0}_{\nabla^{1,0}}$ vanishes.
\begin{lemma}\label{lem:flatness_local_trivialization}
A holomorphic connection $\nabla^{1,0}$ on $f: X \to S$ is flat if and only if for every point $x \in X$, there exist local holomorphic coordinates $(s, z)$ around $x$, such that the projection $f$ is given by $(s, z) \mapsto s$, and the connection coefficients $\Gamma_i^\alpha$ vanish identically.
\end{lemma}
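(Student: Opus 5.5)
The easy direction is a direct computation. Suppose coordinates $(s,z)$ exist near $x$ in which $f(s,z)=s$ and $\Gamma_i^\alpha\equiv 0$. Then $H_i=\partial_i$, so $[H_i,H_j]=0$. By \eqref{eq:hol_curv_coeff} we get $R_{ij}^\beta=0$ near $x$. Since $F^{2,0}_{\nabla^{1,0}}$ is a tensor and $x$ is arbitrary, the connection is flat.

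For the converse, assume $F^{2,0}_{\nabla^{1,0}}=0$. I will use the holomorphic Frobenius theorem, building the adapted coordinates directly from flows.

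\textbf{Step 1: integrability of $H$.} Start with any adapted holomorphic coordinates $(s,w)$ centered at $x$, with $s=0$ at $x$. The horizontal distribution $H$ is spanned by $H_i=\partial_i+\Gamma_i^\alpha(s,w)\partial_\alpha$, with $\Gamma_i^\alpha$ holomorphic. Flatness means $[H_i,H_j]=R_{ij}^\beta\partial_\beta=0$. So the $H_i$ are commuting holomorphic vector fields, and $H$ is an integrable holomorphic distribution of rank $n$.

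\textbf{Step 2: straightening by flows.} Let $\phi^i_t$ be the local holomorphic flow of $H_i$ for complex time $t$. Since the fields commute, the flows commute. Define
\[
\Psi(s,z):=\phi^1_{s^1}\circ\cdots\circ\phi^n_{s^n}(0,z),
\]
for $z$ near $w(x)$ in the fiber $X_0$. This map is holomorphic, since the flows of holomorphic fields depend holomorphically on time and initial data. Each $H_i$ projects to $\partial_i$ under $\D f$, so $f\circ\phi^i_t$ shifts $s^i$ by $t$. Hence $f(\Psi(s,z))=s$.

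The differential of $\Psi$ at the origin is the identity on the $z$-directions and sends $\partial_{s^i}$ to $H_i$. Since the $H_i$ are transverse to the fibers, $\Psi$ is a local biholomorphism by the inverse function theorem. Commutativity of the flows gives $\Psi_*\partial_{s^i}=H_i$ everywhere on the domain, not just at the origin.

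\textbf{Step 3: conclusion.} Take $(s,z)=\Psi^{-1}$ as the new coordinates. The projection $f$ becomes $(s,z)\mapsto s$. Moreover, $\nabla^{1,0}(\partial_{s^i})=H_i=\partial_{s^i}$ in these coordinates, so all $\Gamma_i^\alpha$ vanish.

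\textbf{Main obstacle.} The only delicate point is justifying $\Psi_*\partial_{s^i}=H_i$ away from the origin, which is exactly where commutativity of the flows is needed. One also has to restrict to a sufficiently small polydisc so that the flows are defined, since they need not be complete. If the flow argument becomes awkward, an alternative is to invoke holomorphic Frobenius to obtain leaves transverse to the fibers. One then sets $z$ equal to the value of $w$ at the point where the leaf through a point meets $X_0$. This gives $H_i z^\alpha=0$ and hence $\Gamma_i^\alpha=0$, by the transformation law \eqref{TransLaw10_eq}.
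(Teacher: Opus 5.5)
Your proof is correct and follows the same route as the paper: flatness makes the horizontal distribution spanned by the $H_i$ integrable, and straightening it gives adapted coordinates with $\Gamma_i^\alpha\equiv 0$. The paper simply cites the holomorphic Frobenius theorem (essentially your fallback argument), whereas your commuting-flows construction proves that step by hand. It uses the stronger fact that the particular frame $H_i$ commutes, not just that $H$ is involutive, and it builds $f=s$ into the chart automatically.
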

\begin{proof}
If such a trivialization exists, then $\Gamma_i^\alpha = 0$. By \eqref{eq:hol_curv_coeff}, $R_{ij}^\beta = 0$, so the connection is flat.

Conversely, suppose the connection is flat. The holomorphic horizontal distribution $H$ is integrable. By the complex Frobenius theorem, $H$ defines a holomorphic foliation $\mathcal{F}$ transverse to the fibers. Locally around any point $x \in X$, we can find adapted holomorphic coordinates $(s, z)$ such that the leaves of the foliation are given by $\{z = \text{constant}\}$ and the projection $f$ is locally given by $(s, z) \mapsto s$. In these coordinates, the horizontal distribution $H$ is spanned by $\partial_i$, which means $\Gamma_i^\alpha$ must be identically zero.
\end{proof}

Let $\gamma: [0, 1] \to S$ be a smooth path. Let $v(t) = \dot{\gamma}(t)$ be the real tangent vector of the path. Decompose $v = v^{1,0} + v^{0,1}$. In local coordinates $s(t)$ representing $\gamma(t)$, the $(1,0)$-component is $v^{1,0}(t) = \frac{\D s^i}{\D t} \partial_i$. The horizontal lift of $v^{1,0}(t)$ is $\frac{\D s^i}{\D t} H_i$.

\begin{definition}
Let $\nabla^{1,0}$ be a pure (1,0)-connection on a holomorphic fibration. A smooth path $\tilde{\gamma}(t)$ in $X$ is defined to be the \emph{horizontal lift} of $\gamma(t)$ with respect to $\nabla^{1,0}$ if $\tilde{v}^{1,0}(t)=\nabla^{1,0}(v^{1,0})$ is the $(1,0)$-part of $\dot{\tilde{\gamma}}(t)$. Using local holomorphic coordinates $(s(t),z(t))$ of $\tilde{\gamma}(t)$, the horizontal lift is determined by
\begin{equation}\label{eq:horizontal_ODE}
\frac{\D z^\alpha}{\D t} = \Gamma_i^\alpha(s(t), z(t)) \frac{\D s^i}{\D t}.
\end{equation}
The \emph{parallel transport} $\tau(\gamma): X_{\gamma(0)} \to X_{\gamma(1)}$ maps an initial point $x_0$ to the endpoint of the unique horizontal lift starting at $x_0$, provided the solution exists for $t \in [0, 1]$.
\end{definition}
\begin{remark}\label{rmk:real_conn}
	Let $\nabla^{1,0}$ be a pure (1,0)-connection, and let $\nabla^{0,1}:=\widebar{\nabla^{1,0}}$. Then $\nabla:=\nabla^{1,0}+\nabla^{0,1}$ is the complexification of a real connection $\nabla^\RR$, i.e., a splitting of \eqref{RealTangentSeq_eq}. The horizontal lift defined above using $\nabla^{1,0}$ is identical to that defined using $\nabla^\RR$. In general, for a $(1,0)$-connection on a complex fiber bundle $(f,T_{X/S})$, we define the horizontal lift with respect to $\nabla^{1,0}$ to be that with respect to $\nabla^\RR$ whose complexification is $\nabla^{1,0}+\widebar{\nabla^{1,0}}$. Locally, it is determined by
	\begin{equation}\label{eq:horizontal_ODE1}
	\frac{\D z^\alpha}{\D t} = \Gamma_i^\alpha(s(t), z(t)) \frac{\D s^i}{\D t} + \overline{\Gamma_i^{\bar{\alpha}}(s(t),z(t))} \frac{\D \bar{s}^i}{\D t}.
	\end{equation}
\end{remark}

\begin{definition}
	A (1,0)-connection $\nabla^{1,0}$ on a complex fiber bundle is called \emph{complete} if the horizontal lift of any smooth path $\gamma: [0, 1] \to S$ starting at any point $x_0 \in X_{\gamma(0)}$ is defined for all $t \in [0, 1]$.
\end{definition}

\begin{remark}\label{rmk:cpt_complete}
	If the fiber $Y$ is compact, any (1,0)-connection is automatically complete. If $Y$ is noncompact, completeness is a nontrivial condition (see Example \ref{ex:Incomplete_Hol_Conn}).
\end{remark}

\begin{lemma}\label{lem:completeness_associated_connection}
Let $G$ be a complex Lie group acting holomorphically on a complex manifold $Y$ (not necessarily compact). Let $\pi_P: P \to S$ be a smooth principal $G$-bundle, and $f: X = P \times_G Y \to S$ be the associated isotrivial complex fiber bundle. If $A$ is a principal connection on $P$, then the induced $(1,0)$-connection $\nabla_A^{1,0}$ on $X$ is complete.
\end{lemma}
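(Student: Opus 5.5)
The idea is to show that horizontal lifts for $\nabla_A^{1,0}$ on $X=P\times_G Y$ are images of horizontal lifts in $P$, which always exist for all time. By Remark \ref{rmk:real_conn}, the horizontal lift with respect to $\nabla_A^{1,0}$ is the one defined by the real connection $\nabla_A^\RR$. By construction, the complexification of $\nabla_A^\RR$ is $\nabla_A^{1,0}+\widebar{\nabla_A^{1,0}}=\nabla_A^{1,0}+\nabla_A^{0,1}$, and its horizontal distribution $H_X^\RR$ is the image under $\D q$ of $H_P^\RR\oplus 0\subset T(P\times Y)^\RR$. So it suffices to show that every smooth path $\gamma:[0,1]\to S$ and every starting point $x_0=[p_0,y_0]\in X_{\gamma(0)}$ admit a $\nabla_A^\RR$-horizontal lift defined on all of $[0,1]$.

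\textbf{Step 1 (lifting in $P$).} Lift $\gamma$ to $P$ horizontally with respect to $A$, starting at $p_0$. This is the standard fact that principal connections are complete. Choose any smooth lift $\sigma(t)$ of $\gamma$ through $p_0$; such a lift exists because $[0,1]$ is contractible, so $\gamma^*P$ is trivial. Then look for $\tilde\gamma_P(t)=\sigma(t)\cdot g(t)$. The horizontality condition $A(\dot{\tilde\gamma}_P)=0$ becomes the linear ODE on the Lie group $G$
\[
\dot g(t)\,g(t)^{-1}=-\Ad_{g(t)}\ldots,
\]
or more cleanly $g(t)^{-1}\dot g(t)=-\Ad_{g(t)^{-1}}A(\dot\sigma(t))$, i.e.\ $\dot g\,g^{-1}=-A(\dot\sigma(t))$, with $g(0)=e$. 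This is a right-invariant time-dependent ODE on $G$. Its solutions exist on all of $[0,1]$: right translation of a local solution by group elements yields local solutions with a uniform existence time, so they can be concatenated. I expect this to be the main point requiring care, and I would settle it with exactly this uniform-time argument (equivalently, embed locally and use that the vector field is right-invariant).

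\textbf{Step 2 (pushing down to $X$).} Set $\tilde\gamma(t):=q(\tilde\gamma_P(t),y_0)=[\tilde\gamma_P(t),y_0]$. Its velocity is $\D q(\dot{\tilde\gamma}_P,0)\in \D q(H_P^\RR\oplus 0)=H_X^\RR$, it projects to $\gamma$, and it starts at $x_0$. Hence $\tilde\gamma$ is the horizontal lift of $\gamma$ with respect to $\nabla_A^\RR$, and therefore also with respect to $\nabla_A^{1,0}$. By uniqueness of ODE solutions, namely \eqref{eq:horizontal_ODE1} in local coordinates, it is \emph{the} horizontal lift, and it is defined on all of $[0,1]$.

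Note that compactness of $Y$ is never used. The lift in $X$ is simply the $G$-action of the $P$-lift applied to the constant point $y_0$, so no fiber-direction escape can occur. This proves completeness of $\nabla_A^{1,0}$.
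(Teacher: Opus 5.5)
Your proof is correct and follows the paper's argument: you lift $\gamma$ horizontally in $P$ from $p_0$ and push it down to $t\mapsto q(\tilde\gamma_P(t),y_0)$, whose velocity lies in $\D q(H_P^\RR\oplus 0)=H_X^\RR$. The only difference is that you sketch why principal connections are complete, via the right-invariant ODE $\dot g\,g^{-1}=-A(\dot\sigma)$ on $G$ (this is correct; just delete the half-written first display), whereas the paper simply cites that fact.
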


\begin{proof}
It is well-known that principal connections are complete (see \cite[Th.~10.4]{Sontz15}). Let $q: P \times Y \to X$ be the quotient map. Recall that the horizontal distribution $H_X^\RR$ of $\nabla_A^\RR$ is the image under $\D q$ of the distribution $H_P^\RR \oplus 0$ on $P \times Y$, where $H_P^\RR$ is the horizontal distribution of $A$.

Let $x_0 \in X_{\gamma(0)}$. We can represent $x_0$ as $[p_0, y_0]$ for some $p_0 \in P_{\gamma(0)}$ and $y_0 \in Y$. The horizontal lift $\tilde{\gamma}_P(t)$ of $\gamma(t)$ in $P$ starting at $p_0$ exists for all $t \in [0, 1]$. Consider the path $\tilde{\gamma}_X(t)$ in $X$ defined by
\[
\tilde{\gamma}_X(t) := q(\tilde{\gamma}_P(t), y_0) = [\tilde{\gamma}_P(t), y_0].
\]
We verify that this is the required horizontal lift. Clearly, $f(\tilde{\gamma}_X(t)) = \pi_P(\tilde{\gamma}_P(t)) = \gamma(t)$, and $\tilde{\gamma}_X(0) = [p_0, y_0] = x_0$. The tangent vector is $\tilde{\gamma}_X'(t) = \D q_{(\tilde{\gamma}_P(t), y_0)} (\tilde{\gamma}_P'(t), 0)$. Since $\tilde{\gamma}_P(t)$ is horizontal in $P$, $\tilde{\gamma}_P'(t) \in H_P^\RR$. Thus, $(\tilde{\gamma}_P'(t), 0) \in H_P^\RR \oplus 0$. By the definition of $H_X^\RR$, $\tilde{\gamma}_X'(t)$ lies in $H_X^\RR$. Since the horizontal lift $\tilde{\gamma}_X(t)$ is defined for all $t \in [0, 1]$, the induced connection $\nabla_A^{1,0}$ is complete.
\end{proof}

\begin{lemma}\label{lem:hol_parallel_transport}
Let $\nabla^{1,0}$ be a complete $(1,0)$-connection on a complex fiber bundle $f: X \to S$. Then $\nabla^{1,0}$ is relatively holomorphic and $\widebar{\nabla^{1,0}}$ is mixed relatively holomorphic if and only if, for every smooth path $\gamma:[0,1]\to S$ and every $t\in[0,1]$, the parallel transport map $\tau_t:X_{\gamma(0)}\to X_{\gamma(t)}$ is a biholomorphism.
\end{lemma}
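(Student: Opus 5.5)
The plan is to reduce the statement to a local computation in adapted coordinates. The tool is the fact that a fiberwise map is holomorphic exactly when its $\bar z$-derivatives vanish.

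Fix a smooth path $\gamma$ and cover it by finitely many adapted charts $(s,z)$. By Remark \ref{rmk:real_conn}, the horizontal lift is governed by the ODE \eqref{eq:horizontal_ODE1},
\[
\dot z^\alpha = \Gamma_i^\alpha\,\dot s^i + \overline{\Gamma_i^{\bar\alpha}}\,\dot{\bar s}^i .
\]
The conjugate connection $\widebar{\nabla^{1,0}}$ has coefficients $\Gamma_{\bar i}^{\alpha}=\overline{\Gamma_i^{\bar\alpha}}$. So the right-hand side is the vector field $V_t(z)=\Gamma_i^\alpha\dot s^i+\Gamma_{\bar i}^\alpha\dot{\bar s}^i$ on the fiber, and it depends on $t$ only through $s(t)$. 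Completeness guarantees that the flow $z\mapsto \tau_t(z)$ is globally defined on fibers. Smooth dependence on initial data then makes $\tau_t$ a diffeomorphism.

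For the direction ($\Rightarrow$), assume \eqref{RelHolConn_eq} and \eqref{eq:LC}. Then $V_t$ is a time-dependent fiberwise holomorphic vector field. Differentiating the ODE in $\bar z^\beta$ gives a linear system
\[
\frac{d}{dt}\,\partial_{\bar\beta}\tau^\alpha=(\partial_\gamma V_t^\alpha)\,\partial_{\bar\beta}\tau^\gamma+(\partial_{\bar\gamma}V_t^\alpha)\,\partial_{\bar\beta}\bar\tau^\gamma .
\]
The second term vanishes by hypothesis, and the initial value is $\partial_{\bar\beta}\tau^\alpha|_{t=0}=0$. By uniqueness for linear ODEs, $\partial_{\bar\beta}\tau_t^\alpha\equiv0$, so $\tau_t$ is holomorphic in each chart. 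Chart changes are fiberwise holomorphic, so this concatenates along $\gamma$. The inverse is parallel transport along the reversed path, so $\tau_t$ is a biholomorphism.

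For the direction ($\Leftarrow$), fix $s_0$, a point $x_0$, and a real direction $w\in T_{s_0}S^\RR$, and take $\gamma(t)=s_0+tw$ in a coordinate chart. Differentiating $\tau_t$ at $t=0$ gives $\frac{d}{dt}\big|_0\tau_t=V_0$, with $V_0=\Gamma_i^\alpha w^{1,0,i}+\Gamma_{\bar i}^\alpha w^{0,1,i}$. Each $\tau_t$ is holomorphic in $z$, being a composition of a biholomorphism with fixed holomorphic chart maps. A derivative in $t$ of maps that are holomorphic in $z$ is again holomorphic in $z$ (commute $\partial_t$ with $\partial_{\bar z}$). Hence $V_0$ is holomorphic in $z$ for every real $w$. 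Taking $w=\partial_{x^i}$ and $w=\partial_{y^i}$ gives holomorphicity of $\Gamma_i^\alpha+\Gamma_{\bar i}^\alpha$ and of $\I(\Gamma_i^\alpha-\Gamma_{\bar i}^\alpha)$, hence of each of $\Gamma_i^\alpha$ and $\Gamma_{\bar i}^\alpha$ separately. That is exactly \eqref{RelHolConn_eq} together with \eqref{eq:LC} for $\widebar{\nabla^{1,0}}$. The point $s_0$ was arbitrary, and the path starts at an arbitrary point, so this covers all of $X$.

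I expect the main obstacle to be the bookkeeping in ($\Rightarrow$). The vertical components $\Gamma_i^{\bar\alpha}$ enter \eqref{eq:horizontal_ODE1} through their conjugates, so one has to identify correctly which coefficient must be $z$-holomorphic, namely $\overline{\Gamma_i^{\bar\alpha}}=\Gamma_{\bar i}^\alpha$. One also has to check that the linearized equation for $\partial_{\bar\beta}\tau$ truly decouples once $\partial_{\bar\gamma}V_t=0$. Patching across charts is routine, since holomorphicity of the transition functions in $z$ is part of the definition of an adapted coordinate system.
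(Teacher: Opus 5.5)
Your proposal is correct and follows essentially the paper's proof. The forward direction is the same argument: $w^\alpha_\beta=\partial z^\alpha/\partial\bar z_0^\beta$ solves a homogeneous linear ODE with zero initial value once $\partial_{\bar\beta}\Gamma_i^\alpha=\partial_{\bar\beta}\overline{\Gamma_i^{\bar\alpha}}=0$. Your converse, differentiating $\tau_t$ at $t=0$ along straight paths in the directions $\partial_{x^i},\partial_{y^i}$, is the paper's computation $\frac{\D w^\alpha_\beta}{\D t}(0)=0$ with arbitrary initial velocity, and your remark about concatenating across adapted charts along the lifted path supplies a detail the paper leaves implicit.
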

\begin{proof}
It suffices to work in adapted local coordinates $(s,z)$, where the connection is given by smooth coefficients $\Gamma_i^\alpha(s, z)$ and $\Gamma_i^{\bar{\alpha}}(s,z)$. A path $(s(t), z(t))$ in $X$ is horizontal if \eqref{eq:horizontal_ODE1} is satisfied. Let $z(t; z_0)$ be the solution to \eqref{eq:horizontal_ODE1} with initial condition $z_0$.  Let $w^\alpha_\beta(t) = \frac{\partial z^\alpha}{\partial \bar{z}_0^\beta}(t)$. Then $w^\alpha_\beta(0)=0$. Differentiate \eqref{eq:horizontal_ODE1} with respect to $\bar{z}_0^\beta$,
\begin{equation}\label{eq:dbarz_ODE}
	\frac{\D w^\alpha_\beta}{\D t} = \biggl( \frac{\partial \Gamma_i^\alpha}{\partial z^\gamma} w^\gamma_\beta + \frac{\partial \Gamma_i^\alpha}{\partial \bar{z}^\gamma} \frac{\partial \bar{z}^\gamma}{\partial \bar{z}_0^\beta} \biggr) \frac{\D s^i}{\D t}+\biggl( \frac{\partial \overline{\Gamma_i^{\bar\alpha}}}{\partial z^\gamma} w^\gamma_\beta + \frac{\partial \overline{\Gamma_i^{\bar\alpha}}}{\partial \bar{z}^\gamma} \frac{\partial \bar{z}^\gamma}{\partial \bar{z}_0^\beta} \biggr) \frac{\D \bar{s}^i}{\D t}.
\end{equation}

If $\nabla^{1,0}$ is relatively holomorphic and $\widebar{\nabla^{1,0}}$ is mixed relatively holomorphic, then \eqref{eq:dbarz_ODE} simplifies to
\[ \frac{\D w^\alpha_\beta}{\D t} = \biggl(\frac{\partial \Gamma_i^\alpha}{\partial z^\gamma} \frac{\D s^i}{\D t}+\frac{\partial \overline{\Gamma_i^{\bar\alpha}}}{\partial z^\gamma} \frac{\D \bar{s}^i}{\D t}\biggr) w^\gamma_\beta . \]
By uniqueness of solutions, $w^\alpha_\beta(t) = 0$ for all $t$. Thus, $\tau_t$ is holomorphic. Since $\tau_t$ is bijective, it is a biholomorphism.

Conversely, suppose that for any smooth path $\gamma(t)$ in $S$, the parallel transport map $\tau_t: z_0 \mapsto z(t)$ is holomorphic. This implies that  $w^\alpha_\beta(t) = 0$, and then $\frac{\D w^\alpha_\beta}{\D t}(0) = 0$. By \eqref{eq:dbarz_ODE},
\[
0 = \frac{\partial \Gamma_i^\alpha}{\partial \bar{z}^\beta}(s(0), z_0) \frac{\D s^i}{\D t}(0)+\frac{\partial \overline{\Gamma_i^{\bar \alpha}}}{\partial \bar{z}^\beta}(s(0), z_0) \frac{\D \bar{s}^i}{\D t}(0).
\]
Since we can choose a path passing through any point $(s(0), z_0)$ with any velocity, we must have
\[
\frac{\partial \Gamma_i^\alpha}{\partial \bar{z}^\beta} = \frac{\partial \overline{\Gamma_i^{\bar \alpha}}}{\partial \bar{z}^\beta} = 0. \qedhere
\]
\end{proof}

\begin{lemma}\label{lem:hol_triv_via_conn}
Let $\nabla^{1,0}$ be a complete holomorphic connection on a surjective holomorphic fibration $f: X \to S$. For any $s_0 \in S$, there exists a holomorphic coordinate ball $U$ around $s_0$ such that the bundle $f^{-1}(U) \to U$ can be holomorphically trivialized via parallel transports along radial paths in $U$.
\end{lemma}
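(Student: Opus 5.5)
Fix $s_0\in S$ and a holomorphic coordinate ball $U$ centered at $s_0$ (coordinates $s$ with $s(s_0)=0$). The plan is to define
\[
\Psi:U\times X_{s_0}\to f^{-1}(U),\qquad \Psi(s,x_0):=\tau(\gamma_s)(x_0),
\]
where $\gamma_s(t)=ts$ is the radial path, and to show $\Psi$ is a biholomorphism over $U$. Completeness guarantees that $\tau(\gamma_s)$ is defined on all of $X_{s_0}$. By Remark \ref{rmk:real_conn} we may work with the associated real connection $\nabla^\RR$. Since a holomorphic connection is pure, relatively holomorphic, and has $\overline{\nabla^{1,0}}$ mixed relatively holomorphic (in holomorphic adapted coordinates $\Gamma_i^\alpha$ is holomorphic and $\Gamma_i^{\bar\alpha}=0$), Lemma \ref{lem:hol_parallel_transport} shows each $\tau(\gamma_s)$ is a biholomorphism $X_{s_0}\to X_s$. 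Hence $\Psi$ is bijective, and it commutes with projections to $U$.

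\textbf{Smoothness.} Write the lift along $\gamma_s$ as the solution of the ODE \eqref{eq:horizontal_ODE} with $\D s^i/\D t=s^i$ (constant in $t$). Because the right-hand side depends smoothly on the parameter $s$ and the initial point, $\Psi$ is smooth, with smooth inverse given by transport back along the reversed paths. To handle the fact that an orbit may leave a single coordinate chart, I would argue chart by chart and use the group property of solutions.

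\textbf{Holomorphicity.} Holomorphy in $x_0$ is already established. The main obstacle is holomorphy in $s$. The approach is the rescaling trick: in a local holomorphic chart, $z(t)=Z(ts,z_0)$ solves
\[
\frac{\D z^\alpha}{\D t}=\Gamma_i^\alpha(ts,z)\,s^i .
\]
This is an ODE in real time $t$ whose right-hand side is holomorphic in the complex parameters $(s,z)$. By the standard theorem on holomorphic dependence of ODE solutions on complex parameters (differentiate in $\bar s^j$ and use uniqueness, exactly as in the proof of Lemma \ref{lem:hol_parallel_transport}), $\partial_{\bar s^j}z(t)$ satisfies a homogeneous linear ODE with zero initial value. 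Therefore it vanishes identically.

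To globalize, cover the compact image of the path $t\mapsto \tau(\gamma_{ts})(x_0)$ by finitely many charts. On each piece, transport is a composition of local holomorphic maps: the transport from time $t_k$ to $t_{k+1}$ depends holomorphically on $(s,z(t_k))$. This is the point requiring care, because the full radial path starts at $s_0$, not at $ts$. I would instead view $\Psi$ as the time-$1$ flow of the vector field $s^iH_i$ on $U\times X$, which is holomorphic in all variables, with $s$ held as a parameter. Then $\Psi$ is the restriction of a holomorphic flow, hence holomorphic on $U\times X_{s_0}$. A bijective holomorphic map between equidimensional manifolds is biholomorphic, so $\Psi^{-1}$ is the desired holomorphic trivialization.

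Flatness is not needed, but we must check that $f^{-1}(U)$ is exactly the image of $\Psi$. This holds because $\Psi$ restricted to each fiber is a bijection onto $X_s$.
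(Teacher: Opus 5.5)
Your proposal is correct and follows the paper's proof. Both arguments define the radial transport map, use the rescaled ODE $\D z^\alpha/\D t=\Gamma_i^\alpha(ts,z)\,s^i$ with holomorphic dependence on parameters, invoke Lemma \ref{lem:hol_parallel_transport} to get fiberwise biholomorphisms, and conclude that a bijective holomorphic map is biholomorphic; your extra step of realizing $\Psi$ as the time-one map of the real flow of the holomorphic vector field $(0,s^iH_i)$ on $U\times f^{-1}(U)$ cleanly handles trajectories that leave a single chart, a point the paper passes over silently.
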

\begin{proof}
	Let $U$ be a holomorphic coordinate ball centered at $s_0$. Let $Y=X_{s_0}$. Define $\Phi: U \times Y \to f^{-1}(U)$ by $\Phi(s, z_0) = \tau(\gamma_s)(z_0)$, where $\gamma_s(t)=ts$ is the radial path. The ODE \eqref{eq:horizontal_ODE} becomes
	\begin{equation}\label{eq:ODE_with_parameters}
	\frac{\D z^\alpha}{\D t} = \Gamma_i^\alpha(ts, z(t)) s^i.
	\end{equation}
	Since $\Gamma_i^\alpha$ are holomorphic functions, by holomorphic dependence on parameters, the solution $z(t; s, z_0)$ is holomorphic in $(s, z_0)$. Thus $\Phi$ is a holomorphic map. By Lemma \ref{lem:hol_parallel_transport}, its restriction to $\{s\} \times Y$ is a biholomorphism onto $X_s$. So $\Phi$ is a bijection, and is a biholomorphism as desired.
\end{proof}

\begin{lemma}\label{lem:Path_Independence_Flat}
Let $\nabla^{1,0}$ be a complete, flat holomorphic connection on a holomorphic fiber bundle $f: X \to S$. Let $\gamma_0, \gamma_1: [0, 1] \to S$ be two smooth paths that are path-homotopic. Then $\tau(\gamma_0) = \tau(\gamma_1)$.
\end{lemma}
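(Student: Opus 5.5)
The plan is to reduce path-independence to the integrability of the horizontal distribution $H=\operatorname{span}\{H_i\}$, which is a holomorphic foliation by flatness (Lemma \ref{lem:flatness_local_trivialization}). Fix a smooth homotopy $\Gamma:[0,1]\times[0,1]\to S$ with $\Gamma(\cdot,0)=\gamma_0$, $\Gamma(\cdot,1)=\gamma_1$, and endpoints fixed. Fix $x_0\in X_{\gamma_0(0)}$. By completeness, for each $u\in[0,1]$ the horizontal lift $\tilde\gamma_u$ of $\gamma_u:=\Gamma(\cdot,u)$ starting at $x_0$ exists on all of $[0,1]$. By smooth dependence of ODE solutions on parameters (the system \eqref{eq:horizontal_ODE} in local charts, patched along the path), this gives a smooth lift $\tilde\Gamma:[0,1]^2\to X$ with $f\comp\tilde\Gamma=\Gamma$ and $\tilde\Gamma(0,u)=x_0$. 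The goal is to show that $u\mapsto\tilde\Gamma(1,u)$ is constant, since its endpoint values are $\tau(\gamma_0)(x_0)$ and $\tau(\gamma_1)(x_0)$.

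The key step is to show that $\tilde\Gamma$ is tangent to the horizontal foliation in the $u$-direction as well. By Remark \ref{rmk:real_conn}, horizontality with respect to $\nabla^{1,0}$ agrees with horizontality for the real connection $\nabla^\RR$ with complexification $\nabla^{1,0}+\widebar{\nabla^{1,0}}$. The real horizontal distribution $H^\RR=\Re(H\oplus\bar H)$ is integrable, because $[H,H]\subset H$ and $[H,\bar H]=0$ (the $H_i$ are holomorphic vector fields). Locally, Lemma \ref{lem:flatness_local_trivialization} gives holomorphic coordinates $(s,z)$ in which $\Gamma_i^\alpha=0$; in these coordinates a path is horizontal iff $z$ is constant. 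Let $V(t,u):=\mathrm{pr}_{\mathrm{vert}}\,\partial_u\tilde\Gamma$ be the vertical part of $\partial_u\tilde\Gamma$ with respect to $\nabla^\RR$. Then $V(0,u)=0$. Working in such flat charts along the compact image, $V$ is locally the $u$-derivative of the $z$-coordinate, $\partial_u z(t,u)$. Since $\partial_t z=0$, we get $\partial_t\partial_u z=0$. Coordinate changes between flat charts are holomorphic maps with $z'$ depending only on $z$, since they preserve leaves. Hence $V$ satisfies a linear ODE in $t$ of the form $\partial_t V=0$ in each chart, transported compatibly, and so $V\equiv0$. Equivalently and more invariantly, $\partial_t V=\mathrm{pr}_{\mathrm{vert}}[\partial_t\tilde\Gamma,\partial_u\tilde\Gamma]$ plus a curvature term $F^\RR(\dot\gamma,\partial_u\Gamma)$, which vanishes by flatness. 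The curvature of $\nabla^\RR$ is $F^{2,0}+\overline{F^{2,0}}$ together with $(1,1)$ parts that vanish because the connection is holomorphic (Lemma \ref{lem:hol_conn_curv}).

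Given $V\equiv0$, the curve $u\mapsto\tilde\Gamma(1,u)$ has $\partial_u\tilde\Gamma(1,u)$ horizontal. It also projects to $\partial_u\Gamma(1,u)=0$, since the endpoint is fixed. So this derivative vanishes and the endpoint is constant, giving $\tau(\gamma_0)(x_0)=\tau(\gamma_1)(x_0)$.

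I expect the main obstacle to be the middle step: making rigorous that the vertical component $V$ obeys a homogeneous linear ODE in $t$ with zero initial data. This requires either a careful chart-by-chart argument, subdividing $[0,1]^2$ into small squares each mapped into a flat chart and using the Lebesgue number lemma, or a clean global formula for $\partial_t V$ via the curvature of $\nabla^\RR$. I would first try the subdivision approach. In a small square whose image lies in one flat chart, horizontal lifts are $z=$ const, so lifts along homotopic paths within the square agree. One then propagates this square by square, using that completeness guarantees the lifts $\tilde\gamma_u$ exist globally and thus stay in the charts chosen around $\tilde\Gamma$.
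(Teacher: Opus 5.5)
Your proposal is correct and follows essentially the paper's route: you lift the homotopy using completeness and smooth dependence on parameters, show the lifted square is tangent to the horizontal foliation, and conclude that the endpoint curve $u\mapsto\tilde\Gamma(1,u)$ is both horizontal and vertical, hence constant. The one difference is how tangency is obtained: the paper notes that $\widetilde H([0,1]^2)$ lies in the single leaf $L_{x_0}$, so $\dot\beta(u)\in T L_{x_0}\cap T X_{s_1}=\{0\}$, whereas your flat-chart argument (that $V=\partial_u z$ is independent of $t$, with $V(0,u)=0$) proves this step locally without invoking the global leaf structure.
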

\begin{proof}
Let $\mathcal{F}$ be the holomorphic transversal foliation determined by $\nabla^{1,0}$. Let $H(t, u): [0, 1] \times [0, 1] \to S$ be a smooth homotopy between $\gamma_0$ and $\gamma_1$. Fix $x_0 \in X_{s_0}$. Let $\widetilde{H}(t, u)$ be the unique horizontal lift of $\gamma_u(t):=H(t,u)$ starting at $x_0$. By the smooth dependence of solutions of ODEs on parameters, the map $\widetilde{H}: [0, 1] \times [0, 1] \to X$ is smooth.

Let $L_{x_0}$ be the unique leaf of the foliation $\mathcal{F}$ passing through $x_0$. By construction, $$\widetilde{H}([0, 1] \times [0, 1]) \subset L_{x_0}.$$  Define the path $\beta: [0, 1] \to X$ by $\beta(u) = \widetilde{H}(1, u)$. We have $$f(\beta(u)) = f(\widetilde{H}(1, u)) = H(1, u) = s_1.$$ Thus, $\beta(u)\in L_{x_0} \cap X_{s_1}$. For every $u$, $\dot\beta(u)\in (T_{\beta(u)}L_{x_0})\cap (T_{\beta(u)}X_{s_1})=\{0\}$, since the foliation is transverse to the fibers. Therefore, $\beta$ is constant, and $\beta(0) = \beta(1)$. Since $x_0$ is arbitrary, $\tau(\gamma_0) = \tau(\gamma_1)$.
\end{proof}

\begin{remark}
	Alternatively, by Remark \ref{rmk:real_conn}, it suffices to show the associated real connection $\nabla^\RR$ is flat. Since $\nabla^{1,0}$ is flat, we have $F^{2,0} = F^{0,2} = 0$. We compute
\[ [H_i, H_{\bar{j}}] = [\partial_i + \Gamma_i^\alpha \partial_\alpha, \partial_{\bar{j}} + \overline{\Gamma_j^\beta} \partial_{\bar{\beta}}]= \bigl( \partial_i \overline{\Gamma_j^\beta} + \Gamma_i^\alpha \partial_\alpha \overline{\Gamma_j^\beta} \bigr) \partial_{\bar{\beta}} - \bigl( \partial_{\bar{j}} \Gamma_i^\alpha + \overline{\Gamma_j^\beta} \partial_{\bar{\beta}} \Gamma_i^\alpha \bigr) \partial_\alpha.\]
If $\nabla^{1,0}$ is a holomorphic connection, then $[H_i, H_{\bar{j}}]=0$. Thus, for a holomorphic connection, $F^{1,1}=0$ automatically. Therefore, $\nabla^{\RR}$ is flat as wanted.

If $\nabla^{1,0}$ is merely relatively holomorphic, then $\partial_{\bar{\beta}} \Gamma_i^\alpha = 0$, $\partial_\alpha \overline{\Gamma_j^\beta} = 0$. The formula simplifies to
\[
[H_i, H_{\bar{j}}] = (\partial_i \overline{\Gamma_j^\beta}) \partial_{\bar{\beta}} - (\partial_{\bar{j}} \Gamma_i^\alpha) \partial_\alpha.
\]
This is generally nonzero.
\end{remark}

\begin{proposition}\label{prop:constant_transition_functions}
If a holomorphic connection $\nabla^{1,0}$ on $f: X \to S$ is complete and flat, then the fiber bundle $X$ admits a holomorphic atlas such that the transition functions are locally constant maps to $\Aut(Y)$.
\end{proposition}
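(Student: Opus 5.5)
The natural charts are those given by parallel transport along radial paths, as in Lemma \ref{lem:hol_triv_via_conn}. I would choose the coordinate balls simply connected (balls suffice) and show that flatness forces the transition maps between two such charts to be independent of the base point.

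\textbf{Step 1: the atlas.} For each $s\in S$, Lemma \ref{lem:hol_triv_via_conn} gives a holomorphic coordinate ball $U$ around $s$ and a holomorphic trivialization
\[
\Phi_U:U\times X_{s_U}\to f^{-1}(U),\qquad \Phi_U(s',y)=\tau(\gamma_{s'})(y),
\]
where $s_U$ is the center and $\gamma_{s'}$ is the radial path in $U$. Completeness guarantees this is defined on all of $U\times X_{s_U}$.

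Since $S$ is connected, I would fix a base point $s_0$ and a path from $s_0$ to each center $s_U$. Parallel transport along such a path is a biholomorphism $X_{s_0}\cong X_{s_U}$ by Lemma \ref{lem:hol_parallel_transport}; the hypotheses of that lemma hold because a holomorphic connection is relatively holomorphic and its conjugate is mixed relatively holomorphic. Precomposing $\Phi_U$ with this biholomorphism identifies every typical fiber with $Y:=X_{s_0}$. I will write $\Psi_U:=\Phi_U^{-1}:f^{-1}(U)\to U\times Y$ for the resulting chart.

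\textbf{Step 2: the key observation.} Fix $y\in Y$. The section $s'\mapsto\Phi_U(s',y)$ is horizontal: it lies in a single leaf of the foliation $\mathcal F$ determined by $\nabla^{1,0}$. To see this, note that for any path $\gamma$ in $U$ from $s_U$ to $s'$, the radial path and $\gamma$ are homotopic in the ball. By Lemma \ref{lem:Path_Independence_Flat}, $\Phi_U(s',y)=\tau(\gamma)(y)$. Hence the slice $\Phi_U(U\times\{y\})$ is the set of endpoints of horizontal lifts starting at $\Phi_U(s_U,y)$, which is an integral manifold of $H$. Equivalently, in the chart $\Psi_U$ the horizontal distribution is spanned by the $\partial_{s^i}$, so the local flat charts of Lemma \ref{lem:flatness_local_trivialization} arise globally on $U$.

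\textbf{Step 3: the transition maps.} On $U\cap V$, write the transition as $g_{UV}(s')=\Psi_{U,s'}\circ\Psi_{V,s'}^{-1}\in\Aut(Y)$; it is holomorphic in $(s',y)$. Both charts send horizontal curves to curves with $y$ constant. Therefore $(s',y)\mapsto(s',g_{UV}(s')(y))$ maps horizontal slices to horizontal slices, which gives $\partial_{s'}g_{UV}(s')(y)=0$ and, by conjugation, $\partial_{\bar s'}g_{UV}=0$. More directly, for $s_1,s_2$ in the same connected component of $U\cap V$, joined by a path $\gamma$ inside the intersection, I would check that
\[
\Psi_{U,s_2}^{-1}=\tau(\gamma)\circ\Psi_{U,s_1}^{-1}
\quad\text{and likewise for }V,
\]
using Lemma \ref{lem:Path_Independence_Flat} inside each ball. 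It then follows that $g_{UV}(s_2)=g_{UV}(s_1)$, so $g_{UV}$ is locally constant.

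\textbf{Main obstacle.} The delicate step is justifying that, inside a ball $U$, transport along an arbitrary path agrees with transport along the radial path. This uses the simple connectivity of the ball together with Lemma \ref{lem:Path_Independence_Flat}. Completeness is what makes all lifts exist, so every homotopy lifts and the lemma applies. Once this is established, everything else follows formally.
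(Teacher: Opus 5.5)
Your proposal is correct and follows essentially the same route as the paper: radial parallel-transport trivializations from Lemma \ref{lem:hol_triv_via_conn}, path independence (Lemma \ref{lem:Path_Independence_Flat}) to show that horizontal sections are exactly the constant sections in each chart, and the conclusion that transition maps carry horizontal sections to horizontal sections and are therefore locally constant. Your extra step of identifying all model fibers with a single $X_{s_0}$ via parallel transport makes explicit what the paper leaves tacit when it writes $Y=X_{s_a}$ for each chart.
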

\begin{proof}
 Let $\{U_a\}$ be a cover of $S$ by coordinate balls. Let $s_a \in U_a$ be the center and identify $Y = X_{s_a}$. Since $\nabla^{1,0}$ is complete and holomorphic, we obtain a local holomorphic trivialization $\Phi_a: U_a \times Y \to f^{-1}(U_a)$ by Lemma \ref{lem:hol_triv_via_conn}. This collection of charts forms a holomorphic atlas for $X$.

 If we fix $z_0 \in Y$, the map $s \mapsto \Phi_a(s, z_0)$ defines a local section. Due to path independence, this section traces out the unique leaf of the foliation $\mathcal{F}$ passing through $z_0$ at $s_a$. By definition, this is a horizontal section. Therefore, in the coordinates induced by $\Phi_a$, the horizontal sections are precisely the constant sections $\{z_a = \text{constant}\}$. A trivialization with this property is called a \emph{flat trivialization}.

Let $U_{ab} = U_a \cap U_b$, $g_{ab}: U_{ab}\to \Aut(Y)$ be the transition map, given by $$g_{ab}(s) = \Phi_a(s, \cdot)^{-1} \comp \Phi_b(s, \cdot).$$ Fix $z_b \in Y$. The map $s \mapsto (s, z_b)$ represents a horizontal section in the $\Phi_b$-trivialization. In the $\Phi_a$-trivialization, this horizontal section is represented by $(s,g_{ab}(s)(z_b))$. Hence $g_{ab}(s)$ is locally constant.
\end{proof}
\begin{lemma}\label{lem:fiberwise_holomorphic_flat_morphism}
Let $(f_i:X_i\to S,\nabla_i^{1,0})$, $i=1,2$, be holomorphic fiber bundles equipped with holomorphic connections, and let $H_i^\RR$ be the corresponding real horizontal distributions. For a smooth morphism $F$, i.e., a smooth map $F:X_1\to X_2$ satisfying $f_2\comp F=f_1$, the following conditions are equivalent:
\begin{enumerate}[label=(\roman*)]
\item $F$ is a fiberwise holomorphic flat morphism, i.e., each restriction $F_s:X_{1,s}\to X_{2,s}$ is holomorphic and $\D F(H_1^\RR)\subset H_2^\RR$;
\item $F$ is holomorphic and preserves the holomorphic horizontal distributions.
\end{enumerate}
\end{lemma}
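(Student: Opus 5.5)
The plan is to work in adapted holomorphic coordinates in which both connections are simple. Since $\nabla_i^{1,0}$ is a holomorphic connection on a holomorphic fibration, it is pure, and its horizontal lifts are $H_{i,k}=\partial_k+\Gamma^\alpha_{i,k}\partial_\alpha$ with $\Gamma$ holomorphic in $(s,z)$. The real horizontal distribution $H_i^\RR$ is the real part of $\im(\nabla_i^{1,0})\oplus\overline{\im(\nabla_i^{1,0})}$ (Remark \ref{rmk:real_conn}). Its complexification therefore splits as $H_i^{1,0}\oplus H_i^{0,1}$, with $H_i^{1,0}\subset TX_i$ and $H_i^{0,1}\subset\overline{TX_i}$.

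For (ii)$\Rightarrow$(i), I will argue as follows.
\begin{itemize}
\item If $F$ is holomorphic, each fiber restriction $F_s$ is holomorphic, since fibers are complex submanifolds and $F$ maps $X_{1,s}$ into $X_{2,s}$.
\item By assumption, $\D F(H_1^{1,0})\subset H_2^{1,0}$.
\item Because $F$ is real, conjugation gives $\D F(H_1^{0,1})\subset H_2^{0,1}$.
\item Taking real parts yields $\D F(H_1^\RR)\subset H_2^\RR$.
\end{itemize}

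For (i)$\Rightarrow$(ii), the key step is to show that $\D F^{\CC}$ maps $\overline{TX_1}$ into $\overline{TX_2}$, i.e. $F$ is holomorphic. Since $\D F$ is real-linear and preserves $H^\RR$, its complexification maps $H_1^{0,1}$ into $H_2^\CC$. Since $f_2\comp F=f_1$, a vector $H_{1,\bar j}$ lying over $\partial_{\bar j}$ must go to a horizontal vector lying over $\partial_{\bar j}$, which is exactly $H_{2,\bar j}\in H_2^{0,1}\subset\overline{TX_2}$. Likewise $H_1^{1,0}$ goes into $H_2^{1,0}$, which already gives preservation of the holomorphic horizontal distributions. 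On vertical vectors, $\D F$ restricted to $T^\CC_{X_1/S}$ equals $\D F_s$. Since $F_s$ is holomorphic, $\D F_s$ maps $\overline{T_{X_1/S}}$ into $\overline{T_{X_2/S}}$. Combining this with $\overline{TX_1}=\overline{T_{X_1/S}}\oplus H_1^{0,1}$ gives $\D F^\CC(\overline{TX_1})\subset\overline{TX_2}$. So $F$ is holomorphic.

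I expect the main subtlety to be the bookkeeping rather than any analysis. Two points need care.
\begin{itemize}
\item Smoothness of $F$ together with the Cauchy--Riemann condition suffices for holomorphicity, which is standard.
\item The splittings must be the complexifications of the real horizontal distributions. This holds because holomorphic connections are pure, so $\overline{\nabla^{1,0}}$ lands in $\overline{TX}$.
\end{itemize}
One could also verify the result in local coordinates. There, $F(s,z)=(s,F^\beta(s,\bar s,z,\bar z))$, and the condition $\D F(H_{1,\bar j})=H_{2,\bar j}$ reads
\[
\partial_{\bar j}F^\beta+\overline{\Gamma_{1,j}^{\gamma}}\,\partial_{\bar\gamma}F^\beta=0 .
\]
Fiberwise holomorphicity gives $\partial_{\bar\gamma}F^\beta=0$, and hence $\partial_{\bar j}F^\beta=0$.
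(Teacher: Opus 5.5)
Your proof is correct and is essentially the paper's argument. The paper shows that $\D F$ is complex-linear separately on the vertical bundle (by fiberwise holomorphicity) and on the horizontal distribution (horizontal lifts of $v$ go to horizontal lifts of $v$, and the complex structure acts on these lifts through $J_S$); this is exactly your statement that $\D F^{\CC}$ preserves $\overline{T_{X/S}}$ and $H^{0,1}$, hence $\overline{TX}=\overline{T_{X/S}}\oplus H^{0,1}$, with your explicit converse and the inclusion $\D F(H_1^{1,0})\subset H_2^{1,0}$ filling in what the paper calls immediate.
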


\begin{proof}
Fiberwise holomorphicity implies that $\D F$ is complex-linear on the vertical tangent bundle. Since $f_2\comp F=f_1$ and $F$ preserves the real horizontal distributions, it maps the horizontal lift of a real tangent vector $v\in TS^\RR$ to the horizontal lift of the same vector $v$. The holomorphicity of the connections implies that this restriction of $\D F$ is also complex-linear. Thus $\D F$ is complex-linear on the whole tangent bundle, and $F$ is holomorphic. The converse is immediate.
\end{proof}

\begin{theorem}\label{thm:HolomorphicRH}
Let $S$ be a connected complex manifold. Let $\mathbf{CFB}(S)$ be the category whose objects are holomorphic fiber bundles $f:X\to S$ equipped with complete flat holomorphic connections $\nabla^{1,0}$. Its morphisms are fiberwise holomorphic flat morphisms. Denote $\Gamma:=\pi_1(S,s_0)$. Let $\mathbf{REP}(\Gamma)$ be the category whose objects are pairs $(Y, \rho)$, where $Y$ is a complex manifold and $\rho: \Gamma \to \Aut(Y)$ is a representation, and whose morphisms are $\Gamma$-equivariant holomorphic maps $\psi:Y_1\to Y_2$ (i.e., $\psi \comp \rho_1(\gamma) = \rho_2(\gamma) \comp \psi$ for all $\gamma \in \Gamma$). Then the functor
\[\mathsf{F}: \mathbf{CFB}(S) \xrightarrow{\sim} \mathbf{REP}(\Gamma)\]
defined on objects by $\mathsf{F}(X, \nabla^{1,0}) = (X_{s_0}, \rho)$, where $\rho$ is the monodromy representation, and on morphisms by $\mathsf F(F)=F|_{X_{1,s_0}}$, is an equivalence of categories. 
\end{theorem}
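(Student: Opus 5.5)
We construct a quasi-inverse $\mathsf G:\mathbf{REP}(\Gamma)\to\mathbf{CFB}(S)$. We then check that $\mathsf F$ is well defined and that $\mathsf F$ and $\mathsf G$ are mutually quasi-inverse.

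\textbf{Well-definedness of $\mathsf F$.} Let $(X,\nabla^{1,0})$ be an object. By completeness, parallel transport $\tau(\gamma)$ is defined along every loop $\gamma$ based at $s_0$. By Lemma \ref{lem:Path_Independence_Flat}, it depends only on the homotopy class $[\gamma]$, and by Lemma \ref{lem:hol_parallel_transport} it is a biholomorphism of $Y:=X_{s_0}$. Since a holomorphic connection is relatively holomorphic and its conjugate is mixed relatively holomorphic, that lemma applies. Concatenation of paths gives a homomorphism (or anti-homomorphism, which we fix by convention, e.g.\ $\rho([\gamma])=\tau(\gamma)^{-1}$ if necessary) $\rho:\Gamma\to\Aut(Y)$.

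For a morphism $F$, Lemma \ref{lem:fiberwise_holomorphic_flat_morphism} shows that $F$ maps horizontal lifts to horizontal lifts. Hence $F\circ\tau_1(\gamma)=\tau_2(\gamma)\circ F$ on the fibers over $s_0$, so $F|_{X_{1,s_0}}$ is $\Gamma$-equivariant and holomorphic. Functoriality is clear.

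\textbf{Construction of $\mathsf G$.} Let $p:\widetilde S\to S$ be the universal cover, with $\Gamma$ acting by deck transformations. Given $(Y,\rho)$, set $X_\rho:=\widetilde S\times_\Gamma Y$, the quotient of $\widetilde S\times Y$ by $\gamma\cdot(\tilde s,y)=(\gamma\tilde s,\rho(\gamma)y)$. The action is free and properly discontinuous, so $X_\rho$ is a complex manifold and $X_\rho\to S$ is a holomorphic fiber bundle with locally constant transition functions. The horizontal foliation $\{y=\text{const}\}$ on $\widetilde S\times Y$ is $\Gamma$-invariant and descends to a flat holomorphic connection on $X_\rho$. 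Its horizontal lifts are images of lifts in $\widetilde S$, which exist for all time, so the connection is complete.

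A $\Gamma$-equivariant holomorphic map $\psi$ induces $\mathrm{id}\times\psi$, which descends to a holomorphic map preserving the horizontal foliations, hence a morphism by Lemma \ref{lem:fiberwise_holomorphic_flat_morphism}. Fixing a lift $\tilde s_0$ identifies the fiber of $X_\rho$ over $s_0$ with $Y$. Under this identification the monodromy is $\rho$ (up to the chosen convention), which gives $\mathsf F\mathsf G\cong\mathrm{id}$.

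\textbf{The isomorphism $\mathsf G\mathsf F\cong\mathrm{id}$.} This is the main step. Given $(X,\nabla^{1,0})$, we define $\Psi:\widetilde S\times X_{s_0}\to X$ by $\Psi(\tilde s,y)=\tau(p\circ\tilde\gamma)(y)$, where $\tilde\gamma$ is any path in $\widetilde S$ from $\tilde s_0$ to $\tilde s$. Since $\widetilde S$ is simply connected, Lemma \ref{lem:Path_Independence_Flat} makes $\Psi$ well defined. The map $\Psi$ is $\Gamma$-invariant in the required twisted sense, so it descends to a map $X_\rho\to X$ over $S$.

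Holomorphicity is local. Over a coordinate ball, composing with the flat trivialization of Lemma \ref{lem:hol_triv_via_conn} and Proposition \ref{prop:constant_transition_functions} exhibits $\Psi$ as $(s,y)\mapsto\Phi_a(s,g(y))$ with $g$ a fixed biholomorphism. Thus the descended map is a fiberwise biholomorphic, holomorphic bijection that carries $\{y=\text{const}\}$ to leaves. It is therefore an isomorphism in $\mathbf{CFB}(S)$, and it is natural in $X$ because morphisms commute with parallel transport.

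The same argument gives full faithfulness of $\mathsf F$ directly. A morphism in $\mathbf{CFB}(S)$ is determined by its restriction to $X_{s_0}$, since it commutes with parallel transport and $S$ is connected. Conversely, any equivariant holomorphic $\psi$ extends by parallel transport: the extension is well defined by equivariance and is holomorphic by the local flat trivializations.

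The main obstacle is the bookkeeping of the (anti-)homomorphism convention for monodromy versus the left action in $\widetilde S\times_\Gamma Y$, together with checking that $\Psi$ descends. Before starting, we would fix the convention $\rho([\gamma]):=\tau(\gamma)^{-1}$ and verify consistency with $\gamma\cdot(\tilde s,y)=(\gamma\tilde s,\rho(\gamma)y)$.
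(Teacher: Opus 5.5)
Your proposal is correct and follows essentially the same route as the paper's proof. Both use the associated bundle $\widetilde S\times_\rho Y$ with the descended product foliation, the monodromy convention $\rho([\gamma])=\tau(\gamma)^{-1}$, a comparison map built from parallel transport along projections of paths in $\widetilde S$ (holomorphic via the radial trivializations of Lemma \ref{lem:hol_triv_via_conn}), and full faithfulness via uniqueness of horizontal lifts together with descending $\mathrm{id}\times\psi$.
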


\begin{proof}
Consider a representation $\rho: \Gamma \to G=\Aut(Y)$. Let $\pi: \widetilde{S} \to S$ be the universal covering of $S$. Define an action of $\Gamma$ on the product $\widetilde{S} \times Y$ by
\[
\gamma\cdot (\tilde{s}, y)  = (\gamma\cdot \tilde{s}, \rho(\gamma)y), \quad \gamma \in \Gamma.
\]
Since the actions on $\widetilde{S}$ and $Y$ are holomorphic, this diagonal action is holomorphic, free, and properly discontinuous. The quotient space $X := (\widetilde{S} \times Y)/\Gamma$ (denoted $\widetilde{S} \times_\rho Y$) is a complex manifold. The projection $f: X \to S$ induced by $\pi$ is a holomorphic fiber bundle with fiber $Y$.

We define a flat holomorphic connection on $X$. Consider the trivial connection $\nabla_0$ on the projection $\widetilde{S} \times Y \to \widetilde{S}$. Its horizontal distribution is $H_0 = T\widetilde{S} \oplus 0 \subset T(\widetilde{S} \times Y)$, which is a holomorphic subbundle. Let $L_\gamma$ denote the action of $\gamma$. The differential acts on a horizontal vector $(v, 0) \in H_0(\tilde{s}, y)$ as
\[
\D L_\gamma (v, 0) = (\D( \gamma \cdot ) v, \D(\rho(\gamma)) 0) = (\D(\gamma\cdot) v, 0).
\]
This vector is again horizontal. Thus, $H_0$ descends to a holomorphic distribution $H$ on $X$, defining a holomorphic connection $\nabla^{1,0}$ on $f: X \to S$. Since the curvature of the trivial connection $\nabla_0$ is zero, and the curvature is a tensor that descends to the quotient, the curvature of $\nabla^{1,0}$ is also zero. Moreover, $\nabla^{1,0}$ is complete. Let $\gamma: [0, 1] \to S$ be a path and $x_0 \in X_{\gamma(0)}$. Lift $\gamma$ to a path $\tilde{\gamma}$ in the universal cover $\widetilde{S}$. We can represent $x_0$ as $[(\tilde{\gamma}(0), y_0)]$ for some $y_0 \in Y$. The horizontal lift of $\gamma$ starting at $x_0$ is the projection of the trivial horizontal lift in $\widetilde{S} \times Y$, given explicitly by $t \mapsto [(\tilde{\gamma}(t), y_0)]$. Since $\tilde{\gamma}(t)$ is defined for all $t \in [0, 1]$, the horizontal lift exists for the entire interval, proving that $\nabla^{1,0}$ is complete.

Conversely, let $(X, \nabla^{1,0})$ be a complete flat holomorphic fiber bundle over $S$ with fiber $Y$. Fix a base point $s_0 \in S$ and identify the fiber $X_{s_0}$ with $Y$. For any loop $\gamma$ based at $s_0$, the parallel transport $\tau(\gamma): Y \to Y$ is defined by lifting $\gamma$ horizontally. By Lemma \ref{lem:hol_parallel_transport}, $\tau(\gamma)$ is a biholomorphism, i.e., $\tau(\gamma) \in G$. Since $\nabla^{1,0}$ is flat, $\tau(\gamma)$ depends only on the homotopy class of $\gamma$. This defines the monodromy representation
\[
\rho: \pi_1(S, s_0) \to G, \quad \rho([\gamma]) = \tau(\gamma)^{-1}.
\]

We verify that these constructions are inverse to each other up to isomorphism. Suppose we start with a representation $\rho$ and construct the complete flat holomorphic bundle $(X, \nabla^{1,0}) = \widetilde{S} \times_\rho Y$. Fix $s_0 \in S$ and a lift $\tilde{s}_0 \in \widetilde{S}$. Identify $Y \cong X_{s_0}$ via $y \mapsto [(\tilde{s}_0, y)]$. A loop $\gamma$ lifts to a path from $\tilde{s}_0$ to $[\gamma]\cdot\tilde{s}_0$. The parallel transport of $y$ along $\gamma$ is $t \mapsto [(\tilde{\gamma}(t), y)]$. The endpoint is
$[([\gamma]\cdot\tilde{s}_0, y)] = [(\tilde{s}_0, \rho([\gamma])^{-1}y)]$.
Thus the holonomy of $(X, \nabla^{1,0})$ recovers $\rho$.

Suppose we start with a complete flat holomorphic bundle $(X, \nabla^{1,0})$, and let $\rho$ be its monodromy representation. We construct an isomorphism $\Psi: \widetilde{S} \times_\rho Y \to X$. Let $\alpha$ be a path in $\widetilde{S}$ from $\tilde{s}_0$ to $\tilde{s}$, and $\pi(\alpha)$ its projection in $S$. Define $\Phi: \widetilde{S} \times Y \to X$ by $\Phi(\tilde{s}, y) = \tau(\pi(\alpha))(y)$, using the identification $Y\cong X_{s_0}$. By Lemma \ref{lem:Path_Independence_Flat}, $\Phi$ is well-defined. Since $\nabla^{1,0}$ is holomorphic, by Lemma \ref{lem:hol_triv_via_conn}, $\Phi$ is holomorphic. $\Phi$ maps the trivial horizontal distribution on $\widetilde{S} \times Y$ to the horizontal distribution of $\nabla^{1,0}$. One verifies that $\Phi$ respects the action of $\Gamma$ using the definition of $\rho$, so it descends to an isomorphism $\Psi$.

It remains to show that for any two complete flat bundles $(X_1, \nabla_1^{1,0})$ and $(X_2, \nabla_2^{1,0})$ with fibers $Y_1, Y_2$ over $s_0$ and corresponding monodromy representations $\rho_1, \rho_2$, the map
	\[
	\mathsf{F}_{1,2}: \Hom_{\mathbf{CFB}}((X_1, \nabla_1^{1,0}), (X_2, \nabla_2^{1,0})) \to \Hom_{\mathbf{REP}}((Y_1, \rho_1), (Y_2, \rho_2))
	\]
	given by restricting a bundle morphism to the fiber at $s_0$ ($F \mapsto F|_{X_{1, s_0}}$) is a bijection. First, $\mathsf F_{1,2}$ is well-defined. Indeed, preservation of
horizontal lifts gives $F_{s_0}\comp\tau_1(\gamma^{-1})=\tau_2(\gamma^{-1})\comp F_{s_0}$ for every loop $\gamma$ based at $s_0$. Since $\rho_i([\gamma])=\tau_i(\gamma)^{-1}=\tau_i(\gamma^{-1})$, we obtain
\[
F_{s_0}\comp\rho_1([\gamma])=\rho_2([\gamma])\comp F_{s_0}.
\]
Thus $F_{s_0}$ is an equivariant holomorphic map and hence a
morphism in $\mathbf{REP}(\Gamma)$.

Injectivity: Suppose $F, G: X_1 \to X_2$ are two morphisms such that $F|_{X_{1, s_0}} = G|_{X_{1, s_0}}$. Let $x \in X_1$ be any point. Since $S$ is connected and the connection is complete, there exists a horizontal path $\tilde{\gamma}$ in $X_1$ connecting some $y \in X_{1, s_0}$ to $x$. Let $\gamma$ be the projection of this path on $S$. Since morphisms preserve horizontal distributions, $F(\tilde{\gamma})$ and $G(\tilde{\gamma})$ must be the unique horizontal lifts of $\gamma$ in $X_2$ starting at $F(y)$ and $G(y)$ respectively. Since $F(y) = G(y)$, uniqueness implies $F(x) = G(x)$. Thus $F=G$.

Surjectivity: Let $\psi: Y_1 \to Y_2$ be a morphism in $\mathbf{REP}(\Gamma)$. We construct a bundle morphism $F$. Recall that $X_i \cong (\widetilde{S} \times Y_i)/\Gamma$. Define
	\[
	\widetilde{F}: \widetilde{S} \times Y_1 \to \widetilde{S} \times Y_2, \quad (\tilde{s}, y) \mapsto (\tilde{s}, \psi(y)).
	\]
	$\widetilde{F}$ is holomorphic and preserves the trivial horizontal distribution (as it acts as the identity on $\widetilde{S}$). We verify it descends to the quotients. For any $\gamma \in \Gamma$,
	\[
	\widetilde{F}(\gamma \cdot (\tilde{s}, y)) = \widetilde{F}(\gamma\tilde{s}, \rho_1(\gamma)y) = (\gamma\tilde{s}, \psi(\rho_1(\gamma)y))=\gamma \cdot (\tilde{s}, \psi(y)) = \gamma \cdot \widetilde{F}(\tilde{s}, y),
	\]
	by the $\Gamma$-equivariance of $\psi$. Thus $\widetilde{F}$ descends to a holomorphic bundle morphism $F: X_1 \to X_2$. Since $\widetilde{F}$ preserves the trivial horizontal distributions, $F$ preserves the induced horizontal distributions. By construction, $F|_{X_{1, s_0}} = \psi$.
\end{proof}
Notice that for a holomorphic fiber bundle $f: X\to S$, the proof of Proposition \ref{prop:exist_fiberwise_holo_connection} shows that there exists a relatively holomorphic pure $(1,0)$-connection $\nabla^{1,0}$ on $f$. By Proposition \ref{CurvClassIndep_prop}, there is a well-defined curvature class $[F^{1,1}_{D}]$.
\begin{corollary}\label{cor:RH_RiemannSurface}
Let $S$ be a connected Riemann surface and $Y$ be a compact complex manifold. Let $f: X \to S$ be a holomorphic fiber bundle with typical fiber $Y$. If $[F^{1,1}_{D}]$ vanishes, then this bundle arises from a representation $\rho: \Gamma \to \Aut(Y)$.
\end{corollary}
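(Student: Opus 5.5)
The plan is to turn the vanishing of the curvature class into the existence of a holomorphic connection. That connection is then automatically flat, since $\dim S=1$, and complete, since $Y$ is compact. Theorem \ref{thm:HolomorphicRH} then supplies the representation.

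Concretely, since $f$ is a holomorphic fiber bundle, the proof of Proposition \ref{prop:exist_fiberwise_holo_connection} gives a relatively holomorphic pure $(1,0)$-connection $\nabla^{1,0}$. By Theorem \ref{thm:Ext_class_curv_class}, the hypothesis $[F^{1,1}_D]=0$ in $H^1(X,f^*\Omega_S\otimes T_{X/S})$ means $A(X)=0$. Hence the sequence \eqref{HolTangentSeq_eq1} splits holomorphically, so $f$ admits a holomorphic connection $\nabla_h$.

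I would also make this explicit at the level of forms, as a sanity check. The Dolbeault form $F^{1,1}_D=\dbar_X\gamma$ for some global smooth $\gamma\in C^\infty(X,f^*T^*S\otimes T_{X/S})$. Then $\nabla_h:=\nabla^{1,0}-\gamma$ is pure and satisfies $\dbar_X(\nabla_h)=0$ locally by \eqref{CurvLocalExact_eq}, so it is holomorphic. If one prefers to interpret $[F^{1,1}_D]$ as the class $[F^{1,1}_D]_S$ on $S$, then Remark \ref{rmk:curv_class_S} and the injectivity of $\iota$ in the five-term exact sequence reduce this to the same statement.

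Next, the curvature of $\nabla_h$ is a section of $f^*\wedge^2TS\otimes T_{X/S}$, which vanishes because $S$ is a Riemann surface. So $\nabla_h$ is flat. By Remark \ref{rmk:cpt_complete}, compactness of $Y$ makes $\nabla_h$ complete: horizontal lifts are solutions of an ODE on a bundle with compact fibers, so they cannot escape in finite time. Thus $(f,\nabla_h)$ is an object of $\mathbf{CFB}(S)$.

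Applying Theorem \ref{thm:HolomorphicRH}, $(f,\nabla_h)\cong\widetilde S\times_\rho Y$ with $\rho$ the monodromy $\pi_1(S,s_0)\to\Aut(Y)$, which is the conclusion.

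The only delicate point is the first step: which cohomology group the hypothesis "$[F^{1,1}_D]$ vanishes" refers to. It must yield vanishing in $H^1(X,f^*\Omega_S\otimes T_{X/S})$. Vanishing there is exactly what Theorem \ref{thm:Ext_class_curv_class} uses. Vanishing on $S$ pushes forward to it via $\iota$, because compact fibers make $f^{\mathrm{hol}}_*T_{X/S}$ locally free of finite rank. Everything after that is formal.
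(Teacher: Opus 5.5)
Your proof is correct and follows the paper's argument. Theorem \ref{thm:Ext_class_curv_class} turns $[F^{1,1}_D]=0$ into a holomorphic connection, which is flat because $\dim S=1$ and complete because $Y$ is compact, and Theorem \ref{thm:HolomorphicRH} then produces $\rho$. Your explicit correction $\nabla^{1,0}-\gamma$ and your remark about $[F^{1,1}_D]_S$ are valid extras; the latter uses only that $\iota([F^{1,1}_D]_S)=A(X)$, not the injectivity of $\iota$.
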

\begin{proof}
By Theorem \ref{thm:Ext_class_curv_class}, $f: X \to S$ admits a holomorphic connection, which is complete as $Y$ is compact. Let $\nabla_{\mathrm{hol}}$ be such a connection. Its curvature $F_{\nabla_{\mathrm{hol}}}$ is of type $(2,0)$ which must vanish since $S$ is a Riemann surface. Then the statement follows from Theorem \ref{thm:HolomorphicRH}.
\end{proof}

\begin{lemma}\label{lem:flat_char_torsion}
Let $S$ be a compact connected Riemann surface and $G$ be a connected complex reductive group. Let $P$ be a flat principal $G$-bundle over $S$. Then the characteristic class $c(P) \in \pi_1(G)$ (\cite[Prop.~5.1]{ramanathan75stable}) is a torsion element.
\end{lemma}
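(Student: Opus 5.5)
Since $P$ is flat, it comes from a representation $\rho:\pi_1(S)\to G$, i.e.\ $P=\widetilde S\times_\rho G$. The invariant $c(P)\in\pi_1(G)$ is the obstruction class of the underlying topological $G$-bundle: it is the image of the clutching class. Concretely, trivialize $P$ on $S$ minus a point and on a small disc; the transition function on the boundary circle defines an element of $\pi_1(G)$.

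For a flat bundle I would compute this directly from the standard presentation $\pi_1(S)=\langle a_1,b_1,\dots,a_g,b_g\mid \prod[a_i,b_i]=1\rangle$. Write $A_i=\rho(a_i)$ and $B_i=\rho(b_i)$. Choose paths $\tilde A_i,\tilde B_i$ in $G$ from $1$ to $A_i,B_i$, which is possible since $G$ is connected. Then the loop $t\mapsto\prod_i[\tilde A_i(t),\tilde B_i(t)]$ is closed, because $\prod[A_i,B_i]=1$. Its class is well defined: changing a path multiplies the loop by a commutator of loop classes, and such commutators are trivial in the abelian group $\pi_1(G)$. I would then argue, as in Milnor's and Ramanathan's treatment, that this loop class equals $c(P)$. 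Equivalently, $c(P)$ is the image of $[\rho]$ under $\Hom(\pi_1 S,G)\to\pi_1(G)$, obtained by lifting $\rho$ to the universal cover $\widetilde G$. The lifts $\hat A_i,\hat B_i\in\widetilde G$ satisfy $\prod[\hat A_i,\hat B_i]=z\in\ker(\widetilde G\to G)\cong\pi_1(G)$, and $z=c(P)$.

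Next I would use reductivity. Write $G=(Z^\circ\times G_{ss})/F$ with $F$ finite and central, where $Z^\circ\cong(\CC^*)^r$ and $G_{ss}$ is semisimple. Then $\widetilde G=\CC^r\times\widetilde{G_{ss}}$, and $\pi_1(G)$ is an extension involving $\pi_1(Z^\circ)\cong\ZZ^r$, $\pi_1(G_{ss})$, which is finite, and $F$. The commutator $\prod[\hat A_i,\hat B_i]$ projects trivially to the abelian factor $\CC^r$, since commutators die there. Hence $z$ lies in $\ker(\widetilde G\to G)\cap\widetilde{G_{ss}}$, which is a finite group because it is contained in the center of $\widetilde{G_{ss}}$, finite for semisimple groups. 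So $z$ lies in a finite subgroup of $\pi_1(G)$ and is therefore torsion.

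An alternative route, if needed, is to apply the abelianization $G\to G/[G,G]\cong(\CC^*)^r$. The torsion-freeness of $c$ modulo torsion is detected by the degrees of the associated line bundles, and these vanish because the induced $(\CC^*)^r$-bundle is flat and flat line bundles have degree zero by Chern--Weil. The kernel of $\pi_1(G)\to\pi_1(G/[G,G])$ is finite, so $c(P)$ is torsion.

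The main obstacle is the identification of Ramanathan's $c(P)$ with the commutator lift $z$, or with the data detected by characters. I would settle this using the second route: Ramanathan's class is functorial in $G$, and for tori it is the tuple of degrees. That reduces the problem to the degree-zero statement together with the finiteness of $\ker(\pi_1(G)\to\pi_1(G/[G,G]))$.
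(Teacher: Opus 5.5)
Your proposal is correct, and your first route is essentially the paper's proof. The paper takes the identification $c(P)=\prod_i[\widetilde A_i,\widetilde B_i]$ directly from \cite[Prop.~6.1]{ramanathan75stable}, so what you call the main obstacle is handled there by citation. It then argues as you do: it uses $\widetilde G=\widetilde Z_0\times\widetilde G'$, notes that the commutator product has trivial component in the abelian factor, and concludes from the finiteness of $\pi_1(G')$. Your appeal to the finiteness of the center of $\widetilde{G_{ss}}$ is an equivalent input. One small imprecision: changing a lift $\hat A_i$ by an element of the central subgroup $\pi_1(G)$ leaves $[\hat A_i,\hat B_i]$ literally unchanged, rather than multiplying it by a commutator of loop classes.

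Your second route, which you say you would actually rely on, is genuinely different and also valid. It replaces the commutator formula with three ingredients:
\begin{itemize}
\item Ramanathan's degree formula $\deg L_\chi=\chi_*(c(P))$, the same remark the paper uses in the proof of Corollary~\ref{cor:Weil};
\item the vanishing of $\deg L_\chi$ for the flat line bundles $L_\chi=P\times_\chi\CC$;
\item the finiteness of $\ker\bigl(\pi_1(G)\to\pi_1(G/[G,G])\bigr)$. This follows from the homotopy exact sequence of $[G,G]\to G\to G/[G,G]\cong(\CC^*)^r$, because $\pi_1([G,G])$ is finite and a basis of characters identifies $\pi_1(G/[G,G])$ with $\ZZ^r$.
\end{itemize}

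The two routes buy different things. The paper's route exploits the flat structure itself, through the surface-group relation, and needs no Chern--Weil input. Yours uses only that the character line bundles have degree zero. It therefore proves the stronger statement that $c(P)$ is torsion for any $G$-bundle all of whose $L_\chi$ have degree zero. This is the converse of the implication ``torsion $\Rightarrow$ all $\deg L_\chi=0$'' used in Corollary~\ref{cor:Weil}, so the two conditions there are in fact equivalent.
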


\begin{proof}
    The flat $G$-bundle $P$ over $S$ is determined (up to isomorphism) by a homomorphism $\rho: \pi_1(S) \to G$. Let $g$ be the genus of $S$. The fundamental group of $S$ is generated by $a_1, b_1, \dots, a_g, b_g$ subject to the relation $\prod_{i=1}^g [a_i, b_i] = e$, where $[x, y] = xyx^{-1}y^{-1}$ denotes the commutator. Since $\rho$ is a homomorphism, the images $A_i = \rho(a_i)$ and $B_i = \rho(b_i)$ in $G$ satisfy the relation $\prod_{i=1}^g [A_i, B_i] = e_G$. Let $\widetilde{G}$ be the universal cover of $G$. By \cite[Prop.~6.1]{ramanathan75stable}, \[c(P) = \prod_{i=1}^g [\widetilde{A}_i, \widetilde{B}_i],\] where $\widetilde{A}_i, \widetilde{B}_i \in \widetilde{G}$ are arbitrary lifts of $A_i, B_i$. The characteristic element is given by the product of their commutators in $\widetilde{G}$, which is well-defined since the kernel $\pi_1(G)$ of $\widetilde{G}\to G$ is central.

    Since $G$ is a connected complex reductive group, $\widetilde{G}=\widetilde{Z}_0\times \widetilde{G}'$, where $\widetilde{G}'$ is the universal cover of the semisimple part $G'=[G,G]$, $\widetilde{Z}_0$ is the universal cover of the identity component $Z_0$ of the center of $G$. We have
    $c(P) \in \{e\} \times \widetilde{G}'$. Since $\pi_1(G')$ is finite \cite[Th.~7.1]{theodor_dieck85}, $c(P)$ is a torsion element in $\pi_1(G)$.
\end{proof}

\begin{corollary}\label{cor:Weil}
Let $S$ be a compact connected Riemann surface and $G$ be a connected complex reductive group. Let $f:X\to S$ be a holomorphic fiber bundle with structure group $G$, i.e., it is the associated bundle of a holomorphic principal $G$-bundle $\pi_P:P\to S$. Then $f$ admits a holomorphic connection if each summand in the Remak decomposition of $\ad P$ has degree zero and $c(P)\in \pi_1(G)$ is torsion. The converse holds if $H^0(Y,TY)$ is finite-dimensional and $G\le \Aut(Y)$.
\end{corollary}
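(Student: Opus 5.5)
The plan has two directions; the first is soft and the second is where the work lies.

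\emph{Sufficiency.} Assume each summand in the Remak decomposition of $\ad P$ has degree zero and $c(P)$ is torsion. By the theorem of Azad--Biswas \cite{hassan_biswas03}, these are exactly the conditions for the holomorphic principal $G$-bundle $P$ to admit a holomorphic connection $A$. On a Riemann surface such a connection is automatically flat. Corollary \ref{cor:affine_map_associated_hol} then gives the induced holomorphic connection $\nabla_A^{1,0}$ on $f:X=P\times_GY\to S$. No hypothesis on $Y$ is needed here.

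\emph{Necessity.} Assume $H^0(Y,TY)$ is finite-dimensional, $G\le \Aut(Y)$, and $f$ admits a holomorphic connection. Since $G$ acts effectively, $N$ is trivial, so $G_{\mathrm{eff}}=G$ and $P_{\mathrm{eff}}=P$. Because $G$ is reductive and $H^0(Y,TY)$ is finite-dimensional, Corollary \ref{cor:assoc_holconn_imply_princ_holconn} shows that $P$ admits a holomorphic connection. This is where the split injectivity of $\hat\tau$ (Lemma \ref{lem:reductive_finite_split}) and the identification of the curvature class with the image of the Atiyah class (Theorem \ref{thm:atiyah_associated}) enter. Such a connection on $P$ is flat because $\dim S=1$.

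It remains to extract the two numerical conditions from a holomorphic connection on $P$.
\begin{itemize}
\item \emph{Degree condition.} The connection on $P$ induces a holomorphic connection on $\ad P$, and hence on each indecomposable summand of its Remak decomposition. A holomorphic connection on a bundle over a compact Riemann surface forces its degree to vanish (Atiyah/Weil), since the Atiyah class pairs to the first Chern class. So every Remak summand has degree zero.
\item \emph{Torsion condition.} Since $P$ carries a holomorphic, hence flat, connection, it comes from a representation $\pi_1(S)\to G$, i.e.\ it is a flat principal $G$-bundle. Lemma \ref{lem:flat_char_torsion} then shows that $c(P)\in\pi_1(G)$ is torsion.
\end{itemize}

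The main obstacle is the passage from $f$ back to $P$. It is handled entirely by Corollary \ref{cor:assoc_holconn_imply_princ_holconn}, provided the effectivity hypothesis $G\le\Aut(Y)$ is checked so that $P_{\mathrm{eff}}=P$; without it one would only obtain conclusions about $P/N$. A secondary point is that Azad--Biswas is also consistent with this direction, so one could alternatively cite its converse directly once a holomorphic connection on $P$ is known.
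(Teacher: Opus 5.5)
Your proposal is correct and follows the paper's proof. Sufficiency goes through Azad--Biswas and Corollary~\ref{cor:affine_map_associated_hol}; necessity goes through Corollary~\ref{cor:assoc_holconn_imply_princ_holconn} (with $N$ trivial, so $P_{\mathrm{eff}}=P$), flatness on a curve, and Lemma~\ref{lem:flat_char_torsion}. The differences are only in bookkeeping: the paper spells out what you compress into ``exactly the conditions'' (Ramanathan's formula $\deg(P\times_\chi\CC)=\chi_*(c(P))$ turns torsion of $c(P)$ into vanishing of all character degrees, and Weil's theorem turns the Remak condition into a holomorphic connection on $\ad P$, which together are the hypotheses of \cite[Th.~3.1]{hassan_biswas03}), and for the degree condition in the converse you use the induced connection on $\ad P$ directly instead of citing Azad--Biswas.
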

\begin{proof}
	By Corollary \ref{cor:affine_map_associated_hol}, $f$ admits a holomorphic connection if $\pi_P$ does. Let $\chi: G \to \mathbb{C}^*$ be a holomorphic character. It induces a homomorphism $\chi_*: \pi_1(G) \to \pi_1(\mathbb{C}^*) \cong \mathbb{Z}$. By \cite[Rmk.~5.1]{ramanathan75stable},
    $\deg(L_\chi) = \chi_*(c(P))$, where $L_\chi = P \times_\chi \CC$. Assume $c(P)$ is a torsion element in $\pi_1(G)$. There exists a positive integer $k$ such that $k \cdot c(P) = 0$. For any $\chi$, we have
    \[
    k \cdot \deg(L_\chi) = k \cdot \chi_*(c(P)) = \chi_*(k \cdot c(P)) = \chi_*(0) = 0.
    \]
Then $\deg(L_\chi) = 0$. Note that by Weil's theorem, $\ad P$ admits a holomorphic connection if and only if each summand in the Remak decomposition of $\ad P$ has degree zero. If moreover $\ad P$ admits a holomorphic connection, then so does $\pi_P$ by \cite[Th.~3.1]{hassan_biswas03}.

Suppose $H^0(Y,TY)$ is finite-dimensional, $G\le \Aut(Y)$, and $f$ admits a holomorphic connection. By Corollary \ref{cor:assoc_holconn_imply_princ_holconn}, $P$ also admits a holomorphic connection which is automatically flat on a Riemann surface. By Lemma \ref{lem:flat_char_torsion}, $c(P)$ is torsion. By \cite[Th.~3.1]{hassan_biswas03}, $\ad P$ also admits a holomorphic connection.
\end{proof}

\begin{example}\label{ex:Incomplete_Hol_Conn}
Let the base be $S = \mathbb{C}$ (with coordinate $s$) and the fiber be $Y = \mathbb{C}$ (with coordinate $z$). Consider the trivial holomorphic bundle $f: X = S \times Y \to S$. We define a connection $\nabla^{1,0}$ on $X$ by
\[
\nabla^{1,0}(\partial_s) = \partial_s + z^2 \partial_z.
\]
The connection coefficient is $\Gamma(s, z) = z^2$, which is holomorphic, so $\nabla^{1,0}$ is a holomorphic connection. Furthermore, since the base $S$ is 1-dimensional, its curvature vanishes automatically. Thus, $(X, \nabla^{1,0})$ is a flat holomorphic bundle.

 A curve $(s(t), z(t))$ in $X$ is horizontal if
\begin{equation}\label{eq:parallel_transport_noncompact}
\frac{\D z}{\D t} = z^2 \frac{\D s}{\D t}.
\end{equation}
Consider the path $\gamma: [0, 1] \to S$ defined by $\gamma(t) = t$. Here $s(t)=t$ and $\frac{\D s}{\D t}=1$. The ODE \eqref{eq:parallel_transport_noncompact} becomes $\frac{\D z}{\D t} = z^2$.
The solution with initial condition $z(0)=z_0\in X_0$ is $z(t;z_0)=z_0/(1-tz_0)$ on its interval of existence. It exists for all $t\in[0,1]$ precisely when $z_0\notin[1,\infty)$, and on this domain the parallel transport is $\tau(\gamma)(z_0)=z_0/(1-z_0)$. The solution starting at $z_0=1$ is $z(t; 1) = \frac{1}{1-t}$, which blows up as $t \to 1^-$. The connection $\nabla^{1,0}$ is incomplete.

We can further demonstrate that $\nabla^{1,0}$ does not come from any representation $\Gamma\to \Aut(Y)$. The base $S=\mathbb{C}$ is simply connected, so $\Gamma=\{e\}$. The only representation is the trivial one, $\rho_{triv}$. The flat holomorphic bundle corresponding to $\rho_{triv}$ is the trivial bundle $X$ equipped with the trivial connection $\nabla_{triv}$, whose horizontal distribution is spanned by $\partial_s$. We show that $(X, \nabla^{1,0})$ cannot be isomorphic to $(X, \nabla_{triv})$.

Suppose there exists an isomorphism of flat bundles $\Psi: (X, \nabla_{triv}) \to (X, \nabla^{1,0})$. It must be a fiber-preserving biholomorphism $\Psi(s, w) = (s, \psi(s, w))$ that intertwines the connections. This requires
\[
\D\Psi(\partial_s) = \partial_s + \frac{\partial \psi}{\partial s} \partial_z=\partial_s + \psi(s, w)^2 \partial_z,
\]
which is equivalent to $\frac{\partial \psi}{\partial s} = \psi^2$. The solution is $\psi(s, w) = \frac{g(w)}{1-s g(w)}$, where $g(w) = \psi(0, w)$. Since $\Psi$ must be a global isomorphism, $g(w)$ must be in $\Aut(\mathbb{C})$, i.e., $g(w)=aw+b$ with $a\neq 0$. For any $s\neq 0$, there exists a $w_0 \in \mathbb{C}$ such that $g(w_0) = 1/s$. At the point $(s, w_0)$, the function $\psi(s, w)$ is singular. Thus, $\Psi$ is not globally defined on $X=\mathbb{C}^2$.
\end{example}

\section{Relatively Kähler fibrations}
The classical nonabelian Hodge correspondence establishes a profound connection between flat bundles and Higgs bundles. The existence of a canonical metric, the so-called harmonic metric, is the crucial analytical ingredient establishing the correspondence. Motivated by the generalization of this correspondence to nonlinear settings, we find it essential to investigate metrics on fiber bundles.

\subsection{Fiberwise Kähler metrics}
\begin{definition}\label{def:fib_Riem}
	A \emph{fiberwise Riemannian metric} $g_{X/S}$ on a smooth fiber bundle $f:X\to S$ is an element in $C^\infty (X,\mathrm{Sym}^2((T_{X/S}^\mathbb{R})^*))$, such that for each $x \in X$, $g_x$ is a positive definite inner product on $T_{X/S, x}^\mathbb{R}$.
\end{definition}
\begin{definition}
Let $(f:X\to S,T_{X/S})$ be a complex fiber bundle.	A \emph{fiberwise Kähler metric} $\omega_{X/S}$ on $(f,T_{X/S})$ is a smooth section of $\wedge^{1,1}T^*_{X/S}$, such that for every $s \in S$, the restriction $\omega_s := \omega_{X/S}|_{X_s}$ is a Kähler metric on the fiber $X_s$.
\end{definition}
A fiberwise Kähler metric determines a fiberwise Riemannian metric $g_{X/S}(\sbullet,\sbullet)=\omega_{X/S}(\sbullet,J_{X/S}\sbullet)$ in the sense of Definition \ref{def:fib_Riem}.  Similar to Lemma \ref{lem:isotrivial_cx_fib_bun}, we have the following.
\begin{lemma}\label{lem:unitary atlas}
Let $f: X \to S$ be an isotrivial complex fiber bundle with a compatible atlas $\{(U_a, \Phi_a)\}$. Let $\omega_Y$ be a fixed Kähler form on $Y$.
If the transition functions $g_{ab}$ actually take values in the subgroup of holomorphic isometries $\Aut(Y, \omega_Y)$, then there exists a unique fiberwise Kähler metric $\omega_{X/S}$ on $X$ such that $\omega_s = \Phi_{a,s}^* \omega_Y$ for $s\in U_a$. We call such an atlas a \emph{unitary atlas}.
\end{lemma}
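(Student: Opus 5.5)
The plan is to copy the proof of Lemma \ref{lem:isotrivial_cx_fib_bun}, replacing the fiberwise complex structure $J_Y$ by the K\"ahler form $\omega_Y$. On each chart $f^{-1}(U_a)$, define a local fiberwise form $\omega_a$ by requiring $\omega_a|_{X_s} := \Phi_{a,s}^*\omega_Y$ for $s\in U_a$. Equivalently, $\omega_a$ is the restriction to $\wedge^2 (T^{\RR}_{X/S})^*$ of $\Phi_a^*\pr_Y^*\omega_Y$, where $\pr_Y:U_a\times Y\to Y$ is the projection. This second description makes smoothness in $x\in f^{-1}(U_a)$ immediate, since $\Phi_a$ is a smooth diffeomorphism.

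Next, check that the local forms agree on overlaps. On $U_{ab}$ we have $\Phi_{a,s}=g_{ab}(s)\comp\Phi_{b,s}$, so
\[
\omega_a|_{X_s}=\Phi_{b,s}^*\,g_{ab}(s)^*\omega_Y=\Phi_{b,s}^*\omega_Y=\omega_b|_{X_s},
\]
because $g_{ab}(s)\in\Aut(Y,\omega_Y)$ preserves $\omega_Y$. Hence the $\omega_a$ glue to a global smooth section $\omega_{X/S}$ of $\wedge^2(T^{\RR}_{X/S})^*$.

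It remains to show that this section has the required type. By Lemma \ref{lem:isotrivial_cx_fib_bun}, $\Phi_{a,s}$ is a biholomorphism $X_s\to Y$ for the glued structure $J_{X/S}$. Since $\omega_Y$ is a real $(1,1)$-form, closed and positive, so is its pullback $\omega_s$. Thus $\omega_{X/S}$ is a section of $\wedge^{1,1}T^*_{X/S}$ restricting to a K\"ahler metric on each fiber. Uniqueness is automatic, because the condition $\omega_s=\Phi_{a,s}^*\omega_Y$ determines $\omega_{X/S}$ pointwise on every fiber, and the fibers cover $X$.

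No step is a real obstacle. The only point needing care is smoothness in the base direction: $\omega_{X/S}$ is defined fiberwise, so one must confirm that it varies smoothly with $s$. The global formula $\Phi_a^*\pr_Y^*\omega_Y$ restricted to vertical vectors settles this.
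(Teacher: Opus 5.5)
Your proposal is correct and takes the route the paper intends. The paper gives no separate proof, only noting that the lemma holds ``similar to Lemma \ref{lem:isotrivial_cx_fib_bun}'': define $\omega_s=\Phi_{a,s}^*\omega_Y$ chartwise and glue using $g_{ab}(s)^*\omega_Y=\omega_Y$. Your remarks on smoothness in the base direction (via $\Phi_a^*\pr_Y^*\omega_Y$ restricted to vertical vectors), and on the $(1,1)$-type and positivity of the pullback, fill in details the paper leaves implicit.
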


\begin{definition}
	A fiberwise Kähler metric $\omega_{X/S}$ on an isotrivial complex fiber bundle $f:X\to S$ is said to be \emph{modeled on $(Y, \omega_Y)$} if there exists a unitary atlas for $(f,\omega_{X/S})$.
\end{definition}

\begin{definition}\label{def:Kah_conn}
	Let $(f,T_{X/S})$ be a complex fiber bundle with a $\dbar$-operator $\dbar_f$ and a fiberwise Kähler metric $\omega_{X/S}$. A \emph{symplectic connection} associated to $\dbar_f$ and $\omega_{X/S}$ is a $(1,0)$-connection which is pure with respect to $\dbar_f$ and preserves $\omega_{X/S}$, i.e., $\mathcal{L}_{H_v} \omega_{X/S} = 0$ for the horizontal lift $H_v$ of any real vector field $v$ on $S$. The almost connection induced by a symplectic connection is called a \emph{symplectic almost connection}. If it further preserves $J_{X/S}$ (by Lemma \ref{lem:hol_parallel_transport}, this is equivalent to the condition that it is relatively holomorphic and $\dbar_f$ satisfies the lifting condition), or equivalently, is compatible with $g_{X/S}$, it is called a \emph{Kähler connection}. It induces a \emph{Kähler almost connection}.
\end{definition}

Given a real $2$-form $\omega$ on $X$ whose restriction to fibers is $\omega_{X/S}$, by the fiberwise nondegeneracy, it induces the horizontal distribution
\begin{equation}
	H_\omega=\{ v\in TX^\RR \,|\, (\iota_v \omega)|_{T_{X/S}^\RR}=0\}.
\end{equation}
This defines a real connection $\nabla_\omega^\RR$. Its complexification $\nabla_\omega$ decomposes as $\nabla_\omega^{1,0}+\nabla_\omega^{0,1}$. Conversely, given a real connection $\nabla$, we can define a $2$-form $\omega_\nabla$ by
\begin{equation}\label{eq:omega_conn}
	\omega_\nabla(u,v)=\omega_{X/S}(\mathrm{pr}_{\nabla}^{\mathrm{v}} u,\mathrm{pr}_{\nabla}^{\mathrm{v}} v),
\end{equation}
where $\mathrm{pr}_{\nabla}^{\mathrm{v}}: TX^\RR\to T_{X/S}^\RR$ is the projection induced by $\nabla$. The following result is straightforward.

\begin{lemma}\label{lem:omega_compatible_dbar}
Let $(f,T_{X/S})$ be a complex fiber bundle equipped with a $\bar{\partial}$-operator $\dbar_f$ and a fiberwise Kähler metric $\omega_{X/S}$. Let $\omega$ be a real 2-form on $X$ which restricts to $\omega_{X/S}$ on fibers. $\nabla_\omega^{0,1}$ induces $\dbar_f$ iff
\begin{equation}\label{eq:nabla_omega_pure_condition}
    \omega(u, v) = 0 \quad \text{for all } u \in \widebar{TX} \text{ and } v \in \overline{T_{X/S}}.
\end{equation}
 Equivalently, $\iota_v (\omega^{0,2})= 0$ for any $v \in \widebar{T_{X/S}}$, where $\omega^{0,2}$ is the $(0,2)$-part of $\omega$. Here the type decomposition is given by the almost complex structure associated to $\dbar_f$. Conversely, if $\nabla$ is a real connection, and the $(0,1)$-part of its complexification induces $\dbar_f$, then $\omega_\nabla$ is a real $(1,1)$-form, locally given by
 \begin{equation}\label{eq:omega_conn_loc}
 \omega_\nabla=\I g_{\alpha\bar{\beta}}(\D z^\alpha-\Gamma_i^\alpha\D s^i-\Gamma_{\bar j}^\alpha\D\bar s^j)\wedge(\D\bar z^\beta-\Gamma_i^{\bar\beta}\D s^i-\Gamma_{\bar j}^{\bar\beta}\D\bar s^j),
\end{equation}
where the connection coefficients are as in \eqref{local10conn_eq} and \eqref{local01conn_eq}, and $\omega_{X/S}=\I g_{\alpha\bar{\beta}}\D z^\alpha\wedge\D\bar z^\beta$.
\end{lemma}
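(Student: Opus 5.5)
The argument is a pointwise linear-algebra check, followed by a local coordinate computation for the converse.

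For the first claim, fix a point and recall that $H_\omega$ is the $\omega$-orthogonal complement of $T_{X/S}^\RR$. Its complexification $H_\omega^\CC$ is the $\omega$-orthogonal complement of $T_{X/S}^\CC$, where $\omega$ is extended complex-bilinearly. By definition, $\nabla_\omega^{0,1}$ induces $\dbar_f$ if and only if $\im(\nabla_\omega^{0,1})\subset \widebar{TX}$. Here $\widebar{TX}$ is the almost complex structure attached to $\dbar_f$ by Proposition \ref{prop:dbar_complex_structure}. Using the splitting \eqref{eq:barTX_SES}, we have $\widebar{TX}=\widebar{T_{X/S}}\oplus(\widebar{TX}\cap H_\omega^\CC)$ exactly when $\widebar{TX}$ is spanned by $\widebar{T_{X/S}}$ together with its horizontal part.

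\emph{Forward direction.} Suppose $\nabla_\omega^{0,1}$ induces $\dbar_f$. Then every $u\in\widebar{TX}$ decomposes as $u=u_{\mathrm v}+u_{\mathrm h}$ with $u_{\mathrm v}\in\widebar{T_{X/S}}$ and $u_{\mathrm h}\in H_\omega^\CC$. For $v\in\widebar{T_{X/S}}$ this gives
\[
\omega(u,v)=\omega_{X/S}(u_{\mathrm v},v)+0=0,
\]
because $\omega_{X/S}$ is of type $(1,1)$ on the fibers and so vanishes on two $(0,1)$ vectors.

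\emph{Converse direction.} Assume \eqref{eq:nabla_omega_pure_condition}. Take $\bar w\in f^*\widebar{TS}$ and choose any lift $u\in\widebar{TX}$ of it. Write $u=u_{\mathrm h}+u_{\mathrm v}$ using the $\nabla_\omega$ splitting, and decompose $u_{\mathrm v}=u^{1,0}+u^{0,1}$ along $T_{X/S}\oplus\widebar{T_{X/S}}$. Pairing with an arbitrary $v\in\widebar{T_{X/S}}$ kills the term $\omega(u_{\mathrm h},v)$ and the term $\omega_{X/S}(u^{0,1},v)$, so $0=\omega_{X/S}(u^{1,0},v)$ for all such $v$. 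Fiberwise nondegeneracy of the K\"ahler form then forces $u^{1,0}=0$. Hence $u_{\mathrm h}=u-u^{0,1}\in\widebar{TX}$, and $u_{\mathrm h}$ is exactly $\nabla_\omega^{0,1}(\bar w)$.

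\emph{The reformulation in terms of $\omega^{0,2}$.} For $u\in\widebar{TX}$ and $v\in\widebar{T_{X/S}}\subset\widebar{TX}$, only the $(0,2)$-component of $\omega$ pairs with $(u,v)$. So the condition reads $\iota_v\omega^{0,2}=0$.

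\emph{Local formula for $\omega_\nabla$.} The vertical projection $\mathrm{pr}^{\mathrm v}_\nabla$ is given on the coframe by the vertical forms
\[
\phi^\alpha=\D z^\alpha-\Gamma_i^\alpha\D s^i-\Gamma_{\bar j}^\alpha\D\bar s^j
\]
and their conjugates, analogously to \eqref{eq:vertical_form}. Because $\nabla$ is real, $\Gamma_i^{\bar\beta}=\overline{\Gamma_{\bar i}^{\beta}}$. Substituting into $\omega_{X/S}=\I g_{\alpha\bar\beta}\D z^\alpha\wedge\D\bar z^\beta$ gives \eqref{eq:omega_conn_loc} directly, and $\omega_\nabla$ is real since $\omega_{X/S}$ is real and the projection is real.

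To see that $\omega_\nabla$ has type $(1,1)$, use that $\nabla^{0,1}$ induces $\dbar_f$. Then $\widebar{TX}$ is spanned by $\widebar{T_{X/S}}$ and the $H_{\bar j}$, and the $H_{\bar j}$ are annihilated by every $\phi^\alpha$. Therefore each $\phi^\alpha$ annihilates $\widebar{TX}$, so it is a $(1,0)$-form, and by conjugation each $\phi^{\bar\beta}$ is a $(0,1)$-form. A wedge of a $(1,0)$-form with a $(0,1)$-form is of type $(1,1)$.

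I expect no step to be a real obstacle. The only point needing care is this type claim: the coefficients $\Gamma^\alpha_{\bar j}$ need not vanish in the adapted coordinates, so the argument must go through the annihilator characterization of $\phi^\alpha$ rather than a naive reading of the formula.
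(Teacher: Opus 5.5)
Your proposal is correct. The paper gives no proof and calls the lemma straightforward; your argument is the routine verification that remark points to: you use the $\omega$-orthogonality of $H_\omega^\CC$ and $T_{X/S}^\CC$, the $(1,1)$-type and nondegeneracy of $\omega_{X/S}$, and, for the type claim on $\omega_\nabla$, the fact that the forms $\phi^\alpha$ annihilate $\widebar{TX}$. In the $\omega^{0,2}$ reformulation you implicitly use that $\iota_v\omega^{0,2}$ vanishes on $TX$ automatically, so testing it on $\widebar{TX}$ suffices; this follows immediately from the type decomposition.
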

\begin{remark}
	Note that $\omega^{2,0}=\widebar{\omega^{0,2}}$ since $\omega$ is real. In particular, \eqref{eq:nabla_omega_pure_condition} holds if $\omega$ is a real $(1,1)$-form on $X$. On the other hand, if $\omega$ satisfies  \eqref{eq:nabla_omega_pure_condition}, then the horizontal distribution $H_{\omega}$ defined by $\omega$ is identical to the horizontal distribution $H_{\omega^{1,1}}$ defined by its $(1,1)$-component $\omega^{1,1}$.
\end{remark}

\begin{lemma}[{\cite[Lem.~6.3.5]{mcduff17introduction}}]\label{lem:verti_closed}
	Let $\omega$ be a real $2$-form on $X$ which restricts to a fiberwise symplectic form $\omega_{X/S}$. Then $\nabla_\omega^\RR$ is symplectic, i.e., preserves $\omega_{X/S}$, if and only if $\omega$ is \emph{vertically closed} in the sense that $\D \omega(v_1,v_2,\cdot)=0$ for all vertical tangent vectors $v_1,v_2\in T_{X/S}^\RR$.
\end{lemma}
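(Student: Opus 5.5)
The plan is to evaluate the Lie derivative $\mathcal{L}_{H_v}\omega$ on vertical vectors with Cartan's formula. Let $v$ be a real vector field on an open subset of $S$, and let $H_v$ be its horizontal lift with respect to $\nabla^\RR_\omega$. By the definition of $H_\omega$, the form $\iota_{H_v}\omega$ vanishes on $T^\RR_{X/S}$.

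\textbf{Step 1: what ``preserving $\omega_{X/S}$'' means.} The local flow $\varphi_t$ of $H_v$ covers the flow of $v$, so it maps fibers to fibers. Pulling back a fiberwise form therefore commutes with restricting to fibers. Hence $\mathcal{L}_{H_v}\omega_{X/S}$ is the restriction of $\mathcal{L}_{H_v}\omega$ to $T^\RR_{X/S}\times T^\RR_{X/S}$, and $\nabla^\RR_\omega$ is symplectic if and only if $(\mathcal{L}_{H_v}\omega)(v_1,v_2)=0$ for all vertical $v_1,v_2$ and all $v$. I will check this identification in a local trivialization, which is routine.

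\textbf{Step 2: Cartan's formula.} Write $\mathcal{L}_{H_v}\omega=\iota_{H_v}\D\omega+\D\iota_{H_v}\omega$ and evaluate on vertical vector fields $v_1,v_2$:
\[
\D(\iota_{H_v}\omega)(v_1,v_2)=v_1\bigl(\omega(H_v,v_2)\bigr)-v_2\bigl(\omega(H_v,v_1)\bigr)-\omega(H_v,[v_1,v_2]).
\]
All three terms vanish. The first two do because $\omega(H_v,\cdot)$ is zero on vertical vectors. The third does because $[v_1,v_2]$ is again vertical, as $T^\RR_{X/S}$ is integrable. Consequently
\[
(\mathcal{L}_{H_v}\omega)(v_1,v_2)=\D\omega(H_v,v_1,v_2).
\]

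\textbf{Step 3: conclusion.} The lifts $H_v$ span the horizontal part of $TX^\RR=T^\RR_{X/S}\oplus H_\omega$, so it remains to treat a vertical third argument. Since $d$ commutes with restriction to fibers, $\D\omega(v_1,v_2,v_3)=\D_{X_s}\omega_s(v_1,v_2,v_3)$ for vertical $v_1,v_2,v_3$. This vanishes because each $\omega_s$ is symplectic, hence closed. Therefore $\D\omega(v_1,v_2,\cdot)=0$ for all vertical $v_1,v_2$ if and only if $\D\omega(H_v,v_1,v_2)=0$ for all $v$, and by Steps 1 and 2 this is exactly the condition that $\nabla^\RR_\omega$ preserves $\omega_{X/S}$.

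The only point needing care is Step 1, namely identifying the Lie derivative of the fiberwise form with the vertical restriction of $\mathcal{L}_{H_v}\omega$. This follows from the fact that the flow of $H_v$ is fiber-preserving.
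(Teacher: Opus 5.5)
Your proof is correct and follows the standard Cartan-formula argument behind the cited \cite[Lem.~6.3.5]{mcduff17introduction}, which the paper quotes without proof. On vertical pairs, $\mathcal{L}_{H_v}\omega$ reduces to $\iota_{H_v}\D\omega$ because $\iota_{H_v}\omega$ annihilates $T^\RR_{X/S}$, and $\D\omega$ vanishes on three vertical vectors because each $\omega_s$ is closed; your Step 1 also holds, since the flow of $H_v$ preserves the vertical bundle, so restriction to $T^\RR_{X/S}$ commutes with pullback.
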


\begin{lemma}\label{lem:space_Kah_conn}
	Whenever nonempty, the space of Kähler connections on a fiberwise Kähler complex fiber bundle $(f:X\to S,\omega_{X/S})$ associated to $\omega_{X/S}$ and a $\dbar$-operator $\dbar_f$ satisfying the lifting condition is an affine space modeled on $A^{1,0}(S,\mathfrak{a}_{X/S})$, which consists of $(1,0)$-forms on $S$ taking values in vertical parallel $(1,0)$-vector fields.
\end{lemma}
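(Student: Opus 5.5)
The plan is to take the difference of two Kähler connections, show that it is a tensor with values in fiberwise vector fields, and then identify exactly which tensors occur. Fix a Kähler connection $\nabla_0^{1,0}$, assuming one exists. Let $\nabla^{1,0}$ be any other Kähler connection associated to $(\dbar_f,\omega_{X/S})$.

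Both connections are pure with respect to $\dbar_f$, so their conjugates induce the same $\dbar$-operator. Both are also relatively holomorphic, by Definition \ref{def:Kah_conn} together with Lemma \ref{lem:hol_parallel_transport}. Hence Lemma \ref{lem:RH_space} applies, and the difference
\[ G:=\nabla^{1,0}-\nabla_0^{1,0}\in A^{1,0}(S,f_*T_{X/S}) \]
is a global $(1,0)$-form on $S$ with values in fiberwise holomorphic vertical $(1,0)$-vector fields. Locally, $G=({\Gamma}_i^\alpha-{\Gamma_0}_i^\alpha)\D s^i\otimes\partial_\alpha$.

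It remains to find which such $G$ preserve $\omega_{X/S}$. For a real vector field $v$ on $S$, the real horizontal lifts satisfy
\[ H_v-H^0_v=G(v^{1,0})+\overline{G(v^{1,0})}=2\Re G(v^{1,0}), \]
which is a real vertical field. Since $\mathcal{L}_{H^0_v}\omega_{X/S}=0$, the condition $\mathcal{L}_{H_v}\omega_{X/S}=0$ reduces to $\mathcal{L}_{2\Re G(v^{1,0})}\omega_{X/S}=0$ fiberwise. The field $G(v^{1,0})$ is fiberwise holomorphic, so its real part preserves $J_{X/S}$. Therefore the Lie derivative condition says exactly that $2\Re G(v^{1,0})$ is a fiberwise Killing field, equivalently parallel for the fiberwise Kähler structure in the sense of $\mathfrak{a}_{X/S}$. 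I will read ``vertical parallel $(1,0)$-vector fields'' as the $(1,0)$-parts of real holomorphic Killing fields. This reading is the point to reconcile with the definition of $\mathfrak{a}_{X/S}$, which the paper gives just before or after the lemma.

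The condition must hold for both $v$ and $Jv$, with $(Jv)^{1,0}=\I v^{1,0}$. This yields that both $\Re G(v^{1,0})$ and $\Re(\I G(v^{1,0}))=-\Im G(v^{1,0})$ are Killing. So for each $u\in TS$ the vector $G(u)$ lies in the complex span of $(1,0)$-parts of real fields in $\mathfrak{a}_{X/S}$, which I take to be $\mathfrak{a}_{X/S}$ itself. Conversely, if $G\in A^{1,0}(S,\mathfrak{a}_{X/S})$, then $\nabla_0^{1,0}+G$ is still pure, because only the $\partial_\alpha$-coefficients change. It is still relatively holomorphic, because the added fields are fiberwise holomorphic. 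By the Lie derivative computation above it preserves $\omega_{X/S}$, so it is a Kähler connection. Hence the space is the affine space $\nabla_0^{1,0}+A^{1,0}(S,\mathfrak{a}_{X/S})$.

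The main obstacle is the middle step: checking that fiberwise holomorphic plus $\omega_{X/S}$-preserving is equivalent to lying in $\mathfrak{a}_{X/S}$ with complex coefficients. The delicate case is when $\mathfrak{a}_{X/S}\cap\I\,\mathfrak{a}_{X/S}\neq 0$ at the level of real Killing fields, where one must make sure the complex span is what the statement means. I would first verify this pointwise on a single fiber using the standard fact that $(\mathcal{L}_{\Re\xi}\omega)=\D\iota_{\Re\xi}\omega$ for a holomorphic field $\xi$. Then I would use smooth dependence on $s$ to assemble a global section.
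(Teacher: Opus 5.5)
Your first step matches the paper. Purity, relative holomorphicity and Lemma~\ref{lem:RH_space} give $G=\nabla^{1,0}-\nabla_0^{1,0}\in A^{1,0}(S,f_*T_{X/S})$. Preservation of $\omega_{X/S}$ then reduces to requiring that $2\Re G(v^{1,0})$ be a fiberwise holomorphic Killing field for every real $v$.

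The gap is in the next step, and it has three parts.
\begin{itemize}
\item You assert that a holomorphic Killing field is ``equivalently parallel''. This is false: rotations of $(\mathbb{P}^1,\omega_{\mathrm{FS}})$ are holomorphic Killing fields, while $S^2$ carries no nonzero parallel field.
\item You then replace $\mathfrak{a}_{X/S}$ by the complex span of $(1,0)$-parts of holomorphic Killing fields, i.e.\ fiberwise $\mathfrak{aut}+\I\,\mathfrak{aut}$. This is the wrong model space. Write $\xi=G(v^{1,0})=w^{1,0}$ with $w=2\Re\xi$. Your two conditions (for $v$ and for $J_Sv$) say exactly that $w$ and $J_{X/S}w$ are both Killing, i.e.\ $\xi\in\mathfrak{aut}\cap\I\,\mathfrak{aut}$. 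That is an intersection, not a span.
\item Consequently your converse step fails. On $\CC_s\times\mathbb{P}^1$ the span is $\mathfrak{sl}_2(\CC)$ while the intersection is $0$. Adding $G=\D s\otimes\xi_0$ with $0\neq\xi_0\in\mathfrak{su}(2)$ to the product K\"ahler connection destroys $\omega_{X/S}$-preservation in the direction $\partial_y$. Indeed $2\Re(\I\xi_0)=Jw_0$, with $w_0=2\Re\xi_0$, is a gradient field, not a Killing field.
\end{itemize}
The ``delicate case'' you flag is therefore the generic one, and the correction goes the other way: the space shrinks to the intersection rather than growing to the span.

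The missing ingredient is the fact the paper combines with Lemma~\ref{lem:RH_space}, namely \cite[Lem.~III.1]{kobayashi72transformation}: on a K\"ahler manifold, if $w$ and $Jw$ are both infinitesimal automorphisms, then $w$ is parallel. The proof is short. Set $A:=\nabla w$. Holomorphicity of $w$, together with $\nabla J=0$ and torsion-freeness, gives $AJ=JA$; the Killing condition gives that $A$ is skew-adjoint. Since $\nabla(Jw)=JA$, the Killing condition on $Jw$ says $JA$ is skew-adjoint. But $(JA)^*=A^*J^*=(-A)(-J)=AJ=JA$ is also self-adjoint, so $JA=0$ and hence $A=0$.

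Conversely, a parallel $w$ is holomorphic and Killing, $Jw$ is again parallel, and $\I w^{1,0}=(Jw)^{1,0}$. So the $(1,0)$-parts of parallel fields form a complex subspace, and on it your Lie-derivative computation does give the converse. With this lemma inserted, and $\mathfrak{a}_{X/S}$ read literally as parallel (not merely Killing) fields, your argument becomes the paper's proof.
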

\begin{proof}
This follows from Lemma \ref{lem:RH_space} and the fact that if $u$ and $Ju$ are infinitesimal automorphisms of a Kähler manifold, then $u$ is parallel \cite[Lem.~ III.1]{kobayashi72transformation}.
\end{proof}

Let $Y$ be a complex manifold. Given a holomorphic vector field $v\in H^0(Y,TY)$, let $v_\RR:=v+\bar{v}$ be its real part. Let $\omega_Y$ be a Kähler metric. We define
\begin{align*}
	\mathfrak{aut}(Y,\omega_Y)&:=\{ v\in H^0(Y,TY)\,\mid\, v_\RR \text{ is Killing}  \},\\
	\mathfrak{aut}(Y,\omega_Y)_\CC&:=\{ v+\I w \,\mid\, v,w\in \mathfrak{aut}(Y,\omega_Y)\}=\mathfrak{aut}(Y,\omega_Y)+\I\,\mathfrak{aut}(Y,\omega_Y).
\end{align*}
\begin{proposition}\label{prop: kahler connection}
Let $S$ be connected and let $f: X \to S$ be a complex fiber bundle with a fiberwise Kähler metric $\omega_{X/S}$ and a $\dbar$-operator $\dbar_f$ satisfying the lifting condition. If it admits a unitary atlas $\{(U_a,\Phi_a)\}$ such that on each $U_a$, $\dbar_f-\dbar_a\in A^{0,1}(U_a,\mathfrak{aut}(Y,\omega_Y)_\CC)$, where $\dbar_a$ is the trivial $\dbar$-operator on the unitary trivialization $(f^{-1}(U_a),\omega_{X/S})\cong U_a\times (Y,\omega_Y)$, then there exists a Kähler connection $\nabla^{1,0}$ on $X$ associated to $\dbar_f$ and $\omega_{X/S}$. The converse holds when the Kähler connection $\nabla^{1,0}$ is complete.
\end{proposition}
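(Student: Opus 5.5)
The plan is to build the Kähler connection locally from the unitary trivializations and glue the local pieces with a partition of unity. The gluing works because the conditions involved (purity, preservation of $\omega_{X/S}$, relative holomorphicity) are affine in the connection coefficients.

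\textbf{Local construction.} On $f^{-1}(U_a)\cong U_a\times(Y,\omega_Y)$ write $\dbar_f=\dbar_a+\beta_a$ with $\beta_a\in A^{0,1}(U_a,\mathfrak{aut}(Y,\omega_Y)_\CC)$. Each $\mathfrak{aut}(Y,\omega_Y)_\CC$-valued form splits uniquely as $\beta_a=\beta_a'+\I\beta_a''$ with $\beta_a',\beta_a''$ valued in $\mathfrak{aut}(Y,\omega_Y)$. Uniqueness needs $\mathfrak{aut}\cap\I\,\mathfrak{aut}=\{0\}$, and this holds because $v$ and $\I v$ both Killing forces $v$ to be parallel. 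If uniqueness fails, I will instead fix a real complement. Mimicking the unitary Chern connection, set
\[
\nabla^{1,0}_a(\partial_i)=\partial_i+\Gamma_{a,i}+\overline{\beta_{a,\bar i}},
\]
where $\Gamma_{a,i}$ is the holomorphic vector field chosen so that the real horizontal lift $H_v=\nabla^{1,0}_a(v^{1,0})+\overline{\nabla^{1,0}_a(v^{1,0})}$ has vertical part equal to the real part of an element of $\mathfrak{aut}(Y,\omega_Y)$. Concretely, take $\Gamma_{a,i}=-\overline{\beta_{a,\bar i}}^{\,*}$, the adjoint for the real structure in which Killing fields are anti-Hermitian. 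Then:
\begin{itemize}
\item By Lemma \ref{lem:new10_conn}, the coefficient $\overline{\beta_{a,\bar i}}$ in the $\partial_{\bar\beta}$ directions makes $\nabla_a^{1,0}$ pure with respect to $\dbar_f$.
\item $\Gamma_{a,i}$ is holomorphic along the fibers, so $\nabla_a^{1,0}$ is relatively holomorphic.
\item The vertical part of each real lift is the real part of a Killing holomorphic field, hence its flow preserves $\omega_Y$. So $\mathcal L_{H_v}\omega_{X/S}=0$.
\end{itemize}
I expect the main obstacle to be this bookkeeping: checking that the combination $\Gamma_{a,i}+\overline{\beta_{a,\bar i}}$ yields a real lift whose vertical component is exactly $(\text{Killing})_\RR$. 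I would first verify it in the vector bundle case of Example \ref{ex:vb_conn_dbar}, where it reduces to the classical Chern connection $A=-B^*$.

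\textbf{Gluing.} The conditions are affine:
\begin{itemize}
\item Purity means the $\partial_{\bar\beta}$ coefficients are fixed by $\dbar_f$.
\item Relative holomorphicity is the condition $\partial_{\bar\beta}\Gamma=0$.
\item Preserving $\omega_{X/S}$ is linear in the vertical part of $H_v$, and holds for convex combinations when each piece's vertical part differs from a fixed one by a Killing-type field.
\end{itemize}
So $\nabla^{1,0}:=\sum_a f^*(\chi_a)\nabla^{1,0}_a$, for a partition of unity $\{\chi_a\}$ on $S$, is again pure, relatively holomorphic and symplectic. This follows as in the proof of Proposition \ref{prop:exist_fiberwise_holo_connection}, with the transition maps in $\Aut(Y,\omega_Y)$ preserving these properties. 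The Lie derivative condition is local in $S$, and differences of the local connections are $(1,0)$-forms valued in fields whose real parts preserve $\omega_{X/S}$. Finally, $\dbar_f$ satisfies the lifting condition, so by Definition \ref{def:Kah_conn} and Lemma \ref{lem:hol_parallel_transport} the result is a Kähler connection.

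\textbf{Converse.} Assume $\nabla^{1,0}$ is a complete Kähler connection. Use radial parallel transport on coordinate balls to build trivializations, as in Lemma \ref{lem:hol_triv_via_conn}. By Lemma \ref{lem:hol_parallel_transport}, parallel transport is fiberwise biholomorphic because $\nabla^{1,0}$ is relatively holomorphic and $\dbar_f$ satisfies the lifting condition. It is also $\omega_{X/S}$-isometric because $\nabla^{1,0}$ is symplectic. Hence these trivializations form a unitary atlas.

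In such a chart, the trivial $\dbar_a$ comes from the radial-transport connection. The difference $\dbar_f-\dbar_a$ is the $(1,0)$-vertical part of the $\bar s$-derivative of the lifts. The real lifts of $\nabla^{1,0}$, expressed in the chart, have vertical parts that are real holomorphic Killing fields; they preserve both $J$ and $\omega$. Taking the $(0,1)$-part in $s$ of these Killing fields shows that $\dbar_f-\dbar_a$ takes values in $\mathfrak{aut}(Y,\omega_Y)+\I\,\mathfrak{aut}(Y,\omega_Y)$.
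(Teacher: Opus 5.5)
Your proposal follows the paper's proof. Locally you take $(\partial-\partial_a)(\partial_i)=v-\I w$ where $\dbar_f-\dbar_a=v+\I w$ with $v,w$ valued in $\mathfrak{aut}(Y,\omega_Y)$, glue with a partition of unity, and for the converse build a unitary atlas by radial parallel transport and recover $\dbar_f-\dbar_a$ as the $(0,1)$-part in $s$ of the Killing-valued vertical parts of the real lifts. One correction: $\mathfrak{aut}(Y,\omega_Y)\cap\I\,\mathfrak{aut}(Y,\omega_Y)$ consists of the parallel holomorphic fields and need not be zero (e.g. translations when $Y$ is a flat torus or $\CC^n$), so the splitting is generally non-unique; the paper allows exactly this ("possibly in a non-unique way"), any smooth splitting works, and your fallback suffices.
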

\begin{proof}
	By Lemma \ref{lem:space_Kah_conn}, it suffices to work locally on a unitary trivialization. By assumption we may write
	\[\dbar_f-\dbar_a=v+\I w,\]
	where $v,w\in A^{0,1}(U_a,\mathfrak{aut}(Y,\omega_Y))$ (possibly in a non-unique way). We define an almost connection $\partial$ by
	\begin{equation}\label{eq:kah_al_conn_loc}
		(\partial-\partial_a)(\partial_i)=v(\partial_{\bar{i}})-\I w(\partial_{\bar{i}}),
	\end{equation}
where $\partial_a$ is the trivial almost connection determined by the unitary trivialization. Then $\hat{D}=\partial+\dbar_f$ corresponds to a Kähler connection on $U_a$. Using a partition of unity subordinate to $\{U_a\}$, we obtain a global Kähler connection $\nabla^{1,0}$.

Conversely, let $\nabla^{1,0}$ be a complete Kähler connection, which is equivalent to $\partial+\dbar_f$. Using the radial parallel transports as in Lemma \ref{lem:hol_triv_via_conn}, we obtain a unitary atlas. In each unitary chart $U_a$, we have $(\partial-\partial_a)(\partial_i)+(\dbar_f-\dbar_a)(\partial_{\bar{i}})\in \mathfrak{aut}(Y,\omega_Y)$ and $(\partial-\partial_a)(\partial_i)-(\dbar_f-\dbar_a)(\partial_{\bar{i}})\in\I\, \mathfrak{aut}(Y,\omega_Y)$. Thus, $\dbar_f-\dbar_a\in A^{0,1}(U_a,\mathfrak{aut}(Y,\omega_Y)_\CC)$.
\end{proof}

\begin{definition}\label{def:k_compatible_unitary_atlas}
Let $\mathfrak{k}\subset\mathfrak{aut}(Y,\omega_Y)$ be a real subspace satisfying $\mathfrak{k}\cap\I\mathfrak{k}=\{0\}$, and write $\mathfrak{k}^\CC=\mathfrak{k}\oplus\I\mathfrak{k}$. For a unitary atlas $\mathcal U=\{(U_a,\Phi_a)\}$, write $g_{ab}(s)=\Phi_{a,s}\comp\Phi_{b,s}^{-1}$. For $s\in U_{ab}$ and $v\in T_sS^\RR$, define
\[
\mathsf{g}_{ab,s}(v):=\Bigl(\frac{\D}{\D t}\Big|_{t=0}g_{ab}(s(t))\comp g_{ab}(s)^{-1}\Bigr)^{1,0},
\]
where $s(0)=s$ and $\dot s(0)=v$. Then $\mathsf{g}_{ab,s}(v)$ is a holomorphic vector field on $Y$. We call $\mathcal U$ \emph{$\mathfrak{k}$-compatible} if $(g_{ab}(s))_*\mathfrak{k}=\mathfrak{k}$ and $\mathsf{g}_{ab,s}(v)\in\mathfrak{k}$ for every $U_{ab}$, every $s\in U_{ab}$, and every $v\in T_sS^\RR$. Two $\mathfrak{k}$-compatible unitary atlases are called \emph{equivalent} if all transition maps between their charts, restricted to the corresponding overlaps, satisfy these same conditions.
\end{definition}

\begin{proposition}\label{prop:Chern_conn}
Given the setup of Proposition \ref{prop: kahler connection}, suppose further that $\{(U_a,\Phi_a)\}$ is a $\mathfrak{k}$-compatible atlas and that $B_a:=\dbar_f-\dbar_a\in A^{0,1}(U_a,\mathfrak{k}^\CC)$. Then this atlas determines a Kähler connection, called the \emph{Chern connection}, which is independent of the choice of equivalent $\mathfrak{k}$-compatible unitary atlases.
\end{proposition}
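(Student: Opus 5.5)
The plan is to build the connection chart by chart from the unique splitting of $B_a$, and then show that the local pieces agree on overlaps without any partition of unity. Since $\mathfrak{k}\cap\I\mathfrak{k}=\{0\}$, every element of $\mathfrak{k}^\CC=\mathfrak{k}\oplus\I\mathfrak{k}$ decomposes uniquely. So on each $U_a$ I write $B_a=v_a+\I w_a$ with $v_a,w_a\in A^{0,1}(U_a,\mathfrak{k})$ uniquely determined. Following \eqref{eq:kah_al_conn_loc}, I define the local almost connection
\[
(\partial^{(a)}-\partial_a)(\partial_i):=v_a(\partial_{\bar i})-\I w_a(\partial_{\bar i}).
\]
Because $\mathfrak{k}\subset\mathfrak{aut}(Y,\omega_Y)$, the argument in the proof of Proposition \ref{prop: kahler connection} applies verbatim. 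Thus $\hat D_a=\partial^{(a)}+\dbar_f$ defines, via Lemma \ref{lem:new10_conn}, a Kähler connection $\nabla^{1,0}_a$ on $f^{-1}(U_a)$. The real horizontal lift $H_v$ then differs from the trivial one by $2\Re\bigl(v_a(v^{0,1})\bigr)$, which is real Killing and holomorphic. The uniqueness of the splitting is what makes the local choice canonical, in contrast to Proposition \ref{prop: kahler connection}.

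The main step is to show $\nabla^{1,0}_a=\nabla^{1,0}_b$ on $U_{ab}$, so that these glue to a global Kähler connection. In the $a$-chart, the trivial structures of the $b$-chart are transported by $g_{ab}(s)$. First, $\dbar_b$ expressed in the $a$-trivialization differs from $\dbar_a$ by the $(0,1)$-part of the logarithmic derivative $\D g_{ab}\,g_{ab}^{-1}$. Second, $\partial_b$ differs from $\partial_a$ by the $(1,0)$-part of the same logarithmic derivative. Together these give the real vector field $\tilde{\mathsf g}_{ab}(v)$ whose $(1,0)$-part is $\mathsf{g}_{ab,s}(v)$. I would make this precise with the transformation laws \eqref{TransLaw10_eq} and \eqref{TransLaw01_eq} applied to $z_a=g_{ab}(s)(z_b)$, being careful with signs.

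By the condition $(g_{ab})_*\mathfrak{k}=\mathfrak{k}$, the pushforward $(g_{ab})_*B_b$ lies in $\mathfrak{k}^\CC$, and its unique decomposition is $(g_{ab})_*v_b+\I(g_{ab})_*w_b$. By the condition $\mathsf g_{ab,s}(v)\in\mathfrak{k}$, the correction term is the $(0,1)$-form $\bar v\mapsto \mathsf g_{ab}(\bar v)$, which has values in $\mathfrak{k}$. So it contributes only to the $\mathfrak{k}$-component, with no $\I\mathfrak{k}$-component. Hence
\[
v_a=(g_{ab})_*v_b+\mathsf g_{ab}^{0,1},\qquad w_a=(g_{ab})_*w_b .
\]
Substituting these into the formula for $\partial^{(a)}$, the correction appears in $\partial^{(a)}-\partial_a$ exactly as the $(1,0)$-part of the logarithmic derivative that relates $\partial_b$ to $\partial_a$. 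This is because $\mathsf g_{ab}(v)$ real-Killing means its real field is $\mathsf g+\bar{\mathsf g}$, so the $(1,0)$ and $(0,1)$ contributions coincide. The $w$-terms are transported consistently. It follows that $\partial^{(a)}=\partial^{(b)}$. I expect this bookkeeping to be the main obstacle: getting the signs and conjugations of the logarithmic derivative right, and checking that the real-ness of $\tilde{\mathsf g}_{ab}$ ensures the same $\mathfrak{k}$-valued field enters both the $\partial$- and $\dbar$-sides.

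Independence from the choice within the equivalence class follows by the same computation. Two equivalent $\mathfrak{k}$-compatible atlases can be merged into a single $\mathfrak{k}$-compatible atlas, since by definition the mixed transition maps satisfy the same two conditions. The overlap argument above then shows that the charts of both atlases produce the same local connections. Purity with respect to $\dbar_f$ and preservation of $\omega_{X/S}$ are local properties, so they hold by construction, as in Proposition \ref{prop: kahler connection}.
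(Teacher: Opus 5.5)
Your local definition is the paper's, since $v_a(\partial_{\bar i})-\I w_a(\partial_{\bar i})=\phi_{\mathfrak k}\bigl(B_a(\partial_{\bar i})\bigr)$ gives $\partial^{(a)}=\partial_a+\phi B_a$. Gluing via transformation laws and merging equivalent atlases is also the paper's plan. The gap is in the gluing step, which is the whole content of the proposition, and the specific claims you make there are false.

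The correction term in $B_a=(g_{ab})_*B_b+\mathsf g^{0,1}_{ab}$ is the complex-linear extension of $\mathsf g_{ab}$ evaluated on $(0,1)$-vectors:
\[
\mathsf g^{0,1}_{ab}(\partial_{\bar j})=\tfrac12\bigl(\mathsf g_{ab}(\partial_{x^j})+\I\,\mathsf g_{ab}(\partial_{y^j})\bigr).
\]
This lies in $\mathfrak k^\CC$, not in $\mathfrak k$; the hypothesis $\mathsf g_{ab,s}(v)\in\mathfrak k$ concerns only real $v$. So its $\I\mathfrak k$-component $\tfrac{\I}{2}\mathsf g_{ab}(\partial_{y^j})$ is generally nonzero, and $w_a\neq(g_{ab})_*w_b$. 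Likewise the $(1,0)$- and $(0,1)$-corrections do not ``coincide'', since $\mathsf g^{1,0}_{ab}(\partial_j)=\tfrac12\bigl(\mathsf g_{ab}(\partial_{x^j})-\I\,\mathsf g_{ab}(\partial_{y^j})\bigr)$. Substituting your laws literally gives
\[
(\partial^{(a)}-\partial_a)(\partial_i)=(g_{ab})_*(\partial^{(b)}-\partial_b)(\partial_i)+\mathsf g^{0,1}_{ab}(\partial_{\bar i}),
\]
which is not the almost-connection law; that law requires $\mathsf g^{1,0}_{ab}(\partial_i)$. So the argument does not close.

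A concrete check: take $Y=\CC_\zeta$, $\mathfrak k=\RR\partial_\zeta$, $S=\CC_s$ with $s=x+\I y$, and $g_{ab}(s)(\zeta)=\zeta+x+y$. This atlas is $\mathfrak k$-compatible, with $\mathsf g_{ab}(\partial_x)=\mathsf g_{ab}(\partial_y)=\partial_\zeta$. For the product structure with $B_a=0$ one finds $B_b=-\tfrac12(1+\I)\partial_\zeta$, so $w_b=-\tfrac12\partial_\zeta$ while $w_a=0$.

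The missing idea is that the two corrections are related by the conjugation $\phi_{\mathfrak k}$, not equal. Because $\mathsf g_{ab}$ is $\mathfrak k$-valued on real vectors, $\mathsf g^{1,0}_{ab}=\phi(\mathsf g^{0,1}_{ab})$. Moreover $\phi_{\mathfrak k}$ commutes with $(g_{ab})_*$, since $(g_{ab})_*$ is complex-linear on holomorphic vector fields and preserves $\mathfrak k$. Applying $\phi$ to $B_a=(g_{ab})_*B_b+\mathsf g^{0,1}_{ab}$ therefore yields $A_a=(g_{ab})_*A_b+\mathsf g^{1,0}_{ab}$ for $A_a:=\phi B_a$. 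This is exactly the transformation law for almost-connection coefficients, i.e.\ the $\partial_b$-versus-$\partial_a$ correction you identified. It is the paper's argument, and it never splits the correction into $\mathfrak k$- and $\I\mathfrak k$-parts.

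The same confusion appears in your description of the horizontal lift. The vertical $(1,0)$-part of the real lift of a real $v$ is
\[
A_a(v^{1,0})+B_a(v^{0,1})=\phi_{\mathfrak k}\bigl(B_a(v^{0,1})\bigr)+B_a(v^{0,1}),
\]
which is twice the $\mathfrak k$-component of $B_a(v^{0,1})$. For $v=\partial_{y^j}$ this equals $2w_a(\partial_{\bar j})$, not $2\Re\bigl(v_a(v^{0,1})\bigr)$. This identity is what shows the glued connection is Kähler.
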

\begin{proof}
Define $\phi_{\mathfrak{k}}:\mathfrak{k}^\CC\to\mathfrak{k}^\CC$ by $\phi_{\mathfrak{k}}(p+\I q)=p-\I q$ for $p,q\in\mathfrak{k}$. For $B\in A^{0,1}(S,\mathfrak{k}^\CC)$, define $\phi B\in A^{1,0}(S,\mathfrak{k}^\CC)$ by $(\phi B)(u)=\phi_{\mathfrak{k}}(B(\bar u))$ for $u\in TS$. Set $A_a:=\phi B_a$ and define the local almost connection by $\partial=\partial_a+A_a$.

Extend $\mathsf{g}_{ab}$ complex-linearly in the base tangent variable and decompose it into types. The condition $\mathsf{g}_{ab,s}(v)\in\mathfrak{k}$ for real $v$ gives $\mathsf{g}_{ab}^{1,0}=\phi(\mathsf{g}_{ab}^{0,1})$. Moreover, $(g_{ab})_*\mathfrak{k}=\mathfrak{k}$ implies that push-forward commutes with $\phi_{\mathfrak{k}}$. The transformation law for $\dbar_f$ is $B_a=(g_{ab})_*B_b+\mathsf{g}_{ab}^{0,1}$. Applying $\phi$ therefore yields $A_a=(g_{ab})_*A_b+\mathsf{g}_{ab}^{1,0}$. This is precisely the transformation law for the coefficients of an almost connection, so the local almost connections glue. Together with $\dbar_f$, they determine a global pure $(1,0)$-connection $\nabla^{1,0}$ and its underlying real connection. For $v\in T_sS^\RR$ and $s\in U_a$, the vertical $(1,0)$-component of its horizontal lift is
\[
A_a(v^{1,0})+B_a(v^{0,1})=\phi_{\mathfrak{k}}\bigl(B_a(v^{0,1})\bigr)+B_a(v^{0,1})\in\mathfrak{k}.
\]
Therefore, $\nabla^{1,0}$ is a Kähler connection. The same transformation calculation applies to transitions between equivalent atlases, proving the independence.
\end{proof}

\begin{example}\label{ex:Chern_conn_atlas_dependence}
Let $S=\CC_s$ and let $f:X=S\times\CC_z\to S$ carry the standard product holomorphic structure, its canonical $\dbar$-operator $\dbar_f$, and the fiberwise Kähler metric $\omega_s=\I\,\D z\wedge\D\bar z$. Take the model fiber $(Y,\omega_Y)=(\CC_\zeta,\I\,\D\zeta\wedge\D\bar\zeta)$ and the real subspace $\mathfrak{k}=\RR\partial_\zeta\subset\mathfrak{aut}(Y,\omega_Y)$, which satisfies $\mathfrak{k}\cap\I\mathfrak{k}=\{0\}$. Consider the two global trivializations $\Phi_z(s,z)=(s,z)$, $\Phi_w(s,z)=(s,z+s)$, and the corresponding single-chart atlases $\mathcal U_z=\{(S,\Phi_z)\}$ and $\mathcal U_w=\{(S,\Phi_w)\}$, which are $\mathfrak{k}$-compatible. Moreover, the product $\dbar$-operators $\dbar_z$ and $\dbar_w$ determined by these trivializations both coincide with $\dbar_f$, so $\dbar_f-\dbar_z=\dbar_f-\dbar_w=0$. However, these atlases are not equivalent in the sense of Definition \ref{def:k_compatible_unitary_atlas}. Indeed, their transition map is $g_{wz}(s)(\zeta)=\zeta+s$. Although $(g_{wz}(s))_*\mathfrak{k}=\mathfrak{k}$, writing $s=x+\I y$ gives $\mathsf{g}_{wz}(\partial_x)=\partial_\zeta$ and $\mathsf{g}_{wz}(\partial_y)=\I\partial_\zeta\notin\mathfrak{k}$. Proposition \ref{prop:Chern_conn} gives the product connection in each respective trivialization. They are distinct K\"ahler connections.
\end{example}

\begin{proposition}
    Let $f:X\to S$ be a complex fiber bundle with a fiberwise Kähler metric modeled on $(Y,\omega_Y)$ and a $\dbar$-operator $\dbar_f$ satisfying the lifting condition. If it admits a Kähler connection $\nabla^{1,0}$, then in any local unitary trivialization there exist $P_{ij},Q_{ij}\in\mathfrak{aut}(Y,\omega_Y)$ such that the curvature of the induced Kähler almost connection $\partial$ satisfies
\[F_\partial^{2,0}(\partial_i,\partial_j)=P_{ij}-\I Q_{ij},\qquad F_{\dbar_f}^{0,2}(\partial_{\bar{i}},\partial_{\bar{j}})=P_{ij}+\I Q_{ij}.\]
If, moreover, for every $s\in S$ the real subspace $\mathfrak c_s:=\operatorname{span}_\RR\{P_{ij}(s),Q_{ij}(s):i<j\}\subset H^0(Y,TY)$ satisfies $\mathfrak c_s\cap\I\mathfrak c_s=\{0\}$, then $F_{\partial}^{2,0}=0$ if and only if $F_{\dbar_f}^{0,2}=0$. In particular, if $\nabla^{1,0}$ is the Chern connection determined by a $\mathfrak{k}$-compatible unitary atlas, and $\mathfrak k_{(2)}\cap i\mathfrak k_{(2)}=\{0\}$, where $\mathfrak k_{(2)}:=\mathfrak k+\operatorname{span}_{\mathbb R}\{[u,v]:u,v\in\mathfrak k\}$, then $F_{\partial}^{2,0}=0$ if and only if $F_{\dbar_f}^{0,2}=0$.
\end{proposition}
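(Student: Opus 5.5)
Work in a local unitary trivialization $f^{-1}(U)\cong U\times(Y,\omega_Y)$ with trivial operators $\partial_a,\dbar_a$. There the Kähler connection is described by vertical $(1,0)$-coefficients $A_i:=(\partial-\partial_a)(\partial_i)$ and $B_{\bar j}:=(\dbar_f-\dbar_a)(\partial_{\bar j})$. Both are fiberwise holomorphic vector fields: for $A_i$ because $\nabla^{1,0}$ is relatively holomorphic, and for $B_{\bar j}$ by the lifting condition (Lemma \ref{lem:LC_coordinates}). The Kähler condition says the real horizontal lift of any real $v\in T_sS^\RR$ preserves $\omega_Y$ and $J_Y$. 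Its vertical $(1,0)$-part is $A(v^{1,0})+B(v^{0,1})$, so the holomorphic vector field $A(v^{1,0})+B(v^{0,1})$ lies in $\mathfrak{aut}(Y,\omega_Y)$ for every real $v$. Taking $v=\partial_{x^i}$ and $v=\partial_{y^i}$ gives
\[
A_i+B_{\bar i}\in\mathfrak{aut}(Y,\omega_Y),\qquad \I(A_i-B_{\bar i})\in\mathfrak{aut}(Y,\omega_Y).
\]
Hence we can write $A_i=p_i-\I q_i$ and $B_{\bar i}=p_i+\I q_i$ with $p_i,q_i\in C^\infty(U,\mathfrak{aut}(Y,\omega_Y))$. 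In other words, $B=\bar A$ with respect to the real structure whose fixed part is $\mathfrak{aut}(Y,\omega_Y)$, exactly as in the proof of Proposition \ref{prop:Chern_conn}.

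Next I compute the curvatures in the trivialization, by the analogue of \eqref{eq:hol_curv_coeff} and Lemma \ref{lem:curvature_correspondence}:
\[
F^{2,0}_\partial(\partial_i,\partial_j)=\partial_iA_j-\partial_jA_i+[A_i,A_j],\qquad F^{0,2}_{\dbar_f}(\partial_{\bar i},\partial_{\bar j})=\partial_{\bar i}B_{\bar j}-\partial_{\bar j}B_{\bar i}+[B_{\bar i},B_{\bar j}].
\]
Here the bracket is the Lie bracket of holomorphic vector fields, with signs fixed by the paper's convention; this is valid since the only vertical terms are fiberwise holomorphic. Substituting $A=p-\I q$ and $B=p+\I q$, the derivative terms split into real and imaginary parts:
\[
\partial_iA_j-\partial_jA_i=\tfrac12\bigl[(\partial_{x^i}-\I\partial_{y^i})(p_j-\I q_j)-(i\leftrightarrow j)\bigr],
\]
and similarly for $B$ with conjugated derivatives and $+\I q$. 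The real-coefficient combinations of $p,q$ and their real derivatives stay in $\mathfrak{aut}(Y,\omega_Y)$, and the brackets $[p,p]$, $[p,q]$, $[q,q]$ also stay there, because $\mathfrak{aut}(Y,\omega_Y)$ is a real Lie algebra of Killing holomorphic fields. Collecting terms, the $B$-expression is obtained from the $A$-expression by replacing $\I$ with $-\I$ in every coefficient, and $\mathfrak{aut}$-valued quantities are unchanged. So with $P_{ij}$ the "real" part and $Q_{ij}$ the coefficient of $-\I$, we get $F^{2,0}_\partial=P_{ij}-\I Q_{ij}$ and $F^{0,2}_{\dbar_f}=P_{ij}+\I Q_{ij}$. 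The main bookkeeping obstacle is the bracket term: $[p_i-\I q_i,p_j-\I q_j]$ has real part $[p_i,p_j]-[q_i,q_j]$ and $-\I$ part $[p_i,q_j]+[q_i,p_j]$. Its $B$-counterpart is conjugate in the same sense, so this is consistent.

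For the equivalence, if $\mathfrak c_s\cap\I\mathfrak c_s=0$ then $P_{ij}-\I Q_{ij}=0$ forces $P_{ij}=\I Q_{ij}\in\mathfrak c_s\cap\I\mathfrak c_s$. Hence $P_{ij}=Q_{ij}=0$, and so $F^{0,2}_{\dbar_f}=0$; the converse is symmetric. For the Chern case, Proposition \ref{prop:Chern_conn} lets us take $p_i,q_i$ valued in $\mathfrak k$. The formulas above then show that $P_{ij},Q_{ij}$ lie in $\mathfrak k+[\mathfrak k,\mathfrak k]=\mathfrak k_{(2)}$, since derivatives of $\mathfrak k$-valued functions stay in $\mathfrak k$. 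So $\mathfrak c_s\subset\mathfrak k_{(2)}$, and the hypothesis on $\mathfrak k_{(2)}$ gives the condition on $\mathfrak c_s$. One should also note that $P,Q$ are well defined: although $p,q$ need not be unique when $\mathfrak{aut}\cap\I\mathfrak{aut}\neq0$, the equivalence claim only uses a fixed choice. In the Chern case that choice is canonical because $\mathfrak k\cap\I\mathfrak k=0$.
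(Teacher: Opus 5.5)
Your proposal is correct and follows the paper's proof essentially verbatim. The paper also writes $A_i=v_i-\I w_i$ and $B_i=v_i+\I w_i$, with $v_i,w_i\in\mathfrak{aut}(Y,\omega_Y)$ read off from the real horizontal lifts of $\partial_{x^i},\partial_{y^i}$, and then expands both curvatures into conjugate real and imaginary parts. You go slightly further by spelling out the closure of $\mathfrak{aut}(Y,\omega_Y)$ under brackets and $s$-derivatives, and the inclusion $\mathfrak c_s\subset\mathfrak k_{(2)}$ in the Chern case, which the paper leaves implicit.

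One small correction: your final caveat about non-uniqueness is unnecessary. The two linear equations force $p_i=\tfrac12(A_i+B_{\bar i})$ and $q_i=\tfrac{\I}{2}(A_i-B_{\bar i})$, so $P_{ij},Q_{ij}$ are determined canonically by the connection in each chart, even when $\mathfrak{aut}(Y,\omega_Y)\cap\I\,\mathfrak{aut}(Y,\omega_Y)\neq\{0\}$.
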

\begin{proof}
    We work in a local unitary trivialization. Write $\partial(\partial_i)=\partial_i+A_i\mod\widebar{T_{X/S}}$ and $\dbar_f(\partial_{\bar{i}})=\partial_{\bar{i}}+B_i\mod\widebar{T_{X/S}}$. The vertical parts of the real horizontal lifts of $\partial_{x^i}$ and $\partial_{y^i}$ are respectively $2(v_i+\bar v_i)$ and $2(w_i+\bar w_i)$, where $s^i=x^i+\I y^i$, $v_i:=\tfrac12(A_i+B_i)$ and $w_i:=\tfrac{1}{2\I}(B_i-A_i)$. Since $\nabla^{1,0}$ is Kähler, $v_i,w_i\in\mathfrak{aut}(Y,\omega_Y)$. Thus $\dbar_f$ is given by $\partial_{\bar{i}}\mapsto\partial_{\bar{i}}+(v_i+\I w_i)\mod\widebar{T_{X/S}}$, and $\partial$ is given by $\partial_i\mapsto\partial_i+(v_i-\I w_i)\mod\widebar{T_{X/S}}$. We compute
    \begin{align*}
        F_{\dbar_f}^{0,2}(\partial_{\bar{i}},\partial_{\bar{j}})&=[\partial_{\bar{i}}+v_i+\I w_i,\partial_{\bar{j}}+v_j+\I w_j]\\
        &=\partial_{\bar{i}}(v_j+\I w_j)+[v_i+\I w_i,v_j+\I w_j]-\partial_{\bar{j}}(v_i+\I w_i)\\
        &=\tfrac{1}{2}(\partial_{x^i}v_j-\partial_{y^i}w_j)+[v_i,v_j]-[w_i,w_j]-\tfrac{1}{2}(\partial_{x^j}v_i-\partial_{y^j}w_i)\\
        &\quad +\I\bigl(\tfrac{1}{2}(\partial_{y^i}v_j+\partial_{x^i}w_j)+[w_i,v_j]+[v_i,w_j]-\tfrac{1}{2}(\partial_{y^j}v_i+\partial_{x^j}w_i)\bigr).
    \end{align*}
On the other hand,
    \begin{align*}
         F_{\partial}^{2,0}(\partial_{i},\partial_{j})&=[\partial_{i}+v_i-\I w_i,\partial_{j}+v_j-\I w_j]\\
        &=\partial_{i}(v_j-\I w_j)+[v_i-\I w_i,v_j-\I w_j]-\partial_j(v_i-\I w_i)\\
        &=\tfrac{1}{2}(\partial_{x^i}v_j-\partial_{y^i}w_j)+[v_i,v_j]-[w_i,w_j]-\tfrac{1}{2}(\partial_{x^j}v_i-\partial_{y^j}w_i)\\
        &\quad -\I\bigl(\tfrac{1}{2}(\partial_{y^i}v_j+\partial_{x^i}w_j)+[w_i,v_j]+[v_i,w_j]-\tfrac{1}{2}(\partial_{y^j}v_i+\partial_{x^j}w_i)\bigr).
    \end{align*}
 The result follows.
\end{proof}

\subsection{Relatively Kähler fibrations}
Let $f:X\to S$ be a holomorphic fibration as in Section \ref{sec:hol_conn}. We call it a \emph{relatively Kähler fibration} if there is a $\D$-closed real $(1,1)$-form $\omega$ on $X$ whose restriction to each fiber $X_s$ is Kähler, i.e., it gives rise to a fiberwise Kähler metric $\omega_{X/S}$. The form $\omega$ determines a smooth splitting of \eqref{HolTangentSeq_eq1}, i.e. a pure $(1,0)$-connection $\nabla_\omega^{1,0}$, given by
\begin{align}
	\partial_i&\mapsto \partial_i+\Gamma_i^\alpha\partial_\alpha=:H_i,\quad \Gamma_i^\alpha=-g^{\bar{\beta}\alpha}g_{i\bar{\beta}},\quad \text{where} \label{eq:rel_kah_lift}\\
	\omega&=\I (g_{\alpha\bar{\beta}}\,\D z^\alpha \wedge \D \bar{z}^\beta+g_{\alpha \bar{j}}\,\D z^\alpha \wedge \D\bar{s}^j+g_{i\bar{\beta}}\,\D s^i\wedge\D\bar{z}^\beta+g_{i\bar{j}}\,\D s^i\wedge\D\bar{s}^j),\label{eq:rel_kah_coeff}
\end{align}
and $(g^{\bar{\beta}\alpha})$ denotes the inverse matrix of $(g_{\alpha\bar{\beta}})$. The horizontal lift $H_i$ of $\partial_i$ with respect to $\omega$ is orthogonal to $T_{X/S}^\CC$, with respect to the Hermitian form
\[\langle v_1,v_2\rangle_\omega:=\omega(v_1,J\widebar{v_2}). \]
By Lemma \ref{lem:omega_compatible_dbar} and Lemma \ref{lem:verti_closed}, $\nabla_\omega^{1,0}$ is a symplectic connection associated to $\omega_{X/S}$ and the canonical $\dbar$-operator $\dbar_f$. We call $\omega$ \emph{relatively holomorphic} if the induced connection $\nabla_{\omega}^{1,0}$ is relatively holomorphic in the sense of \eqref{RelHolConn_eq}. In this case $\nabla_{\omega}^{1,0}$ is a Kähler connection. This is a nontrivial condition, as the following example illustrates.
\begin{example}
	Consider the trivial disk bundle $f:\CC\times \Delta\to \CC$, $(s,z)\mapsto s$, where $\Delta:=\{z\in\CC\mid  |z|< 1\}$. Let
	\[\phi=|z|^2+2|s|^2+\tfrac{1}{2}(\bar{s}z^2+s\bar{z}^2). \]
	Then $\omega=\I\partial\dbar\phi$ is a Kähler form on $X$. By \eqref{eq:rel_kah_lift}, $\Gamma_s^z=-\bar{z}$, so $\omega$ is not relatively holomorphic. Consider a path $\gamma(t)=t$ in $\CC$, then its horizontal lift is determined by the ODE $\frac{\D z}{\D t}=-\bar{z}$. The resulting local parallel transport is given on its domain of definition by $\tau_t(x+\I y)=\E^{-t}x+\I\E^t y$, which is not holomorphic for $t\ne0$.
\end{example}

By Proposition \ref{prop:exist_fiberwise_holo_connection}, if $f$ is proper and $\omega$ is relatively holomorphic, then the restriction of $f$ over each connected component of $S$ is a holomorphic fiber bundle. In the following we give some examples of relatively Kähler fiber bundles where $\omega$ is relatively holomorphic.

\begin{example}
If $f:(X,\omega_X)\to (S,\omega_S)$ is a relatively Kähler fibration, which is a complex nilpotent fibration in the sense of \cite[Definition 5.4]{rollenske20}, then $\omega_X$ is relatively holomorphic. Indeed, by \cite[Definition 3.5]{rollenske20}, there exists an orthonormal local frame $\{V_1, \dots, V_{m+n}\}$ of $TX$ such that $\{V_j\}_{j\le m}$ spans $T_{X/S}$ and $\{V_k\}_{k>m}$ spans the image of $\nabla_{\omega_X}^{1,0}$, which is spanned by $\{H_i\}$ in the above notation, and
\begin{equation}\label{CNCSBracket_eq}
[V_j, V_k] = 0 \quad \text{and} \quad [V_j, \widebar{V}_k] = 0, \quad \text{for all } 1\le j\le m \text{ and } 1\le k\le m+n.
\end{equation}
By Lemma \ref{lem:curv_def_rel_hol}, it suffices to show that for any vertical $(0,1)$-vector field $\widebar{V}$ and any horizontal $(1,0)$-vector field $W$, the Lie bracket $[\widebar{V}, W]$ has no vertical $(1,0)$-component. We write $\widebar{V} = \sum_{j\le m} a_j \widebar{V}_j$ and $W = \sum_{k>m} b_k V_k$, where $a_j, b_k$ are smooth local functions. Then
\[
[\widebar{V}, W] = \sum_{j\le m, k>m} [a_j \widebar{V}_j, b_k V_k] = \sum_{j\le m, k>m} \left( a_j \widebar{V}_j(b_k) V_k - b_k V_k(a_j) \widebar{V}_j + a_j b_k [\widebar{V}_j, V_k] \right).
\]
The nilpotent condition \eqref{CNCSBracket_eq} states that $[V_j, \widebar{V}_k]=0$ for $j\le m$ and all $k$. Taking the complex conjugate yields $[\bar{V}_j, V_k]=0$. Substituting this back,
\[
[\widebar{V}, W] = \sum_{k>m} \widebar{V}(b_k) V_k - \sum_{j\le m} W(a_j) \widebar{V}_j,
\]
which has no vertical $(1,0)$-component. The claim is proved.
\end{example}

\begin{example}\label{ex:chern_conn}
	Let $f:E\to S$ be a holomorphic vector bundle with a Hermitian metric $h$. Let $\phi\in C^\infty(E)$ be defined by $\phi(v):=|v|_h^2$. Let $\omega:=\I \partial \bar{\partial} \phi$, then $\omega$ is relatively Kähler and relatively holomorphic. In a local holomorphic trivialization $E|_U\cong U\times \CC^m$, we have $\phi(s, z) = h_{\alpha\bar{\beta}}(s) z^\alpha \bar{z}^\beta$, and
\[
    \omega = \I \bigl( h_{\alpha\bar{\beta}} \D z^\alpha\wedge \D \bar{z}^\beta +  (\bar{z}^\beta \partial_{\bar{j}} h_{\alpha\bar{\beta}} ) \D z^\alpha \wedge \D \bar{s}^j + (z^\alpha \partial_i h_{\alpha\bar{\beta}} )\D s^i \wedge \D \bar{z}^\beta + (z^\alpha \bar{z}^\beta \partial_i \partial_{\bar{j}} h_{\alpha\bar{\beta}}) \D s^i \wedge \D \bar{s}^j \bigr).
\]
This implies
	\[\Gamma_i^\alpha=-h^{\bar{\beta}\alpha}z^\gamma\partial_i h_{\gamma\bar{\beta}}, \]
where $h_{\alpha\bar{\beta}}$ only depends on $(s,\bar s)$. In fact, $\nabla_{\omega}^{1,0}$ is exactly induced by the Chern connection associated to $h$ and the holomorphic structure of $E$. The Chern connection has connection $1$-form $h^{\bar{\beta}\alpha}\partial_i h_{\gamma\bar{\beta}}\D s^i$, then by Example \ref{ex:vb_conn_dbar}, the coefficients $\Gamma_i^\alpha$ of the induced $(1,0)$-connection have the above form.

Let $\nabla_{\omega}^\RR$ be the real connection determined by $\nabla_{\omega}^{1,0}$ and $\omega_{\nabla}$ be the corresponding $2$-form given by \eqref{eq:omega_conn}. By \eqref{eq:omega_conn_loc}, we have
\begin{align*}
    \omega_\nabla &= \I h_{\alpha\bar{\beta}} \bigl( \D z^\alpha \wedge \D \bar{z}^\beta - \D z^\alpha \wedge (\widebar{\Gamma_j^\beta} \D \bar{s}^j) - (\Gamma_i^\alpha \D s^i) \wedge \D \bar{z}^\beta + (\Gamma_i^\alpha \D s^i) \wedge (\widebar{\Gamma_j^\beta} \D \bar{s}^j) \bigr)\\
		&=\I \bigl( h_{\alpha\bar{\beta}} \D z^\alpha\wedge \D \bar{z}^\beta +  (\bar{z}^\beta \partial_{\bar{j}} h_{\alpha\bar{\beta}} ) \D z^\alpha \wedge \D \bar{s}^j + (z^\alpha \partial_i h_{\alpha\bar{\beta}} )\D s^i \wedge \D \bar{z}^\beta \\
        &\phantom{=\I \bigl( h_{\alpha\bar{\beta}} \D z^\alpha\wedge \D \bar{z}^\beta +  (\bar{z}^\beta \partial_{\bar{j}} h_{\alpha\bar{\beta}} ) \D z^\alpha \wedge \D \bar{s}^j } + (h^{\bar{\delta}\gamma} (\partial_i h_{\alpha\bar{\delta}}) (\partial_{\bar{j}} h_{\gamma\bar{\beta}}) z^\alpha \bar{z}^\beta) \D s^i \wedge \D \bar{s}^j \bigr).
\end{align*}
Then
\[  \omega - \omega_\nabla = \I \bigl( \partial_i \partial_{\bar{j}} h_{\alpha\bar{\beta}} - h^{\bar{\delta}\gamma} (\partial_i h_{\alpha\bar{\delta}}) (\partial_{\bar{j}} h_{\gamma\bar{\beta}}) \bigr) z^\alpha \bar{z}^\beta \D s^i \wedge \D \bar{s}^j=-\I \langle F_h z, z \rangle_h,\]
where $F_h\in A^{1,1}(S,\End E)$ is the curvature of the Chern connection.
\end{example}

\subsection{Associated relatively Kähler bundles}\label{subsec:assoc_rel_kahler}
We now investigate the construction of relatively Kähler metrics on associated bundles using the symplectic coupling mechanism. This construction applies whenever the action of the compact subgroup used for the reduction is Hamiltonian. We show that such induced metrics are naturally relatively holomorphic.

Let $(Y, \omega_Y)$ be a Kähler manifold (not necessarily compact) and $S$ be a complex manifold. Let $G$ be a complex Lie group acting holomorphically on $Y$. Let $\pi_P:P\to S$ be a smooth principal $G$-bundle. We consider the associated isotrivial complex fiber bundle $f: X = P\times_G Y\to S$.

A point $p \in P_s$ defines a biholomorphism $\phi_p: Y \to X_s$ by $\phi_p(y) = [p, y]$. Note that $\phi_{p\cdot g} = \phi_p \comp g$.
\begin{definition}\label{def:fib_kahler_model}
	A fiberwise Kähler metric $\omega_{X/S}$ is said to be \textbf{$G$-modeled on $(Y, \omega_Y)$} if for every $p \in P$, $\phi_p^* \omega_s \in G\cdot \omega_Y := \{ (g^{-1})^* \omega_Y \mid g \in G\}$. In particular, $(X_s,\omega_s)$ is biholomorphically isometric to $(Y,\omega_Y)$ via some $\phi_p$. $\omega_{X/S}$ is said to be \textbf{$G$-modeled on $(Y, [\omega_Y])$} if for every $p \in P$, $\phi_p^* \omega_s \in [\omega_Y]$.
\end{definition}
The following result is analogous to the lifting theorem of \cite[\S3]{CH2000}. 
\begin{lemma}\label{lem:smooth_orbit_lifting}
Let a Lie group $G$ act smoothly on a Hausdorff Fréchet space $E$, and let $\eta:U\to E$ be a smooth map from a smooth manifold whose image is contained in one $G$-orbit. For every $s_0\in U$, after shrinking $U$ around $s_0$, there is a smooth map $g:U\to G$ such that $g(s_0)=e$ and $\eta(s)=g(s)\cdot\eta(s_0)$.
\end{lemma}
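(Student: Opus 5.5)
We reduce the statement to the standard local-section property of orbit maps, that is, to the existence of smooth local sections of $G\to G\cdot\eta(s_0)$ in a form adapted to $\eta$. Write $x_0:=\eta(s_0)$, let $H:=\Stab_G(x_0)$, a closed subgroup (stabilizers are closed since $E$ is Hausdorff and the action is continuous), and let $\mathfrak h\subset\mathfrak g$ be its Lie algebra. Choose a linear complement $\mathfrak m$ with $\mathfrak g=\mathfrak m\oplus\mathfrak h$. The map $\mathfrak m\times H\to G$, $(\xi,h)\mapsto \exp(\xi)h$, is a diffeomorphism near $(0,e)$, so $\sigma(\xi):=\exp(\xi)\cdot x_0$ is injective near $0$ on a small ball $B\subset\mathfrak m$. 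Because $\eta$ takes values in the single orbit $G\cdot x_0$, for each $s$ there is some $g$ with $\eta(s)=g\cdot x_0$. Our candidate is $g(s):=\exp(\xi(s))$, where $\xi(s)\in B$ is the unique element with $\sigma(\xi(s))=\eta(s)$. This gives $g(s_0)=e$ and $\eta(s)=g(s)\cdot\eta(s_0)$ automatically, so everything reduces to two things: such a $\xi(s)$ exists for $s$ near $s_0$, and $\xi$ is smooth.

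\textbf{Step 1: existence and continuity of $\xi$.} We must show that for $s$ near $s_0$, $\eta(s)$ lies in $\sigma(B)$ and not merely in $G\cdot x_0$. A continuous curve in the orbit need not stay in a small orbit neighborhood in the ambient topology. This is exactly where the hypothesis of \cite[\S3]{CH2000} enters: we need the orbit map $G/H\to E$ to be a topological embedding near $x_0$, i.e. $G\cdot x_0$ is locally closed, or equivalently the orbit topology agrees with the subspace topology. I would first try to deduce this from the setting, since the intended application is $E$ the space of Kähler forms with $G$ acting through a reductive group fixing $[\omega_Y]$, where orbits are closed. Failing that, I would add this as a standing hypothesis, as in the cited lifting theorem.

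\textbf{Step 2: smoothness of $\xi$.} Granting Step 1, $\sigma:B\to E$ is an injective smooth immersion. Injectivity of $d\sigma_0$ holds because $d\sigma_0(\xi)=\xi\cdot x_0$ vanishes only for $\xi\in\mathfrak h$, and $\mathfrak h$ is exactly the kernel of the infinitesimal action on the orbit, this being finite dimensional. Since $\mathfrak m$ is finite dimensional, the image $d\sigma_0(\mathfrak m)$ is a closed subspace of $E$ and admits a continuous projection $\pi:E\to\mathbb R^k$ (use Hahn--Banach on the coordinates of a basis). Then $\pi\circ\sigma$ is a local diffeomorphism of finite-dimensional spaces by the inverse function theorem, with no Nash--Moser input needed. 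Hence $\xi(s)=(\pi\circ\sigma)^{-1}(\pi(\eta(s)))$ for $s$ near $s_0$, which is smooth, and $g=\exp\circ\xi$ is smooth.

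The main obstacle is Step 1, the local embeddedness of the orbit. Step 2 is routine once it is known.
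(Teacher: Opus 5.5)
Your Step 1 is a genuine gap, and neither remedy you offer closes it. The lemma is stated for an arbitrary smooth Lie group action on a Hausdorff Fr\'echet space, with no hypothesis that $G\cdot\eta(s_0)$ is locally closed. Adding that as a standing assumption therefore proves a strictly weaker statement. You also cannot borrow it from the application. In Lemma \ref{lem:smooth_G_unitary_atlas}, $G$ is an arbitrary complex Lie group acting holomorphically on a possibly noncompact K\"ahler manifold $Y$; reductivity is imposed only afterwards. The lemma is applied there to the orbit of $\omega_Y$ in the Fr\'echet space of all smooth real $2$-forms, where nothing about closedness of orbits is known. Your Step 2 only starts once you know $\xi(s)$ exists and tends to $0$ as $s\to s_0$. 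So as written, the proposal covers only the locally closed case.

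The missing idea is that local closedness is irrelevant; countability of $G/H$ suffices. Cover $G/H$ by countably many compact sets $C_\nu$. For a smooth curve $c$ in the orbit, the closed sets $c^{-1}(j(C_\nu))$ cover the parameter interval, so by Baire the union of their interiors is dense. On each interior, $j^{-1}\circ c$ is continuous, because $j|_{C_\nu}$ is a homeomorphism onto its image. It is then smooth, using exactly the chart by finitely many continuous linear functionals that you describe in Step 2. This gives $c'(t)\in\mathfrak g\cdot c(t)$ on a dense open set. Since $\dim\mathfrak g\cdot c(t)$ is constant along the orbit, a continuously varying projection onto it extends the tangency to every $t$.

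Baire only gives continuity of $j^{-1}\circ c$ on a dense open set, not at a prescribed point. That is why the paper does not invert a chart at $s_0$ as you do. Instead it uses the tangency $\D\eta_s(T_sU)\subset\mathfrak g\cdot\eta(s)$ to build a smooth $\mathfrak g$-valued $1$-form $b$ with $(b_s(v))_E(\eta(s))=\D\eta_s(v)$, where $\xi_E$ is the fundamental vector field of $\xi$. It then solves $\frac{\D}{\D t}g_s(t)=(\D R_{g_s(t)})_e\,b_{ts}(s)$, $g_s(0)=e$, on the finite-dimensional group, and checks that $g_s(t)^{-1}\cdot\eta(ts)$ is constant in $t$.

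In particular, the phenomenon you worry about in Step 1 does not occur for smooth maps. Even for a dense, non-locally-closed orbit such as an irrational winding, the lemma's conclusion forces $\eta(s)\in\sigma(B)$ for $s$ near $s_0$. The real obstruction is the lack of a priori continuity of $\eta$ into $G/H$ at $s_0$, which tangency plus integration circumvents. Your Step 2 is sound in itself, but once that argument is in hand it is no longer needed.
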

\begin{proof}
Set $\eta_0=\eta(s_0)$ and $H=G_{\eta_0}$. The stabilizer $H$ is closed, and the orbit map $j:G/H\to E$ is an injective immersion. Since $G/H$ is finite-dimensional, finitely many continuous linear functionals on $E$ give local coordinates for $j$ by the inverse function theorem. In particular, $j$ is locally a smooth embedding.

For $\zeta$ in the orbit, let $V_\zeta\subset E$ be the image of the infinitesimal action at $\zeta$. We first show that every smooth curve $c:I\to E$ with image in the orbit satisfies $c'(t)\in V_{c(t)}$. Choose compact subsets $C_\nu$ covering $G/H$. The closed sets $c^{-1}(j(C_\nu))$ cover $I$, and the union of their interiors is dense by the Baire category theorem. On each such interior, $j^{-1}\comp c$ is continuous, since $j|_{C_\nu}$ is a homeomorphism onto its image, and is then smooth by the local coordinate description of $j$. Thus $c'(t)\in V_{c(t)}$ on a dense open subset. Near any fixed parameter $t_0\in I$, choose infinitesimal generators whose values form a basis of $V_{c(t)}$. Their linear independence persists locally, and they continue to span $V_{c(t)}$ because its dimension is constant along the orbit. Continuous linear functionals on $E$ give a continuously varying projection onto this span. The tangency condition therefore extends to every parameter by continuity.

Applying this to curves in $U$ shows that $\D\eta_s(T_sU^\RR)\subset V_{\eta(s)}$. The same choice of generators and linear functionals gives, locally, a smooth $\mathfrak g$-valued $1$-form $b$ such that $(b_s(v))_E(\eta(s))=\D\eta_s(v)$, where $\xi_E(\zeta):=\frac{\D}{\D t}\big|_{t=0}\exp(t\xi)\cdot\zeta$. Take real coordinates centered at $s_0=0$ and shrink the coordinate neighborhood to a ball. For $s$ sufficiently small, solve
\[
\frac{\D g_s(t)}{\D t}=(\D R_{g_s(t)})_e\,b_{ts}(s),\qquad g_s(0)=e,\qquad 0\leq t\leq1,
\]
where $R_g$ denotes right multiplication on $G$. These finite-dimensional ordinary differential equations have solutions on $[0,1]$ for small $s$, depending smoothly on $(s,t)$. Differentiating $g_s(t)^{-1}\cdot\eta(ts)$ shows that it is constant in $t$. Hence $g(s):=g_s(1)$ has the required properties.
\end{proof}

\begin{lemma}\label{lem:smooth_G_unitary_atlas}
A smooth fiberwise Kähler metric $\omega_{X/S}$ is $G$-modeled on $(Y,\omega_Y)$ if and only if there are local smooth sections $\sigma_a:U_a\to P$ satisfying $\phi_{\sigma_a(s)}^*\omega_s=\omega_Y$, i.e., it admits a unitary atlas induced by local smooth sections of $P$.
\end{lemma}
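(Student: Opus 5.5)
The reverse implication is essentially formal; the forward implication will come from applying Lemma \ref{lem:smooth_orbit_lifting} to local sections of $P$.

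\emph{Reverse direction.} Suppose local smooth sections $\sigma_a:U_a\to P$ satisfy $\phi_{\sigma_a(s)}^*\omega_s=\omega_Y$. Every $p\in P_s$ with $s\in U_a$ can be written $p=\sigma_a(s)\cdot g$ for some $g\in G$. Since $\phi_{p}=\phi_{\sigma_a(s)}\comp g$, we get
\[
\phi_p^*\omega_s=g^*\omega_Y=((g^{-1})^{-1})^*\omega_Y\in G\cdot\omega_Y .
\]
So $\omega_{X/S}$ is $G$-modeled on $(Y,\omega_Y)$. The charts $\Phi_a(x)=(s,\phi_{\sigma_a(s)}^{-1}(x))$ have transition maps $g_{ab}(s)=\phi_{\sigma_a(s)}^{-1}\comp\phi_{\sigma_b(s)}$. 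These are given by elements of $G$ preserving $\omega_Y$, hence lie in $\Aut(Y,\omega_Y)$. By Lemma \ref{lem:unitary atlas} this is a unitary atlas.

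\emph{Forward direction.} Assume $\omega_{X/S}$ is $G$-modeled on $(Y,\omega_Y)$.
\begin{enumerate}
\item Fix $s_0$ and take any smooth local section $\tau:U\to P$ near $s_0$.
\item Consider the map $\eta:U\to E$, $\eta(s)=\phi_{\tau(s)}^*\omega_s$. Here $E$ is the Fréchet space of smooth real $2$-forms on $Y$, with the $C^\infty_{\mathrm{loc}}$ topology, which is Hausdorff. $G$ acts on $E$ by $g\cdot\alpha=(g^{-1})^*\alpha$.
\item Check that this action is smooth. I will treat this as routine, since $G\times Y\to Y$ is smooth.
\item Check that $\eta$ is smooth. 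This holds because $\tau$ and $\omega_{X/S}$ are smooth and $\phi_{\tau(s)}$ depends smoothly on $s$ through the associated-bundle structure.
\item By hypothesis, every value $\eta(s)$ lies in the single orbit $G\cdot\omega_Y$.
\end{enumerate}

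Pick $h\in G$ with $\eta(s_0)=(h^{-1})^*\omega_Y$. Lemma \ref{lem:smooth_orbit_lifting}, after shrinking $U$, gives a smooth $g:U\to G$ with $\eta(s)=g(s)\cdot\eta(s_0)$. Therefore
\[
\eta(s)=((g(s)h)^{-1})^*\omega_Y .
\]
Set $\sigma(s):=\tau(s)\cdot g(s)h$. This section is smooth, and
\[
\phi_{\sigma(s)}^*\omega_s=(g(s)h)^*\phi_{\tau(s)}^*\omega_s=(g(s)h)^*((g(s)h)^{-1})^*\omega_Y=\omega_Y .
\]
Covering $S$ by such neighborhoods produces the required sections $\sigma_a$, and the reverse direction shows they induce a unitary atlas.

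The main obstacle is verifying the hypotheses of Lemma \ref{lem:smooth_orbit_lifting}. Specifically, one must confirm that the smoothness of the action on $E$ and of $\eta$ hold in the Fréchet sense, for example via the convenient calculus of smooth curves. I would check this directly using the identity $\partial_t^k$ commuting with pullback on compact subsets of $Y$. The remaining steps are bookkeeping with the relation $\phi_{pg}=\phi_p\comp g$.
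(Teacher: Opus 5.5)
Your proposal is correct and follows essentially the same route as the paper. The easy direction comes from $\phi_{p\cdot g}=\phi_p\comp g$, and the hard direction applies Lemma \ref{lem:smooth_orbit_lifting} to $s\mapsto\phi_{\tau(s)}^*\omega_s$ in the Fréchet space of smooth $2$-forms and then corrects the section by the lifted $g(s)$. The only cosmetic difference is that the paper first normalizes the local section by a constant element of $G$ so that $\phi_{\sigma(s_0)}^*\omega_{s_0}=\omega_Y$, whereas you append the constant $h$ after applying the lemma.
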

\begin{proof}
Suppose that the unitary trivialization $\Phi_a$ is induced by a local section $\sigma_a: U_a \to P$. We have $\Phi_{a, s}^{-1}(y) = [\sigma_a(s), y] = \phi_{\sigma_a(s)}(y)$.
The unitary condition means $\omega_s = \Phi_{a, s}^* \omega_Y$, which is equivalent to $\phi_{\sigma_a(s)}^* (\omega_s) = \omega_Y$.
Now let $p \in P_s$ be arbitrary, we can write $p = \sigma_a(s) \cdot g$ for some $g\in G$. Then
\[
\phi_p^* (\omega_s) = (\phi_{\sigma_a(s)} \comp g)^* (\omega_s) = g^*  \phi_{\sigma_a(s)}^* (\omega_s)= g^*\omega_Y\in G\cdot \omega_Y.
\]

Conversely, suppose that $\omega_{X/S}$ is $G$-modeled on $(Y,\omega_Y)$. Fix $s_0\in S$ and choose a smooth local section $\sigma:U\to P$, adjusted by a constant element of $G$ so that $\phi_{\sigma(s_0)}^*\omega_{s_0}=\omega_Y$. The forms $\eta_s:=\phi_{\sigma(s)}^*\omega_s$ define a smooth map into the Fréchet space of smooth real two-forms on $Y$, with its compact-open $C^\infty$ topology. Their values lie in the orbit of $\omega_Y$ for the smooth action $g\cdot\eta=(g^{-1})^*\eta$. By Lemma \ref{lem:smooth_orbit_lifting}, after shrinking $U$, there is a smooth map $g:U\to G$ such that $\eta_s=(g(s)^{-1})^*\omega_Y$. The section $\tilde{\sigma}(s):=\sigma(s)g(s)$ then satisfies $\phi_{\tilde{\sigma}(s)}^*\omega_s=g(s)^*\eta_s=\omega_Y$.
\end{proof}

Further assume that $G$ is reductive, with a compact real form $K$ preserving $\omega_Y$. Let $\mathfrak{k}$, $\mathfrak{g}$ be the Lie algebras of $K$, $G$. Then $\mathfrak g=\mathfrak k\oplus\I\mathfrak k$. $K$ meets every connected component of $G$.
\begin{lemma}\label{lem:fib_kahler_reduction}
	If $P$ admits a reduction of the structure group to $K$, given by a principal $K$-subbundle $P_K \subset P$, then it induces a fiberwise Kähler metric $\omega_{X/S}$ on $X$ which is $G$-modeled on $(Y, \omega_Y)$.
	Conversely, if $X$ admits a fiberwise Kähler metric $\omega_{X/S}$ $G$-modeled on $(Y, \omega_Y)$, then it induces a reduction of the structure group of $P$ to $H:=\Stab_G(\omega_Y):=\{g\in G \,|\, (g^{-1})^*\omega_Y=\omega_Y\}$, given by a principal $H$-subbundle $P_H \subset P$. Furthermore, the $K$-reductions of $P$ inducing the prescribed metric $\omega_{X/S}$ are precisely the $K$-reductions contained in $P_H$ (exist if and only if $P_H/K\to S$ admits a smooth section).
\end{lemma}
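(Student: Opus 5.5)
We treat the three assertions in order. Throughout, $\phi_p\colon Y\to X_s$, $y\mapsto[p,y]$, satisfies $\phi_{pg}=\phi_p\comp g$, and $\phi_p^*\omega_s$ is the basic object.

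\emph{From a $K$-reduction to a metric.} Given $P_K\subset P$, for $s\in S$ pick any $p\in (P_K)_s$ and define $\omega_s:=(\phi_p^{-1})^*\omega_Y$ on $X_s$. This does not depend on $p$: another choice is $pk$ with $k\in K$, and $\phi_{pk}=\phi_p\comp k$, so $(\phi_{pk}^{-1})^*\omega_Y=(\phi_p^{-1})^*(k^{-1})^*\omega_Y=(\phi_p^{-1})^*\omega_Y$ because $K$ preserves $\omega_Y$. For smoothness, take local smooth sections $\sigma_a$ of $P_K$. These give a compatible atlas whose transition functions lie in $K\subset\Aut(Y,\omega_Y)$, so Lemma \ref{lem:unitary atlas} produces the smooth fiberwise Kähler metric, and it coincides with $\omega_s$. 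For an arbitrary $p=\sigma_a(s)g\in P_s$ we get $\phi_p^*\omega_s=g^*\omega_Y\in G\cdot\omega_Y$, as in the computation in Lemma \ref{lem:smooth_G_unitary_atlas}. Hence $\omega_{X/S}$ is $G$-modeled.

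\emph{From a metric to an $H$-reduction.} Given a $G$-modeled $\omega_{X/S}$, set $P_H:=\{p\in P:\phi_p^*\omega_s=\omega_Y\}$. This set is stable under $H$: $\phi_{ph}^*\omega_s=h^*\omega_Y=\omega_Y$. Each fibre is nonempty by the $G$-modeled hypothesis. The stabilizer of $\omega_Y$ is closed, so $H$ is a closed Lie subgroup. If $p,p'\in (P_H)_s$ then $p'=pg$ with $g^*\omega_Y=\omega_Y$, so $g\in H$, and $H$ acts simply transitively on $(P_H)_s$. The main point is that $P_H$ is a smooth principal subbundle. We take this from Lemma \ref{lem:smooth_G_unitary_atlas}, which supplies local smooth sections $\sigma_a$ with values in $P_H$. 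The maps $U_a\times H\to P_H$, $(s,h)\mapsto\sigma_a(s)h$, are then bijections and restrictions of the smooth trivializations $U_a\times G\to P|_{U_a}$, so $P_H$ is an embedded submanifold and a principal $H$-bundle. We expect this smoothness step, which rests on the orbit-lifting Lemma \ref{lem:smooth_orbit_lifting}, to be the only real obstacle, and it has already been handled there.

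\emph{Characterizing the $K$-reductions.} Since $K$ preserves $\omega_Y$, we have $K\subset H$. Suppose a $K$-reduction $P_K$ induces $\omega_{X/S}$. Then for $p\in P_K$ the construction in the first step gives $\phi_p^*\omega_s=\omega_Y$, so $p\in P_H$ and $P_K\subset P_H$. Conversely, if $P_K\subset P_H$, then each $p\in P_K$ satisfies $\omega_s=(\phi_p^{-1})^*\omega_Y$, which is exactly the metric induced by $P_K$. Finally, $K$-reductions contained in $P_H$ correspond bijectively to smooth sections of $P_H/K\to S$, via the standard correspondence for a closed subgroup $K\subset H$: a section $\sigma$ corresponds to the preimage of $\sigma(S)$ under $P_H\to P_H/K$. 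This gives the existence criterion.
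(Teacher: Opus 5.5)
Your proposal is correct and follows the paper's proof essentially step by step. Both arguments define $\omega_s=(\phi_p^{-1})^*\omega_Y$ with the same $K$-invariance check, take $P_H=\{p:\phi_p^*\omega_s=\omega_Y\}$ and obtain its smoothness from the sections supplied by Lemma \ref{lem:smooth_G_unitary_atlas}, and characterize the $K$-reductions by the same two-way inclusion. The extra details you give (smoothness of the metric via Lemma \ref{lem:unitary atlas}, nonemptiness of the fibres of $P_H$, and the standard correspondence between $K$-reductions of $P_H$ and sections of $P_H/K$) fill in points the paper leaves implicit.
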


\begin{proof}
$\Rightarrow$: Let $\omega_s :=(\phi_p^{-1})^*\omega_Y$, for $p\in (P_K)_s$. We verify this is well-defined. Let $p, p' \in (P_K)_s$. There is a unique $k\in K$ such that $p' = p\cdot k$. We have $\phi_{p'} = \phi_p \comp k$. Then
\[
\phi_{p'}^* \omega_s = (\phi_p \comp k)^* \omega_s = k^* (\phi_p^* \omega_s) = k^* \omega_Y=\omega_Y.\]
The metric $\omega_s$ is well-defined on each fiber and is $G$-modeled on $(Y, \omega_Y)$. Since $P_K$ is a smooth subbundle, this construction yields a smooth fiberwise Kähler metric $\omega_{X/S}$.

$\Leftarrow$:
Suppose $\omega_{X/S}$ is a fiberwise Kähler metric $G$-modeled on $(Y, \omega_Y)$. Define
\[ P_H := \{p \in P \mid \phi_p^* \omega_s = \omega_Y, \text{ where } s=\pi_P(p) \}. \]
 Let $p \in P_H$ and $g \in G$, then
$\phi_{p\cdot g}^* \omega_s = g^* (\phi_p^* \omega_s) = g^* \omega_Y$. Thus, $p\cdot g \in P_H$ if and only if $g^* \omega_Y = \omega_Y$, which is equivalent to $g \in H$. Therefore, the right action of $H$ on each fiber of $P_H$ is free and transitive. The stabilizer $H$ is a closed Lie subgroup of $G$. By Lemma \ref{lem:smooth_G_unitary_atlas}, there are local smooth sections $\sigma_a:U_a\to P$ satisfying $\phi_{\sigma_a(s)}^*\omega_s=\omega_Y$, and $P_H|_{U_a}=\sigma_a(U_a)H$. Hence $P_H$ is a smooth principal $H$-subbundle of $P$.

When $P_K\subset P_H$ is a $K$-reduction, let $\omega'_{X/S}$ be the fiberwise metric it induces. For every $p\in(P_K)_s$, we have $\phi_p^*\omega_s=\omega_Y=\phi_p^*\omega'_s$, so $\omega'_{X/S}=\omega_{X/S}$. Conversely, if a $K$-reduction $P_K\subset P$ induces $\omega_{X/S}$, then $\phi_p^*\omega_s=\omega_Y$ for every $p\in P_K$, and therefore $P_K\subset P_H$.
\end{proof}

\begin{remark}
Since $G$ is reductive and $K$ is a maximal compact subgroup, $G/K$ is contractible. Hence $P$ admits a smooth $K$-reduction, and the above lemma shows that $X$ always admits a smooth fiberwise Kähler metric $G$-modeled on $(Y,\omega_Y)$. This does not imply that an arbitrarily prescribed $G$-modeled metric is induced by a $K$-reduction.

If $H$ is compact, since $K\subset H$ and $K$ is a maximal compact subgroup of $G$, we have $H=K$ and the correspondence is bijective between smooth $K$-reductions of $P$ and smooth fiberwise Kähler metrics $G$-modeled on $(Y,\omega_Y)$. In particular, if $Y$ is compact and the action homomorphism $G\to\Aut(Y)$ is proper, then $H$ is compact. This applies when $G\le \Aut(Y)$ is a closed complex Lie subgroup.
\end{remark}

Now we further assume that $P$ is a holomorphic principal $G$-bundle. Then the associated bundle $f:X\to S$ is a holomorphic fiber bundle. Suppose $P$ admits a reduction of the structure group to $K$, called a \emph{Hermitian structure}. Let $P_K$ be the corresponding principal $K$-bundle. By \cite{singer59}, there is a unique complex unitary connection (called the \emph{(principal) Chern connection}) $A\in A^{1,0}(P,\mathfrak g)$ on $P$ with respect to the Hermitian structure, i.e., $A$ is induced by a principal connection $A_K$ on $P_K$. The construction also applies when $G$ and $K$ are disconnected.

\begin{lemma}\label{lem:assoc_Chern_conn}
	Let $\pi_P:P\to S$ be a holomorphic principal $G$-bundle, where $G=K^\CC$ is reductive with $K$ acting by Kähler isometries on a Kähler manifold $(Y,\omega_Y)$. Let $f:X=P\times_G Y\to S$ be the associated bundle. Then for any Hermitian structure $P_K$, the Chern connection $A$ on $P$ induces a complete Kähler connection $\nabla_A^{1,0}$ on $f$, compatible with the fiberwise metric $\omega_{X/S}$ induced by $P_K$ and the canonical $\dbar$-operator $\dbar_f$. We call it the \emph{associated Chern connection} determined by $(P,P_K)$. If $\mathfrak k_Y:=\tau_0(\mathfrak k)$ satisfies $\mathfrak k_Y\cap\I\mathfrak k_Y=\{0\}$, this is the Chern connection of Proposition \ref{prop:Chern_conn} relative to the $\mathfrak k_Y$-compatible atlas class induced by local smooth sections of $P_K$. In particular, this holds if $\tau_0$ is injective. In all cases, $F_{\partial_A}^{2,0}=0$.
\end{lemma}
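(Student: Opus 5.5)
Let $A$ be the principal Chern connection. It is a principal complex connection, so it lies in $A^{1,0}(P,\mathfrak g)$. In a local holomorphic trivialization we may therefore write $A=A_i(s)\,\D s^i$ with $B_{\bar j}=0$. The plan has four steps.

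\textbf{Step 1: $\nabla_A^{1,0}$ is pure and relatively holomorphic, and $\dbar_A=\dbar_f$.} By Lemma \ref{lem:b_rel_holo_associated} and the discussion before Corollary \ref{cor:affine_map_associated}, $\nabla_A^{1,0}(\partial_i)=\partial_i+\tau_0(A_i)$. Since $\tau_0(A_i)$ is a holomorphic vector field on $Y$, the connection $\nabla_A^{1,0}$ is a relatively holomorphic pure $(1,0)$-connection. Its conjugate is mixed relatively holomorphic and induces the canonical $\dbar_f$.

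\textbf{Step 2: completeness and preservation of $\omega_{X/S}$.} Completeness is immediate from Lemma \ref{lem:completeness_associated_connection}. For the metric, I use that $A$ is induced by a principal connection $A_K$ on $P_K$. Then the real horizontal distribution of $\nabla_A^\RR$ is the image under $q:P_K\times Y\to X$ of $H_{P_K}^\RR\oplus 0$. Parallel transport along a path $\gamma$ is $[\tilde\gamma_{P_K}(0),y]\mapsto[\tilde\gamma_{P_K}(1),y]$, where $\tilde\gamma_{P_K}$ is the horizontal lift in $P_K$.

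Using the charts $\phi_p$ with $p\in P_K$, which by Lemma \ref{lem:fib_kahler_reduction} are isometries $(Y,\omega_Y)\to(X_s,\omega_s)$, the transport equals $\phi_{\tilde\gamma(1)}\comp\phi_{\tilde\gamma(0)}^{-1}$. This is a biholomorphic isometry. Differentiating gives $\mathcal L_{H_v}\omega_{X/S}=0$ for real $v$. Parallel transport also preserves $J_{X/S}$, consistent with Lemma \ref{lem:hol_parallel_transport}. Hence $\nabla_A^{1,0}$ is a Kähler connection in the sense of Definition \ref{def:Kah_conn}.

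\textbf{Step 3: identification with the Chern connection of Proposition \ref{prop:Chern_conn}.} Take local smooth sections $\sigma_a$ of $P_K$. They give a unitary atlas whose transition maps $g_{ab}$ take values in $K$. Hence $(g_{ab})_*\mathfrak k_Y=\mathfrak k_Y$ by $\Ad$-invariance and equivariance of $\tau_0$. Moreover $\mathsf g_{ab}(v)\in\tau_0(\mathfrak k)=\mathfrak k_Y$, since $g_{ab}^{-1}\D g_{ab}$ is $\mathfrak k$-valued. So the atlas is $\mathfrak k_Y$-compatible.

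In such a chart, pull $A$ back by $\sigma_a$. It becomes $A^{(a)}=A^{(a),1,0}+A^{(a),0,1}$ with values in $\mathfrak g=\mathfrak k\oplus\I\mathfrak k$, and it is $\mathfrak k$-valued on real vectors because it comes from $A_K$. By the computation in Lemma \ref{lem:b_rel_holo_associated}, $\dbar_f-\dbar_a=\tau_0(A^{(a),0,1})\in A^{0,1}(U_a,\mathfrak k_Y^\CC)$ and $\partial_A-\partial_a=\tau_0(A^{(a),1,0})$. Since $A^{(a)}$ is real for the $\mathfrak k$-structure, $A^{(a),1,0}=\phi(A^{(a),0,1})$, where $\phi$ is the conjugation in the proof of Proposition \ref{prop:Chern_conn}. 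This uses that $\tau_0$ intertwines the two conjugations, which holds because $\mathfrak k_Y\cap\I\mathfrak k_Y=\{0\}$ makes $\phi_{\mathfrak k_Y}$ well defined on $\tau_0(\mathfrak g)$. This is exactly the local defining formula $\partial=\partial_a+\phi B_a$ of the Chern connection. If $\tau_0$ is injective, then $\mathfrak k_Y\cong\mathfrak k$ and $\mathfrak k\cap\I\mathfrak k=\{0\}$ gives the hypothesis.

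\textbf{Step 4: $F^{2,0}_{\partial_A}=0$.} By Lemma \ref{lem:curvature_correspondence}, $F^{2,0}_{\partial_A}=\tau(F_A^{2,0})$. Since $A$ is of type $(1,0)$ in holomorphic frames, $F_A^{0,2}=0$. Since $A$ comes from $A_K$, the curvature $F_A$ is $\mathfrak k$-valued on real vectors, so $F_A^{2,0}=-\overline{F_A^{0,2}}^{\,\mathfrak k}=0$ with respect to the Cartan conjugation. Therefore $F^{2,0}_{\partial_A}=0$.

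\textbf{Main obstacle.} I expect the main care to be needed in Step 3, in matching conventions. The sign in $\tilde\tau_0$ (with $\exp(-t\xi)$) and the compatibility of $\phi_{\mathfrak k}$ with $\tau_0$ when $\tau_0$ is not injective both need to be checked against Definition \ref{def:k_compatible_unitary_atlas}.
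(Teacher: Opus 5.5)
Your proposal is correct and follows essentially the same route as the paper's proof. For the Kähler and completeness claims, both arguments use isometric parallel transport inherited from $A_K$ together with Lemmas \ref{lem:b_rel_holo_associated} and \ref{lem:completeness_associated_connection}. For the identification, both use local sections of $P_K$, the $\mathfrak k$-reality of the connection form, and the intertwining $\phi_Y\circ\tau_0=\tau_0\circ\phi_K$. Finally, both get $F^{2,0}_{\partial_A}=\tau(F_A^{2,0})=0$ from Lemma \ref{lem:curvature_correspondence}: the paper simply cites that the Chern curvature has type $(1,1)$, whereas you re-derive this from $F_A^{0,2}=0$ and $\mathfrak k$-reality.

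One point is left implicit: the "atlas class" assertion requires applying your transition computation to the union of any two systems of local sections of $P_K$. That shows they are equivalent $\mathfrak k_Y$-compatible atlases, which the paper states explicitly.
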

\begin{proof}
The parallel transport maps of the induced connection are Kähler isometries, since they are induced by $A_K$. Compatibility of $A$ with the holomorphic structure of $P$ implies that $\nabla_A^{1,0}$ is pure with respect to $\dbar_f$, and it is relatively holomorphic by Lemma \ref{lem:b_rel_holo_associated}. Hence $\nabla_A^{1,0}$ is a Kähler connection. Completeness follows from Lemma \ref{lem:completeness_associated_connection}.

Suppose that $\mathfrak k_Y\cap\I\mathfrak k_Y=\{0\}$. Choose local smooth sections $\sigma_a:U_a\to P_K$, with induced unitary trivializations $\Phi_{a,s}=\phi_{\sigma_a(s)}^{-1}$, and write $\sigma_b=\sigma_a k_{ab}$ for $k_{ab}:U_{ab}\to K$.
Then $g_{ab}(s)$ is the action of $k_{ab}(s)$ on $Y$, and the $G$-equivariance of $\tau_0$ gives $(g_{ab}(s))_*\mathfrak k_Y=\tau_0(\Ad_{k_{ab}(s)}\mathfrak k)=\mathfrak k_Y$. For $v\in T_sS^\RR$, choose $s(t)$ with $s(0)=s$ and $\dot s(0)=v$ and put $\xi_{ab,s}(v):=\frac{\D}{\D t}\big|_{t=0}\bigl(k_{ab}(s(t))k_{ab}(s)^{-1}\bigr)\in\mathfrak k$. Then we have $\mathsf{g}_{ab,s}(v)=-\tau_0(\xi_{ab,s}(v))\in\mathfrak k_Y$. The same calculation applies to transitions between any two systems of local smooth sections of $P_K$, so these unitary atlases form a single $\mathfrak k_Y$-compatible atlas class. Let $\phi_Y$ be the conjugation on $\mathfrak k_Y^\CC$ with fixed-point set $\mathfrak k_Y$. Then $\phi_Y\comp\tau_0=\tau_0\comp\phi_K$. In a smooth $P_K$-trivialization, write the principal Chern connection form as $b+\phi_K(b)$, where $b$ has type $(0,1)$. The associated coefficients are $\tau_0(b)$ and $\tau_0(\phi_K(b))=\phi_Y(\tau_0(b))$, exactly as in Proposition \ref{prop:Chern_conn}.

Finally, the curvature $F_A$ of the Chern connection $A$ has type $(1,1)$. Lemma \ref{lem:curvature_correspondence} therefore gives $F_{\partial_A}^{2,0}=\tau(F_A^{2,0})=0$.
\end{proof}

\begin{proposition}\label{prop:induced_rel_kahler}
Given the setup above, assume the action of $K$ on $(Y,\omega_Y)$ is Hamiltonian, with a fixed $K$-equivariant moment map $\mu:Y\to\mathfrak k^*$ satisfying $\D\langle\mu,\xi\rangle=-\iota_{\tilde{\tau}_0(\xi)}\omega_Y$ for every $\xi\in\mathfrak k$. Then the Chern connection $A$ on $P$ induces a relatively Kähler form $\omega$ on $X$. Furthermore, the restriction of $\omega$ to the fibers $X_s$ coincides with the fiberwise Kähler metric $\omega_{X/S}$ induced by the reduction $P_K$ as described in Lemma \ref{lem:fib_kahler_reduction}.
\end{proposition}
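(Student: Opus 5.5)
The plan is to use the minimal coupling (Sternberg--Weinstein) construction. On $P_K\times Y$ consider the $2$-form
\[
\widetilde\omega:=\mathrm{pr}_Y^*\omega_Y-\D\langle \mu, A_K\rangle ,
\]
where $A_K$ is the principal connection on $P_K$ whose extension is the Chern connection $A$, and $\langle\mu,A_K\rangle$ is the real $1$-form $(p,y)\mapsto\langle\mu(y),A_K(p)\rangle$. This form is closed, since it is $\omega_Y$ plus an exact form. It is $K$-invariant under $r_k(p,y)=(pk,k^{-1}y)$, because $\omega_Y$ is $K$-invariant and $\mu$ and $A_K$ are equivariant. It is also basic: a direct computation of $\iota_{v_\xi}\widetilde\omega$ for the generator $(v_\xi,-\tilde\tau_0(\xi))$ of the $K$-action gives zero, using $A_K(v_\xi)=\xi$, $\mathcal L_{v_\xi}A_K=-[\xi,A_K]$, equivariance of $\mu$, and the moment map equation $\D\langle\mu,\xi\rangle=-\iota_{\tilde\tau_0(\xi)}\omega_Y$. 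The sign convention for $\tilde\tau_0$ (via $\exp(-t\xi)$) has to be tracked carefully here. Hence $\widetilde\omega$ descends to a closed real $2$-form $\omega$ on $X=P_K\times_K Y=P\times_G Y$.

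For the second claim, restrict $\omega$ to a fiber. On $\{p\}\times Y$ the term $\D\langle\mu,A_K\rangle$ restricts to zero, since $A_K$ vanishes on vertical $Y$-directions and $\D\mu\wedge A_K$ does as well. So $\phi_p^*\omega_s=\omega_Y$ for $p\in P_K$, which is exactly the metric of Lemma \ref{lem:fib_kahler_reduction}.

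The remaining step, which I expect to be the main obstacle, is to show that $\omega$ has type $(1,1)$ for the complex structure of $X$ as a holomorphic fiber bundle. Expanding, $\widetilde\omega=\omega_Y-\langle\D\mu\wedge A_K\rangle-\langle\mu,\D A_K\rangle$. The last term can be rewritten as $-\langle\mu,F_{A_K}\rangle+\tfrac12\langle\mu,[A_K,A_K]\rangle$. I would evaluate the descended form in the splitting $TX^\CC=T_{X/S}^\CC\oplus H^\CC$ given by $\nabla_A$, using the coframe of vertical forms $\phi^\alpha$ from \eqref{eq:vertical_form}. In this coframe the $[A_K,A_K]$ term and the $\D\mu\wedge A_K$ term combine exactly into the horizontal corrections. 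The descended form becomes $\omega_{\nabla_A}-\langle\mu,F_A\rangle$, with $\omega_{\nabla_A}$ as in \eqref{eq:omega_conn}; this is the same pattern as in Example \ref{ex:chern_conn}. By Lemma \ref{lem:assoc_Chern_conn}, $\nabla_A^{1,0}$ is pure and its conjugate induces $\dbar_f$, so Lemma \ref{lem:omega_compatible_dbar} shows that $\omega_{\nabla_A}$ is a real $(1,1)$-form. The Chern curvature $F_A$ is of type $(1,1)$ with values in $\mathfrak k$, so $\langle\mu,F_A\rangle$ is a real $(1,1)$-form pulled back along the horizontal directions. Therefore $\omega$ is a $\D$-closed real $(1,1)$-form that is Kähler on fibers, i.e.\ a relatively Kähler form.

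Finally, the horizontal distribution of $\omega$ is $H_{\nabla_A}$, since $\langle\mu,F_A\rangle$ is purely horizontal. So $\nabla^{1,0}_\omega=\nabla^{1,0}_A$, which is relatively holomorphic by Lemma \ref{lem:b_rel_holo_associated}; this is the relative holomorphy announced before the statement.
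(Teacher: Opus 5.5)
Your proposal follows the paper's own proof. Both use the same minimal-coupling form $\omega_Y-\D\langle\mu,A_K\rangle$ on $P_K\times Y$, its closedness and descent, the rewriting $\pi_{A_K}^*\omega_Y-\langle\mu,F_{A_K}\rangle$ (your $\omega_{\nabla_A}-\langle\mu,F_A\rangle$) to see type $(1,1)$, and the fiber check via $i_p^*$. One sign slip should be fixed: with the paper's convention $\tilde\tau_0(\xi)(y)=\frac{\D}{\D t}\big|_{t=0}\exp(-t\xi)\cdot y$ and the action $(p,y)\mapsto(pk,k^{-1}y)$, the infinitesimal generator is $(v_\xi,+\tilde\tau_0(\xi))$, not $(v_\xi,-\tilde\tau_0(\xi))$, and only with this sign does the moment map equation $\D\langle\mu,\xi\rangle=-\iota_{\tilde\tau_0(\xi)}\omega_Y$ make the contraction vanish.
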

\begin{proof}
This essentially follows from \cite[Th.~6.3.3]{mcduff17introduction} and \cite[Prop.~3.3]{mccarthy2022canonical}. We only recall the construction of $\omega$. On $P_K\times Y$, define the 2-form
\[
\widehat\omega := \omega_Y - \D\langle\mu,A_K\rangle.
\]
This form is closed. One can show that
\begin{equation}\label{eq:symplectic_coupling}
\widehat\omega = \pi_{A_K}^\ast\omega_Y - \langle \mu, F_{A_K}\rangle,
\end{equation}
where $\pi_{A_K}: TP_K^{\RR}\times TY^{\RR} \to TY^{\RR}$ is given by $\pi_{A_K}(v,\hat y)=\hat y-\tilde{\tau}_0(A_K(v))(y)$, and $F_{A_K}$ is the curvature of $A_K$. Then $\widehat\omega$ is basic and descends to a smooth closed $(1,1)$-form $\omega$ on $X=P_K\times_K Y$. For each $s\in S$, the restriction $\omega|_{X_s}$ can be identified with $\omega_Y$, hence $\omega$ is a relatively Kähler form on $X$.

Next, we verify that for any $s\in S$ and $p \in (P_K)_s$, $\phi_p^* (\omega|_{X_s}) = \omega_Y$. We have $\phi_p = q \comp i_p$, where $i_p: Y \to P_K\times Y$ is the inclusion $i_p(y) = (p,y)$, and $q: P_K\times Y \to X$ is the quotient map. By construction, $q^* \omega = \widehat\omega$, so $\phi_p^* \omega = i_p^* \widehat\omega=\omega_Y$.
\end{proof}
\begin{remark}\label{rmk:ham_total_real}
Under the Hamiltonian hypothesis of Proposition \ref{prop:induced_rel_kahler}, the subspace $\mathfrak k_Y=\tau_0(\mathfrak k)$ automatically satisfies $\mathfrak k_Y\cap\I\mathfrak k_Y=\{0\}$. Indeed, suppose that $\tau_0(\xi)=\I\tau_0(\eta)$ for $\xi,\eta\in\mathfrak k$. Set $V=\tilde{\tau}_0(\xi)$ and $W=\tilde{\tau}_0(\eta)$, so that $V=J_YW$. Along the integral curve $\gamma(t)=\exp(-t\xi)\cdot y$ of $V$, the Hamiltonian identity gives
\[
\frac{\D}{\D t}\langle\mu(\gamma(t)),\eta\rangle=-\omega_Y(W,V)|_{\gamma(t)}=-|W(y)|^2.
\]
The last equality holds because $V=J_YW$ and the norm of the Killing field $V$ is constant along its own flow. The left-hand side is the derivative of a bounded function of $t$, since $\gamma(t)$ lies in the compact orbit $K\cdot y$. Thus $W(y)=0$. Since $y$ is arbitrary, $\tau_0(\eta)=\tau_0(\xi)=0$, proving $\mathfrak k_Y\cap\I\mathfrak k_Y=\{0\}$.
\end{remark}

We now relate the connection $\nabla_\omega^{1,0}$ induced by the relatively Kähler form $\omega$ (defined by \eqref{eq:rel_kah_lift}) and the associated connection $\nabla_A^{1,0}$ induced by the complex connection $A$ (defined in Section \ref{subsec:assoc_bundle}).

\begin{lemma}\label{lem:connection_coincidence}
Let $A$ be a complex unitary connection on $P$ and $\omega$ be the induced relatively Kähler form on $X$. Then the pure $(1,0)$-connection $\nabla_\omega^{1,0}$ coincides with the associated connection $\nabla_A^{1,0}$.
\end{lemma}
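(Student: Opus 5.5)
Both connections split the same holomorphic sequence \eqref{HolTangentSeq_eq1}: $\nabla_\omega^{1,0}$ is pure because $\omega$ is of type $(1,1)$ (Lemma \ref{lem:omega_compatible_dbar}), and $\nabla_A^{1,0}$ is pure because $A$ is a complex connection. So it suffices to compare their horizontal distributions. I will show that the real horizontal distribution $H_X^\RR=\D q(H_{P_K}^\RR\oplus 0)$ of $\nabla_A^\RR$ coincides with the $\omega$-orthogonal distribution
\[
H_\omega=\{v\in TX^\RR : (\iota_v\omega)|_{T^\RR_{X/S}}=0\}.
\]
The equality of $(1,0)$-parts then follows by complexification, using that both real connections complexify to pure connections of the form $\nabla^{1,0}+\widebar{\nabla^{1,0}}$. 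I will work upstairs on $P_K\times Y$ with $\widehat\omega=q^*\omega$, given in the form \eqref{eq:symplectic_coupling}. Since $A$ is the Chern connection, it is induced by $A_K$, so the horizontal space of $A$ restricted to $P_K$ is $\ker A_K$.

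Let $v=\D q(h,0)$ with $h\in\ker A_K$, and let $w=\D q(0,\hat y)$ be vertical; vertical vectors of $X$ are exactly of this form. I evaluate $\widehat\omega((h,0),(0,\hat y))$ term by term in \eqref{eq:symplectic_coupling}. In the first term, $\pi_{A_K}(h,0)=-\tilde\tau_0(A_K(h))=0$, so $\pi_{A_K}^*\omega_Y$ vanishes on this pair. In the second term, $F_{A_K}$ is horizontal on $P_K$ and pulled back along the projection to $P_K$, so it vanishes whenever one argument is $(0,\hat y)$; hence $\langle\mu,F_{A_K}\rangle$ contributes nothing. Therefore $\iota_v\omega$ vanishes on $T^\RR_{X/S}$, i.e.\ $H_X^\RR\subset H_\omega$. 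Both are complements of $T^\RR_{X/S}$, because $\omega$ is fiberwise nondegenerate, so they have equal rank and are equal.

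The main point needing care is the justification of \eqref{eq:symplectic_coupling} itself. I would verify it directly: expand $\D\langle\mu,A_K\rangle=\langle\D\mu\wedge A_K\rangle+\langle\mu,\D A_K\rangle$, use $\D\langle\mu,\xi\rangle=-\iota_{\tilde\tau_0(\xi)}\omega_Y$, and use $K$-equivariance of $\mu$ to absorb the $\tfrac12[A_K,A_K]$ term. Alternatively, one only needs the much weaker fact that $\widehat\omega(( h,0),(0,\hat y))=0$, which is immediate from the definition: $\omega_Y$ does not see $(h,0)$; $\D\langle\mu,A_K\rangle((h,0),(0,\hat y))$ equals $-\langle\D\mu(\hat y),A_K(h)\rangle$ up to a term $\langle\mu,\D A_K((h,0),(0,\hat y))\rangle$; the first vanishes since $A_K(h)=0$, and the second vanishes since $A_K$ is pulled back from $P_K$. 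Since $q$ is a submersion, this concludes the argument.
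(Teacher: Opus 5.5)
Your proposal is correct and is essentially the paper's argument: you show $\widehat\omega((h,0),(0,\hat y))=0$ for $h\in\ker A_K$ using \eqref{eq:symplectic_coupling}, deduce $\D q(\ker A_K\oplus 0)\subset H_\omega$, and get equality from equal rank together with fiberwise nondegeneracy. Your observation that this vanishing follows directly from $\widehat\omega=\omega_Y-\D\langle\mu,A_K\rangle$ is a genuine economy, since it means the full coupling identity, which the paper only cites, is not needed. The opening purity discussion is superfluous, because equal real horizontal distributions already give equal $(1,0)$-parts.
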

\begin{proof}
Let $q:P_K\times Y\to X$ be the quotient map. If $u\in T_pP_K^\RR$ is $A_K$-horizontal and $w\in T_yY^\RR$, then \eqref{eq:symplectic_coupling} gives $\widehat\omega((u,0),(0,w))=0$. Thus $\D q(\ker A_K\oplus0)$ is the horizontal distribution determined by $\omega$, since it is a horizontal complement of the correct rank and the restriction of $\omega$ to the vertical tangent bundle is nondegenerate. This is precisely the horizontal distribution of the associated connection, so $\nabla_\omega^{1,0}=\nabla_A^{1,0}$.
\end{proof}

\begin{corollary}\label{cor:assoc_rel_kahler_fiberwise_holo}
The relatively Kähler form $\omega$ induced by a complex unitary connection $A$ on $P$ is relatively holomorphic.
\end{corollary}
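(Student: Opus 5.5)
The plan is to reduce the corollary to Lemma \ref{lem:connection_coincidence} combined with Lemma \ref{lem:b_rel_holo_associated}. By definition, $\omega$ is relatively holomorphic exactly when the pure $(1,0)$-connection $\nabla_\omega^{1,0}$ of \eqref{eq:rel_kah_lift} satisfies \eqref{RelHolConn_eq}. Our task is therefore to identify $\nabla_\omega^{1,0}$ with a connection already known to be relatively holomorphic.

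The steps are as follows.

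First, by Proposition \ref{prop:induced_rel_kahler}, the complex unitary connection $A$ (the principal Chern connection, induced from $A_K$ on $P_K$) produces a closed real $(1,1)$-form $\omega$ on $X$ whose fiberwise restriction is Kähler. Hence $f$ is a relatively Kähler fibration and $\nabla_\omega^{1,0}$ is defined.

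Second, we apply Lemma \ref{lem:connection_coincidence}, which gives $\nabla_\omega^{1,0}=\nabla_A^{1,0}$. The real content here is the coupling formula \eqref{eq:symplectic_coupling}: the $A_K$-horizontal distribution pushed down to $X$ is $\omega$-orthogonal to the vertical directions. Since purity is built into both sides, the equality holds as $(1,0)$-connections and not merely as real horizontal distributions.

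Third, Lemma \ref{lem:b_rel_holo_associated} states that for any principal connection, in particular $A$, the induced $\nabla_A^{1,0}$ is relatively holomorphic. In a local trivialization its vertical coefficient is $\tau_0(A_i)$, which is a holomorphic vector field on $Y$, so $\partial_{\bar\beta}\Gamma_i^\alpha=0$.

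The only point requiring care is the second step. There one must check that the horizontal distribution $H_\omega$ defined via $\omega$ agrees with the one determined by the Hermitian orthogonality in \eqref{eq:rel_kah_lift}. Because $\omega$ is of type $(1,1)$, the remark after Lemma \ref{lem:omega_compatible_dbar} gives $H_\omega=H_{\omega^{1,1}}$, and this settles it.

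Combining the three steps, $\nabla_\omega^{1,0}$ is relatively holomorphic, so $\omega$ is relatively holomorphic. It is moreover a Kähler connection, by Lemma \ref{lem:assoc_Chern_conn}.
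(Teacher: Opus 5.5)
Your proposal is correct and is essentially the paper's proof: the corollary follows by identifying $\nabla_\omega^{1,0}$ with $\nabla_A^{1,0}$ via Lemma \ref{lem:connection_coincidence}, and then using Lemma \ref{lem:b_rel_holo_associated} to get that $\nabla_A^{1,0}$ is relatively holomorphic. The consistency check you add in the second step is not needed, since $\omega$ is already of type $(1,1)$, so $H_\omega=H_{\omega^{1,1}}$ holds trivially.
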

\begin{proof}
This follows immediately from Lemma \ref{lem:connection_coincidence} and Lemma \ref{lem:b_rel_holo_associated}.
\end{proof}

\begin{example}[Relatively cscK fibrations]\label{ex:rel_csck_fib}
Let $(Y,H_Y)$ be a polarized compact complex manifold with a cscK metric $\omega_Y\in c_1(H_Y)$. Let $G=\mathrm{Aut}(Y,H_Y)$ be the group of holomorphic automorphisms of $Y$ which lift to $H_Y$, and set $K=\Stab_G(\omega_Y)$. By the Matsushima–Lichnerowicz theorem (see e.g. \cite[Th.~2.2]{hallam23}), $G$ is reductive and $K$ is a maximal compact subgroup. The $K$-action is automatically Hamiltonian.

If $P\to S$ is a holomorphic principal $G$-bundle with a $K$-reduction, the above construction applies, yielding a relatively cscK fibration which is relatively holomorphic by Corollary \ref{cor:assoc_rel_kahler_fiberwise_holo}. Conversely, by \cite[Lem.~3.2]{mccarthy2022canonical}, any smooth polarized isotrivial relatively cscK fibration arises as an associated bundle $X=P\times_G Y=P_K\times_K Y$.
\end{example}

\section{Nonlinear harmonic metrics}\label{sec:nonlin_harm}
The primary goal of this section is to establish a framework for the correspondence between flat holomorphic bundles and nonlinear Higgs bundles, generalizing the classical non-abelian Hodge correspondence. Using the classical result of Donaldson and Corlette, we realize one direction by constructing a harmonic metric on the flat holomorphic bundle $(f,\nabla)$ arising from a representation $\rho: \Gamma:=\pi_1(S,s_0)\to G$, which yields a nonlinear Higgs bundle structure. We also discuss the harmonic maps to the space of all Kähler metrics $\mathcal{H}_0$ on a complex manifold $Y$ as a broader context, viewing it as the natural generalization for flat bundles which do not come from a representation of $\Gamma$ to a finite-dimensional Lie group $G$.

\subsection{Nonlinear Higgs bundles}
\begin{definition}
An \emph{almost Higgs field} $\theta$ on a complex fiber bundle $(f:X\to S,T_{X/S})$ is an element of $C^{\infty}(X,f^* T^*S\otimes  T_{X/S})$. $\theta$ is called \emph{relatively holomorphic} if $\theta\in A^{1,0}(S,f_* T_{X/S})$.
\end{definition}

\begin{definition}\label{def:holomorphic_Higgs_field1}
Let $f: X\to S$ be a holomorphic fibration. A \emph{holomorphic Higgs field} on $f$ is an $\mathcal{O}_S$-linear morphism $\theta: TS\to f^{\mathrm{hol}}_*T_{X/S}$ satisfying the integrability condition $[\theta,\theta]=0$. The pair $(f,\theta)$ is called a \emph{(nonlinear) Higgs bundle}.
\end{definition}

Let $(f, T_{X/S})$ be a complex fiber bundle. Let $\dbar_f$ be a $\dbar$-operator satisfying the lifting condition \eqref{eq:lift_cond}, and let $\theta$ be an almost Higgs field. Let $D'':=\dbar_f+\theta$. We define $G^{1,1}_{D''}$ by
\begin{equation}\label{eq:pseudo_curv11}
	G^{1,1}_{D''}(v,\bar w):=\operatorname{pr}_{T_{X/S}}([\theta(v),\nabla^{0,1}(\bar w)]-\theta([v,\bar w]^{1,0})),
\end{equation}
where $v$ is a local $(1,0)$-vector field and $\bar{w}$ is a local $(0,1)$-vector field on $S$, $\nabla^{0,1}$ is a lifting of $\dbar_f$, $[v,\bar w]^{1,0}$ denotes the $(1,0)$-part of the Lie bracket on $S$, and $\operatorname{pr}_{T_{X/S}}:T_{X/S}^\CC\to T_{X/S}$ is the canonical projection. Here we note that $\D f^\CC( [\theta(v),\nabla^{0,1}(\bar{w})])=[0,\bar{w}]=0$, so $[\theta(v),\nabla^{0,1}(\bar{w})]$ is a vertical vector field. \eqref{eq:pseudo_curv11} is $C^\infty(S)$-linear in both $v$ and $\bar w$, hence defines an element of $C^{\infty}(X,f^*(\wedge^{1,1}T^*S)\otimes T_{X/S})$. \eqref{eq:pseudo_curv11} is independent of the choice of $\nabla^{0,1}$ if and only if $\theta$ is relatively holomorphic. In this case, locally we have
\begin{equation}\label{eq:pseudo_curv11_loc}
		G_{D''}^{1,1} = \bigl( -\partial_{\bar{j}} \theta_i^\alpha + \theta_i^\gamma \partial_\gamma \Gamma_{\bar{j}}^\alpha- \Gamma_{\bar{j}}^\gamma \partial_\gamma \theta_i^\alpha  \bigr)  \D s^i \wedge \D \bar{s}^j\otimes \partial_\alpha,
\end{equation}
where locally $\theta= \theta_i^\alpha \D s^i\otimes \partial_\alpha$ and $\dbar_f(\partial_{\bar{j}})=[\partial_{\bar{j}}+\Gamma_{\bar{j}}^\gamma\partial_\gamma]\mod \widebar{T_{X/S}}$. Next we define $G_{\theta}^{2,0}\in C^{\infty}(X,f^*(\wedge^2 T^*S)\otimes T_{X/S})$ by $G_{\theta}^{2,0}:=\frac12[\theta,\theta]$, so that $G_{\theta}^{2,0}(u,v)=[\theta(u),\theta(v)]$. Assume from now on that $\theta$ is relatively holomorphic. Then the \emph{pseudo-curvature} $G_{D''}$ of $(\dbar_f,\theta)$ is defined as
\[G_{D''}:=G_{\dbar_f}^{0,2}+G_{D''}^{1,1}+G_{\theta}^{2,0}\in C^{\infty}(X,f^*(\wedge^2T^*S^\CC)\otimes T_{X/S}),\]
where $G_{\dbar_f}^{0,2}:=F_{\dbar_f}^{0,2}$, which is defined in Corollary \ref{cor:dbarint_via_conn_curv}. Since $\dbar_f$ satisfies the lifting condition and $\theta$ is relatively holomorphic, we have $G_{D''}\in
A^2(S,f_*T_{X/S})$.

If $\dbar_f$ is integrable, then in adapted holomorphic coordinates, \eqref{eq:pseudo_curv11_loc} simplifies to
\begin{equation}\label{eq:pseudo_curv11_loc1}
		G_{D''}^{1,1} = -\bigl( \partial_{\bar{j}} \theta_i^\alpha \bigr)  \D s^i \wedge \D \bar{s}^j\otimes \partial_\alpha.
\end{equation}
Then $G_{D''}^{1,1}=0$ if and only if $\theta\in H^0(X,f^* \Omega_S\otimes  T_{X/S})$. Therefore $(\dbar_f,\theta)$ is a Higgs bundle if and only if $G_{D''}=0$.

\begin{definition}
	Let $G$ be a complex Lie group and $\pi:P\to S$ be a smooth principal $G$-bundle. An \emph{almost Higgs pair} is $(\dbar_\pi,\theta)$, where $\dbar_\pi$ is a principal $\dbar$-operator (Remark \ref{rmk:principal_dbar_alconn}) and $\theta$ is an element of $A^{1,0}(S,\ad P)$. The \emph{pseudo-curvature} of $(\dbar_\pi,\theta)$ is
	\[G_{(\dbar_\pi,\theta)}:=-F_{\dbar_\pi}^{0,2}+\dbar_\pi\theta+\tfrac12[\theta,\theta]\in A^2(P,\mathfrak{g}),\]
	where $\dbar_\pi:A^{1,0}(S,\ad P)\to A^{1,1}(S,\ad P)$ is the Dolbeault operator (\cite[\S2.2.2]{CTW25}) determined by $\dbar_\pi$. For any principal connection $A$ inducing $\dbar_\pi$, $\dbar_\pi\theta:=(\D_A\theta)^{1,1}$, which is independent of the choice of $A$, since for any other $A'$ inducing $\dbar_\pi$, $A'=A+a$ for some $a\in A^{1,0}(S,\ad P)$ and $([a,\theta])^{1,1}=0$. By Remark \ref{rmk:curv_coresp}, $-F_{\dbar_\pi}^{0,2}=F_A^{0,2}$.
\end{definition}
By the $G$-equivariance and the horizontality, $G_{(\dbar_\pi,\theta)}\in A^2(S,\ad P)$. A \emph{$G$-Higgs bundle} is a holomorphic principal $G$-bundle with a holomorphic section $\theta$ of $\ad(P)\otimes\Omega_S$ satisfying $[\theta,\theta]=0$. $(\dbar_\pi,\theta)$ is a $G$-Higgs bundle if and only if $G_{(\dbar_\pi,\theta)}=0$. An almost Higgs pair $(\dbar_\pi,\theta)$ on $P$ induces a $\dbar$-operator $\dbar_f$ satisfying the lifting condition and a relatively holomorphic almost Higgs field $\tau(\theta)$ on the associated bundle $f:X=P\times_G Y\to S$, where $\dbar_f$ is induced by $\nabla_A^{0,1}$ for any principal connection $A$ inducing $\dbar_\pi$ ($\dbar_f$ is independent of the choice of $A$) and $\tau:A^{1,0}(S,\ad P)\to A^{1,0}(S,f_*T_{X/S})$ is induced by the morphism $\tau$ in Lemma \ref{lem:tau_associated}. Similarly to Lemma \ref{lem:curvature_correspondence}, we have the following.
\begin{lemma}\label{lem:pseudo_curvature_correspondence}
	The pseudo-curvature of the induced operator $D''=\dbar_f+\tau(\theta)$ satisfies
	\[G_{D''}=\tau(G_{(\dbar_\pi,\theta)}).\]
\end{lemma}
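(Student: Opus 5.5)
The identity $G_{D''}=\tau(G_{(\dbar_\pi,\theta)})$ is an equality of tensors in $A^2(S,f_*T_{X/S})$. I will prove it locally, in a smooth trivialization $P|_U\cong U\times G$, and check it type by type, $(0,2)$, $(1,1)$ and $(2,0)$, exactly as in Lemma \ref{lem:curvature_correspondence}. First fix a principal connection $A$ inducing $\dbar_\pi$, pulled back along the identity section as $A=A_i\D s^i+B_{\bar j}\D\bar s^j$. By Lemma \ref{lem:b_rel_holo_associated}, the associated $(0,1)$-connection is
\[
\nabla_A^{0,1}(\partial_{\bar j})=\partial_{\bar j}+\tau_0(B_{\bar j})+\overline{\tau_0(A_j)}.
\]
It is a lifting of $\dbar_f$ that is mixed relatively holomorphic, so $\Gamma_{\bar j}^\gamma=\tau_0(B_{\bar j})^\gamma$ in adapted coordinates $(s,z)$, with $z$ holomorphic coordinates on $Y$. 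Write $\theta=\theta_i\D s^i$ with $\theta_i:U\to\mathfrak g$. Then $\tau(\theta)=\tau_0(\theta_i)\D s^i$, which is relatively holomorphic because $\tau_0(\theta_i)$ is a holomorphic vector field on $Y$.

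\textbf{The $(0,2)$ and $(2,0)$ parts.} For the $(0,2)$-part, Lemma \ref{lem:curvature_correspondence} already gives $F^{0,2}_{\dbar_f}=\tau(F_A^{0,2})$. Remark \ref{rmk:curv_coresp} gives $F_A^{0,2}=-F^{0,2}_{\dbar_\pi}$, which is the $(0,2)$-component of $G_{(\dbar_\pi,\theta)}$. For the $(2,0)$-part, $\tau_0$ is a Lie algebra homomorphism into $H^0(Y,TY)$ with the bracket of vector fields. Hence
\[
[\tau(\theta)(\partial_i),\tau(\theta)(\partial_j)]=\tau_0([\theta_i,\theta_j]),
\]
that is, $G^{2,0}_{\tau(\theta)}=\tau(\tfrac12[\theta,\theta])$.

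\textbf{The $(1,1)$ part.} Here I use the local formula \eqref{eq:pseudo_curv11_loc}. Its terms give
\[
-\partial_{\bar j}\tau_0(\theta_i)+\theta_i^\gamma\partial_\gamma\Gamma^\alpha_{\bar j}-\Gamma^\gamma_{\bar j}\partial_\gamma\theta_i^\alpha
=\tau_0\bigl(-\partial_{\bar j}\theta_i\bigr)^\alpha+[\tau_0(\theta_i),\tau_0(B_{\bar j})]^\alpha .
\]
By the homomorphism property this equals $\tau_0\bigl(-\partial_{\bar j}\theta_i+[\theta_i,B_{\bar j}]\bigr)^\alpha$. On the other side, $(\D_A\theta)^{1,1}=\D_A(\theta_i\D s^i)$ has $\D s^i\wedge\D\bar s^j$-coefficient
\[
-\partial_{\bar j}\theta_i-[B_{\bar j},\theta_i]=-\partial_{\bar j}\theta_i+[\theta_i,B_{\bar j}],
\]
so the two expressions agree.

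\textbf{Main obstacle: signs.} The only real work is sign bookkeeping, and this is where I expect the argument could go wrong. The map $\tau_0$ is defined using $\exp(-t\xi)$, so I must check that $\tau_0$ is a homomorphism rather than an anti-homomorphism for the bracket convention used. This was already needed in Lemma \ref{lem:curvature_correspondence}, and I will take it from there. I must also check the sign convention for $\D_A\theta=\D\theta+[A,\theta]$ under the paper's bracket convention for forms. I will verify both against the $(1,1)$ curvature computation in Lemma \ref{lem:curvature_correspondence}, since it uses the same conventions. Finally, both sides are global tensors, so the local identities glue and the lemma follows.
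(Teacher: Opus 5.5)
Your proposal is correct and follows the paper's proof essentially step by step. The $(0,2)$-part comes from Lemma~\ref{lem:curvature_correspondence} together with $-F^{0,2}_{\dbar_\pi}=F^{0,2}_A$, the $(2,0)$-part from $\tau_0$ being a Lie algebra homomorphism, and the $(1,1)$-part from inserting $\Gamma^\gamma_{\bar j}=\tau_0(B_{\bar j})^\gamma$ into \eqref{eq:pseudo_curv11_loc} and comparing with $(\D_A\theta)^{1,1}=-(\partial_{\bar j}\theta_i+[B_{\bar j},\theta_i])\,\D s^i\wedge\D\bar s^j$. The signs you flagged are consistent with the paper's conventions: $\D_A\theta=\D\theta+[A,\theta]$ uses the stated bracket convention for forms, and $\tau_0$ is a homomorphism because of the $\exp(-t\xi)$ in its definition.
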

\begin{proof}
	By Lemma \ref{lem:curvature_correspondence}, $G_{D''}^{0,2}=\tau(G_{(\dbar_\pi,\theta)}^{0,2})$. Let $u, v \in TS$, we have
	\[ G_{D''}^{2,0}(u, v) = [\tau(\theta(u)), \tau(\theta(v))] = \tau([\theta(u),\theta(v)])=\tau\!\left(\tfrac12[\theta,\theta](u,v)\right)=\tau(G_{(\dbar_\pi,\theta)}^{2,0}(u,v)). \]
	$G_{D''}^{1,1}$ is locally given by \eqref{eq:pseudo_curv11_loc}, we compute
	\begin{align*}
		 -\partial_{\bar{j}} \tau_0(\theta_i)^\alpha + \tau_0(\theta_i)^\gamma \partial_\gamma \Gamma_{\bar{j}}^\alpha- \Gamma_{\bar{j}}^\gamma \partial_\gamma \tau_0(\theta_i)^\alpha&=\tau_0(-\partial_{\bar{j}}\theta_i)^\alpha-[\tau_0(B_{\bar{j}}), \tau_0(\theta_i)]^\alpha\\
		 &=-\tau_0(\partial_{\bar{j}}\theta_i+[B_{\bar{j}},\theta_i])^\alpha.
	\end{align*}
On the other hand,
\[G_{(\dbar_\pi,\theta)}^{1,1}=(\D_A\theta)^{1,1}=\dbar\theta+[B_{\bar{j}},\theta_i]\D \bar{s}^j\wedge \D s^i=-(\partial_{\bar{j}}\theta_i+[B_{\bar{j}},\theta_i])\D s^i\wedge\D \bar{s}^j.\]
Therefore, $G_{D''}^{1,1}=\tau(G_{(\dbar_\pi,\theta)}^{1,1})$.
\end{proof}

We shall define the complex conjugation of a relatively holomorphic almost Higgs field on a complex fiber bundle. Let $\theta: TS\to f_*T_{X/S}$ be such a Higgs field on a complex fiber bundle $f$. Then for $s\in S$, $\theta$ defines a $\CC$-linear map $\theta_s: T_s S\to H^0(X_s,TX_s)$, where $T_s S$ has constant finite dimension $n$.

\begin{definition}\label{def:metric_adapted}
 A fiberwise Kähler metric $\omega_{X/S}$ on $f$ is said to be \emph{aut-finite}, if for each $s\in S$, $\Aut(X_s,\omega_s)$ is a real Lie group of finite dimension independent of $s$. It is \emph{$\theta$-adapted}, if $\theta\in A^{1,0}(S,\mathfrak{k}_{X/S}^\CC)$, where $\mathfrak{k}_{X/S}$ is a smooth vector bundle on $S$, with fibers $\mathfrak{k}_s\subset \mathfrak{aut}(X_s,\omega_s)$ satisfying $\mathfrak{k}_s\cap \I\mathfrak{k}_s=\{0\}$, $\mathfrak{k}_s^\CC=\mathfrak{k}_s\oplus \I\mathfrak{k}_s$.
\end{definition}

\begin{definition}\label{def:comp_conj_higgs}
	Let $\theta$ be a relatively holomorphic almost Higgs field on $f$, and $\omega_{X/S}$ be a $\theta$-adapted fiberwise Kähler metric on $f$. The \emph{complex conjugate} $\bar \theta_{\omega_{X/S}}$ of $\theta$ with respect to $\omega_{X/S}$ (and $\mathfrak{k}_{X/S}$) is the element of $A^{0,1}(S,\mathfrak{k}_{X/S}^\CC)$ whose value at $s\in S$ is defined by sending $\bar{v}\in \widebar{T_s S}$ to $-\phi_s(\theta_s(v))$ where $\phi_s$ is the involution given by the real form $\mathfrak{k}_s$ of $\mathfrak{k}_s^\CC$.
\end{definition}
\begin{remark}
If there are no parallel vector fields on $(X_s,\omega_s)$, then $\mathfrak{aut}(X_s,\omega_s)\cap\I\mathfrak{aut}(X_s,\omega_s)=\{0\}$. We may choose $\mathfrak{k}_s=\mathfrak{aut}(X_s,\omega_s)$ whenever these spaces form a smooth vector bundle of finite rank over $S$.
\end{remark}
\begin{lemma}\label{lem:thetabar_assoc}
	Let $(\pi_P:P\to S,\theta_P)$ be a $G$-Higgs bundle, where $G=K^\CC$ is reductive with $K$ acting by Kähler isometries on a Kähler manifold $(Y,\omega_Y)$. Let $(f:X=P\times_G Y\to S,\theta=\tau(\theta_P))$ be the associated Higgs bundle as in Lemma \ref{lem:pseudo_curvature_correspondence}. Let $P_K$ be a Hermitian structure on $P$, which induces a fiberwise Kähler metric $\omega_{X/S}$ by Lemma \ref{lem:fib_kahler_reduction}. Assume that $\mathfrak k_Y:=\tau_0(\mathfrak k)$ satisfies $\mathfrak k_Y\cap\I\mathfrak k_Y=\{0\}$, and use $\mathfrak k_{X/S}:=P_K\times_K\mathfrak k_Y$ in Definition \ref{def:metric_adapted}. Then $\omega_{X/S}$ is $\theta$-adapted and $\bar{\theta}_{\omega_{X/S}}=\tau(\bar{\theta}_{P,K})$, where $\bar{\theta}_{P,K}\in A^{0,1}(S,\ad P)$ is the conjugate of $\theta_P$ with respect to the Hermitian structure $P_K$.
\end{lemma}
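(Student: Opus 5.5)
The argument is fiberwise and pointwise in $s\in S$, so I will work in local smooth sections of $P_K$ throughout. First I will check that $\mathfrak k_{X/S}:=P_K\times_K\mathfrak k_Y$ is a well-defined smooth vector bundle on $S$ whose fibers satisfy the conditions of Definition \ref{def:metric_adapted}. Since $\tau_0$ is $G$-equivariant, $\mathfrak k_Y=\tau_0(\mathfrak k)$ is $\Ad_K$-stable, so the associated bundle makes sense and is smooth. Fix $p\in (P_K)_s$. The fiber $\mathfrak k_s$ is identified, via $(\phi_p)_*$, with $\mathfrak k_Y$. Because $K$ preserves $\omega_Y$, every element of $\mathfrak k_Y$ has Killing real part; also $\phi_p$ is an isometry $(Y,\omega_Y)\to(X_s,\omega_s)$ by Lemma \ref{lem:fib_kahler_reduction}. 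Hence $\mathfrak k_s\subset\mathfrak{aut}(X_s,\omega_s)$. The condition $\mathfrak k_s\cap\I\mathfrak k_s=\{0\}$ transfers directly from the hypothesis on $\mathfrak k_Y$, and $\mathfrak k_s^\CC=(\phi_p)_*\tau_0(\mathfrak g)$ because $\mathfrak g=\mathfrak k\oplus\I\mathfrak k$ and $\tau_0$ is complex linear.

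Next I show $\theta\in A^{1,0}(S,\mathfrak k_{X/S}^\CC)$. By Lemma \ref{lem:tau_associated}, in the trivialization given by $p$ the section $\theta=\tau(\theta_P)$ at $s$ is $(\phi_p)_*\tau_0(\theta_P(p))$, where $\theta_P(p)\in\mathfrak g\otimes T^*_sS$. This takes values in $(\phi_p)_*\tau_0(\mathfrak g)=\mathfrak k_s^\CC$, and the values vary smoothly in $s$. This gives $\theta$-adaptedness.

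For the conjugate I compare the two involutions. Let $\phi_K$ be the conjugation on $\mathfrak g$ with fixed set $\mathfrak k$, and $\phi_Y$ the one on $\mathfrak k_Y^\CC$. As in the proof of Lemma \ref{lem:assoc_Chern_conn}, $\tau_0\comp\phi_K=\phi_Y\comp\tau_0$, since $\tau_0$ is complex linear and maps $\mathfrak k$ into $\mathfrak k_Y$. Transporting by $(\phi_p)_*$ gives $\phi_s\comp\tau=\tau\comp\phi_K$ fiberwise, where $\phi_K$ acts on $\ad P$ through $P_K$. By definition the principal conjugate is $\bar\theta_{P,K}(\bar v)=-\phi_K(\theta_P(v))$. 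Therefore
\[\bar\theta_{\omega_{X/S}}(\bar v)=-\phi_s(\tau(\theta_P(v)))=\tau(-\phi_K(\theta_P(v)))=\tau(\bar\theta_{P,K}(\bar v)).\]

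The only delicate point is independence of the choice of $p\in(P_K)_s$. Changing $p$ to $pk$ conjugates everything by $k$ (through $\Ad_k$ and $(L_k)_*$). By equivariance of $\tau_0$ and $\Ad_K$-invariance of $\mathfrak k$, both $\mathfrak k_s$ and $\phi_s$ are unchanged. I also need to fix the sign convention for $\bar\theta_{P,K}$ so that it agrees with Definition \ref{def:comp_conj_higgs}, namely $\bar\theta_{P,K}=-\phi_K(\theta_P)$, the usual $-\theta^{*}$. With that convention the argument is routine.
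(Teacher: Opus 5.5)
Your proposal is correct and follows the same route as the paper's proof. In both, one checks that $\mathfrak k_{X/S}=P_K\times_K\mathfrak k_Y$ is a smooth bundle whose complexification contains the values of $\theta=\tau(\theta_P)$, and then applies the identity $\tau_0\circ\phi_K=\phi_Y\circ\tau_0$ (from the proof of Lemma \ref{lem:assoc_Chern_conn}) to the coefficients of $\theta_P$. You simply spell out fiberwise checks that the paper leaves implicit.

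One small slip, in your closing parenthetical only. For $\mathfrak k=\mathfrak u(n)$ one has $\phi_K(X)=-X^{*}$, so your (correct) convention $\bar\theta_{P,K}=-\phi_K(\theta_P)$ is the usual adjoint $\theta_P^{*}$, not $-\theta_P^{*}$. Only the formula $-\phi_K(\theta_P)$ enters your argument, so nothing else is affected.
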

\begin{proof}
The bundle $\mathfrak k_{X/S}=P_K\times_K\mathfrak k_Y$ is smooth, and $\theta$ takes values in its complexification. The conjugations satisfy $-\phi_Y\comp\tau_0=-\tau_0\comp\phi_K$, as in the proof of Lemma \ref{lem:assoc_Chern_conn}. Applying this identity to the coefficients of $\theta_P$ proves the assertion.
\end{proof}

\begin{remark}
A harmonic reduction of a flat principal $G$-bundle over a compact K\"ahler base induces the nonlinear Simpson structure relative to its associated atlas class whenever $\mathfrak k_Y\cap\I\mathfrak k_Y=\{0\}$. Let $N$ be the kernel of the action. If $N$ is an algebraic normal subgroup, then $G_{\mathrm{eff}}=G/N$ is reductive, $K_{\mathrm{eff}}$ is the image of $K$, and the quotient homomorphism induces a totally geodesic map $G/K\to G_{\mathrm{eff}}/K_{\mathrm{eff}}$. Composing with it preserves equivariant harmonicity. These conditions are not automatic for an arbitrary holomorphic action. However, if $\Stab_G(\omega_Y)=K$, then $N\subset K$ is a compact complex Lie subgroup with zero Lie algebra, hence finite, and $G/K\cong(G/N)/(K/N)$.
\end{remark}

\subsection{Simpson mechanism}\label{sec:simpson_mechanism}
In this subsection, we reformulate the construction of Simpson \cite[\S1]{Si1} in a way that generalizes to the fiber bundle setting. Throughout this subsection, all Chern connections are taken relative to a fixed $\mathfrak k$-compatible unitary atlas class as in Proposition \ref{prop:Chern_conn}, and the same real bundle $\mathfrak k_{X/S}$ is used to define complex conjugation.

\vspace{1em}
\noindent{\itshape From Higgs bundles to flat bundles:} Let $(f,\theta)$ be a Higgs bundle, $\dbar_f$ be the canonical $\dbar$-operator. Let $\omega_{X/S}$ be a fiberwise Kähler metric satisfying the conditions in Definition \ref{def:comp_conj_higgs}, so that the complex conjugate $\bar{\theta}_{\omega_{X/S}}$ is well-defined. We also regard $\bar{\theta}_{\omega_{X/S}}$ as a bundle map $f^*\widebar{TS} \to T_{X/S} \to TX^\CC/ \widebar{T_{X/S}}$.

Now we define a smooth bundle morphism $f^*\widebar{TS}\to TX^\CC/ \widebar{T_{X/S}}$ by
\begin{equation}
		\dbar_{\omega_{X/S}} := \dbar_f + \bar{\theta}_{\omega_{X/S}}. \label{eq:simpson_dbar_A}
\end{equation}
Since its composite with the projection $TX^\CC/ \widebar{T_{X/S}}\to f^*\widebar{TS}$ is the identity, it defines a $\dbar$-operator on $(f,T_{X/S})$. Next, we define an almost connection by
\begin{equation}\label{eq:simpson_partial_A}
	\partial_{\omega_{X/S}}:=\partial_{\omega_{X/S}}^{\mathrm{Ch}}+2\theta,
\end{equation}
where $\partial_{\omega_{X/S}}^{\mathrm{Ch}}$ is the Chern almost connection (assuming it exists) associated to $\dbar_{\omega_{X/S}}$ and $\omega_{X/S}$. Clearly, $\dbar_{\omega_{X/S}}$ and $\partial_{\omega_{X/S}}$ both satisfy the lifting conditions. Then the curvatures are well-defined, and $(\partial_{\omega_{X/S}},\dbar_{\omega_{X/S}})$ defines a flat holomorphic bundle structure $(f,\nabla_{\omega_{X/S}}^{1,0})$, where $\nabla_{\omega_{X/S}}^{1,0}$ is the $(1,0)$-connection determined by $\hat{D}_{\omega_{X/S}}:=\partial_{\omega_{X/S}}+\dbar_{\omega_{X/S}}$, if and only if \[F_{\nabla_{\omega_{X/S}}^{1,0}}:=F_{\partial_{\omega_{X/S}}}^{2,0}+F_{\hat{D}_{\omega_{X/S}}}^{1,1}+F_{\dbar_{\omega_{X/S}}}^{0,2}=0\in A^2(S,f_*T_{X/S}).\]

\vspace{0.5em}
\noindent{\itshape From flat bundles to Higgs bundles:}
Let $(f,\nabla^{1,0})$ be a flat holomorphic fiber bundle, $\dbar_\nabla$ be the canonical $\dbar$-operator and $\partial_{\nabla}$ be the almost connection determined by $\nabla^{1,0}$. Let $\omega_{X/S}$ be a fiberwise Kähler metric on $f$ and $\partial_{\omega_{X/S}}^{\mathrm{Ch}}$ be the Chern almost connection (assume it exists) associated to $\dbar_\nabla$ and $\omega_{X/S}$. We define an almost Higgs field by
\begin{equation}
		\theta_{\omega_{X/S}} := \tfrac{1}{2} (\partial_\nabla - \partial_{\omega_{X/S}}^{\mathrm{Ch}}), \label{eq:simpson_theta_B}
\end{equation}
which is clearly relatively holomorphic. Assume that $\omega_{X/S}$ satisfies the conditions in Definition \ref{def:comp_conj_higgs}. Next, we define a $\dbar$-operator on $(f,T_{X/S})$ by
\begin{equation}\label{eq:simpson_dbar_B}
		\dbar_{\omega_{X/S}} := \dbar_\nabla - \bar{\theta}_{\omega_{X/S}},
\end{equation}
	where $\bar{\theta}_{\omega_{X/S}}$ is the complex conjugate of $\theta_{\omega_{X/S}}$ with respect to $\omega_{X/S}$. Let $D''_{\omega_{X/S}}:=\dbar_{\omega_{X/S}}+\theta_{\omega_{X/S}}$, then $(\dbar_{\omega_{X/S}},\theta_{\omega_{X/S}})$ defines a Higgs bundle structure if and only if $G_{D''_{\omega_{X/S}}}=0$.

\begin{remark}
It is clear from the construction that the transformations \eqref{eq:simpson_dbar_A}-\eqref{eq:simpson_partial_A} and \eqref{eq:simpson_theta_B}-\eqref{eq:simpson_dbar_B} are inverse to each other.
\end{remark}

\subsection{$G$-harmonic metrics and Higgs bundles}
Let $(S,\omega_S)$ be a connected compact Kähler manifold and $(Y,\omega_Y)$ be a Kähler manifold. Let $G\leq \Aut_0(Y)$ be a connected complex Lie subgroup. The space of Kähler potentials relative to $\omega_Y$ is
\begin{equation}\label{eq:space_kah_poten}
	\calH_{\omega_Y}:=\big\{\phi\in C^\infty(Y,\RR)\, \big|\, \omega_\phi:=\omega_Y+\sqrt{-1}\,\partial\bar\partial\phi>0\big\}.
\end{equation}
If $Y$ is compact, this is a Fr\'echet manifold, being an open subset of $C^\infty(Y,\RR)$. The \emph{space of Kähler metrics} in the cohomology class $[\omega_Y]$ is
\begin{equation}\label{eq:space_kah_metric}
\calH_{\omega_Y,0}:=\{\omega\mid \omega\text{ is a K\"ahler form on }Y,\ [\omega]=[\omega_Y]\}.
\end{equation}
For notational simplicity, we will omit the subscript $\omega_Y$ from now on. If $Y$ is compact, the $\partial\bar\partial$-lemma gives $\calH_0=\{\omega_\phi\mid\phi\in\calH\}$ and $T_\phi\calH\cong C^\infty(Y,\RR)$. Smooth maps into $\calH_0$ are understood as smooth families of differential forms on $Y$.

\begin{lemma}\label{lem:flat_bundle_isomorphism}
	Let $\rho: \Gamma:=\pi_1(S,s_0) \to G$ be a representation. Let $X = \widetilde{S} \times_\rho Y$ be the flat fiber bundle and $P = \widetilde{S} \times_\rho G$ be the flat principal $G$-bundle corresponding to $\rho$, where $\pi: \widetilde{S}\to S$ is the universal cover. Then there is a canonical isomorphism $\Phi: P\times_G Y  \to X$ satisfying $\Phi \comp \phi_p = \varphi_{\tilde{s}} \comp g$, where $\varphi_{\tilde{s}}: Y \xrightarrow{\cong} X_s$ is defined by $\varphi_{\tilde{s}}(y) = [\tilde{s}, y]_\rho$ for $\tilde{s}\in \pi^{-1}(s)$, and $\phi_p: Y \xrightarrow{\cong} (P \times_G Y)_s$ is defined by $\phi_p(y) = [p, y]_G$ for $p=[\tilde s,g]_\rho\in P_s$.
\end{lemma}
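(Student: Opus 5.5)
The plan is to write down the map explicitly at the level of representatives and then check that it is well defined, equivariant, bijective and holomorphic. Recall the conventions: $P=\widetilde S\times_\rho G$ is the quotient of $\widetilde S\times G$ by $\gamma\cdot(\tilde s,g)=(\gamma\tilde s,\rho(\gamma)g)$, with right $G$-action $[\tilde s,g]_\rho\cdot h=[\tilde s,gh]_\rho$. Likewise $X$ is the quotient of $\widetilde S\times Y$ by $\gamma\cdot(\tilde s,y)=(\gamma\tilde s,\rho(\gamma)y)$. Finally $P\times_G Y$ is the quotient of $P\times Y$ by $(p,y)\sim(p h, h^{-1}y)$. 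I will define
\[
\Phi\bigl([[\tilde s,g]_\rho,y]_G\bigr):=[\tilde s,g\cdot y]_\rho .
\]

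First, well-definedness must be checked under both equivalence relations. Replacing $(p,y)$ by $(ph,h^{-1}y)$ gives $[\tilde s,gh\cdot h^{-1}y]_\rho=[\tilde s,g y]_\rho$. Replacing the representative $(\tilde s,g)$ of $p$ by $(\gamma\tilde s,\rho(\gamma)g)$ gives $[\gamma\tilde s,\rho(\gamma)g y]_\rho=[\tilde s,gy]_\rho$. Both checks use only that the $G$-action on $Y$ is a left action.

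Next, $\Phi$ covers the identity on $S$. The identity $\Phi\comp\phi_p=\varphi_{\tilde s}\comp g$ holds by construction: $\Phi(\phi_p(y))=\Phi([p,y]_G)=[\tilde s,gy]_\rho=\varphi_{\tilde s}(g y)$.

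For bijectivity, I would write down the inverse $[\tilde s,y]_\rho\mapsto[[\tilde s,e]_\rho,y]_G$. This is well defined because $[[\gamma\tilde s,e]_\rho,\rho(\gamma)y]_G=[[\tilde s,\rho(\gamma)^{-1}]_\rho,\rho(\gamma)y]_G=[[\tilde s,e]_\rho,y]_G$, using the right action. One then checks that the two composites are the identity.

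Smoothness and holomorphy are local statements. Lift to $\widetilde S\times G\times Y\to\widetilde S\times Y$, $(\tilde s,g,y)\mapsto(\tilde s,gy)$. This map is holomorphic because the action $G\times Y\to Y$ is holomorphic, and both quotient maps are local biholomorphisms or holomorphic submersions. The inverse lifts to $(\tilde s,y)\mapsto(\tilde s,e,y)$, which is also holomorphic. Canonicity follows because no choices were made beyond the universal cover.

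The main point to watch is consistency of the conventions for the $\Gamma$-actions and for the right $G$-action on $P$. Once these are fixed as above, every step is a one-line computation. If one also wants $\Phi$ to intertwine the canonical flat connections, that follows since the trivial horizontal distributions on $\widetilde S\times G\times Y$ and $\widetilde S\times Y$ correspond under the lift.
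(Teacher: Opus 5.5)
Your proposal is correct and follows the paper's proof: it uses the same formula $\Phi([[\tilde s,g]_\rho,y]_G)=[\tilde s,gy]_\rho$, the same two well-definedness checks, and the same one-line verification of $\Phi\circ\phi_p=\varphi_{\tilde s}\circ g$. Your explicit inverse $[\tilde s,y]_\rho\mapsto[[\tilde s,e]_\rho,y]_G$ and the holomorphy argument via lifts to $\widetilde S\times G\times Y$ supply details that the paper only asserts when it says $\Phi$ is a bundle isomorphism.
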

	\begin{proof}
		By the definition of the quotient $X$, $[\gamma\cdot\tilde{s}, y]_\rho = [\tilde{s}, \rho(\gamma)^{-1}y]_\rho$. Thus, $\varphi_{\gamma\cdot\tilde{s}} = \varphi_{\tilde{s}}\comp \rho(\gamma)^{-1}$. We define the map $\Phi: P \times_G Y \to X$ by \[ \Phi([[\tilde{s}, g]_\rho, y]_G) := [\tilde{s}, gy]_\rho. \]We verify this is well-defined. (i) Independence of the representative for $p\in P$: Let $[\tilde{s}', g']_\rho = [\tilde{s}, g]_\rho$. This means there exists $\gamma \in \Gamma$ such that $\tilde{s}' = \gamma \cdot \tilde{s}$ and $g' = \rho(\gamma) g$. Then \[ [\tilde{s}', g'y]_\rho = [\gamma\cdot\tilde{s}, (\rho(\gamma)g)y]_\rho. \]By the definition of the quotient $X$, this equals $[\tilde{s}, \rho(\gamma)^{-1}((\rho(\gamma)g)y)]_\rho = [\tilde{s}, gy]_\rho$.

		(ii) Independence of the representative for $[p, y]_G$: We use the standard associated bundle equivalence $[p\cdot h, h^{-1}y]_G = [p, y]_G$. If $p=[\tilde{s}, g]_\rho$, then $p\cdot h = [\tilde{s}, gh]_\rho$. \[ \Phi([p\cdot h, h^{-1}y]_G) = \Phi([[\tilde{s}, gh]_\rho, h^{-1}y]_G) = [\tilde{s}, (gh)(h^{-1}y)]_\rho = [\tilde{s}, gy]_\rho. \]

		The map $\Phi$ is a smooth bundle isomorphism over $S$. Let $p=[\tilde{s}, g]_\rho$. We have $(\Phi \comp \phi_p)(y) = \Phi([p, y]_G) = [\tilde{s}, gy]_\rho$. On the other hand, $(\varphi_{\tilde{s}} \comp g)(y) = \varphi_{\tilde{s}}(gy) = [\tilde{s}, gy]_\rho$. Thus, $\Phi \comp \phi_p = \varphi_{\tilde{s}} \comp g$.
	\end{proof}

\begin{lemma}\label{lem:fib_Kah_equivalence}
Let $f: X = \widetilde{S} \times_\rho Y \to S$ be a flat fiber bundle defined by $\rho: \Gamma \to G$, identified with the associated bundle of $P = \widetilde{S} \times_\rho G$ via Lemma \ref{lem:flat_bundle_isomorphism}. Then there is a natural bijection between the following two collections of objects:
\begin{enumerate}
    \item Fiberwise Kähler metrics $\omega_{X/S}$ on $X$ which are $G$-modeled on $(Y, [\omega_Y])$ (in the sense of Definition \ref{def:fib_kahler_model}).
    \item Smooth $\rho$-equivariant maps $h: \widetilde{S} \to \calH_0$, where the $\rho$-equivariance condition is
		\begin{equation}\label{eq:rho-equivariance}
		h(\gamma\cdot \tilde{s}) = {(\rho(\gamma)^{-1})}^\ast h(\tilde{s}),\qquad \forall\,\gamma\in\Gamma, \tilde{s}\in\widetilde S.
		\end{equation}
\end{enumerate}
\end{lemma}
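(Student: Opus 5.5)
The plan is to build both directions of the bijection explicitly from the fiber identifications $\varphi_{\tilde s}:Y\xrightarrow{\cong}X_{\pi(\tilde s)}$, $\varphi_{\tilde s}(y)=[\tilde s,y]_\rho$, of Lemma \ref{lem:flat_bundle_isomorphism}, and then check that the two constructions are inverse to each other.

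Given a fiberwise Kähler metric $\omega_{X/S}$, set
\[
h(\tilde s):=\varphi_{\tilde s}^*\,\omega_{\pi(\tilde s)}.
\]
Taking $p=[\tilde s,e]_\rho$ in Lemma \ref{lem:flat_bundle_isomorphism} gives $\Phi\comp\phi_p=\varphi_{\tilde s}$. Hence $h(\tilde s)=\phi_p^*\omega_s$, and the $G$-modeled condition on $(Y,[\omega_Y])$ says exactly that $h(\tilde s)$ is a Kähler form on $Y$ in the class $[\omega_Y]$. So $h$ takes values in $\calH_0$. For equivariance, the proof of Lemma \ref{lem:flat_bundle_isomorphism} shows $\varphi_{\gamma\cdot\tilde s}=\varphi_{\tilde s}\comp\rho(\gamma)^{-1}$. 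Therefore $h(\gamma\cdot\tilde s)=(\rho(\gamma)^{-1})^*h(\tilde s)$, which is \eqref{eq:rho-equivariance}.

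Smoothness of $h$, understood as a smooth family of forms on $Y$, is local over $\widetilde S$. Over a simply connected open $U\subset S$, the local section $U\to\widetilde S$ gives a smooth trivialization $U\times Y\cong f^{-1}(U)$, $(s,y)\mapsto\varphi_{\tilde s}(y)$. In this trivialization $h$ is just the fiber restriction of the smooth section $\omega_{X/S}$, so it is smooth.

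Conversely, given a smooth $\rho$-equivariant $h$, define
\[
\omega_s:=(\varphi_{\tilde s}^{-1})^*h(\tilde s)\quad\text{for any }\tilde s\in\pi^{-1}(s).
\]
This is well defined because, by equivariance,
\[
(\varphi_{\gamma\tilde s}^{-1})^*h(\gamma\tilde s)=(\varphi_{\tilde s}^{-1})^*(\rho(\gamma)^{-1})^{-1*}(\rho(\gamma)^{-1})^*h(\tilde s)=(\varphi_{\tilde s}^{-1})^*h(\tilde s).
\]
Each $\omega_s$ is Kähler on $X_s$ and is of type $(1,1)$ with respect to $J_{X/S}$, since $\varphi_{\tilde s}$ is a biholomorphism. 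Smoothness again follows in the local trivializations above.

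For the $G$-model condition, take an arbitrary $p=[\tilde s,g]_\rho$. Using $\Phi\comp\phi_p=\varphi_{\tilde s}\comp g$, we get $\phi_p^*\omega_s=g^*h(\tilde s)$. Because $G\le\Aut_0(Y)$ is connected, $g$ is isotopic to the identity, so $g^*$ acts trivially on $H^2(Y)$ and $[g^*h(\tilde s)]=[\omega_Y]$. The two constructions are visibly inverse to each other.

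The main obstacle is the smoothness bookkeeping: one must check that smooth maps into $\calH_0$ in the paper's sense correspond exactly to smooth sections of $\wedge^{1,1}T^*_{X/S}$. This should be immediate once everything is written in the flat local trivializations, since those trivializations identify the two notions directly.
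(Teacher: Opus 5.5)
Your proposal is correct and follows the paper's proof essentially verbatim. Both directions are built from $h(\tilde s)=\varphi_{\tilde s}^*\omega_s$ and its inverse; well-definedness and $\rho$-equivariance come from $\varphi_{\gamma\cdot\tilde s}=\varphi_{\tilde s}\circ\rho(\gamma)^{-1}$, and the $G$-model condition is checked via $\phi_p^*\omega_s=g^*h(\tilde s)$. Your extra remarks on smoothness in flat local trivializations, and on why $g^*$ preserves $[\omega_Y]$ (because $G\le\Aut_0(Y)$ is connected), only make explicit what the paper leaves implicit.
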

\begin{proof}
	(2) $\Rightarrow$ (1): Let $h: \widetilde{S} \to \calH_0$ be a $\rho$-equivariant map. We define $\omega_s:=(\varphi_{\tilde{s}}^{-1})^*  h(\tilde{s})$, where $\tilde{s}\in \pi^{-1}(s)$. Let $\tilde{s}'=\gamma\cdot\tilde{s}$, then
	$\varphi_{\tilde{s}'}^\ast \omega_s = (\varphi_{\tilde{s}}\comp \rho(\gamma)^{-1})^\ast \omega_s = {(\rho(\gamma)^{-1})}^\ast (\varphi_{\tilde{s}}^\ast \omega_s)= {(\rho(\gamma)^{-1})}^\ast h(\tilde{s})$. By the equivariance of $h$, this equals $h(\tilde{s}')$. Thus we obtain a well-defined fiberwise Kähler metric $\omega_{X/S}$. Let $p=[\tilde{s}, g]_\rho \in P_s$. Using $\phi_p = \varphi_{\tilde{s}} \comp g$ (by Lemma \ref{lem:flat_bundle_isomorphism}, where we omitted $\Phi$), we have $\phi_p^* \omega_s = (\varphi_{\tilde{s}} \comp g)^* \omega_s = g^* (\varphi_{\tilde{s}}^* \omega_s) = g^* h(\tilde{s})$. Since $h(\tilde{s}) \in \calH_0$ and $G$ preserves the class, $g^* h(\tilde{s}) \in \calH_0$.

	(1) $\Rightarrow$ (2): Let $\omega_{X/S}$ be $G$-modeled on $(Y, [\omega_Y])$. We define $h(\tilde{s}) := \varphi_{\tilde{s}}^\ast \omega_s$. Consider $p_e = [\tilde{s}, e]_\rho \in P_s$. Then $\phi_{p_e} = \varphi_{\tilde{s}} \comp e = \varphi_{\tilde{s}}$. By Definition \ref{def:fib_kahler_model}, $\phi_{p_e}^* \omega_s \in [\omega_Y]$. Therefore, $h(\tilde{s}) \in \calH_0$. $h$ is $\rho$-equivariant since
	\[h(\gamma\cdot \tilde{s}) = \varphi_{\gamma\cdot \tilde{s}}^\ast \omega_s = (\varphi_{\tilde{s}}\comp \rho(\gamma)^{-1})^\ast \omega_s = {(\rho(\gamma)^{-1})}^\ast (\varphi_{\tilde{s}}^\ast \omega_s) = {(\rho(\gamma)^{-1})}^\ast h(\tilde{s}). \qedhere\]
\end{proof}
\begin{corollary}\label{cor:fib_Kah_orbit_equivalence}
Under the assumptions of Lemma \ref{lem:fib_Kah_equivalence}, there is a natural bijection between the following two sets:
\begin{enumerate}
    \item Fiberwise Kähler metrics $\omega_{X/S}$ on $X$ which are $G$-modeled on $(Y, \omega_Y)$ (in the sense of Definition \ref{def:fib_kahler_model}).
    \item Smooth $\rho$-equivariant maps $h: \widetilde{S} \to G\cdot\omega_Y\cong G/K$, where $K:=\Stab_G(\omega_Y)$.
\end{enumerate}
\end{corollary}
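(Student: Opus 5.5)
The plan is to derive the corollary from Lemma \ref{lem:fib_Kah_equivalence} by restricting both sides of that bijection. First I check that the conditions match under the bijection constructed there: $h(\tilde s)=\varphi_{\tilde s}^*\omega_s$ and $\omega_s=(\varphi_{\tilde s}^{-1})^*h(\tilde s)$. Suppose $\omega_{X/S}$ is $G$-modeled on $(Y,\omega_Y)$. With $p_e=[\tilde s,e]_\rho$ we have $\phi_{p_e}=\varphi_{\tilde s}$. Definition \ref{def:fib_kahler_model} then gives $h(\tilde s)=\phi_{p_e}^*\omega_s\in G\cdot\omega_Y$. Since $G\le\Aut_0(Y)$ acts trivially on cohomology, $G\cdot\omega_Y\subset\calH_0$, so $h$ lands in the orbit.

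For the converse, suppose $h$ takes values in $G\cdot\omega_Y$. For $p=[\tilde s,g]_\rho$ the identity $\phi_p=\varphi_{\tilde s}\comp g$ gives
\[
\phi_p^*\omega_s=g^*h(\tilde s)\in G\cdot\omega_Y .
\]
This is exactly the $G$-modeled condition. The two maps are mutually inverse, as in Lemma \ref{lem:fib_Kah_equivalence}. The $\rho$-equivariance condition \eqref{eq:rho-equivariance} is preserved because $G\cdot\omega_Y$ is stable under the action of $\rho(\Gamma)\subset G$.

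It remains to identify $G\cdot\omega_Y$ with $G/K$ and to match smoothness. The orbit map $gK\mapsto (g^{-1})^*\omega_Y$ is a bijection $G/K\to G\cdot\omega_Y$ by the definition of $K=\Stab_G(\omega_Y)$. Under this bijection the equivariance condition becomes left multiplication by $\rho(\gamma)$ on $G/K$.

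The main obstacle is smoothness. A map into $G/K$ is smooth in the manifold sense, while a map into $\calH_0$ is smooth as a family of forms (a map into the Fréchet space of forms). In one direction, composing a smooth map into $G/K$ with the smooth orbit map gives a smooth family of forms.

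For the other direction I would use Lemma \ref{lem:smooth_orbit_lifting}. Given a smooth $h$ with values in the single orbit $G\cdot\omega_Y$, that lemma locally produces a smooth $g:U\to G$ with $h(\tilde s)=g(\tilde s)\cdot h(\tilde s_0)$. Writing $h(\tilde s_0)=g_0\cdot\omega_Y$, the map $\tilde s\mapsto g(\tilde s)g_0K$ is a smooth local lift to $G/K$. Lifts obtained from different choices agree as points of $G/K$, by injectivity of the orbit map, so they glue to a globally smooth map. Thus the identification $G\cdot\omega_Y\cong G/K$ is compatible with smoothness in both directions, and the bijection follows.
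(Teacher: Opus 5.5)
Your proposal is correct and follows the paper's proof: both restrict the bijection of Lemma \ref{lem:fib_Kah_equivalence}, using $\phi_{p_e}=\varphi_{\tilde s}$ in one direction and $\phi_p^*\omega_s=g^*h(\tilde s)$ in the other. Your additional treatment of the identification $G\cdot\omega_Y\cong G/K$ and of matching the two notions of smoothness via Lemma \ref{lem:smooth_orbit_lifting} is sound, and it makes explicit a point the paper leaves implicit.
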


\begin{proof}
We utilize the correspondence established in Lemma \ref{lem:fib_Kah_equivalence} and check that the condition of being $G$-modeled on $(Y, \omega_Y)$ is equivalent to the image of the corresponding map $h$ being contained in $G\cdot\omega_Y \subset \mathcal{H}_0$.

(2) $\Rightarrow$ (1): Let $p=[\tilde{s}, g]_\rho \in P_s$, $\phi_p^* \omega_s = g^* h(\tilde{s})\in G\cdot\omega_Y$.

(1) $\Rightarrow$ (2): Let $p_e = [\tilde{s}, e]_\rho \in P_s$, $\phi_{p_e}^* \omega_s \in G\cdot\omega_Y$. Since $\varphi_{\tilde{s}} = \phi_{p_e}$, $h(\tilde{s})=\phi_{p_e}^* \omega_s \in G\cdot\omega_Y$.
\end{proof}

From now on, we make the following assumption.
\begin{assumption}\label{ass:finite_dim}
$G$ is a connected complex reductive Lie subgroup of $\Aut_0(Y)$. $K = \Stab_G(\omega_Y)$ is a compact real form of $G$.
\end{assumption}
\begin{definition}
A fiberwise Kähler metric	$\omega_{X/S}$ on $f$ is called \emph{$G$-harmonic} if it corresponds via Corollary \ref{cor:fib_Kah_orbit_equivalence} to a $\rho$-equivariant harmonic map $h:\widetilde{S}\to G\cdot \omega_Y$, where $G\cdot \omega_Y\cong G/K$ is a Riemannian symmetric space.
\end{definition}

\begin{theorem}[{\cite{donaldson87twisted, corlette88}}]\label{thm:harm-DC}
Let $\rho:\Gamma\to G$ be a reductive representation. Then there exists a $\rho$-equivariant harmonic map $h: \widetilde{S} \to G/K$. Consequently, the flat bundle $f$ admits a $G$-harmonic fiberwise Kähler metric $\omega_{X/S}$.
\end{theorem}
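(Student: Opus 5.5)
The first assertion, existence of a $\rho$-equivariant harmonic map $h:\widetilde S\to G/K$ for reductive $\rho$, is the theorem of Donaldson and Corlette. We will quote it rather than reprove it. For completeness we recall the setup. $S$ is compact, $G$ is complex reductive, and $G/K$ is a Riemannian symmetric space of noncompact type, up to a Euclidean factor coming from the center. Reductivity of $\rho$ (reductive Zariski closure of the image) is exactly the hypothesis under which the equivariant heat flow, or a direct minimization of the energy over $\rho$-equivariant maps, converges. Corlette's argument runs as follows. Start from any smooth equivariant map; one exists because $G/K$ is contractible, so sections of $\widetilde S\times_\rho G/K$ exist. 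Run the Eells--Sampson flow. Nonpositive curvature gives uniform bounds. Reductivity rules out escape to infinity, since escape would produce a fixed point on the ideal boundary and hence a proper parabolic subgroup containing $\rho(\Gamma)$. For the Euclidean central factor, the problem reduces to harmonic maps into a flat space, which can be solved by Hodge theory.

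The second assertion is then formal, given Corollary \ref{cor:fib_Kah_orbit_equivalence}. Under Assumption \ref{ass:finite_dim} the orbit map $G/K\to G\cdot\omega_Y$, $gK\mapsto (g^{-1})^*\omega_Y$, is a $G$-equivariant bijection. It is a diffeomorphism onto its image in $\calH_0$, viewed as an embedded finite-dimensional submanifold (cf.\ the local lifting of Lemma \ref{lem:smooth_orbit_lifting}). We put on $G\cdot\omega_Y$ the symmetric metric transported from $G/K$, choosing the sign of the identification so that the $\rho$-equivariance \eqref{eq:rho-equivariance} matches left multiplication by $\rho(\gamma)$ on $G/K$. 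The harmonic map $h$ then yields a smooth $\rho$-equivariant map $\widetilde S\to G\cdot\omega_Y$. By Corollary \ref{cor:fib_Kah_orbit_equivalence} this map corresponds to a fiberwise Kähler metric $\omega_{X/S}$ that is $G$-modeled on $(Y,\omega_Y)$ on $X=\widetilde S\times_\rho Y$. By the definition of $G$-harmonic, this $\omega_{X/S}$ is $G$-harmonic.

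The main point to check is that the equivariance conventions agree. The identification $G/K\cong G\cdot\omega_Y$ must intertwine $\rho(\gamma)\cdot$ on $G/K$ with $(\rho(\gamma)^{-1})^*$ on forms. Taking $gK\mapsto (g^{-1})^*\omega_Y$ is well defined because $K$ stabilizes $\omega_Y$, and it does this: $(\rho(\gamma)g)^{-1\,*}\omega_Y=(\rho(\gamma)^{-1})^*(g^{-1})^*\omega_Y$. Smoothness of the resulting family of forms also needs checking, but it is clear because the orbit map is smooth in $g$. No further analysis is needed beyond the cited existence theorem.
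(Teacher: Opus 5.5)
Your proposal is correct and follows the paper, which gives no separate argument: existence is quoted from Donaldson--Corlette, and the ``consequently'' part is exactly the transfer of the equivariant harmonic map through the bijection of Corollary~\ref{cor:fib_Kah_orbit_equivalence}, via $gK\mapsto (g^{-1})^*\omega_Y$ (whose well-definedness and $\rho$-equivariance you check correctly), together with the definition of $G$-harmonic. Your sketch of Corlette's heat-flow argument is unnecessary and somewhat loose: a reductive $\rho(\Gamma)$ can fix points at infinity, for example when it lies in a Levi factor, and reductivity is what allows one to reduce to that Levi. Since you rely on the citation rather than this sketch, it does not affect the proof.
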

\begin{remark}\label{rmk:unique_equiv_harm}
	The $\rho$-equivariant harmonic map $h$ is unique up to post-composition with elements of the centralizer of $\rho(\Gamma)<G$ (\cite[Rmk.~6.7]{loustau2020harmonic}).
\end{remark}
\begin{remark}\label{rmk:global_form}
Assume further that the $K$-action is Hamiltonian. Then $\omega_{X/S}$ is induced by a relatively holomorphic relatively Kähler form $\omega$ by Lemma \ref{lem:fib_kahler_reduction}, Proposition \ref{prop:induced_rel_kahler}, and Corollary \ref{cor:assoc_rel_kahler_fiberwise_holo}.
\end{remark}

\begin{proposition}\label{thm:nonlinear_higgs_correspondence}
Let $(S,\omega_S)$ be a connected compact Kähler manifold and $(Y,\omega_Y)$ be a Kähler manifold. Let $G \leq \Aut_0(Y)$ be a connected complex reductive subgroup satisfying Assumption \ref{ass:finite_dim}. Let $\rho: \Gamma \to G$ be a reductive representation, and let $(f: X \to S, \nabla^{1,0})$ be the corresponding flat holomorphic fiber bundle. Let $\omega_{X/S}$ be a $G$-harmonic fiberwise Kähler metric on $f$ (which exists by Theorem \ref{thm:harm-DC}). The Higgs pair $(\dbar_{\omega_{X/S}}, \theta_{\omega_{X/S}})$ obtained from $(f, \nabla^{1,0})$ and $\omega_{X/S}$ via the Simpson mechanism (Section \ref{sec:simpson_mechanism}) is a Higgs bundle (pseudo-curvature vanishes) and is the one associated to the $G$-Higgs bundle $(\mathcal{E}_G, \varphi)$ constructed from $\rho$ via the classical nonabelian Hodge correspondence.

Furthermore, the resulting Higgs bundle $(f, \dbar_{\omega_{X/S}}, \theta_{\omega_{X/S}})$ is independent of the choices of the Kähler metric $\omega_Y$ on $Y$ such that $\Stab_G(\omega_Y)$ is a compact real form of $G$ and the Kähler metric $\omega_S$ on $S$. The Higgs bundle structure is also independent of the $G$-harmonic metric $\omega_{X/S}$ on the fixed flat bundle.
\end{proposition}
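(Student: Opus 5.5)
The strategy is to reduce everything to the principal bundle $P=\widetilde S\times_\rho G$ and the classical Corlette--Donaldson--Simpson theory. Then we transport the result to $f:X=P\times_GY\to S$ using the dictionary of \S\ref{subsec:assoc_bundle} and \S\ref{subsec:assoc_rel_kahler}.

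First, by Lemma \ref{lem:flat_bundle_isomorphism}, identify $(f,\nabla^{1,0})$ with the associated bundle of the flat principal bundle $(P,\nabla_P)$. Here $\nabla^{1,0}=\nabla^{1,0}_{A_\rho}$ for the flat principal connection $A_\rho$, and $\partial_\nabla,\dbar_\nabla$ are induced by $A_\rho$. By Corollary \ref{cor:fib_Kah_orbit_equivalence} and Lemma \ref{lem:fib_kahler_reduction}, the $G$-harmonic metric $\omega_{X/S}$ corresponds to a $\rho$-equivariant harmonic map $h:\widetilde S\to G/K$. Since $H=\Stab_G(\omega_Y)=K$ is compact, this is the same as a $K$-reduction $P_K$. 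Decompose $A_\rho=A_K+\Psi$ with $A_K$ a $K$-connection and $\Psi\in A^1(S,\I\,\ad P_K)$, and put $\theta_P:=\Psi^{1,0}$. Since $S$ is compact Kähler, the harmonic map is pluriharmonic (Siu/Sampson, Corlette). Hence by Simpson \cite{Si1} the pair $(\dbar_{A_\rho}-\bar\theta_{P,K},\theta_P)$ has vanishing pseudo-curvature $G_{(\dbar_\pi,\theta_P)}=0$, i.e.\ it is the $G$-Higgs bundle $(\mathcal E_G,\varphi)$.

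Second, I will match the nonlinear Simpson mechanism term by term. The Chern almost connection $\partial^{\mathrm{Ch}}_{\omega_{X/S}}$ is taken relative to the atlas class induced by $P_K$; this uses $\mathfrak k_Y\cap\I\mathfrak k_Y=0$, which holds since $\tau_0$ is injective because $G\le\Aut_0(Y)$. It is the almost connection induced by the principal Chern-type connection $A_K+\Psi^{0,1}_{\mathfrak k}$-part. Concretely, one shows $\partial^{\mathrm{Ch}}_{\omega_{X/S}}$ is induced by $A_K$ plus the $(1,0)$-shadow prescribed in Proposition \ref{prop:Chern_conn}, and then compares with Lemma \ref{lem:assoc_Chern_conn}. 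Since $\tau$ is affine and injective (Lemma \ref{lem:affine_map_associated}), this gives $\theta_{\omega_{X/S}}=\tfrac12(\partial_\nabla-\partial^{\mathrm{Ch}})=\tau(\theta_P)$. Lemma \ref{lem:thetabar_assoc} then gives $\bar\theta_{\omega_{X/S}}=\tau(\bar\theta_{P,K})$, so $\dbar_{\omega_{X/S}}$ is induced by the principal $\dbar$-operator $\dbar_{A_\rho}-\bar\theta_{P,K}$. Lemma \ref{lem:pseudo_curvature_correspondence} yields $G_{D''}=\tau(G_{(\dbar_\pi,\theta_P)})=0$, so the result is a genuine Higgs bundle, namely the associated bundle of $(\mathcal E_G,\varphi)$.

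The main obstacle is the identification of the Chern almost connection in the twisted $\dbar$-operator setting. Here the reference $\dbar$ is $\dbar_\nabla$, not the holomorphic structure of $\mathcal E_G$, so Lemma \ref{lem:assoc_Chern_conn} does not apply verbatim. I would verify it locally in a $P_K$-unitary chart. There $\dbar_\nabla-\dbar_a=\tau_0(B)$ with $B=(A_K+\Psi)^{0,1}$, and the formula $A_a=\phi B_a$ of Proposition \ref{prop:Chern_conn} gives $\tau_0(A_K^{1,0}-\Psi^{1,0})$. Subtracting from $\partial_\nabla=\tau_0((A_K+\Psi)^{1,0})$ leaves exactly $2\tau_0(\Psi^{1,0})$. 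The sign conventions for $\phi_{\mathfrak k}$ on $\I\mathfrak k$ need care here.

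Finally, for independence: changing $\omega_S$ does not change pluriharmonic equivariant maps on a compact Kähler base, hence not the Higgs bundle. This follows from uniqueness in \cite{corlette88} together with the fact that pluriharmonicity is metric independent. Changing the harmonic map within Remark \ref{rmk:unique_equiv_harm} by a centralizer element $c$ changes $P_K$ by the flat automorphism $c$ of $(P,\nabla_P)$, and hence gives an isomorphic, in fact equal up to this gauge, Higgs structure on $f$. Changing $\omega_Y$ to $\omega_Y'$ with $\Stab_G(\omega_Y')=K'$ a compact real form: the forms $K'$ and $K$ are $G$-conjugate, $K'=gKg^{-1}$. The equivariant harmonic maps correspond under right translation $G/K\to G/K'$, $xK\mapsto xg^{-1}K'$, so the same $\tau(\theta_P)$ and $\dbar$-operator arise.
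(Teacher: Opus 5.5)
\textbf{Gap: independence of the harmonic metric.} Your proof of the first part is sound and follows the paper. In a $P_K$-unitary chart, $\phi B_a=\tau_0(A_K^{1,0}-\Psi^{1,0})$ is exactly the paper's identity $A_{\mathrm{Ch}}^{1,0}=A_K^{1,0}-\varphi$, so $\theta_{\omega_{X/S}}=\tau(\Psi^{1,0})$. Lemma \ref{lem:thetabar_assoc} and Lemma \ref{lem:pseudo_curvature_correspondence} then give the vanishing pseudo-curvature. Contrary to your worry, Lemma \ref{lem:assoc_Chern_conn} does apply verbatim: $\dbar_\nabla$ is the holomorphic structure of the flat holomorphic principal bundle $P$, and the paper takes the Chern connection of $(P,A^{0,1},P_K)$. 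Your local check just reproves that.

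With $h'=c\cdot h$ for $c$ centralizing $\rho(\Gamma)$, you correctly find that $P_K'=\Phi_c(P_K)$ for the flat gauge transformation $\Phi_c$. Hence the flat automorphism of $f$ induced by $\Phi_c$ carries one Higgs structure to the other. That only shows the two structures are isomorphic, i.e.\ ``equal up to this gauge''. The statement, however, says the Higgs structure on the fixed bundle $f$ does not change. This literal equality is what Proposition \ref{prop:faithful_functor} later uses, when it identifies the endpoint Higgs operators coming from different harmonic reductions.

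Flatness of $\Phi_c$ alone only ties $\dbar_{A_K}\Phi_c$ to the action of $\Psi^{0,1}$, and $\partial_{A_K}\Phi_c$ to that of $\Psi^{1,0}$. It does not force $\Phi_c$ to be $\dbar_{A_K}$-holomorphic and to commute with $\Psi^{1,0}$ separately. The missing input is Simpson's Lemma 1.2 in \cite{Si1}: over a compact K\"ahler base, a flat section of a harmonic bundle is automatically a Higgs section.

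The paper applies this directly, and its route does not even need uniqueness modulo the centralizer:
\begin{enumerate}
\item Embed $G\hookrightarrow\GL(V)$ faithfully and fix a $K$-invariant Hermitian form. Both harmonic reductions then give harmonic metrics on the same flat vector bundle, because the induced map of symmetric spaces is totally geodesic.
\item The identity is a flat section of the Hom bundle between these two harmonic bundles, hence a Higgs section. So the two Dolbeault operators and Higgs fields coincide.
\item Injectivity of $\D\iota$ transfers this equality to the principal operators, and applying $\tau$ transfers it to $f$.
\end{enumerate}

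Your arguments for independence of $\omega_S$ and of $\omega_Y$ both finish by comparing two harmonic maps into the same $G/K$ (resp.\ $G/K'$). So they inherit this gap until the step above is added.
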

\begin{proof}
Let $P = \widetilde{S} \times_\rho G$ be the flat principal $G$-bundle associated to $\rho$, equipped with the flat connection $A$. By Lemma \ref{lem:flat_bundle_isomorphism}, we identify $X$ with the associated bundle $P \times_G Y$. Under this identification, the flat holomorphic connection $\nabla^{1,0}_A$ on $X$ is induced by $A$. By Theorem \ref{thm:harm-DC}, there exists a $\rho$-equivariant harmonic map $h: \widetilde{S} \to G/K$, where $K=\Stab_G(\omega_Y)$. This map determines a reduction of the structure group of $P$ to $K$, denoted by $P_K$. By Lemma \ref{lem:fib_kahler_reduction}, this reduction induces the $G$-harmonic fiberwise Kähler metric $\omega_{X/S}$ on $X$.

With respect to the reduction $\iota_K:P_K\to P$, the flat connection $A$ decomposes as $\iota_K^* A = A_K + \psi$, where $A_K$ is a connection on $P_K$ and $\psi \in A^1(S, P_K\times_K \I\mathfrak{k})$. The $G$-Higgs bundle $(\mathcal{E}_G, \varphi)$ is defined by the principal $\dbar$-operator determined by $A_K$ (extended to $P$) and the Higgs field $\varphi = \psi^{1,0}$ (see \cite[Lem.~1.1]{Si1} for $G=\GL(n,\CC)$ and \cite[\S6.2.3]{loustau2020harmonic} for general reductive groups). We now compute the operators defined by the Simpson mechanism in Section \ref{sec:simpson_mechanism}. It suffices to work locally as in \eqref{eq:princ_conn_local}, where we may write $A=A^{1,0}+A^{0,1}$ with $A^{1,0}\in A^{1,0}(U,\mathfrak{g})$, $A^{0,1}\in A^{0,1}(U,\mathfrak{g})$. Let $A_{\mathrm{Ch}}$ be the Chern connection on $P$ determined by the holomorphic structure associated to $A$ and the Hermitian structure $P_K$. By Lemma \ref{lem:assoc_Chern_conn}, $\partial_{\omega_{X/S}}^{\mathrm{Ch}}$ is induced by $A_{\mathrm{Ch}}$. Then $\theta_{\omega_{X/S}}$ is induced by $(A-A_{\mathrm{Ch}})^{1,0}/2$. Locally,
\[\psi^{1,0}(\partial_i)=\varphi(\partial_i)=(\psi(\partial_{x^i})-\I\psi(\partial_{y^i}))/2,\quad \psi^{0,1}(\partial_{\bar{i}})=(\psi(\partial_{x^i})+\I\psi(\partial_{y^i}))/2=\bar{\varphi}_K(\partial_{\bar{i}}). \]
Note that $\psi(\partial_{x^i}),\psi(\partial_{y^i})\in \I \mathfrak{k}$. This implies $A_{\mathrm{Ch}}^{1,0}=A_K^{1,0}-\varphi$, and then $(A-A_{\mathrm{Ch}})^{1,0}/2=\varphi$. By Lemma \ref{lem:thetabar_assoc}, the conjugate $\bar{\theta}_{\omega_{X/S}}$ is induced by $\bar{\varphi}_K=\psi^{0,1}$. Then $\dbar_{\omega_{X/S}}=\dbar_\nabla-\bar{\theta}_{\omega_{X/S}}$ is induced by $A^{0,1}-\psi^{0,1}=A_K^{0,1}$. Therefore $(\dbar_{\omega_{X/S}},\theta_{\omega_{X/S}})$ is induced by $(\mathcal{E}_G, \varphi)$, which is a nonlinear Higgs bundle. Here, all Chern connections and conjugations on $f$ are taken relative to the unitary atlas induced by local smooth sections of $P_K$.

Let $\omega_{Y}'$ be another Kähler metric on $Y$ such that $K'= \Stab_G(\omega_{Y}')$ is also a compact real form of $G$. There exists $u \in G$ such that $K' = u K u^{-1}$. This conjugation induces a $G$-equivariant isometry of symmetric spaces $\iota: G/K \to G/K'$ by $x K \mapsto x u^{-1} K'$. Then $h' := \iota \comp h$ is a $\rho$-equivariant harmonic map $\widetilde{S}\to G/K'$. Let $\sigma$ be a local frame corresponding to the Hermitian structure induced by $h$. Then $\sigma' = \sigma u^{-1}$ corresponds to $h'$. The flat connection in these frames is expressed by $A\in A^1(U,\mathfrak{g})$ and $A'= u A u^{-1}$ respectively. Correspondingly, we have the splitting
\[A'=A_K'+\psi', \text{ where } A_K'=u A_K u^{-1},~\psi'=u \psi u^{-1}.\]
Therefore, the Higgs bundles induced by $h$ and $h'$ are identical.

The Higgs bundle structure $(f, \dbar_{\omega_{X/S}}, \theta_{\omega_{X/S}})$ is independent of $\omega_S$ since the harmonicity implies the pluriharmonicity by the Siu–Sampson theorem \cite[Th.~4.1]{loustau2020harmonic} and the pluriharmonicity is independent of $\omega_S$.

To compare two harmonic $K$-reductions on the fixed flat principal bundle, choose a faithful representation $\iota:G\hookrightarrow\GL(V)$ and a $K$-invariant Hermitian form on $V$. The induced symmetric-space map is totally geodesic, so the reductions induce harmonic metrics on the same flat vector bundle $E=P\times_GV$. The identity is a flat section of the harmonic Hom bundle formed from these two harmonic structures. By \cite[Lem.~1.2]{Si1}, it is also a Higgs section, so the two vector-bundle Dolbeault operators and Higgs fields agree. Injectivity of $\D\iota$ implies equality of the principal operators, and applying $\tau$ gives equality of the nonlinear operators.
\end{proof}

\begin{definition}\label{def:admissible_flat_morphism}
Let $S$ be a connected compact K\"ahler manifold and set $\Gamma:=\pi_1(S,s_0)$. For $i=1,2$, let $X_i=\widetilde S\times_{\rho_i}Y_i$ be flat bundles with reductive representations $\rho_i:\Gamma\to G_i$, where $G_i\leq\Aut_0(Y_i)$ satisfy Assumption \ref{ass:finite_dim}. We call such flat bundles \emph{reductive of K\"ahler type}. A fiberwise holomorphic flat morphism $F:X_1\to X_2$, written as $F([\tilde s,y]_{\rho_1})=[\tilde s,\psi(y)]_{\rho_2}$, is called \emph{admissible} if there exist a (possibly disconnected) complex reductive Lie group $H$, a reductive representation $\sigma:\Gamma\to H$, and algebraic homomorphisms (with respect to the natural algebraic structures on the reductive groups) $p_i:H\to G_i$ such that
\[
p_i\comp\sigma=\rho_i,\qquad \psi(p_1(h)\cdot y)=p_2(h)\cdot\psi(y)\quad(h\in H,\ y\in Y_1).
\]
The auxiliary data are not part of the morphism.
\end{definition}
\begin{remark}
All flat complex-linear maps between semisimple flat vector bundles are admissible for their $\GL$-presentations. Indeed, the joint Zariski closure of the two monodromy representations is reductive, and the linear intertwining equations extend to this closure. See Proposition \ref{prop:flat_is_monodromy_associated} below for more general cases.
\end{remark}

\begin{proposition}\label{prop:faithful_functor}
 Let $S$ be a connected compact complex manifold admitting a Kähler metric. Let $\mathbf{RFB}(S)$ be the category whose objects are $(f:X\to S,\nabla^{1,0})$, each of which is a flat holomorphic bundle arising from a reductive representation $\rho:\Gamma\to G$ with $G$ satisfying  Assumption \ref{ass:finite_dim} for some Kähler form $\omega_Y$ on $Y$. For each object, fix such a presentation $X=\widetilde S\times_\rho Y$. The morphisms are the admissible morphisms $F: X_1 \to X_2$ of Definition \ref{def:admissible_flat_morphism}.

 Let $\mathbf{NHIG}(S)$ be the category whose objects are nonlinear Higgs bundles $(f:X\to S,\dbar_f,\theta)$, and whose morphisms, called \emph{Higgs morphisms}, are holomorphic bundle morphisms $F: X_1 \to X_2$ such that $\D F_x(\theta_1(v)_x)=\theta_2(v)_{F(x)}$ for every $s\in S$, $x\in(X_1)_s$, and $v\in T_sS$.

	There exists a faithful functor
	\[
	\mathsf{H}: \mathbf{RFB}(S) \longrightarrow \mathbf{NHIG}(S),
	\]
	which assigns to a reductive flat bundle the nonlinear Higgs bundle determined by the Simpson mechanism using a $G$-harmonic metric. On morphisms, $\mathsf H(F)=F$.
\end{proposition}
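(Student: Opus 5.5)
The plan is to define $\mathsf H$ on objects using Proposition \ref{thm:nonlinear_higgs_correspondence}, and on morphisms as the identity on underlying maps. The real work is to show that an admissible morphism $F$ is a Higgs morphism between the Simpson-mechanism outputs. Faithfulness and functoriality are then automatic, since $\mathsf H(F)=F$ as maps and composition and identities are preserved on the nose.

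\emph{Objects.} For $(X=\widetilde S\times_\rho Y,\nabla^{1,0})$ in $\mathbf{RFB}(S)$, I choose a $G$-harmonic fiberwise Kähler metric by Theorem \ref{thm:harm-DC}. I then set $\mathsf H(X)=(f,\dbar_{\omega_{X/S}},\theta_{\omega_{X/S}})$. By Proposition \ref{thm:nonlinear_higgs_correspondence}, this is a nonlinear Higgs bundle, and it is independent of $\omega_Y$, $\omega_S$ and the harmonic metric. It is moreover the associated bundle of the $G$-Higgs bundle $(\mathcal E_G,\varphi)$ attached to $\rho$, with $\theta=\tau(\varphi)$ and $\dbar$ induced by $A_K^{0,1}$.

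\emph{Morphisms.} Let $F$ be admissible, with data $(H,\sigma,p_1,p_2)$. I write $P_H=\widetilde S\times_\sigma H$. Then $X_i\cong P_H\times_H Y_i$, where $H$ acts on $Y_i$ through $p_i$, and $P_i\cong P_H\times_{p_i}G_i$. Under these identifications $F([q,y])=[q,\psi(y)]$, which is well defined by the equivariance of $\psi$. I would apply the harmonic-map theorem to the reductive $\sigma$ to get a harmonic $K_H$-reduction of $P_H$, giving a $\sigma$-equivariant harmonic map $h_H:\widetilde S\to H/K_H$. Its Simpson decomposition $A^H=A^H_{K}+\psi^H$ yields an $H$-Higgs bundle $(\mathcal E_H,\varphi_H)$.

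The key step, and the expected main obstacle, is to show that $h_H$ pushes forward to $G_i$-harmonic maps for $\rho_i$. The plan is to apply the algebraic homomorphism $p_1\times p_2:H\to G_1\times G_2$, whose image is reductive. By Mostow's theorem on compatible Cartan involutions, the compact real form $K_H$ can be chosen, after conjugating in $H$, so that $p_i(K_H)\subset K_i'$ for compact real forms $K_i'=u_iK_iu_i^{-1}$ of $G_i$. Since $p_i$ is then compatible with Cartan involutions, the induced map $H/K_H\to G_i/K_i'$ is totally geodesic, so $p_i\circ h_H$ is a $\rho_i$-equivariant harmonic map. The independence of $\omega_Y$ in Proposition \ref{thm:nonlinear_higgs_correspondence}, which allows conjugating $K_i$ by $u_i$, shows that replacing $K_i$ by $K_i'$ does not change $\mathsf H(X_i)$. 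Consequently, $p_i$ maps the decomposition $A^H_K+\psi^H$ to the corresponding decomposition on $P_i$. Thus $\mathsf H(X_i)$ is the bundle associated to $(\mathcal E_H,\varphi_H)$ via the action $H\to G_i\to\Aut(Y_i)$, with $\theta_i=\tau_i(\varphi_H)$, and Lemma \ref{lem:pseudo_curvature_correspondence} applies to it.

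It remains to check the Higgs-morphism conditions. Differentiating $\psi(p_1(h)y)=p_2(h)\psi(y)$ gives $\D\psi\circ\tau_{0,1}=\tau_{0,2}$ on $\mathfrak h$, pushed through $\D p_i$. In a local $H$-trivialization of $P_H$, the operators on $X_i$ have coefficients $\tau_{0,i}(B^H)$ and $\tau_{0,i}(\varphi_H)$, as in the proof of Lemma \ref{lem:b_rel_holo_associated}, while $F$ is $\mathrm{id}\times\psi$ with $\psi$ holomorphic. Hence $\D F$ maps the new $\overline{TX_1}$ into $\overline{TX_2}$, so $F$ is holomorphic for the new complex structures, and $\D F(\theta_1(v))=\theta_2(v)\circ F$. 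Therefore $\mathsf H(F)=F$ is a Higgs morphism, which completes the plan.
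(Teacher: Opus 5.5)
Your main step is correct and is essentially the paper's argument. You push a harmonic $K_H$-reduction forward along $p_i$ via the totally geodesic map $hK_H\mapsto p_i(h)\cdot o_i$. You then use independence of the Simpson output from $\omega_{Y_i}$ and from the harmonic metric. Finally, you read off holomorphicity of $F$ and the intertwining of Higgs fields from the differentiated equivariance of $\psi$. (Mostow is more than you need here. Since $p_i$ is holomorphic, any $c_i$ with $c_ip_i(K_H)c_i^{-1}\subset K_i$ works, and $\D p_i(\I\mathfrak k_H)=\I\,\D p_i(\mathfrak k_H)$ already gives total geodesy.)

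What is missing is the verification that $\mathbf{RFB}(S)$ is a category. You say functoriality is automatic because composition is preserved on the nose, but that presupposes that a composite of admissible morphisms is admissible. This is not immediate. $F_{12}$ and $F_{23}$ come with unrelated auxiliary data $(H_{12},\sigma_{12},p_1,p_2)$ and $(H_{23},\sigma_{23},p_2',p_3)$, and $\psi_{23}\circ\psi_{12}$ is only equivariant for a group on which the two maps to $G_2$ agree. The paper supplies this step as follows:
\begin{itemize}
\item Replace $H_{ij}$ by the reductive Zariski closures $R_{ij}$ of $\sigma_{ij}(\Gamma)$.
\item Let $L$ be the Zariski closure of $(\sigma_{12},\sigma_{23})(\Gamma)$ in $R_{12}\times R_{23}$.
\item $L$ is reductive: both projections are surjective, so the unipotent radical of $L^\circ$ maps to trivial groups in both factors.
\item $p_2\circ\mathrm{pr}_1=p_2'\circ\mathrm{pr}_2$ on $L$, since both are algebraic and agree (with $\rho_2$) on the Zariski-dense monodromy.
\end{itemize}
Hence $\psi_{23}\circ\psi_{12}$ is $L$-equivariant and the composite is admissible.

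A smaller point concerns connectedness of $H$. In Definition \ref{def:admissible_flat_morphism}, $H$ may be disconnected, whereas Theorem \ref{thm:harm-DC} is stated for connected groups, and you apply the existence theorem directly to $\sigma:\Gamma\to H$. That is acceptable if you cite the Donaldson--Corlette theorem in the generality of possibly disconnected reductive groups. The paper avoids this. It replaces $H$ by $\overline{\sigma(\Gamma)}^{\mathrm{Zar}}$ and passes to the finite unramified cover $q_S:S^\circ\to S$ attached to $\sigma^{-1}(H^\circ)$, where the monodromy is Zariski dense in the connected reductive group $H^\circ$. It then applies the harmonic-metric independence of Proposition \ref{thm:nonlinear_higgs_correspondence} on $S^\circ$ to the pulled-back endpoint operators. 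Finally, it descends the local conditions (holomorphicity of $F$ and $\D F\circ\theta_1=\theta_2\circ F$) through the local biholomorphism $q_S$. You should either adopt this reduction or state explicitly which version of the existence theorem you use.
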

\begin{proof}
We first check that admissible maps are closed under composition. Replace each auxiliary $H_{ij}$ by the reductive Zariski closure $R_{ij}$ of its monodromy. Let
\[
L:=\overline{(\sigma_{12},\sigma_{23})(\Gamma)}^{\mathrm{Zar}}\subset R_{12}\times R_{23}.
\]
Both projections are surjective. The unipotent radical of $L^\circ$ maps to connected normal unipotent subgroups of both reductive factors, which are trivial. Therefore, $L$ is reductive. The two homomorphisms from $L$ to the middle group $G_2$ (note that the auxiliary data provide algebraic homomorphisms $H_{12}\to G_2$ and $H_{23}\to G_2$) agree on the dense monodromy and hence agree everywhere. The composite fiber map is equivariant for $L$, which supplies auxiliary data for the composite. Identity maps are admissible for the auxiliary data $H=G$, $\sigma=\rho$, $p_1=p_2=\mathrm{id}$.

Let $F$ be admissible for $(H,\sigma,p_1,p_2)$, and replace $H$ by $\overline{\sigma(\Gamma)}^{\mathrm{Zar}}$. Pass to the connected finite unramified holomorphic cover $q_S:S^\circ\to S$ corresponding to $\sigma^{-1}(H^\circ)$. The restricted representation is Zariski dense in the connected reductive group $H^\circ$. By Theorem \ref{thm:harm-DC}, it admits a harmonic $K_H$-reduction, where $K_H\subset H^\circ$ is a maximal compact subgroup.

For each $i$, choose $c_i\in G_i$ such that $c_i p_i(K_H)c_i^{-1}\subset K_i$. The map
\[
H^\circ/K_H\longrightarrow G_i/K_i,\qquad hK_H\longmapsto p_i(h)c_i^{-1}K_i,
\]
is equivariant and totally geodesic, so the common harmonic reduction induces a harmonic reduction of each endpoint principal $G_i$-bundle over $S^\circ$. Differentiated equivariance of $\psi$ makes $q_S^*F$ preserve the Higgs data associated to this principal harmonic reduction. More explicitly, after the frame changes by $c_i^{-1}$, the fiber map is $\psi_c(y)=c_2\psi(c_1^{-1}y)$, which is equivariant for the conjugated homomorphisms and induces the same original bundle map $F$.

The harmonic-reduction independence proved in Proposition \ref{thm:nonlinear_higgs_correspondence} identifies these endpoint Higgs operators with the pullbacks of the independently chosen endpoint operators. Thus $q_S^*F$ is holomorphic and intertwines the Higgs fields for those operators. These equations descend through the surjective local biholomorphism $q_S$, so $F$ itself is a Higgs-bundle morphism. The assignment $\mathsf H(F)=F$ preserves identities and composition and is faithful.
\end{proof}

Theorem \ref{thm:HolomorphicRH} identifies fiberwise holomorphic flat morphisms with monodromy-equivariant holomorphic maps between the reference fibers.
The following proposition determines when such a morphism is admissible relative to the fixed reductive representations.

\begin{proposition}\label{prop:flat_is_monodromy_associated}
Let $S$ be a connected compact K\"ahler manifold, and let $X_i=\widetilde S\times_{\rho_i}Y_i$, $i=1,2$, be two flat bundles reductive of K\"ahler type as in Definition \ref{def:admissible_flat_morphism}.
Let $F:X_1\to X_2$ be a fiberwise holomorphic flat morphism, and let $\psi:Y_1\to Y_2$ be the corresponding holomorphic map, so that
\[
\psi\comp\rho_1(\gamma)=\rho_2(\gamma)\comp\psi,\qquad F([\tilde s,y]_{\rho_1})=[\tilde s,\psi(y)]_{\rho_2}.
\]
Set
\[
\mathbf G:=\overline{(\rho_1,\rho_2)(\Gamma)}^{\mathrm{Zar}}\subseteq G_1\times G_2,\qquad \hat\rho:=(\rho_1,\rho_2):\Gamma\to\mathbf G,\qquad p_i:=\operatorname{pr}_i|_{\mathbf G}.
\]
Then the following statements hold.
\begin{enumerate}[label=(\roman*)]
\item The (possibly disconnected) algebraic group $\mathbf G$ is reductive, and $F$ is admissible if and only if $\psi$ is $\mathbf G$-equivariant, i.e., $\psi(p_1(g)y)=p_2(g)\psi(y)$ for all $g\in\mathbf G$, $y\in Y_1$. When this holds, $(\mathbf G,\hat\rho,p_1,p_2)$ provides auxiliary data for the admissibility of $F$.
\item Suppose that $Y_i$ are smooth quasi-projective varieties, that the $G_i$-actions $G_i\times Y_i\to Y_i$ are algebraic, and that $\psi$ is algebraic.
Then $F$ is admissible.
In particular, if the $Y_i$ are projective and the $G_i$-actions are algebraic, every fiberwise holomorphic flat morphism $X_1\to X_2$ is admissible.
\item Suppose that $Y_i=V_i$ are finite-dimensional complex vector spaces and $G_i=\GL(V_i)$ acts by the standard linear action.
Then every fiberwise holomorphic flat morphism $X_1\to X_2$ is admissible.
\end{enumerate}
\end{proposition}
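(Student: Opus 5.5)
For (i), I will first show that $\mathbf G$ is reductive, using the same argument as in the proof of Proposition \ref{prop:faithful_functor}. Since $\rho_i$ is reductive, $R_i:=\overline{\rho_i(\Gamma)}^{\mathrm{Zar}}$ is reductive. The projections $p_i:\mathbf G\to R_i$ are surjective, because their images are closed and contain $\rho_i(\Gamma)$. The unipotent radical $U$ of $\mathbf G^\circ$ is normal in $\mathbf G$, so $p_i(U)$ is a connected normal unipotent subgroup of $R_i$, hence trivial. Thus $U\subset\ker p_1\cap\ker p_2=\{e\}$, and $\mathbf G$ is reductive.

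The "if" direction of the equivalence is immediate: $(\mathbf G,\hat\rho,p_1,p_2)$ satisfies $p_i\comp\hat\rho=\rho_i$, and $\hat\rho$ is reductive because its Zariski closure is $\mathbf G$. For "only if", suppose $(H,\sigma,q_1,q_2)$ witnesses admissibility. Replace $H$ by $\overline{\sigma(\Gamma)}^{\mathrm{Zar}}$. The map $(q_1,q_2):H\to G_1\times G_2$ is algebraic, so its image is closed and therefore equals $\overline{(q_1,q_2)\sigma(\Gamma)}=\mathbf G$. Every $g\in\mathbf G$ is then $(q_1(h),q_2(h))$ for some $h$, so $\psi(p_1(g)y)=\psi(q_1(h)y)=q_2(h)\psi(y)=p_2(g)\psi(y)$.

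For (ii), by (i) it suffices to prove $\mathbf G$-equivariance. For fixed $y$, consider
\[
Z:=\{g\in\mathbf G\mid \psi(p_1(g)y)=p_2(g)\psi(y)\ \forall y\in Y_1\}.
\]
Since $\psi$ and the actions are algebraic, for each $y$ the map $g\mapsto(\psi(p_1(g)y),p_2(g)\psi(y))\in Y_2\times Y_2$ is a morphism. Its preimage of the diagonal is closed because $Y_2$ is separated. So $Z$ is an intersection of Zariski closed sets, hence closed. It contains $\hat\rho(\Gamma)$ by the equivariance of $\psi$, and therefore $Z=\mathbf G$. For the projective case, GAGA makes $\psi$, a holomorphic map between projective varieties, algebraic. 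For (iii), the same argument applies after observing that $\psi$ need not be algebraic a priori. Here I would use the fact that the condition is linear in $g$ inside $\End(V_1)\times\End(V_2)$: for fixed $y$ it requires, for each linear functional $\lambda$ on $V_2$, that $\lambda\comp\psi$ composed with the action be compared with $p_2(g)\psi(y)$. Expanding $\psi$ as a convergent power series into homogeneous polynomial components $\psi_k$, equivariance under linear maps splits by degree into $\psi_k(p_1(g)y)=p_2(g)\psi_k(y)$, because the action is linear and preserves degrees. Each of these is a polynomial condition in $g$, so the closed-set argument applies again.

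I expect the main obstacle to be the degree splitting in (iii), since one must check that $\Gamma$-equivariance of $\psi$ implies equivariance of each homogeneous component. This holds because the linear action commutes with the scaling $y\mapsto ty$ and the decomposition into homogeneous components is unique. A secondary point is the surjectivity and closedness of images of algebraic homomorphisms of algebraic groups, which I take as standard.
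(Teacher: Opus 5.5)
Your proposal is correct and follows the paper's proof essentially step for step: the unipotent-radical argument for the reductivity of $\mathbf G$; for the converse in (i), the observation that the image of $H$ in $G_1\times G_2$ is Zariski closed and hence contains $\mathbf G$; the Zariski-closedness of the equivariance locus in (ii); and, in (iii), the expansion of $\psi$ into homogeneous components with comparison of coefficients along $y\mapsto ty$. The only blemish is your aside in (iii) that the condition is ``linear in $g$'', which is false for nonlinear $\psi$, but the degree-by-degree decomposition you then use is exactly what makes each condition polynomial in $g$, so the closed-set argument followed by summing the convergent series goes through.
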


\begin{proof}
(i) The description of $F$ in terms of $\psi$ is provided by Theorem \ref{thm:HolomorphicRH}. Let $R_i:=\overline{\rho_i(\Gamma)}^{\mathrm{Zar}}\subseteq G_i$. Each $R_i$ is reductive since $\rho_i$ is reductive. The image of an algebraic-group homomorphism is Zariski closed, so $p_i(\mathbf G)$ is a closed subgroup containing $\rho_i(\Gamma)$. Since $\mathbf G\subseteq R_1\times R_2$, it follows that $p_i(\mathbf G)=R_i$. Let $U=R_u(\mathbf G^\circ)$ be the unipotent radical of the identity component. It is normal in $\mathbf G$. Then $p_i(U)$ is a connected normal unipotent algebraic subgroup of $R_i$, so $p_i(U)=\{e\}$ for both $i$ and $U=\{e\}$. Therefore, $\mathbf G$ is reductive, and its Zariski-dense representation $\hat\rho$ is reductive.

If $\psi$ is $\mathbf G$-equivariant, the data $(\mathbf G,\hat\rho,p_1,p_2)$ satisfy the conditions of Definition \ref{def:admissible_flat_morphism}, proving admissibility. Conversely, suppose that $F$ is admissible for the auxiliary data consisting of a reductive algebraic group $H$, a reductive representation $\sigma:\Gamma\to H$, and algebraic homomorphisms $a_i:H\to G_i$. The image of $a:=(a_1,a_2):H\to G_1\times G_2$ is Zariski closed and contains $(\rho_1,\rho_2)(\Gamma)$ because $a_i\comp\sigma=\rho_i$. Hence $\mathbf G\subseteq a(H)$.
The defining $H$-equivariance of $\psi$ implies its equivariance under every pair in $a(H)$, and therefore under $\mathbf G$.

(ii) Fix $y\in Y_1$. The maps
\[
G_1\times G_2\longrightarrow Y_2,\qquad (g_1,g_2)\longmapsto\psi(g_1\cdot y),\qquad (g_1,g_2)\longmapsto g_2\cdot\psi(y)
\]
are algebraic. For each $y\in Y_1$, let $Z_y:=\{(g_1,g_2)\in G_1\times G_2\,|\, \psi(g_1\cdot y)=g_2\cdot\psi(y)\}$. $Z_y$ is Zariski closed, since it is the inverse image of the closed diagonal in $Y_2\times Y_2$ under the algebraic map $(g_1,g_2)\mapsto(\psi(g_1\cdot y),g_2\cdot\psi(y))$. Monodromy equivariance gives $(\rho_1,\rho_2)(\Gamma)\subseteq Z_y$, hence $\mathbf G\subseteq Z_y$. Since this holds for every $y$, the map $\psi$ is $\mathbf G$-equivariant, and \textup{(i)} applies. When both fibers are projective, the holomorphic map $\psi$ is algebraic, which proves the stated special case.

(iii) Expand the entire holomorphic map $\psi:V_1\to V_2$ at the origin as $\psi(v)=\sum_{d=0}^{\infty}P_d(v)$, where each $P_d:V_1\to V_2$ is a homogeneous polynomial of degree $d$, and the series converges uniformly on compact subsets of $V_1$. For $\gamma\in\Gamma$ and $v\in V_1$, apply monodromy equivariance to $tv$, $t\in\CC$, and compare Taylor coefficients in $t$.
Linearity of the monodromy actions gives
\[
P_d(\rho_1(\gamma)v)=\rho_2(\gamma)P_d(v)\qquad(d\geq0).
\]
The argument in \textup{(ii)}, applied to $P_d$ and the standard linear actions, yields
\[
P_d(p_1(g)v)=p_2(g)P_d(v)\qquad(g\in\mathbf G,\ d\geq0).
\]
Summing the convergent series and using the linearity of $p_2(g)$, we obtain
\[
\psi(p_1(g)v)=\sum_{d=0}^{\infty}P_d(p_1(g)v)=\sum_{d=0}^{\infty}p_2(g)P_d(v)=p_2(g)\psi(v).
\]
(i) now proves (iii).
\end{proof}

\begin{example}\label{ex:veronese_nonlinear_morphism}
Let $E=\widetilde S\times_\rho V$ be a semisimple flat complex vector bundle over $S$, where $\dim_{\CC}V=m\geq1$ and $\rho:\Gamma\to\GL(V)$ is reductive.
Fix an integer $d\geq2$ and put $V_d=\operatorname{Sym}^dV$, $N=\dim_{\CC}V_d=\binom{m+d-1}{d}$, and $\rho_d=\operatorname{Sym}^d\rho$.
The representation $\rho_d$ is reductive, since it is obtained by applying an algebraic representation to the reductive Zariski closure of $\rho(\Gamma)$.
Thus
\[
E_d:=\operatorname{Sym}^dE=\widetilde S\times_{\rho_d}V_d
\]
is again a semisimple flat vector bundle. The map
\[
v_d:V\longrightarrow V_d,\qquad v_d(v)=v^{\otimes d} \qquad (m=2,\ d=2:\ (v_0,v_1)\mapsto(v_0^2,\sqrt2\,v_0v_1,v_1^2)\bigr),
\]
satisfies $v_d(gv)=\operatorname{Sym}^d(g)v_d(v)$ for every $g\in\GL(V)$.
It induces a fiberwise holomorphic flat morphism
\[
F_d:E\longrightarrow E_d,\qquad F_d([\tilde s,v]_\rho)=[\tilde s,v_d(v)]_{\rho_d}.
\]
This map is admissible by Proposition \ref{prop:flat_is_monodromy_associated}. Explicitly, one may choose the auxiliary data
\[
H=\GL(V),\qquad \sigma=\rho,\qquad p_1=\mathrm{id}_{\GL(V)},\qquad p_2=\operatorname{Sym}^d:\GL(V)\to\GL(V_d).
\]

Since $v_d(tv)=t^dv_d(v)$ and $d\geq2$, while $v_d(v)\neq0$ for $v\neq0$, the map $F_d$ is not fiberwise complex-linear.
Therefore, admissible flat morphisms between semisimple flat vector bundles strictly generalize the usual complex-linear flat morphisms.
\end{example}

\subsection{Harmonic maps to $(\calH_0,g_{L^2})$}
When $Y$ is noncompact, its holomorphic automorphism group can be infinite-dimensional. A complete flat holomorphic bundle with a typical fiber $Y$ may not come from a representation of $\Gamma:=\pi_1(S,s_0)$ to a complex Lie subgroup of $\Aut(Y)$.

The following example gives a flat holomorphic bundle whose monodromy cannot be realized by an algebraic action of a finite-dimensional algebraic group on its reference fiber.
\begin{example}
Fix a set of generators of $\pi_1(\CC\setminus\{1,2\},0)$. Consider the representation $\rho: \pi_1(\CC\setminus\{1,2\},0)\to \Aut(\CC^2)$ given by
\[
(z_1,z_2)\mapsto a(z_1,z_2):=(z_2,z_1),\quad (z_1,z_2)\mapsto b(z_1,z_2):=(z_1,z_1^2+z_2).
\]
Then $\deg((a\comp b)^n)=2^n$. For an algebraic action of a finite-dimensional algebraic group on $\CC^2$, the degrees of all action maps are uniformly bounded. Consequently, $\rho$ cannot factor through such an algebraic action.
\end{example}
Eventually, we have to consider equivariant harmonic maps to an infinite-dimensional space, the space of Kähler metrics $\calH_0$. In the remainder of this subsection, assume that $Y$ is compact and connected. We show that $G$-harmonicity is equivalent to harmonicity with respect to the Mabuchi metric under Assumption \ref{ass:finite_dim}, provided that the $K$-action is Hamiltonian.

Recall that $\calH$ and $\calH_0$ are defined by \eqref{eq:space_kah_poten} and \eqref{eq:space_kah_metric}. Let
$V:=\int_{Y}\omega_\phi^m$
be the total volume, which is independent of $\phi\in \calH$. The Mabuchi $L^2$-metric on $\calH$ is defined by
\begin{equation}\label{eq:Mabuchi-metric}
g_{L^2}(\xi,\eta)_\phi:=\frac{1}{V}\int_{Y}\xi\eta\,\omega_\phi^m,\qquad \xi,\eta\in T_\phi\calH.
\end{equation}
The Levi-Civita connection $D$ of $g_{L^2}$ has the Christoffel symbol \cite[Lem.~3.1]{rubinstein10bergman}
\begin{equation}\label{eq:Christoffel-symbol}
\Gamma_\phi(\xi,\eta) = -\tfrac{1}{2}\,\langle \nabla_{g_\phi}\xi,\nabla_{g_\phi}\eta\rangle_{g_\phi},
\end{equation}
where $g_\phi(u,v)=\omega_\phi (u , J_Y v)$ is the Riemannian metric on $Y$ corresponding to $\omega_\phi$.

 The map $\Pi: \calH \to \calH_0$ given by $\phi \mapsto \omega_\phi$ is surjective, with fibers corresponding to the addition of constants.  $\mathcal{H}_0$ can be identified with a totally geodesic subspace of $\mathcal{H}$. This gives a Riemannian structure on $\mathcal{H}_0$. Moreover, $\calH$ is isometric to the Riemannian product $\calH_0\times\RR$, see \cite[\S2.4]{chen00}. In fact, $\calH_0\cong I^{-1}(0)$, the space of normalized Kähler potentials, where $I:\calH\to\RR$ is the Monge--Amp{\`e}re energy: \[I(\phi) = \frac{1}{(m+1)V}\sum_{j=0}^m \int_Y \phi \omega_Y^j \wedge \omega_\phi^{m-j}.\] Since $I(\phi+c)=I(\phi)+c$, for any K\"ahler form $\omega'\in[\omega_Y]$, there exists a unique $\phi\in\calH_0$ such that $\omega'=\omega_\phi$.

A path $\phi(t)$ in $\calH$ is a geodesic if it satisfies the geodesic equation $D_{\dot{\phi}}\dot{\phi}=0$, which expands using \eqref{eq:Christoffel-symbol} to
\begin{equation}\label{eq:geodesic-eq}
\ddot{\phi} - \tfrac{1}{2} |\nabla_{g_\phi} \dot{\phi}|_{g_\phi}^2 = 0.
\end{equation}

Let $G=\Aut_0(Y)$ be the identity component of the holomorphic automorphism group of $Y$, which acts (on the right) on $\calH_0$ via pullbacks. Let $g\in G$, then $[g^\ast\omega_Y]=[\omega_Y]$. Using the identification
$\calH_0\cong I^{-1}(0)$, it induces a map $R_g:I^{-1}(0)\to I^{-1}(0)$ by $\omega_{R_g(\phi)}:=g^\ast \omega_\phi$. Then $R_g(\phi)=R_g(0)+\phi\comp g $. Moreover, $R_g$ extends to a map $R_g:\calH\to\calH$ via
\[R_g(\phi)=R_g(\phi-I(\phi))+I(\phi),\quad \phi\in\calH.\]
It is known that $R_g$ is a differentiable $L^2$ isometry of $\calH$ \cite[\S2.3]{darvas21}.

By \cite{guedj14metric, darvas17}, the metric completion
$(\widebar{\calH},d_{g_{L^2}})$ of $(\calH,d_{g_{L^2}})$
is a Hadamard space, i.e. a complete geodesic metric space which has nonpositive curvature in the sense of Alexandrov. The isometry $\calH\cong \calH_0\times \RR$ extends to $\widebar{\calH}\cong \overline{\calH_0}\times \RR$, where $G$ also acts by isometries and restricts to a left action on $\overline{\calH_0}$, given by
\[
g\cdot \omega_\phi :=\lim_{i\to\infty} {(g^{-1})}^\ast\omega_{\phi_i},\qquad g\in G,
\]
where $\{\omega_{\phi_i}\}$ is a Cauchy sequence in $\calH_0$ converging to $\omega_\phi$. $\overline{\calH_0}$ is also a Hadamard space.

Harmonic maps with metric space targets have been studied in \cite{ks93, ks97}. When the target is $\calH_0$, we may describe the harmonic map more explicitly \cite{rubinstein10bergman}. Let $(N,g)$ be a compact oriented smooth Riemannian manifold (possibly with smooth boundary), with local coordinates $(y^1,\dots,y^n)$ and metric components $g_{ab}$ and inverse $g^{ab}$. A smooth map
\[
\phi: (N,g) \longrightarrow (\calH_0,g_{L^2}),\qquad y \longmapsto \phi(y,\cdot)\in\calH_0,
\]
has energy
\begin{equation}\label{eq:energy-functional}
E(\phi):=\int_N |\D\phi|^2\,\D V_{N,g} = \frac{1}{V}\int_{N\times Y} g^{ab}(y)\,\frac{\partial\phi}{\partial y^a}(y,x)\,\frac{\partial\phi}{\partial y^b}(y,x)\, \omega_{\phi}^m\wedge \D V_{N,g}.
\end{equation}
A harmonic map is a critical point of $E(\phi)$ under variations fixing the boundary values, equivalently it satisfies the weak Euler--Lagrange equation
\[
\D_D^\ast \D\phi = 0,
\]
where $\D_D$ is the exterior covariant derivative associated to the pullback of the Levi-Civita connection $D$ on $(\calH_{\omega_Y},g_{L^2})$ via $\phi$ and $\D_D^\ast$ is its formal adjoint. Locally, using \eqref{eq:Christoffel-symbol}, the Euler--Lagrange equation reads
\[
\Delta_N\phi - \tfrac{1}{2}\,g^{ab}\,\langle \nabla_{g_\phi}\partial_{y^a}\phi,\ \nabla_{g_\phi}\partial_{y^b}\phi\rangle_{g_\phi} = 0,
\]
where $\Delta_N=g^{ab}(\partial_{y^a}\partial_{y^b}-\Gamma_{ab}^c\partial_{y^c})$.

Let $h:\widetilde{S}\to \calH_0$ be $\rho$-equivariant. The energy density $|\D h|^2$ on $\widetilde{S}$ is $\Gamma$-invariant and descends to $S$. Define the energy of $h$ by
\begin{equation}\label{eq:equiv-energy-functional}
E(h):=\int_S |\D h|^2\,\D V_{S,g}.
\end{equation}
We call the fiberwise Kähler metric $\omega_{X/S}$ determined by $h$ via Lemma \ref{lem:fib_Kah_equivalence} \emph{harmonic} with respect to the Mabuchi metric if $h$ is an equivariant harmonic map, which means that $h$ is a critical point of \eqref{eq:equiv-energy-functional} and satisfies \eqref{eq:rho-equivariance}.
\begin{lemma}
The orbit $G\cdot\omega_Y$ is an embedded closed smooth submanifold of $\calH_0$.
\end{lemma}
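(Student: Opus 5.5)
The plan is to realize the orbit as the image of a proper injective immersion of $G/K$ into $\calH_0$, using the normalized-potential chart $\calH_0\cong I^{-1}(0)$ (a closed affine-type Fréchet submanifold of the open set $\calH\subset C^\infty(Y,\RR)$). The underlying assumptions are Assumption \ref{ass:finite_dim} together with the Hamiltonian hypothesis on the $K$-action.

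\textbf{Step 1: the orbit map.} Define
\[
\Psi:G/K\to I^{-1}(0),\qquad gK\mapsto \text{the unique normalized potential } \phi_g \text{ with } \omega_{\phi_g}=(g^{-1})^*\omega_Y .
\]
This map is well defined because $K=\Stab_G(\omega_Y)$, and it is injective for the same reason. It is smooth since $G$ acts smoothly on $C^\infty$ two-forms on $Y$. Moreover, normalization depends smoothly on the form: solve $\omega'-\omega_Y=\I\partial\dbar\phi$ by Hodge theory with a continuous linear inverse, then subtract $I(\phi)$.

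\textbf{Step 2: immersion.} Using $G$-equivariance (via the isometries $R_g$), it suffices to check injectivity of $\D\Psi$ at $eK$. For $\xi\in\mathfrak g=\mathfrak k\oplus\I\mathfrak k$, the derivative of $t\mapsto(\exp(-t\xi))^*\omega_Y$ is $\mathcal L_{\tilde\tau_0(\xi)}\omega_Y$, which vanishes on $\mathfrak k$. For $\xi=\I\eta$ with $\eta\in\mathfrak k$, we have $\tilde\tau_0(\I\eta)=J_Y\tilde\tau_0(\eta)$ up to sign. The Hamiltonian identity $\D\langle\mu,\eta\rangle=-\iota_{\tilde\tau_0(\eta)}\omega_Y$ then gives $\mathcal L_{J\tilde\tau_0(\eta)}\omega_Y=\pm\,2\I\partial\dbar\langle\mu,\eta\rangle$ (Cartan's formula plus $\D J\D=2\I\partial\dbar$). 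Hence $\D\Psi_{eK}(\I\eta)$ is the normalized function $\pm\bigl(\langle\mu,\eta\rangle-\text{mean}\bigr)$. This is zero only if $\mu_\eta$ is constant, i.e.\ $\tilde\tau_0(\eta)=0$, i.e.\ $\eta=0$ since $G\le\Aut_0(Y)$ acts effectively. So $\Psi$ is an injective immersion with finite-dimensional image of tangent spaces, and the Fréchet space $C^\infty(Y,\RR)$ splits off this finite-dimensional image using finitely many continuous functionals.

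\textbf{Step 3: properness, the main obstacle.} I would show $\Psi$ is proper, which makes it a homeomorphism onto a closed image and hence gives an embedded closed submanifold (locally a graph over the finite-dimensional tangent directions by the inverse function theorem in those coordinates, as in Lemma \ref{lem:smooth_orbit_lifting}). Take $g_i K$ with $(g_i^{-1})^*\omega_Y\to\omega'$ in $C^\infty$. Write $g_i=k_i\exp(\I\eta_i)$ via the Cartan decomposition. If $|\eta_i|\to\infty$, normalize $\eta_i/|\eta_i|\to\eta$. Along $t\mapsto\exp(\I t\eta)^*\omega_Y$, the potentials form a Mabuchi geodesic: the $G$-orbit is totally geodesic, since a $K$-invariant symmetric-space geodesic corresponds to a holomorphic-vector-field flow, which solves \eqref{eq:geodesic-eq}. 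The Mabuchi distance therefore grows linearly, $d(\omega_Y,\exp(\I t\eta)^*\omega_Y)=c\,t\|\mu_\eta\|_{L^2}$. A $C^\infty$-convergent sequence has bounded $d_{g_{L^2}}$-distance, because $d\le$ the $L^2$ norm of the potential difference along straight segments. This contradicts $\|\mu_\eta\|\ne0$ (from Step 2). Hence $\eta_i$ stay bounded, and compactness of $K$ gives a subsequence $g_i\to g$ with $\omega'=(g^{-1})^*\omega_Y$. The delicate points are justifying linear growth of the distance along the one-parameter subgroup (convexity/Hadamard property of $\overline{\calH_0}$ from \cite{darvas17} plus an explicit geodesic) and the comparison of $C^\infty$ and Mabuchi topologies; I would attempt exactly that route first.
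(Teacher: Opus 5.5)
Your argument is sound under the hypotheses you impose (Assumption \ref{ass:finite_dim} together with a Hamiltonian $K$-action), but it takes a genuinely different route from the paper's.

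\textbf{The paper's route.} The paper never analyses the orbit map. It proves that the $G$-action on $\calH_0$ is proper by passing to Riemannian metrics $\omega\mapsto\omega(\cdot,J_Y\cdot)$ and using properness of the $\Diff(Y)$-action on $\mathcal R(Y)$ (Ebin's theorem, via \cite[Th.~2.23]{corro21}). A sequence $g_n\in G$ with $g_n^*\omega\to\omega$ then has a subsequence converging in $C^\infty$ to a diffeomorphism. That limit is holomorphic as a $C^\infty$-limit of biholomorphisms, hence lies in $\Aut_0(Y)$, hence in $G$ because $G$ is closed. The argument uses no reductivity, no Cartan decomposition, no moment map and no Mabuchi geometry. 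So it covers every closed subgroup $G\le\Aut_0(Y)$, including $G=\Aut_0(Y)$ itself (the group fixed just before the lemma), which need not be reductive. Your Step 3 does not apply in that generality: it needs $G=\exp(\I\mathfrak k)K$ with $K$ compact, and it needs the moment map to make $t\mapsto\exp(\I t\eta)^*\omega_Y$ a Mabuchi geodesic.

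\textbf{What your route buys.} In exchange, you obtain intrinsic information that the paper only extracts afterwards. The tangent space of the orbit at $\omega_Y$ is the space of normalized Hamiltonians $\langle\mu,\eta\rangle-\mathrm{mean}$, $\eta\in\mathfrak k$. The Mabuchi distance grows linearly, $d(\omega_Y,\exp(\I\eta)^*\omega_Y)\asymp|\eta|$, which is essentially the content behind Proposition \ref{prop:orbit_tot_geodesic}. The price is the external fact that smooth geodesics of $\calH$ realize the Mabuchi distance (Chen's theorem that $d$ equals the length of the $C^{1,1}$ geodesic, plus uniqueness of that geodesic \cite{chen00}), which you rightly flag.

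\textbf{Two minor points.} First, for cosets $gK$ you should write $g_i=\exp(\I\eta_i)k_i$, or simply use that $K$ acts by Mabuchi isometries fixing $\omega_Y$. Second, passing to a limit direction $\eta$ is unnecessary. By Step 2 the map $\eta\mapsto\|\langle\mu,\eta\rangle-\mathrm{mean}\|_{L^2}$ is a norm on the finite-dimensional space $\mathfrak k$, which gives $d\ge c\,|\eta_i|$ directly.
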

\begin{proof}
It suffices to show the action $G \curvearrowright \calH_0$ is proper. It is known that the action $\Diff(Y) \curvearrowright \mathcal{R}(Y)$ is proper, where $\mathcal{R}(Y)$ is the space of Riemannian metrics (see e.g. \cite[Th.~2.23]{corro21}). By \cite[Prop.~2.2]{corro21}, we only need to show that for any $\omega\in \mathcal{H}_0$, if a sequence $\{g_n\}$ in $G$ is such that $\omega_n = g_n^*\omega$ converges smoothly in $\calH_0$ to $\omega$, then $\{g_n\}$ has a convergent subsequence in $G$. By \cite[Lem.~2.25]{corro21}, there is a subsequence (still denoted by $\{g_n\}$) converging in $C^\infty$ to $g\in \Diff(Y)$. The maps $g_n\in \Aut_0(Y)$ are holomorphic, so the limit $g$ is also holomorphic. Since $g\in \Diff(Y)$, $g^{-1}$ is also holomorphic. Therefore, $g\in \Aut_0(Y)$. Since $G$ is a closed subgroup of $\Aut_0(Y)$, and $g_n \in G$ converges to $g$ in $\Aut_0(Y)$, the limit $g$ must belong to $G$.
\end{proof}

\begin{proposition}\label{prop:orbit_tot_geodesic}
Let $G$ be a connected complex reductive closed Lie subgroup of $\mathrm{Aut}_0(Y)$. If the stabilizer $K=\Stab_G(\omega_Y)$ is a compact real form of $G$ and the action of $K$ on $(Y,\omega_Y)$ is Hamiltonian, then the orbit $G\cdot\omega_Y$ is a totally geodesic submanifold of $\mathcal{H}_0$. Therefore, a $G$-harmonic metric is equivalent to a metric $G$-modeled on $(Y,\omega_Y)$ that is harmonic with respect to the Mabuchi metric.
\end{proposition}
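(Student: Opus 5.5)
The plan is to show that every Mabuchi geodesic tangent to the orbit $G\cdot\omega_Y$ stays in the orbit. By $G$-equivariance of the action (each $R_g$ is an $L^2$ isometry), it suffices to check this at the base point $\omega_Y$. Since $G=K^\CC$ and $K$ fixes $\omega_Y$, the tangent space $T_{\omega_Y}(G\cdot\omega_Y)$ is spanned by the infinitesimal action of $\I\mathfrak{k}$.

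The first step identifies these tangent vectors as potentials via the moment map. For $\xi\in\mathfrak{k}$ with Hamiltonian $\mu^\xi:=\langle\mu,\xi\rangle$, the vector field $\tilde\tau_0(\I\xi)=J_Y\tilde\tau_0(\xi)$ (up to sign) is the $g_{\omega_Y}$-gradient of $\mu^\xi$. Hence $\frac{\D}{\D t}\big|_{t=0}\exp(t\I\xi)^*\omega_Y=\I\partial\dbar(c\,\mu^\xi)$ for a fixed constant $c$, normalized so that the variation lies in $I^{-1}(0)$. The candidate geodesic is therefore
\[\omega_t=\exp(-t\I\xi)^*\omega_Y=\omega_Y+\I\partial\dbar\phi_t,\qquad \dot\phi_t=c\,\mu^\xi\comp \exp(-t\I\xi)\]
(up to an additive normalization), where we use $K$-equivariance of $\mu$ to transport the Hamiltonian along the flow.

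The second step verifies the geodesic equation \eqref{eq:geodesic-eq}. Writing $F_t=\exp(-t\I\xi)$, we have $\omega_t=F_t^*\omega_Y$ and $\dot\phi_t=F_t^*(c\mu^\xi)$. Differentiating once more gives $\ddot\phi_t=F_t^*\bigl(c\,\D\mu^\xi(\text{generator})\bigr)$. The generator is the $\omega_Y$-gradient of $c\mu^\xi$, so $\ddot\phi_t=F_t^*\bigl(\tfrac{1}{2}|\nabla_{g_Y}(c\mu^\xi)|^2\bigr)$ for the right constant. Because $F_t$ is an isometry from $g_t$ to $g_Y$, this equals $\tfrac{1}{2}|\nabla_{g_t}\dot\phi_t|^2_{g_t}$, as required. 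This is the standard computation showing that one-parameter subgroups of $G$ generated by $\I\mathfrak{k}$ give Mabuchi geodesics.

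\textbf{Main obstacle.} The delicate part is fixing the constants: matching the normalization $\D\mu^\xi=-\iota_{\tilde\tau_0(\xi)}\omega_Y$, the convention for $\tilde\tau_0$, and the $I$-normalization so that the curve lies in $\calH_0\cong I^{-1}(0)$. Since $\calH_0$ is totally geodesic in $\calH$ and $\calH\cong\calH_0\times\RR$, the normalization changes only an additive constant, and adding a constant commutes with the geodesic equation.

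Geodesics in $\calH_0$ are unique given initial data, being solutions of the second-order ODE \eqref{eq:geodesic-eq} on smooth paths. Hence the geodesic with initial velocity $v$ tangent to the orbit is exactly the orbit curve, and the orbit (a closed embedded submanifold by the previous lemma) is totally geodesic.

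For the final assertion, $G\cdot\omega_Y\cong G/K$ with the induced $L^2$ metric is a $G$-invariant metric on the symmetric space; on each irreducible factor it agrees with the symmetric metric up to scale. Totally geodesic inclusions preserve the harmonic map equation: the second fundamental form vanishes, so the tension field in $\calH_0$ equals the tension field in the orbit. Therefore a $\rho$-equivariant map into $G\cdot\omega_Y$ is harmonic for the $L^2$ metric if and only if it is harmonic into $G/K$. Harmonicity into $G/K$ is insensitive to rescaling irreducible factors, which gives the equivalence via Corollary \ref{cor:fib_Kah_orbit_equivalence}.
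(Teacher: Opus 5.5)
Your argument is essentially the paper's. The paper also reduces to showing that the curves $t\mapsto\exp(t\I\xi)\cdot\omega_Y$, $\xi\in\mathfrak k$, are Mabuchi geodesics, translates by the isometric $G$-action, and identifies the two harmonic-map equations through the vanishing second fundamental form. The difference is that the paper cites \cite[Th.~3.5]{mabuchi87} for the geodesic property, whereas you carry out the holomorphic-flow computation by hand, correctly using the Hamiltonian hypothesis to make $J_Y\tilde\tau_0(\xi)$ a gradient field.

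The one step to repair is your appeal to uniqueness of geodesics. \eqref{eq:geodesic-eq} is not an ODE. Its right-hand side involves fiber derivatives of $\dot\phi$ and the metric $g_\phi$, so it is a fully nonlinear PDE, equivalently a Cauchy problem for the degenerate complex Monge--Amp\`ere equation. ODE uniqueness therefore gives you nothing on the Fr\'echet manifold $\calH_0$.

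You do not need uniqueness; either of two routes works.
\begin{enumerate}
\item Argue as the paper does. Every $G$-invariant metric on $G/K$ is symmetric, because the Cartan involution induces an isometric geodesic symmetry. Hence the geodesics of the orbit through $\omega_Y$ are exactly your orbit curves. Since these are Mabuchi geodesics, geodesics of the orbit are ambient geodesics.
\item Argue directly. If $v=\dot\gamma(0)$ for an orbit curve $\gamma$ that is an ambient geodesic, then $\mathrm{II}(v,v)=(D_{\dot\gamma}\dot\gamma)^\perp=0$; polarizing gives $\mathrm{II}=0$.
\end{enumerate}
Vanishing of $\mathrm{II}$ is all that the comparison of tension fields uses.

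Your observation that harmonicity into $G/K$ does not depend on which $G$-invariant metric is chosen (scalings on irreducible factors, an arbitrary constant metric on the flat factor) is a point the paper leaves implicit.
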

\begin{proof}
The induced Mabuchi metric on $G/K$ is $G$-invariant. Each geodesic through $\omega_Y$ is generated by an element of $\I\mathfrak k$ and, by the Hamiltonian hypothesis and \cite[Th.~3.5]{mabuchi87}, is also a Mabuchi geodesic. Translating by $G$ proves that $G\cdot\omega_Y$ is totally geodesic. The vanishing of the second fundamental form identifies the two harmonic-map equations.
\end{proof}

\begin{proposition}
Let $G$ be a connected complex reductive closed Lie subgroup of $\mathrm{Aut}_0(Y)$. If $G\cdot\omega_Y$ is a totally geodesic submanifold of $\mathcal{H}_0$, then $K = \mathrm{Stab}_G(\omega_Y)$ is a compact real form of $G$.
\end{proposition}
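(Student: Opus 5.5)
Since $K$ is a compact subgroup of the reductive group $G$, the plan is to show that the orbit map $G/K\to\calH_0$ embeds $G/K$ as a totally geodesic submanifold with nonpositive curvature. From this we will deduce that $\mathfrak g=\mathfrak k\oplus\I\mathfrak k$, which characterizes a compact real form.

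The key steps, in order, are as follows.

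\emph{Step 1: $K$ is compact.} The previous lemma shows the action is proper, so $K=\Stab_G(\omega_Y)$ is compact. Its Lie algebra $\mathfrak k$ consists of holomorphic vector fields whose real parts are Killing for $\omega_Y$. Hence $\mathfrak k\subset\mathfrak{aut}(Y,\omega_Y)$.

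\emph{Step 2: the tangent space of the orbit.} Identify $T_{\omega_Y}(G\cdot\omega_Y)$ with $\mathfrak g/\mathfrak k$. For $\xi\in\mathfrak g$, the variation $\frac{\D}{\D t}\exp(t\xi)^*\omega_Y$ is $\mathcal L_{\tilde\tau_0(\xi)}\omega_Y=\D\iota_{\tilde\tau_0(\xi)}\omega_Y$. This equals $\I\partial\dbar u_\xi$ for a real potential $u_\xi$ (by the $\partial\dbar$-lemma on compact $Y$), normalized via $I$.

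\emph{Step 3: using total geodesy.} Since $G\cdot\omega_Y$ is totally geodesic, the Mabuchi geodesic $\gamma_\xi$ from $\omega_Y$ with initial velocity $u_\xi$ stays in the orbit. Mabuchi geodesics with potential initial data $u$ that are realized by automorphisms are exactly the flows of $J\nabla u$, the imaginary gradient fields (Mabuchi). We would show $\gamma_\xi(t)=\exp(t\eta)^*\omega_Y$ for some $\eta\in\mathfrak g$ with $\tilde\tau_0(\eta)=\pm J\nabla_{g_Y}u_\xi$, so that $\I\eta$ is Killing-generated, i.e. $\I\eta\in\mathfrak k$ after subtracting a $\mathfrak k$-component. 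Writing $\xi=\eta+(\xi-\eta)$ with $\xi-\eta\in\mathfrak k$ (same tangent vector) gives $\mathfrak g=\mathfrak k+\I\mathfrak k$.

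\emph{Step 4: directness of the sum.} To get $\mathfrak k\cap\I\mathfrak k=\{0\}$, use compactness of $Y$. If $v,\I v$ are both Killing, then $v$ is parallel and its real part is a gradient of a potential with vanishing Hessian, hence constant. So $\I v$ induces the zero tangent vector, and $\exp(t\I v)$ fixes $\omega_Y$. The one-parameter group $\exp(t\I v)$ then lies in the compact $K$, while also $\exp(tv)\in K$. Then $\I$ acts on $\mathfrak k\cap\I\mathfrak k$, making it a complex subalgebra of the compact algebra $\mathfrak k$, hence abelian and complex, forcing a complex torus in $K$ acting holomorphically. Its orbits would give a nontrivial complex subgroup in the compact $K$. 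Such a subgroup must be trivial once we use that $G$ is reductive and the complex torus is closed in $G$ only if it is compact, which is impossible for a nontrivial complex linear group. Therefore $\mathfrak g=\mathfrak k\oplus\I\mathfrak k$, and $K$ is a compact real form, also using connectedness of $G$ and that $K$ meets every component.

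The main obstacle is Step 3: passing from "the geodesic stays in the orbit" to "it is generated by an element of $\I\mathfrak k$". Abstractly, the orbit geodesic is $\exp(t\eta)^*\omega_Y$ for some $\eta$ only up to a $K$-dependent reparametrization. We would first try to handle this by comparing the geodesic equation (eq:geodesic-eq) with the ODE satisfied by $\dot\phi$ along a one-parameter subgroup, i.e. $\ddot\phi=\tilde\tau_0(\eta)(\dot\phi)$. This forces $\mathrm{Re}\,\tilde\tau_0(\eta)=\frac12\nabla\dot\phi$, which makes $\eta$ a gradient holomorphic field, whose $J$-rotation is Killing.
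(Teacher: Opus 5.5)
\paragraph{Step 3 does not go through.} Step 3 carries the whole content of the proposition, and it is not established.

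\emph{What total geodesy gives.} It only tells you that the Mabuchi geodesic $\gamma_\xi$ stays in the orbit, i.e.\ $\gamma_\xi(t)=g(t)^*\omega_Y$ for some smooth curve $g(t)$ in $G$. For a homogeneous space $G/K$ with an invariant metric that is not yet known to be symmetric, nothing forces $g(t)$ to have the form $k(t)\exp(t\eta)$. In particular, the lift is not just ``a one-parameter subgroup up to a $K$-dependent reparametrization.''

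\emph{The cited fact is circular.} You invoke the statement that a geodesic realized by automorphisms must be generated by a gradient field. Mabuchi's theorem gives only the converse direction. The direction you need is essentially equivalent to $\mathfrak g=\mathfrak k+\I\mathfrak k$, which is what you are trying to prove. (Also, the fields generating nonconstant geodesics are gradients, i.e.\ $\eta\in\I\mathfrak k$. A real field $\tilde\tau_0(\eta)=\pm J\nabla u$ is Killing and gives a constant curve.)

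\emph{The proposed remedy falls short even if granted.} It presupposes the one-parameter form. Even granting that form, comparing $\ddot\phi=\tilde\tau_0(\eta)(\dot\phi)$ with \eqref{eq:geodesic-eq} yields only the scalar identity $\tilde\tau_0(\eta)(\dot\phi)=\tfrac12|\nabla\dot\phi|^2$. Write $\tilde\tau_0(\eta)$ as a Hamiltonian part $X_a$ plus a gradient part. Then the identity says only that the Poisson bracket of $a$ with $\dot\phi$ vanishes.
\begin{itemize}
\item This does not force $X_a=0$. For example, $\eta=(1+\I)k$ with $k\in\mathfrak k$ generates the same geodesic as $\I k$, yet $X_a\neq0$.
\item It also does not show that $X_a$ is holomorphic, i.e.\ the real part of an element of $\mathfrak k$.
\end{itemize}
So you cannot ``subtract a $\mathfrak k$-component'' without already knowing the conclusion.

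\paragraph{How the paper argues instead.} The paper never identifies orbit geodesics with one-parameter subgroups; it argues globally.
\begin{itemize}
\item The orbit $G\cdot\omega_Y\cong G/K$ with the induced Mabuchi metric is a finite-dimensional homogeneous Riemannian manifold, hence complete.
\item By total geodesy, its minimizing geodesics are geodesics of the Hadamard completion $\overline{\calH_0}$. Geodesics there are unique, so the orbit is geodesically convex and therefore contractible.
\item Take a maximal compact subgroup $L\supset K$. The Cartan decomposition $G=\exp(\I\mathfrak l)L$ deformation retracts $G/K$ onto the closed manifold $L/K$.
\item Hence $L/K$ is contractible, so it is a point by top-degree mod-two homology, and $K=L$.
\end{itemize}
The key input is global: uniqueness of geodesics in the CAT(0) completion. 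It is precisely what rules out, e.g., a flat cylinder orbit with trivial $K$. Your infinitesimal argument has no substitute for it.

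\paragraph{Step 4.} The claim that a parallel holomorphic field is the gradient of a potential with vanishing Hessian, hence zero, is false; translations along a flat torus factor are a counterexample. The step is unnecessary anyway. Once $\mathfrak g=\mathfrak k+\I\mathfrak k$ with $K$ compact, place $K$ inside a maximal compact $L$, which is a compact real form. Then $2\dim\mathfrak k\ge\dim_\RR\mathfrak g=2\dim\mathfrak l$ gives $\mathfrak k=\mathfrak l$. Since $L$ is connected, $K=L$.
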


\begin{proof}
Since $Y$ is compact and $G$ is closed in $\mathrm{Aut}_0(Y)$, $K$ is a closed subgroup of the compact group of holomorphic isometries of $(Y,\omega_Y)$, hence compact. The induced Mabuchi metric makes $G\cdot\omega_Y\cong G/K$ a finite-dimensional homogeneous Riemannian manifold, hence complete. By total geodesy, its minimizing geodesics are geodesics in the Hadamard completion $\overline{\calH_0}$. Uniqueness of geodesics in that completion makes the orbit geodesically convex and therefore contractible. Let $L\subset G$ be a maximal compact subgroup containing $K$. The Cartan decomposition $G=\exp(\I\mathfrak l)L$ gives a deformation retraction of $G/K$ onto $L/K$. Hence $L/K$ is contractible. A compact manifold without boundary is contractible only when it is a point, by its top-dimensional mod-two homology. Thus $K=L$, which is a compact real form of $G$.
\end{proof}

\begin{example}\label{ex: mabuchi's result}
	Let $(Y,H_Y,\omega_Y)$ be a polarized compact cscK manifold as in Example \ref{ex:rel_csck_fib}. Then $G=\Aut_0(Y,H_Y)$ and $K=\mathrm{Isom}_0(Y,H_Y,\omega_Y)$ satisfy Assumption \ref{ass:finite_dim}. According to the Yau-Tian-Donaldson conjecture (which has been resolved in the Kähler-Einstein case), the existence of $\omega_Y$ would be guaranteed by the $K$-polystability of $(Y,H_Y)$. More generally, one can consider a compact cscK manifold $(Y,\omega_Y)$ and let $G:=\ker(\Aut_0(Y)\to \Alb(Y))^\circ$, which is reductive with compact real form $K=\Stab_G(\omega_Y)$ \cite{Fu78, mabuchi87}. Let $\mathcal{C}_0 \subset \calH_0$ be the subset of normalized potentials corresponding to cscK metrics. Then by \cite[Th.~6.3]{mabuchi87} and \cite[Th.~1.3]{berman_berndtsson17convexity} (see also {\cite[Prop.~ 2.1]{hallam23}}), $\mathcal{C}_0=G\cdot\omega_Y\cong G/K$ is a connected, finite-dimensional, totally geodesic smooth submanifold of $\calH_0$. By Theorem \ref{thm:harm-DC} and Remark \ref{rmk:global_form}, a reductive representation $\rho:\Gamma\to G$ gives rise to a relatively holomorphic relatively cscK form $\omega$ on a flat holomorphic bundle, whose restriction to fibers $\omega_{X/S}$ is harmonic.
\end{example}

\section{Variation of nonabelian Hodge structure}
In the final section, we introduce a new factor into the Simpson mechanism, the twisting maps. This leads to the twisted Simpson mechanism, and a broader notion of nonlinear harmonic bundles. Then we prove in two important special cases that variations of nonabelian Hodge structure are nonlinear harmonic bundles.

\subsection{Twisted Simpson mechanism}\label{subsec:twisted_mech}
Let $f: X \to S$ be a smooth fiber bundle on a complex manifold $S$. Suppose the relative real tangent bundle $T_{X/S}^\RR$ is equipped with two integrable fiberwise complex structures $T_{X/S}^A$ and $T_{X/S}^B$. We denote the resulting complex fiber bundles by $X_A = (f, T_{X/S}^A)$ and $X_B = (f, T_{X/S}^B)$.
We denote the quotient bundles by
\[
Q_A := \frac{TX^\CC}{\widebar{T_{X/S}^A}} \quad \text{and} \quad Q_B := \frac{TX^\CC}{\widebar{T_{X/S}^B}}.
\]

\begin{lemma}\label{lem:real_beta}
Let $Y$ be a smooth manifold equipped with two complex structures $J_A$ and $J_B$, and let $TY_A$ and $TY_B$ be the corresponding holomorphic tangent bundles. There exists a natural bijection between complex vector bundle isomorphisms $\beta_Y: TY_A \to TY_B$ and real vector bundle isomorphisms $\beta_Y^\RR: TY^\RR \to TY^\RR$ satisfying
\[\beta_Y^\RR \comp J_A = J_B \comp \beta_Y^\RR.\]
\end{lemma}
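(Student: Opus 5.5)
The plan is to write down both directions of the correspondence explicitly, using the complexification of the real tangent bundle, and then check that they are mutually inverse. Throughout, $TY_A$ (resp.\ $TY_B$) is the $+\I$-eigenbundle of $J_A$ (resp.\ $J_B$) acting on $TY^\CC$.

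For the map from real to complex, start from a real isomorphism $\beta_Y^\RR$ with $\beta_Y^\RR\comp J_A=J_B\comp\beta_Y^\RR$. Extend it $\CC$-linearly to $\beta^\CC:TY^\CC\to TY^\CC$; the intertwining relation still holds after complexification. If $J_Av=\I v$, then $J_B\beta^\CC v=\beta^\CC J_Av=\I\beta^\CC v$, so $\beta^\CC$ carries $TY_A$ into $TY_B$. Define $\beta_Y:=\beta^\CC|_{TY_A}$. It is complex-linear. It is injective because $\beta^\CC$ is, and the two bundles have the same rank, so it is an isomorphism.

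For the map from complex to real, use the real isomorphisms $r_A:TY^\RR\to TY_A$, $u\mapsto \tfrac12(u-\I J_Au)$, and its analogue $r_B$. These are the standard identifications of $(TY^\RR,J_A)$ with $TY_A$; they are complex-linear, with inverse $w\mapsto w+\bar w=2\Re w$. Given a complex isomorphism $\beta_Y$, set $\beta_Y^\RR:=r_B^{-1}\comp\beta_Y\comp r_A$, i.e.\ $\beta_Y^\RR(u)=2\Re\,\beta_Y\bigl(\tfrac12(u-\I J_Au)\bigr)$. Since $r_A$ intertwines $J_A$ with multiplication by $\I$, and $r_B$ intertwines $J_B$ with $\I$, complex-linearity of $\beta_Y$ gives $\beta_Y^\RR\comp J_A=J_B\comp\beta_Y^\RR$. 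As a composite of isomorphisms, $\beta_Y^\RR$ is an isomorphism.

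To see that the two constructions are inverse, first note that $\beta^\CC$ commutes with complex conjugation because it is the complexification of a real map. Hence $\beta^\CC(r_Au)=\tfrac12(\beta^\RR u-\I J_B\beta^\RR u)=r_B(\beta^\RR u)$, which recovers $\beta^\RR$. In the other direction, the complexification of $r_B^{-1}\beta_Y r_A$ agrees with $\beta_Y$ on $TY_A$ by the same identity. Naturality with respect to diffeomorphisms preserving both structures is immediate, since only canonical projections are involved. The only point needing care is this compatibility of the identifications $r_A$ and $r_B$ with $J$; once it is checked, the rest of the argument is routine linear algebra, applied fiberwise and smooth in the base point.
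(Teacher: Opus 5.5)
Your proposal is correct and follows the paper's proof. The real-to-complex direction (complexify, then restrict to $TY_A$) is the same as the paper's, and your complex-to-real map $r_B^{-1}\circ\beta_Y\circ r_A$ is exactly the paper's $u=v+\bar v\mapsto\beta_Y(v)+\overline{\beta_Y(v)}$. You additionally verify that the two constructions are mutually inverse and produce isomorphisms, which the paper leaves implicit.
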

\begin{proof}
Suppose we are given a complex bundle isomorphism $\beta_Y: TY_A \to TY_B$. We define $\beta_Y^\RR: TY^\RR \to TY^\RR$ by
$\beta_Y^\RR(u) := \beta_Y(v) + \widebar{\beta_Y(v)}$, where $u=v+\bar{v}$  with $v\in TY_A$. Then we have
\begin{align*}
    \beta_Y^\RR (J_A u) &= \beta_Y(\I v) + \widebar{\beta_Y({\I} v)}= \I\beta_Y(v) - \I\overline{\beta_Y(v)}\\
		&=J_B(\beta_Y(v) + \widebar{\beta_Y(v)})=J_B(\beta_Y^\RR(u)).
\end{align*}
Conversely, let $\beta_Y^\RR: TY^\RR \to TY^\RR$ be a real isomorphism such that $\beta_Y^\RR \comp J_A = J_B \comp \beta_Y^\RR$. We extend $\beta_Y^\RR$ complex-linearly to $\beta_Y^\CC: TY^\CC \to TY^\CC$.
For any $v \in TY_A$,
\[
    J_B(\beta_Y^\CC (v)) = \beta_Y^\CC(J_A v) = \beta_Y^\CC(\I v) = \I \beta_Y^\CC(v).
\]
This implies that $\beta_Y :=\beta_Y^{\mathbb{C}} \big|_{TY_A} : TY_A \to TY_B$ is a complex bundle isomorphism.
\end{proof}
To relate geometries of $X_A$ and $X_B$, we demand an identification map.
\begin{definition}
A \emph{twisting map} $\beta_{X/S}$ is a smooth complex vector bundle isomorphism $\beta_{X/S}: T_{X/S}^A \xrightarrow{\cong} T_{X/S}^B$, equivalently a real bundle isomorphism $\beta_{X/S}^\RR:T_{X/S}^\RR\to T_{X/S}^\RR$ satisfying $\beta_{X/S}^\RR\comp J_{X/S}^A=J_{X/S}^B\comp \beta_{X/S}^\RR$. It is said to be \emph{effective} if $\beta_{X/S}=\id$ when $T^{A}_{X/S}=T^B_{X/S}$, or if $J^{A}_{X/S}-J^B_{X/S}$ is nowhere vanishing when $T^{A}_{X/S}\neq T^B_{X/S}$.
\end{definition}

Now we consider a twisted version of the Simpson mechanism in Section \ref{sec:simpson_mechanism}.
\begin{definition}[Twisted  Simpson mechanism]
	Fix two reference $\dbar$-operators $\dbar_{A,0}:f^*\widebar{TS}\to Q_A$ and $\dbar_{B,0}:f^*\widebar{TS}\to Q_B$ on $X_A$ and $X_B$ respectively. Suppose $J_{X/S}^A-J_{X/S}^B$ is nowhere vanishing.
	\begin{enumerate}
		\item {\itshape From almost Higgs fields to almost connections:}  Let $\dbar_B:f^*\widebar{TS}\to Q_B$ be a $\dbar$-operator on $X_B$, and let $\theta$ be an almost Higgs field on $X_B$. Let $g_{X/S}$ be a fiberwise Riemannian metric on $f$ such that $\omega_{X/S}^B(\sbullet,\sbullet):=g_{X/S}(J_{X/S}^B\sbullet,\sbullet)$ is a fiberwise Kähler metric on $X_B$. Define $\bar{\theta}_J$ by
		\begin{equation}\label{eq:theta_barJ}
			\bar{\theta}_J(\bar{v}):=\mathrm{pr}_{T_{X/S}^B}(\I J_{X/S}^A\widebar{\theta(v)}_{J_{X/S}^B}),
		\end{equation}
where $v\in TS$, $\widebar{(\cdot)}_{J_{X/S}^B}: T_{X/S}^B\to \widebar{T_{X/S}^B}$ is the conjugation, and $\mathrm{pr}_{T_{X/S}^B}:T_{X/S}^\CC\to T_{X/S}^B$ is the projection.

Now we define a smooth bundle morphism $f^*\widebar{TS}\to Q_A$ by
\begin{equation}
		\dbar_A := \beta_{X/S}^{-1}(\dbar_B-\dbar_{B,0} + \bar{\theta}_J)+\dbar_{A,0}, \label{eq:twist_simpson_dbar_A}
\end{equation}
which is a $\dbar$-operator on $X_A$. We further assume that $\beta_{X/S}^\RR$ is an isometry with respect to $g_{X/S}$, and $g_{X/S}$ corresponds to a fiberwise Kähler metric $\omega_{X/S}^A$ on $X_A$. Next, we define an almost connection on $X_A$ by
\begin{equation}\label{eq:twist_simpson_partial_A}
	\partial_A:=\partial_{\omega_{X/S}^A}+2|J_{X/S}^A-J_{X/S}^B|_{g_{X/S}}\beta_{X/S}^{-1}(\theta),
\end{equation}
where $\partial_{\omega_{X/S}^A}$ is a symplectic almost connection associated to $\dbar_A$ and $\omega_{X/S}^A$ and $|\cdot|_{g_{X/S}}$ denotes the pointwise operator norm of the endomorphism with respect to the metric $g_{X/S}$. We assume that the positive function $|J_{X/S}^A-J_{X/S}^B|_{g_{X/S}}$ is smooth.
		\item {\itshape From connections to almost Higgs fields:} Suppose $X_A$ is equipped with a $\dbar$-operator $\dbar_A$ and an almost connection $\partial_A$.
Let $g_{X/S}$ be a fiberwise Riemannian metric on $f$ which corresponds to a fiberwise Kähler metric $\omega_{X/S}^A$ on $X_A$. Let $\partial_{\omega_{X/S}^A}$ be a symplectic almost connection associated to $\dbar_A$ and $\omega_{X/S}^A$. We define an almost Higgs field on $X_B$ by
\begin{equation}
		\theta := \tfrac{1}{2}|J_{X/S}^A-J_{X/S}^B|_{g_{X/S}}^{-1} \beta_{X/S} (\partial_A - \partial_{\omega_{X/S}^A}). \label{eq:twist_simpson_theta_B}
\end{equation}
We further assume that $\beta_{X/S}^\RR$ is an isometry with respect to $g_{X/S}$, and $g_{X/S}$ corresponds to a fiberwise Kähler metric $\omega_{X/S}^B$ on $X_B$. Next, we define a $\dbar$-operator on $X_B$ by
\begin{equation}\label{eq:twist_simpson_dbar_B}
		\dbar_B := \beta_{X/S} (\dbar_A-\dbar_{A,0})+\dbar_{B,0} - \bar{\theta}_J,
\end{equation}
	where $\bar{\theta}_J$ is defined by \eqref{eq:theta_barJ}.
	\end{enumerate}
\end{definition}
\begin{remark}
	When $J_{X/S}^A=J_{X/S}^B$, the mechanism degenerates and is not compatible with the Simpson mechanism in Section \ref{sec:simpson_mechanism}. By removing $|J_{X/S}^A-J_{X/S}^B|_{g_{X/S}}$ (or its inverse) and replacing $\bar{\theta}_J$ by $\bar{\theta}_{\omega_{X/S}^B}$ in \eqref{eq:twist_simpson_dbar_A}-\eqref{eq:twist_simpson_dbar_B}, we obtain a version compatible with Section \ref{sec:simpson_mechanism} ($\beta_{X/S}=\id$ and $\dbar_{B,0}=\dbar_{A,0}$) without assuming $J_{X/S}^A\neq J_{X/S}^B$. However, $\bar{\theta}_J$ is always well-defined while Definition \ref{def:comp_conj_higgs} of $\bar{\theta}_{\omega_{X/S}^B}$ requires extra conditions.

	In the direction $(1)$, we may unify the two approaches by
	\begin{align*}
			\dbar_A &:= \beta_{1,X/S}^{-1}(\dbar_B-\dbar_{B,0} + \bar{\theta}_{1,J})+\beta_{2,X/S}^{-1}(\bar{\theta}_{2,\omega_{X/S}^B})+\dbar_{A,0},\\
			\partial_A &:=\partial_{\omega_{X/S}^A}+2|J_{X/S}^A-J_{X/S}^B|_{g_{X/S}}\beta_{1,X/S}^{-1}(\theta_1)+2\beta_{2,X/S}^{-1}(\theta_2),
	\end{align*}
	where $\beta_{1,X/S}$ and $\beta_{2,X/S}$ are two twisting maps and $\theta_1$ and $\theta_2$ are almost Higgs fields, where $\theta_2$ satisfies the extra conditions such that $\bar{\theta}_{2,\omega_{X/S}^B}$ is well-defined.
\end{remark}
\begin{remark}
	 $J_{X/S}^A\widebar{\theta(v)}_{J_{X/S}^B}\in T_{X/S}^B$ if and only if  $\theta(v)\in \ker (J_{X/S}^AJ_{X/S}^B+J_{X/S}^BJ_{X/S}^A)$. In this case, $\bar{\theta}_J(\bar{v})=\I J_{X/S}^A\widebar{\theta(v)}_{J_{X/S}^B}$. If this holds for all $\theta$, then $J_{X/S}^AJ_{X/S}^B=-J_{X/S}^BJ_{X/S}^A$ and $J_{X/S}^A,J_{X/S}^B,J_{X/S}^C:=J_{X/S}^AJ_{X/S}^B$ form a hypercomplex structure on each fiber. In this case, if $g_{X/S}$ is Kähler with respect to $J_{X/S}^A$ and $J_{X/S}^B$, then it must be Kähler with respect to $J_{X/S}^C$, so that $(g_{X/S},J_{X/S}^A,J_{X/S}^B,J_{X/S}^C)$ is a hyperkähler structure, and $|J_{X/S}^A-J_{X/S}^B|_{g_{X/S}}=\sqrt{2}$.
\end{remark}
We demonstrate the analogy between $\bar{\theta}_J$ and $\bar{\theta}_{\omega_{X/S}}$ in the following lemma.
\begin{lemma}
 Let $(Y,\omega_Y)$ be a Kähler manifold. Let $v\in \mathfrak{k}^\CC$ be a holomorphic vector field, where $\mathfrak{k}$ is a subspace of $\mathfrak{aut}(Y,\omega_Y):=\{v\in H^0(Y,TY)\,|\, v_\RR \text{ is Killing}\}$ such that $\mathfrak{k}\cap \I \mathfrak{k}=\{0\}$ and that vector fields in $\mathfrak{k}$ are Hamiltonian. Then we have $\iota_v\omega_Y=\dbar f_v$ for some function $f_v$ and $\iota_{\bar{v}_{\mathfrak{k}}}\omega_Y=\dbar \widebar{f_v}$, where $\bar{v}_{\mathfrak{k}}\in \mathfrak{k}^\CC$ is the conjugate of $v$ determined by the real structure $\mathfrak{k}$.

	Let $(Y, g_Y, J_A, J_B, J_C)$ be a hyperkähler manifold. Let $\Omega_{J_B} = \omega_{J_A} - \I\omega_{J_C}$ be the holomorphic symplectic form for $J_B$. Let $v$ be a $J_B$-holomorphic vector field that is Hamiltonian with respect to $\Omega_{J_B}$, i.e., $\iota_v \Omega_{J_B} = \partial f_v$ for some holomorphic function $f_v$. Define the conjugate vector field $\tilde{v}$ as the Hamiltonian vector field of the function $ -\frac{1}{2}\overline{f_v}$ with respect to $\omega_{J_B}$, i.e., $\iota_{\tilde{v}} \omega_{J_B} = -\D\left( \frac{1}{2} \widebar{f_v} \right)$.
Then $\tilde{v} = \I J_A(\bar{v})$.
\end{lemma}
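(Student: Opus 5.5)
The lemma has two independent parts, and I will prove each by a short pointwise computation using only the definitions involved.

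\textbf{First part.} Write $v=a+\I b$ with $a,b\in\mathfrak{k}$. Because $\mathfrak{k}$ consists of holomorphic Killing fields that are Hamiltonian, for each $u\in\mathfrak{k}$ there is a real function $h_u$ with $\iota_{u_\RR}\omega_Y=\D h_u$, up to the sign convention fixed in Proposition \ref{prop:induced_rel_kahler}.

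For a holomorphic vector field $u$, the form $\iota_u\omega_Y$ has type $(0,1)$. Since $u_\RR=u+\bar u$, taking $(0,1)$-parts gives $\iota_u\omega_Y=\dbar h_u$.

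By linearity, $\iota_v\omega_Y=\dbar(h_a+\I h_b)$, so we may set $f_v:=h_a+\I h_b$. The $\mathfrak{k}$-conjugate is $\bar v_{\mathfrak{k}}=a-\I b$, and the same computation gives $\iota_{\bar v_{\mathfrak{k}}}\omega_Y=\dbar(h_a-\I h_b)=\dbar\,\widebar{f_v}$, because $h_a,h_b$ are real. The only care needed is to fix the sign convention consistently in both identities.

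\textbf{Second part.} It suffices to check that $\iota_{\I J_A\bar v}\,\omega_{J_B}=-\tfrac12\D\widebar{f_v}$, since $\omega_{J_B}$ is nondegenerate. Here $\omega_{J_B}(X,Y)=g_Y(J_BX,Y)$, and similarly for $A$ and $C$.

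First I decompose the given identity. Since $v\in T^{1,0}_{J_B}$ and $\Omega_{J_B}$ is of type $(2,0)$ for $J_B$, the form $\iota_v\Omega_{J_B}$ is a $J_B$-$(1,0)$-form, and $f_v$ is $J_B$-holomorphic. Hence $\iota_v\Omega_{J_B}=\partial f_v=\D f_v$. Conjugating gives $\iota_{\bar v}\overline{\Omega_{J_B}}=\D\widebar{f_v}$, where $\overline{\Omega_{J_B}}=\omega_{J_A}+\I\omega_{J_C}$.

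Next I compute $\iota_{J_A\bar v}\omega_{J_B}$ using the quaternion relations. For any $Y$,
\[
\omega_{J_B}(J_A\bar v,Y)=g(J_BJ_A\bar v,Y)=-g(J_C\bar v,Y)=-\omega_{J_C}(\bar v,Y).
\]
Since $\bar v$ is of type $(0,1)$ for $J_B$, the identity $\iota_{\bar v}\omega_{J_A}=\I\,\iota_{\bar v}\omega_{J_C}$ should hold, equivalently $\iota_{\bar v}\Omega_{J_B}=0$. The reason is that $\Omega_{J_B}$ vanishes on $T^{0,1}_{J_B}$ in either slot. This gives
\[
\D\widebar{f_v}=\iota_{\bar v}(\omega_{J_A}+\I\omega_{J_C})=2\I\,\iota_{\bar v}\omega_{J_C}.
\]
Therefore
\[
\iota_{\I J_A\bar v}\omega_{J_B}=-\I\,\iota_{\bar v}\omega_{J_C}=-\tfrac12\D\widebar{f_v},
\]
and so $\tilde v=\I J_A\bar v$.

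\textbf{Expected obstacle.} The main risk is sign and factor bookkeeping: the convention $J_C=J_AJ_B$ versus $J_BJ_A$, the definition $\omega_J=g(J\cdot,\cdot)$ versus $g(\cdot,J\cdot)$, and the vanishing $\iota_{\bar v}\Omega_{J_B}=0$. I will fix the conventions from the preceding remarks, namely $J_C=J_AJ_B$ and $\omega^B(\cdot,\cdot)=g(J_B\cdot,\cdot)$, and then re-verify that the factor comes out as $\tfrac12$ with the correct sign. If it does not, the convention on $\Omega_{J_B}$ has to be adjusted to match the statement.
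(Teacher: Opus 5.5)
Your proof is correct and is essentially the paper's argument. In the first part you write $v=a+\I b$ with $a,b\in\mathfrak{k}$ and take $(0,1)$-parts of the real Hamiltonian identities. In the second part you conjugate $\iota_v\Omega_{J_B}=\D f_v$, use $\iota_{\bar v}\Omega_{J_B}=0$ to get $\iota_{\bar v}\omega_{J_C}=-\tfrac{\I}{2}\D\widebar{f_v}$, and finish with $J_BJ_A=-J_C$, exactly as the paper does.

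The convention worry you raise resolves itself. With $J_C=J_AJ_B$ and $\omega_J=g(J\cdot,\cdot)$, one checks $\Omega_{J_B}(J_BX,Y)=\I\,\Omega_{J_B}(X,Y)$, so $\Omega_{J_B}$ is indeed of type $(2,0)$ for $J_B$ and no adjustment is needed.
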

\begin{proof}
  We prove the first statement. Since $v\in \mathfrak{k}^\CC$, we have $v=u+\I w$ and $\bar{v}_{\mathfrak{k}}=u-\I w$ for $u,w\in \mathfrak{k}$. Since $u,w$ are Hamiltonian, we have
	\[\iota_v\omega_Y=\iota_{u+\I w}\omega_Y=\dbar (f_u+\I f_w)=:\dbar (f_v),\]
	for some real functions $f_u$ and $f_w$. Then we have $\iota_{\bar{v}_{\mathfrak{k}}}\omega_Y=\dbar\widebar{f_v}$.

	Now we prove the second statement. We have $\iota_v (\omega_{J_A} - \I\omega_{J_C}) = \partial f_v$. Taking the complex conjugate,
	\[ \iota_{\bar{v}} (\omega_{J_A} + \I\omega_{J_C}) = \bar{\partial} \widebar{f_v} = \D\widebar{f_v}.\]
	Since $\Omega_{J_{B}}$ has type $(2,0)$ and $\bar{v}$ has type $(0,1)$ with respect to $J_B$, we have $\iota_{\bar{v}}\Omega_{J_B}=0$, implying $\iota_{\bar{v}} \omega_{J_A} = \I \iota_{\bar{v}} \omega_{J_C}$. Then $\iota_{\bar{v}} \omega_{J_C} = -\frac{\I}{2} \D\widebar{f_v}$. On the other hand, \[\iota_{\I J_A\bar{v}} \omega_{J_B} (\cdot) = \I g_Y(J_B(J_A\bar{v}), \cdot)=-\I\iota_{\bar{v}}\omega_{J_C}=-\tfrac{1}{2} \D\widebar{f_v}.\]
	Therefore, $\tilde{v}=\I J_A(\bar{v})$.
\end{proof}
\begin{definition}
Let $(\dbar_B,\theta)$ be a Higgs bundle structure on $X_B$, i.e., the pseudo-curvature $G_{D_B''}=0$ for $D_B''=\dbar_B+\theta$. Let $g_{X/S}$ be a fiberwise Riemannian metric which is Kähler with respect to $J_{X/S}^A$ and $J_{X/S}^B$. With the auxiliary choices in the twisted Simpson mechanism fixed, including an isometry $\beta_{X/S}^\RR$, we call $g_{X/S}$ a \emph{$\beta$-twisted harmonic metric} if the pair $(\dbar_A,\partial_A)$ obtained from \eqref{eq:twist_simpson_dbar_A} and \eqref{eq:twist_simpson_partial_A} defines a flat holomorphic bundle structure on $X_A$. Conversely, starting from a flat holomorphic connection on $X_A$ and making the same auxiliary choices, we call $g_{X/S}$ a \emph{$\beta$-twisted harmonic metric} if the pair $(\dbar_B,\theta)$ obtained from \eqref{eq:twist_simpson_theta_B} and \eqref{eq:twist_simpson_dbar_B} defines a Higgs bundle structure on $X_B$.
\end{definition}
The following three results generalize Lemma \ref{lem:space_Kah_conn}, Proposition \ref{prop: kahler connection}, and Proposition \ref{prop:Chern_conn} (where $\tilde{\theta}=0$), and the proofs are completely analogous.
\begin{lemma}\label{lem:twisted_affine_space}
Let $(f, T_{X/S}^A)$ be a complex fiber bundle with a fiberwise Kähler metric $\omega_{X/S}^A$ and a $\dbar$-operator $\dbar_A$ satisfying the lifting condition. Let $\tilde{\theta} \in C^\infty(X, f^*T^*S\otimes T_{X/S}^A)$. Whenever nonempty, the space of symplectic almost connections $\partial_{\omega_{X/S}^A}$ associated to $\dbar_A$ and $\omega_{X/S}^A$ such that $\partial_{\omega_{X/S}^A} + \tilde{\theta}$ satisfies the lifting condition is affine modeled on $A^{1,0}(S, \mathfrak{a}_{X/S})$.
\end{lemma}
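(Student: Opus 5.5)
The plan is to reduce the statement to Lemma \ref{lem:RH_space} and Lemma \ref{lem:space_Kah_conn}. We shift the lifting condition by the fixed tensor $\tilde{\theta}$ and then intersect with the symplectic condition.

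First, I will show that the set is affine over the stated space. Fix two admissible symplectic almost connections $\partial_1,\partial_2$ in the set. They are associated to the same $\dbar_A$, so by Lemma \ref{lem:new10_conn} they determine pure $(1,0)$-connections $\nabla_1^{1,0},\nabla_2^{1,0}$ with the same conjugate part. As in \eqref{TransgressionForm_eq}, their difference $G_{1,2}=\partial_1-\partial_2$ is a global section of $C^\infty(X,f^*T^*S\otimes T_{X/S}^A)$. Both $\partial_j+\tilde\theta$ satisfy the lifting condition, and the lifting condition for an almost connection is the local condition $\partial_{\bar\beta}\Gamma_i^\alpha=0$ of \eqref{RelHolConn_eq}. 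Since $\tilde\theta$ is common to both, the difference $G_{1,2}=(\partial_1+\tilde\theta)-(\partial_2+\tilde\theta)$ is fiberwise holomorphic, i.e. $G_{1,2}\in A^{1,0}(S,f_*T^A_{X/S})$. The same algebra applied in reverse shows that adding any $G\in A^{1,0}(S,f_*T_{X/S}^A)$ to $\partial_1$ preserves the lifting condition for $\partial_1+G+\tilde\theta$.

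Second, I will impose the symplectic condition. For a real vector $v$ on $S$, the real horizontal lifts differ by the real vertical vector field $G_{1,2}(v^{1,0})+\overline{G_{1,2}(v^{1,0})}$. Both lifts satisfy $\mathcal L_{H_v}\omega^A_{X/S}=0$, so this difference preserves $\omega^A_{X/S}$ fiberwise. Applying the same argument to $Jv$ shows that the real parts of $G_{1,2}(u)$ and of $\I G_{1,2}(u)$ are both symplectic, for $u=v^{1,0}$. Since $G_{1,2}(u)$ is fiberwise holomorphic, both real parts are Killing fields. The fact from \cite[Lem.~III.1]{kobayashi72transformation} quoted in Lemma \ref{lem:space_Kah_conn} then gives that $G_{1,2}(u)$ is parallel, so $G_{1,2}\in A^{1,0}(S,\mathfrak a_{X/S})$. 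Conversely, adding a form with values in vertical parallel $(1,0)$-fields keeps the lifts symplectic, because parallel vector fields are Killing and holomorphic. It also keeps them pure, since only the $(1,0)$-vertical part changes. Finally, the combined map is affine and acts freely and transitively, which gives the claimed affine structure.

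The main obstacle is the step where a fiberwise holomorphic vector field $w$ with $\mathcal L_{w+\bar w}\omega=0$ and $\mathcal L_{\I w-\I\bar w}\omega=0$ is shown to be parallel. Symplectic together with holomorphic gives Killing, so both $w_\RR$ and $(\I w)_\RR=J w_\RR$ are Killing, and Kobayashi's lemma then applies. I also need to check that $\tilde\theta$ plays no role beyond shifting the lifting condition, since it cancels in every difference. This makes the proof essentially identical to that of Lemma \ref{lem:space_Kah_conn}.
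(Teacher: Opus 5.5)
Your proposal is correct and takes essentially the same route as the paper, which proves this lemma exactly like Lemma \ref{lem:space_Kah_conn}. In both, the common $\tilde\theta$ cancels in differences, so the difference of two such almost connections lies in $A^{1,0}(S,f_*T^A_{X/S})$ as in Lemma \ref{lem:RH_space}; the symplectic condition applied to $v$ and $J_S v$, together with Kobayashi's lemma, then forces the values to be parallel. Your explicit checks (symplectic plus fiberwise holomorphic gives Killing, and parallel fields preserve purity, the shifted lifting condition and $\omega^A_{X/S}$) fill in what the paper leaves as ``completely analogous.''
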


\begin{proposition}\label{prop:twisted_existence}
In the setup of Lemma \ref{lem:twisted_affine_space},  suppose that $f$ admits a \emph{symplectic atlas} $\mathcal U=\{(U_a,\Phi_a)\}$ (i.e. $\omega_s^A = \Phi_{a,s}^* \omega_Y$ for $s\in U_a$) such that $\partial_a+\tilde\theta$ satisfies the lifting condition and that, in local holomorphic base coordinates, every coefficient of $B_a:=\dbar_A-\dbar_a$ can be written smoothly as $u+\I v$, where $u|_{X_s},v|_{X_s}\in\mathfrak{aut}(X_s,\omega_s^A)$ for all $s\in U_a$. Then there exists a symplectic connection $\nabla_{\omega_{X/S}^A}^{1,0}$ associated to $\dbar_A$ and $\omega_{X/S}^A$ such that $\partial_{\omega_{X/S}^A} + \tilde{\theta}$ satisfies the lifting condition.
\end{proposition}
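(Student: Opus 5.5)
The plan is to follow the proof of Proposition \ref{prop: kahler connection}: build a local symplectic connection on each chart $U_a$, check that it has the required lifting property, and glue the local pieces with a partition of unity. Fix a symplectic chart $(U_a,\Phi_a)$ and use it to identify $f^{-1}(U_a)$ with $U_a\times Y$, so that $\omega^A_s=\Phi_{a,s}^*\omega_Y$. Let $\partial_a,\dbar_a$ denote the trivial almost connection and trivial $\dbar$-operator of this trivialization. By hypothesis, in holomorphic base coordinates $s^i$ we can write
\[
B_a(\partial_{\bar i})=u_i+\I v_i,
\]
where $u_i,v_i$ depend smoothly on $s$ and restrict on each fiber to elements of $\mathfrak{aut}(X_s,\omega^A_s)$. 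As in \eqref{eq:kah_al_conn_loc}, define the local almost connection
\[
\partial^{(a)}(\partial_i):=\partial_a(\partial_i)+u_i-\I v_i .
\]

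The first step is to show that the real connection determined by $\partial^{(a)}+\dbar_A$ (through Lemma \ref{lem:new10_conn}) is symplectic for $\omega^A_{X/S}$. The argument is the one used in the proof of Proposition \ref{prop:Chern_conn}. The vertical $(1,0)$-parts of the horizontal lifts of $\partial_{x^i}$ and $\partial_{y^i}$ are
\[
(u_i-\I v_i)+(u_i+\I v_i)=2u_i \qquad\text{and}\qquad \I(u_i-\I v_i)-\I(u_i+\I v_i)=2v_i ,
\]
up to the normalising factor. The corresponding real vertical fields are therefore real parts of fields in $\mathfrak{aut}$, which are Killing and holomorphic, and so they preserve $\omega_Y$ on each fiber. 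The base part of each lift is the trivial lift, which preserves $\omega^A_{X/S}=\Phi_a^*\omega_Y$ because this form is constant in $s$ in the chart. Putting the two pieces together gives $\mathcal L_{H_v}\omega^A_{X/S}=0$, so the local connection is symplectic on $U_a$. Purity with respect to $\dbar_A$ holds by construction.

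The second step is the lifting condition for $\partial^{(a)}+\tilde\theta$. We write
\[
\partial^{(a)}+\tilde\theta=(\partial_a+\tilde\theta)+(u_i-\I v_i)\,\D s^i .
\]
The first summand satisfies the lifting condition by hypothesis. The correction term has fiberwise holomorphic coefficients, since elements of $\mathfrak{aut}$ are holomorphic vector fields. In the coordinate description of Lemma \ref{lem:curv_def_rel_hol}, the lifting condition is $\partial_{\bar\beta}\Gamma_i^\alpha=0$, and adding fiberwise holomorphic terms does not change this. One point needs care: the fiber coordinates used here come from the symplectic chart and are holomorphic along the fibers, so this criterion applies in them.

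The last step is to glue. Choose a partition of unity $\{\chi_a\}$ on $S$ subordinate to $\{U_a\}$ and set $\partial_{\omega^A_{X/S}}:=\sum_a f^*\chi_a\,\partial^{(a)}$. Convex combinations are legitimate because of the affine structure: any two of the local almost connections differ by an element of $A^{1,0}(S,\mathfrak a_{X/S})$ in the sense of Lemma \ref{lem:twisted_affine_space}, or more generally by fiberwise holomorphic, fiberwise symplectic vertical fields. The three properties then survive the averaging, since each is preserved under convex combination. Purity depends only on $\dbar_A$, which is common to all the local pieces. The symplectic condition $\mathcal L_{H_v}\omega^A_{X/S}=0$ is affine in the connection, because the difference of two connections is a vertical field, and the averaged vertical field is a combination with base-dependent coefficients of fields that are symplectic along each fiber. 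The lifting condition is also affine.

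I expect this gluing step to be the main obstacle. The differences of the local connections are only guaranteed to be symplectic along each fiber, not Killing. The expected way around this is that the vertical parts are real parts of holomorphic symplectic fields, hence Killing, so that Lemma \ref{lem:twisted_affine_space} applies directly. It is in exactly this reduction that the precise form of the hypothesis on $B_a$ is used.
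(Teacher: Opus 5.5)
Your proposal is correct and is essentially the paper's argument: the paper proves this statement by declaring it completely analogous to Proposition \ref{prop: kahler connection}, namely setting $\partial_a+(u_i-\I v_i)\,\D s^i$ on each symplectic chart and gluing with a partition of unity via the affine structure of Lemma \ref{lem:twisted_affine_space}, and you carry out exactly this with more of the checks written out. Two minor remarks: the worry in your last paragraph is unfounded, since $\mathcal{L}_{(f^*\chi) H}\,\omega^A_{X/S}=f^*\chi\,\mathcal{L}_{H}\,\omega^A_{X/S}$ on vertical vectors already makes the symplectic condition survive averaging without any Killing property; and the fiber coordinates of a merely symplectic chart need not be $J^A$-holomorphic, which is harmless because the lifting condition may be checked in any adapted fiber-holomorphic coordinates.
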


\begin{lemma}\label{lem:twisted_Chern_conn}
Suppose the conditions of Proposition \ref{prop:twisted_existence} hold. Fix a smooth real vector bundle $\mathfrak k_{X/S}$ of finite rank over $S$, smoothly realized as families of fiberwise holomorphic vector fields, with $\mathfrak k_s\subset\mathfrak{aut}(X_s,\omega_s^A)$ and $\mathfrak k_s\cap\I\mathfrak k_s=\{0\}$.
Write $\mathfrak k_{X/S}^\CC=\mathfrak k_{X/S}\oplus\I\mathfrak k_{X/S}$, let $\phi_s$ be its conjugation, and set $(\phi B)(u):=\phi_s(B(\bar u))$ for $u\in T_sS$.
Assume that $B_a:=\dbar_A-\dbar_a\in A^{0,1}(U_a,\mathfrak k_{X/S}^\CC)$ and that the  compatibility condition $\partial_a-\partial_b=\phi(\dbar_a-\dbar_b)$ holds on overlaps. Then the local operators $\partial_a+\phi B_a$ glue to a symplectic almost connection associated to $\dbar_A$ and $\omega_{X/S}^A$, whose sum with $\tilde\theta$ satisfies the lifting condition.
The corresponding symplectic connection is called the \emph{$\tilde\theta$-twisted Chern connection} determined by this compatible atlas. It is independent of the choice of an equivalent atlas, where two such atlases are equivalent if the same compatibility condition holds between all their charts on overlaps, with the same $\mathfrak k_{X/S}$.
\end{lemma}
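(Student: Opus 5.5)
The plan is to mimic the proof of Proposition \ref{prop:Chern_conn} and work chart by chart. On a chart $U_a$ of the compatible atlas put $\partial^{(a)}:=\partial_a+\phi B_a$.

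\emph{Local properties.} First I will show that each $\partial^{(a)}$ is a symplectic almost connection associated to $\dbar_A$ and $\omega^A_{X/S}$, and that $\partial^{(a)}+\tilde\theta$ satisfies the lifting condition.
\begin{itemize}
\item For $v\in T_sS^\RR$, the vertical $(1,0)$-part of the real horizontal lift is
\[
(\phi B_a)(v^{1,0})+B_a(v^{0,1})=\phi_s\bigl(B_a(v^{0,1})\bigr)+B_a(v^{0,1}).
\]
This lies in $\mathfrak k_s\subset\mathfrak{aut}(X_s,\omega^A_s)$.
\item Its real part is therefore a Killing field preserving $\omega_s^A$. Since the trivial connection of a symplectic chart preserves $\omega^A_{X/S}$, the horizontal lifts preserve $\omega^A_{X/S}$. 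So the connection is symplectic, and it is pure by construction.
\item The lifting condition for $\partial^{(a)}+\tilde\theta$ follows from the hypothesis on $\partial_a+\tilde\theta$. The reason is that $\phi B_a$ takes values in fiberwise holomorphic vector fields, so adding it does not change the coefficient condition \eqref{RelHolConn_eq}.
\end{itemize}

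\emph{Gluing.} On $U_{ab}$ I compute
\[
\partial^{(a)}-\partial^{(b)}=(\partial_a-\partial_b)+\phi(B_a-B_b).
\]
Since $B_a-B_b=\dbar_b-\dbar_a$, this equals
\[
\phi(\dbar_a-\dbar_b)-\phi(\dbar_a-\dbar_b)=0
\]
by the compatibility hypothesis. Here I use that $\phi$ is $\RR$-linear and acts fiberwise on $\mathfrak k^\CC_{X/S}$, a globally defined bundle, so no pushforward of $\mathfrak k$ by transition maps needs to be tracked. Hence the $\partial^{(a)}$ agree on overlaps and define a global almost connection. Symplecticity and the lifting condition are local, so they hold globally. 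Together with $\dbar_A$, Lemma \ref{lem:new10_conn} gives the corresponding pure $(1,0)$-connection, which is the $\tilde\theta$-twisted Chern connection.

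\emph{Independence.} Given an equivalent atlas with charts $(U'_c,\Phi'_c)$, the same overlap computation applies between $U_a$ and $U'_c$ because the compatibility condition holds across the two atlases. So the local operators coincide there and the glued object is unchanged.

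\emph{Expected obstacle.} The main subtlety is checking that $B_a$ and $\phi B_a$ are genuine well-defined differences in the affine spaces. $B_a=\dbar_A-\dbar_a$ is an element of $C^\infty(f^*\overline{T^*S}\otimes T^A_{X/S})$ by Lemma \ref{lem:dbar_affine_space}, and $\phi$ is applied to it pointwise. Symmetrically, $\partial_a-\partial_b$ is a difference of almost connections and therefore a tensor. Once this is settled, the compatibility identity is an equation of tensors and the gluing is purely formal.
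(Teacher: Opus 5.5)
Your proposal is correct and follows the paper's route. The paper proves this lemma by the same argument as Proposition~\ref{prop:Chern_conn}: take the local operators $\partial_a+\phi B_a$, observe that the vertical $(1,0)$-part of the real horizontal lift is $\phi_s(B_a(v^{0,1}))+B_a(v^{0,1})\in\mathfrak k_s$, glue on overlaps, and repeat the overlap computation for equivalent atlases. Here the compatibility condition is imposed directly as the tensor identity $\partial_a-\partial_b=\phi(\dbar_a-\dbar_b)$, with a globally defined $\mathfrak k_{X/S}$. So your one-line cancellation correctly replaces the transition-function bookkeeping of that proof, and the lifting condition survives because $\phi B_a$ is fiberwise holomorphic.
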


We end this subsection by discussing some properties of the twisting map.

\begin{lemma}
Let $(Y, g_Y)$ be a Riemannian manifold such that $g_Y$ is Kähler with respect to two complex structures $J_A$ and $J_B$. Let $\beta_Y^\RR$ be as above. Suppose $\nabla_{g_Y} \beta_Y^\RR = 0$, i.e., $\beta_Y^\RR$ is parallel with respect to the Levi-Civita connection. Assume that $(J_A - J_B)$ is invertible (which holds if $J_A \neq J_B$ are part of a hyperkähler structure).
If $v$ is a real $J_A$-holomorphic vector field such that $w = \beta_Y^\RR(v)$ is a real $J_B$-holomorphic vector field, then $v$ is parallel.
\end{lemma}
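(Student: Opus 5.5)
The plan is to reduce everything to the endomorphism $\mathcal{A}:=\nabla_{g_Y}v\in C^\infty(Y,\End(TY^\RR))$, $X\mapsto\nabla_X v$, and to show that $\mathcal{A}=0$. The first step is the standard characterization of real holomorphic vector fields on a Kähler manifold. Let $J$ be a complex structure that is parallel for the Levi-Civita connection $\nabla=\nabla_{g_Y}$, which is torsion-free. Then for any vector fields $u,X$,
\[
(\mathcal{L}_uJ)(X)=[u,JX]-J[u,X]=\nabla_u(JX)-\nabla_{JX}u-J\nabla_uX+J\nabla_Xu=J\nabla_Xu-\nabla_{JX}u .
\]
Hence $u$ is real $J$-holomorphic, i.e. $\mathcal{L}_uJ=0$, if and only if $\nabla u$ commutes with $J$. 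Since $g_Y$ is Kähler for both $J_A$ and $J_B$, both complex structures are parallel, so this criterion applies to each of them.

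Next I apply the criterion to $v$ and to $w$. For $v$ with $J=J_A$ it gives $\mathcal{A}J_A=J_A\mathcal{A}$. Because $\beta_Y^\RR$ is parallel, $\nabla_X w=\nabla_X(\beta_Y^\RR v)=\beta_Y^\RR\nabla_Xv$, so $\nabla w=\beta_Y^\RR\comp\mathcal{A}$. Applying the criterion to $w$ with $J=J_B$ then gives $\beta_Y^\RR\mathcal{A}J_B=J_B\beta_Y^\RR\mathcal{A}$.

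The last step combines these with the intertwining relation $\beta_Y^\RR\comp J_A=J_B\comp\beta_Y^\RR$:
\[
\beta_Y^\RR\mathcal{A}J_B=J_B\beta_Y^\RR\mathcal{A}=\beta_Y^\RR J_A\mathcal{A}=\beta_Y^\RR\mathcal{A}J_A .
\]
Since $\beta_Y^\RR$ is an isomorphism, this yields $\mathcal{A}(J_B-J_A)=0$. Since $J_A-J_B$ is invertible, it follows that $\mathcal{A}=0$, that is, $\nabla_{g_Y}v=0$ and $v$ is parallel.

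The only point that needs care is the first step: the sign conventions in the Lie-derivative identity, and the use of torsion-freeness together with $\nabla J=0$. Everything after that is linear algebra at each point. For the parenthetical claim, if $J_A\neq J_B$ belong to a hyperkähler structure, then $J_A,J_B$ are unit quaternions, and the computation $(J_A-J_B)^2=-2-(J_AJ_B+J_BJ_A)=-2+2\langle J_A,J_B\rangle$ is a negative scalar, since $\langle J_A,J_B\rangle<1$ for distinct unit imaginary quaternions; hence $J_A-J_B$ is invertible.
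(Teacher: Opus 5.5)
Your proof is correct and is the paper's argument: set $M=\nabla_{g_Y}v$, use that holomorphicity with respect to a parallel complex structure means that $\nabla u$ commutes with $J$, transport this through the parallel map $\beta_Y^\RR$ using the intertwining relation, and conclude $M(J_A-J_B)=0$, hence $M=0$. You also verify two points the paper leaves implicit: the Lie-derivative characterization of holomorphic vector fields, and the invertibility of $J_A-J_B$ in the hyperk\"ahler case.
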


\begin{proof}
Let $M = \nabla_{g_Y} v$. The $J_A$-holomorphicity of $v$ implies $M J_A = J_A M$. The $J_B$-holomorphicity of $w$ implies $(\nabla_{g_Y} w) J_B = J_B (\nabla_{g_Y} w)$. Since $\beta_Y^\RR$ is parallel, $\nabla_{g_Y} w = \nabla_{g_Y}(\beta_Y^\RR v) = \beta_Y^\RR M$. We have
\[ (\beta_Y^\RR M) J_B = J_B (\beta_Y^\RR M)=\beta_Y^\RR J_A M. \]
Thus $M J_B = J_A M=M J_A$, and then $M=\nabla_{g_Y} v = 0$.
\end{proof}

\begin{lemma}
Let $(Y, g_Y, J_A, J_B, J_C)$ be a hyperkähler manifold (where $J_C = J_A J_B$). Let $\beta_{Y}^\RR$ be an endomorphism of $TY^{\RR}$ which belongs to $\mathrm{span}_\RR\{\id, J_A, J_B, J_C\}$. If $\beta_Y^\RR \comp J_A = J_B \comp \beta_Y^\RR$, then $\beta_Y^\RR$ has the form
\[ \beta_Y^\RR = c_1 (\id + J_C) + c_2 (J_A + J_B), \quad c_1, c_2 \in \RR. \]
If $\beta_Y^\RR$ is furthermore an isometry with respect to $g_Y$, then $2(c_1^2 + c_2^2) = 1$. This allows for a parameterization by $\theta \in [0, 2\pi)$ as
\[ \beta_Y^\RR(\theta) = (1/\sqrt{2}) \bigl( \cos\theta\, (\id + J_C) + \sin\theta\, (J_A + J_B) \bigr). \]
\end{lemma}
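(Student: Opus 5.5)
Write a general element $\beta=a\,\id+b J_A+c J_B+d J_C$ with $a,b,c,d\in\RR$ and impose $\beta J_A=J_B\beta$ using only the quaternion relations $J_A^2=J_B^2=J_C^2=-\id$, $J_AJ_B=J_C=-J_BJ_A$, $J_BJ_C=J_A=-J_CJ_B$, $J_CJ_A=J_B=-J_AJ_C$. First compute
\[
\beta J_A=aJ_A-b\,\id+cJ_BJ_A+dJ_CJ_A=aJ_A-b\,\id-cJ_C+dJ_B,
\]
\[
J_B\beta=aJ_B+bJ_BJ_A-c\,\id+dJ_BJ_C=aJ_B-bJ_C-c\,\id+dJ_A .
\]
On a hyperkähler manifold of positive dimension, $\id,J_A,J_B,J_C$ are linearly independent at every point, since they act as the standard quaternion basis on each tangent space, which is a quaternionic vector space. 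So we may compare coefficients. The $\id$-terms give $b=c$, the $J_A$-terms give $a=d$, and the $J_B$- and $J_C$-terms give the same two relations again. Hence $\beta=a(\id+J_C)+b(J_A+J_B)$, which proves the first claim with $c_1=a$ and $c_2=b$.

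For the isometry condition, use that each $J$ is $g_Y$-orthogonal and skew, so $J^{T}=-J$ for the $g_Y$-transpose. Then
\[
\beta^{T}=c_1(\id-J_C)-c_2(J_A+J_B).
\]
Expand $\beta^{T}\beta$ with the same relations. The cross terms between $\id$ and $J_C$ cancel. One gets $(\id-J_C)(\id+J_C)=\id-J_C^2=2\,\id$ and $(J_A+J_B)^2=-2\,\id+J_C-J_C=-2\,\id$. The mixed terms are
\[
c_1c_2\bigl[(\id-J_C)(J_A+J_B)-(J_A+J_B)(\id+J_C)\bigr].
\]
Using $J_CJ_A=J_B$, $J_CJ_B=-J_A$, $J_AJ_C=-J_B$ and $J_BJ_C=J_A$, the bracket is $(J_A+J_B-J_B+J_A)-(J_A+J_B-J_B+J_A)=0$. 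Therefore $\beta^{T}\beta=2(c_1^2+c_2^2)\,\id$, and $\beta$ is an isometry if and only if $2(c_1^2+c_2^2)=1$.

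Finally, the condition $c_1^2+c_2^2=\tfrac12$ lets us write $(c_1,c_2)=\tfrac1{\sqrt2}(\cos\theta,\sin\theta)$ for a unique $\theta\in[0,2\pi)$, which gives the stated parametrization. The only point that needs care is the linear independence of $\id,J_A,J_B,J_C$, which is what justifies comparing coefficients. The sign bookkeeping in the quaternion products is routine but easy to get wrong, so each product will be checked against $J_C=J_AJ_B$.
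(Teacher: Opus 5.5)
Your proposal is correct and follows the paper's argument. You expand $\beta_Y^\RR$ in the basis $\{\id,J_A,J_B,J_C\}$, impose $\beta_Y^\RR\circ J_A=J_B\circ\beta_Y^\RR$ to obtain $a_0=a_3$ and $a_1=a_2$, and then use $J^{T}=-J$ to compute $(\beta_Y^\RR)^{T}\beta_Y^\RR=2(c_1^2+c_2^2)\id$. The only addition over the paper is that you justify the coefficient comparison through the pointwise linear independence of the quaternion basis, which the paper leaves implicit.
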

\begin{proof}
 Let $\beta_Y^\RR = a_0 \id + a_1 J_A + a_2 J_B + a_3 J_C$. Imposing $\beta_Y^\RR\comp J_A = J_B\comp \beta_Y^\RR$ leads to the constraints $a_0 = a_3$ and $a_1 = a_2$. Next, we compute
\begin{align*}
	(\beta_Y^\RR)^* \beta_Y^\RR &= \bigl(c_1(\id - J_C) - c_2(J_A + J_B)\bigr) \bigl(c_1(\id + J_C) + c_2(J_A + J_B)\bigr)\\
	&=2(c_1^2 + c_2^2)\id.
\end{align*}
Setting this to $\id$ requires $2(c_1^2 + c_2^2) = 1$.
\end{proof}
\begin{remark}
	Let $(Y, g_Y, J_A, J_B, J_C)$ be an irreducible connected hyperkähler manifold of dimension $4m$, i.e. $\mathrm{Hol}(g_Y)=\mathrm{Sp}(m)$. If $\beta_Y^\RR$ is a parallel endomorphism of $TY^\RR$, then by the holonomy principle, $\beta_Y^\RR$ lies in the commutant of $\mathrm{Sp}(m)$ in $\End(\RR^{4m})$ at each point, which is spanned by $\{\id,J_A,J_B,J_C\}$. Since $\beta_Y^\RR$ is parallel, we have $\beta_Y^\RR\in \mathrm{span}_\RR\{\id, J_A, J_B, J_C\}$.
\end{remark}

\subsection{Cotangent bundles of abelian varieties}\label{subsec:cotangent_ab_var}

Let $S := \{ \Pi \in M_k(\mathbb{C}) \mid \Pi = \Pi^{\text{T}}, \Pi_y:=\Im(\Pi) > 0 \}$ be the Siegel upper half-space of degree $k$. We consider the universal family $f: X \to S$, where the fiber over $\Pi \in S$ is the cotangent bundle of the corresponding abelian variety $A_\Pi = \mathbb{C}^k / (\mathbb{Z}^k + \Pi \mathbb{Z}^k)$, i.e., $X_\Pi = T^* A_\Pi=A_\Pi\times \CC^k$. In this subsection, we will verify that $f: X \to S$ admits a $\beta$-twisted harmonic metric.

$f$ is a trivial smooth fiber bundle, with fiberwise complex structure $T_{X/S}^B$ determined by the complex structure on $A_\Pi$. Denote $X_B:=(f,T_{X/S}^B)$, which is a holomorphic fibration with adapted local holomorphic coordinates $(\Pi_{ij},q^a,p^\alpha)$, where $\Pi_{ij}=(\Pi_{x})_{ij}+\I(\Pi_{y})_{ij}$ ($1 \le i\le j \le k$) are the entries of the period matrix $\Pi$, $q^a=q_x^a+\I q_y^a$ ($a=1,\dots,k$) are (periodic) coordinates on the abelian variety, $p^\alpha=p_x^\alpha+\I p_y^\alpha$ ($\alpha=1,\dots,k$) are linear coordinates on the cotangent fibers. Let $\partial_{ij},\partial_{a},\partial_{\alpha}$ be the corresponding basis of $TX_B$, and $\dbar_B$ be the canonical $\dbar$-operator on $X_B$.

The fiber $X_\Pi$ carries a hyperkähler structure $(g_\Pi,J_\Pi^A,J_\Pi^B,J_\Pi^C)$ given by
\begin{align}
    g_\Pi &= (\Pi_y^{-1})^{ab} (\D q_x^a \otimes \D q_x^b + \D q_y^a \otimes \D q_y^b) + (\Pi_y)_{\alpha\beta} (\D p_x^\alpha \otimes \D p_x^\beta + \D p_y^\alpha \otimes \D p_y^\beta),\label{eq:cotangent_hyperkah_eq1}\\
		J_{\Pi}^B (\partial_{q_x}) &= \partial_{q_y}, \  J_{\Pi}^B (\partial_{q_y}) = -\partial_{q_x}, \  J_{\Pi}^B (\partial_{p_x}) = \partial_{p_y},\  J_{\Pi}^B (\partial_{p_y}) = -\partial_{p_x},\label{eq:cotangent_hyperkah_eq2}\\
		 J_{\Pi}^A (\partial_{q_x}) &= -\Pi_y^{-1} \partial_{p_y}, \  J_{\Pi}^A (\partial_{q_y}) = -\Pi_y^{-1} \partial_{p_x},\  J_{\Pi}^A (\partial_{p_x}) = \Pi_y \partial_{q_y}, \  J_{\Pi}^A (\partial_{p_y}) = \Pi_y \partial_{q_x},\label{eq:cotangent_hyperkah_eq3}\\
		 J_{\Pi}^C (\partial_{q_x}) &= -\Pi_y^{-1} \partial_{p_x}, \  J_{\Pi}^C (\partial_{q_y}) = \Pi_y^{-1} \partial_{p_y},\
	J_{\Pi}^C (\partial_{p_x}) = \Pi_y \partial_{q_x},\ J_{\Pi}^C (\partial_{p_y}) = -\Pi_y \partial_{q_y}.\label{eq:cotangent_hyperkah_eq4}
\end{align}
The corresponding Kähler forms are
\begin{align*}
	 \omega_{J_{\Pi}^B} &= (\Pi_y^{-1})^{ab} \D q_x^a \wedge \D q_y^b + (\Pi_y)_{\alpha\beta} \D p_x^\alpha \wedge \D p_y^\beta,\\
	  \omega_{J_{\Pi}^A} &=  \D p_x^\alpha \wedge \D q_y^\alpha - \D q_x^\alpha \wedge \D p_y^\alpha = \Im(\D p^\alpha \wedge \D q^\alpha),\\
		 \omega_{J_{\Pi}^C} &= \D p_x^\alpha \wedge \D q_x^\alpha - \D p_y^\alpha \wedge \D q_y^\alpha = \Re(\D p^\alpha \wedge \D q^\alpha).
\end{align*}
We define the local $\CC^k$-valued functions $\xi$ and $\eta$ on $X$ by
\begin{align*}
    \xi = \Pi_y^{-1} q_y - \I p_x, \quad \eta &= q_x - \Pi_x \Pi_y^{-1} q_y + \I ( \Pi_x p_x - \Pi_y p_y ).
\end{align*}
These coordinates define the biholomorphism
\[  \rho: (X_\Pi, J_{\Pi}^A) \xrightarrow{\cong} (\CC^*)^{2k},\quad
    (q, p) \mapsto \bigl( \E^{2\pi \I \xi^1}, \dots, \E^{2\pi \I \xi^k}, \E^{2\pi \I \eta^1}, \dots, \E^{2\pi \I \eta^k} \bigr).\]
In fact, this map is well-defined since it is invariant under the translations $q_x \mapsto q_x + m + \Pi_x n$ and $q_y \mapsto q_y + \Pi_y n$ for $(m, n) \in \ZZ^k \times \ZZ^k$. $(q,p)$ can be determined by $(\xi,\eta)$ via
\begin{equation}\label{eq:qp_inverse_rel}
	q = \text{Re}(\eta) + \Pi \text{Re}(\xi),\quad p = -\I \Pi_y^{-1} \bigl( \text{Im}(\eta) + \widebar{\Pi} \text{Im}(\xi) \bigr).
\end{equation}
\begin{lemma}
The local functions $\xi^\alpha$ and $\eta^a$ are holomorphic with respect to $J_{\Pi}^A$.
\end{lemma}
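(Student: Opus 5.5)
The statement asks us to show that $\xi^\alpha$ and $\eta^a$ are $J_\Pi^A$-holomorphic. Since these are explicit functions of real coordinates, I plan to verify directly that their differentials are complex-linear for $J_\Pi^A$. A real-valued-in-$\CC$ function $u$ is $J^A_\Pi$-holomorphic exactly when
\[
\D u\circ J_\Pi^A=\I\,\D u .
\]
So it suffices to evaluate both sides on the real frame $\partial_{q_x},\partial_{q_y},\partial_{p_x},\partial_{p_y}$, using \eqref{eq:cotangent_hyperkah_eq3}. Throughout, $\Pi$ is fixed on the fiber, so $\Pi_x$ and $\Pi_y$ are constant symmetric matrices.

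First, for $\xi=\Pi_y^{-1}q_y-\I p_x$, I will record the partial derivatives in matrix form:
\[
\D\xi(\partial_{q_x})=0,\qquad \D\xi(\partial_{q_y})=\Pi_y^{-1},\qquad \D\xi(\partial_{p_x})=-\I,\qquad \D\xi(\partial_{p_y})=0 .
\]
Then I check the four identities. For instance,
\[
\D\xi(J^A\partial_{q_x})=\D\xi(-\Pi_y^{-1}\partial_{p_y})=0=\I\,\D\xi(\partial_{q_x}),
\]
and
\[
\D\xi(J^A\partial_{q_y})=\D\xi(-\Pi_y^{-1}\partial_{p_x})=\I\,\Pi_y^{-1}=\I\,\D\xi(\partial_{q_y}).
\]
The remaining two frame vectors $\partial_{p_x}$ and $\partial_{p_y}$ are checked the same way.

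Second, I apply the same procedure to $\eta=q_x-\Pi_x\Pi_y^{-1}q_y+\I(\Pi_xp_x-\Pi_yp_y)$, whose derivatives are
\[
\D\eta(\partial_{q_x})=1,\qquad \D\eta(\partial_{q_y})=-\Pi_x\Pi_y^{-1},\qquad \D\eta(\partial_{p_x})=\I\Pi_x,\qquad \D\eta(\partial_{p_y})=-\I\Pi_y .
\]
For example,
\[
\D\eta(J^A\partial_{q_x})=\D\eta(-\Pi_y^{-1}\partial_{p_y})=\I=\I\,\D\eta(\partial_{q_x}),
\]
and
\[
\D\eta(J^A\partial_{p_x})=\D\eta(\Pi_y\partial_{q_y})=-\Pi_x=\I\cdot\I\Pi_x .
\]

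The main obstacle is bookkeeping. Each vector such as $\Pi_y^{-1}\partial_{p_y}$ must be read with the index convention $(\Pi_y^{-1})_{\alpha a}\partial_{p_y^\alpha}$. The products of $\Pi_y$ with $\Pi_y^{-1}$ and $\Pi_x$ must also be composed in the correct order; this works because both matrices are symmetric. I will fix this convention at the outset (vectors $\Pi_y^{\pm1}\partial$ act by matrix multiplication on the index) and then check each of the eight identities.
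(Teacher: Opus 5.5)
Your proposal is correct and is essentially the paper's argument: both verify the Cauchy--Riemann condition $\D u\circ J_\Pi^A=\I\,\D u$ directly from \eqref{eq:cotangent_hyperkah_eq3}. The only difference is that the paper first computes ${J_\Pi^A}^*$ on the coframe $\D q_x,\D q_y,\D p_x,\D p_y$ and then applies it to $\D\xi$ and $\D\eta$, whereas you evaluate on the dual frame; the checks you left implicit (e.g.\ $\D\xi(J_\Pi^A\partial_{p_x})=1=\I\cdot(-\I)$ and $\D\eta(J_\Pi^A\partial_{p_y})=\Pi_y=\I\cdot(-\I\Pi_y)$) all go through.
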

\begin{proof}
    A function $f$ is holomorphic if ${J_{\Pi}^A}^*\D f = \I \D f$. By the definition, we have ${J_\Pi^A}^* \D q_x = \Pi_y \D p_y$, ${J_\Pi^A}^* \D p_y = -\Pi_y^{-1} \D q_x$, ${J_\Pi^A}^* \D q_y = \Pi_y \D p_x$, ${J_\Pi^A}^* \D p_x = -\Pi_y^{-1} \D q_y$. We have $\D \xi = \Pi_y^{-1} \D q_y - \I \D p_x$, and
		\[{J_\Pi^A}^*\D \xi=\Pi_y^{-1} \Pi_y \D p_x+\I\Pi_y^{-1}\D q_y=\I\D\xi. \]
   Thus $\xi$ is holomorphic. For $\eta$, we have
    \begin{align*}
        {J_\Pi^A}^* \D \Re(\eta) &= {J_\Pi^A}^* (\D q_x - \Pi_x \Pi_y^{-1} \D q_y)= \Pi_y \D p_y - \Pi_x \Pi_y^{-1} (\Pi_y \D p_x) = \Pi_y \D p_y - \Pi_x \D p_x. \\
        \I {J_\Pi^A}^*(\D \Im(\eta)) &= \I {J_\Pi^A}^*(\Pi_x \D p_x - \Pi_y \D p_y)=\I  (\Pi_x (-\Pi_y^{-1} \D q_y) - \Pi_y (-\Pi_y^{-1} \D q_x)) =\I (\D q_x-\Pi_x \Pi_y^{-1} \D q_y).
    \end{align*}
    Therefore, ${J_\Pi^A}^* (\D \Re(\eta) + \I \D \Im(\eta)) = -\D \Im(\eta) + \I \D \Re(\eta) = \I (\D \Re(\eta) + \I \D \Im(\eta))$.
\end{proof}
The hyperkähler structure on $X_\Pi$ varies smoothly in $\Pi$, which defines a fiberwise hyperkähler structure $(g_{X/S},J_{X/S}^A,J_{X/S}^B,J_{X/S}^C)$. Then $f:X\to S$ has another complex fiber bundle structure $X_A:=(f,T_{X/S}^A)$. Using the coordinates $(\Pi,\hat{\xi},\hat{\eta})$, where $(\hat{\xi},\hat{\eta})=(\exp(2\pi \I\xi),\exp(2\pi \I\eta))$, we identify $X_A$ with the trivial flat holomorphic fiber bundle $S\times (\CC^*)^{2k}$. We call the trivial flat connection $\nabla^{\RR}$ on $X_A$ the \emph{Gauss-Manin connection}. Its complexification decomposes as $\nabla=\nabla^{1,0}+\nabla^{0,1}$. $\nabla^{0,1}$ induces two $\dbar$-operators $\dbar_{A,0}$ and $\dbar_{B,0}$ on $X_A$ and $X_B$ respectively. $\nabla^{1,0}$ induces an almost connection $\partial_A$ on $X_A$. Let $\dbar_A=\dbar_{A,0}$.

\begin{proposition} \label{prop:GM_lift}
Let $\partial_{ij} = \frac{\partial}{\partial \Pi_{ij}}$ ($1 \le i \le j \le k$) be a basis vector for $TS$. Let $V_{ij} \in M_k(\mathbb{C})$ be the corresponding symmetric matrix variation defined by $(V_{ij})_{\mu\nu} = (\delta_{i\mu}\delta_{j\nu} + \delta_{j\mu}\delta_{i\nu})/(1+\delta_{ij})$ (or simply $V_{ij} = \dot{\Pi}$). The Gauss-Manin $(1,0)$-connection $\nabla^{1,0}$ on $X_B$ is locally given by
    \begin{equation}\label{eq:GM_10}
      \partial_{ij} \mapsto \partial_{ij} +V_{ij} \Pi_y^{-1} q_y\cdot\partial_q + \tfrac{\I}{2} \Pi_y^{-1} V_{ij} p\cdot \partial_p -\frac{\I}{2} \Pi_y^{-1} V_{ij} p\cdot\partial_{\bar{p}}.
    \end{equation}
Consequently, $\dbar_{B,0}-\dbar_B=\tfrac{\I}{2}(\Pi_y^{-1}\D\widebar{\Pi}) \bar{p}\cdot\partial_p\in C^{\infty}(X,f^*\widebar{T^*S}\otimes T_{X/S}^B)$.
\end{proposition}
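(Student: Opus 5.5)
The horizontal lift will be computed directly from the fact that the Gauss--Manin connection is trivial in the $J^A$-holomorphic coordinates $(\Pi,\xi,\eta)$ on $X_A\cong S\times(\CC^*)^{2k}$. Its real horizontal distribution is spanned by the base coordinate vector fields with $\xi,\eta$ held fixed. So the $(1,0)$-horizontal lift of $\partial_{ij}$ is the unique vector $H_{ij}$ with $\D f^\CC(H_{ij})=\partial_{ij}$ that annihilates $\D\xi,\D\bar\xi,\D\eta,\D\bar\eta$. Since all four of these are constant, it is equivalent to require $H_{ij}$ to annihilate $\D\Re\xi$, $\D\Im\xi$, $\D\Re\eta$, $\D\Im\eta$. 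Along such a horizontal curve with $\dot\Pi=V_{ij}$ (and $\dot{\widebar\Pi}=0$), I will use the inverse relations \eqref{eq:qp_inverse_rel} to read off how $q$, $\bar q$, $p$, $\bar p$ vary.

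The main computation is as follows. Holding $\Re\xi,\Re\eta$ fixed, $q=\Re\eta+\Pi\Re\xi$, so $H_{ij}(q)=V_{ij}\Re\xi=V_{ij}\Pi_y^{-1}q_y$, and $H_{ij}(\bar q)=\widebar{\Pi}$-derivative $=0$. Holding $\Im\xi,\Im\eta$ fixed, $p=-\I\Pi_y^{-1}(\Im\eta+\widebar\Pi\Im\xi)$, and only $\Pi_y^{-1}$ depends on $\Pi$ along $\partial_{ij}$. Using $\Pi_y=(\Pi-\widebar\Pi)/(2\I)$, I get $\partial_{ij}\Pi_y^{-1}=-\tfrac{1}{2\I}\Pi_y^{-1}V_{ij}\Pi_y^{-1}$. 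Hence $H_{ij}(p)=-\tfrac{1}{2\I}\Pi_y^{-1}V_{ij}\cdot\Pi_y^{-1}\Pi_y p\cdot(\ldots)$, which simplifies to $\tfrac{\I}{2}\Pi_y^{-1}V_{ij}p$.

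For $\bar p=\I\Pi_y^{-1}(\Im\eta+\Pi\Im\xi)$, there are two contributions: one from $\Pi_y^{-1}$ and one from $\Pi$ itself. The key simplification will use the identity $\Im\eta+\Pi\Im\xi=\Im\eta+\widebar\Pi\Im\xi+2\I\Pi_y\Im\xi$. Together with $\Im\xi=-p_x$, I expect the sum to collapse to $-\tfrac{\I}{2}\Pi_y^{-1}V_{ij}p$. Assembling $H_{ij}=\partial_{ij}+H_{ij}(q)\partial_q+H_{ij}(\bar q)\partial_{\bar q}+H_{ij}(p)\partial_p+H_{ij}(\bar p)\partial_{\bar p}$ then gives \eqref{eq:GM_10}. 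The main obstacle is this sign and matrix-order bookkeeping in the $\bar p$ term, since $\Pi_y$ and $V_{ij}$ do not commute. I will verify it carefully, possibly first checking the case $k=1$.

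For the consequence, I will use the conjugate $\nabla^{0,1}(\partial_{\bar{ij}})=\overline{H_{ij}}$. Its $\partial_p$-coefficient (the conjugate of the $\partial_{\bar p}$-coefficient) is $\tfrac{\I}{2}\Pi_y^{-1}\widebar V_{ij}\bar p$, since $\Pi_y$ is real. It has no $\partial_q$-component, because the $\partial_{\bar q}$-coefficient of $H_{ij}$ vanishes. In the adapted $J^B$-holomorphic coordinates, the canonical $\dbar_B$ has vanishing coefficients $\Gamma_{\bar j}^\gamma$. Therefore $\dbar_{B,0}-\dbar_B$ is given modulo $\widebar{T^B_{X/S}}$ exactly by these $T^B_{X/S}$-components, namely $\tfrac{\I}{2}(\Pi_y^{-1}\D\widebar\Pi)\bar p\cdot\partial_p$, as claimed.
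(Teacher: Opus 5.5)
Your proposal is correct and follows essentially the paper's route: it differentiates the inverse relations \eqref{eq:qp_inverse_rel} along the leaves $\{\xi,\eta=\text{const}\}$ with $\dot\Pi=V_{ij}$ and $\dot{\widebar\Pi}=0$, then conjugates to obtain $\dbar_{B,0}-\dbar_B$. The identity you plan to use for the $\bar p$-term is exactly $p+\bar p=-2\Im\xi$, so $H_{ij}(\bar p)=-H_{ij}(p)$ follows at once and no matrix-order bookkeeping is needed; this is how the paper handles that term.
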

\begin{proof}
	A vector field is horizontal with respect to the Gauss-Manin connection if $\xi$ and $\eta$ are constant along its integral curve.
	We compute the induced variations $\dot{q}$ and $\dot{p}$ by differentiating \eqref{eq:qp_inverse_rel} with respect to $\Pi$ in the direction $V_{ij}$. Note that $\dot{\Pi} = V_{ij}$ and $\dot{\overline{\Pi}} = 0$. We have
	\[
	\dot{q} = \overset{\cdot}{\wideparen{\Re(\eta)}} + \dot{\Pi} \Re(\xi) + \Pi  \overset{\cdot}{\wideparen{\Re(\xi)}} = \dot{\Pi} \Re(\xi)= V_{ij} \Pi_y^{-1} q_y.
	\]
	For the conjugate coordinate $\bar{q} = \Re(\eta) + \widebar{\Pi} \Re(\xi)$, the derivative is
	\[
	\dot{\bar{q}} = \dot{\overline{\Pi}} \Re(\xi) = 0.
	\]
	 By \eqref{eq:qp_inverse_rel}, the derivative of $\Pi_y p$ vanishes, so
	\[
	\dot{\Pi}_y p + \Pi_y \dot{p} = 0.
	\]
	Note that $\dot{\Pi}_y = \frac{1}{2\I} (\dot{\Pi}- \dot{\overline{\Pi}}) = \frac{1}{2\I} V_{ij}$. Then
	\[\dot{p} = - \Pi_y^{-1} \bigl( \tfrac{1}{2\I} V_{ij} p \bigr) = \tfrac{\I}{2} \Pi_y^{-1} V_{ij} p.\]
	Since $p + \bar{p} = -2\Im(\xi)$ is constant, we have
	\[\dot{\bar{p}} = -\dot{p} = - \tfrac{\I}{2} \Pi_y^{-1} V_{ij} p.
	\]
	Therefore \eqref{eq:GM_10} follows. By taking its conjugate we obtain $\nabla^{0,1}$, which induces $\dbar_{B,0}$. The conjugate of the last term in \eqref{eq:GM_10} is $(\dbar_{B,0}-\dbar_B)(\partial_{\widebar{ij}})$, and the second statement follows.
\end{proof}
Define an almost Higgs field $\theta$ on $X_B$ by $\theta=-\frac{\I}{2}(\D \Pi)p\cdot\partial_q$, i.e.,
\begin{equation}\label{eq:cotang_ab_theta}
	\theta(\partial_{ij})=-\tfrac{\I}{2} V_{ij} p \cdot \partial_q=-\tfrac{\I}{2} (V_{ij})_{ab} p^b \partial_{q^a}.
\end{equation}
The coefficients are holomorphic on $X_B$ and $[\theta,\theta]=0$ by the following lemma, so $(X_B,\dbar_B,\theta)$ is a nonlinear Higgs bundle.
\begin{lemma}\label{lem:rk1_theta_int}
Let $\partial_{i_1 j_1},\partial_{i_2 j_2}\in T_{\Pi}S$. Then $[\theta(\partial_{i_1 j_1}), \theta(\partial_{i_2 j_2})] = 0$.
\end{lemma}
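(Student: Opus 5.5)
The plan is a direct computation of the Lie bracket of the two vertical $(1,0)$-vector fields in the adapted holomorphic coordinates $(\Pi_{ij},q^a,p^\alpha)$ on $X_B$. By \eqref{eq:cotang_ab_theta},
\[
\theta(\partial_{i_1j_1})=-\tfrac{\I}{2}(V_{i_1j_1})_{ab}\,p^b\,\partial_{q^a},\qquad
\theta(\partial_{i_2j_2})=-\tfrac{\I}{2}(V_{i_2j_2})_{cd}\,p^d\,\partial_{q^c}.
\]
These are fiberwise holomorphic vector fields on $X_\Pi=A_\Pi\times\CC^k$. Their coefficients are linear functions of $p$ only. The matrices $V_{ij}$ are constant and do not depend on the fiber coordinates.

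The key observation is that each field has components only in the $\partial_q$-directions, while its coefficients depend only on the $p$-coordinates. Expanding the bracket, one therefore gets
\[
[\theta(\partial_{i_1j_1}),\theta(\partial_{i_2j_2})]
=-\tfrac14\Bigl((V_{i_1j_1})_{ab}p^b\,\partial_{q^a}\bigl((V_{i_2j_2})_{cd}p^d\bigr)\partial_{q^c}
-(V_{i_2j_2})_{cd}p^d\,\partial_{q^c}\bigl((V_{i_1j_1})_{ab}p^b\bigr)\partial_{q^a}\Bigr).
\]
Both terms vanish, because $\partial_{q^a}p^d=0$. Note also that the bracket is taken among vertical vector fields on a fixed fiber, so the base variable $\Pi$ plays no role. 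Hence the bracket is zero. This gives $\frac12[\theta,\theta]=0$, that is, $G^{2,0}_\theta=0$.

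The only point needing care is to confirm that $\partial_{q^a}$ is the holomorphic coordinate vector field for the $J^B$-structure in which $p$ is an independent holomorphic coordinate. We should also check that the coefficient $p^b$ is a global fiberwise holomorphic function, so that the local computation is legitimate. Both facts hold: $(q,p)$ are holomorphic coordinates on $T^*A_\Pi=A_\Pi\times\CC^k$, and $p$ is invariant under the lattice translations in $q$. I do not expect a genuine obstacle here; the computation is essentially immediate.
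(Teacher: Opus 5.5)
Your proof is correct and matches the paper's: both fields have the form $f^a(p)\,\partial_{q^a}$ with coefficients independent of $q$, so each term of the bracket vanishes. Your extra remarks about the holomorphic coordinates $(q,p)$ on $T^*A_\Pi$ and the lattice-invariance of $p$ are harmless additions.
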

\begin{proof}
The two holomorphic vector fields on $X_\Pi$ are
\[
    \theta(\partial_{i_1 j_1}) = f^a(p) \partial_{q^a} \quad \text{and}\quad \theta(\partial_{i_2 j_2}) =  g^c(p) \partial_{q^c},
\]
where $f^a(p) = -\frac{\I}{2} (V_{i_1 j_1})_{ab} p^b$ and $g^c(p) = -\frac{\I}{2} (V_{i_2 j_2})_{cd} p^d$. Then
\[
    [\theta(\partial_{i_1 j_1}), \theta(\partial_{i_2 j_2})] =  (f^a(p) \partial_{q^a} g^c(p) - g^a(p) \partial_{q^a} f^c(p))\partial_{q^c}=0,
\]
since $f^a(p)$ and $g^c(p)$ are independent of $q$.
\end{proof}

$\theta$ is equivalent to the Kodaira-Spencer map as follows. By Proposition \ref{prop:ks_conn_rep} and Proposition \ref{prop:GM_lift}, the Kodaira-Spencer map $\kappa: T_\Pi S \to H^1(A_\Pi, T A_\Pi)$ is given by
\[\kappa(\partial_{ij})=\dbar_{X_\Pi}(V_{ij}\Pi_y^{-1}q_y\partial_q)=\tfrac{\I}{2}(V_{ij}\Pi_y^{-1})_{ba}\D \bar{q}^a\otimes \partial_{q^b}.\]
For the principally polarized abelian variety $A_\Pi$, we have the canonical identifications $T A_\Pi \cong \mathcal{O}^{\oplus k}$ and
\[ H^1(A_\Pi, T A_\Pi) \cong H^1(A_\Pi, \mathcal{O}) \otimes H^0(A_\Pi, T A_\Pi) \cong \widebar{H^0(A_\Pi, \Omega^1)} \otimes H^0(A_\Pi, \Omega^1)^*. \]
Using the Hermitian metric on $A_\Pi$, we have $\widebar{H^0(A_\Pi, \Omega^1)}\cong H^0(A_\Pi, \Omega^1)^*$. The fiber coordinates $p^\alpha$ of the cotangent bundle $X_\Pi \cong T^* A_\Pi$ can be identified with coordinates on $H^0(A_\Pi, \Omega^1)$. Then $\kappa(\partial_{ij})$ can be identified with a function
\[H_{ij}(q, p) = \tfrac{\I}{4} (V_{ij})_{ab} p^a p^b.\]
\begin{lemma}\label{lem:cotang_theta_ham}
  The vector field $\theta(\partial_{ij})$ is the Hamiltonian vector field of $H_{ij}$ with respect to the holomorphic symplectic form $\Omega_{X/S}^B:=\D p^\alpha\wedge \D q^\alpha$, i.e., $\iota_{\theta(\partial_{ij})} \Omega_{X/S}^B = \D H_{ij}$.
\end{lemma}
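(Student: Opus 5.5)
This is a direct coordinate computation. We write $\theta(\partial_{ij})=f^a(p)\,\partial_{q^a}$ with $f^a(p)=-\tfrac{\I}{2}(V_{ij})_{ab}p^b$, as in \eqref{eq:cot_ang_theta} and the proof of Lemma \ref{lem:rk1_theta_int}. We then contract this vector field into $\Omega^B_{X/S}=\D p^\alpha\wedge\D q^\alpha$ and compare the result with the holomorphic differential of $H_{ij}=\tfrac{\I}{4}(V_{ij})_{ab}p^ap^b$. Both sides are $(1,0)$-forms on the fiber $(X_\Pi,J^B_\Pi)$ with holomorphic coefficients, so it suffices to compare coefficients of $\D q^\alpha$ and $\D p^\alpha$. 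Since $H_{ij}$ is holomorphic, $\D H_{ij}=\partial H_{ij}$, and no antiholomorphic terms need to be tracked.

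The first step is the contraction. Because $\theta(\partial_{ij})$ has only $\partial_{q}$-components, $\iota_{\partial_{q^a}}(\D p^\alpha\wedge\D q^\alpha)=-\delta_{a\alpha}\,\D p^\alpha$. This gives
\[
\iota_{\theta(\partial_{ij})}\Omega^B_{X/S}=-f^\alpha(p)\,\D p^\alpha=\tfrac{\I}{2}(V_{ij})_{\alpha b}p^b\,\D p^\alpha .
\]
The second step is the derivative. Since $V_{ij}$ is symmetric, $\D H_{ij}=\tfrac{\I}{4}\bigl((V_{ij})_{\alpha b}+(V_{ij})_{b\alpha}\bigr)p^b\,\D p^\alpha=\tfrac{\I}{2}(V_{ij})_{\alpha b}p^b\,\D p^\alpha$, and this agrees with the contraction.

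The only real obstacle is the sign, which depends on the convention $\iota_v(\alpha\wedge\beta)=\alpha(v)\beta-\beta(v)\alpha$ and on the order $\D p\wedge\D q$ in the definition of $\Omega^B_{X/S}$. I would check this once explicitly. If the sign came out opposite, the same calculation would show that the discrepancy is a global factor $-1$, which would point to the orientation convention for $\Omega^B_{X/S}$ rather than to the identification of $\kappa$ with $H_{ij}$. As a secondary check, the symmetry of $V_{ij}$ is used to replace the symmetrized coefficient by $2(V_{ij})_{\alpha b}$, and this also holds on the diagonal, where $(V_{ii})_{\mu\nu}=\delta_{i\mu}\delta_{i\nu}$.
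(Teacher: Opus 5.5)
Your proof is correct and follows the paper's argument: you contract $\theta(\partial_{ij})$ into $\D p^\alpha\wedge\D q^\alpha$ to get $\tfrac{\I}{2}(V_{ij})_{ab}p^b\,\D p^a$, then match this with $\D H_{ij}$ using the symmetry of $V_{ij}$. Your sign check with $\iota_v(\alpha\wedge\beta)=\alpha(v)\beta-\beta(v)\alpha$ already settles the orientation issue you were worried about.
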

\begin{proof}
  By the symmetry of $V_{ij}$, we have
    \[\D H_{ij}=\tfrac{\I}{2} (V_{ij} p) \cdot \D p.\]
On the other hand,
    \begin{align*}
        \iota_{\theta(\partial_{ij})} (\D p^\alpha \wedge \D q^\alpha) &= (\iota_{\theta(\partial_{ij})} \D p^\alpha) \wedge \D q^\alpha - \D p^\alpha \wedge (\iota_{\theta(\partial_{ij})} \D q^\alpha) \\
        &= 0 - \D p^\alpha \wedge \bigl( -\tfrac{\I}{2} (V_{ij})_{ab} p^b \delta_{\alpha a} \bigr) \\
        &= \tfrac{\I}{2} (V_{ij})_{a b} p^b \D p^a=\D H_{ij}. \qedhere
    \end{align*}
\end{proof}

\begin{proposition}
  $\bar{\theta}_J=\dbar_{B,0}-\dbar_B$, where $\bar{\theta}_J$ is defined in \eqref{eq:theta_barJ}.
\end{proposition}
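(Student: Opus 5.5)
The plan is to compute both sides of the claimed identity on the basis vector $\partial_{\widebar{ij}}$ of $\widebar{TS}$ and compare them. The right-hand side is already known from Proposition \ref{prop:GM_lift}:
\[
(\dbar_{B,0}-\dbar_B)(\partial_{\widebar{ij}})=\tfrac{\I}{2}\bigl(\Pi_y^{-1}\widebar{V_{ij}}\,\bar p\bigr)^\alpha\partial_{p^\alpha}.
\]
So the work is to evaluate $\bar\theta_J(\partial_{\widebar{ij}})=\mathrm{pr}_{T^B_{X/S}}\bigl(\I J^A_{X/S}\widebar{\theta(\partial_{ij})}_{J^B_{X/S}}\bigr)$ from \eqref{eq:theta_barJ}, using the explicit formula \eqref{eq:cotang_ab_theta} and the explicit hyperk\"ahler structure \eqref{eq:cotangent_hyperkah_eq2}--\eqref{eq:cotangent_hyperkah_eq3}.

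\emph{Step 1: conjugation with respect to $J^B$.} Since $\theta(\partial_{ij})=-\tfrac{\I}{2}(V_{ij})_{ab}p^b\partial_{q^a}$ lies in $T^B_{X/S}$, its conjugate is
\[
\widebar{\theta(\partial_{ij})}_{J^B}=\tfrac{\I}{2}\,\overline{(V_{ij})_{ab}}\;\bar p^b\,\partial_{\bar q^a}.
\]

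\emph{Step 2: apply $J^A$ to $\partial_{\bar q^a}$.} Write $\partial_{\bar q^a}=\tfrac12(\partial_{q_x^a}+\I\partial_{q_y^a})$ and use \eqref{eq:cotangent_hyperkah_eq3}, which gives $J^A\partial_{q_x}=-\Pi_y^{-1}\partial_{p_y}$ and $J^A\partial_{q_y}=-\Pi_y^{-1}\partial_{p_x}$. This yields
\[
J^A\partial_{\bar q^a}=-\tfrac12(\Pi_y^{-1})_{\alpha a}\bigl(\partial_{p_y^\alpha}+\I\partial_{p_x^\alpha}\bigr)=-\tfrac{\I}{2}(\Pi_y^{-1})_{\alpha a}\bigl(\partial_{p_x^\alpha}-\I\partial_{p_y^\alpha}\bigr)=-\I(\Pi_y^{-1})_{\alpha a}\,\partial_{p^\alpha}.
\]
The image is of type $(1,0)$ for $J^B$. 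This is consistent with the preceding remark: in the hyperk\"ahler situation $J^A$ anticommutes with $J^B$, so the projection $\mathrm{pr}_{T^B_{X/S}}$ acts as the identity.

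\emph{Step 3: assemble.} Multiplying by $\I$ and combining Steps 1 and 2 gives
\[
\bar\theta_J(\partial_{\widebar{ij}})=\I\cdot\tfrac{\I}{2}\cdot(-\I)\,(\Pi_y^{-1})_{\alpha a}\overline{(V_{ij})_{ab}}\,\bar p^b\,\partial_{p^\alpha}=\tfrac{\I}{2}\bigl(\Pi_y^{-1}\widebar{V_{ij}}\,\bar p\bigr)^\alpha\partial_{p^\alpha}.
\]
This agrees with the expression from Proposition \ref{prop:GM_lift}. Both sides are $C^\infty$-antilinear in the base direction, so agreement on the basis $\partial_{\widebar{ij}}$ proves the identity.

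The only delicate point is bookkeeping of conventions: the sign and index placement of $\Pi_y^{-1}$ in \eqref{eq:cotangent_hyperkah_eq3}, the symmetry of $\Pi_y$ and $V_{ij}$ used to move indices, and the normalizations $\partial_p=\tfrac12(\partial_{p_x}-\I\partial_{p_y})$ and $\partial_{\bar q}=\tfrac12(\partial_{q_x}+\I\partial_{q_y})$. I would double-check Step 2 by verifying $(J^A)^2=-\id$ on these vectors using \eqref{eq:cotangent_hyperkah_eq3}.
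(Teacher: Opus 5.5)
Your proposal is correct and follows essentially the same route as the paper's proof: conjugate $\theta(\partial_{ij})$ with respect to $J^B$, use \eqref{eq:cotangent_hyperkah_eq3} to get $J^A\partial_{\bar q^a}=-\I(\Pi_y^{-1})_{\alpha a}\partial_{p^\alpha}$, which is already $J^B$-$(1,0)$ so the projection is trivial, and then compare with the second statement of Proposition \ref{prop:GM_lift}. The only cosmetic difference is that you write $\overline{V_{ij}}$ where the paper writes $V_{ij}$; these agree because $V_{ij}$ is a real matrix.
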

\begin{proof}
Let $v$ be the vector field in \eqref{eq:cotang_ab_theta}. It suffices to show \[\I J_\Pi^A(\bar{v}_{J_\Pi^B})=\tfrac{\I}{2}\Pi_y^{-1}V_{ij}\bar{p}\cdot\partial_{p}.\]
First we have  \[\bar{v}_{J_\Pi^B} = \overline{-\tfrac{\I}{2} (V_{ij} p)^a} \partial_{\bar{q}^a} = \tfrac{\I}{2} (V_{ij} \bar{p})^a \partial_{\bar{q}^a}.\]
Using $J_\Pi^A (\partial_{q_x}) = -\Pi_y^{-1} \partial_{p_y}$ and $J_\Pi^A (\partial_{q_y}) = -\Pi_y^{-1} \partial_{p_x}$, we have
    \begin{align*}
        J_\Pi^A (\partial_{\bar{q}^a}) &= \tfrac{1}{2} \bigl( J_\Pi^A(\partial_{q_x^a}) + \I J_\Pi^A(\partial_{q_y^a}) \bigr) \\
        &= \tfrac{1}{2} \bigl( -(\Pi_y^{-1})^{ab} \partial_{p_y^b} - \I (\Pi_y^{-1})^{ab} \partial_{p_x^b} \bigr) \\
				&=-\I (\Pi_y^{-1})^{ab} \partial_{p^b}.
    \end{align*}
Then we have
    \begin{align*}
        \I J_\Pi^A (\bar{v}_{J_\Pi^B}) &= \I \bigl( \tfrac{\I}{2} (V_{ij} \bar{p})^a \bigr) J_\Pi^A (\partial_{\bar{q}^a}) \\
        &= -\tfrac{1}{2}(V_{ij} \bar{p})^a \bigl( -\I (\Pi_y^{-1})^{ab} \partial_{p^b} \bigr) \\
        &= \tfrac{\I}{2} (\Pi_y^{-1})^{ba} (V_{ij} \bar{p})^a \partial_{p^b}=\tfrac{\I}{2} \Pi_y^{-1} V_{ij} \bar{p}\cdot \partial_p,
    \end{align*}
    where we used the symmetry $(\Pi_y^{-1})^{ba} = (\Pi_y^{-1})^{ab}$.
\end{proof}
This proposition verifies \eqref{eq:twist_simpson_dbar_A}. We proceed to verify \eqref{eq:twist_simpson_partial_A}. Consider the closed $(1,1)$-form on $X_A$ defined by $\omega=\I \partial_{X_A}\dbar_{X_A}\phi$, where $\phi\in C^\infty(X)$ is given by the Hodge norm squared of the cotangent vector $p$:
\begin{equation} \label{eq:hodge_norm_kah_pot}
    \phi(\Pi,q,p) = (\Pi_y)_{\alpha\beta} p^\alpha \bar{p}^\beta.
\end{equation}
\begin{lemma}
	$\omega$ restricts to the fiberwise K\"ahler metric $\omega_{X/S}^A$.
\end{lemma}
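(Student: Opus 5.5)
The plan is to compute $\omega$ directly in the $J^A$-holomorphic coordinates $(\Pi,\xi,\eta)$ and then restrict to a fiber, where $\Pi$ is constant. On a fiber, $\Pi_y$ is constant, so $\phi|_{X_\Pi}=(\Pi_y)_{\alpha\beta}p^\alpha\bar p^\beta$. The restriction of $\omega$ to the fiber is $\I\partial_{X_\Pi,A}\dbar_{X_\Pi,A}(\phi|_{X_\Pi})$, because $\D\Pi$ vanishes on vertical vectors. It is therefore enough to show that
\[\I\partial_A\dbar_A\bigl((\Pi_y)_{\alpha\beta}p^\alpha\bar p^\beta\bigr)=\omega_{J_\Pi^A}\]
for fixed $\Pi$, where $\partial_A,\dbar_A$ are taken with respect to $J_\Pi^A$.

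First, express $p$ through the holomorphic coordinates using \eqref{eq:qp_inverse_rel}. This gives $\Pi_y p=-\I(\Im\eta+\widebar\Pi\Im\xi)$. Write $\Im\eta=(\eta-\bar\eta)/(2\I)$ and $\Im\xi=(\xi-\bar\xi)/(2\I)$. Then $\Pi_y p=-\tfrac12(\eta-\bar\eta+\widebar\Pi(\xi-\bar\xi))$, which is a real-affine combination of $\xi,\eta$ and their conjugates. From this, $\phi=p^{\mathrm T}\Pi_y\bar p=\bar p^{\mathrm T}\Pi_y p$ is a quadratic Hermitian-type expression in $(\xi,\eta,\bar\xi,\bar\eta)$.

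Next, isolate the mixed terms, since only these survive $\partial_A\dbar_A$. The pure terms in $\xi\xi$, $\eta\eta$, $\bar\xi\bar\xi$, etc.\ are pluriharmonic and are killed. A convenient way to organize this is to write $\Pi_y p=-\tfrac12(w-\bar w')$ with $w=\eta+\widebar\Pi\xi$ and $w'=\eta+\Pi\xi$, noting that $\bar w'=\bar\eta+\widebar\Pi\bar\xi$ is antiholomorphic. Then
\[\phi=\tfrac14\,(w-\bar w')^{\mathrm T}\,\Pi_y^{-1}\,\overline{(w-\bar w')}.\]
The mixed part is $\tfrac14\bigl(w^{\mathrm T}\Pi_y^{-1}\bar w+\bar w'^{\mathrm T}\Pi_y^{-1}w'\bigr)$ up to the pluriharmonic cross terms $w\,w'$ and $\bar w\,\bar w'$. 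Applying $\I\partial\dbar$ yields $\tfrac{\I}{4}\bigl(\D w^{\mathrm T}\Pi_y^{-1}\wedge\D\bar w+\D w'^{\mathrm T}\Pi_y^{-1}\wedge\D\bar w'\bigr)$.

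Finally, substitute $\D\xi=\Pi_y^{-1}\D q_y-\I\D p_x$ and the analogous expression for $\D\eta$. This gives $\D w,\D w'$ in terms of $\D q_x,\D q_y,\D p_x,\D p_y$. Expanding, one checks that the result equals $\D p_x^\alpha\wedge\D q_y^\alpha-\D q_x^\alpha\wedge\D p_y^\alpha=\omega_{J_\Pi^A}$. The cross terms involving $\Pi_x$ should cancel because $\Pi$ is symmetric.

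The main obstacle is this last bookkeeping step: keeping track of the factors of $\tfrac12$ and $\I$ and of the transposes, and confirming both that the $\Pi_x$ contributions cancel and that the normalization is exactly $1$ rather than a multiple. If the normalization comes out wrong, the first thing to recheck is the coefficient identity $\D w-\D w'=(\widebar\Pi-\Pi)\D\xi=-2\I\Pi_y\D\xi$, since it drives the cancellation. As a sanity check, we can set $k=1$, $\Pi=\I$ and verify $\I\partial\dbar|p|^2=\Im(\D p\wedge\D q)$ by hand.
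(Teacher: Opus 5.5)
Your proof is correct, and it takes a different route from the paper's. The step you flag as the main obstacle closes cleanly. On a fiber one has $w=\eta+\widebar\Pi\xi=\bar q-\Pi_y p$ and $w'=\eta+\Pi\xi=q+\Pi_y\bar p$, so $\Pi_x$ disappears identically rather than through a cancellation. Expanding $\tfrac{\I}{4}\bigl(\D w^{\mathrm T}\Pi_y^{-1}\wedge\D\bar w+\D (w')^{\mathrm T}\Pi_y^{-1}\wedge\D\bar w'\bigr)$, the $\D q\wedge\D\bar q$ and $\D p\wedge\D\bar p$ terms cancel by the symmetry of $\Pi_y$. What remains is $\tfrac{\I}{4}\cdot(-4\I)\,\Im(\D p^a\wedge\D q^a)=\omega_{J_\Pi^A}$, so the normalization is exactly $1$.

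The paper instead works with real coordinates. It writes $\phi=(\Pi_y)_{\alpha\beta}(p_x^\alpha p_x^\beta+p_y^\alpha p_y^\beta)$ and uses $\D\D^c=2\I\partial\dbar$ with $\D^c=-{J_\Pi^A}^*\D$. The action of ${J_\Pi^A}^*$ on the real coframe was already computed when checking that $\xi,\eta$ are holomorphic, and it gives $\D^c\phi=2p_x^a\D q_y^a+2p_y^a\D q_x^a$. One more exterior derivative finishes the proof.

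Your approach never uses the coframe action of $J^A$. Instead it writes $\phi$ as $\tfrac14\bigl(w^{\mathrm T}\Pi_y^{-1}\bar w+(w')^{\mathrm T}\Pi_y^{-1}\bar w'\bigr)$ plus pluriharmonic terms. This makes the plurisubharmonicity of $\phi$ visible and matches the $(\Pi,\xi,\eta)$-coordinate computation the paper uses next for the symplectic connection. The cost is a change of variables that the paper's two-line $\D\D^c$ computation avoids.
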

\begin{proof}
	It suffices to show that $\I\partial_{X_\Pi^A}\dbar_{X_\Pi^A}\phi=\omega_{J_\Pi^A}$. In the following, we omit the subscript $X_\Pi^A$. It is equivalent to show $\frac{1}{2} \D \D^c \phi = \omega_{J_\Pi^A}$, since $\D \D^c = 2 \I \partial\dbar$ for $\D^c = -{J_\Pi^A}^* \D$. Let $p^\alpha = p_x^\alpha + \I p_y^\alpha$, then
	\[		\phi = (\Pi_y)_{\alpha\beta} (p_x^\alpha + \I p_y^\alpha) (p_x^\beta - \I p_y^\beta) = (\Pi_y)_{\alpha\beta} (p_x^\alpha p_x^\beta + p_y^\alpha p_y^\beta),\]
	where we used the symmetry of $\Pi_y$. We compute
	\begin{align*}
		\D^c \phi &= -{J_\Pi^A}^* \bigl(2 (\Pi_y)_{\alpha\beta} p_x^\alpha \D p_x^\beta + 2 (\Pi_y)_{\alpha\beta} p_y^\alpha \D p_y^\beta\bigr)\\
		&=- \bigl( 2 (\Pi_y)_{\alpha\beta} p_x^\alpha ( -(\Pi_y^{-1})^{\beta a} \D q_y^a ) + 2 (\Pi_y)_{\alpha\beta} p_y^\alpha ( -(\Pi_y^{-1})^{\beta a} \D q_x^a) \bigr)\\
		&=2 p_x^a \D q_y^a + 2 p_y^a \D q_x^a.
	\end{align*}
	Taking the exterior derivative,
	\[\D (\D^c \phi) = 2 \D p_x^a \wedge \D q_y^a + 2 \D p_y^a \wedge \D q_x^a=2\omega_{J_\Pi^A}.\]
	The conclusion follows.
\end{proof}
$\omega$ is a relatively Kähler form on the holomorphic fiber bundle $X_A$, so it induces a $(1,0)$-connection $\nabla_{\omega_{X/S}^A}^{1,0}$ by \eqref{eq:rel_kah_lift}, which is a symplectic connection associated to $\dbar_A$ and $\omega_{X/S}^A$. Let $\partial_{\omega_{X/S}^A}$ be the corresponding symplectic almost connection.
\begin{proposition}
$\nabla_{\omega_{X/S}^A}^{1,0}$ on $X_A$ is given in $(\Pi,q,p)$ by
\begin{equation}\label{eq:symp_A}
\partial_{ij}\mapsto\nabla^{1,0}(\partial_{ij})+\I V_{ij}p\cdot\partial_q+\I\Pi_y^{-1}V_{ij}p\cdot\partial_{\bar p}.
\end{equation}
Consequently, $\partial_A-\partial_{\omega_{X/S}^A}=2\sqrt{2}\beta_{X/S}^{-1}(\theta)$, where $\beta_{X/S}$ corresponds via Lemma \ref{lem:real_beta} to an isometry $\beta_{X/S}^\RR$ given by
    \begin{equation}\label{eq:beta_canon}
        (\beta_{X/S}^\RR)^{-1}= \sqrt{1/2}(\id - J_{X/S}^C).
    \end{equation}
Moreover, if $(\beta_{X/S}^\RR)^{-1}$ has the form $a_0 \id + a_1 J_{X/S}^A + a_2 J_{X/S}^B + a_3 J_{X/S}^C$ for $a_0,a_1,a_2,a_3\in C^\infty(X)$ and satisfies $\partial_A-\partial_{\omega_{X/S}^A}=2\sqrt{2}\beta_{X/S}^{-1}(\theta)$, then it must be the one in \eqref{eq:beta_canon}.
\end{proposition}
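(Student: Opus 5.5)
The proof is a direct coordinate computation in the real coordinates $(\Pi,q_x,q_y,p_x,p_y)$, organized in three steps.

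\textbf{Step 1: the horizontal lift for $\omega$.} By \eqref{eq:rel_kah_lift}, $\nabla^{1,0}_{\omega_{X/S}^A}(\partial_{ij})$ is the unique lift of $\partial_{ij}$ that is pure for $X_A$ and $\omega$-orthogonal to the vertical bundle. I will write the candidate
\[
H_{ij}:=\nabla^{1,0}(\partial_{ij})+\I V_{ij}p\cdot\partial_q+\I\Pi_y^{-1}V_{ij}p\cdot\partial_{\bar p}
\]
and check the two defining properties.
\begin{itemize}
\item \emph{Purity.} $\nabla^{1,0}$ is already pure for $X_A$: it is the complexified Gauss--Manin connection, trivial in the $J^A$-holomorphic coordinates $(\hat\xi,\hat\eta)$. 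So it suffices to show that the vertical correction $W:=\I V_{ij}p\cdot\partial_q+\I\Pi_y^{-1}V_{ij}p\cdot\partial_{\bar p}$ lies in $T^A_{X/S}$. For this I evaluate $\D\bar\xi$ and $\D\bar\eta$ on $W$ and show they vanish, using $\D\xi=\Pi_y^{-1}\D q_y-\I\D p_x$ and the expression for $\D\eta$.
\item \emph{Orthogonality.} I need $\iota_{H_{ij}}\omega|_{T^\RR_{X/S}}=0$. Since $\omega=\I\partial\dbar\phi$ with $\phi=(\Pi_y)_{\alpha\beta}p^\alpha\bar p^\beta$, I will compute $\iota_{\nabla^{1,0}(\partial_{ij})}\omega$ restricted to fibers. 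This is done by differentiating $\D^c\phi$ (already computed fiberwise in the lemma above) in the $\Pi$ direction, with the $\Pi$-dependence of $J^A$ taken into account. The resulting term should then cancel against $\iota_W\omega_{J^A_\Pi}$, which I will evaluate from $\omega_{J^A_\Pi}=\Im(\D p\wedge\D q)$.
\end{itemize}
I expect this orthogonality computation to be the main obstacle, because $\Pi_y$ enters both the potential and $J^A$. To keep it manageable I will work in the $J^A$-holomorphic coordinates $(\xi,\eta)$: there $\phi$ can be rewritten via \eqref{eq:qp_inverse_rel} as a function of $\Im\xi,\Im\eta,\Pi$, and the horizontal coefficient is read off from $-g^{\bar\beta\alpha}g_{i\bar\beta}$.

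\textbf{Step 2: the difference of almost connections.} $\partial_A-\partial_{\omega^A_{X/S}}$ is the image of $-W$ in $Q_A$. Modulo $\widebar{T^A_{X/S}}$, I will express the $T^A$-component of the real vector $\Re W$ (equivalently, $\mathrm{pr}_{T^A}(-W)$) in terms of $\partial_q,\partial_p$. Separately, I apply $(\beta^\RR)^{-1}=\sqrt{1/2}(\id-J^C)$, extended complex-linearly, to the real part of $\theta(\partial_{ij})=-\tfrac{\I}{2}V_{ij}p\cdot\partial_q$ using \eqref{eq:cotangent_hyperkah_eq4}, and project to $T^A$. Comparing with $2\sqrt2$ times this should give the identity. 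I will also check that $(\beta^\RR)^{-1}$ intertwines $J^B$ with $J^A$, which is consistent with the earlier lemma's family $c_1(\id+J^C)+c_2(J^A+J^B)$ after inversion, and that it is an isometry.

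\textbf{Step 3: uniqueness.} Write a general $(\beta^\RR)^{-1}=a_0\id+a_1J^A+a_2J^B+a_3J^C$. The intertwining condition $J^A\beta^{-1}=\beta^{-1}J^B$ forces the shape $c_1(\id-J^C)+c_2(J^A+J^B)$; this follows by the same algebra as the earlier lemma, run with $A$ and $B$ swapped. Imposing $\partial_A-\partial_{\omega^A}=2\sqrt2\beta^{-1}(\theta)$ on the explicit vector field $\theta(\partial_{ij})$, whose $p$-dependence is generic (for instance at a point with $p\neq0$ and $V_{ij}p\neq0$), gives two linear equations that should yield $c_1=\sqrt{1/2}$ and $c_2=0$ pointwise.
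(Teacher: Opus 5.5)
Your proposal is correct and is essentially the paper's proof. The paper also rewrites $\phi$ in the $J^A$-holomorphic coordinates $(\Pi,\xi,\eta)$ and solves $\Gamma_{ij}=-g^{-1}g_{ij,\bar{\mathsf b}}$, converts the resulting vertical field back to $(q,p)$, and checks $2(\id-J^C_{X/S})\theta(\partial_{ij})=-W$ via $\I J^C_\Pi(V_{ij}p\cdot\partial_q)=-\I\Pi_y^{-1}V_{ij}p\cdot\partial_{\bar p}$; for uniqueness it compares the $\partial_q$- and $\partial_{\bar p}$-coefficients of $2(a_0\id+a_1J^A+a_2J^B+a_3J^C)\theta(\partial_{ij})$, which pins down all four $a_i$ without first imposing the intertwining relation as you do.

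A bookkeeping remark on Step 2: no real parts or projections are needed, since $W\in T^A_{X/S}$ already and $\beta_{X/S}^{-1}$ is just the complex-linear extension of $(\beta^\RR_{X/S})^{-1}$ restricted to $T^B_{X/S}$. Your ``equivalently'' is also off, because the $T^A$-component of $\Re W$ is $\tfrac12 W$, not $\mathrm{pr}_{T^A}(-W)=-W$. Finally, in Step 3 the coefficient comparison determines the $a_i$ only where $p\neq0$; continuity then gives them at $p=0$.
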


\begin{proof}
	We work in the holomorphic coordinates $(\Pi_{ij},\xi^\alpha, \eta^a)$, where the potential $\phi$ depends only on the imaginary parts $\xi_y = \Im(\xi)$ and $\eta_y = \Im(\eta)$. The connection coefficients $\Gamma^{\mathsf{a}}_{ij}$ (where $\mathsf{a}=a$ or $\alpha$) are determined by
  \[  g_{\mathsf{a}\bar{\mathsf{b}}} \Gamma_{ij}^{\mathsf{a}} = - g_{ij,\bar{\mathsf{b}}}, \quad \text{where } g_{\mathsf{a}\bar{\mathsf{b}}} = \partial_{\mathsf{a}} \partial_{\bar{\mathsf{b}}} \phi.\]
Recall that $\phi = p^\dagger \Pi_y p$. Using \eqref{eq:qp_inverse_rel}, we have $p_x = -\xi_y$ and $p_y = -\Pi_y^{-1}(\eta_y + \Pi_x \xi_y)$, so
\begin{equation}\label{eq:phi_xi_eta}
	 \phi(\Pi, \xi_y, \eta_y) = \xi_y^{\text{T}} \Pi_y \xi_y + (\eta_y + \Pi_x \xi_y)^{\text{T}} \Pi_y^{-1} (\eta_y + \Pi_x \xi_y).
\end{equation}
We have $\frac{\partial \phi}{\partial \bar{\xi}}=\frac{\I}{2}\frac{\partial \phi}{\partial \xi_y}$, $\frac{\partial \phi}{\partial \xi}=-\frac{\I}{2}\frac{\partial \phi}{\partial \xi_y}$, and similarly for $\eta$. The fiber coefficient matrix of $\omega$ is
\[ (g_{\mathsf{a}\bar{\mathsf{b}}}) = \tfrac{1}{4} \operatorname{Hess}_{\xi_y,\eta_y}(\phi)= \tfrac{1}{2} \begin{pmatrix} \Pi_y + \Pi_x^{\text{T}} \Pi_y^{-1} \Pi_x & \Pi_x^{\text{T}} \Pi_y^{-1} \\ \Pi_y^{-1} \Pi_x & \Pi_y^{-1} \end{pmatrix}. \]
Its inverse is
\begin{equation} \label{eq:G_inv_final}
  (g^{\bar{\mathsf{b}}\mathsf{a}}) = 2 \begin{pmatrix} \Pi_y^{-1} & -\Pi_y^{-1} \Pi_x \\ -\Pi_x \Pi_y^{-1} & \Pi_y + \Pi_x \Pi_y^{-1} \Pi_x \end{pmatrix}.
\end{equation}

We compute $g_{ij,\bar{\mathsf{b}}} = \partial_{ij} (\partial_{\bar{\mathsf{b}}} \phi)$. Recall $V_{ij} = \dot{\Pi}$, then $\dot{\Pi}_x = \frac{1}{2}(\dot{\Pi}+\dot{\overline{\Pi}}) =\frac{1}{2}V_{ij}$ and $\dot{\Pi}_y = -\frac{\I}{2}V_{ij}$. Note that $\Pi_y$ is symmetric, $\partial_{\eta_y} \phi=(\partial p_y/\partial \eta_y)\partial_{p_y} \phi = -\Pi_y^{-1}(2\Pi_y p_y)=-2p_y$. Then
\begin{align*}
	g_{ij,\bar{\eta}}&= \tfrac{\I}{2}\partial_{ij}(-2p_y)=\I\partial_{ij}\bigl(\Pi_y^{-1}(\eta_y + \Pi_x \xi_y)\bigr)\\
	&=\I \bigl(\tfrac{\I}{2} \Pi_y^{-1} V_{ij} \Pi_y^{-1} (\eta_y + \Pi_x \xi_y) + \Pi_y^{-1} \big(\tfrac{1}{2} V_{ij} \xi_y\big) \bigr)\\
	&=\I\bigl(\tfrac{\I}{2} \Pi_y^{-1} V_{ij} (-p_y) + \tfrac{1}{2} \Pi_y^{-1} V_{ij} (-p_x) \bigr)\\
	&=-\tfrac{\I}{2} \Pi_y^{-1} V_{ij} p.
\end{align*}
Similarly,  $\partial_{\xi_y} \phi = 2 \Pi_y \xi_y - 2 \Pi_x p_y$, and using $\partial_{ij} p_y=(1/2)\Pi_y^{-1}V_{ij}p$ computed above,
\begin{align*}
    g_{ij,\bar{\xi}}&=\partial_{ij} \partial_{\bar{\xi}}\phi =\I \bigl(-\tfrac{\I}{2} V_{ij} \xi_y - \tfrac{1}{2} V_{ij} p_y - \Pi_x (\partial_{ij} p_y)\bigr) \\
    &=\tfrac{\I}{2}( V_{ij} (\I p_x - p_y) - \Pi_x \Pi_y^{-1} V_{ij} p) \\
    &= \tfrac{\I}{2} (\I V_{ij} p - \Pi_x \Pi_y^{-1} V_{ij} p).
\end{align*}

Therefore, the coefficients of the symplectic connection are
\begin{align*}
    \begin{pmatrix} \Gamma_{ij}^\xi \\ \Gamma_{ij}^\eta \end{pmatrix} &= -2 \begin{pmatrix} \Pi_y^{-1} & -\Pi_y^{-1} \Pi_x \\ -\Pi_x \Pi_y^{-1} & \Pi_y + \Pi_x \Pi_y^{-1} \Pi_x \end{pmatrix} \begin{pmatrix} \frac{\I}{2}(\I V_{ij} p - \Pi_x \Pi_y^{-1} V_{ij} p) \\ -\frac{\I}{2} \Pi_y^{-1} V_{ij} p \end{pmatrix}\\
			&=	\begin{pmatrix}	\Pi_y^{-1} V_{ij} p \\ -\Pi_x \Pi_y^{-1} V_{ij} p + \I V_{ij} p		\end{pmatrix}.
\end{align*}
The symplectic connection $\nabla_{\omega_{X/S}^A}^{1,0}$ is given in $(\Pi,\xi,\eta)$ by
\[\partial_{ij}\mapsto \partial_{ij}+\mathcal{V},\text{ where }   \mathcal{V} = \Pi_y^{-1} V_{ij} p \cdot \partial_\xi + (-\Pi_x \Pi_y^{-1} V_{ij} p + \I V_{ij} p)\cdot \partial_\eta.\]
Recall that
\[q= \tfrac{1}{2}(\eta + \bar{\eta}) + \tfrac{1}{2}\Pi(\xi + \bar{\xi}),\quad p=-\tfrac{1}{2} \Pi_y^{-1} \bigl( (\eta - \bar{\eta}) + \widebar{\Pi}(\xi - \bar{\xi}) \bigr).\]
Then we have
\begin{align*}
		\mathcal{V}(q) &= \tfrac{1}{2} \mathcal{V}(\eta) + \tfrac{1}{2} \Pi \mathcal{V}(\xi) = \tfrac{1}{2} \Gamma_{ij}^\eta + \tfrac{1}{2} \Pi \Gamma_{ij}^\xi \\
		&= \tfrac{1}{2} \bigl( (-\Pi_x \Pi_y^{-1} V_{ij} p + \I V_{ij} p) + (\Pi_x + \I \Pi_y)(\Pi_y^{-1} V_{ij} p) \bigr) \\
		&= \tfrac{1}{2} \bigl( -\Pi_x \Pi_y^{-1} V_{ij} p + \I V_{ij} p + \Pi_x \Pi_y^{-1} V_{ij} p + \I \Pi_y \Pi_y^{-1} V_{ij} p \bigr) \\
		&= \tfrac{1}{2} ( \I V_{ij} p + \I V_{ij} p ) = \I V_{ij} p.
\end{align*}
Similarly we have
\begin{align*}
		\mathcal{V}(\bar{q}) &= \tfrac{1}{2} \mathcal{V}(\eta) + \tfrac{1}{2} \widebar{\Pi} \mathcal{V}(\xi) = \tfrac{1}{2} \Gamma_{ij}^\eta + \tfrac{1}{2} \widebar{\Pi} \Gamma_{ij}^\xi \\
		&= \tfrac{1}{2} \bigl( (-\Pi_x \Pi_y^{-1} V_{ij} p + \I V_{ij} p) + (\Pi_x - \I \Pi_y)(\Pi_y^{-1} V_{ij} p) \bigr) \\
		&= \tfrac{1}{2} \bigl( -\Pi_x \Pi_y^{-1} V_{ij} p + \I V_{ij} p + \Pi_x \Pi_y^{-1} V_{ij} p - \I V_{ij} p \bigr) \\
		&= 0.
\end{align*}
Now applying the above two equalities $\mathcal{V}(q)=\I V_{ij}p$ and 	$\mathcal{V}(\bar{q})=0$, we have
\begin{align*}
	\mathcal{V}(p) &= -\tfrac{1}{2} \Pi_y^{-1} \bigl( \mathcal{V}(\eta) + \widebar{\Pi} \mathcal{V}(\xi) \bigr) = -\Pi_y^{-1} \mathcal{V}(\bar{q})=0,\\
	\mathcal{V}(\bar{p}) &= \tfrac{1}{2} \Pi_y^{-1} \bigl( \mathcal{V}(\eta) + \Pi \mathcal{V}(\xi) \bigr) = \Pi_y^{-1} \mathcal{V}(q)=\I \Pi_y^{-1} V_{ij} p.
\end{align*}
Combining these together, we obtain
\[\mathcal{V}=\mathcal{V}(q)\cdot\partial_q+\mathcal{V}(\bar{p})\cdot\partial_{\bar{p}}=\I V_{ij} p\cdot \partial_q +\I \Pi_y^{-1} V_{ij} p\cdot \partial_{\bar{p}}.\]
Therefore, $\nabla_{\omega_{X/S}^A}^{1,0}$ is given by \eqref{eq:symp_A} in $(\Pi,q,p)$.

To show $\partial_A-\partial_{\omega_{X/S}^A}=2\sqrt{2}\beta_{X/S}^{-1}(\theta)$, it remains to check that
\[2(\id-J_\Pi^C)\bigl(-\tfrac{\I}{2} V_{ij} p \cdot \partial_q\bigr)=-(\I V_{ij} p\cdot \partial_q +\I \Pi_y^{-1} V_{ij} p\cdot \partial_{\bar{p}}).\]
This follows since
\[\I J_\Pi^C(V_{ij}p\cdot\partial_q)=-\tfrac{\I}{2}\Pi_y^{-1}V_{ij}p\cdot(\partial_{p_x}+\I\partial_{p_y})=-\I\Pi_y^{-1}V_{ij}p\cdot\partial_{\bar p}.\]
 We prove the last statement. Since $\theta(\partial_{ij})$ is $J_\Pi^B$-holomorphic, we have $J_\Pi^B (\theta(\partial_{ij})) = \I (\theta(\partial_{ij}))$ and
  \[J_\Pi^A (\theta(\partial_{ij}))=-J_\Pi^C (\I\theta(\partial_{ij}))=\tfrac{1}{2}\Pi_y^{-1} V_{ij} p\cdot \partial_{\bar{p}}.\]
Then we have
\[2(a_0\id+a_1J_\Pi^A+a_2J_\Pi^B+a_3J_\Pi^C)(\theta(\partial_{ij}))=(a_2-\I a_0)V_{ij}p\cdot\partial_q+(a_1+\I a_3)\Pi_y^{-1}V_{ij}p\cdot\partial_{\bar{p}}.\]
By comparing the coefficients, we see that $(\beta_{X/S}^\RR)^{-1}$ must be the one in \eqref{eq:beta_canon}.
\end{proof}
\begin{lemma}
	Let $\beta_{X/S}^{\RR}:T_{X/S}^\RR\to T_{X/S}^\RR$ be an isometry with respect to $g_{X/S}$, which satisfies $\beta_{X/S}^{\RR}\comp J_{X/S}^A=J_{X/S}^B\comp \beta_{X/S}^{\RR}$ and $\partial_A-\partial_{\omega_{X/S}^A}=2\sqrt{2}\beta_{X/S}^{-1}(\theta)$. Using the splitting $T_{X/S}^\RR=E_{\mathrm{ab}}\oplus E_{\mathrm{fib}}$ induced by $X_\Pi\cong A_\Pi\times \CC^k$, we have
	\[  (\beta_{X/S}^\RR)^{-1} = (\beta_{X/S,\mathrm{can}}^{\RR})^{-1} \comp \begin{pmatrix} \id_{E_{\mathrm{ab}}} & 0 \\ 0 & U \end{pmatrix},\]
	where $(\beta_{X/S,\mathrm{can}}^{\RR})^{-1}=\sqrt{1/2}(\id-J_{X/S}^C)$ and $U\in C^\infty(X, U(E_{\mathrm{fib}},g_{X/S},J_{X/S}^B))$, i.e., $U$ is a unitary automorphism of $E_{\mathrm{fib}}$.
\end{lemma}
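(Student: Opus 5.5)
Set $M:=\beta_{X/S}\comp\beta_{X/S,\mathrm{can}}^{-1}$, i.e. $(\beta_{X/S}^\RR)^{-1}=(\beta_{X/S,\mathrm{can}}^\RR)^{-1}\comp M^\RR$ with $M^\RR:=\beta_{X/S,\mathrm{can}}^\RR\comp(\beta_{X/S}^\RR)^{-1}$. The plan is to show in turn that $M^\RR$ is a $g_{X/S}$-isometry, that it commutes with $J_{X/S}^B$, that it fixes $E_{\mathrm{ab}}$ pointwise, and that it preserves $E_{\mathrm{fib}}$. Together these give the claimed block form with $U:=M^\RR|_{E_{\mathrm{fib}}}$ unitary.

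The first two properties are formal. Both $\beta_{X/S}^\RR$ and $\beta_{X/S,\mathrm{can}}^\RR$ are isometries, so $M^\RR$ is an isometry. Both intertwine $J_{X/S}^A$ with $J_{X/S}^B$, so
\[
M^\RR J_{X/S}^B=\beta_{X/S,\mathrm{can}}^\RR J_{X/S}^A(\beta_{X/S}^\RR)^{-1}=J_{X/S}^B M^\RR .
\]
Hence $M^\RR$ is a unitary automorphism of $(T_{X/S}^\RR,g_{X/S},J_{X/S}^B)$. Equivalently, $M$ is a unitary automorphism of $T_{X/S}^B$.

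Next use the identity $\partial_A-\partial_{\omega_{X/S}^A}=2\sqrt2\beta^{-1}(\theta)$, which holds for both twisting maps by hypothesis and by the preceding proposition. This gives $\beta_{X/S}^{-1}(\theta(v))=\beta_{X/S,\mathrm{can}}^{-1}(\theta(v))$ for every $v\in TS$, that is, $M^{-1}$ fixes every vector $\theta(v)=-\tfrac{\I}{2}V_{ij}p\cdot\partial_q$.

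The main step is to show that these vectors span the $(1,0)$-part of $E_{\mathrm{ab}}$, namely $\mathrm{span}\{\partial_{q^a}\}$. At a point with $p\neq0$, the vectors $V_{ij}p$, as $V_{ij}$ ranges over symmetric matrices, span all of $\CC^k$. For instance, $V=\frac{1}{p^{\mathsf T}p}(wp^{\mathsf T}+pw^{\mathsf T})-\frac{(w^{\mathsf T}p)}{(p^{\mathsf T}p)^2}pp^{\mathsf T}$ yields $Vp=w$ when $p^{\mathsf T}p\ne0$, and otherwise one uses $V=\frac{1}{p^\dagger p}(w\bar p^{\mathsf T}+\bar p w^{\mathsf T})$ plus a correction. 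Hence $M$ fixes $\mathrm{span}\{\partial_{q^a}\}$ on the dense open set $\{p\neq0\}$. Since $M$ is smooth, $M=\id$ there on all of $X$, and taking real parts, $M^\RR|_{E_{\mathrm{ab}}}=\id$.

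Finally, $E_{\mathrm{fib}}$ is the $g_{X/S}$-orthogonal complement of $E_{\mathrm{ab}}$ by \eqref{eq:cotangent_hyperkah_eq1}. An isometry that fixes $E_{\mathrm{ab}}$ pointwise preserves its orthogonal complement, so $M^\RR$ preserves $E_{\mathrm{fib}}$. Since $E_{\mathrm{fib}}$ is $J_{X/S}^B$-invariant by \eqref{eq:cotangent_hyperkah_eq2}, the restriction $U$ is a smooth unitary automorphism of $(E_{\mathrm{fib}},g_{X/S},J_{X/S}^B)$. Inverting the definition of $M^\RR$ gives $(\beta_{X/S}^\RR)^{-1}=(\beta_{X/S,\mathrm{can}}^\RR)^{-1}\comp\diag(\id,U)$.

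The only delicate step is the spanning argument together with the density/continuity extension across $p=0$. There I would simply note that $\{p\neq0\}$ is dense and rely on smoothness of $M$.
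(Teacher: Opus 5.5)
Your proposal is correct and follows essentially the same route as the paper. The paper also sets $\Phi:=\beta^\RR_{X/S,\mathrm{can}}\comp(\beta^\RR_{X/S})^{-1}$ (your $M^\RR$) and observes that it is a unitary automorphism of $(T^\RR_{X/S},g_{X/S},J^B_{X/S})$; it then uses that the $\theta(\partial_{ij})$ span $E_{\mathrm{ab}}^{1,0}$ where $p\neq0$ to get $\Phi|_{E_{\mathrm{ab}}}=\id$, extending to $p=0$ by continuity, and deduces the block form from the $g_{X/S}$-orthogonality and $J^B_{X/S}$-invariance of the splitting. The one blemish is notational: your complex map $M=\beta_{X/S}\comp\beta_{X/S,\mathrm{can}}^{-1}$ is the inverse of the complex-linear part of $M^\RR$, not its complexification, but this is harmless because unitarity and fixing $E_{\mathrm{ab}}$ pointwise are both preserved under inversion.
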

\begin{proof}
   We define the gauge transformation $\Phi := \beta_{X/S,\mathrm{can}}^{\RR} \comp (\beta_{X/S}^\RR)^{-1}$, which is clearly a unitary automorphism of $(T_{X/S}^\RR,g_{X/S},J_{X/S}^B)$. The vectors $\{\theta(\partial_{ij})\}$ span $E_{\mathrm{ab}}^{1,0}$ over $\CC$ at every point where $p\ne0$, so the condition $\beta_{X/S}^{-1}(\theta) = (\beta_{X/S,\mathrm{can}})^{-1}(\theta)$ implies $\Phi v = v$ for all $v\in E_{\mathrm{ab}}$, also at $p=0$ by continuity. Since the splitting $T_{X/S}^\RR=E_{\mathrm{ab}}\oplus E_{\mathrm{fib}}$ is orthogonal with respect to $g_{X/S}$ and is preserved by $J_{X/S}^B$, $\Phi$ has the block diagonal form as above.
\end{proof}
Let $(\Sigma,j)$ be a compact Riemann surface, where $j\in C^\infty(\Sigma,\End T\Sigma^\RR)$ is a complex structure. The moduli space of rank one Higgs bundles $M_{\mathrm{Dol},j}(\CC^*)$ on $(\Sigma,j)$ can be identified with the cotangent bundle of the Jacobian variety $T^*\Jac(\Sigma,j)$, which admits a hyperkähler structure as in \eqref{eq:cotangent_hyperkah_eq1}-\eqref{eq:cotangent_hyperkah_eq4} (see \cite[Ch.~7]{GX08}). As $j\in \mathbf{T}(\Sigma)$ varies, where $\mathbf{T}(\Sigma)$ is the Teichmüller space, we obtain a holomorphic fibration $M_{\mathrm{Dol}}(\CC^*)\to \mathbf{T}(\Sigma)$. Similarly to the above, this holomorphic fibration admits a $\beta$-twisted harmonic metric which is the fiberwise Hitchin metric. In the next subsection, we will consider the fibration whose fibers are moduli spaces of $G$-Higgs bundles.

\newcommand{\bfM}{\mathbf{M}}
\newcommand{\bfT}{\mathbf{T}}
\newcommand{\bfX}{\mathbf{X}}
\subsection{Joint moduli spaces of stable Higgs bundles}\label{subsec:joint_moduli}
In this subsection, we consider the joint moduli space of $G$-Higgs bundles $\bfM(G)$ defined in \cite{CTW25}, where $G$ is a connected complex semisimple Lie group with Lie algebra $\mathfrak{g}$ and Killing form $\kappa_{\mathfrak{g}}$. $\bfM(G)$ consists of equivalence classes of stable $G$-Higgs bundles $(J,\Phi)$, where $J$ is a principal complex structure on a fixed smooth principal $G$-bundle $\pi_P:P\to \Sigma$ over a fixed connected closed oriented surface of genus $g(\Sigma)\geq 2$, and $\Phi\in A_b^1(P,\mathfrak{g})$ satisfies $\Phi\comp J=\I\Phi$ and $\dbar_J\Phi=0$. By \cite[Th.~A]{CTW25}, there is a holomorphic fibration $\pi: \bfM(G)\to \bfT(\Sigma)$, where $\bfT(\Sigma)$ is the Teichmüller space of complex structures on $\Sigma$ and each fiber $\pi^{-1}(j)$ is isomorphic to the moduli space of stable $G$-Higgs bundles on the Riemann surface $(\Sigma,j)$. Moreover, there is a holomorphic Higgs field $\Theta$ on $\pi$. We will verify that the fiberwise Hitchin metric is a $\beta$-twisted harmonic metric. $\bfM(G)$ may have orbifold singularities, but we will restrict to the smooth locus.

Denote the underlying smooth fiber bundle by $f:X\to S$. Let $\mathrm{I}$ be the complex structure on $\bfM(G)$ and $J_{X/S}^B$ be its restriction to the fibers. Let $\dbar_{B}$ be the canonical $\dbar$-operator on $\pi$. Choose a representative $(J,\Phi)$ solving the Hitchin equations with respect to the fixed reduction $P_K$. The tangent space $T_{(J,\Phi)}\bfM(G)^\RR$ consists of equivalence classes of $(\mu,\beta,\psi)$, where $\mu\in A^{0,1}(\Sigma,T_j\Sigma)$ is a Beltrami differential, $\beta\in A_b^{0,1}(P,\mathfrak{g})$, and $\psi\in A_b^{1,0}(P,\mathfrak{g})$, which is \emph{semiharmonic} in the sense that
\[D''(\beta,\psi)+D'\bigl(\tfrac{1}{2\I}\Phi\mu\bigr)=0\quad\text{and}\quad D'(\beta,\psi)=0,\]
where $D''=\dbar_{A_J}+\frac{1}{2\I}\Phi$ and $D'=\partial_{A_J}+(\frac{1}{2\I}\Phi)^*$, $A_J$ is the Chern connection determined by $J$ and a fixed reduction $P_K\subset P$ of the structure group to a fixed compact real form $K$ of $G$. The adjoint $*$ is defined by taking the conjugation on the form part and taking the negative Cartan involution on the Lie algebra part. The complex structure $\mathrm{I}$ is given by
\[\mathrm{I}[(\mu,\beta,\psi)]=[(\I\mu,\I\beta,\I\psi)].\]

 When $\mu=0$, the semiharmonic vector $(0,\beta,\psi)$ is harmonic in the classical sense, and represents a vertical real tangent vector $[(\beta,\psi)]$. By the nonabelian Hodge correspondence, each fiber $\pi^{-1}(j)$ admits a hyperkähler structure $(g_{X/S},J_{X/S}^A,J_{X/S}^B,J_{X/S}^C)$, given by
 \begin{gather*}
 	g_{X/S}([(\beta_1,\psi_1)],[(\beta_2,\psi_2)])=\Re(\langle \beta_1,\beta_2\rangle +\langle \psi_1,\psi_2\rangle),\\
	J_{X/S}^A([(\beta,\psi)])=[(\I\psi^*,-\I\beta^*)],\quad J_{X/S}^B([(\beta,\psi)])=[(\I\beta,\I\psi)],\quad J_{X/S}^C([(\beta,\psi)])=[(\psi^*,-\beta^*)],
\end{gather*}
where $\langle \beta_1,\beta_2\rangle=-\I\int_\Sigma \kappa_{\mathfrak{g}}(\beta_1\wedge \beta_2^*)$ and $\langle \psi_1,\psi_2\rangle=\I\int_\Sigma \kappa_{\mathfrak{g}}(\psi_1\wedge \psi_2^*)$.

  When equipped with the fiberwise complex structure $J_{X/S}^A$, $f$ becomes a trivial bundle $S\times\bfX(G)$, where $\bfX(G)$ is the (smooth locus of the) $G$-character variety. This global trivialization gives rise to the trivial flat real connection $\nabla^\RR$, which defines the \emph{isomonodromic foliation}. Its complexification $\nabla$ induces a $\dbar$-operator $\dbar_{B,0}$ on $X_B:=(f,J_{X/S}^B)$, a $\dbar$-operator $\dbar_A=\dbar_{A,0}$ on $X_A:=(f,J_{X/S}^A)$, and an almost connection $\partial_A$ on $X_A$.

By \cite[Th.~A.(3)]{CTW25}, there is a relatively Kähler form $\omega_0$ on $\bfM(G)$. By Lemma \ref{lem:omega_compatible_dbar}, the symplectic connection $\nabla_{\omega_0}^\RR$ induces $\dbar_B$. Let $[\mu]\in T_jS^\RR$, represented by $\mu\in A^{0,1}(\Sigma,T_j\Sigma)$. We have $J_S([\mu])=[\I\mu]$, where $J_S$ is the complex structure on $S$. Denote $w_{[\mu]}:=\nabla_{\omega_0}^\RR([\mu])$ and $\ell_{[\mu]}:=\nabla^\RR([\mu])$. By \cite[Prop.~5.3.(2)]{CTW25}, $w_{[\mu]}=\frac{1}{2}(\ell_{[\mu]}-\mathrm{I}\ell_{[\I \mu]})$.

\begin{lemma}
	Let $f:X\to S$ be a holomorphic fibration, with the canonical $\dbar$-operator $\dbar_f$ and the complex structure $J$ on $X$. Let $\nabla_1^\RR$ and $\nabla_2^\RR$ be two real connections on $f$, with complexifications $\nabla_1$ and $\nabla_2$. Suppose $\nabla_1^\RR(v)=\frac{1}{2}(\nabla_2^\RR(v)-J\nabla_2^\RR(J_S v))$ for all $v\in TS^\RR$, where $J_S$ is the complex structure on $S$. Then $\nabla_1^{0,1}$ induces $\dbar_f$, $\nabla_1^{1,0}$ and $\nabla_2^{1,0}$ induce the same almost connection. Moreover,
	\begin{equation}\label{eq:diff_dbar_joint_moduli}
		(\dbar_2-\dbar_f)(v^{0,1})=\tfrac{1}{2}(\nabla_2^\RR(v)+J\nabla_2^\RR(J_S v))^{1,0}
	\end{equation}
	for all $v\in TS^\RR$, where $\dbar_2$ is the $\dbar$-operator induced by $\nabla_2^{0,1}$, $v^{0,1}=\frac{1}{2}(v+\I J_S v)$ is the $J_S$-$(0,1)$-vector corresponding to $v$, and the superscript $(1,0)$ on the right hand side has a similar meaning.
\end{lemma}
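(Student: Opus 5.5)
The proof is a direct decomposition of the horizontal lifts into $J$-types. Throughout, $J$ is the integrable complex structure on $X$ and $T_{X/S}\subset TX$. For $v\in TS^\RR$, set $H_2(v):=\nabla_2^\RR(v)$ and $H_1(v):=\nabla_1^\RR(v)=\tfrac12(H_2(v)-JH_2(J_Sv))$. The complexified lifts are $\nabla_k^{1,0}(v^{1,0})=\tfrac12(H_k(v)-\I H_k(J_Sv))$ and $\nabla_k^{0,1}(v^{0,1})=\tfrac12(H_k(v)+\I H_k(J_Sv))$, since $\nabla_k$ is the $\CC$-linear extension of $\nabla_k^\RR$ and $v^{1,0}=\tfrac12(v-\I J_Sv)$.

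\emph{Step 1: $\nabla_1^{0,1}$ induces $\dbar_f$.} By the correspondence in Proposition \ref{prop:dbar_complex_structure}, it suffices to show that $\nabla_1^{0,1}(v^{0,1})\in\widebar{TX}$, i.e.\ $J\nabla_1^{0,1}(v^{0,1})=-\I\nabla_1^{0,1}(v^{0,1})$. Substituting $J_Sv$ for $v$ and using $J_S^2=-1$ gives
\[
H_1(J_Sv)=\tfrac12\bigl(H_2(J_Sv)+JH_2(v)\bigr)=JH_1(v).
\]
Hence $\nabla_1^{0,1}(v^{0,1})=\tfrac12(1+\I J)H_1(v)$. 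Since $J(1+\I J)=J-\I=-\I(1+\I J)$, this vector lies in the $-\I$-eigenspace, which proves Step 1. The same identity shows $\nabla_1^{1,0}(v^{1,0})=\tfrac12(1-\I J)H_1(v)\in TX$, so $\nabla_1$ is pure.

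\emph{Step 2: equal almost connections.} Both $\nabla_1^{1,0}(v^{1,0})$ and $\nabla_2^{1,0}(v^{1,0})$ lie over $v^{1,0}$, so their difference is vertical. It suffices to show that the $T_{X/S}$-component of this difference vanishes, i.e.\ that the difference lies in $\widebar{T_{X/S}}$. Note that $(1-\I J)$ annihilates $\widebar{TX}$ and is $2$ times the projection onto $TX$. From $H_1(J_Sv)=JH_1(v)$ we get $\nabla_1^{1,0}(v^{1,0})=\tfrac12(1-\I J)H_1(v)$. Expanding $H_1(v)$ gives
\[
(1-\I J)H_1(v)=\tfrac12(1-\I J)H_2(v)-\tfrac12(1-\I J)JH_2(J_Sv)=\tfrac12(1-\I J)\bigl(H_2(v)-\I H_2(J_Sv)\bigr),
\]
using $(1-\I J)J=-\I(1-\I J)$. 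Therefore
\[
\nabla_1^{1,0}(v^{1,0})=\tfrac12(1-\I J)\nabla_2^{1,0}(v^{1,0}),
\]
which is the $TX$-component of $\nabla_2^{1,0}(v^{1,0})$. The remaining difference is the $\widebar{TX}$-component of $\nabla_2^{1,0}(v^{1,0})$. This component is vertical, because both lifts project to $v^{1,0}\in f^*TS$. A vertical vector in $\widebar{TX}$ lies in $\widebar{T_{X/S}}$, as shown in the proof of Proposition \ref{prop:dbar_complex_structure}. So the two lifts agree modulo $\widebar{T_{X/S}}$, and hence induce the same almost connection.

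\emph{Step 3: formula \eqref{eq:diff_dbar_joint_moduli}.} $(\dbar_2-\dbar_f)(v^{0,1})$ is the $T_{X/S}$-part of $\nabla_2^{0,1}(v^{0,1})-\nabla_1^{0,1}(v^{0,1})$. By the analogue of Step 2, $\nabla_1^{0,1}(v^{0,1})$ is the $\widebar{TX}$-component of $\nabla_2^{0,1}(v^{0,1})$. The difference is therefore the $TX$-component,
\[
\tfrac12(1-\I J)\cdot\tfrac12\bigl(H_2(v)+\I H_2(J_Sv)\bigr)=\tfrac14\bigl(H_2(v)+JH_2(J_Sv)\bigr)-\tfrac{\I}{4}\bigl(JH_2(v)-H_2(J_Sv)\bigr).
\]
This vector is vertical, and it equals $\tfrac12(1-\I J)$ applied to the real vertical vector $w:=\tfrac12(H_2(v)+JH_2(J_Sv))$. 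Indeed, $w$ is vertical because $\D f(JH_2(J_Sv))=J_S^2v=-v$. On vertical vectors $\tfrac12(1-\I J)$ is exactly the $(1,0)$-projection, which gives \eqref{eq:diff_dbar_joint_moduli}.

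The only subtle point is keeping track of which components are vertical; the rest is linear algebra with $J$.
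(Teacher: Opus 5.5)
Your argument is correct, and it takes a different route from the paper. The paper works in adapted local holomorphic coordinates $s^i=x^i+\I y^i$, $z^\alpha=u^\alpha+\I w^\alpha$. It writes $\nabla_2^\RR(\partial_{x^i})$ and $\nabla_2^\RR(\partial_{y^i})$ with real coefficients and computes $\nabla_1^\RR$ from the hypothesis. It then compares the $\partial_\alpha$- and $\partial_{\bar\alpha}$-coefficients of $\nabla_1^{1,0}(\partial_i)$ and $\nabla_2^{1,0}(\partial_i)$, and finally expands both sides of the formula for $(\dbar_2-\dbar_f)(v^{0,1})$ in these coordinates.

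You instead use the intrinsic identity $\nabla_1^\RR\circ J_S=J\circ\nabla_1^\RR$, which makes the horizontal distribution of $\nabla_1$ $J$-invariant, so $\nabla_1$ is pure. You then identify $\nabla_1^{1,0}(v^{1,0})$ as the $TX$-component of $\nabla_2^{1,0}(v^{1,0})$, and $\nabla_1^{0,1}(v^{0,1})$ as the $\widebar{TX}$-component of $\nabla_2^{0,1}(v^{0,1})$. Combined with $\widebar{TX}\cap T^\CC_{X/S}=\widebar{T_{X/S}}$, this gives all three claims without coordinates.

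Your route explains why the lemma holds: $\nabla_1$ is the type-projection of $\nabla_2$, and $\dbar_2-\dbar_f$ is the $TX$-component of $\nabla_2^{0,1}$. The paper's computation instead produces explicit coefficient formulas, matching the local style used elsewhere in the paper.

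One small slip: in Step 2 the identity you need is $(1-\I J)J=\I(1-\I J)$, not $-\I(1-\I J)$. With the correct sign you get exactly the expression $\tfrac12(1-\I J)\bigl(H_2(v)-\I H_2(J_Sv)\bigr)$ that you wrote, so the conclusion is unaffected.
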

\begin{proof}
In adapted local holomorphic coordinates, we may write
\[\nabla_2^\RR(\partial_{x^i})= \partial_{x^i}+\Gamma_{x^i}^{u^\alpha}\partial_{u^\alpha}+\Gamma_{x^i}^{w^\alpha}\partial_{w^\alpha},\quad \nabla_2^\RR(\partial_{y^i})= \partial_{y^i}+\Gamma_{y^i}^{u^\alpha}\partial_{u^\alpha}+\Gamma_{y^i}^{w^\alpha}\partial_{w^\alpha},\]
where $s^i=x^i+\I y^i$, $z^\alpha=u^\alpha+\I w^\alpha$. Then we have
\begin{align*}
	\nabla_2^{1,0}(\partial_i)&=\partial_i+\tfrac{1}{2}(\Gamma_{x^i}^{u^\alpha}-\I\Gamma_{y^i}^{u^\alpha})(\partial_\alpha+\partial_{\bar{\alpha}})+\tfrac{1}{2}(\Gamma_{x^i}^{w^\alpha}-\I\Gamma_{y^i}^{w^\alpha})\I(\partial_\alpha-\partial_{\bar{\alpha}})\\
	&=\partial_i+\tfrac{1}{2}(\Gamma_{x^i}^{u^\alpha}-\I\Gamma_{y^i}^{u^\alpha}+\I\Gamma_{x^i}^{w^\alpha}+\Gamma_{y^i}^{w^\alpha})\partial_{\alpha}+\tfrac{1}{2}(\Gamma_{x^i}^{u^\alpha}-\I\Gamma_{y^i}^{u^\alpha}-\I\Gamma_{x^i}^{w^\alpha}-\Gamma_{y^i}^{w^\alpha})\partial_{\bar{\alpha}}.
\end{align*}
By the condition $\nabla_1^\RR(v)=\frac{1}{2}(\nabla_2^\RR(v)-J\nabla_2^\RR(J_S v))$, we have
\begin{align*}
	\nabla_1^{\RR}(\partial_{x^i})&=\tfrac{1}{2}(\partial_{x^i}+\Gamma_{x^i}^{u^\alpha}\partial_{u^\alpha}+\Gamma_{x^i}^{w^\alpha}\partial_{w^\alpha})-\tfrac{1}{2}J(\partial_{y^i}+\Gamma_{y^i}^{u^\alpha}\partial_{u^\alpha}+\Gamma_{y^i}^{w^\alpha}\partial_{w^\alpha})\\
	&=\tfrac{1}{2}(\partial_{x^i}+\Gamma_{x^i}^{u^\alpha}\partial_{u^\alpha}+\Gamma_{x^i}^{w^\alpha}\partial_{w^\alpha})-\tfrac{1}{2}(-\partial_{x^i}+\Gamma_{y^i}^{u^\alpha}\partial_{w^\alpha}-\Gamma_{y^i}^{w^\alpha}\partial_{u^\alpha})\\
	&=\partial_{x^i}+\tfrac{1}{2}(\Gamma_{x^i}^{u^\alpha}+\Gamma_{y^i}^{w^\alpha})\partial_{u^\alpha}+\tfrac{1}{2}(\Gamma_{x^i}^{w^\alpha}-\Gamma_{y^i}^{u^\alpha})\partial_{w^\alpha}.
\end{align*}
Similarly,
\[\nabla_1^{\RR}(\partial_{y^i})=\partial_{y^i}+\tfrac{1}{2}(\Gamma_{y^i}^{u^\alpha}-\Gamma_{x^i}^{w^\alpha})\partial_{u^\alpha}+\tfrac{1}{2}(\Gamma_{x^i}^{u^\alpha}+\Gamma_{y^i}^{w^\alpha})\partial_{w^\alpha}.\]
Then we have
\[\nabla_1^{1,0}(\partial_i)=\partial_i+\tfrac{1}{2}(\Gamma_{x^i}^{u^\alpha}+\Gamma_{y^i}^{w^\alpha}+\I(\Gamma_{x^i}^{w^\alpha}-\Gamma_{y^i}^{u^\alpha}))\partial_{\alpha}.\]
By taking the conjugate, we see that $\nabla_1^{0,1}$ induces $\dbar_f$. By comparing the above formulas for $\nabla_1^{1,0}$ and $\nabla_2^{1,0}$, we see that they induce the same almost connection.

For the last statement, we have
\[(\dbar_2-\dbar_f)(\partial_{\bar{i}})=\widebar{\nabla_2^{1,0}(\partial_i)-\nabla_1^{1,0}(\partial_i)}=\tfrac{1}{2}(\Gamma_{x^i}^{u^\alpha}+\I\Gamma_{y^i}^{u^\alpha}+\I\Gamma_{x^i}^{w^\alpha}-\Gamma_{y^i}^{w^\alpha})\partial_{\alpha}.\]
On the other hand,
\begin{align*}
	\tfrac{1}{2}(\nabla_2^\RR(\partial_{x^i})+J\nabla_2^\RR(\partial_{y^i}))^{1,0}&=\tfrac{1}{2}(\Gamma_{x^i}^{u^\alpha}\partial_{u^\alpha}+\Gamma_{x^i}^{w^\alpha}\partial_{w^\alpha}+\Gamma_{y^i}^{u^\alpha}\partial_{w^\alpha}-\Gamma_{y^i}^{w^\alpha}\partial_{u^\alpha})^{1,0}\\
	&=\tfrac{1}{2}(\Gamma_{x^i}^{u^\alpha}+\I\Gamma_{y^i}^{u^\alpha}+\I\Gamma_{x^i}^{w^\alpha}-\Gamma_{y^i}^{w^\alpha})\partial_{\alpha}.
\end{align*}
Therefore \eqref{eq:diff_dbar_joint_moduli} is proved.
\end{proof}
\begin{lemma}\label{lem:barJ_real}
Let $(Y, J_A, J_B, J_C)$ ($J_C=J_AJ_B$) be a hypercomplex manifold. Let $v,w\in TY^\RR$, with $v_B^{1,0} = \frac{1}{2}(v - \I J_B v)$ and $w_B^{1,0} = \frac{1}{2}(w - \I J_B w)$ being the corresponding $J_B$-$(1,0)$-tangent vectors. Then,
\[
v = J_A w \iff v_B^{1,0} = J_A\bigl(\widebar{w_B^{1,0}}\bigr),
\]
where the conjugation is taken with respect to $J_B$.
\end{lemma}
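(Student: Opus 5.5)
The plan is to prove both implications by a direct computation with the anticommutation relation $J_AJ_B=-J_BJ_A$, which holds for a hypercomplex structure because $J_C=J_AJ_B$ squares to $-\id$. First extend $J_A$ and $J_B$ complex-linearly to $TY^\CC$. Then note that the $J_B$-conjugate of $w_B^{1,0}=\frac12(w-\I J_Bw)$ is $\widebar{w_B^{1,0}}=\frac12(w+\I J_Bw)$, since $w$ and $J_Bw$ are real.

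Next apply $J_A$ and use anticommutation:
\[
J_A\bigl(\widebar{w_B^{1,0}}\bigr)=\tfrac12\bigl(J_Aw+\I J_AJ_Bw\bigr)=\tfrac12\bigl(J_Aw-\I J_B(J_Aw)\bigr)=(J_Aw)_B^{1,0}.
\]
This identity is the whole content of the lemma. It shows in particular that $J_A$ maps $\widebar{T^B}$ into $T^B$, which is consistent with the preceding remark about $\ker(J^AJ^B+J^BJ^A)$.

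For the forward direction, if $v=J_Aw$, the identity immediately gives $v_B^{1,0}=(J_Aw)_B^{1,0}=J_A(\widebar{w_B^{1,0}})$. For the converse, if $v_B^{1,0}=J_A(\widebar{w_B^{1,0}})$, then $v_B^{1,0}=(J_Aw)_B^{1,0}$. The map $u\mapsto u_B^{1,0}$ from $TY^\RR$ to $T^{1,0}_B$ is injective, with inverse $u=2\Re(u_B^{1,0})$. Hence $v=J_Aw$.

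There is no real obstacle here. The only points to check are that the complexified $J_A$ commutes with complex conjugation (it is a real operator) and that the sign coming from $J_AJ_B=-J_BJ_A$ comes out correctly in the displayed step.
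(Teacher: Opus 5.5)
Your proof is correct and is essentially the paper's argument. The paper computes $J_A\bigl(\overline{w_B^{1,0}}\bigr)=\tfrac12(J_Aw+\I J_Cw)$ and equates real and imaginary parts to get $v=J_Aw$ and $-J_Bv=J_Cw$. It then notes that the second equation follows from the first by anticommutation. This is the same computation as your identity $J_A\bigl(\overline{w_B^{1,0}}\bigr)=(J_Aw)_B^{1,0}$ combined with the injectivity of $u\mapsto u_B^{1,0}$.
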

\begin{proof}
We compute
\[
J_A\bigl(\widebar{w_B^{1,0}}\bigr) = \tfrac{1}{2}J_A (w + \I J_B w) = \tfrac{1}{2} (J_A w + \I J_C w).
\]
Now the equation $v_B^{1,0} = J_A\bigl(\widebar{w_B^{1,0}}\bigr)$ is equivalent to
\[
\tfrac{1}{2}(v - \I J_B v) = \tfrac{1}{2} (J_A w + \I J_C w).
\]
Equating the real and imaginary parts, we obtain $v = J_A w$ and $-J_B v = J_C w$. It remains to check the first equation implies the second. This is true since $-J_B v = -J_B (J_A w) = J_C w$.
\end{proof}
By \cite[Th.~A.(4)]{CTW25}, there is a holomorphic section $\Theta$ of $\Hom(f^*TS,T_{X/S}^B)$, given by $\Theta([\mu]):=[(\frac{1}{2\I}\Phi\mu,0)]$ for $[\mu]\in T_jS^\RR$, where we identified a real tangent vector with its $(1,0)$-part. Similarly to Lemma \ref{lem:cotang_theta_ham}, we have the following.
\begin{lemma}[{\cite[Prop.~7.4]{CTW25}}]\label{lem:Theta_Hamiltonian}
Fix $[\mu]\in T_jS^\RR$ and a stable $G$-Higgs bundle $(J,\Phi)$ with $\pi(J)=j$, representing a point of $f^{-1}(j)$. Endow $f^{-1}(j)$ with the holomorphic symplectic form of $J_{X/S}^B$,
\[
\Omega_{X/S}^B\big((\beta_1,\psi_1),(\beta_2,\psi_2)\big):=\I\int_\Sigma\kappa_{\mathfrak g}\big(\psi_2\wedge\beta_1-\psi_1\wedge\beta_2\big),
\]
and define the function
\[
\varphi_\mu([J,\Phi]):=\tfrac14\int_\Sigma\kappa_{\mathfrak g}(\Phi\otimes\Phi)\,\mu.
\]
Then $\Theta([\mu])$ is the Hamiltonian vector field of $\varphi_\mu$ with respect to $\Omega_{X/S}^B$, i.e., $\iota_{\Theta([\mu])}\Omega_{X/S}^B=\D \varphi_{\mu}$.
\end{lemma}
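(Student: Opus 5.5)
The identity $\iota_{\Theta([\mu])}\Omega_{X/S}^B=\D\varphi_\mu$ is pointwise on the fiber $f^{-1}(j)$, so I will verify it on an arbitrary tangent vector. Fix a harmonic representative $(J,\Phi)$ and let $(\beta,\psi)$ be a harmonic (vertical, $\mu=0$) tangent vector at $[J,\Phi]$. Then $\Theta([\mu])=[(\tfrac{1}{2\I}\Phi\mu,0)]$, and the argument reduces to two computations and a comparison.

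\emph{Step 1: the contraction.} Put $\beta_1=\tfrac{1}{2\I}\Phi\mu$, $\psi_1=0$, $\beta_2=\beta$, $\psi_2=\psi$ in the formula for $\Omega_{X/S}^B$. This gives
\[
\iota_{\Theta([\mu])}\Omega_{X/S}^B(\beta,\psi)=\I\int_\Sigma\kappa_{\mathfrak g}\bigl(\psi\wedge\tfrac{1}{2\I}\Phi\mu\bigr)=\tfrac12\int_\Sigma\kappa_{\mathfrak g}(\psi\wedge\Phi\mu).
\]
Here $\Phi\mu$ is the $(0,1)$-form obtained by contracting the $(1,0)$-form $\Phi$ with the Beltrami differential.

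\emph{Step 2: the derivative.} Differentiate $\varphi_\mu$ along a path $(J_t,\Phi_t)$ in the fiber with derivative $(\beta,\psi)$. The fiber is fixed, so $j$ and $\mu$ stay fixed, and $\dot\Phi$ is $\psi$ modulo an infinitesimal gauge term $[\Phi,\xi]$. By $\Ad$-invariance and symmetry of $\kappa_{\mathfrak g}$, the quadratic differential $\kappa_{\mathfrak g}(\Phi\otimes\Phi)$ is gauge invariant, so the gauge term contributes nothing. Symmetry then gives
\[
\D\varphi_\mu(\beta,\psi)=\tfrac14\int_\Sigma 2\kappa_{\mathfrak g}(\psi\otimes\Phi)\mu=\tfrac12\int_\Sigma\kappa_{\mathfrak g}(\psi\otimes\Phi)\mu.
\]
This is independent of $\beta$, consistent with Step 1, since $\psi_1=0$ kills every $\beta$-term there.

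\emph{Step 3: comparison.} It remains to identify the pairing $\int\kappa(\psi\otimes\Phi)\mu$ of a quadratic differential with a Beltrami differential with $\int\kappa(\psi\wedge\Phi\mu)$. In a local coordinate $z$ write $\psi=\psi_z\D z$, $\Phi=\Phi_z\D z$ and $\mu=\mu_{\bar z}{}^{z}\D\bar z\otimes\partial_z$. Both integrands are then $\kappa(\psi_z,\Phi_z)\mu_{\bar z}{}^{z}\,\D z\wedge\D\bar z$, up to the orientation and sign convention used in \cite{CTW25} to define the pairing. I expect this sign and normalization bookkeeping to be the main obstacle: the convention for $\Phi\mu$ and for the pairing $\int q\mu$ must be exactly those of \cite{CTW25}, which also fix the factor $\tfrac14$ in $\varphi_\mu$.

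Finally, well-definedness on equivalence classes follows from Step 2's gauge invariance of $\varphi_\mu$ together with the closedness of $\Omega_{X/S}^B$ on the quotient. Alternatively, we may simply invoke \cite[Prop.~7.4]{CTW25}.
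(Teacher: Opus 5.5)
Your argument is correct, but it takes a different route from the paper. The paper does not prove Lemma~\ref{lem:Theta_Hamiltonian}; it simply quotes \cite[Prop.~7.4]{CTW25}. You instead verify $\iota_{\Theta([\mu])}\Omega^B_{X/S}=\D\varphi_\mu$ directly on a vertical tangent vector, which is how the paper proves the rank-one analogue, Lemma~\ref{lem:cotang_theta_ham}. Quoting buys brevity and inherits the normalizations of \cite{CTW25} wholesale. Your computation buys a self-contained check that the constants $\tfrac14$, $\tfrac{1}{2\I}$ and $\I$ in $\varphi_\mu$, $\Theta$ and $\Omega^B_{X/S}$ are mutually consistent.

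You place the normalization difficulty in the wrong step, and one justification should be corrected.

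\begin{itemize}
\item The Step~3 convention is not actually open. Lemma~\ref{lem:energy_var}, applied to $[\mu]$ (real parts) and to $[\I\mu]$ (imaginary parts), gives $\int_\Sigma\kappa_{\mathfrak g}(\Phi\otimes\Phi)\mu=\int_\Sigma\kappa_{\mathfrak g}(\Phi\wedge\Phi\mu)$. This is exactly the pairing convention $q\mu=q_{zz}\mu_{\bar z}{}^{z}\,\D z\wedge\D\bar z$ that you need.
\item The convention-sensitive point is Step~2. There you use that the $\psi$-component of a vertical tangent vector is the first-order variation of $\Phi$ itself. This is how the paper uses it when computing $\D(\D\mathrm{E}(\ell_{[\mu]}))(w)$ in \S\ref{subsec:joint_moduli}, and you should state it explicitly. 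If $\psi$ were instead the variation of $\tfrac{1}{2\I}\Phi$, your right-hand side would pick up a factor $2\I$ and the identity would fail as stated.
\item Closedness of $\Omega^B_{X/S}$ plays no role in well-definedness. What lets you insert the non-harmonic representative $(\tfrac{1}{2\I}\Phi\mu,0)$ into the formula is that infinitesimal gauge directions $(\dbar_{A_J}\xi,[\Phi,\xi])$ pair to zero with deformation cocycles. This follows by integration by parts and $\Ad$-invariance of $\kappa_{\mathfrak g}$. Against $(\tfrac{1}{2\I}\Phi\mu,0)$ it even holds pointwise, since $\kappa_{\mathfrak g}([\Phi,\xi]\wedge\Phi\mu)=0$.
\end{itemize}
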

 Note that $\Theta([\mu])$ is represented by a vector field on the configuration space depending linearly on $\Phi$ whose flow varies $J$ while leaving $\Phi$ invariant. Then similarly to Lemma \ref{lem:rk1_theta_int}, we have $[\Theta,\Theta]=0$. Therefore, $\Theta$ is a Higgs field on $X_B$.
\begin{proposition}
	$\dbar_{B,0}-\dbar_B=\widebar{\Theta}_{J}$.
\end{proposition}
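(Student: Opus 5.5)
The plan is to compute both sides pointwise on a fiber $f^{-1}(j)$, evaluated on $v^{0,1}$ for a real tangent vector $v=[\mu]\in T_jS^\RR$. The relation between the two connections is given by the unnumbered lemma just before Lemma \ref{lem:barJ_real}. Apply it with $\nabla_1^\RR=\nabla_{\omega_0}^\RR$, $\nabla_2^\RR=\nabla^\RR$ (the isomonodromic connection) and $J=\mathrm{I}$. Its hypothesis is exactly \cite[Prop.~5.3.(2)]{CTW25}: $w_{[\mu]}=\frac12(\ell_{[\mu]}-\mathrm{I}\ell_{[\I\mu]})$. Since $\nabla_{\omega_0}^{0,1}$ induces $\dbar_B$, and $\dbar_{B,0}$ is induced by $\nabla^{0,1}$, formula \eqref{eq:diff_dbar_joint_moduli} gives
\[
(\dbar_{B,0}-\dbar_B)([\mu]^{0,1})=\tfrac12\bigl(\ell_{[\mu]}+\mathrm{I}\ell_{[\I\mu]}\bigr)^{1,0}.
\]
Here the vector $\ell_{[\mu]}+\mathrm{I}\ell_{[\I\mu]}$ is vertical, because its image under $\D f$ is $[\mu]+J_S[\I\mu]=0$. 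The $(1,0)$-part is therefore taken with respect to $J_{X/S}^B$.

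The second step is to compute this vertical vector explicitly. The isomonodromic lift $\ell_{[\mu]}$ keeps the flat connection $A_J+\frac{1}{2\I}(\Phi-\Phi^*)$ (or the appropriate normalization) fixed while $J$ moves by $\mu$. Using the semiharmonic description, $\ell_{[\mu]}$ is represented by some $(\mu,\beta_\mu,\psi_\mu)$, and $\mathrm{I}\ell_{[\I\mu]}$ is represented by $(-\mu,\I\beta_{\I\mu},\I\psi_{\I\mu})$. The sum is then the vertical harmonic vector $[(\beta_\mu+\I\beta_{\I\mu},\,\psi_\mu+\I\psi_{\I\mu})]$. I expect the $\mathbb C$-antilinear dependence on $\mu$ to leave only terms proportional to $(\frac{1}{2\I}\Phi\mu)^*$ in the $\beta$-slot, after projecting to harmonic representatives. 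Concretely, I aim to show that the vertical vector equals $[((\frac{1}{2\I}\Phi\mu)^*\cdot c,\,0)]$ for an explicit constant $c$. This is the main obstacle. It requires solving the isomonodromy condition to first order using the semiharmonic equations $D''(\beta,\psi)+D'(\frac{1}{2\I}\Phi\mu)=0$ and $D'(\beta,\psi)=0$. It also requires checking that the harmonic projection kills the remaining terms. I would first try to follow the analogous computation in \cite[\S5]{CTW25}.

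The third step is to compute $\widebar{\Theta}_J([\mu]^{0,1})$ from \eqref{eq:theta_barJ}. Since the fiber is hyperkähler, the remark after the twisted mechanism gives $\widebar\Theta_J(\bar v)=\I J^A_{X/S}\overline{\Theta(v)}_{J^B}$. With $\Theta([\mu])=[(\frac{1}{2\I}\Phi\mu,0)]$, the $J^B$-conjugate of a $(1,0)$-vector corresponds to the same real vector. Lemma \ref{lem:barJ_real} then converts $J_A$ applied to the conjugate into the $(1,0)$-part of $J_A$ of the real vector, and $J^A_{X/S}[(\beta,\psi)]=[(\I\psi^*,-\I\beta^*)]$ gives an explicit vector of the form $[(0,\ast(\frac{1}{2\I}\Phi\mu)^*)]$, up to the real/$(1,0)$ identifications. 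Finally, I would match this with the result of the second step, keeping careful track of the factor $\frac12$ and of the identification of real vectors with their $(1,0)$-parts. This parallels the rank-one computation comparing Proposition \ref{prop:GM_lift} with \eqref{eq:theta_barJ}.
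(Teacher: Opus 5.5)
Your first and third steps are correct and match the paper. By \cite[Prop.~5.3.(2)]{CTW25} and \eqref{eq:diff_dbar_joint_moduli}, $(\dbar_{B,0}-\dbar_B)([\mu]^{0,1})=V^{1,0}$ with $V:=\frac12(\ell_{[\mu]}+\mathrm{I}\ell_{[\I\mu]})$ vertical. Also, $\widebar{\Theta}_J([\mu]^{0,1})=\I J^A_{X/S}\overline{\Theta([\mu])}$ is the $(1,0)$-part of $[(0,(\frac{1}{2\I}\Phi\mu)^*)]$.

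The gap is step 2. It carries the whole content of the proposition, and you leave it open. Worse, your predicted outcome is wrong. In $[(c\,(\frac{1}{2\I}\Phi\mu)^*,0)]$ the entry $(\frac{1}{2\I}\Phi\mu)^*$ is a $(1,0)$-form, but the $\beta$-slot consists of $(0,1)$-forms. So this is not a tangent vector. Nor could it ever match the $\psi$-slot vector $[(0,(\frac{1}{2\I}\Phi\mu)^*)]$ that your own step 3 produces. The rank-one computation you cite already shows this: there $\dbar_{B,0}-\dbar_B$ points along $\partial_p$ (the Higgs-field direction), while $\theta$ points along $\partial_q$. The correct target is $V=[(0,(\frac{1}{2\I}\Phi\mu)^*)]$, equivalently $J^A_{X/S}V=\mathrm{I}\Theta([\mu])$.

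The missing ingredient is \cite[Lem.~5.12]{CTW25}, which characterizes the isomonodromic lift. Let $(\mu,\beta_1,\psi_1)$ and $(\I\mu,\beta_2,\psi_2)$ be the semiharmonic representatives of $\ell_{[\mu]}$ and $\ell_{[\I\mu]}$. Then there are Hermitian sections $\zeta_1,\zeta_2$ with $D''\zeta_1=\frac{1}{2\I}(\beta_1^*-\psi_1^*+\frac{1}{2\I}\Phi\mu)$ and $D''\zeta_2=\frac{1}{2\I}(\beta_2^*-\psi_2^*+\frac{1}{2\I}\Phi\I\mu)$.

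These identities involve $\beta^*$ and $\psi^*$. So the efficient move is to compute $J^A_{X/S}V=\frac12[(\I\psi_1^*+\psi_2^*,-\I\beta_1^*-\beta_2^*)]$ rather than $V$ itself. Then
\[
D''(\zeta_1-\I\zeta_2)=\tfrac12(\I\psi_1^*+\psi_2^*-\I\beta_1^*-\beta_2^*)-\I\,\tfrac{1}{2\I}\Phi\mu .
\]
The right-hand side is therefore an infinitesimal complex gauge transformation, so $J^A_{X/S}V=[(\frac{\I}{2\I}\Phi\mu,0)]=\mathrm{I}\Theta([\mu])$. Lemma \ref{lem:barJ_real} then converts this into $V^{1,0}=\I J^A_{X/S}\overline{\Theta([\mu])^{1,0}}$.

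No first-order solving of the isomonodromy equations beyond that lemma is needed, and no harmonic-projection argument. The leftover terms are exactly $D''(\zeta_1-\I\zeta_2)$.
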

\begin{proof}
	By \eqref{eq:diff_dbar_joint_moduli} and Lemma \ref{lem:barJ_real}, we only need to compute $\frac{1}{2}(\ell_{[\mu]}+\mathrm{I}\ell_{[\I\mu]})$. Let $(\mu, \beta_1, \psi_1)$ be the semiharmonic representative of $\ell_{[\mu]}$ and $(\I\mu, \beta_2, \psi_2)$ be that of $\ell_{[\I\mu]}$.
	Then $\mathrm{I} \ell_{[\I\mu]}$ is represented by $(-\mu, \I\beta_2, \I\psi_2)$, and $\frac{1}{2}(\ell_{[\mu]}+\mathrm{I}\ell_{[\I\mu]})$ is represented by $\frac{1}{2}(0,\beta_1+\I\beta_2,\psi_1+\I\psi_2)$. We have
	\[\tfrac{1}{2}J_{X/S}^A(\ell_{[\mu]}+\mathrm{I}\ell_{[\I\mu]})=\tfrac{1}{2}[(\I\psi_1^*+\psi_2^*,-\I\beta_1^*-\beta_2^*)].\]
	By \cite[Lem.~5.12]{CTW25}, there exist unique Hermitian sections $\zeta_1, \zeta_2\in C^\infty(\Sigma,\ad P)$ such that
\[	D''\zeta_1 = \tfrac{1}{2\I} \bigl(\beta_1^* - \psi_1^* + \tfrac{1}{2\I}\Phi\mu\bigr)\quad\text{and}\quad
			D''\zeta_2 = \tfrac{1}{2\I} \bigl(\beta_2^* - \psi_2^* + \tfrac{1}{2\I}\Phi\I\mu\bigr).\]
Therefore, we have
\[D''(\zeta_1-\I\zeta_2)=\tfrac{1}{2}(\I\psi_1^*+\psi_2^*-\I\beta_1^*-\beta_2^*)-\I \tfrac{1}{2\I}\Phi\mu.\]
This implies
\[\tfrac{1}{2}[(\I\psi_1^*+\psi_2^*,-\I\beta_1^*-\beta_2^*)]=\bigl[\bigl(\tfrac{\I}{2\I}\Phi\mu,0\bigr)\bigr]=\mathrm{I}\Theta([\mu]).\]
	By \eqref{eq:diff_dbar_joint_moduli} and Lemma \ref{lem:barJ_real}, we have
	\[(\dbar_{B,0}-\dbar_B)([\mu]^{0,1})=\tfrac{1}{2}(\ell_{[\mu]}+\mathrm{I}\ell_{[\I\mu]})_B^{1,0}=J_{X/S}^A(\widebar{(-{\mathrm{I}}\Theta([\mu]))_B^{1,0}})=\I J_{X/S}^A(\widebar{\Theta([\mu])_B^{1,0}}).\]
	Therefore, $\dbar_{B,0}-\dbar_B=\widebar{\Theta}_{J}$.
\end{proof}
\begin{remark}
When $G=\GL(n,\CC)$, this result is essentially \cite[Th.~A]{HSZ25}. In \cite[\S3.3]{HSZ25}, a deformation-theoretic interpretation of the conjugation $\bar{(\cdot)}_J$ is given.
\end{remark}
This proposition verifies \eqref{eq:twist_simpson_dbar_A}. We proceed to verify \eqref{eq:twist_simpson_partial_A}. Let $\mathrm{E}:\bfM(G)\to\RR$ be the \emph{energy function} (see \cite[\S5.4]{CTW25}) defined by
\[\mathrm{E}([(J,\Phi)])=\|\Phi\|^2=-2\int_\Sigma \kappa_{\mathfrak{g}}(\Psi\wedge\Psi\comp j),\]
where $\Psi=\frac{1}{2\I}(\Phi-\Phi^*)$ and $j=\pi(J)$ is the induced complex structure on $\Sigma$. By \cite[Eq.~(9.10)]{hitchin87selfdual}, $\mathrm{E}$ is a Kähler potential for $\omega_{X/S}^A$ on each fiber. Let $\omega:=\I\partial_{X_A}\dbar_{X_A}\mathrm{E}$, which is a relatively Kähler form on $X_A$, and it induces a symplectic connection (resp. almost connection) $\nabla_{\omega_{X/S}^A}^{1,0}$ (resp. $\partial_{\omega_{X/S}^A}$) associated to $\dbar_A$ and $\omega_{X/S}^A$.
\begin{lemma}\label{lem:theta_grad_energy}
Let $(Y, g_Y, J_A, J_B, J_C)$ ($J_A J_B = J_C$) be a hyperkähler manifold. Let $f: Y \to \mathbb{C}$ be a smooth function. Suppose $u$ is a $J_A$-$(1,0)$-vector field such that
\[
\omega_{J_A}(u, v) = \I \dbar_{J_A} f (v)
\]
for any $J_A$-$(0,1)$-vector field $v$. Let $w$ be a $J_B$-$(1,0)$-vector field. Then $u = 2(\id - J_C)(w)$ iff
\[
w_\RR = \tfrac{1}{4} \left( \grad \Re(f) + J_C \grad  \Re(f) + J_A \grad  \Im(f) + J_B \grad \Im(f) \right).
\]
\end{lemma}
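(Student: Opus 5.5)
The plan is pointwise linear algebra on a hyperkähler manifold: convert the condition on $u$ into a formula for $u$ in terms of gradients, then invert the relation $u=2(\id-J_C)(w)$. Throughout we use $\omega_{J}(\cdot,\cdot)=g_Y(J\cdot,\cdot)$, the identities $J_AJ_B=J_C=-J_BJ_A$, and $J_A^2=J_B^2=J_C^2=-\id$.

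\textbf{Step 1: solve for $u$.} Write $f=f_1+\I f_2$ with $f_1=\Re(f)$ and $f_2=\Im(f)$. Since $u$ is of $J_A$-type $(1,0)$, we have $\omega_{J_A}(u,v)=0$ for $J_A$-$(1,0)$ vectors $v$. So the hypothesis says that $\iota_u\omega_{J_A}$ equals the $(0,1)$-form $\I\dbar_{J_A}f$.

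Write $u=\tfrac12(U-\I J_AU)$ with $U$ complex, and use $g_Y(J_AU,\cdot)=\iota_U\omega_{J_A}$. Matching against $\dbar_{J_A}f=\tfrac12(\D f+\I J_A^*\D f)$ should give $u$ as the $J_A$-$(1,0)$ part of a complex combination of $\grad f_1$, $\grad f_2$, $J_A\grad f_1$ and $J_A\grad f_2$. Concretely, I expect something like $u=\tfrac12\bigl(\grad f-\I J_A\grad f\bigr)^{\pm}$, up to a sign and factor that I will fix by evaluating on a $(0,1)$-vector. The step that needs care is that $f$ is complex. Then $\grad f=\grad f_1+\I\grad f_2$ is not real, so $u$ is not simply the $(1,0)$-part of a real vector; its real and imaginary parts each contribute. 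The cleanest route is to record $u$ directly as an element of $T^{1,0}_{J_A}\otimes\CC$.

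\textbf{Step 2: invert $2(\id-J_C)$.} The operator $\id-J_C$ maps $J_B$-$(1,0)$ vectors to $J_A$-$(1,0)$ vectors, since $J_A(\id-J_C)=(\id-J_C)J_B$ follows from $J_AJ_C=-J_B$. On real vectors, $(\id-J_C)(\id+J_C)=2\id$, so the inverse is $(\id-J_C)^{-1}=\tfrac12(\id+J_C)$. Hence $w=\tfrac14(\id+J_C)u$, and this is automatically $J_B$-$(1,0)$ whenever $u$ is $J_A$-$(1,0)$. The "iff" is therefore just the invertibility of $2(\id-J_C)$ together with the uniqueness of $u$ from Step 1.

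\textbf{Step 3: pass to real parts.} Here $w_\RR=w+\bar w$ is the real vector with $J_B$-$(1,0)$ part $w$. Using the reality of $J_C$, we get $w_\RR=\tfrac14(\id+J_C)(u+\bar u)$, where the conjugation is the ordinary complex conjugation on $TY^\CC$. Substituting Step 1 should yield $u+\bar u=\grad f_1+J_A\grad f_2$, or that expression with signs adjusted.

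Applying $\tfrac14(\id+J_C)$ then gives
\[
\tfrac14\bigl(\grad f_1+J_C\grad f_1+J_A\grad f_2+J_CJ_A\grad f_2\bigr),
\]
and $J_CJ_A=J_B$ produces exactly the stated formula.

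\textbf{Main obstacle.} The main risk is Step 1: correctly extracting $u+\bar u$ when $f$ is complex. There, $\bar u$ is a $J_A$-$(0,1)$ vector determined by $\partial_{J_A}\bar f$ rather than $\bar\partial_{J_A} f$. I would verify the claimed $u+\bar u=\grad f_1+J_A\grad f_2$ by testing both sides against arbitrary real $v$ via $g_Y$. That test reduces to the identity $\omega_{J_A}(u+\bar u,v)=\Re$ and $\Im$ parts of $\I\dbar f(v^{0,1})$, and it is where all sign conventions get fixed.
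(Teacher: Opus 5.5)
Your proposal is correct and is essentially the paper's proof: the paper pins down $u_\RR=\grad\Re(f)+J_A\grad\Im(f)$ by taking the real part of $g_Y(u,v)=v(f)$ with $v=\tfrac12(v_\RR+\I J_Av_\RR)$, which is exactly the test you propose, and then inverts $\id-J_C$ by $\tfrac12(\id+J_C)$ and uses $J_CJ_A=J_B$. Your hedged guesses need no sign adjustment, since $u=\tfrac12(\id-\I J_A)\bigl(\grad\Re(f)+\I\grad\Im(f)\bigr)$ is exactly the $J_A$-$(1,0)$ part of the complexified gradient; the only cosmetic difference is that the paper splits $u=2(\id-J_C)w$ into real and imaginary parts before inverting, while you invert on complex vectors first.
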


\begin{proof}
Let $f = \phi + \I\psi$, where $\phi = \Re(f)$ and $\psi = \Im(f)$. The condition $\omega_{J_A}(u, v) = \I \bar{\partial}_{J_A} f (v)$ implies $g_Y(J_A u, v) = \I v(f)=\I g_Y(u, v)$. Note that $v=\frac{1}{2}(v_\RR+\I J_Av_\RR)$. Taking the real part of $g_Y(u, v) = v(f)$ yields
\begin{multline*}
	 g_Y(u_\RR, v_\RR) = v_\RR(\phi) - (J_A v_\RR)(\psi)\\=g_Y(\grad \phi,v_\RR)-g_Y(\grad\psi,J_A v_\RR)=g_Y(\grad \phi + J_A \grad \psi, v_\RR).
\end{multline*}
 Since $v_\RR$ is arbitrary, $u_\RR=\grad \phi + J_A \grad \psi$.

The equation $u = 2(\id - J_C)(w)$ is equivalent to
\begin{align*}
(u_\RR - \I J_A u_\RR) &= 2(\id - J_C)(w_\RR - \I J_B w_\RR) \\
&= 2w_\RR - 2J_C w_\RR - 2\I (J_B w_\RR +J_A w_\RR).
\end{align*}
Comparing the real and imaginary parts, we obtain $u_\RR = 2(\id - J_C) w_\RR$ and $J_Au_\RR = 2 (J_A+J_B) w_\RR$. The first equation implies the second, and is equivalent to
\begin{align*}
	w_\RR &= \tfrac{1}{2} (\id - J_C)^{-1} u_\RR = \tfrac{1}{4} (\id + J_C) u_\RR\\
	&=\tfrac{1}{4} (\id + J_C)(\grad \phi + J_A \grad \psi) \\
	&=\tfrac{1}{4} \left( \grad \Re(f) + J_C \grad  \Re(f) + J_A \grad  \Im(f) + J_B \grad \Im(f) \right). \qedhere
\end{align*}
\end{proof}
\begin{lemma}[{\cite[Lem.~5.19]{CTW25}}]\label{lem:energy_var}
	$\D \mathrm{E}(\ell_{[\mu]})=2\Re\bigl\langle \Phi, \bigl(\tfrac{1}{2\I}\Phi\mu\bigr)^*\bigr\rangle=4\Re \varphi_\mu$.
\end{lemma}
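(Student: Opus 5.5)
The plan is to compute $\D\mathrm{E}$ along the isomonodromic horizontal lift $\ell_{[\mu]}=\nabla^\RR([\mu])$ at a point $[(J,\Phi)]$. The key input is the semiharmonic representative $(\mu,\beta_1,\psi_1)$ of $\ell_{[\mu]}$.

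\textbf{Step 1: vary the expression for the energy.} Write
\[\mathrm{E}=-2\int_\Sigma\kappa_{\mathfrak g}(\Psi\wedge\Psi\comp j)\]
and differentiate along the path whose tangent is $(\mu,\beta_1,\psi_1)$. The variation $\dot\Phi$ has two parts. One comes from $\psi_1$ together with the infinitesimal gauge part coming from $\beta_1$. The other comes from the change of type induced by $\mu$. The variation of $j$ is expressed through $\mu$; explicitly $\dot j=2\,\Re(\I\mu)$ up to the standard identification.

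\textbf{Step 2: discard the gauge-type terms.} The contributions of $\beta_1$ and of the gauge directions should cancel. This is because $\mathrm{E}$ is gauge invariant, and because the semiharmonicity equation $D'(\beta_1,\psi_1)=0$ makes the $\psi_1$-contribution pair trivially with $\Phi^*$. More precisely, $\langle \Phi,\psi_1\rangle$ vanishes after integrating by parts with the harmonicity conditions, using $D''\Phi=0$ and the Hitchin equation.

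\textbf{Step 3: evaluate what remains.} The remaining terms are those produced by $\mu$. The first is the term in which $\Phi$ acquires the component $\frac{1}{2\I}\Phi\mu$ via the equation $D''(\beta,\psi)+D'(\frac{1}{2\I}\Phi\mu)=0$. The second is the term from varying $j$ in $\Psi\comp j$. Together these should collapse to $2\Re\langle\Phi,(\frac{1}{2\I}\Phi\mu)^*\rangle$. Expanding the pairing
\[\langle\psi_1,\psi_2\rangle=\I\int_\Sigma\kappa_{\mathfrak g}(\psi_1\wedge\psi_2^*)\]
then turns this into $2\Re\bigl(\tfrac12\int_\Sigma\kappa_{\mathfrak g}(\Phi\otimes\Phi)\mu\bigr)$, which equals $4\Re\varphi_\mu$ by the definition of $\varphi_\mu$ in Lemma~\ref{lem:Theta_Hamiltonian}.

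The main obstacle is Step 2, where I must justify rigorously that the $(\beta_1,\psi_1)$-terms drop out. My first attempt would use Lemma~5.12 of \cite{CTW25}, which writes $\beta_1^*-\psi_1^*$ as $D''\zeta_1$ plus a $\mu$-term. I would then integrate the corresponding pairing by parts against $\Phi$. Alternatively, since this identity is exactly \cite[Lem.~5.19]{CTW25}, the cleanest route is simply to cite it after matching conventions: namely the definitions of $\Psi$ and $*$, and the factor $\frac{1}{2\I}$.
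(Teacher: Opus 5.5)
The paper gives no independent proof of this lemma; it simply quotes \cite[Lem.~5.19]{CTW25}, so your fallback is exactly what the paper does. The direct computation you sketch would not go through, however, because the bookkeeping in Steps~2 and~3 is reversed.

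\textbf{Step 2 is false.} Semiharmonicity, even together with $\dbar_{A_J}\Phi=0$ and the Hitchin equation, cannot force $\langle\Phi,\psi_1\rangle=0$. Every tangent vector of the joint moduli space is represented by a semiharmonic triple. Your argument would therefore apply verbatim to vertical vectors $(0,\beta,\psi)$, on which $\D\mathrm E=2\Re\langle\psi,\Phi\rangle$. It would make $\mathrm E$ locally constant on fibers, which is impossible for a Kähler potential of $\omega^A_{X/S}$. Relatedly, nothing in Step~2 uses that $\ell_{[\mu]}$ is the \emph{isomonodromic} lift. Yet the value of $\D\mathrm E$ on a lift of $[\mu]$ depends on the lift: adding a vertical vector on which $\D\mathrm E\neq0$ changes it.

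\textbf{Step 3 errs in the opposite direction.} The terms produced directly by $\mu$ contribute nothing in total. Along a path of solutions of the Hitchin equations for the fixed $P_K$, $\Phi_t$ stays of type $(1,0)$ for $j_t$. On such configurations $-2\int_\Sigma\kappa_{\mathfrak g}(\Psi\wedge\Psi\circ j)=\I\int_\Sigma\kappa_{\mathfrak g}(\Phi\wedge\Phi^*)$, which has no explicit $j$-dependence. Differentiating, the $(0,1)$-part of $\dot\Phi$ (the $\Phi\mu$-term) drops out by type, so $\D\mathrm E(\ell_{[\mu]})=2\Re\langle\psi_1,\Phi\rangle$. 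In your $\Psi\circ j$ form, the $\dot j$-term and the change-of-type term cancel each other.

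\textbf{Where the answer actually comes from.} The $\psi_1$-term is the whole answer. It is evaluated by the horizontality of $\ell_{[\mu]}$, i.e.\ by \cite[Lem.~5.12]{CTW25}, which you mention but aim at the wrong target. The $(0,1)$-component of that identity reads $\dbar_{A_J}\zeta_1=\tfrac{1}{2\I}\bigl(\tfrac{1}{2\I}\Phi\mu-\psi_1^*\bigr)$ with $\zeta_1$ Hermitian. Hence $\psi_1$ equals $(\tfrac{1}{2\I}\Phi\mu)^*$ up to a $\partial_{A_J}$-exact term. That exact term pairs to zero with $\Phi$ by Stokes' theorem, since $\dbar_{A_J}\Phi=0$. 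This gives $\D\mathrm E(\ell_{[\mu]})=2\Re\langle(\tfrac{1}{2\I}\Phi\mu)^*,\Phi\rangle=2\Re\langle\Phi,(\tfrac{1}{2\I}\Phi\mu)^*\rangle$. Your final conversion of this quantity to $4\Re\varphi_\mu$ is correct.
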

\begin{proposition}
$\partial_A-\partial_{\omega_{X/S}^A}=2(\id-J_{X/S}^C)(\Theta)$.
\end{proposition}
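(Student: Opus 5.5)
The plan is to compare the two $(1,0)$-connections on $X_A$ horizontal lift by horizontal lift, using Lemma \ref{lem:theta_grad_energy}. Since $\nabla$ is the trivial flat connection in the $J_{X/S}^A$-holomorphic trivialization $S\times\bfX(G)$, its horizontal lifts $\ell_{[\mu]}$ are the isomonodromic lifts. The difference $\partial_A-\partial_{\omega_{X/S}^A}$, evaluated on $[\mu]^{1,0}$, is therefore the $J^A$-$(1,0)$ vertical vector $u:=-\Gamma_{[\mu]}$, where $\Gamma$ are the coefficients of \eqref{eq:rel_kah_lift} for $\omega=\I\partial_{X_A}\dbar_{X_A}\mathrm{E}$ written in isomonodromic coordinates.

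First I would identify $u$ as a fiberwise gradient-type field. In adapted $J^A$-holomorphic coordinates $(s,z)$ the relation $\Gamma_i^\alpha=-g^{\bar\beta\alpha}g_{i\bar\beta}$ gives
\[
\omega_{J_A}(u,v)=\I\,v\bigl(\partial_{s^i}\mathrm{E}\bigr)=\I\,\dbar_{J_A}(\partial_{s^i}\mathrm{E})(v)
\]
for every $J^A$-$(0,1)$ vertical $v$, since $g_{i\bar\beta}=\partial_i\partial_{\bar\beta}\mathrm{E}$. The signs have to be checked against the convention of \eqref{eq:rel_kah_coeff}. Because $z$ is isomonodromic, $\partial_{s^i}\mathrm{E}$ is exactly $\tfrac12(\D\mathrm{E}(\ell_{[\mu]})-\I\,\D\mathrm{E}(\ell_{[\I\mu]}))$. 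So $u$ satisfies the hypothesis of Lemma \ref{lem:theta_grad_energy} with $f=\partial_{[\mu]}\mathrm{E}$.

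Next I would apply Lemma \ref{lem:energy_var}. It gives $\D\mathrm{E}(\ell_{[\mu]})=4\Re\varphi_\mu$, and applying it to $\I\mu$ gives $\D\mathrm{E}(\ell_{[\I\mu]})=4\Re\varphi_{\I\mu}=-4\Im\varphi_\mu$. Hence $f=2\Re\varphi_\mu+2\I\Im\varphi_\mu=2\varphi_\mu$, which is a $J^B$-holomorphic function on the fiber. Lemma \ref{lem:theta_grad_energy} then reduces the claim $u=2(\id-J_{X/S}^C)\Theta([\mu])$ to the identity
\[
\Theta([\mu])_\RR=\tfrac14\bigl(\grad\Re f+J_C\grad\Re f+J_A\grad\Im f+J_B\grad\Im f\bigr).
\]

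Finally I would verify this identity from Lemma \ref{lem:Theta_Hamiltonian}, namely $\iota_{\Theta}\Omega^B=\D\varphi_\mu$ with $\Omega^B=\omega_{J_A}-\I\omega_{J_C}$, in the convention of the earlier lemma relating $\bar\theta_J$ and $\bar\theta_{\omega}$. For a $J_B$-holomorphic $h=\varphi_\mu$ the Cauchy–Riemann equations give $\grad\Im h=-J_B\grad\Re h$. Consequently $J_B\grad\Im h=\grad\Re h$ and $J_A\grad\Im h=-J_C\grad\Re h$, up to the sign conventions for $J_C=J_AJ_B$. The bracket then collapses to a combination of $\grad\Re h$ and $J_C\grad\Re h$. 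On the other side, solving $\iota_{\Theta_\RR}\omega_{J_A}=\D\Re h$ expresses $\Theta_\RR$ as $\pm J_A\grad\Re h$, or equivalently $\pm J_C$ of a gradient. Matching the two expressions, with the factor $2$ coming from $f=2\varphi_\mu$ and the factor $\tfrac14$ from the lemma, should give equality.

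The main obstacle I expect is keeping the constants and signs consistent. The relevant sources are the factor $\tfrac12$ in identifying real and $(1,0)$-vectors, the normalization of $\Omega^B$ in Lemma \ref{lem:Theta_Hamiltonian} against $\omega_{J_A}-\I\omega_{J_C}$, and the sign in \eqref{eq:rel_kah_lift}. If the two sides disagree by a sign or a factor of $2$, I would recheck these conventions against the rank one computation of \S\ref{subsec:cotangent_ab_var}, where the analogous identity $\partial_A-\partial_{\omega_{X/S}^A}=2\sqrt2\beta_{X/S}^{-1}(\theta)$ was verified explicitly and can serve as a calibration.
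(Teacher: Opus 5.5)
There is a genuine gap. Your reduction matches the paper's: $u=(\partial_A-\partial_{\omega^A_{X/S}})([\mu]^{1,0})$ satisfies the hypothesis of Lemma \ref{lem:theta_grad_energy} with $f=\tfrac12(\D\mathrm E(\ell_{[\mu]})-\I\,\D\mathrm E(\ell_{[\I\mu]}))$. Your observation $f=2\varphi_\mu$, from Lemma \ref{lem:energy_var} and $\varphi_{\I\mu}=\I\varphi_\mu$, is also correct. The gap is in the final matching, which is where the substance of the proposition lies. With the conventions you commit to, it fails, and no sign or factor of $2$ repairs it.

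\emph{First error: the normalization of $\Omega^B_{X/S}$.} For the data of \S\ref{subsec:joint_moduli}, the form in Lemma \ref{lem:Theta_Hamiltonian} is not $\omega_{J_A}-\I\omega_{J_C}$. Put $Q(w_1,w_2):=\int_\Sigma\kappa_{\mathfrak g}(\psi_1\wedge\beta_2-\psi_2\wedge\beta_1)$. From $J^A_{X/S}(\beta,\psi)=(\I\psi^*,-\I\beta^*)$, $J^C_{X/S}(\beta,\psi)=(\psi^*,-\beta^*)$ and $\kappa_{\mathfrak g}(\alpha^*\wedge\gamma^*)=\overline{\kappa_{\mathfrak g}(\alpha\wedge\gamma)}$, one gets $\omega_{J_A}=\Re Q$ and $\omega_{J_C}=-\Im Q$. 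On the other hand, $\Omega^B_{X/S}=-\I Q=-\omega_{J_C}-\I\omega_{J_A}$. So the real part of $\iota_{\Theta([\mu])}\Omega^B_{X/S}=\D\varphi_\mu$ is $\iota_{\Theta([\mu])}\omega_{J_C}=-\D\Re\varphi_\mu$. That is, $\Theta([\mu])=J_C\grad\Re\varphi_\mu$, not $\pm J_A\grad\Re\varphi_\mu$.

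\emph{Second error: the Cauchy--Riemann sign.} For $J_B$-holomorphic $h$ one has $\grad\Im h=+J_B\grad\Re h$ (test $h=z$ on $\CC$). Hence $J_B\grad\Im h=-\grad\Re h$ and $J_A\grad\Im h=J_C\grad\Re h$.

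\emph{Why your fallback would not catch this.} With your signs, the bracket in Lemma \ref{lem:theta_grad_energy} collapses to $\grad\Re\varphi_\mu$, while your Hamiltonian step gives $\pm J_A\grad\Re\varphi_\mu$. These two vectors are $g_{X/S}$-orthogonal, so the discrepancy is a phase. Rechecking signs and factors of $2$ would not detect it. The rank one case of \S\ref{subsec:cotangent_ab_var} does not calibrate the constants either. There $\Omega^B_{X/S}=\D p\wedge\D q=\I(\omega_{J_A}-\I\omega_{J_C})$, and one checks that the isomonodromic derivative of the potential is $\partial_{ij}\phi=-2H_{ij}$. Both normalizations differ from the present ones.

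With both corrections your argument closes. Set $X:=\grad\Re\varphi_\mu$. With $f=2\varphi_\mu$, the bracket in Lemma \ref{lem:theta_grad_energy} is $\tfrac14(2X+2J_CX+2J_CX-2X)=J_CX$, which equals $\Theta([\mu])$. This is then a genuinely different route from the paper's. The paper never uses Lemma \ref{lem:Theta_Hamiltonian}. It differentiates $\D\mathrm E(\ell_{[\mu]})=2\Re\langle\Phi,(\tfrac{1}{2\I}\Phi\mu)^*\rangle$ along harmonic vertical vectors to get $\grad\D\mathrm E(\ell_{[\mu]})=4[(0,(\tfrac{1}{2\I}\Phi\mu)^*)]$, and then applies the explicit $J^A_{X/S},J^B_{X/S},J^C_{X/S}$. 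That sidesteps the normalization problem entirely. Your version is more conceptual: it exhibits the identity as a consequence of $\Theta([\mu])$ being the Hamiltonian field of the holomorphic function $\tfrac12\partial_{[\mu]}\mathrm E$, in parallel with Lemma \ref{lem:cotang_theta_ham}. But it only works once $\Omega^B_{X/S}$ is computed explicitly in terms of $g_{X/S}$, $J^A_{X/S}$, $J^C_{X/S}$, which is the step your proposal leaves undone and gets wrong.
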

\begin{proof}
Denote $\mathcal{V}:=(\partial_A-\partial_{\omega_{X/S}^A})([\mu]^{1,0})$. By the orthogonality condition satisfied by the symplectic connection, we have $\omega(\nabla_{\omega_{X/S}^A}^{1,0}([\mu]^{1,0}),v)=0$ for any vertical $J_{X/S}^{A}$-$(0,1)$-vector field $v$. This implies
\[\omega_{X/S}^A(\mathcal{V},v)=\omega(\nabla^{1,0}([\mu]^{1,0}),v)=\I\dbar_{J_{X/S}^A}\bigl(\tfrac{1}{2}(\D \mathrm{E}(\ell_{[\mu]})-\I\D \mathrm{E}(\ell_{[\I\mu]}))\bigr)(v),\]
where the second equality follows from the definition $\omega=\I\partial_{X_A}\dbar_{X_A}\mathrm{E}$. By Lemma \ref{lem:theta_grad_energy}, it suffices to prove
\[\Theta([\mu])=\tfrac{1}{8}(\grad \D \mathrm{E}(\ell_{[\mu]}) + J_{X/S}^C \grad \D \mathrm{E}(\ell_{[\mu]})-J_{X/S}^A\grad \D \mathrm{E}(\ell_{[\I\mu]})-J_{X/S}^B\grad \D \mathrm{E}(\ell_{[\I\mu]})).\]
By Lemma \ref{lem:energy_var}, $\D \mathrm{E}(\ell_{[\mu]})=2\Re\bigl\langle \Phi, \bigl(\tfrac{1}{2\I}\Phi\mu\bigr)^*\bigr\rangle$. Let $w=(\beta,\psi)$ be a harmonic vertical tangent vector. Then
\[  \D (\D \mathrm{E}(\ell_{[\mu]}))(w) = 2\Re\bigl( \bigl\langle \psi, \bigl(\tfrac{1}{2\I}\Phi\mu\bigr)^* \bigr\rangle + \bigl\langle \Phi, \bigl(\tfrac{1}{2\I}\psi\mu\bigr)^* \bigr\rangle \bigr) = 4\Re\bigl\langle \psi, \bigl(\tfrac{1}{2\I}\Phi\mu\bigr)^* \bigr\rangle,\]
and $\grad \D \mathrm{E}(\ell_{[\mu]})=4[(0,(\frac{1}{2\I}\Phi\mu)^*)]$. Replacing $\mu$ by $\I\mu$, we have $\grad \D \mathrm{E}(\ell_{[\I \mu]})=4[(0,(\frac{\I}{2\I}\Phi\mu)^*)]$. Consequently,
\begin{multline*}
	\tfrac{1}{8}(\grad \D \mathrm{E}(\ell_{[\mu]}) + J_{X/S}^C \grad \D \mathrm{E}(\ell_{[\mu]})-J_{X/S}^A\grad \D \mathrm{E}(\ell_{[\I\mu]})-J_{X/S}^B\grad \D \mathrm{E}(\ell_{[\I\mu]}))\\
	=\tfrac{1}{2}\bigl[\bigl(0,\bigl(\tfrac{1}{2\I}\Phi\mu\bigr)^*\bigr)+\bigl(\tfrac{1}{2\I}\Phi\mu,0\bigr)+\bigl(\tfrac{1}{2\I}\Phi\mu,0\bigr)-\bigl(0,\bigl(\tfrac{1}{2\I}\Phi\mu\bigr)^*\bigr)\bigr]=\bigl[\bigl(\tfrac{1}{2\I}\Phi\mu,0\bigr)\bigr]=\Theta([\mu]).
\end{multline*}
The proposition is proved.
\end{proof}
\begin{remark}\label{rmk:hitchin}
	When $G=\SL(n,\CC)$, the functions $\mathrm{E}$, $\D \mathrm{E}(\ell_{[\mu]})$, and $\varphi_\mu$ are exactly (up to scaling constants) $f$, $\dot{f}$, $\varphi$ (evaluated on $\mu$) considered in \cite{hitchiin26}. When $G=\GL(n,\CC)$, these functions satisfy the same properties as in Lemmas \ref{lem:Theta_Hamiltonian} and \ref{lem:energy_var} when replacing $\kappa_{\mathfrak{g}}(A_1,A_2)$ by $\mathrm{tr}(A_1A_2)$ (cf. \cite[\S2.2-2.3]{hitchiin26}).
\end{remark}

\subsection{Nonlinear harmonic bundles}
Let $S$ be a smooth complex algebraic variety and $f:X\to S$ be a smooth projective family of algebraic curves of genus $\geq 2$. For each point $s\in S$, we denote by $X_s$ the fiber over $s$. Let $G$ be a connected complex reductive group. In nonabelian Hodge theory, there are three moduli spaces: $M_{\B}(X_s,G)$ is the Betti moduli space, parametrizing the isomorphism classes of irreducible representations of $\pi_1(X_s)$ to $G$; $M_{\dR}(X_s,G)$ is the de Rham moduli space, parametrizing the isomorphism classes of irreducible flat $G$-connections on $X_s$; $M_{\Dol}(X_s,G)$ is the Dolbeault moduli space, parametrizing the isomorphism classes of stable $G$-Higgs bundles on $X_s$ of vanishing rational characteristic classes. It is known that $M_{\B}(X_s,G)$ and $M_{\dR}(X_s,G)$ are complex analytically isomorphic, and $M_{\dR}(X_s,G)$ and $M_{\Dol}(X_s,G)$ are real analytically, but not complex analytically isomorphic. When $s$ varies in $S$, $M_{\B}(X_s,G)$ (resp. $M_{\dR}(X_s,G)$ and $M_{\Dol}(X_s,G)$) form a holomorphic fibration $f_{\B}:M_{\B}(X/S,G)\to S$ (resp. $f_{\dR}:M_{\dR}(X/S,G)\to S$ and $f_{\Dol}:M_{\Dol}(X/S,G)\to S$). We henceforth restrict to the corresponding smooth loci of these moduli spaces, retaining the same notation. For $G=\CC^*$, no restriction is necessary. It follows from \cite[Th.~ 9.11]{simpson94moduli} that $f_{\B}$ and $f_{\dR}$ are isomorphic as holomorphic fiber bundles. Then by \cite[Lem.~ 9.14]{simpson94moduli}, $f_{\dR}$ is isomorphic to the relative Dolbeault moduli space $f_{\Dol}: M_{\Dol}(X/S,G)\to S$ as topological fiber bundles. Based on the recent work \cite{CTW25}, we have the following result.
\begin{lemma}
Notation as above. Suppose that $G$ is semisimple. Then $f_{\dR}$ and $f_{\Dol}$ are isomorphic as smooth fiber bundles.
\end{lemma}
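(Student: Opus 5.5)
The plan is to pass through the joint moduli space $\bfM(G)$ of Section \ref{subsec:joint_moduli} and the isomonodromic foliation. I will reduce to the universal situation over Teichmüller space and then pull back along the classifying map.

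The reduction is local on $S$, since being an isomorphism of smooth fiber bundles over $S$ is local once the local isomorphisms are canonical enough to glue. So I fix a contractible open $U\subset S$ and trivialize the smooth family $X|_U\cong U\times\Sigma$. The induced complex structures on $\Sigma$ give a holomorphic classifying map $c:U\to\bfT(\Sigma)$. By \cite[Th.~A]{CTW25}, $\pi:\bfM(G)\to\bfT(\Sigma)$ is a holomorphic fibration whose fiber over $j$ is the Dolbeault moduli space of $(\Sigma,j)$. On the other hand, $(\bfM(G),J^A)$ is smoothly (indeed real-analytically) the product $\bfT(\Sigma)\times\bfX(G)$ via the isomonodromic trivialization. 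Consequently $f_{\Dol}|_U\cong c^*\pi$ as holomorphic fibrations, and $f_{\dR}|_U\cong f_{\B}|_U\cong U\times\bfX(G)$ by \cite[Th.~9.11]{simpson94moduli}. The fiberwise nonabelian Hodge map (Hitchin equations plus Corlette--Donaldson) is exactly the identification used in $\bfM(G)$ between the $J^B$-structure and the $J^A$-structure on the same smooth manifold. Hence $c^*\pi$ is smoothly identified with $U\times\bfX(G)$, compatibly with the homeomorphism of \cite[Lem.~9.14]{simpson94moduli}.

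The key point, and the main obstacle, is smoothness of the nonabelian Hodge homeomorphism in the parameter $s$, not merely along fibers. I would get this from \cite{CTW25}, where $\bfM(G)$ is constructed as a single smooth manifold, in fact a quotient of the configuration space of solutions $(J,\Phi)$ of the Hitchin equations by the gauge group. On this manifold both $\mathrm{I}$ and the fiberwise $J^A$ are smooth, and the map to $\bfX(G)$, taking the holonomy of $A_J+\Phi+\Phi^*$, is smooth because the solution depends smoothly on $(J,\Phi)$ by the implicit function theorem at stable (irreducible) points. This is where semisimplicity and restriction to the smooth loci enter. Stability guarantees unique solutions with trivial infinitesimal automorphisms, so the linearized Hitchin operator is invertible transverse to gauge. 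Without semisimplicity the center, for instance $\CC^*$-factors, would spoil this uniqueness, and that case is treated separately.

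Finally, on overlaps $U\cap U'$ the two trivializations of $X$ differ by a smooth family of diffeomorphisms of $\Sigma$, which acts compatibly on $\bfM(G)$ and on $\bfX(G)$ through the mapping class group and isotopy. The local smooth isomorphisms are the canonical fiberwise nonabelian Hodge maps, so they agree with each other and glue to a global smooth isomorphism $f_{\dR}\cong f_{\Dol}$ over $S$.
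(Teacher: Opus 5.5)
Apart from routing through the universal family via the classifying map, your approach is the paper's. You localize on $S$, use the canonical fiberwise nonabelian Hodge map (so gluing is automatic and the mapping-class-group discussion is not needed), and take regularity in the parameter from \cite{CTW25}. The paper cites \cite[Th.~4.23]{CTW25} for real analyticity of $F=(f_{\mathrm{Dol}},\mathrm{H})\colon M_{\mathrm{Dol}}(X/S,G)|_U\to U\times M_{\mathrm{dR}}(X_{s_0},G)$.

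The gap is in what you call the key point. There you only argue that $F$ (equivalently, the holonomy map on $\mathbf{M}(G)$) is smooth. A smooth homeomorphism need not be a diffeomorphism ($t\mapsto t^3$ on $\mathbb{R}$), so this does not yet give an isomorphism of smooth fiber bundles. Your earlier assertion that $(\mathbf{M}(G),J^A)$ is smoothly the product $\mathbf{T}(\Sigma)\times\mathbf{X}(G)$ cannot fill this in. It is the universal case of the lemma itself, and your justification of it again yields only smoothness of the forward map.

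The missing step is short. Since $F$ covers $\mathrm{id}_U$, any $v\in\ker \mathrm{d}F_x$ has $\mathrm{d}f_{\mathrm{Dol}}(v)=0$, so $v$ is vertical. On vertical vectors $\mathrm{d}F_x$ is the differential of the fiberwise nonabelian Hodge map (composed with isomonodromic transport), which is a real-analytic diffeomorphism of the smooth loci, hence $v=0$. By equality of dimensions $\mathrm{d}F_x$ is invertible. The inverse function theorem together with bijectivity of $F$ then gives a smooth (indeed real-analytic) inverse; this is exactly how the paper concludes.

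A minor point: your explanation of where semisimplicity enters is off. A positive-dimensional center does not break existence or uniqueness of harmonic reductions modulo the center. The hypothesis is used because $\mathbf{M}(G)$ and its regularity theorem in \cite{CTW25} are set up for semisimple $G$, while $G=\mathbb{C}^*$ is treated by a separate argument.
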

\begin{proof}
    This is a local problem, so we may assume that $f_{\dR}$ is a trivial bundle $S\times M_{\dR}(X_{s_0},G)\to S$. Consider the map
    \[F:M_{\Dol}(X/S,G)\to S\times M_{\dR}(X_{s_0},G),\quad x\mapsto (f_{\Dol}(x),\mathrm{H}(x)),\]
    where $\mathrm{H}$ is the nonabelian Hodge map. $F$ is a homeomorphism. By \cite[Th.~4.23]{CTW25}, $F$ is real analytic. If $\D F_x(v)=(0,0)$, then $\D f_{\Dol,x}(v)=0$, which means $v$ is a vertical tangent vector. Since the restriction of $\mathrm H$ to each fiber is a real analytic diffeomorphism, we have $v=0$. Therefore, $\D F$ is invertible. By the real analytic inverse function theorem, $F^{-1}$ is also real analytic. Hence, $f_{\dR}$ and $f_{\Dol}$ are isomorphic smooth fiber bundles.
\end{proof}
Simpson (\cite[{\S8}]{simpson94moduli}) constructed the Gauss-Manin connection $\nabla_{\mathrm{GM}}$ on $f_{\dR}$, which comes from the isomonodromy deformation of a flat connection. He further constructed in \cite{simpson97Hodge} the nonabelian Hodge filtration $F_{\mathrm{Hod}}$ on $f_{\dR}$. Indeed, via the Rees construction, the natural $\G_m$-action on the relative Hodge moduli $M_{\mathrm{Hod}}(X/S,G)\to \mathbb{A}^1_S$ is interpreted as a filtration on $M_{\dR}(X/S,G)$. It satisfies a nonabelian analogue of the Griffiths transversality \cite[{\S7-8}]{simpson97Hodge}: the $\G_m$-equivariant extension $\widetilde \nabla_{\mathrm{GM}}$ of $\nabla_{\mathrm{GM}}$ over $M_{\mathrm{Hod}}(X/S,G)$ vanishes with order one along $M_{\Dol}(X/S,G)$. On the other hand, an explicit Higgs field $\theta_{\mathrm{KS}}$, the so-called nonabelian Kodaira-Spencer map on $f_{\Dol}$ has recently been constructed in \cite{fu2025}, and it coincides with the residual action of $\widetilde \nabla_{\mathrm{GM}}$ (\cite[Th.~1.2]{fu2025}, see also \cite{Ch12}).

\begin{theorem}\label{thm: vnhs is harmonic bundle}
Notation as above. Suppose that $G$ is semisimple or $\CC^*$. Then the flat bundle $(f_{\dR}: M_{\dR}(X/S,G)\to S,\nabla_{\mathrm{GM}})$ and the Higgs bundle $(f_{\Dol}: M_{\Dol}(X/S,G)\to S,\theta_{\mathrm{KS}})$ can be related to each other via the twisted Simpson mechanism.
\end{theorem}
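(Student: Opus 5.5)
The plan is to reduce Theorem \ref{thm: vnhs is harmonic bundle} to the two model computations already carried out: Subsection \ref{subsec:cotangent_ab_var} for $G=\CC^*$ and Subsection \ref{subsec:joint_moduli} for $G$ semisimple. Both are done over the universal bases (Siegel space, resp. Teichm\"uller space). The argument then pulls those results back along the classifying maps of the family $f:X\to S$.

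\textbf{Step 1 (smooth identification).} First I fix the underlying smooth bundle and the two fiberwise complex structures. For $G$ semisimple, the preceding lemma identifies $f_{\dR}$ and $f_{\Dol}$ as smooth fiber bundles via $F=(f_{\Dol},\mathrm H)$. Under this identification, $T^A_{X/S}$ is the complex structure of $M_{\dR}$ and $T^B_{X/S}$ is that of $M_{\Dol}$. For $G=\CC^*$ the identification comes from $M_{\Dol}(X_s,\CC^*)\cong T^*\Jac(X_s)$ and $M_{\dR}(X_s,\CC^*)\cong(\CC^*)^{2g}$, which is exactly the $(\xi,\eta)$ coordinate change of Subsection \ref{subsec:cotangent_ab_var}. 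On each fiber the Hitchin metric $g_{X/S}$ is hyperk\"ahler with respect to $(J^A,J^B,J^C)$. Hence $|J^A-J^B|_{g_{X/S}}=\sqrt2$ is constant and smooth, and $\beta^\RR_{X/S}:=(\id+J^A_{X/S}J^B_{X/S})/\sqrt2$ is an isometry intertwining $J^A$ and $J^B$. This agrees with $(\beta^\RR_{X/S})^{-1}=\sqrt{1/2}(\id-J^C_{X/S})$.

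\textbf{Step 2 (base change to the universal families).} Locally on $S$, the family of curves is classified by a holomorphic map $\kappa:S\to\bfT(\Sigma)$ after choosing a marking. For $\CC^*$ one composes further with the period map to the Siegel space. The relative moduli spaces are then pullbacks: $M_{\Dol}(X/S,G)=\kappa^*\bfM(G)$. The isomonodromic foliation pulls back to $\nabla_{\mathrm{GM}}$, since Simpson's Gauss--Manin connection is the isomonodromy connection. The fiberwise hyperk\"ahler data pull back as well. For the Higgs side, I identify $\theta_{\mathrm{KS}}$ with $\kappa^*\Theta$: both are given by $\mu\mapsto\frac{1}{2\I}\Phi\mu$ composed with the Kodaira--Spencer class of $X/S$. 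I would justify this using \cite[Th.~1.2]{fu2025}, comparing residues of $\widetilde\nabla_{\mathrm{GM}}$ with \cite[Th.~A.(4)]{CTW25}. All ingredients of the twisted Simpson mechanism (\eqref{eq:twist_simpson_theta_B}, \eqref{eq:twist_simpson_dbar_B}, $\bar\theta_J$, and the symplectic almost connection from the pulled-back relatively K\"ahler form $\I\partial\dbar\mathrm E$) are natural under holomorphic base change, so it suffices to verify the statement on the universal family.

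\textbf{Step 3 (the mechanism on the universal family).} Take $\dbar_{A,0}=\dbar_A$ and $\dbar_{B,0}$ to be the $\dbar$-operators induced by $\nabla^{0,1}$. Then \eqref{eq:twist_simpson_dbar_B} reduces to $\dbar_B=\dbar_{B,0}-\bar\theta_J$, which is the proposition $\dbar_{B,0}-\dbar_B=\widebar\Theta_J$ (resp. $\bar\theta_J=\dbar_{B,0}-\dbar_B$ for $\CC^*$). Next, \eqref{eq:twist_simpson_theta_B} reads $\theta=\frac{1}{2\sqrt2}\beta_{X/S}(\partial_A-\partial_{\omega^A_{X/S}})$. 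This is the proposition $\partial_A-\partial_{\omega^A_{X/S}}=2(\id-J^C)(\Theta)=2\sqrt2\,\beta^{-1}_{X/S}(\Theta)$. The converse direction, \eqref{eq:twist_simpson_dbar_A}--\eqref{eq:twist_simpson_partial_A}, follows from the same two identities by inverting.

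I expect Step 2 to be the main obstacle. The difficulty is identifying $\theta_{\mathrm{KS}}$ with $\kappa^*\Theta$ up to the normalizing constants, and making sure the restriction to smooth loci is compatible with pullback. In the semisimple case I would first try a pointwise comparison of the deformation-theoretic descriptions of the two Higgs fields.
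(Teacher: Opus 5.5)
Your proposal is correct and follows essentially the same route as the paper. Both arguments work locally on $S$, pull back along the classifying map to $\bfT(\Sigma)$ (resp. the period map to the Siegel space), and conclude from the universal computations of Subsections~\ref{subsec:joint_moduli} and~\ref{subsec:cotangent_ab_var} together with the compatibility of the twisted Simpson mechanism with holomorphic pullback. There are two minor differences. For $G=\CC^*$, the paper passes through the weight one $\ZZ$-PVHS: it identifies $(M_{\dR},\nabla_{\mathrm{GM}})$ with $(V/V_{\ZZ},\overline\nabla_{\mathrm{GM}})$ and $(M_{\Dol},\theta_{\mathrm{KS}})$ with the relative cotangent bundle of $\mathrm{Pic}^0(X/S)$ carrying the descended linear Kodaira--Spencer map, which is how it matches $\theta_{\mathrm{KS}}$ with the universal Higgs field. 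In the semisimple case, the identification $\theta_{\mathrm{KS}}=\kappa^*\Theta$ that you flag as the main obstacle is taken for granted in the paper rather than proved.
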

\begin{proof}
The problem is local. For a given point $s\in S$, take an open neighborhood $s\in U\subset S$ so that the family $X\to S$ over $U$ admits a smooth trivialization $f^{-1}(U)\cong U\times \Sigma$ over $U$. Then in the semisimple case, there is a unique holomorphic map $\phi: U\to \bfT(\Sigma)$ such that the pullback family $\bfM(G)\times_{\bfT(\Sigma)}U\to U$ is isomorphic to $M_{\Dol}(X/S,G)|_U\to U$. Note that the (twisted) Simpson mechanism is compatible with pullback. The result follows from the universal case as carried out in Section \ref{subsec:joint_moduli}.

For the rank one case, i.e., $G=\CC^*$, we consider the associated weight one $\ZZ$-PVHS $V_{\ZZ}$ to $X\to S$. The theory of variation of Hodge structure applied to $V_{\ZZ}$ gives rise to a linear flat bundle $(V,\nabla_{\mathrm{GM}})$ and a linear Higgs bundle $(E=E^{1,0}\oplus E^{0,1},\theta_{\mathrm{KS}})$. They correspond to each other under the nonabelian Hodge correspondence. This holds true for a general weight one $\ZZ$-PVHS, equivalently the polarized abelian family $A:=E^{0,1}/\operatorname{pr}^{0,1}(V_{\ZZ})\to S$, where $\operatorname{pr}^{0,1}:V\to V/F^1V=E^{0,1}$ is the natural projection (in our case, $A=\textrm{Pic}^0(X/S)$). As $V_{\ZZ}\subset V$ and $\nabla_{\mathrm{GM}}$ is linear, we obtain a flat holomorphic bundle by taking the quotient
$(V/V_{\ZZ}\to S, \overline \nabla_{\mathrm{GM}})$, which is nothing but the rank one relative de Rham moduli equipped with the nonabelian Gauss-Manin connection. We know that the relative Dolbeault moduli is the relative cotangent bundle of the relative Picard variety $\textrm{Pic}^0(X/S)$. Analytically, it is isomorphic to the quotient
$E^{1,0}\times_S (E^{0,1}/\operatorname{pr}^{0,1}(V_{\ZZ}))\to S$. The polarization identifies $E^{1,0}$ with $(\operatorname{Lie}(A/S))^*$, so this is the relative cotangent bundle $T^*_{A/S}\to S$.
 Since $\theta_{\mathrm{KS}}$ vanishes on $E^{0,1}$, it descends to a nonlinear Higgs field on the relative cotangent bundle, which in the case of rank one relative Dolbeault moduli is nothing but the nonabelian Kodaira-Spencer map. Now the result follows from the universal one verified in Section \ref{subsec:cotangent_ab_var}.
\end{proof}
The above result leads to the following notion.
\begin{definition}\label{def: nonlinear harmonic bundle}
Let $S$ be a complex manifold. A \emph{nonlinear harmonic bundle} over $S$ is a smooth fiber bundle
$f: X\to S$, equipped with a flat holomorphic bundle structure $(f_A: X_A\to S,\nabla)$ and simultaneously a holomorphic Higgs bundle structure $(f_B: X_B\to S,\dbar_B,\theta)$ (which means $f_A$ and $f_B$ have the same underlying smooth fiber bundle $f$), such that they are related to each other via some effective twisting map $\beta: T_{X/S}^A\to T_{X/S}^B$ (both are regarded as complex subbundles of $T^{\CC}_{X/S}$) and a ($\beta$-twisted) harmonic metric on $f$ through the (twisted) Simpson mechanism. A choice of a twisting map and a ($\beta$-twisted) harmonic metric is not part of the data.
\end{definition}

Naturally, we expect that $(M_{\dR}(X/S,G),\nabla_{\mathrm{GM}}; M_{\Dol}(X/S,G),\theta_{\mathrm{KS}})$ is also a nonlinear harmonic bundle for arbitrary reductive $G$. A similar result should also hold true when $X\to S$ is an arbitrary smooth projective morphism. However, one has to either restrict to the smooth locus (it seems unclear whether $M_{\dR}(X/S,G)$ and $M_{\Dol}(X/S,G)$ have the same smooth locus. Instead, one may consider an even smaller smooth locus, namely the Zariski-dense locus), or consider everything with suitable singularities. \\

Motivated by the half nonlinear Hodge correspondence established in Section \ref{sec:nonlin_harm}, it is a natural subsequent goal to characterize the image of the functor, and then construct the other half correspondence. It involves introducing an appropriate notion of stability for nonlinear Higgs bundles, and a suitable nonlinear analogue of the Hermitian-Yang-Mills equation. This problem becomes increasingly challenging when the corresponding monodromy representations do not factor through a complex Lie subgroup (which one might call infinite-dimensional), when the effective twisting map is non-identity, and when the base manifold $S$ is noncompact.

\bibliographystyle{amsalpha}
\bibliography{hol}

\end{document}